\DeclareMathOperator{\Lie}{Lie}
\DeclareMathOperator{\End}{End}
\DeclareMathOperator{\tr}{tr}
\DeclareMathOperator{\Bun}{{Bun}}
\DeclareMathOperator{\frBun}{{frBun}}
\DeclareMathOperator{\vect}{{Vect}}
\DeclareMathOperator{\frvect}{{frVect}}
\DeclareMathOperator{\id}{id}
\DeclareMathOperator{\hol}{hol}
\DeclareMathOperator{\Map}{Map}
\DeclareMathOperator{\Mat}{Mat}
\DeclareMathOperator{\ad}{ad}
\DeclareMathOperator{\ev}{ev}
\DeclareMathOperator{\pr}{pr}
\DeclareMathOperator{\Gr}{Gr}
\DeclareMathOperator{\Hom}{Hom}
\DeclareMathOperator{\struct}{Struct_{\Omega}}
\DeclareMathOperator{\struuct}{Struct}
\DeclareMathOperator{\Man}{Man}
\DeclareMathOperator{\gagrp}{AbGrp^\bullet}
\DeclareMathOperator{\rank}{rank}
\DeclareMathOperator{\bimon}{BimonCat}
\DeclareMathOperator{\rig}{SemiRing}
\theoremstyle{plain}
\newtheorem{theorem}{Theorem}[section]
\newtheorem{corollary}[theorem]{Corollary}
\newtheorem{lemma}[theorem]{Lemma}
\newtheorem{proposition}[theorem]{Proposition}
\theoremstyle{definition}
\newtheorem{definition}[theorem]{Definition}
\theoremstyle{remark}
\newtheorem{example}[theorem]{Example}
\newtheorem{remark}[theorem]{Remark}
\newcommand{\cC}{{\mathcal C}}
\renewcommand{\cH}{{\mathcal H}}
\newcommand{\cS}{{\mathcal S}}
\newcommand{\cA}{{\mathcal A}}
\newcommand{\cF}{{\mathcal F}}
\newcommand{\cK}{\mathcal{K}^{-1}}
\newcommand{\cKr}{{\mathcal{K}}_{\RR}^{-1}}
\renewcommand{\cL}{{\mathcal L}}
\renewcommand{\cL}{{\mathcal L}}
\newcommand{\cV}{{\mathcal V}}
\newcommand{\cM}{{\mathcal M}}
\renewcommand{\cR}{{\mathcal R}}
\newcommand{\CC}{{\mathbb C}}
\newcommand{\RR}{{\mathbb R}}
\newcommand{\ZZ}{{\mathbb Z}}
\newcommand{\QQ}{{\mathbb Q}}
\renewcommand{\d}{\delta}
\newcommand{\g}{\mathfrak{g}}
\renewcommand{\u}{\mathfrak{u}}
\newcommand{\sQ}{\mathsf{Q}}
\newcommand{\sA}{\mathsf{A}}
\newcommand{\sP}{\mathsf{P}}
\newcommand{\sF}{\mathsf{F}}
\newcommand{\sE}{\mathsf{E}}
\newcommand{\sR}{\mathsf{R}}
\newcommand{\sB}{\mathsf{B}}
\newcommand{\sH}{\mathsf{H}}
\newcommand{\sh}{\mathsf{h}}
\newcommand{\dK}{\check{\mathcal{K}}^{-1}}
\newcommand{\dl}{\check{\mathcal{L}}^{-1}}
\newcommand{\dL}{\check{\mathcal{L}}'^{-1}}
\newcommand{\edK}{\check{\mathcal{K}}^{0}}
\newcommand{\x}{\times}
\newcommand{\lan}{\langle}
\newcommand{\ran}{\rangle}
\newcommand{\llan}{\lan\mspace{-5mu}\lan}
\newcommand{\rran}{\ran\mspace{-5mu}\ran}
\newcommand{\sone}{{S^1}}
\newcommand{\lo}{\longrightarrow}
\newcommand{\ovect}{\Omega\!\vect}
\newcommand{\ohect}{\Omega\!\vect_{\mathrm{H}}}
\newcommand{\frhect}{\frvect_{\mathrm{H}}}
\newcommand{\oast}{{\mspace{4mu}\odot\mspace{-11.5mu}\star\mspace{6mu}}} 
\newcommand{\bp}{{x_0}}
\renewcommand{\d}{\partial}
\newcommand{\vp}{{\vphantom{-1}}}
\newcommand{\simto}{\xrightarrow{\;\;\sim\;\;}}
\newcommand{\im}{\mathrm{im}}
\newcommand{\CS}{{C \mspace{-1.9mu}S}}
\newcommand{\TWZ}{{\text{TWZ}}}
\newcommand{\Ch}{{C \mspace{-1.9mu}h}}
\newcommand{\cw}{c \mspace{-1mu}w}
\newcommand{\I}{{[0,1]}}
\newcommand{\J}{{[0,\tfrac{\pi}{2}]}}
\newcommand{\deR}{{\mathrm{deR}}}
\newcommand{\dlim}{\varinjlim}
\newcommand{\sBun}{{S^1\mspace{-4mu}\Bun}}
\newcommand{\uc}{{A}} 
\newcommand{\uf}{{F}} 
\newcommand{\hc}{{\widehat{A}}} 
\newcommand{\hf}{{\widehat F}} 
\newcommand{\dtz}[1]{\frac{d}{dt} \Big\{ {#1} \Big\}_{t=0}}
\renewcommand{\theequation}{\arabic{chapter}.\arabic{section}.\arabic{equation}}
\begin{document}

\pagenumbering{roman}
 \addtolength{\headwidth}{\marginparsep}
 \addtolength{\headwidth}{\marginparwidth}

\begin{titlepage}
%
%
\vspace*{\fill}

%
%
\begin{huge}
\begin{bf}
\begin{center}
The Caloron Correspondence and \\
\vspace{1mm}
Odd
 Differential K-theory\end{center}
\end{bf}
\end{huge}
%
%
\vspace*{\fill}
%
%
\begin{Large}
\begin{center}
Vincent S. Schlegel
\end{center}
\end{Large}

\begin{large}
%
%
\vspace*{\fill}

%
%
{{
\begin{center}
\begin{normalsize}
Thesis
submitted in partial fulfilment\\ of the requirements for the degree of\\
\vspace{1mm}
\end{normalsize}
\emph{
Master of Philosophy\\}
\begin{normalsize}
\vspace{1mm}
in\\
\vspace{1mm}
\end{normalsize}
\emph{
Pure Mathematics\\}
\begin{normalsize}
\vspace{1mm}
at
The University of Adelaide
\end{normalsize}
\end{center}
}}
%
%
%
%
%
%
\vspace*{\fill}
\vspace*{\fill}
\begin{center} 
\includegraphics[height=4cm]{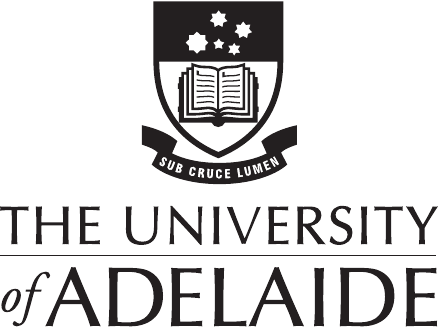}
\end{center}
\vspace{3mm}
\begin{center}

\begin{bf}
School of Mathematical Sciences
\end{bf}

\end{center}
%
%
\vspace{6.0mm}
%
%
\begin{center}
July 2013
\end{center}
\end{large}
%
%
\vspace*{\fill}
\end{titlepage}

\newpage
\thispagestyle{empty}
\mbox{}


\pagestyle{empty}
\addtocontents{toc}{\protect\thispagestyle{empty}}
\tableofcontents
\cleardoublepage
\setcounter{page}{1}
\chapter*{Abstract}
\typeout{Abstract}
\label{ch:abstract}
\addcontentsline{toc}{chapter}{Abstract}

The caloron correspondence (introduced in \cite{MS} and generalised in \cite{HMV,MV,V}) is a tool that gives an equivalence between principal $G$-bundles based over the manifold $M\x\sone$ and principal $LG$-bundles on $M$, where $LG$ is the Fr\'{e}chet Lie group of smooth loops in the Lie group $G$.
This thesis uses the caloron correspondence to construct certain differential forms called \emph{string potentials} that play the same role as Chern-Simons forms for loop group bundles.
Following their construction, the string potentials are used to define degree $1$ differential characteristic classes for $\Omega U(n)$-bundles.

The notion of an \emph{$\Omega$ vector bundle} is introduced and a caloron correspondence is developed for these objects.
Finally, string potentials and $\Omega$ vector bundles are used to define an $\Omega$ bundle version of the structured vector bundles of \cite{SSvec}.
The \emph{$\Omega$ model} of odd differential $K$-theory is constructed using these objects and an elementary differential extension of odd $K$-theory appearing in \cite{TWZ}.

\newpage
\thispagestyle{plain}
\mbox{}

\chapter*{Signed statement}
\addcontentsline{toc}{chapter}{Signed statement}
I certify that this work contains no material which has been accepted for the award of any other 
degree or diploma in any university or other tertiary institution and, to the best of my knowledge and
belief, contains no material previously published or written by another person, except where due
reference has been made in the text. In addition, I certify that no part of this work will, in the future,
be used in a submission for any other degree or diploma in any university or other tertiary institution 
without the prior approval of the University of Adelaide and where applicable, any partner institution
responsible for the joint-award of this degree.
\vspace{5mm}

I give consent to this copy of my thesis, when deposited in the University Library, being made
available for loan and photocopying, subject to the provisions of the Copyright Act 1968.
\vspace{5mm}

I also give permission for the digital version of my thesis to be made available on the web, via the
University's digital research repository, the Library catalogue and also through web search engines, unless permission has been granted by the University to restrict access for a period of time.

\vspace{20mm}

\noindent
{\sc SIGNED}: {\tt ............................} \\
\vspace{7mm}

{\sc DATE}: {\tt ..............................}

\newpage
\thispagestyle{plain}
\mbox{}
\chapter*{}
\label{ch:dedication}
\vspace*{\fill}

\begin{center}

\emph{In Erinnerung an meinen Opa,}
\vskip1mm

\begin{Large}
 Ermut Geldmacher\\
\vspace{2mm}
\end{Large}
\begin{large}
(20. Juni 1923 bis 24. Februar 2009)
\end{large}

\vspace{1mm}

\emph{einen geselligen und freundlichen Mann,\\
dessen Kreativit\"{a}t mich nach wie vor inspiriert.}

\end{center}

\vspace*{\fill}
\vspace*{\fill}

\newpage
\thispagestyle{plain}
\mbox{}
\chapter*{Acknowledgements}
\typeout{Acknowledgements}
\label{ch:acknowledgements}
\addcontentsline{toc}{chapter}{Acknowledgements}

I would like to sincerely thank my supervisors, Michael and Pedram, for allowing me to undertake my research with a great deal of autonomy and also for their patience during the numerous times each week I would drop by unannounced.
Their guidance, insight and mathematical savvy, far exceeding my own, has proved invaluable throughout the course of my project. I am happy to say that I have learned a lot from them.
By the same token, my thanks go to Raymond Vozzo and David Roberts for their sheer abundance of patience as well as the many useful and interesting conversations that I have had with them.

I am very grateful to have been part of the amazing community of students and staff that is the School of Mathematical Sciences at the University of Adelaide; there are too many to name.
In particular, I would like to thank my fellow students David Price, Nic Rebuli, Ben Rohrlach and Mingmei Teo for the readiness and gusto with which they would ingest vast quantities of caffeine each day, just so we could have a break from the mathematics by way of frequently hilarious conversation (at least now I will not claim that I can perform a one-man alley-oop).
Thanks are also due to Nicholas Buchdahl and Finnur L\'{a}russon, who between them are largely responsible for teaching me most of the mathematics I now know and who were always willing to discuss with me the finer points of whatever it was that I didn't understand on any given week.

I would also like to thank my family.
Especial thanks go to my mother Karin, who is precisely the type of person I aspire to be, and my step-father Jason, who has been a constant source of motivation throughout the course of my education.
I am grateful to my little brother Ned for reminding me what it is like to be a child (even though he never lets me win our sword-fights) and to my not-so-little brother Lukas for his humorous anecdotes.
Special thanks are due also to the Braunack-Mayers; Annette, Erik and Anna, as well as Jakob and Jacqui Mayer, for their constant support and encouragement.
Their company was an unexpected blessing that has had an inestimable effect on my well-being throughout the past year or so (not to mention their willingness in trying my food as I attempted to refine my culinary skills).

Finally, I would like to thank my beautiful girlfriend Lydia Braunack-Mayer; without her infinite patience and enchanting company I would surely be a gibbering wreck by now.
\newpage
\thispagestyle{plain}
\mbox{}

\cleardoublepage
\pagenumbering{arabic}
\pagestyle{plain}
\renewcommand{\theequation}{I.\arabic{equation}}
\chapter*{Introduction \label{ch:intro}}
\addcontentsline{toc}{chapter}{Introduction}

The caloron correspondence appeared initially in the guise of a bijection between certain isomorphism classes of periodic instantons, or \emph{calorons}, on $\RR^4$ and isomorphism classes of monopoles on $\RR^3$.

Considering monopoles for loop groups and their twistor theory, Garland and Murray established in \cite{GM1} a correspondence between $SU(n)$-calorons on $\RR^4$ and monopoles on $\RR^3$ with structure group $L SU(n)$, the Fr\'{e}chet Lie group of smooth loops in $SU(n)$.
By virtue of being periodic, a caloron on $\RR^4$ may be naturally viewed as an instanton on $\RR^3\x\sone$, thus the work of Garland and Murray may be viewed as establishing a relationship between certain geometric data on $SU(n)$-bundles over $\RR^3 \x \sone$ and similar data on $L SU(n)$-bundles on $\RR^3$.

The underlying principle of this original \emph{caloron correspondence}---as it was first described by Murray and Stevenson \cite{MS}---is that, for any compact Lie group $G$ and manifold $M$, there is a bijective correspondence between $G$-bundles over $M\x\sone$ and $L G$-bundles over $M$, with $LG$ the Fr\'{e}chet Lie group of smooth loops in $G$.
This procedure gives a sort of fake dimensional reduction, whereby the circle direction of the $G$-bundle $P \to M\x\sone$ is hidden in the fibres to obtain an $LG$-bundle $\sP \to M$ and vice-versa.

The caloron correspondence may be thought of as the bundle-theoretic generalisation of the following simple observation.
If  $X, Y$ and $Z$ are sets, then denoting by $Y^X$ the set of all functions $X\to Y$, there is a bijection
\begin{equation}
\label{eqn:setcaloron}
c \colon Z^{X\x Y} \simto \big(Z^Y\big){}^X
\end{equation}
given by sending $f \mapsto \check f$ with
\[
\check f(x)(y) := f(x,y)
\]
for $x \in X$ and $y\in Y$.
In the case that $X, Y$ and $Z$ are finite-dimensional manifolds, let $\Map(X,Y)$ be the set of all smooth maps $X\to Y$.
If $Y$ is compact then $\Map(Y,Z)$ becomes a (smooth) Fr\'{e}chet manifold.
The map \eqref{eqn:setcaloron} now gives a method by which one may study smooth maps from $X$ into the infinite-dimensional manifold $\Map(Y,Z)$ by instead studying smooth maps from $X\x Y$ into $Z$.
In fact, in the case that $X = M$, $Y=\sone$ and $Z = G$, the map \eqref{eqn:setcaloron} gives a bijective correspondence between the space of sections of the trivial $G$-bundle over $M\x\sone$ and the space of sections of the trivial $LG$-bundle over $M$.
For general (non-trivial) $G$-bundles, the caloron correspondence is a twisted version of this equivalence.

The caloron correspondence outlined thus far gives a means by which one may more easily study $LG$-bundles, which are necessarily infinite-dimensional manifolds, by instead studying their finite-dimensional $G$-bundle counterparts.
Perhaps more importantly, the caloron correspondence may be extended to incorporate geometric data.
In \cite{MS} it was shown that a $G$-bundle over $ M\x\sone$ equipped with a $G$-connection is equivalent to an $LG$-bundle over $M$ equipped with an $LG$-connection and an extra geometric datum called a \emph{Higgs field}, which is essentially the component of a connection on a $G$-bundle over $M\x\sone$ in the $\sone$ direction.

Using this \emph{geometric} caloron correspondence together with the machinery of bundle gerbes, Murray and Stevenson developed a useful generalisation of string classes.
String classes first appeared in the work of Killingback \cite{K} on string structures; the string theory versions of the spin structures that are important in quantum field theory.
Taking a compact Lie group $G$ one may ask whether a given $LG$-bundle $\sP \to M$ admits a lifting of the structure group to the Kac-Moody group $\widehat{LG}$, which is a central extension of $LG$ by $\sone$ (see \cite{PS}, for example).
The obstruction to such a lift is a class in the degree three integral cohomology of $M$.
In the case that $\sP = LQ \to LM$ is given by taking smooth loops in a $G$-bundle $Q \to M$, Killingback showed that this obstruction is given by transgressing the first Pontryagin class $p_1(Q)$ of $Q$.
Thus the \emph{string class} is
\[
s(\sP) = \widehat{\int_\sone} \ev^\ast p_1(Q) \in H^3(LM)
\]
where $\ev \colon \sone \x LM \to M$ is the evaluation map and $\widehat{\int_\sone}$ denotes integration over the fibre in integral cohomology.
The string class of $\sP$ measures the obstruction to $\sP$ having \emph{string structure}; i.e.~a lifting to an $\widehat{LG}$-bundle.
String structures are important in string theory because, as the work of Killingback shows, the loop bundle $LQ \to LM$ has a Dirac-Ramond operator if and only if $LQ$ has a string structure.

Murray and Stevenson used the caloron correspondence to extend the work of Killingback by first defining the string class for all $LG$-bundles $\sP \to M$ and showing that it satisfies
\[
s(\sP) = \widehat{\int_\sone} p_1(P)
\]
where $p_1(P)$ is the first Pontryagin class of the caloron transform $P$ of $\sP$.
They also showed, using bundle gerbes, that a de Rham representative for the string class is given by
\[
-\frac{1}{4\pi^2} \int_{\sone} \lan \sF, \nabla\Phi \ran
\]
where $\sF$ is the curvature of a chosen $LG$-connection on $\sP$, $\nabla\Phi$ is the covariant derivative of a chosen Higgs field $\Phi$  on $\sP$ and $\lan \cdot,\cdot\ran$ is the (normalised) Killing form. 

In his PhD thesis \cite{V} and together with Murray in \cite{MV}, Vozzo generalised the caloron correspondence to principal bundles with structure group $\Omega G$; the Fr\'{e}chet Lie group of smooth loops in $G$ based at the identity.
The key innovation here is the use of framings to establish a correspondence between $\Omega G$-bundles over $M$ and $G$-bundles over $M\x\sone$ equipped with a distinguished section over $M\x\{0\}$.
As before, this correspondence generalises to incorporate connective data, which must necessarily be compatible with the framing data on the $G$-bundle side.

Murray and Vozzo also defined (higher) string classes, which are characteristic classes for $\Omega G$-bundles that live in odd integral cohomology.
Fixing the $\Omega G$-bundle $\sP \to M$ and choosing an $\Omega G$-connection $\sA$ and Higgs field $\Phi$, explicit de Rham representatives for these characteristic classes called \emph{string forms} are given by
\[
s_f(\sA,\Phi) = k \int_\sone f(\nabla\Phi,\!\underbrace{\sF,\dotsc,\sF}_{\text{$k-1$ times}}\!),
\]
where $f$ is an $\ad$-invariant symmetric polynomial on the Lie algebra $\g$ of $G$ of degree $k$ and $\sF$, $\nabla \Phi$ are as above.
If $A$ denotes the corresponding connection on the caloron transform $P$ of $\sP$, it turns out that the string forms satisfy
\[
s_f(\sA,\Phi) = \widehat{\int_\sone} \cw_f(A),
\]
where $\cw_f(A)$ is the Chern-Weil form associated to $f$ and $A$.
The higher string classes give a version of Chern-Weil theory for loop group bundles different from the more analytic approach of \cite{PRose}.
In addition, by considering the path fibration $PG \to G$ which is a smooth model for the universal $\Omega G$-bundle, Murray and Vozzo show that the construction of the higher string classes provides a geometric interpretation of Borel's transgression map $\tau \colon H^{2k}(BG) \to H^{2k-1}(G)$ (see \cite{B2} for details).

This thesis grew out of the attempt to answer a natural question that arises when one contrasts the theory of string classes for loop group bundles to the familiar Chern-Weil theory.
In Chern-Weil theory, differential form representatives for characteristic classes of the $G$-bundle $P \to X$ are given in terms of the curvature of a chosen connection $A$ on $P$.
Whilst the characteristic cohomology classes of the bundle $P$ are necessarily independent of the choice of $A$, the differential form representatives are not.
There are well-known differential forms, the Chern-Simons forms introduced in \cite{CS}, that measure the dependence of the Chern-Weil forms on the choice of connection. 
It is natural to ask, therefore, whether such forms exists in the context of loop group bundles and string classes.

The first part of this thesis deals with the construction of the \emph{string potentials}, which are the analogues of Chern-Simons forms for loop group bundles.
Like Chern-Simons forms, the string potentials come in two different flavours: one has \emph{relative} string potentials, which live on the base space of a loop group bundle and encode the dependence of the string forms on the choice of connection and Higgs field; and \emph{total} string potentials, which live on the total space and carry secondary geometric data associated to a particular choice of connection and Higgs field.

Within the framework of the differential characters of Cheeger and Simons \cite{ChS1}, the total Chern-Simons forms become \emph{differential} characteristic classes (characteristic classes valued in differential cohomology).
This thesis hints at a similar interpretation for the total string potentials by constructing such classes in a limited setting.

The interpretation of the relative string potential forms is more involved and proceeds by analogy with the codification of relative Chern-Simons forms given by Simons and Sullivan in \cite{SSvec}.
In that paper, the authors use relative Chern-Simons forms to define an equivalence relation on the space of connections on a given vector bundle.
The space of isomorphism classes of \emph{structured vector bundles}, i.e.~vector bundles equipped with such an equivalence class of connections, determines a functor from the category of compact manifolds with corners to the category of abelian semi-rings.
Passing to the Grothendieck group completion, one obtains a multiplicative differential extension of the even-degree part of topological $K$-theory.
By a result of Bunke and Schick \cite{BS2} this differential extension, denoted here by $\edK$, is isomorphic to any other differential extension of even $K$-theory via a unique isomorphism.

The Simons-Sullivan model of even differential $K$-theory is built upon vector bundles rather than principal bundles\footnote{though, of course, the two are naturally related by the frame bundle and associated vector bundle functors.}, since the even topological $K$-theory of a compact manifold $M$ has a natural construction in terms of vector bundles over $M$.
Topological $K$-theory is a generalised cohomology theory and as such  has a `homotopy-invariant' representation as homotopy classes of maps into a spectrum $KU$.
By the well-known Bott Periodicity Theorem this spectrum is $2$-periodic and in fact
\[
K^0(M) \cong [M,BU\x\ZZ]\;\;\mbox{ and }\;\; K^{-1}(M) \cong [M,U],
\]
where $U = \dlim U(n)$ is the stabilised unitary group and $BU$ is its classifying space.
Using this representation, it is clear why even $K$-theory $K^0(M)$ may be represented by vector bundles over $M$.
In fact, as $M$ is taken to be a smooth manifold, one may define $K^0(M)$ using only smooth vector bundles.

The odd $K$-theory of $M$ is a little more subtle and is usually defined in terms of vector bundles over $\Sigma M^+$, the reduced suspension of $M^+ := M\sqcup \{\ast\}$.
This is problematic when attempting to construct a differential extension after the fashion of Simons-Sullivan as $\Sigma M^+$ is rarely a smooth manifold so it is not clear how to incorporate differential form data.
The homotopy-theoretic model for $K^{-1}(M)$ gives a clue as to how to resolve this issue: by pulling back the path fibration $PU \to U$ one may construct odd $K$-theory using $\Omega U$-bundles, or rather their associated vector bundles, over $M$.
The benefits of this are two-fold since the building blocks of the theory are bundles over $M$ that may additionally  be taken to be smooth without loss of generality.

The latter part of this thesis introduces $\Omega$ vector bundles, which are the associated vector bundles of $\Omega GL_n(\CC)$- and $\Omega U(n)$-bundles.
As with their frame bundles, there is a caloron correspondence for $\Omega$ vector bundles that may be extended to incorporate the appropriate connective data.
A model for odd topological $K$-theory is given in terms of $\Omega$ vector bundles and the odd Chern character is computed in this model in terms of characteristic classes of the underlying $\Omega$ vector bundles.
Using the relative string potentials to define an equivalence relation on connective data, this model is refined to give a differential extension of odd $K$-theory: the $\Omega$ model.
Using the work of Bunke and Schick \cite{BS3,BS2,BS1} and Tradler, Wilson and Zeinalian \cite{TWZ} it is shown that the $\Omega$ model is isomorphic to the odd part of differential $K$-theory, thereby giving the desired codification of relative string potentials.

An outline of this thesis is:

\textbf{Chapter \ref{ch:two}.}
This chapter gives a detailed review of the construction of the caloron correspondence as formulated by Murray and Stevenson \cite{MS} for free loop groups and Murray and Vozzo \cite{MV} for based loop groups.
Following this, an in-depth exposition of the construction of string forms and string classes is presented.

\textbf{Chapter \ref{ch:three}.}
This chapter describes the construction of the relative and total string potential forms for loop group bundles and collects some facts about these objects used in subsequent chapters.
Following a brief review of differential cohomology, in particular Cheeger-Simons differential characters, the total string potentials are used to construct degree $1$ differential characteristic classes for $\Omega U(n)$-bundles.

\textbf{Chapter \ref{ch:four}.}
This chapter focusses on the introduction of $\Omega$ vector bundles.
These objects are Fr\'{e}chet vector bundles with typical fibre $LV$ and structure group $\Omega G$ for some complex vector space $V$ and matrix group $G\subseteq GL(V)$ with its standard action on $V$.
A caloron correspondence is developed relating $\Omega$ vector bundles over $M$ to framed vector bundles over $M \x\sone$ that respects the frame bundle functor and principal bundle caloron correspondence.
A version of the Serre-Swan theorem is proved for $\Omega$ vector bundles, which shows that every $\Omega$ vector bundle over compact $M$ may be regarded as a smoothly-varying family of modules for the ring $L\CC$ over $M$.
This module structure is used to define connective data (\emph{module connections} and \emph{vector bundle Higgs fields}) on $\Omega$ vector bundles, which fit into a geometric caloron correspondence for vector bundles.

After introducing the analogue of Hermitian structures for $\Omega$ vector bundles, together with an associated caloron correspondence,  a model for odd $K$-theory is defined by applying the Grothendieck group completion to the abelian semi-group of isomorphism classes of $\Omega$ vector bundles.
The odd Chern character is computed in this model of odd $K$-theory in terms of string forms of the underlying $\Omega$ vector bundles.

\textbf{Chapter \ref{ch:five}.}
Based on the results of Chapters \ref{ch:three} and \ref{ch:four} and following a review of the Simons-Sullivan construction of \cite{SSvec}, a differential extension of odd $K$-theory is constructed in terms of $\Omega$ vector bundles.
This construction uses the relative string potential forms to generate an equivalence relation on the space of module connections and Higgs fields of a given $\Omega$ vector bundle, an equivalence class of which is called a \emph{string datum}.
The $\Omega$ model is given by applying the Grothendieck group completion device to the abelian semi-group of (a certain collection of) isomorphism classes of \emph{structured $\Omega$ vector bundles}; $\Omega$ vector bundles equipped with string data. 

Bunke and Schick showed in \cite{BS2} that differential extensions of odd $K$-theory are non-unique and that additional structure is required in order to obtain differential $K$-theory, which is unique up to unique isomorphism.
Nevertheless, by relating the $\Omega$ model to a differential extension appearing in a recent paper of Tradler, Wilson and Zeinalian \cite{TWZ}, the caloron transform is used to show that the $\Omega$ model defines the odd part of differential $K$-theory.
The effect of this it two-fold, as it provides a sort of homotopy-theoretic interpretation of the $\Omega$ model as well as a proof that the $\TWZ$ differential extension defines odd differential $K$-theory, a result not previously obtained.

\textbf{Appendices.}
Appendix \ref{app:frechet} provides background material on Fr\'{e}chet spaces and Fr\'{e}chet manifolds, a proof that the path fibration $PG\to G$ gives a model for the universal $\Omega G$-bundle and some results on direct limits of directed systems of manifolds.
Appendix \ref{app:int} discusses the integration over the fibre operations on differential forms and in singular cohomology.
Appendix \ref{app:diff} records the Bunke-Schick definition of differential extensions together with some results that are required in this thesis.

\textbf{Remark on conventions.}
Unless stated otherwise all smooth finite-dimensional manifolds are taken to be paracompact and Hausdorff (so that they admit smooth partitions of unity) and all maps between smooth manifolds are smooth.
All unadorned cohomology groups $H^\bullet$ represent integer-valued singular cohomology and $\Omega_{d=0}(M)$ denotes the space of closed differential forms on the smooth manifold $M$.
The symbol $G$ shall usually denote a smooth connected finite-dimensional Lie group, with $\Theta$ its (left-invariant) Maurer-Cartan form and $\g = \Lie(G)$ its Lie algebra.
The terms `$G$-bundle' and `principal $G$-bundle' are used interchangeably.
The circle group $\sone$ is regarded as the quotient of $\RR$ modulo the equivalence relation $x\sim y \Leftrightarrow x = y + 2k \pi$ for some $k\in \ZZ$ and the equivalence class of $0$ defines a distinguished basepoint for $\sone$, which is also denoted $0$.
The integration over the fibre operation $\widehat{\int_\sone}$ is always taken with respect to the canonical orientation on $\sone$ inherited from $\RR$.
The Fr\'{e}chet Lie group of smooth maps $\sone \to G$ with pointwise group operations is denoted by $LG$ and the subgroup of those maps sending $0 \in \sone$ to the identity in $G$ is denoted $\Omega G$.

\renewcommand{\theequation}{\arabic{chapter}.\arabic{section}.\arabic{equation}}
\chapter{The caloron correspondence\label{ch:two}}

This chapter gives a detailed treatment of the underlying constructions needed throughout this thesis.
It begins with an in-depth review of the caloron correspondence for $LG$-bundles, then continues by reviewing a variant of the correspondence---the `based case'---that is of singular importance in the sequel.
Finally, some results are collected from the classifying theory of $\Omega G$-bundles.

To aid the exposition some technical material that, strictly speaking, should form a part of the discourse has been relegated to the appendices.


\section{The caloron correspondence}
\label{S:caloron}
The caloron correspondence is a bijective correspondence between isomorphism classes of $G$-bundles over $M\x\sone$ and isomorphism classes of $LG$-bundles over $M$.
The correspondence may be thought of as a sort of fake dimensional reduction; given a $G$-bundle over $M\x\sone$ it allows one to simplify the base manifold by `hiding' the circle direction in the fibres resulting in an $LG$-bundle over $M$.

The underlying idea of the caloron correspondence appeared initially in \cite{GM1} in the study of the relationship between $LG$-valued monopoles on $\RR^3$ and \emph{calorons}---periodic $G$-instantons on $\RR^4$.
In \cite{MS}, the authors realised the caloron correspondence as a relationship between $G$-bundles over $M\x\sone$ and $LG$-bundles over $M$, for any manifold $M$.

The caloron correspondence enables one to represent the total space of an $LG$-bundle, which is necessarily an infinite-dimensional Fr\'{e}chet manifold (cf. Appendix \ref{app:frechet}), in terms of a finite-dimensional manifold.
However, at this level the caloron correspondence is not especially exotic.
What is perhaps surprising is that the caloron correspondence allows one to transfer certain \emph{geometric} data, such as connections and Higgs fields (Definition \ref{defn:higgs}), from the infinite-dimensional setting to the finite-dimensional setting and vice-versa.
The caloron correspondence thus becomes a powerful tool for elucidating the properties of loop group bundles.
In particular, via a modification of the classical Chern-Weil theory, it allows one to construct explicitly a suite of characteristic classes for such bundles.


\subsection{The caloron correspondence}
\label{SS:caloron}
Before detailing the caloron correspondence, some basic definitions are needed.
\vspace{11pt}
\begin{definition}
\label{defn:bundlecat}
Let $\Bun_{G}$  be the category whose objects are principal $G$-bundles and whose morphisms are $G$-bundle maps, that is, smooth $G$-equivariant maps on the total spaces.

For a fixed manifold $M$, let $\Bun_{G}(M)$ be the groupoid of all $G$-bundles $P \to M$ with morphisms those bundle maps covering the identity on $M$.
There is a canonical faithful functor $\Bun_G(M) \to \Bun_G$.
\end{definition}
\vspace{11pt}
\begin{definition}
\label{defn:bundlescat}
Define $\sBun$ to be the category whose objects are those $G$-bundles of the form $P \to M\x\sone$ for some manifold $M$.
The morphisms of $\sBun_G$ are given by $G$-bundle maps covering maps of the form $\widetilde f \x\id \colon N\x\sone \to M\x\sone$.

As above, for fixed $M$ let $\sBun_{G}(M)$ be the groupoid of all $G$-bundles $P \to M \x\sone$ with morphisms those bundle maps covering the identity on $M\x\sone$.
There is a canonical faithful functor $\sBun_G(M) \to \sBun_G$.
\end{definition}

The caloron correspondence may now  be phrased succinctly as an equivalence of categories between $\Bun_{LG}$ and $\sBun_G$.
This equivalence is given by a pair of functors
\[
\cC \colon \Bun_{LG} \lo \sBun_G
\]
and
\[
\cC^{-1} \colon \sBun_{G} \lo \Bun_{LG},
\]
called the \emph{caloron transform} and \emph{inverse caloron transform} respectively.
It is important to notice that the notation $\cC^{-1}$ is somewhat misleading since the inverse caloron transform is only a pseudo-inverse for $\cC$; i.e. $\cC^{-1}$ is the inverse of $\cC$ only up to a natural isomorphism. 

The action of the \emph{caloron transform} on objects is given by sending the $LG$-bundle $\sQ \to M$ to the space
\[
\cC(\sQ) := (\sQ \x\sone \x G) / LG,
\]
the quotient taken with respect to the $LG$-action $(q,\theta,g) \cdot \gamma = (q\gamma,\theta,\gamma(\theta)^{-1} g)$, where one denotes the equivalence class of $(q,\theta,g)$ by $[q,\theta,g]$.
A straightforward argument using the local triviality of $\sQ$ shows that the space $\cC(\sQ)$ has a smooth manifold structure.
Moreover, writing $\pi \colon \sQ \to M$ for the projection, $\cC(\sQ)$ is naturally a $G$-bundle over $M\x\sone$ with (free and transitive) right action
\[
[q,\theta,g] \cdot h := [q,\theta,gh]
\]
and projection map
\[
[q,\theta,g] \longmapsto (\pi(q),\theta).
\]
The action of $\cC$ on the morphism $f \colon \sQ \to \sP$ is given by
\begin{align*}
\cC(f) \colon \cC(\sQ) &\lo \cC(\sP)\\
[q,\theta,g] &\longmapsto [f(q),\theta,g]
\end{align*}
which is clearly well-defined, $G$-equivariant and covers a map $\widetilde{f}\x\id \colon N\x\sone \to M\x\sone$, hence is a morphism in $\sBun_G$.

Conversely, the \emph{inverse caloron transform} $\cC^{-1}$ works by taking sections.
That is, $\cC^{-1}$ sends the $G$-bundle $P \to M\x\sone$ to the $LG$-bundle $\cC^{-1}(P) \to M$ whose fibre over $m \in M$ is precisely the set of sections $\Gamma(\{m\}\x\sone, P)$ with the obvious pointwise $LG$-action.
This construction may be realised globally as follows.
Applying the functor $L = \Map(\sone,\cdot)$ of smooth maps to $P \to M\x\sone$ gives the $LG$-bundle $LP \to L(M\x\sone)$.
There is a natural smooth map
\begin{equation}
\label{eqn:etamap}
\eta \colon M \lo L(M\x\sone)
\end{equation}
sending $m \in M$ to the loop $\theta \mapsto (m,\theta)$. 
Using $\eta$ to pull back $LP$ yields the $LG$-bundle $\cC^{-1}(P) \to M$
\[
\xy
(0,25)*+{\cC^{-1}(P):= \eta^\ast LP}="1";
(50,25)*+{LP}="2";
(0,0)*+{M}="3";
(50,0)*+{L(M\x\sone)}="4";
{\ar^{} "1";"2"};
{\ar^{} "2";"4"};
{\ar^{} "1";"3"};
{\ar^{\eta} "3";"4"};
\endxy
\]
and from the construction the fibre $\cC^{-1}(P)_m = \Gamma(\{m\}\x\sone,P)$ for all $m \in M$.
The action of $\cC^{-1}$ on the morphism $f \colon P \to Q$ is simply the induced map
\begin{align*}
\cC^{-1}(f) \colon \cC^{-1}(P) &\lo \cC^{-1}(Q)\\
p &\longmapsto f \circ p,
\end{align*}
which is a morphism in $\Bun_{LG}$.
\vspace{11pt}
\begin{remark}
One may view the  inverse caloron transform in sheaf theory terms, namely  if $\Gamma_P$ is the sheaf of smooth sections of the $G$-bundle $P\to M\x\sone$ then  $\cC^{-1}(P)$ is the $LG$-bundle whose sheaf of smooth sections is exactly $\pi_\ast \Gamma_P$, where $\pi \colon M\x\sone \to M$ is the projection. 
\end{remark}

The caloron transform functors defined above give a way of constructing a $G$-bundle given an $LG$-bundle and conversely.
It remains to be seen that $\cC$ and $\cC^{-1}$ do indeed define an equivalence of categories between $\sBun_G$ and $\Bun_{LG}$.
That is, one requires the existence of natural isomorphisms
\[
\alpha \colon \cC^{-1} \circ \cC \lo \id_{\Bun_{LG}}
\]
and
\[
\beta \colon \cC \circ \cC^{-1} \lo \id_{\sBun_{G}}\!\!.
\]
To construct the natural isomorphism $\alpha$, one first takes the $LG$-bundle $\sQ \to M$.
Denoting $Q := \cC(\sQ)$, define the smooth map
\[
\hat \eta \colon \sQ \lo L Q
\]
by setting $\hat\eta(q)(\theta) := [q,\theta,1]$ for $\theta\in\sone$.
This map is $LG$-equivariant since for any $\gamma \in LG$
\begin{align*}
\hat\eta(q\gamma)(\theta) &= [q\gamma,\theta,1]\\
&=[q,\theta,\gamma(\theta)] \\
&=\left(\hat\eta(q) \cdot \gamma\right)(\theta).
\end{align*}
This gives the bundle map
\[
\xy
(0,25)*+{\sQ}="1";
(50,25)*+{L Q}="2";
(0,0)*+{M}="3";
(50,0)*+{L(M\x\sone)}="4";
{\ar^{\hat\eta} "1";"2"};
{\ar^{} "2";"4"};
{\ar^{} "1";"3"};
{\ar^{\eta} "3";"4"};
\endxy
\]
which, recalling the construction of $\cC^{-1}$, defines an isomorphism $\alpha_\sQ \colon \cC^{-1}(\cC(\sQ)) \to \sQ$ of $LG$-bundles.
It is easy to see that for any morphism $f \colon \sQ \to \sP$ in $\Bun_{LG}$ one has $\alpha_\sP \circ \cC^{-1}(\cC(f)) = f \circ \alpha_\sQ$ so that $\alpha$ is indeed a natural transformation from $\cC^{-1} \circ \cC$ to $\id_{\Bun_{LG}}$.

To define the natural isomorphism $\beta$, first take any $G$-bundle $P \to M\x\sone$.
Since the construction of $\sP := \cC^{-1}(P)$ is such that
\begin{equation}
\label{eqn:secidentg}
\sP_m = \Gamma(\{m\}\x\sone,P)
\end{equation}
for all $m\in M$, it follows that the fibre of $\cC(\sP)$ over $(m,\theta) \in M\x\sone$ is
\[
(\sP_m \x \{\theta\} \x G) / LG = \left\{ p \colon \sone \to P \mid \pi \circ p (\theta) = (m,\theta) \text{ for all $\theta \in \sone$}\right\} \x \{\theta\} \x G
\]
with $\pi \colon P\to M$ the projection.
Then $\beta_P$ is the map
\[
\beta_P \colon [p,\theta,g] \longmapsto p(\theta)g,
\]
which is an isomorphism of $G$-bundles covering the identity.
For any morphism $f\colon P \to Q$ in $\sBun_G$, it is clear that $\beta_Q \circ \cC(\cC^{-1}(f)) = f \circ \beta_P$ so that $\beta$ is a natural transformation from $\cC\circ\cC^{-1}$ to $\id_{\sBun_G}$.
This completes the proof of
\vspace{11pt}
\begin{theorem}[\cite{MS,MV}]
\label{theorem:lgcaloron}
The caloron correspondence
\[
\cC \colon \Bun_{LG} \lo \sBun_G \;\;\mbox{and}\;\;\; \cC^{-1} \colon  \sBun_G \lo \Bun_{LG}
\]
is an equivalence of categories that, for any manifold $M$, restricts to an equivalence of groupoids
\[
\cC \colon \Bun_{LG}(M) \lo \sBun_G(M) \;\;\mbox{and}\;\;\; \cC^{-1} \colon  \sBun_G(M) \lo \Bun_{LG}(M).
\]
\end{theorem}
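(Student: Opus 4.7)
The plan is to exhibit the two quasi-inverse functors and then produce explicit natural isomorphisms on either side, since everything is constructive. First I would check that the assignments $\sQ\mapsto \cC(\sQ)$ and $P\mapsto \cC^{-1}(P)$ really land in the claimed categories. For $\cC$, the key point is that $\cC(\sQ)=(\sQ\times\sone\times G)/LG$ inherits a smooth manifold structure and a principal $G$-bundle structure over $M\times\sone$; this is verified locally by picking a trivialisation $\sQ|_U\cong U\times LG$ and observing that on such a chart the quotient is diffeomorphic to $U\times\sone\times G$ via $[(u,\gamma),\theta,g]\mapsto(u,\theta,\gamma(\theta)g)$, so transition data for $\sQ$ produce bona fide $G$-valued transitions on $\cC(\sQ)$. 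For $\cC^{-1}$, the pullback $\eta^\ast LP$ is automatically a principal $LG$-bundle on $M$, and the fibre identification $\cC^{-1}(P)_m=\Gamma(\{m\}\times\sone,P)$ is immediate from the definition of $\eta$.

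Next I would check functoriality. The assignments on morphisms $\cC(f)[q,\theta,g]=[f(q),\theta,g]$ and $\cC^{-1}(f)(p)=f\circ p$ manifestly preserve identities and compositions, and they respect the equivariance and base-covering requirements of $\sBun_G$ and $\Bun_{LG}$ respectively. In particular, a morphism in $\Bun_{LG}(M)$ (covering $\id_M$) produces one in $\sBun_G(M)$ (covering $\id_{M\times\sone}$) and vice versa, which will give the groupoid restriction at the end for free.

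The heart of the proof is producing the natural isomorphisms $\alpha\colon\cC^{-1}\circ\cC\Rightarrow\id_{\Bun_{LG}}$ and $\beta\colon\cC\circ\cC^{-1}\Rightarrow\id_{\sBun_G}$. For $\alpha_\sQ$ I would define $\hat\eta\colon\sQ\to L\cC(\sQ)$ by $\hat\eta(q)(\theta)=[q,\theta,1]$, verify the $LG$-equivariance computation $\hat\eta(q\gamma)(\theta)=[q\gamma,\theta,1]=[q,\theta,\gamma(\theta)]=\hat\eta(q)\cdot\gamma$, and note that $\hat\eta$ covers $\eta$, hence factors through $\eta^\ast L\cC(\sQ)=\cC^{-1}(\cC(\sQ))$; since the resulting map of $LG$-bundles over $M$ is $LG$-equivariant, it is automatically an isomorphism. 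For $\beta_P$ I would use the sectional description of $\cC^{-1}(P)$ to write $\beta_P[p,\theta,g]=p(\theta)g$; this is well-defined because $(p\gamma,\theta,\gamma(\theta)^{-1}g)\mapsto p(\theta)\gamma(\theta)\gamma(\theta)^{-1}g=p(\theta)g$, it covers $\id_{M\times\sone}$, and is $G$-equivariant, so again it is an isomorphism. Naturality in both cases is a one-line check: $\alpha_\sP\circ\cC^{-1}(\cC(f))=f\circ\alpha_\sQ$ and $\beta_Q\circ\cC(\cC^{-1}(f))=f\circ\beta_P$ follow by writing out both sides on equivalence-class representatives.

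I expect the main obstacle to be the smooth manifold structure on the quotient $\cC(\sQ)$, since $\sQ$ itself is a Fr\'echet manifold modelled on $LG$; one has to argue carefully that the $LG$-action on $\sQ\times\sone\times G$ is free and proper and that the quotient is a finite-dimensional smooth bundle. This is handled by the local trivialisation argument above, which reduces the question to the trivial model $U\times LG\times\sone\times G$ where the slice $\{\gamma=1\}$ gives a global section of the $LG$-action. Once this is in hand, everything else is bookkeeping with equivalence classes, and the groupoid statement follows simply by observing that $\cC$, $\cC^{-1}$, $\alpha$ and $\beta$ all cover identities on the relevant base manifolds.
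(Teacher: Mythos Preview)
Your proposal is correct and follows essentially the same approach as the paper: the constructions of $\hat\eta$ and $\beta_P$, the equivariance and well-definedness checks, and the naturality verifications are exactly those carried out in the text preceding the theorem. The only cosmetic difference is direction---the paper takes $\alpha_\sQ$ to be the inverse of the map you describe---but this is immaterial for an equivalence of categories.
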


The following result establishes that the caloron correspondence truly is the bundle-theoretic version of the bijection $c \colon f \to \check f$ of \eqref{eqn:setcaloron}.
\vspace{11pt}
\begin{lemma}
Take any $G$-bundle $P \to M\x\sone$.
If $\{U_\alpha\}$ is an open cover of $M$ for which there are local sections $s_\alpha\in \Gamma(U_\alpha\x\sone, P)$ then the $LG$-bundle $\sP := \cC^{-1}(P)$ has local sections $\check s_\alpha \in \Gamma(U_\alpha,\sP)$.

Moreover, if $\tau_{\alpha\beta}$ are the transition functions of $P$ with respect to the sections $s_\alpha$ then the transition functions of $\sP$ with respect to the sections $\check s_\alpha$ are precisely $\check \tau_{\alpha\beta}$.

The converse is also true.
\end{lemma}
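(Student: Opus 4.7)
The strategy is to exploit the fibrewise identification \eqref{eqn:secidentg}, which says that $\sP_m = \Gamma(\{m\}\x\sone,P)$ for each $m \in M$. This identification is precisely the set-level caloron bijection \eqref{eqn:setcaloron}, so everything reduces to unwinding how sections and transition data translate under $f \mapsto \check f$. The only nontrivial point is verifying that the loop-adjoint of a smooth map is a smooth section into a Fr\'echet bundle; this is the exponential-law content of the appendix on Fr\'echet manifolds.

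First, given $s_\alpha \in \Gamma(U_\alpha \x \sone, P)$, I define
\[
\check s_\alpha \colon U_\alpha \lo \sP,\qquad \check s_\alpha(m)(\theta) := s_\alpha(m,\theta),
\]
noting that for each fixed $m$ the map $\theta \mapsto s_\alpha(m,\theta)$ is a section of $P$ over $\{m\}\x\sone$, hence an element of $\sP_m$ by \eqref{eqn:secidentg}. Smoothness of $\check s_\alpha$ as a map into the Fr\'echet bundle $\sP$ follows from the smoothness of $s_\alpha$ via the standard exponential-law identification $C^\infty(U_\alpha\x\sone, P) \cong C^\infty(U_\alpha, \Map(\sone,P))$ (Appendix \ref{app:frechet}) and the local triviality of $\sP$.

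Second, suppose $s_\alpha = s_\beta \cdot \tau_{\alpha\beta}$ on $(U_\alpha\cap U_\beta)\x\sone$ with $\tau_{\alpha\beta}\colon (U_\alpha\cap U_\beta)\x\sone \to G$. Evaluating both sides at $(m,\theta)$ and rewriting in loop-adjoint form gives
\[
\check s_\alpha(m)(\theta) = s_\alpha(m,\theta) = s_\beta(m,\theta)\,\tau_{\alpha\beta}(m,\theta) = \bigl(\check s_\beta(m)\cdot \check\tau_{\alpha\beta}(m)\bigr)(\theta),
\]
so that $\check s_\alpha = \check s_\beta \cdot \check\tau_{\alpha\beta}$ on $U_\alpha\cap U_\beta$, where $\check\tau_{\alpha\beta}\colon U_\alpha\cap U_\beta \to LG$ is the loop-adjoint of $\tau_{\alpha\beta}$. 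This is exactly the required transition relation.

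For the converse, start with local sections $\sigma_\alpha \in \Gamma(U_\alpha, \sP)$ and transition functions $\varsigma_{\alpha\beta}\colon U_\alpha\cap U_\beta \to LG$. Using the natural isomorphism $\beta_P \colon \cC(\sP) \to P$ to identify the two bundles, define
\[
\hat\sigma_\alpha \colon U_\alpha \x \sone \lo P,\qquad \hat\sigma_\alpha(m,\theta) := \sigma_\alpha(m)(\theta),
\]
which is smooth by the same exponential-law argument. The computation
\[
\hat\sigma_\alpha(m,\theta) = \sigma_\alpha(m)(\theta) = \bigl(\sigma_\beta(m)\cdot\varsigma_{\alpha\beta}(m)\bigr)(\theta) = \hat\sigma_\beta(m,\theta)\cdot\hat\varsigma_{\alpha\beta}(m,\theta),
\]
where $\hat\varsigma_{\alpha\beta}(m,\theta) := \varsigma_{\alpha\beta}(m)(\theta)$, then exhibits the required transition functions for $P$. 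The only real obstacle in the whole argument is the smoothness transfer between ordinary maps and maps into mapping-space Fr\'echet manifolds; everything else is a direct unpacking of definitions.
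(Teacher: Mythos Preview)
Your proof is correct and follows essentially the same approach as the paper: define $\check s_\alpha(m)(\theta) := s_\alpha(m,\theta)$ and read off the transition relation pointwise. The paper's version is terser---it does not spell out the smoothness argument via the exponential law and simply notes that $\sP$ is constructed by looping $P$---and it dispatches the converse with the phrase ``essentially the above argument,'' whereas you make the use of $\beta_P$ explicit; but the underlying idea is identical.
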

\begin{proof}
Since $\sP$ is constructed by looping $P$, the map $\check s_\alpha(m)(\theta) := s_\alpha(m,\theta)$ is a section of $\sP$ over $U_\alpha$.
Moreover, on the intersections $U_{\alpha\beta} :=U_\alpha \cap U_\beta$ one has
\[
s_\beta(m,\theta) = s_\alpha(m,\theta) \cdot   \tau_{\alpha\beta}(m,\theta)\;\;\mbox{ and }\;\;\check s_\beta(m) = \check s_\alpha(m) \cdot   \upsilon_{\alpha\beta}(m)
\]
so evaluating the latter expression at $\theta \in \sone$ gives
\[
\upsilon_{\alpha\beta}(m)(\theta) = \tau_{\alpha\beta}(m,\theta)
\]
and hence $\upsilon_{\alpha\beta} = \check \tau_{\alpha\beta}$.
The converse is essentially the above argument.
\end{proof}


\subsection{Higgs fields and connections}
As mentioned previously, the true power of the caloron correspondence lies in its ability to translate certain geometric data from loop group bundles to finite-dimensional bundles and vice-versa.
More specifically, there is a functorial equivalence between $G$-bundles $P \to M\x\sone$ equipped with a $G$-connection and $LG$-bundles $\sQ \to M$ equipped with an $LG$-connection and Higgs field (Definition \ref{defn:higgs}).
\vspace{11pt}
\begin{definition}[\cite{MS,MV}]
\label{defn:higgs}
A \emph{Higgs field} on the $LG$-bundle $\sQ \to M$ is a smooth map $\Phi \colon \sQ \to L\g$ that satisfies the \emph{twisted equivariance condition}
\begin{equation}
\label{eqn:twisteq}
\Phi(q\gamma) = \ad(\gamma^{-1})\Phi(q) + \gamma^{-1} \d\gamma
\end{equation}
for all $q\in \sQ$ and $\gamma\in L G$, where $\d$ denotes differentiation of the loop $\gamma$ in the $\sone$ direction.
The space of Higgs fields $\cH_\sQ$ on a fixed $LG$-bundle $\sQ \to M$ is an affine space.
\end{definition}

As a result of the caloron correspondence for bundles with connection, it will become apparent that Higgs fields really encode the $\sone$ component of a connection on a $G$-bundle over $M\x\sone$.
The following result guarantees the existence of Higgs fields on any $LG$-bundle $\sQ \to M$.
\vspace{11pt}
\begin{lemma}[\cite{MV,V}]
\label{lemma:hfexist}
Higgs fields exist.
\end{lemma}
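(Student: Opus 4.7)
The plan is to produce a Higgs field by a partition of unity argument, exploiting the affine nature of the twisted equivariance condition \eqref{eqn:twisteq}.

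The first step would be to construct Higgs fields locally on trivialisations of $\sQ$. Choose a trivialising cover $\{U_\alpha\}$ of $M$ with local trivialisations $\phi_\alpha \colon \sQ|_{U_\alpha} \simto U_\alpha \x LG$, and define $\Phi_\alpha \colon \sQ|_{U_\alpha} \to L\g$ by setting
\[
\Phi_\alpha(q) := \gamma^{-1} \d \gamma \;\;\text{ whenever }\;\; \phi_\alpha(q) = (m,\gamma).
\]
A direct computation using the Leibniz rule for $\d$ shows that $(\gamma \gamma')^{-1} \d(\gamma \gamma') = \ad((\gamma')^{-1})(\gamma^{-1}\d\gamma) + (\gamma')^{-1} \d \gamma'$, so $\Phi_\alpha$ satisfies \eqref{eqn:twisteq} on $\sQ|_{U_\alpha}$.

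The second step is to glue. The key observation is that if $\Phi_1$ and $\Phi_2$ are any two Higgs fields (globally or on an overlap), then their difference satisfies $(\Phi_1 - \Phi_2)(q\gamma) = \ad(\gamma^{-1}) (\Phi_1 - \Phi_2)(q)$, so convex combinations of Higgs fields are again Higgs fields: if $\{\rho_\alpha\}$ is a smooth partition of unity on $M$ subordinate to $\{U_\alpha\}$, then for any $q \in \sQ$ lying over $m \in M$ and any $\gamma \in LG$,
\[
\sum_\alpha (\rho_\alpha \circ \pi)(q) \, \Phi_\alpha(q\gamma) = \ad(\gamma^{-1}) \sum_\alpha (\rho_\alpha\circ\pi)(q)\, \Phi_\alpha(q) + \Big(\sum_\alpha \rho_\alpha(m)\Big) \gamma^{-1} \d \gamma,
\]
and since $\sum_\alpha \rho_\alpha(m) = 1$ the map $\Phi := \sum_\alpha (\rho_\alpha \circ \pi) \Phi_\alpha$ again satisfies \eqref{eqn:twisteq}. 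Smoothness is inherited from local smoothness of the $\Phi_\alpha$ and the local finiteness of the partition. This proves existence.

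I expect no real obstacle here; the only subtlety is to make sure that the convex combination really does respect the affine structure, which is exactly why the constant term $\gamma^{-1}\d\gamma$ in \eqref{eqn:twisteq} is unaffected by the weights $\rho_\alpha$ summing to one. The same mechanism exhibits $\cH_\sQ$ as an affine space modelled on the vector space of smooth maps $\sQ \to L\g$ transforming by $\ad$, which is the statement made (without proof) in Definition \ref{defn:higgs}.
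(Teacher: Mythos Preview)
Your proof is correct and follows essentially the same approach as the paper's: define the trivial Higgs field $\gamma \mapsto \gamma^{-1}\d\gamma$ on local trivialisations, observe that convex combinations of Higgs fields are again Higgs fields, and glue via a partition of unity. The paper's version is simply more terse.
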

\begin{proof}
It is evident that a convex combination of Higgs fields is again a Higgs field and that any trivial $LG$-bundle admits the trivial Higgs field
\begin{align*}
LG &\lo L\g\\
\gamma &\longmapsto \gamma^{-1}\d\gamma.
\end{align*}
The result follows by choosing a smooth partition of unity for $M$ subordinate to a given trivialisation.
\end{proof}

Having established this result, one is almost in a position to describe the geometric caloron correspondence.
\vspace{11pt}
\begin{definition}
\label{defn:gbuncatconn}
Let $\sBun^{c}_{G}$ to be the category whose objects are objects of $\sBun_G$ equipped with $G$-connections and whose morphisms are the connection-preserving morphisms of $\sBun_G$.

For a fixed manifold $M$, define $\sBun^{c}_{G}(M)$ to be the groupoid with objects the $G$-bundles $P \to M\x\sone$ equipped with connection, with morphisms those connection-preserving bundle maps covering the identity.

Moreover, in the case that the group is a loop group, define $\Bun^c_{LG}$ to be the category whose objects are $LG$-bundles equipped with $LG$-connections and Higgs fields.
Morphisms of this category are $LG$-bundle maps that preserve the additional structure.
For a fixed manifold $M$, the groupoid $\Bun^c_{LG}(M)$ is defined similarly to the above.
\end{definition}

To define the \emph{geometric caloron transform}, a functor
\[
\cC \colon \Bun^c_{LG} \lo \sBun^{c}_{G},
\]
first take the $LG$-bundle $\sQ \to M$ equipped with $LG$-connection $\sA$ and Higgs field $\Phi$.
Define the 1-form $A$ on $\sQ\x\sone\x G$ by
\begin{equation}
\label{eqn:con:lgcalfinconn}
A_{(q,\theta,g)} : = \ad(g^{-1}) \big( \sA_q(\theta) + \Phi(q) (\theta) d\theta \big) + \Theta_g,
\end{equation}
with $\Theta$ the Maurer-Cartan form on $G$.
\vspace{11pt}
\begin{lemma}[\cite{MS,MV,V}]
\label{lemma:conndescend}
The 1-form $A$ defined by equation \eqref{eqn:con:lgcalfinconn} descends to a $G$-connection, also called $A$, on the caloron transform $Q:= \cC(\sQ)  = (\sQ \x\sone \x G) / LG$.
\end{lemma}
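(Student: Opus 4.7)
My plan is to verify two things: (i) the $\g$-valued $1$-form $A$ on $\sQ \x \sone \x G$ is basic with respect to the principal $LG$-action so that it descends to a $1$-form on $Q$, and (ii) the descended form satisfies the axioms of a $G$-connection with respect to the right $G$-action $[q,\theta,g]\cdot h = [q,\theta,gh]$ on $Q$.

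For (i), I need to show that $A$ is $LG$-invariant and that it vanishes on $LG$-vertical vectors. To prove invariance, I would fix $\gamma \in LG$ and directly compute $\rho_\gamma^* A$ at an arbitrary point $(q,\theta,g)$, where $\rho_\gamma(q,\theta,g) = (q\gamma, \theta, \gamma(\theta)^{-1}g)$. This computation rests on three ingredients: the standard equivariance $R_\gamma^* \sA = \ad(\gamma^{-1})\sA + \gamma^{-1} d\gamma$ of the $LG$-connection; the twisted equivariance \eqref{eqn:twisteq} of the Higgs field $\Phi$, which contributes precisely the extra $\gamma^{-1}\d\gamma$ term needed along the $\sone$-direction; and the transformation of the Maurer-Cartan form $\Theta$ under left translation in $G$. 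The key point, anticipated by Definition \ref{defn:higgs}, is that the twisted equivariance of $\Phi$ produces exactly the derivative along $\sone$ that is required to cancel the $\theta$-derivative of $\gamma(\theta)^{-1}$ appearing when one differentiates the Maurer-Cartan term. To check that $A$ kills $LG$-fundamental vector fields, I would, for $\xi \in L\g$, evaluate $A$ on the vector field obtained by differentiating $(q\exp(t\xi), \theta, \exp(-t\xi(\theta))g)$ at $t=0$, using that $\sA$ restricted to vertical vectors reproduces the $L\g$-element and that the Maurer-Cartan contribution cancels the image of the $L\g$-action on the third factor.

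For (ii), I would verify $G$-equivariance and the vertical condition on $Q$. Equivariance under the right $G$-action lifts to right multiplication on the third factor of $\sQ \x \sone \x G$: only the $\ad(g^{-1})$ prefactor and the Maurer-Cartan term in \eqref{eqn:con:lgcalfinconn} are affected, and both transform so that $R_h^* A = \ad(h^{-1}) A$, as required. The condition $A(\xi^*_Q) = \xi$ for $\xi \in \g$ follows because the fundamental vector field of $\xi$ on $Q$ lifts to the left-invariant vector field on $G$, on which $\Theta$ returns $\xi$ while the first two summands of \eqref{eqn:con:lgcalfinconn} vanish (they involve only forms pulled back from $\sQ$ and $\sone$).

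The main obstacle is the invariance computation in step (i): some careful bookkeeping is required to separate the $\theta$-dependence of $\gamma$ from the ambient differential on $\sQ \x \sone \x G$, and to see explicitly how the $\gamma^{-1}\d\gamma \, d\theta$ term emerging from the twisted equivariance of $\Phi$ cancels the $\sone$-derivative produced by differentiating $\gamma(\theta)^{-1}g$ inside the Maurer-Cartan term. Once this cancellation is in hand, the remaining verifications reduce to unpacking the definitions of the $LG$- and $G$-actions and applying the standard identities for connections and the Maurer-Cartan form.
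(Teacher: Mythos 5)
Your proposal is correct and follows essentially the same route as the paper: show $A$ is basic for the $LG$-action by verifying it annihilates $LG$-vertical vectors and is $LG$-invariant, then check the $G$-connection axioms on $Q$. You have correctly identified the crucial subtlety — that the twisted equivariance condition on $\Phi$ supplies exactly the $\gamma^{-1}\d\gamma\,d\theta$ term needed to cancel the $\theta$-derivative of $\gamma(\theta)^{-1}$ produced in the Maurer--Cartan factor under the $LG$-action — which is the heart of the paper's invariance computation.
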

\begin{proof}
The proof is essentially that of \cite[Proposition 3.9]{MV}.
To show that equation \eqref{eqn:con:lgcalfinconn} determines a well-defined 1-form on $\cC(\sQ)$, one must show that it is basic for the projection $\sQ \x\sone\x G \to \cC(\sQ)$.

Take any $X \in T_{[q,\theta,g]}Q$ and suppose that $\hat X$, $\hat X'$ are lifts of the vector $X$ to $\sQ\x\sone\x G$.
Without loss of generality, one may suppose that 
\[
\hat X \in T_{(q,\theta,g)}(\sQ\x\sone\x G)\;\;\mbox{and}\;\;\hat X' \in T_{(q,\theta,g)\gamma}(\sQ\x\sone\x G).
\]
Since $\hat X$ and $\hat X'$ are both lifts of $X$, one has that the pushforward $dR_\gamma\hat X$ of $\hat X$ by the (right) action $R_\gamma$ of $\gamma$  satisfies $dR_\gamma \hat X = \hat X' + V$ for some vertical vector $V$.
It is therefore sufficient to show that $A$ is invariant under the $LG$-action and that it annihilates vertical vectors.
For simplicity of calculation, one supposes $G$ to have a faithful matrix representation\footnote{since this thesis deals exclusively with $G$ a compact Lie group or $G= GL_n(\CC)$ for some $n$, this assumption is not restrictive. This assumption is not required in the general case.} so that the exponential map may be written as $\exp(t\xi) = 1 + t \xi + \mathcal{O}(t^2)$.
Thus any vertical vector at $(q,\theta,g)$ is of the form
\[
V = \dtz{(q,\theta,g) \cdot \exp(t\xi)} =\big(\xi^\#_q, 0, -\xi(\theta) g\big)
\]
for some $\xi \in L\g$, where $\xi^\#$ is the fundamental vector field on $\sQ$ generated by $\xi$.
The action of $A$ on such a vector is
\[
A_{(q,\theta,g)}(V) = \ad(g^{-1})(\xi(\theta)) - g^{-1} \xi(\theta) g = 0.
\]
To see that $A$ is basic suppose that
\[
\hat X_{(q,\theta,g)} = \dtz{\big(\gamma_\chi(t), \theta+tx, g\exp(t\zeta)\big)} = (\chi,x,g\zeta).
\]
Then for $\gamma \in LG$, the pushforward $dR_\gamma(\hat X)$ is given by
\begin{align*}
dR_\gamma(\hat X)_{(q,\theta,g)\gamma} &= \dtz{\big(\gamma_\chi(t)\gamma, \theta+tx, \gamma(\theta+tx)^{-1} g\exp(t\zeta)\big) } \\
&= \left(dR_\gamma(\chi), x, \gamma(\theta)^{-1}g\left[\zeta - x\ad(g^{-1})(\d\gamma(\theta))\gamma(\theta)^{-1}  \right] \right).
\end{align*}
Consequently,
\begin{align*}
R_\gamma^\ast A_{(q,\theta,g)} (\hat X) & = A_{(q\gamma,\theta,\gamma(\theta)^{-1}g)} (dR_\gamma(\hat X)) \\
&=\ad(g^{-1}\gamma(\theta))\left( \ad(\gamma(\theta)^{-1}) \sA_q (\chi)(\theta) + x \ad(\gamma(\theta)^{-1})\Phi(q)(\theta)  + x \gamma(\theta)^{-1}\d\gamma(\theta) \right)\\
&\qquad +\zeta  -x\ad(g^{-1})(\d\gamma(\theta))\gamma(\theta)^{-1} \\
&= \ad(g^{-1})\left( \sA_q(\chi)(\theta) + x \Phi(q)(\theta)\right) + \zeta\\
& = A_{(q,\theta,g)}(\hat X).
\end{align*}
This shows that the $1$-form defined in \eqref{eqn:con:lgcalfinconn} does indeed descend to a form on $Q$, which shall also be called $A$.

To see that $A$ is a $G$-connection, one must show that it reproduces the Lie algebra generators of fundamental vector fields on $Q$ and that it is equivariant for the $G$-action.
Notice that the vertical vector generated by $\xi \in \g$ at $[q,\theta,g] \in Q$ is
\[
V = \dtz{\big[q,\theta,g\exp(t\xi)\big]} = [0,0,g\xi]
\]
so that $A_{[q,\theta,g]} (V) = \xi$ as required.
It remains only to show that $R_h^\ast A(X) = \ad(h^{-1}) A(X)$ for all $h \in G$ and vector fields $X$.
If $X = [\chi,x,g\xi] \in T_{[q,\theta,g]}Q$ then
\begin{align*}
R_h^\ast A_{[q,\theta,g]}(X) &= A_{[q,\theta,gh]}([\chi,x,gh\ad(h)^{-1} \xi])\\
&= \ad(h^{-1}g^{-1})\left(\sA_q(\chi)(\theta) + x\,\Phi(q)(\theta)\right) + \ad(h^{-1}) \xi \\
&= \ad(h^{-1}) A_{[q,\theta,g]}(X) 
\end{align*}
which completes the proof.
\end{proof}

Having this result, one defines the geometric caloron transform of the $LG$-bundle $\sQ \to M$ with $LG$-connection $\sA$ and Higgs field $\Phi$ as the $G$-bundle $\cC(\sQ) \to M\x\sone$ equipped with the connection $A$ determined by \eqref{eqn:con:lgcalfinconn}.
Since a morphism $f \colon \sQ \to \sP$ in $\Bun^c_{LG}$ is required to respect the geometric data, the expression \eqref{eqn:con:lgcalfinconn} and the definition of $\cC$ together imply that the $G$-bundle morphism $\cC(f) \colon \cC(\sQ)\to \cC(\sP)$ as defined previously respects the connective data and so is a morphism of $\sBun^{c}_G$.

To define the \emph{geometric inverse caloron transform}, a functor
\[
\cC^{-1} \colon \sBun^{c}_G \lo \Bun_{LG}^c,
\]
first take a $G$-bundle $P \to M\x\sone$ with connection $A$.
Recalling that $\sP := \cC^{-1}(P)$ is defined essentially by looping $P$, one defines the $LG$-connection $\sA$ on $\sP$ via the expression
\begin{equation}
\label{eqn:con:lgcalfrecconn}
\sA_q(\chi)(\theta) := A_{q(\theta)} (\chi(\theta)),
\end{equation}
as the tangent vector $\chi \in T_q\sP$ is naturally equivalent to a section of the pullback vector bundle $q^\ast TP \to \sone$ (see \cite[Example 4.3.3]{H} or Appendix \ref{app:frechet}).
It is immediate that $\sA$ satisfies the properties required of an $LG$-connection simply by virtue of the fact that $A$ is a connection.

Writing $\d$\label{page:d} for the canonical vector field on $\sone$, one defines a Higgs field $\Phi$ on $\sP$ via the expression
\begin{equation}
\label{eqn:con:lgcalhiggs}
\Phi(q)(\theta) := q^\ast A_\theta (\d).
\end{equation}
To see that $\Phi$ satisfies the twisted equivariance condition \eqref{eqn:twisteq}, take $\gamma \in LG$ so that
\begin{align*}
\Phi(q\gamma)(\theta) &= (q\gamma)^\ast A_\theta(\d) \\
&= A_{q(\theta)\gamma(\theta)} \left(dR_{\gamma(\theta)}(\d q(\theta)) + (\gamma(\theta)^{-1}\d\gamma(\theta))^\#\right) \\
&= \ad(\gamma(\theta)^{-1})\Phi(q)(\theta) + \gamma(\theta)^{-1} \d \gamma(\theta)
\end{align*}
as required.
Notice that this formulation justifies the remark following Definition \ref{defn:higgs}, since the Higgs field constructed above really is the $\sone$ component of the connection $A$.

The geometric inverse caloron transform is given by sending the $G$-bundle $P \to M\x\sone$ with $G$-connection $A$ to the $LG$-bundle $\cC^{-1}(P) \to M$ with the $LG$-connection $\sA$ and Higgs field $\Phi$ given by \eqref{eqn:con:lgcalfrecconn} and \eqref{eqn:con:lgcalhiggs} respectively.
As for $\cC$, the action of $\cC^{-1}$ on morphisms is straightforward by virtue of the fact that morphisms in $\sBun^{c}_{G}$ necessarily preserve the connective data.
\vspace{11pt}
\begin{theorem}[\cite{MS,MV}]
\label{theorem:lggcaloron}
The geometric caloron correspondence
\[
\cC \colon \Bun^c_{LG} \lo \sBun^{c}_G \;\;\mbox{and}\;\;\; \cC^{-1} \colon  \sBun^{c}_G \lo \Bun^c_{LG}
\]
is an equivalence of categories that, for any manifold $M$, restricts to an equivalence of groupoids
\[
\cC \colon \Bun^c_{LG}(M) \lo \sBun^{c}_G(M) \;\;\mbox{and}\;\; \cC^{-1} \colon  \sBun^{c}_G(M) \lo \Bun^c_{LG}(M).
\]
\end{theorem}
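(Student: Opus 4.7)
The plan is to bootstrap from Theorem \ref{theorem:lgcaloron}, which already supplies functors $\cC, \cC^{-1}$ together with natural isomorphisms $\alpha \colon \cC^{-1}\circ\cC \to \id_{\Bun_{LG}}$ and $\beta \colon \cC\circ\cC^{-1}\to \id_{\sBun_G}$ at the level of bare bundles. Since the underlying bundle functors are unchanged, essentially the only work to do is check compatibility with the connective data: first, that $\cC$ and $\cC^{-1}$ are well-defined on the enriched categories, and second, that $\alpha$ and $\beta$ preserve connections (and Higgs fields).

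First I would verify that $\cC$ and $\cC^{-1}$ define functors between $\Bun^c_{LG}$ and $\sBun^c_G$. On objects this is immediate: Lemma \ref{lemma:conndescend} shows that \eqref{eqn:con:lgcalfinconn} produces a genuine $G$-connection on $\cC(\sQ)$, while the formulas \eqref{eqn:con:lgcalfrecconn}--\eqref{eqn:con:lgcalhiggs} manifestly define an $LG$-connection and a Higgs field on $\cC^{-1}(P)$ (twisted equivariance of $\Phi$ was verified explicitly above). On morphisms, since any morphism in $\sBun^c_G$ pulls connections back to connections, and any morphism in $\Bun^c_{LG}$ intertwines both $\sA$ and $\Phi$, direct inspection of the formulas for $\cC(f)$ and $\cC^{-1}(f)$ shows that the induced bundle maps automatically intertwine the transported connective data.

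The heart of the argument is then to enhance $\alpha$ and $\beta$. For $\beta_P \colon [p,\theta,g]\mapsto p(\theta)g$, I would take a tangent vector of the form $[\chi,x,g\zeta]$ at $[p,\theta,g]$, compute $d\beta_P$ explicitly, and show that
\[
\beta_P^\ast A_{[p,\theta,g]}(\,[\chi,x,g\zeta]\,) = \ad(g^{-1})\bigl(\sA_p(\chi)(\theta) + x\,\Phi(p)(\theta)\bigr) + \zeta,
\]
where $\sA, \Phi$ are defined by \eqref{eqn:con:lgcalfrecconn}--\eqref{eqn:con:lgcalhiggs}. This is a matter of expanding $p(\theta)g$ as a product and applying the Leibniz rule, after which the $\chi$-term recovers $A(\chi(\theta))$, the $x$-term recovers $p^\ast A(\d)$, and the $\zeta$-term yields the Maurer-Cartan contribution. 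Dually, for $\alpha_\sQ \colon q\mapsto \hat\eta(q)$, I would pull the transformed pair $(\sA',\Phi')$ on $\cC^{-1}(\cC(\sQ))$ back along $\hat\eta$ and show directly that $\hat\eta^\ast\sA' = \sA$ and $\Phi'\circ\hat\eta = \Phi$ by plugging into the defining formulas at the distinguished section $g=1$, where the Maurer-Cartan corrections vanish.

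The main obstacle is bookkeeping in these tangent-space calculations: moving between the model $\sQ\times\sone\times G$ (where $A$ is defined by \eqref{eqn:con:lgcalfinconn}) and its quotient, tracking the $\sone$-derivative that occurs inside the Higgs field, and correctly applying the twisted equivariance \eqref{eqn:twisteq} when changing representatives. As in Lemma \ref{lemma:conndescend}, assuming $G$ is a matrix group and writing everything at first order with $\exp(t\xi)=1+t\xi+O(t^2)$ keeps the computation tractable. Once this compatibility is established, naturality of $\alpha$ and $\beta$ was already proved in Theorem \ref{theorem:lgcaloron} and is unaffected by the added connective structure, because morphisms in $\Bun^c_{LG}$ and $\sBun^c_G$ are by definition compatible with it. The restriction to the groupoids over a fixed base $M$ is immediate since all of these constructions commute with the underlying maps on base spaces.
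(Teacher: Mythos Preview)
Your proposal is correct and follows essentially the same approach as the paper: reduce to Theorem~\ref{theorem:lgcaloron} and then verify by direct computation that the natural isomorphisms $\alpha_\sQ$ and $\beta_P$ preserve the connective data, carrying out the $\beta_P$ check via the tangent-space computation you describe and the $\alpha_\sQ$ check by evaluating at the distinguished slice $g=1$. The paper's proof is exactly this, with the same organization and the same two calculations.
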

\begin{proof}
It suffices to show that the natural isomorphisms $\beta_P$ and $\alpha_\sQ$ of Theorem \ref{theorem:lgcaloron} preserve the connective data.
If one starts with the $LG$-bundle $\sQ \to M$ with connection $\sA$ and Higgs field $\Phi$ then the connection $\sA'$ on $\sQ' := \cC^{-1}(\cC(\sQ))$ is given by
\[
\sA'_q(\chi)(\theta) := A_{q(\theta)}(\chi(\theta))
\]
where $Q := \cC(\sQ)$ is equipped with caloron-transformed connection $A$.
Recall that the natural isomorphism $\alpha_\sQ \colon \cC^{-1}(\cC(\sQ)) \to \sQ$ satisfies
\[
\alpha_\sQ^{-1}(q) := \big( \theta\mapsto [q,\theta,1] \big)
\] 
so that
\[
(\alpha_\sQ^{-1})^\ast \sA'_{q}(\chi)(\theta) = A_{[q,\theta,1]}( [\chi,0,0]) := \sA_q(\chi)(\theta).
\]
Thus $\alpha_\sQ$ preserves the $LG$-connections and an analogous argument holds for the Higgs fields.

On the other hand, if one begins with a $G$-bundle $P \to M\x\sone$ with connection $A$, then the connection $A'$ on $P' := \cC(\cC^{-1}(P))$ is given by
\[
A'_{[p,\theta,g]} = \ad(g^{-1})\big(\sA_p(\theta) + \Phi(p)(\theta)d\theta \big) + \Theta_g,
\]
with $\sA$ and $\Phi$ repsectively the connection and Higgs field on the (inverse) caloron transform $\sP:= \cC^{-1}(P)$ of $P$.
Recall that $\beta_P$ is given by sending
\[
[p,\theta,g] \longmapsto p(\theta)g,
\]
so considering the tangent vector
\[
X_{[p,\theta,g]} = \dtz{\big[\gamma_\chi(t), \theta+tx, g\exp(t\zeta)\big]} = [\chi,x,\zeta]
\]
one obtains
\[
d\beta_P(X)_{p(\theta)g} = \dtz{\gamma_\chi(t)(\theta+tx) g \exp(t\zeta)} = \chi(\theta)g + x\, \d p(\theta) g + \zeta^\#_{p(\theta)g}.
\]
Hence
\begin{align*}
\beta_P^\ast A_{[p,\theta,g]} (X) &= \ad(g^{-1})\left( A_{p(\theta)} (\chi(\theta)) + x A_{p(\theta)}(\d p(\theta)) \right) + \zeta = A'_{[p,\theta,g]}(X).
\end{align*}
This completes the proof.
\end{proof}


\subsection{String classes}
\label{SS:stringclasses}
The enhanced caloron correspondence of Theorem \ref{theorem:lggcaloron} turns out to be a very useful tool, particularly for constructing characteristic classes for $LG$-bundles.
The procedure is a relatively simple variation of the standard Chern-Weil theory and relies on the caloron transform and integration over the fibre (cf. Appendix \ref{app:int}).

First, one recalls briefly the theory of characteristic classes; classical references for this material are \cite{KN2,Steen}.
For any Lie group $G$ there is a \emph{universal} $G$-bundle $EG \to BG$ such that the total space $EG$ is contractible.
A key property of the universal bundle is that for any (topological) $G$-bundle $P \to M$ there is a \emph{classifying map} $f \colon M \to BG$ such that the pullback $f^\ast EG$ is isomorphic to $P$.
The homotopy class of the classifying map $f$ is uniquely determined by $P$ and for any two homotopic maps $f_0,f_1\colon M\to BG$ the pullbacks $f_0^\ast EG$ and $f_1^\ast EG$ are isomorphic as $G$-bundles over $M$.
This establishes a bijective correspondence between principal $G$-bundles over $M$ and homotopy classes of maps $M \to BG$.
It is important to notice that in general neither $EG$ or $BG$ are assumed to be smooth manifolds and that they are unique only up to homotopy equivalence.

A \emph{characteristic class} associates to a $G$-bundle $P\to M$ a class $c(P)$ in the cohomology of $M$ that is natural in the sense that if $f \colon N \to M$ is a continuous (or, in the setting of the caloron correspondence, smooth) map then $f^\ast c(P) = c(f^\ast(P))$.
Since all $G$-bundles are pullbacks of the universal $G$-bundle $EG \to BG$, characteristic classes correspond precisely with cohomology classes of $BG$.

One important method for manufacturing characteristic classes is Chern-Weil theory.
Let
\[
\g^{\otimes k} := \underbrace{\g\otimes\dotsb\otimes\g}_{\text{$k$ times}},
\]
then  an \emph{invariant polynomial} of degree $k$ on $\g$ is a symmetric multilinear map $\g^{\otimes k} \to \RR$ that is invariant under the adjoint action of $G$ on $\g^{\otimes k}$.
The set of invariant polynomials of degree $k$ is denoted $I^k(\g)$.
Invariant polynomials multiply in a natural way so that $I^\bullet(\g) = \bigoplus_{i=1}^\infty I^k(\g)$ is a graded algebra.

Denote by $\Omega^p(M;\g^{\otimes q})$ the space of differential $p$-forms on $M$ taking values in $\g^{\otimes q}$.
There is a wedge product $\wedge \colon \Omega^p(M;\g^{\otimes q}) \x\Omega^{p'}(M;\g^{\otimes {q'}}) \to \Omega^{p+p'}(M;\g^{\otimes {q+q'}})$ given by
\[
\alpha \wedge \beta\, (X_1,\dotsc,X_{p+p'}) := \sum_{\sigma\in S_{p+p'}} (-1)^{|\sigma|}\alpha\big(X_{\sigma(1)},\dotsc,X_{\sigma(p)}\big) \otimes \beta\big(X_{\sigma(p+1)},\dotsc,X_{\sigma(p+p')}\big)
\]
for vector fields $X_1,\dotsc,X_{p+p'}$ on $M$, where $S_k$ is the group of permutations on $\{1,\dotsc,k\}$ and $|\sigma|$ is the sign of the permutation $\sigma$.
If $\alpha \in \Omega^p(M;\g)$ and $\beta \in \Omega^{p'}(M;\g)$ set
\[
[\alpha,\beta](X_1,\dotsc,X_{p+p'}) :=\sum_{\sigma\in S_{p+p'}} (-1)^{|\sigma|}\big[\alpha\big(X_{\sigma(1)},\dotsc,X_{\sigma(p)}\big), \beta\big(X_{\sigma(p+1)},\dotsc,X_{\sigma(p+p')}\big)\big]
\]
and there is also an exterior derivative $d \colon \Omega^p(M;\g^{\otimes q}) \to \Omega^{p+1}(M;\g^{\otimes q})$.
If $f \in I^k(\g)$ and $\omega_i \in \Omega^{p_i}(M;\g)$ for $i = 1,\dotsc,k$ write
\[
f( \omega_1,\dotsc,\omega_k):= f( \omega_1\wedge\dotsb\wedge\omega_k) \in \Omega^{p_1+\dotsb+p_k}(M).
\]
The $\ad$-invariance of $f $ implies
\[
\sum_{i=1}^k (-1)^{(p_1+\dotsb+p_i)}f(\omega_1,\dotsc,[\omega_i,\varpi],\dotsc,\omega_k) = 0
\]
for any $\varpi \in \Omega^{1}(M;\g)$ (cf. \cite{CS} or \cite[Lemma 3.2.6]{V} for the case that $\varpi$ has degree $p \geq 1$).
The following is well-known
\vspace{11pt}
\begin{theorem}[Chern-Weil Homomorphism]
\label{theorem:chernweil}
Given a $G$-bundle $P\to M$ with connection $A$ and curvature $F$, for any $f \in I^k(\g)$ the real-valued $2k$-form
\[
cw_f(A) := f( \underbrace{F,\dotsc,F}_{\text{$k$ times}})
\]
on $P$ is closed and descends to a form on $M$ whose class in de Rham cohomology is independent of the choice of connection $A$.
Using the de Rham isomorphism, this defines a map
\[
cw \colon I^\bullet(\g) \lo H^{2\bullet}(M;\RR),
\]
which is an algebra homomorphism.
\end{theorem}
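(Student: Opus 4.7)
The plan is in three main steps---descent to $M$, closedness, and independence of the connection---after which the algebra homomorphism property is tautological from the fact that $f \mapsto cw_f$ is multiplicative at the level of forms.

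First I would show that $cw_f(A) = f(F,\dotsc,F)$ is basic for the bundle projection $\pi \colon P \to M$, i.e.\ horizontal and $G$-invariant, which will allow it to descend uniquely to a form on $M$. Horizontality is immediate: the curvature $F = dA + \tfrac{1}{2}[A,A]$ vanishes on any pair of vectors one of which is vertical (this is standard, using that $A$ reproduces the Lie algebra generators), so $f(F,\dotsc,F)$ annihilates any tangent vector with a nontrivial vertical component. For $G$-invariance I would combine the equivariance $R_g^\ast F = \ad(g^{-1}) F$ with the $\ad$-invariance of $f$ to conclude $R_g^\ast cw_f(A) = cw_f(A)$.

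Next I would verify closedness. Direct computation, using multilinearity and the fact that $f$ is symmetric, gives
$$d\, f(F,\dotsc,F) = k\, f(dF, F, \dotsc, F).$$
The Bianchi identity $dF = -[A,F]$ then combines with the $\ad$-invariance identity stated just before the theorem, applied with $\varpi = A$, to kill this expression. Since $\pi^\ast$ is injective on forms, the descended form on $M$ is also closed.

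The main work is in establishing independence of the choice of connection. Given two connections $A_0,A_1$ on $P$, I would interpolate via $A_t := A_0 + t\eta$ with $\eta := A_1 - A_0$; note that $\eta$ is a horizontal, $\ad$-equivariant $\g$-valued $1$-form on $P$ because the difference of two connections has these properties. Computing $\tfrac{d}{dt} F_t = d\eta + [A_t,\eta] = \nabla_t \eta$ (the covariant derivative relative to $A_t$), one obtains
$$\frac{d}{dt} cw_f(A_t) = k\, f(\nabla_t \eta, F_t,\dotsc,F_t).$$
Using Bianchi $\nabla_t F_t = 0$ together with the $\ad$-invariance identity (now with $\varpi = A_t$) one then shows that the right-hand side equals $d\bigl(k\, f(\eta, F_t,\dotsc,F_t)\bigr)$. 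Integrating in $t$ from $0$ to $1$ exhibits $cw_f(A_1) - cw_f(A_0)$ as the exterior derivative of the Chern--Simons transgression form $k\int_0^1 f(\eta, F_t,\dotsc,F_t)\,dt$, which is itself basic by the same horizontality/equivariance argument as above. Passing to $M$ gives the desired cohomological independence, so the Chern--Weil assignment is well-defined; multiplicativity and naturality then follow immediately from the pointwise definition.

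The step I expect to be the main obstacle is the transgression identity---showing that $f(\nabla_t \eta, F_t,\dotsc,F_t)$ has the specific exact primitive $f(\eta, F_t,\dotsc,F_t)$ (up to the factor of $k$)---since this requires simultaneously invoking the Bianchi identity and the $\ad$-invariance identity with one slot carrying a form of odd degree, where sign bookkeeping is delicate. All other steps are routine once one has the standard apparatus of Chern--Weil theory in hand.
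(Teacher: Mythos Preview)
Your proposal is correct and is precisely the standard Chern--Weil argument. Note, however, that the paper does not actually prove this theorem: immediately after the statement it simply writes ``For a detailed treatment of Chern-Weil theory, including a proof of this result, see \cite{Dupont}.'' Your three-step outline (basicness via horizontality of $F$ and $\ad$-invariance of $f$; closedness via Bianchi plus the $\ad$-invariance identity; transgression along the affine path $A_t = A_0 + t\eta$) is exactly what one finds in Dupont or any other standard reference, so there is nothing to compare.
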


For a detailed treatment of Chern-Weil theory, including a proof of this result, see \cite{Dupont}.
An immediate consequence is that the cohomology class of $cw_f(A)$ gives a characteristic class $cw_f(P) \in H^{2k} (M;\RR)$ for any $f \in I^k(g)$.
By a result of H. Cartan \cite{Cartan}, if $G$ is compact then all characteristic classes for $G$-bundles in $\RR$-valued cohomology are Chern-Weil classes (see also \cite[Theorem 8.1]{Dupont}).

In a similar vein, one may use the caloron correspondence and the Chern-Weil homomorphism to construct characteristic classes for $LG$-bundles; classes constructed in this manner are called \emph{string classes}.
To construct the string classes of the $LG$-bundle $\sQ \to M$, choose any $LG$-connection $\sA$ and Higgs field $\Phi$ on $\sQ$.
Applying the caloron transform gives the $G$-bundle $Q\to M\x\sone$ equipped with the $G$-connection $A$.
\vspace{11pt}
\begin{definition}[\cite{MV,V}]
The \emph{string form} associated to $f \in I^k(\g)$ is
\begin{equation}
\label{eqn:stringform}
s_f(\sA,\Phi) := \widehat{\int_\sone} cw_f(A),
\end{equation}
which is a closed $(2k-1)$-form on $M$.
The operation $\widehat{\int_\sone}$ here denotes integration over the fibre of differential forms (see Appendix \ref{app:int}).
\end{definition}

Since the exterior derivative commutes with integration over the fibre (Lemma \ref{lemma:intwithd}) Theorem \ref{theorem:chernweil} implies that the cohomology class of $s_f(\sA,\Phi)$ depends neither on the $LG$-connection $\sA$ nor on the Higgs field $\Phi$---this can also be seen immediately as a result of the construction of the string potential forms in Chapter \ref{ch:three}, in particular Theorem \ref{theorem:antistring}.
Therefore, taking the cohomology class and applying the de Rham isomorphism gives the \emph{string class}
\begin{equation}
\label{eqn:stringclass}
s_f(\sQ) \in H^{2k-1}(M;\RR),
\end{equation}
which is a characteristic class for any $f \in I^k(\g)$.

This construction of the string classes naturally provides differential form representatives that seem to depend on data on the caloron transform.
However, the caloron correspondence allows one to write these representatives entirely in terms of data on the underlying $LG$-bundle.
Namely, given the $LG$-bundle $\sQ \to M$ with connection $\sA$ and Higgs field $\Phi$, let $Q:=\cC(\sQ)$ be its caloron transform with caloron-transformed connection $A$.
The string form associated to the invariant polynomial $f \in I^k(\g)$ is then
\[
s_f(\sA,\Phi) = \widehat{\int_\sone} f( \underbrace{F,\dotsc,F}_{\text{$k$ times}})
\]
with $F$ the curvature of the connection $A$ on $Q$.
Recall $F = dA + \tfrac{1}{2}[A,A]$ and that, in this case, the connection $A$ is given by \eqref{eqn:con:lgcalfinconn} so one may write $F$ in terms of $\sA$ and $\Phi$; first notice that
\begin{align*}
[A,A] &=  \left[ \ad(g^{-1}) \left( \sA + \Phi d\theta \right) + \Theta, \ad(g^{-1}) \left( \sA + \Phi d\theta \right) + \Theta\right]\\
&= \ad(g^{-1}) \left([\sA,\sA] + 2[\sA,\Phi] \wedge d\theta \right) + 2[\Theta,\ad(g^{-1})\sA] + 2[\Theta,\ad(g^{-1})\Phi]\wedge d\theta.
\end{align*}
Recall also that $d\omega (X,Y) := \left\{X(\omega(Y)) - Y(\omega(X))- \omega([X,Y]) \right\}$ for a 1-form $\omega$ and vector fields $X, Y$.
If $X$ and $Y$ are the vector fields on $Q$ whose values at the point $[q,\theta,g] \in Q$ are
\begin{align*}
X_{[q,\theta,g]} &= \dtz{\big(\gamma_\chi(t), \theta+tx, g\exp(t\zeta)\big)} = (\chi,x,g\zeta)\\
Y_{[q,\theta,g]} &= \dtz{\big(\gamma_\kappa(t), \theta+ty, g\exp(t\xi)\big)} = (\kappa,y,g\xi)
\end{align*}
then
\[
[X,Y]_{[q,\theta,g]} = ([\chi,\kappa],0,g[\zeta,\xi]).
\]
First, one calculates $d(\ad(g^{-1})\sA)$ by noticing that at the point $[q,\theta,g]$
\begin{align*}
X \big(\ad(g^{-1})\sA(Y)\big) &= \dtz{(1-t\zeta)g^{-1} \sA_{\gamma_{\chi}(t)}(Y)(\theta+tx) g(1+t\zeta)}\\
&= \dtz{\ad(g^{-1})\sA_{\gamma_{\chi}(t)}(\kappa)(\theta)} + x\, \ad(g^{-1}) \d\sA_q(\kappa)(\theta) \\
&\qquad\qquad - \left[\zeta, \ad(g^{-1})\sA_q(\kappa)(\theta)\right]
\end{align*}
and hence
\[
d(\ad(g^{-1})\sA) = \ad(g^{-1})d\sA + \d\sA\wedge d\theta -\left[\Theta,\ad(g^{-1})\sA\right],
\]
where $\d \sA$ is the $L\g$-valued $1$-form on $\sQ$ given by differentiating $\sA$ in the $\sone$ direction. 

Applying the same argument to $\ad(g^{-1})\Phi \,d\theta$ this all gives
\[
d(\ad(g^{-1})\Phi d\theta) = \ad(g^{-1}) d\Phi\wedge d\theta -\left[\Theta,\ad(g^{-1})\Phi\right] \wedge d\theta,
\]
so recalling that the Maurer-Cartan form $\Theta$ satisfies $d\Theta + \frac{1}{2}[\Theta,\Theta] = 0$ gives
\[
F = \ad(g^{-1}) \left( d\sA + \frac{1}{2} [\sA,\sA] + (d\Phi + [\sA,\Phi] -\d\sA) \wedge d\theta  \right)\!.
\]
Writing $\sF = d\sA +\frac{1}{2}[\sA,\sA]$ for the curvature of $\sA$ and defining the \emph{Higgs field covariant derivative}\footnote{one might be tempted to call $\nabla \Phi$ the `Higgs field curvature', however there is a generalised version of the caloron correspondence in which an additional term $\sF_\Phi$ appears on the right-hand side of the expression \eqref{eqn:curv} \cite[pp.~238]{HMV}. In this context it is more appropriate to call $\sF_\Phi$ the Higgs field curvature, so this terminology is used here for consistency.} $\nabla\Phi := d\Phi + [\sA,\Phi] -\d\sA$, one obtains
\begin{equation}
\label{eqn:curv}
F = \ad(g^{-1}) (\sF + \nabla\Phi\wedge d\theta).
\end{equation}
Using the properties of $f \in I^k(\g)$ yields the expression
\begin{equation}
\label{eqn:stringformex}
s_f(\sA,\Phi) = k \int_\sone f(\nabla\Phi,\underbrace{\sF,\dotsc,\sF}_{\text{$k-1$ times}})
\end{equation}
for the string form associated to $f$.
Note that the integration symbol in \eqref{eqn:stringformex} denotes the standard integration operation on functions $\sone \to \RR$ and not integration over the fibre.%
\vspace{11pt}
\begin{example}
\label{example:kstring}
In \cite{K} Killingback studied the notion of a \emph{string structure}, which is the string-theoretic analogue of a spin structure.
If $G$ is a compact, simple, simply-connected Lie group then it is well-known (see \cite{PS}, for example) that there is a universal central extension
\[
0 \lo \sone \lo \widehat{LG} \lo LG\lo 0
\]
of the loop group $LG$.
A string structure on $M$ is then given by a lifting of the $LG$-bundle $\sQ \to M$ to an $\widehat{LG}$-bundle $\widehat{\sQ} \to M$ and there is an integral three-class---the original string class---that measures the obstruction to such a lift.
Using the machinery of bundle gerbes, which  give smooth geometric representatives for degree three integral cohomology through their Dixmier-Douady classes, Murray and Stevenson \cite{MS} gave an explicit formula for a de Rham representative of this class, namely
\[
-\frac{1}{4\pi^2}\int_\sone \lan \nabla\Phi,\sF \ran.
\]
In this expression, $\sF$ and $\nabla\Phi$ are respectively the curvature of a connection and covariant derivative of a Higgs field on $\sQ$ and $\lan\cdot,\cdot\ran$ is the Killing form on $\g$ normalised so that the longest root has length $\sqrt{2}$.
It is clear that this string class is simply the string class (in the sense of \eqref{eqn:stringclass}) corresponding to the $\ad$-invariant polynomial
\[
f(\cdot,\cdot) := - \frac{1}{8\pi^2} \lan\cdot,\cdot\ran.
\]
Notice in particular that if $Q$ is the caloron transform of $\sQ$ equipped with the caloron-transformed connection $A$ then
\[
s_f(\sQ) = \widehat{\int_\sone} p_1(Q)
\]
where $p_1(Q)$ is the first Pontryagin class of $Q$.
\end{example}


\section{The based case}
\label{S:based}
In Section \ref{S:caloron}, the main objects under consideration were $LG$-bundles over $M$ and the corresponding $G$-bundles over $M\x\sone$.
It turns out that with a little more work, one may extend all of the results of Section \ref{S:caloron} to give a caloron correspondence for principal bundles whose structure group is the \emph{based} loop group
\[
\Omega G := \{ \gamma \in LG \mid \gamma(0) = 1 \},
\]
a Fr\'{e}chet Lie subgroup of $LG$.
The key innovation here is the use of framings (Definition \ref{defn:framedbundle}) on the finite-dimensional side to reduce the structure group on the Fr\'{e}chet side from $LG$ to $\Omega G$.
The discussion presented here is largely based off of \cite{MV,V}.
\vspace{11pt}
\begin{definition}
\label{defn:framedbundle}
Let $P \to X$ be a $G$-bundle and $X_0 \subset X$ a submanifold.
Then $P$ is \emph{framed} over $X_0$ if there is a distinguished section $s_0 \in \Gamma(X_0,P)$.
Write $P_0 = s_0(X_0) \subset P$ for the image of $s_0$.
\end{definition}

In what follows, if $P \to M\x\sone$ is a $G$-bundle then, unless stated otherwise, the framing shall be taken over the submanifold $M_0 := M\x\{0\}$.
\vspace{11pt}
\begin{definition}
\label{defn:frbundlecat}
Define $\frBun_{G}$ to be the category whose objects are framed $G$-bundles $P \to M\x\sone$ and whose morphisms are $G$-bundle maps that preserve the framings and cover a map of the form $\widetilde f\x\id \colon N \x\sone \to M\x\sone$.

For a fixed manifold $M$, let $\frBun_{G}(M)$ be the groupoid with objects the framed $G$-bundles $P \to M\x\sone$ with morphisms those bundle maps preserving the framing and covering the identity on $M\x\sone$.
\end{definition}

The \emph{based caloron transform} is then the functor
\[
\cC \colon \Bun_{\Omega G}\lo \frBun_G
\]
constructed as in Section \ref{SS:caloron}, i.e.~by sending the $\Omega G$-bundle $\sQ \to M$ to the associated bundle
\[
\cC(\sQ) := (\sQ\x\sone\x G) /\Omega G.
\]
This bundle has a canonical framing given by
\[
s_0 (m,0) := [q,0,1],
\]
where $q$ is any point in the fibre of $\sQ$ over $m$, and so is an object of $\frBun_G$.
The action of $\cC$ on morphisms is given analogously to the free loop case, noting of course that if $f \colon \sQ \to \sP$ is a morphism in $\Bun_{\Omega G}$,  then $\cC(f) \colon \cC(\sQ)\to \cC(\sP)$ preserves the framings and therefore gives a morphism in $\frBun_G$.

Conversely, the \emph{based inverse caloron transform} is a functor
\[
\cC^{-1} \colon \frBun_G \lo \Bun_{\Omega G}.
\]
defined similary to the free loop case, i.e.~by first looping and then pulling back by the map $\eta$ of \eqref{eqn:etamap}.
The distinction here is that instead of applying the smooth loop functor $L = \Map(\sone,\cdot)$ to the framed $G$-bundle $P \to M\x\sone$ one takes \emph{based} loops.
If $X$ is a smooth (finite-dimensional) manifold with submanifold $X_0 \subset X$, define the based loop manifold
\[
\Omega_{X_0}X := \{p\colon \sone \to X \text{ smooth } \mid p(0) \in X_0 \}.
\]
The based inverse caloron transform is given by first taking the based loop bundle
\[
\Omega_{P_0} P \lo \Omega_{M_0}(M\x\sone),
\]
which is an $\Omega G$-bundle, and then pulling back by $\eta$ to obtain the $\Omega G$-bundle $\cC^{-1}(P) \to M$.
The action of $\cC^{-1}$ on morphisms is defined similarly to the free loop case and, as before,
\vspace{11pt}
\begin{theorem}[\cite{MV,V}]
\label{theorem:equiv}
The based caloron correspondence
\[
\cC \colon \Bun_{\Omega G} \lo \frBun_G \;\;\mbox{and}\;\;\; \cC^{-1} \colon  \frBun_G \lo \Bun_{\Omega G}
\]
is an equivalence of categories that, for any manifold $M$, restricts to an equivalence of groupoids
\[
\cC \colon \Bun_{\Omega G}(M) \lo \frBun_G(M) \;\;\mbox{and}\;\; \cC^{-1} \colon  \frBun_G(M) \lo \Bun_{\Omega G}(M).
\]
\end{theorem}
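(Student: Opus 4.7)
The plan is to adapt the proof of Theorem \ref{theorem:lgcaloron} essentially verbatim, with the main new ingredient being the careful tracking of framings on the finite-dimensional side and their compatibility with the reduction of the structure group from $LG$ to $\Omega G$. As in the free loop case, I would construct natural isomorphisms
\[
\alpha \colon \cC^{-1} \circ \cC \lo \id_{\Bun_{\Omega G}} \;\;\mbox{and}\;\;\; \beta \colon \cC \circ \cC^{-1} \lo \id_{\frBun_G}
\]
that restrict appropriately to fixed base manifolds.

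First I would check that the stated constructions land in the correct categories. For the caloron transform of an $\Omega G$-bundle $\sQ \to M$, the candidate framing $s_0(m,0):=[q,0,1]$ (for any $q$ in the fibre over $m$) is well-defined precisely because any other choice $q\gamma$ with $\gamma\in\Omega G$ yields $[q\gamma,0,1]=[q,0,\gamma(0)^{-1}]=[q,0,1]$, using $\gamma(0)=1$. Conversely, given a framed $G$-bundle $P\to M\x\sone$, the fact that $\Omega_{P_0}P\to\Omega_{M_0}(M\x\sone)$ is an $\Omega G$-bundle (rather than merely an $LG$-bundle) follows because if $p\in\Omega_{P_0}P$ and $p\gamma\in\Omega_{P_0}P$, then $p(0)$ and $p(0)\gamma(0)$ both lie in $P_0$, and since $P_0$ is the image of a section, $\gamma(0)=1$.

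Next I would define $\alpha_\sQ$ by $\hat\eta(q)(\theta):=[q,\theta,1]$ as in Theorem \ref{theorem:lgcaloron}. The new check is that $\hat\eta(q)$ actually defines a \emph{based} loop in $\cC(\sQ)$: at $\theta=0$ one has $\hat\eta(q)(0)=[q,0,1]=s_0(\pi(q),0)$, so $\hat\eta(q)\in\Omega_{P_0}P$. Equivariance for the $\Omega G$-action is identical to the free loop calculation, and smoothness together with the bundle-map property already proved in Theorem \ref{theorem:lgcaloron} then show $\alpha_\sQ$ is an isomorphism of $\Omega G$-bundles. For $\beta_P$ I would again use $[p,\theta,g]\mapsto p(\theta)g$; this is the same map as in the free case, but one must verify it preserves framings. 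At $(m,0)$ a representative is $[p,0,1]$ with $p(0)\in P_0$, so $[p,0,1]\mapsto p(0)\cdot 1 = s_0(m,0)$, as required. Naturality of both $\alpha$ and $\beta$ is formal from their explicit formulae, exactly as in the free loop case.

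The main obstacle, such as it is, is purely bookkeeping: one must consistently verify that every map in sight respects the framing data and that morphisms in $\frBun_G$ (which by Definition \ref{defn:frbundlecat} are required to be framing-preserving) correspond under $\cC^{-1}$ to genuine $\Omega G$-bundle maps rather than merely $LG$-bundle maps. This is a direct consequence of the characterisation of $\Omega G$-reductions established in the first step above: a bundle map between based loop bundles takes values in $\Omega G$ if and only if it sends the distinguished points over $M_0$ to each other. Once these framing checks are in place, the functoriality of $\cC$ and $\cC^{-1}$ and the verification that $\alpha,\beta$ give natural isomorphisms follow word-for-word from Theorem \ref{theorem:lgcaloron}. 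The restriction to fixed base $M$ is immediate since all constructions cover the identity on $M$ when applied to morphisms in $\Bun_{\Omega G}(M)$ or $\frBun_G(M)$.
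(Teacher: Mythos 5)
Your proposal is correct and follows exactly the route the paper takes: the paper's own proof simply asserts that the natural isomorphisms $\alpha$ and $\beta$ are constructed analogously to Theorem~\ref{theorem:lgcaloron} and satisfy the required properties, while you supply the framing-compatibility verifications that the paper leaves implicit.
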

\begin{proof}
The natural isomorphisms $\alpha$ and $\beta$ are constructed in an analogous fashion to the natural isomorphisms of Theorem \ref{theorem:lgcaloron} and, following the arguments presented there, are easily seen to satisfy the required properties.
\end{proof}

As in the free loop case, there is an extension of the based caloron correspondence to bundles with connection.
In this setting, connections on the $G$-bundle side are required to satisfy a compatibility condition with respect to the framings.
\vspace{11pt}
\begin{definition}
\label{caloron:framedconnection}
Let $P \to X$ be a framed $G$-bundle with framing $s_0 \in \Gamma(X_0,P)$.
A connection $A$ on $P$ is \emph{framed (with respect to $s_0$)} if $s_0^\ast A = 0$.
\end{definition}

This framing condition is required to guarantee that the connections constructed on the Fr\'{e}chet side are indeed valued in $\Omega\g = \Lie(\Omega G)$.
Framed connections exist on framed bundles provided, as is assumed in this thesis, that the base manifold admits smooth partitions of unity \cite[Lemma 3.5]{MV}.
\vspace{11pt}
\begin{definition}
Let $\frBun^{c}_{G}$ to be the category whose objects are objects of $\frBun_G$ equipped with framed  $G$-connections and whose morphisms are the connection-preserving morphisms of $\frBun_G$.

For a fixed manifold $M$, $\frBun^c_G(M)$ denotes the groupoid with objects the framed $G$-bundles $P \to M\x\sone$ equipped with framed connection and with morphisms the connection-preserving bundle maps covering the identity that also preserve the framing.
\end{definition}

In order to formulate the geometric caloron correspondence for based loop group bundles, one requires the correct notion of Higgs field for $\Omega G$-bundles.
It turns out that this is given exactly by replacing $LG$ in Definition \ref{defn:higgs} by $\Omega G$.
\vspace{11pt}
\begin{remark}
Higgs fields on $\Omega G$-bundles still map into $L\g$, not $\Omega\g$ as one might suspect.
This is because in general $\gamma^{-1}\d\gamma \notin \Omega \g$ for $\gamma \in \Omega G$, so a Higgs field cannot both map into $\Omega\g$ and satisfy the twisted equivariance condition.
\end{remark}
Lemma \ref{lemma:hfexist} may be easily adapted to show that every $\Omega G$-bundle admits a Higgs field.

Define the categories $\Bun_{\Omega G}^c$ and $\Bun_{\Omega G}^c(M)$ exactly as in Definition \ref{defn:gbuncatconn} (with $LG$ replaced by $\Omega G$).
The \emph{based geometric caloron transform} is the functor
\[
\cC\colon \Bun_{\Omega G}^c \lo \frBun_G^c
\]
defined by sending the $\Omega G$-bundle $\sQ \to M$ with $\Omega G$-connection $\sA$ and Higgs field $\Phi$ to the framed $G$-bundle $\cC(\sQ) \to M\x\sone$ (as defined above) equipped with the connection $A$  given by the expression \eqref{eqn:con:lgcalfinconn}.
To see that this is well-defined, one must verify that the connection $A$ is framed.
Recall that the framing of $\cC(\sQ)$ is the section
\[
s_0 \colon (m,0) \longmapsto [q,0,1].
\]
Therefore, taking any $X \in  T_{(m,0)}M_0$ (so that $ds_0 (X)= [\chi,0,0]$ for some tangent vector $\chi$ to $\sQ$), one obtains
\[
s_0^\ast A_{(m,0)}(X) = A_{[q,0,1]}([\chi,0,0]) = \sA_{q}(\chi)(0) = 0
\]
since $\sA$ is valued in $\Omega\g$.
The action of $\cC$ on morphisms is essentially the same as in the free loop case.

Conversely, the \emph{based inverse geometric caloron transform} is a functor
\[
\cC^{-1} \colon   \frBun_G^c\lo \Bun_{\Omega G}^c
\]
sends the framed $G$-bundle $P \to M\x\sone$ with framed connection $A$ to an $\Omega G$-bundle $\sP \to M$ equipped with $\Omega G$-connection $\sA$ and Higgs field $\Phi$.
Here $ \sP := \cC^{-1}(P)$ is the based inverse caloron transform as above and the connection $\sA$ and Higgs field $\Phi$ are given respectively by \eqref{eqn:con:lgcalfrecconn} and \eqref{eqn:con:lgcalhiggs}.
Since it is clear that $\Phi$ is well-defined, the only thing that needs to be shown is that the connection $\sA$ is indeed an $\Omega G$-connection.
To see this, take any $\chi \in T_p \sP$ noting that $\chi(0) \in T_{p(0)}P_0$ is in the image of the map $ds_0 \colon TM_0 \to TP_0$.
Thus
\[
\sA_p(\chi)(0) = A_{p(0)} (\chi(0)) = s_0^\ast A_{(m,0)}(X) = 0 
\]
where $p(0)$ is in the fibre of $P$ over $(m,0)$ and $ds_0 (X) = \chi(0)$.
Once again, the action of $\cC^{-1}$ on morphisms is essentially the same as in the free loop case.

One may verify that the natural isomorphisms of Theorem \ref{theorem:equiv} respect the connective data, the arguments proceeding exactly as in the free loop case, so that
\vspace{11pt}
\begin{theorem}[\cite{MV,V}]
\label{theorem:gequiv}
The based geometric caloron correspondence
\[
\cC \colon \Bun^c_{\Omega G} \lo \frBun^c_G \;\;\mbox{and}\;\;\; \cC^{-1} \colon  \frBun^c_G \lo \Bun^c_{\Omega G}
\]
is an equivalence of categories that, for any manifold $M$, restricts to an equivalence of groupoids
\[
\cC \colon \Bun^c_{\Omega G}(M) \lo \frBun^c_G(M) \;\;\mbox{and}\;\; \cC^{-1} \colon  \frBun^c_G(M) \lo \Bun^c_{\Omega G}(M).
\]
\end{theorem}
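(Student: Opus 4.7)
The plan is to follow the strategy used for Theorem \ref{theorem:lggcaloron}, adapting it to accommodate the framing and the reduction of structure group from $LG$ to $\Omega G$. The underlying categorical equivalence is already provided by Theorem \ref{theorem:equiv}, so the natural isomorphisms $\alpha_\sQ \colon \cC^{-1}(\cC(\sQ)) \to \sQ$ and $\beta_P \colon \cC(\cC^{-1}(P)) \to P$ are already in hand. What remains is to verify that, when the objects are enriched with framed connections (respectively $\Omega G$-connections and Higgs fields), these natural isomorphisms preserve the additional geometric structure. The framed and $\Omega G$-valued nature of the data needs to be checked only at the level of the definitions of $\cC$ and $\cC^{-1}$ in the enriched setting; the compatibility on morphisms follows automatically since morphisms in $\Bun^c_{\Omega G}$ and $\frBun^c_G$ preserve by definition all structure under consideration.

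Concretely, I would first verify that the based geometric caloron transform is well-defined on objects: given $(\sQ, \sA, \Phi) \in \Bun^c_{\Omega G}$, the form $A$ defined by \eqref{eqn:con:lgcalfinconn} descends to a $G$-connection on $\cC(\sQ)$ by Lemma \ref{lemma:conndescend}, and the computation $s_0^\ast A_{(m,0)}(X) = A_{[q,0,1]}([\chi,0,0]) = \sA_q(\chi)(0) = 0$ (already sketched in the text) shows that $A$ is framed with respect to the canonical section $s_0(m,0) = [q,0,1]$, using crucially that $\sA$ takes values in $\Omega\g$. Next, I would check that the based inverse geometric caloron transform is well-defined on objects: given a framed connection $A$ on $P$, the formulas \eqref{eqn:con:lgcalfrecconn} and \eqref{eqn:con:lgcalhiggs} produce $\sA$ and $\Phi$ on $\sP = \cC^{-1}(P)$, and the calculation $\sA_p(\chi)(0) = A_{p(0)}(\chi(0)) = s_0^\ast A(X) = 0$ (again sketched above) shows that $\sA$ genuinely lands in $\Omega\g$, precisely because the framing condition $s_0^\ast A = 0$ holds.

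The central remaining step is to show that $\alpha_\sQ$ and $\beta_P$ are morphisms in $\Bun^c_{\Omega G}$ and $\frBun^c_G$ respectively. For $\alpha_\sQ$, the computation in the proof of Theorem \ref{theorem:lggcaloron} carries over verbatim: pulling back the connection $\sA'$ on $\cC^{-1}(\cC(\sQ))$ by $\alpha_\sQ^{-1}$ recovers $\sA$, and the analogous computation with $\d$ in place of $\chi$ recovers $\Phi$. One also needs to note that $\alpha_\sQ$ preserves the $\Omega G$-structure, which is immediate since both sides are $\Omega G$-bundles and $\alpha_\sQ$ is $\Omega G$-equivariant by restriction of the $LG$-equivariance. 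For $\beta_P$, the same pullback computation as in Theorem \ref{theorem:lggcaloron} shows $\beta_P^\ast A = A'$, and preservation of the framings follows because $\beta_P([q,0,1]) = q(0) \cdot 1 = s_0(m,0)$ (more precisely, the canonical section $s_0'$ of $\cC(\cC^{-1}(P))$ is sent by $\beta_P$ to the framing section $s_0$ of $P$, using the identification $\cC^{-1}(P)_m = \Gamma(\{m\}\x\sone, P)$ and the fact that loops in $\cC^{-1}(P)$ sit inside $P_0$ at $\theta=0$).

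The one point requiring genuine care, and the main obstacle, is the bookkeeping around framings when verifying that $\beta_P$ preserves them and that all the natural isomorphisms descend consistently under the based inverse caloron transform: one must trace through the identification of based loops $\Omega_{P_0}P$ against sections of $P$ over $\{m\}\x\sone$ landing in $P_0$ at $\theta = 0$, and ensure that the canonical framings of $\cC(\cC^{-1}(P))$ and $\cC^{-1}(\cC(\sQ))$ correspond under $\beta_P$ and $\alpha_\sQ$ respectively. Once these compatibilities are established, the restriction to groupoids over a fixed $M$ is automatic because both functors $\cC$ and $\cC^{-1}$ cover the identity on $M$ when restricted to the relevant groupoids.
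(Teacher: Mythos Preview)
Your proposal is correct and follows essentially the same approach as the paper, which simply remarks that the natural isomorphisms of Theorem \ref{theorem:equiv} respect the connective data by the same arguments as in the free loop case (Theorem \ref{theorem:lggcaloron}). Your ``main obstacle'' concerning framing compatibility is in fact already absorbed by Theorem \ref{theorem:equiv}, since $\alpha_\sQ$ and $\beta_P$ are constructed there as morphisms in $\Bun_{\Omega G}$ and $\frBun_G$ respectively; so only the preservation of $(\sA,\Phi)$ and $A$ remains, and that is handled verbatim by the computations in Theorem \ref{theorem:lggcaloron}.
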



\subsection{The path fibration and string classes}
\label{S:path}
The argument that constructed the string classes of Section \ref{SS:stringclasses} may easily be adapted in order to construct characteristic classes for $\Omega G$-bundles, which are also called \emph{string classes}.

Explicitly, for an $\Omega G$-bundle $\sQ \to M$ equipped with connection $\sA$ and Higgs field $\Phi$ and for any  invariant polynomial $f \in I^k(\g)$, one obtains the closed \emph{string form}
\[
s_f(\sA,\Phi) = k \int_\sone f(\nabla\phi,\underbrace{\sF,\dotsc,\sF}_{\text{$k-1$ times}})
\]
noticing that this is exactly the expression \eqref{eqn:stringformex}---the only thing that has changed is that the connection and Higgs field now live on an $\Omega G$-bundle.
As in the free loop case, the corresponding class in the real-valued cohomology of $M$ is the \emph{string class}
\[
s_f(\sQ) \in H^{2k-1}(M;\RR),
\]
which is independent of the choice of $\sA$ and $\Phi$.

So far, there have not been any particularly novel features of the based case.
One advantage of working with based loops instead of free loops is that there is a \emph{smooth} model for the universal $\Omega G$-bundle---the \emph{path fibration}---that allows one to explicitly compute the universal string classes.
\[
\xy
(20,25)*+{\Omega G}="1";
(50,25)*+{PG}="2";
(50,0)*+{G}="4";
{\ar^{} "1";"2"};
{\ar^{\ev_{2\pi}} "2";"4"};
\endxy
\]
Let $PG$ be the space of all smooth maps $p\colon \RR\to G$ such that $p(0)$ is the identity and $p^{-1}\d p$ is periodic with period $2\pi$.
There is a natural action of $\Omega G$ on $PG$ that gives $PG$ the structure of an $\Omega G$-bundle over $G$.
The projection $PG \to G$ is simply evaluation of paths at $2\pi$ and it turns out that $PG$ is (smoothly) contractible.
Therefore $PG\to G$ is a model for the universal $\Omega G$-bundle, which also shows that $B\Omega G = G$ in this case.
For a rigorous treatment see Appendix \ref{app:frechet}.

Since $PG$ is a smooth manifold one may talk about connective data directly on the universal $\Omega G$-bundle.
Recall that a tangent vector $\chi \in T_pPG$ is canonically identified with a section of $p^\ast TG \to \RR$ so that a vertical vector for $PG\to G$ is a tangent vector such that
\[
\chi(2\pi) = 0.
\]
Choosing a smooth function $\alpha \colon \RR\to \RR$ satisfying $\alpha(t) = 0$ for $t\leq0$ and $\alpha(t) = 1$ for $t\geq 2\pi$, a 
complementary horizontal subspace at $p \in PG$ is given by
\[
H_p = \{ \chi \in T_pPG \mid \chi(\theta) = \alpha(\theta) dR_{p(\theta)}(\xi) \mbox{ for some } \xi \in \g  \}
\]
(see also \cite[pp.~552--553]{MS}).
The horizontal projection of any tangent vector $\chi \in T_pPG$ is
\[
\chi^h(\theta) = \alpha(\theta)dR_{p(2\pi)^{-1}p(\theta)} \left(\chi(2\pi)\right).\
\]
and the value of the $\Omega G$-connection corresponding to this splitting of the tangent bundle at $p \in PG$ is
\[
\sA_\infty(\theta) = \Theta(\theta) - \alpha(\theta) \ad\left(p(\theta)^{-1}\right) \ev_{2\pi}^\ast\widehat\Theta
\]
with $\Theta$ the Maurer-Cartan form on $\Omega G$\footnote{given by $\Theta_\gamma(\xi)(\theta) := (\Theta_G){}_{\gamma(\theta)}(\xi(\theta))$ where $\gamma\in \Omega G$, $\xi \in T_\gamma \Omega G$ and $\Theta_G$ momentarily denotes the Maurer-Cartan form on $G$.} and $\widehat\Theta$ the \emph{right-invariant} Maurer-Cartan form on $G$.
There is a canonical Higgs field on $PG$ given by
\[
\Phi_\infty(p) = p^{-1}\d p
\]
and some straightforward calculations (as in \cite[Section 3.1.2]{V}) show that
\begin{equation}
\label{eqn:universalcurv}
\sF_\infty = -\frac{1}{2}\alpha(1-\alpha) \ad(p^{-1}) \ev_{2\pi}^\ast\left([\widehat\Theta,\widehat\Theta] \right)\;\;\mbox{ and }\;\; \nabla\Phi_\infty = \frac{d\alpha}{d\theta} \ad(p^{-1}) \ev_{2\pi}^\ast\big(\widehat\Theta\big).
\end{equation}
Assuming a fixed choice of smooth function $\alpha$, $\sA_\infty$ and $\Phi_\infty$ are respectively the \emph{standard} connection and Higgs field for $PG$.

One is now in a position to explicitly calculate the universal string forms, which are odd-degree forms on $G$, as follows.
Taking any $f \in I^k(\g)$ gives
\vspace{11pt}
\begin{lemma}[\cite{MV,V}]
\label{lemma:universalstringform}
The string form of the standard connection and Higgs field of the path fibration over $G$ is
\[
\tau_f :=  \left(-\frac{1}{2}\right)^{k-1}\frac{k!(k-1)!}{(2k-1)!} f\big(\Theta, \underbrace{[\Theta,\Theta],\dotsc,[\Theta,\Theta]}_{\text{$k-1$ times}} \big)
\]
with $\Theta$ the Maurer-Cartan form on $G$.
\end{lemma}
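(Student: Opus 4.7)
The plan is to substitute the explicit expressions \eqref{eqn:universalcurv} for $\sF_\infty$ and $\nabla\Phi_\infty$ directly into the formula $s_f(\sA_\infty,\Phi_\infty) = k\int_\sone f(\nabla\Phi_\infty, \sF_\infty, \dotsc, \sF_\infty)$ and simplify. By multilinearity of $f$, the scalars $d\alpha/d\theta$ and $\bigl(-\tfrac{1}{2}\alpha(1-\alpha)\bigr)^{k-1}$ come out as a common factor. At each fixed $\theta\in\sone$, every one of the $k$ slots of $f$ then carries a common $\Ad(p(\theta)^{-1})$ acting on some combination of $\widehat\Theta$'s, so a single application of the $\Ad$-invariance of $f$ strips these conjugations away. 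The residual form $\ev_{2\pi}^\ast f(\widehat\Theta,[\widehat\Theta,\widehat\Theta],\dotsc,[\widehat\Theta,\widehat\Theta])$ on $PG$ is independent of $\theta$ and so passes outside the $\sone$-integral entirely.

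The scalar integral that remains is
\[
\int_\sone (\alpha(1-\alpha))^{k-1}\, \frac{d\alpha}{d\theta}\,d\theta,
\]
which under the substitution $u = \alpha(\theta)$---with boundary values $\alpha(0)=0$ and $\alpha(2\pi)=1$---reduces to the Beta integral $B(k,k) = ((k-1)!)^2/(2k-1)!$. Combining this with the prefactor $k\bigl(-\tfrac{1}{2}\bigr)^{k-1}$ and using the identity $k((k-1)!)^2 = k!(k-1)!$ recovers the rational coefficient appearing in the statement.

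The final step is to convert from the right-invariant $\widehat\Theta$ to the left-invariant $\Theta$: these differ by $\widehat\Theta_g = \Ad(g)\Theta_g$, so $[\widehat\Theta,\widehat\Theta] = \Ad(g)[\Theta,\Theta]$, and one further invocation of the $\Ad$-invariance of $f$ replaces each $\widehat\Theta$ by $\Theta$. Since the string form is already known to descend to the base, the resulting pullback along $\ev_{2\pi}$ determines $\tau_f$ on $G$ uniquely. The computation presents no genuine obstacle; the only point needing care is that the $\Ad$-invariance is applied pointwise in $\theta$, which is legitimate precisely because the conjugation $\Ad(p(\theta)^{-1})$ at any fixed $\theta$ acts uniformly across all $k$ slots of $f$.
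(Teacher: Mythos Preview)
Your proposal is correct and follows essentially the same route as the paper's proof: substitute \eqref{eqn:universalcurv} into \eqref{eqn:stringformex}, use $\ad$-invariance of $f$ to strip the conjugations and convert $\widehat\Theta$ to $\Theta$, and evaluate the resulting scalar integral via the beta function $B(k,k)$. The paper's argument is terser but the logical structure is identical.
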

\begin{proof}
Plugging the expressions from \eqref{eqn:universalcurv} into the formula \eqref{eqn:stringformex} and using the fact that $\Theta_g = \ad(g^{-1}) \widehat\Theta_g$ gives
\[
s_f(\sA_\infty,\Phi_\infty) = k \left(-\frac{1}{2}\right)^{k-1} \left( \int_\sone \alpha^{k-1}(1-\alpha)^{k-1}\frac{d\alpha}{d\theta}\,d\theta \right) f\big(\Theta, \underbrace{[\Theta,\Theta],\dotsc,[\Theta,\Theta]}_{\text{$k-1$ times}} \big).
\]
The integral is evaluated as
\begin{align*}
 \int_\sone \alpha^{k-1}(1-\alpha)^{k-1} \frac{d\alpha}{d\theta}  \,d\theta = \int_0^1 t^{k-1}(1-t)^{k-1}\,dt = \frac{(k-1)!(k-1)!}{(2k-1)!}
\end{align*}
using the beta function, which completes the proof.
\end{proof}

\begin{remark}
\label{remark:trans}
The de Rham cohomology classes of the forms $\tau_f$ appearing in Lemma \ref{lemma:universalstringform} are well-known (cf. \cite{CS,HL}) to be precisely the cohomology classes on $G$ obtained by transgressing classes on $BG$.
$H^\bullet(G;\RR)$ is generated as an exterior algebra by (finitely many) such transgressed classes \cite[Theorem 18.1]{B2}.
\end{remark}

Another major advantage that comes from working with based loop group bundles is that classifying maps are easy to describe.
Namely, given an $\Omega G$-bundle $\sQ \to M$ a choice of Higgs field $\Phi$ on $\sQ$ is equivalent to choosing a smooth classifying map $M \to G$ for $\sQ$ in the following manner.
At the point $q\in \sQ$ the equation
\begin{equation}
\label{eqn:hfholonomy}
\Phi(q) = g(q)^{-1}\d g(q)
\end{equation}
for $g = g(q) \in PG$ has a unique solution by the Picard-Lindel\"{o}f Theorem.
The \emph{holonomy} of the Higgs field $\Phi$ is then the map $\hol_\Phi \colon \sQ \to PG$ that sends
\[
q \longmapsto \hol_\Phi(q) := g
\]
with $g$ satisfying \eqref{eqn:hfholonomy}.
As $\Phi$ is smooth, $\hol_\Phi$ is also smooth and notice also that $\hol_\Phi$ is $\Omega G$-equivariant, for if $\gamma \in \Omega G$ and $g = \hol_\Phi(q)$ then
\begin{align*}
(g\gamma)^{-1} \d(g\gamma) = \ad(\gamma^{-1}) g^{-1}\d g + \gamma^{-1}\d\gamma = \Phi(q\gamma)
\end{align*}
implies that $\hol_\Phi(q\gamma) = \hol_\Phi(q)\gamma$.
Hence $\hol_\Phi$ descends to a map $M\to G$, which shall also be called $\hol_\Phi$, proving
\vspace{11pt}
\begin{proposition}[\cite{MV}]
\label{prop:higgsfieldholonomy}
If $\sQ \to M$ is an $\Omega G$-bundle and $\Phi$ is any Higgs field on $\sQ$ then $\hol_\Phi \colon M\to G$ is a smooth classifying map for $\sQ$.
\end{proposition}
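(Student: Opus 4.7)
The text preceding the statement has essentially produced the candidate classifying map: it constructs a smooth $\Omega G$-equivariant lift $\hol_\Phi \colon \sQ \to PG$ using the Picard--Lindel\"{o}f theorem, and observes it descends to $\hol_\Phi \colon M \to G$. What remains for the proposition is to verify that this descended map genuinely classifies $\sQ$, i.e.\ that $\sQ \cong \hol_\Phi^\ast PG$ as $\Omega G$-bundles over $M$.

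My plan is to exploit the universal property of the pullback bundle. The pair $(\hol_\Phi \colon \sQ \to PG,\; \hol_\Phi \colon M \to G)$ fits into a commuting square with the projections $\sQ \to M$ and $PG \to G$ (commutativity is immediate: given $q \in \sQ$ over $m \in M$, the element $\hol_\Phi(q) \in PG$ projects to $\ev_{2\pi}(\hol_\Phi(q)) \in G$, and this projection is exactly what we defined $\hol_\Phi(m)$ to be, since $\hol_\Phi$ on $M$ was obtained by descent). The universal property of pullback therefore produces a smooth bundle map $\widetilde{\hol_\Phi} \colon \sQ \to \hol_\Phi^\ast PG$ over $M$, which is $\Omega G$-equivariant because $\hol_\Phi \colon \sQ \to PG$ is.

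Then I appeal to the standard fact that any $\Omega G$-equivariant smooth map between two principal $\Omega G$-bundles over the same base covering the identity is automatically an isomorphism (fibrewise it is a map of $\Omega G$-torsors, hence bijective; smoothness of the inverse follows from local triviality). This yields $\sQ \cong \hol_\Phi^\ast PG$, which says precisely that $\hol_\Phi \colon M \to G$ is a classifying map for $\sQ$, given that $PG \to G$ is a model for $E\Omega G \to B\Omega G$ (established earlier in the section and in Appendix \ref{app:frechet}).

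The only non-routine step is verifying smoothness carefully in the Fr\'{e}chet setting, but this is not a genuine obstacle: Picard--Lindel\"{o}f gives smooth dependence of the ODE solution $g(q)$ on the parameter $q$ (since $\Phi$ is smooth and valued in $L\g$), so $\hol_\Phi \colon \sQ \to PG$ is smooth as a map of Fr\'{e}chet manifolds, and all subsequent constructions (descent to $M$, the universal-property map into the pullback, and inversion) preserve smoothness.
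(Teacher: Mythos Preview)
Your proof is correct and follows the same approach as the paper: the paper's argument preceding the proposition constructs the $\Omega G$-equivariant map $\hol_\Phi \colon \sQ \to PG$ and simply states that it descends to $M \to G$, treating the conclusion that this is a classifying map as immediate. You have just made explicit the standard fact (principal bundle morphism covering the identity is an isomorphism, hence $\sQ \cong \hol_\Phi^\ast PG$) that the paper leaves implicit.
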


Writing
\[
\tau \colon I^\bullet(\g) \lo H^{2\bullet-1} (G;\RR)
\]
for the map $f \mapsto [\tau_f]$, with $\tau_f$ the forms of Lemma \ref{lemma:universalstringform}, by naturality of the string classes and using \eqref{eqn:stringform} one obtains
\vspace{11pt}
\begin{theorem}[\cite{MV}]
\label{theorem:stringdiag1}
If $\sQ \to M$ is an $\Omega G$-bundle and
\[
s(\sQ) \colon I^\bullet(\g) \lo H^{2\bullet-1}(M;\RR)
\]
is the map $f \mapsto s_f(\sQ)$ then the diagram
\[
\xy
(0,25)*+{I^\bullet(\g)}="1";
(50,25)*+{H^{2\bullet}(M\x\sone;\RR)}="2";
(0,0)*+{H^{2\bullet-1}(G;\RR)}="3";
(50,0)*+{H^{2\bullet-1}(M;\RR)}="4";
{\ar^{cw(Q)} "1";"2"};
{\ar^{\widehat{\int_\sone}} "2";"4"};
{\ar^{\tau} "1";"3"};
{\ar^{s(\sQ)} "1";"4"};
{\ar^{\hol_\Phi^\ast} "3";"4"};
\endxy
\]
commutes for any choice of Higgs field $\Phi$ on $\sQ$, where $Q := \cC(\sQ)$ is the caloron transform.
Here $\widehat{\int_\sone} \colon H^\bullet(M\x\sone;\RR) \to H^{\bullet-1}(M;\RR)$ is the integration over the fibre map in singular cohomology.
\end{theorem}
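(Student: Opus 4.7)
The plan is to verify the two triangles separately: the upper-right triangle (via $M\times\sone$) reduces to the definition of string forms, while the lower-left triangle (via $G$) is an application of naturality combined with Lemma \ref{lemma:universalstringform}.

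For the upper-right triangle, I would fix any $\Omega G$-connection $\sA$ and Higgs field $\Phi$ on $\sQ$ and form the caloron transform $(Q,A) := \cC(\sQ,\sA,\Phi)$. By the defining equation \eqref{eqn:stringform}, the differential form $s_f(\sA,\Phi)$ equals $\widehat{\int_{\sone}} cw_f(A)$ on the nose. Since integration over the fibre commutes with the exterior derivative (so it descends to a map on de Rham cohomology) and $cw_f(A)$ represents $cw_f(Q)\in H^{2k}(M\x\sone;\RR)$, taking classes yields $s_f(\sQ) = \widehat{\int_\sone}\, cw_f(Q)$, which is exactly the commutativity of the upper-right triangle.

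For the lower-left triangle, the key input is Proposition \ref{prop:higgsfieldholonomy}: the map $\hol_\Phi \colon M\to G$ classifies $\sQ$, and in fact the holonomy construction furnishes an $\Omega G$-equivariant smooth map $\hol_\Phi \colon \sQ \to PG$ covering $\hol_\Phi \colon M\to G$, so $\sQ \cong \hol_\Phi^{\ast} PG$. I would then establish naturality of the string class, namely that for any smooth $g\colon N\to M$ and any $\Omega G$-bundle $\sP\to M$, $s_f(g^{\ast}\sP) = g^{\ast} s_f(\sP)$. This follows by choosing representative connective data $(\sA,\Phi)$ on $\sP$, noting that $(g^{\ast}\sA, g^{\ast}\Phi)$ is valid connective data on $g^{\ast}\sP$, and observing that the geometric caloron correspondence (Theorem \ref{theorem:gequiv}) is natural, so $\cC(g^{\ast}\sP, g^{\ast}\sA, g^{\ast}\Phi) = (g\x\id)^{\ast}\cC(\sP,\sA,\Phi)$ with the pulled-back connection. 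Since Chern--Weil forms are natural and integration over the fibre commutes with pullbacks in the base direction, $s_f(g^{\ast}\sA, g^{\ast}\Phi) = g^{\ast} s_f(\sA,\Phi)$, and hence the same relation holds at the level of cohomology classes.

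Applying naturality to $\hol_\Phi$ gives $s_f(\sQ) = \hol_\Phi^{\ast} s_f(PG)$. It remains to identify $s_f(PG) \in H^{2k-1}(G;\RR)$ with $\tau(f)$. For this, I would compute the string form of the path fibration using the standard connection $\sA_\infty$ and Higgs field $\Phi_\infty$; Lemma \ref{lemma:universalstringform} gives $s_f(\sA_\infty, \Phi_\infty) = \tau_f$, whose class is $\tau(f)$ by definition. Combining, $s_f(\sQ) = \hol_\Phi^{\ast} \tau(f)$, proving commutativity of the lower-left triangle.

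The main potential obstacle is ensuring the naturality claim is watertight. Specifically, one must check that the pullback of the caloron transform agrees with the caloron transform of the pullback not just as bundles but together with their connective data, and also that the construction of $s_f$ from $(\sA,\Phi)$ is compatible with pulling back along $\hol_\Phi \colon \sQ \to PG$ (or, equivalently, that we can choose connective data on $\sQ$ inducing that on $PG$ via the bundle map). These compatibilities are in principle routine unpackings of the categorical formulation of the geometric based caloron correspondence, but they must be carried out carefully; otherwise everything in the lower triangle is immediate.
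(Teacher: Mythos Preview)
Your proposal is correct and follows essentially the same approach as the paper, which simply states that the result follows ``by naturality of the string classes and using \eqref{eqn:stringform}'' without spelling out further details. Your decomposition into the two triangles, with the upper-right handled by the defining equation \eqref{eqn:stringform} and the lower-left by naturality plus Proposition~\ref{prop:higgsfieldholonomy} and Lemma~\ref{lemma:universalstringform}, is exactly the intended argument; the naturality verification you flag as a potential obstacle is routine and implicit in the paper's treatment of string classes as characteristic classes.
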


An immediate consequence of this is that for any choice of connection $\sA$ and Higgs field $\Phi$ on $\sQ$ there is some $(2k-2)$-form $\omega$ on $M$ such that
\[
s_f(\sA,\Phi) = \hol_\Phi^\ast\tau_f + d\omega.
\]
As shown in Proposition \ref{prop:canform} below, for each choice of $\sA$ and $\Phi$ there is a corresponding \emph{canonical} choice of $\omega$.
The fact that there is a canonical choice of $\omega$ plays an important role in relating the $\Omega$ model of Chapter \ref{ch:five} to the TWZ differential extension of odd $K$-theory in Section \ref{S:other}.

It is important for the construction of these canonical forms to understand how the Higgs field holonomy is related to the conventional notion of holonomy of a connection.
First take a $G$-bundle $\pi\colon P \to X$ with framing $s_0$ over $X_0$ and framed connection $A$.
Writing $P_0 = s_0(X_0)$ as before and taking based loops\footnote{recalling that, for example, $\Omega_{X_0}X = \{\gamma \in LX \mid \gamma(0) \in X_0 \}$.} gives the $\Omega G$-bundle $\Omega_{P_0} P \to \Omega_{X_0} X$, as in the construction of the based inverse caloron transform functor.
Take a loop $p \in \Omega_{P_0}P$ and project down to $\Omega_{X_0}X$ to obtain $\pi\circ p$.
Taking the horizontal lift $\widehat{p}$ of $\pi\circ p$ through $s_0(\pi\circ p(0))$ with respect to the connection $A$, one has
\[
p = \widehat{p} \hol(p)
\]
for some $\hol(p) \in PG$.
Notice that $\hol$ is $\Omega G$-equivariant since $\widehat{p\gamma} = \widehat{p}$ for each $\gamma \in \Omega G$ and so
\[
p\gamma = \widehat{p\gamma}\hol(p\gamma) \Longrightarrow \hol(p\gamma) = \hol(p)\gamma.
\]
Thus, $\hol$ descends to a map $\hol \colon \Omega_{X_0} X \to G$ such that the diagram
\[
\xy
(0,25)*+{\Omega_{P_0}P}="1";
(50,25)*+{PG}="2";
(0,0)*+{\Omega_{X_0}X}="3";
(50,0)*+{G}="4";
{\ar^{\hol} "1";"2"};
{\ar^{\hol} "3";"4"};
{\ar^{} "1";"3"};
{\ar^{} "2";"4"};
\endxy
\]
commutes.
In particular, $\hol$ is a classifying map for $\Omega_{P_0}P$ and agrees with the traditional notion of holonomy (see also \cite[Section 3]{CM1}).
Recalling the map $\eta \colon M \to \Omega_{M_0}(M\x\sone)$ of \eqref{eqn:etamap} used in the definition of the caloron correspondence one has
\vspace{11pt}
\begin{lemma}[\cite{MV,V}]
\label{lemma:hol}
Let $\sQ \to M$ be an $\Omega G$-bundle with connection $\sA$ and Higgs field $\Phi$.
Write $Q \to M\x\sone$ for its caloron transform, with the framed connection $A$.
Then the Higgs field holonomy is related to the usual holonomy (with respect to $A$) by
\[
\hol_\Phi = \hol\circ\,\eta.
\]
\end{lemma}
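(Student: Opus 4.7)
The strategy is to unwind the definition of the connection holonomy on the caloron transform and show it satisfies exactly the ODE that defines the Higgs field holonomy.

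First, I would reduce the statement to an identity on total spaces. Recall the natural isomorphism $\alpha_\sQ$ (from the proof of Theorem \ref{theorem:equiv}, adapted from Theorem \ref{theorem:lgcaloron}) identifies a point $q \in \sQ$ with the loop $\hat\eta(q)(\theta) = [q,\theta,1]$ in $\Omega_{Q_0}Q$, which projects under $\pi \colon Q \to M\x\sone$ to $\eta(\pi_\sQ(q)) \colon \theta \mapsto (\pi_\sQ(q),\theta)$. Thus it suffices to prove the equivariant equality $\hol_\Phi(q) = \hol(\hat\eta(q))$ in $PG$ for every $q \in \sQ$.

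Next, I would explicitly describe the horizontal lift. Set $m = \pi_\sQ(q)$ and let $\widehat p \colon [0,2\pi] \to Q$ be the horizontal lift of $\theta\mapsto(m,\theta)$ starting at $s_0(m,0) = [q,0,1]$. Parametrise it as $\widehat p(\theta) = [q,\theta,h(\theta)]$ with $h(0) = 1$. Its velocity at $\widehat p(\theta)$ is the vector $[0,1,\dot h(\theta)]$, which under the tangent-vector notation $(\chi,x,g\zeta)$ used in the proof of Lemma \ref{lemma:conndescend} corresponds to $\chi = 0$, $x = 1$, $g = h(\theta)$ and $\zeta = h(\theta)^{-1}\dot h(\theta)$. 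Substituting into the expression \eqref{eqn:con:lgcalfinconn} for the caloron-transformed connection $A$ and using $\sA_q(0) = 0$ gives
\[
A_{\widehat p(\theta)}(\dot{\widehat p}(\theta)) = \ad(h(\theta)^{-1})\Phi(q)(\theta) + h(\theta)^{-1}\dot h(\theta).
\]
Horizontality forces this to vanish, so $h$ satisfies the ODE $\dot h(\theta) = -\Phi(q)(\theta)\,h(\theta)$ with $h(0) = 1$.

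Then I would make the ODE comparison with the defining equation \eqref{eqn:hfholonomy} for $\hol_\Phi$. Setting $k(\theta) := h(\theta)^{-1}$ and differentiating gives $\dot k = -k\dot h\,k = k\,\Phi(q)$, with $k(0) = 1$. This is precisely the Picard–Lindel\"of initial value problem defining $\hol_\Phi(q) \in PG$, so by uniqueness $h^{-1} = \hol_\Phi(q)$. Since the decomposition $\hat\eta(q) = \widehat p \cdot \hol(\hat\eta(q))$ written out at $\theta$ reads $[q,\theta,1] = [q,\theta,h(\theta)]\cdot \hol(\hat\eta(q))(\theta)$, one has $\hol(\hat\eta(q))(\theta) = h(\theta)^{-1}$. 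Combining the two identifications yields $\hol(\hat\eta(q)) = \hol_\Phi(q)$, and descending to $M$ gives $\hol\circ\eta = \hol_\Phi$, as required.

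The only minor obstacle is bookkeeping: one must be careful that the parametrisation $\widehat p(\theta) = [q,\theta,h(\theta)]$ is indeed a global section of $\hat\eta(q)^\ast Q$ (not merely local), but this is immediate because each fibre $\{[q,\theta,g] : g \in G\}$ is a full $G$-orbit containing $s_0(m,0)$ through the choice $g = 1$. Once that is noted, the proof is essentially the one-line ODE argument above.
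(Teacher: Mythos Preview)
Your argument is correct. The paper does not actually supply its own proof of this lemma---it simply cites \cite[Lemma 3.1.1]{V}---so there is nothing to compare against directly. Your direct ODE computation is the natural way to prove the statement: parametrise the $A$-horizontal lift of $\theta\mapsto(m,\theta)$ as $[q,\theta,h(\theta)]$, read off the horizontality condition from \eqref{eqn:con:lgcalfinconn}, and observe that the resulting equation $\dot h = -\Phi(q)h$ is equivalent (via $k=h^{-1}$) to the defining equation \eqref{eqn:hfholonomy} for $\hol_\Phi(q)$. One very minor point of notation: when you write ``$\sA_q(0)=0$'' you mean $\sA_q(\chi)=0$ because $\chi=0$ is the zero tangent vector, not because you are evaluating at $\theta=0$; it would be worth writing this explicitly to avoid confusion with the based condition $\sA_q(\cdot)(0)=0$.
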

For a proof of this fact see \cite[Lemma 3.1.1]{V}.
In \cite[Lemma 3.1.2]{V} it is also shown that\footnote{in fact, the argument presented there proves this only for the case $k= 4$ but the argument is sufficiently general to hold for all $k$.}
\vspace{11pt}
\begin{lemma}[\cite{V}]
\label{lemma:etaevform}
For differential $k$-forms on $M\x\sone$
\[
\eta^\ast \widehat{\int_\sone} \ev^\ast = \widehat{\int_\sone}
\]
where $\ev\colon \Omega_{M_0}(M\x\sone)\x\sone \to M\x\sone$ is the evaluation map $(\gamma,\theta)\mapsto \gamma(\theta)$.
\end{lemma}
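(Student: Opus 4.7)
The plan is to reduce the identity to the naturality of fibre integration combined with a simple observation about composing $\ev$ with $\eta \times \id_\sone$.

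First I would verify the elementary identity $\ev \circ (\eta \times \id_\sone) = \id_{M \times \sone}$. Unwinding definitions, for $(m,\theta) \in M\x\sone$ the map $\eta$ sends $m$ to the loop $\theta' \mapsto (m,\theta')$, so
\[
\ev\big(\eta(m),\theta\big) = \eta(m)(\theta) = (m,\theta).
\]
This means that $\eta \times \id_\sone \colon M\x\sone \to \Omega_{M_0}(M\x\sone)\x\sone$ is a smooth section of $\ev$ over the diagonal direction.

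Next I would invoke the naturality of integration over the fibre as recorded in Appendix \ref{app:int}. For the trivial $\sone$-bundles $M\x\sone \to M$ and $\Omega_{M_0}(M\x\sone)\x\sone \to \Omega_{M_0}(M\x\sone)$, the map $\eta \times \id_\sone$ covers $\eta$, so
\[
\eta^\ast\,\widehat{\int_\sone} \alpha = \widehat{\int_\sone}\,(\eta \times \id_\sone)^\ast \alpha
\]
for any form $\alpha$ on $\Omega_{M_0}(M\x\sone)\x\sone$. Applying this with $\alpha = \ev^\ast\omega$ for an arbitrary differential $k$-form $\omega$ on $M\x\sone$ and then using the identity from the first step gives
\[
\eta^\ast\,\widehat{\int_\sone}\ev^\ast\omega \;=\; \widehat{\int_\sone}(\eta \times \id_\sone)^\ast\ev^\ast\omega \;=\; \widehat{\int_\sone}\big(\ev \circ (\eta \times \id_\sone)\big)^\ast\omega \;=\; \widehat{\int_\sone}\omega,
\]
which is the desired equality.

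The step requiring the most care is the invocation of naturality, because the intermediate base space $\Omega_{M_0}(M\x\sone)$ is an infinite-dimensional Fr\'{e}chet manifold rather than a finite-dimensional one. The fibre $\sone$ is nevertheless a compact finite-dimensional manifold, so the fibrewise integral is defined pointwise by the usual integration of a smooth family of forms; smoothness of $\eta$ in the Fr\'{e}chet sense ensures that the pullback $\eta^\ast$ is compatible with this operation, so the naturality square commutes exactly as in the finite-dimensional situation. Once this point is handled, the rest of the argument is formal.
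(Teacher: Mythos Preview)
Your argument is correct. The paper does not actually supply a proof of this lemma; it merely cites \cite[Lemma 3.1.2]{V} and remarks in a footnote that the argument there, given for $k=4$, generalises to all $k$. Your approach---observing that $\ev\circ(\eta\times\id_\sone)=\id_{M\times\sone}$ and then invoking the naturality square of Lemma~\ref{lemma:intpullsquare}---is exactly the right one and works uniformly in $k$ from the outset. Your caveat about the Fr\'{e}chet base is well placed: the key point is that the fibre is the finite-dimensional compact manifold $\sone$, so the fibrewise integral is defined pointwise and the naturality argument goes through without change.
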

With these results, one is able to prove
\vspace{11pt}
\begin{proposition}
\label{prop:canform}
Let $\sQ \to M$ be an $\Omega G$-bundle.
For any $f \in I^k(\g)$, and choice of $\Omega G$-connection $\sA$ and Higgs field $\Phi$ on $\sQ$, there is a canonical choice of $(2k-2)$-form $\chi$ on $M$ such that
\[
s_f(\sA,\Phi) = \hol_\Phi^\ast \tau_f + d\chi.
\]
\end{proposition}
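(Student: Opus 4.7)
The plan is to obtain $\chi$ as the integration over the $\sone$ fibre of a Chern--Simons transgression form on the caloron transform. First, I would use Proposition \ref{prop:higgsfieldholonomy} to produce an auxiliary connection on $\sQ$ by pulling back universal data: set $\sA' := \hol_\Phi^\ast \sA_\infty$. A direct computation from \eqref{eqn:hfholonomy} shows that $\hol_\Phi^\ast \Phi_\infty = \Phi$ (since for $q\in\sQ$ and $g=\hol_\Phi(q)$ one has $\Phi_\infty(g) = g^{-1}\d g = \Phi(q)$), so the universal Higgs field pulls back to the original $\Phi$, while $\sA'$ generally differs from $\sA$. Hence, under the canonical isomorphism $\sQ \simto \hol_\Phi^\ast PG$ furnished by $\hol_\Phi$, both $\sA$ and $\sA'$ appear as $\Omega G$-connections on the same bundle $\sQ$, sharing the Higgs field $\Phi$.

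Next, I would apply the based geometric caloron transform to both pairs $(\sA,\Phi)$ and $(\sA',\Phi)$. Since the underlying bundle part of the caloron functor does not depend on the connection or the Higgs field, this produces two framed $G$-connections $A$ and $A'$ on the same caloron transform $Q := \cC(\sQ) \to M\x\sone$. Classical Chern--Simons theory, applied to the straight-line interpolation $A_t := (1-t)A' + tA$ with curvatures $F_t$, then yields a canonical $(2k-1)$-form
\[
Tf(A,A') := k\int_0^1 f\big(A-A',\underbrace{F_t,\dotsc,F_t}_{\text{$k-1$ times}}\big)\,dt
\]
on $M\x\sone$ satisfying $cw_f(A) - cw_f(A') = d\,Tf(A,A')$.

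Third, I would integrate this relation over the $\sone$ fibre. Using that $\widehat{\int_\sone}$ commutes with $d$ (Lemma \ref{lemma:intwithd}, as $\sone$ has no boundary),
\begin{align*}
s_f(\sA,\Phi) - s_f(\sA',\Phi)
&= \widehat{\int_\sone}\big(cw_f(A) - cw_f(A')\big) \\
&= d\,\widehat{\int_\sone} Tf(A,A').
\end{align*}
By naturality of the string forms under bundle maps (immediate from functoriality of the geometric caloron transform combined with Chern--Weil naturality), one also has $s_f(\sA',\Phi) = s_f(\hol_\Phi^\ast\sA_\infty,\hol_\Phi^\ast\Phi_\infty) = \hol_\Phi^\ast s_f(\sA_\infty,\Phi_\infty) = \hol_\Phi^\ast\tau_f$. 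Setting $\chi := \widehat{\int_\sone} Tf(A,A')$ then gives the desired identity, with $\chi$ canonical once $(\sA,\Phi,f)$ is fixed (the straight-line path in the Chern--Simons construction being the standard canonical choice).

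The main point requiring care is the identification of the caloron transforms of $(\sA,\Phi)$ and $(\sA',\Phi)$ as two connections on a single bundle over $M\x\sone$, which hinges on the canonical isomorphism $\sQ \simto \hol_\Phi^\ast PG$ implicit in treating $\hol_\Phi$ as a classifying map---the same principle that underlies Theorem \ref{theorem:stringdiag1}. Beyond this, the argument is just Chern--Simons transgression followed by integration over the fibre.
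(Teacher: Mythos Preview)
Your argument is correct and takes a genuinely different route from the paper. The paper works on the based loop space $\Omega_{M_0}(M\times\sone)$: it pulls $Q$ back along the evaluation map, trivialises via the section of horizontal lifts $h$, and then performs an explicit computation (Leibniz rule plus integration by parts in the $\I$-direction) to identify the transgression $\tau_f$ and isolate a form $\beta$ on the loop space; the canonical $\chi$ is then $\eta^\ast\beta$. Your approach instead exploits the observation that $\hol_\Phi$ preserves the Higgs field ($\hol_\Phi^\ast\Phi_\infty=\Phi$), which reduces the problem to comparing two connections $(\sA,\Phi)$ and $(\hol_\Phi^\ast\sA_\infty,\Phi)$ on the \emph{same} bundle with the \emph{same} Higgs field; after caloron transform this is a standard relative Chern--Simons situation on $M\times\sone$, and fibre-integrating the transgression form gives $\chi$ directly.

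Your route is shorter and more conceptual; the only point you leave implicit is that the relative Chern--Simons form $Tf(A,A')$ is basic (it is, since $A-A'$ and the $F_t$ are horizontal and $\ad$-equivariant, so $f$ applied to them is invariant and horizontal). The paper's longer computation buys an explicit formula for $\chi$ as $\eta^\ast$ of a concrete expression in $\hc$ and its derivatives, which ties in with the loop-space framework of Lemmas~\ref{lemma:hol} and~\ref{lemma:etaevform}. Note that your canonical $\chi$ need not coincide on the nose with the paper's --- both satisfy the same $d\chi$ equation and the same naturality under pullback (Corollary~\ref{cor:canpullback}), but the proposition only asserts existence of \emph{a} canonical choice, which your construction certainly provides.
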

\begin{proof}
The proof is essentially that of \cite[Proposition 3.2.7]{V} and \cite[Proposition 4.15]{MV}.
Let $Q \to M\x\sone$ be the caloron transform, with framing $s_0 \in \Gamma(M_0,Q)$ and framed connection $A$.
Pulling back by the evaluation map $\ev \colon \Omega_{M_0}(M\x\sone)\x\I \to M\x\sone$ one obtains the $G$-bundle $\ev^\ast \!Q \to \Omega_{M_0}(M\x\sone)\x\I$.
This bundle is in fact trivial, since it has a section
\[
h \colon \Omega_{M_0}(M\x\sone)\x\I \lo \ev^\ast \!Q
\] 
given by $h(p,t) := \widehat p(t)$, where $\widehat p$ is the horizontal lift of $p$ through $s_0(p(0))$ with respect to the connection $\uc$ on $Q$.
Setting
\[
\hc := h^\ast \ev^\ast \uc,
\]
since $\Omega_{M_0}(M\x\sone)\x\I$ is a product manifold one may write
\[
h^\ast \ev^\ast \uf = -\frac{\d}{\d t} \hc \wedge dt + \hf = -\d_t \hc\wedge dt +\hf
\]
where $\uf$ is the curvature of $\uc$ and, if $\varsigma_t \colon p \mapsto (p,t)$ is the slice map, $\varsigma_t^\ast \hf$ is the curvature of $\varsigma_t^\ast \hc$.

Putting this aside for one moment, considering the Chern-Weil forms on $M\x\sone$ one has
\[
s_f(\sA,\Phi)  = \widehat{\int_\sone} cw_f(\uc) = \eta^\ast \widehat{\int_\sone} \ev^\ast cw_f(\uc) = \eta^\ast \widehat{\int_\sone}  cw_f(\ev^\ast \!\uc)
\]
for any $f \in I^k(\g)$ by Lemma \ref{lemma:etaevform}.
Inserting the above expression for $h^\ast \ev^\ast F$ and treating $\sone$ as $\I$ with endpoints identified\footnote{so as to avoid an excess of factors of $2\pi$ that are integrated out in any case.} gives
\[
c_f:= \widehat{\int_\sone} cw_f(\ev^\ast \!\uc) = -k \int_0^1 f\big(\d_t\hc,\underbrace{\hf,\dotsc,\hf}_{\text{$k-1$ times}}\big)\, dt.
\] 
Using the formula $\hf = d\hc +\frac{1}{2}[\hc,\hc]$, this becomes
\[
c_f = -k \sum_{i=0}^{k-1} \frac{1}{2^i} \frac{(k-1)!}{i!(k-1-i)!}\int_0^1 f\big(\d_t\hc,\underbrace{d\hc,\dotsc,d\hc}_{\text{$k-1-i$ times}},\underbrace{[\hc,\hc],\dotsc,[\hc,\hc]}_{\text{$i$ times}}\big)\, dt\\
\]
For $i= 0,\dotsc, k-1$ set
\[
f_i :=  f\big(\d_t\hc,\underbrace{d\hc,\dotsc,d\hc}_{\text{$k-1-i$ times}},\underbrace{[\hc,\hc],\dotsc,[\hc,\hc]}_{\text{$i$ times}}\big),
\]
which, acting on tangent vectors at a point $p\in \Omega_{M_0}(M\x\sone)$, is a map $\sone \to \RR$.
Using the shorthand $\hc^j$ to mean $\hc$ repeated  $j$ times as an argument of $f$, using the Leibniz rule for $d$ gives
\begin{multline*}
\int_0^1 f_i(t)\, dt = \int_0^1 f\big(d(\d_t\hc),\hc,d\hc^{k-2-i},[\hc,\hc]^i \big)\,dt 
\\ + i \int_0^1 f\big( \d_t\hc,\hc,d\hc^{k-2-i},d[\hc,\hc],[\hc,\hc]^{i-1} \big)\, dt 
-d \int_0^1 f\big(\d_t\hc,\hc,d\hc^{k-2-i},[\hc,\hc]^i \big)\,dt
\end{multline*}
and integrating by parts in the $\I$ direction gives
\begin{multline*}
\int_0^1 f_i(t) \,dt = F_i(1) - F_i(0) - (k-1-i) \int_0^1 f\big(\hc,\d_t(d\hc),d\hc^{k-2-i},[\hc,\hc]^i \big)\,dt\\
-i \int_0^1 f\big(\hc,d\hc^{k-1-i},\d_t[\hc,\hc],[\hc,\hc]^{i-1} \big)\,dt
\end{multline*}
where $F_i := f\big(\hc,d\hc^{k-1-i},[\hc,\hc]^i \big)$.

By the $\ad$-invariance of $f$,
\begin{multline*}
f\big( \d_t\hc,\hc,d\hc^{k-2-i},d[\hc,\hc],[\hc,\hc]^{i-1} \big) \\
= 2 f\big([\d_t\hc,\hc],\hc,d\hc^{k-1-i},[\hc,\hc]^{i-1} \big) -2  f\big(\d_t\hc,d\hc^{k-i-1},[\hc,\hc]^{i} \big)\\
 \quad\qquad- 2(k-2-i)  f\big(\d_t\hc,\hc,[d\hc,\hc],d\hc^{k-2-i},[\hc,\hc]^{i-1} \big)\\
= f\big(\d_t[\hc,\hc],\hc,d\hc^{k-1-i},[\hc,\hc]^{i-1} \big)  -2  f\big(\d_t\hc,d\hc^{k-i-1},[\hc,\hc]^{i} \big)\\
 \quad\qquad- (k-2-i)  f\big(\d_t\hc,\hc,d[\hc,\hc],d\hc^{k-2-i},[\hc,\hc]^{i-1} \big).
\end{multline*}
Using this and combining the two expressions for $\int_0^1 f_i(t)\, dt$ above gives
\begin{multline*}
(k+i) \int_0^1 f_i(t)\,dt = F_i(1) - F_i(0) -(k-1-i) d \int_0^1  f\big(\d_t\hc,\hc,d\hc^{k-2-i},[\hc,\hc]^i \big)\,dt.
\end{multline*}
This gives the expression
\[
c_f =  \sum_{i=0}^{k-1} \left[ -\frac{1}{2^i} \frac{k(k-1)!}{i!(k-1-i)!}\frac{1}{k+i} \left\{F_i(1) - F_i(0) \right\}\right] + d\beta
\]
where
\[
\beta := \sum_{i=0}^{k-1} \frac{k}{2^i} \frac{(k-1)!}{i!(k-2-i)!} \int_0^1  f\big(\d_t\hc,\hc,d\hc^{k-2-i},[\hc,\hc]^i \big)\,dt.
\] 
Notice that $h(p,0) = h(p,1) \hol(p)$ so that
\[
\hc_0 = \ad(\hol^{-1})\hc_1 + \hol^{-1} d\hol
\]
where $\hc_t:= \varsigma_t^\ast \hc$.
Since $h(p,0) \in P_0 = s_0(M_0)$ for all $p \in \Omega_{M_0}(M\x\sone)$, the composition $\ev\circ \,h\circ \varsigma_0$ sends the loop $p$ to $s_0(p(0))$.
Recall that $A$ is framed with respect to $s_0$,  so $\hc_0 = 0$ which gives 
\[
\hc_1 = - (d\hol)\hol^{-1}
\]
and $F_i(0) = 0$ for each $i = 0,\dotsc,k-1$.
Thus, if $\widehat\Theta$ is the right-invariant Maurer-Cartan form on $G$
\[
\hc_1 = -\hol^\ast\widehat\Theta
\]
and hence
\[
d\hc_1 = - \frac{1}{2}\left[(d\hol)\hol^{-1}, (d\hol)\hol^{-1}\right]
\]
since $d\widehat\Theta +\tfrac{1}{2}[\widehat\Theta,\widehat\Theta] = 0$.
Using $\Theta_g = \ad(g^{-1})\widehat\Theta_g$ and the $\ad$-invariance of $f$ gives
\[
F_i(1) =  f\big(\hc,d\hc^{k-1-i},[\hc,\hc]^i\big) =  -\left(-\frac{1}{2}\right)^{k-1-i} \hol^\ast f\big(\Theta,[\Theta,\Theta]^{k-1}\big) 
\]
so that
\[
c_f = \hol^\ast \left(-\frac{1}{2}\right)^{k-1} k\sum_{i=0}^{k-1} \frac{(k-1)!}{i!(k-1-i)!}\frac{(-1)^i}{k+i} f\big(\Theta,[\Theta,\Theta]^{k-1}\big) + d\beta.
\]
Recalling the beta function
\[
\frac{(k-1)!(k-1)!}{(2k-1)!} = B(k,k) := \int_0^1 t^{k-1}(1-t)^{k-1} \,dt = \sum_{i=0}^{k-1} \frac{(k-1)!}{i!(k-1-i)!}\frac{(-1)^i}{k+i}
\]
finally gives
\[
c_f = \hol^\ast  \left(-\frac{1}{2}\right)^{k-1} \frac{k!(k-1)!}{(2k-1)!} f\big(\Theta,[\Theta,\Theta]^{k-1}\big) + d\beta.
\]
Pulling back by $\eta$ and using $\hol_\Phi = \hol \circ\,\eta$ gives
\[
s_f(\sA,\Phi) = \hol_\Phi^\ast  \left(-\frac{1}{2}\right)^{k-1} \frac{k!(k-1)!}{(2k-1)!} f\big(\Theta,[\Theta,\Theta]^{k-1}\big) + d\chi
\]
and hence the result, since
\[
\chi := \eta^\ast \beta = \eta^\ast \sum_{i=0}^{k-1} \frac{k}{2^i} \frac{(k-1)!}{i!(k-2-i)!} \int_0^1  f\big(\d_t\hc,\hc,d\hc^{k-2-i},[\hc,\hc]^i \big)\,dt
\]
is a $(2k-2)$-form on $M$ determined entirely by $\sA$ and $\Phi$.
Note that since $\widehat A$ depends on both $\sA$ and $\Phi$, the form $\chi$ also depends on both of these data.
\end{proof}

Unwinding the proof above gives
\vspace{11pt}
\begin{corollary}
\label{cor:canpullback}
If $g \colon N \to M$ is smooth, on the pullback $g^\ast \sQ \to N$ equipped with the pullback connection $g^\ast\sA$ and Higgs field $g^\ast \Phi$ one has
\[
s_f(g^\ast\sA,g^\ast\Phi) = \hol_{g^\ast\Phi}^\ast\tau_f + d\chi'
\]
for some canonical form $\chi'$.
Then $\chi' = g^\ast\chi$, with $\chi$ as in Proposition \ref{prop:canform}.
\end{corollary}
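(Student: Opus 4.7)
The plan is to unwind the definition of $\chi$ from the proof of Proposition \ref{prop:canform} and verify that every construction used there is natural with respect to the smooth map $g\colon N\to M$. The form $\chi$ is built from three layers of data—the caloron transform with its framed connection, the horizontal-lift trivialisation $h$ of the pulled-back bundle over $\Omega_{M_0}(M\x\sone)\x\I$, and the map $\eta$—each of which behaves well under pullback.

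First I would observe that the caloron transform commutes with pullback along $g$: the based caloron correspondence (Theorem \ref{theorem:gequiv}) is natural, so there is a canonical isomorphism $\cC(g^\ast\sQ) \cong (g\x\id)^\ast Q$ as framed $G$-bundles over $N\x\sone$, with framing and framed connection given by $(g\x\id)^\ast s_0$ and $(g\x\id)^\ast \uc$ respectively. Next I would note that the based loop functor is functorial: composition with $g\x\id$ yields a smooth map $\Omega(g\x\id)\colon \Omega_{N_0}(N\x\sone)\to\Omega_{M_0}(M\x\sone)$, and the evaluation maps fit into a commutative diagram
\[
\xy
(0,20)*+{\Omega_{N_0}(N\x\sone)\x\I}="1";
(60,20)*+{\Omega_{M_0}(M\x\sone)\x\I}="2";
(0,0)*+{N\x\sone}="3";
(60,0)*+{M\x\sone.}="4";
{\ar^{\Omega(g\x\id)\x\id} "1";"2"};
{\ar_{\ev_N} "1";"3"};
{\ar^{\ev_M} "2";"4"};
{\ar_{g\x\id} "3";"4"};
\endxy
\]

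The key technical step is that the horizontal-lift section $h^N$ used to trivialise $\ev_N^\ast(g\x\id)^\ast Q$ is precisely the pullback of $h^M$. Concretely, because parallel transport with respect to a pullback connection is obtained by composing parallel transport for the original connection with the canonical bundle map $(g\x\id)^\ast Q \to Q$, the horizontal lift of a loop $p\in\Omega_{N_0}(N\x\sone)$ through $(g\x\id)^\ast s_0(p(0))$ maps under the canonical bundle map to the horizontal lift of $(g\x\id)\circ p$ through $s_0(g(p(0)))$. This identity is the main (if modest) obstacle, but it is exactly the standard naturality of horizontal lifts. It follows that $\hc^N = (\Omega(g\x\id)\x\id)^\ast \hc^M$ and likewise $\hf^N$ is the corresponding pullback, so the integrand of $\beta$ in Proposition \ref{prop:canform}, being a polynomial in $\hc$, $d\hc$, $[\hc,\hc]$ and $\d_t\hc$, satisfies
\[
\beta^N = \Omega(g\x\id)^\ast \beta^M.
\]

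Finally I would verify that $\Omega(g\x\id)\circ \eta^N = \eta^M\circ g$, which is immediate from the definitions: both send $n\in N$ to the loop $\theta\mapsto(g(n),\theta)$. Combining these ingredients gives
\[
\chi' = (\eta^N)^\ast \beta^N = (\eta^N)^\ast \Omega(g\x\id)^\ast \beta^M = g^\ast (\eta^M)^\ast \beta^M = g^\ast \chi,
\]
which completes the argument.
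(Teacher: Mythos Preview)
Your proof is correct and is exactly the ``unwinding'' that the paper has in mind: the paper's own proof consists of the single sentence ``Unwinding the proof above gives'', and you have supplied precisely those details---naturality of the caloron transform, of the horizontal-lift section $h$, of the form $\beta$, and of the map $\eta$---that make the corollary follow from Proposition \ref{prop:canform}.
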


\chapter{The string potentials\label{ch:three}}

The discussion of the caloron correspondence in Chapter \ref{ch:two} culminated in the construction of characteristic classes for loop group bundles.
Since these classes live in the cohomology of the base manifold and depend only on the isomorphism class of the bundle, they define topological invariants.
More importantly, the construction provides explicit differential form representatives for these classes, given some data on the total space.

This chapter is devoted to the construction of new objects on loop group bundles: the \emph{string potentials}.
The string potentials are differential forms measuring the dependence of the string forms  (of some given loop group bundle) on a particular choice of connection and Higgs field.
In this manner, the string potentials may be thought of as `looped' versions of the well-known Chern-Simons forms.


\section{Construction of the string potentials}
\label{S:antistring}

In the setting of finite-dimensional principal bundles, recall that a connection $A$ on the $G$-bundle $P\to M$ determines the Chern-Weil forms: closed real-valued differential forms
\[
cw_f(A) := f(\underbrace{F,\dotsc,F}_{\text{$k$ times}}) \in \Omega^{2k}(M),
\]
where $f \in I^k(\g)$ and $F$ is the curvature form of $A$.
As discussed in Section \ref{SS:stringclasses} (in particular Theorem \ref{theorem:chernweil}) the cohomology class associated to $cw_f(A)$ is independent of the chosen connection $A$ and defines a characteristic class of the bundle.
The characteristic classes that arise in this manner are primary \emph{topological} invariants of the bundle $P\to M$.

In \cite{CS}, the study of the dependence of the forms $cw_f(A)$ on the connection $A$ led to the discovery of secondary \emph{geometric} invariants called Chern-Simons forms.
The Chern-Simons form associated to the connection $A$ and polynomial $f \in I^k(\g)$ is the $(2k-1)$-form on $P$ given by
\[
\CS_f(A) := \sum_{j=1}^{k-1} \left(-\frac{1}{2}\right)^{j}\frac{k!(k-1)!}{(k+j)! (k-1-j)!} f(A, \underbrace{[A,A],\dotsc,[A,A]}_{\text{$j$ times}}, \!\!\!\!\underbrace{F,\dotsc,F}_{\text{$k-j-1$ times}}\!\!\!\! ).
\]
\cite[equation (3.5)]{CS}.
An important property of the Chern-Simons forms is that
\[
d\CS_f(A) = \pi^\ast \cw_f(A)
\]
with $\pi \colon P\to M$ the projection \cite[Proposition 3.2]{CS}.
The discovery of the Chern-Simons forms motivated the work of Cheeger and Simons \cite{ChS1}, in which the theory of differential characters was developed.
Differential characters provide a refinement of integral cohomology that naturally includes differential forms: refer to \cite{Green} for a detailed treatment.
In \cite{ChS1} it is shown that the Chern-Simons forms descend to the base manifold $M$ as differential characters and, in fact, that these differential characters define differential characteristic classes for the $G$-bundle $P\to M$.

The initial situation for loop group bundles is not too dissimilar to the finite-dimensional setting: associated to a connection $\sA$ and Higgs field $\Phi$ on the $LG$-bundle $\sQ \to M$ there are the string forms
\[
s_f(\sA,\Phi) = k \int_\sone f(\nabla\Phi,\underbrace{\sF,\dotsc,\sF}_{\text{$k-1$ times}}) \in \Omega^{2k-1}(M)
\]
that play the role of the Chern-Weil forms.
Before introducing the \emph{total} string potentials, which play the role of the Chern-Simons forms in the loop group setting, the discussion  focusses on the \emph{relative} string potentials.
These latter objects are constructed by analogy with the relative Chern-Simons forms---see \cite[Appendix A]{FreedR} for a discussion of these objects.

The following result is used in the construction of the total string potentials.
\vspace{11pt}
\begin{lemma}
\label{lemma:intcube}
Let $\pi\colon \sQ \to M$ be a loop group bundle with caloron transform the $G$-bundle $\Pi\colon Q \to M\x\sone$.
Then the diagram
\[
\xy
(0,25)*+{\Omega^\bullet(\sQ\x\sone\x G)}="1";
(50,25)*+{\Omega^{\bullet-1}(\sQ\x G)}="2";
(0,0)*+{\Omega^\bullet(\sQ\x\sone)}="3";
(50,0)*+{\Omega^{\bullet-1}(\sQ)}="4";
(-40,13)*+{\Omega^\bullet(Q)}="5";
(-40,-12)*+{\Omega^{\bullet}(M\x\sone)}="6";
(10,-12)*+{\Omega^{\bullet-1}(M)}="7";
{\ar^{\widehat{\int_\sone}} "1";"2"};
{\ar^{s^\ast} "2";"4"};
{\ar^{(s\x\id)^\ast} "1";"3"};
{\ar^{\widehat{\int_\sone}} "3";"4"};
{\ar^{} "5";"1"};
{\ar^{(\pi \x\id)^\ast} "6";"3"};
{\ar^<<<<<<<<<<<<<<<<<{\widehat{\int_\sone}} "6";"7"};
{\ar^{\Pi^\ast} "6";"5"};
{\ar^{\pi^\ast} "7";"4"};
\endxy
\]
commutes, with $s$ the section $q \mapsto (q,1)$ and the map $\Omega^\bullet(Q) \to \Omega^\bullet(\sQ\x\sone\x G)$ given by the pullback of the quotient map.
\end{lemma}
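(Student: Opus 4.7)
The plan is to verify commutativity of the diagram face-by-face, since it decomposes naturally into three smaller commuting pieces, each of which reduces either to naturality of fibrewise integration along a map of trivial $\sone$-bundles or to a direct manifold-level identity coming from the definition of the caloron transform.

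First I would check the top square, which asserts that $s^\ast \widehat{\int_\sone} = \widehat{\int_\sone}(s\x\id)^\ast$. The map $s\x\id \colon \sQ\x\sone \to \sQ\x\sone\x G$ sending $(q,\theta)\mapsto(q,\theta,1)$ covers $s\colon \sQ\to\sQ\x G$, $q\mapsto(q,1)$, and is a bundle map between trivial $\sone$-bundles preserving the canonical fibre orientation. The required identity is therefore an instance of the naturality of $\widehat{\int_\sone}$ under pullback (Appendix~\ref{app:int}). The bottom triangle is handled identically: the assertion $\pi^\ast\widehat{\int_\sone}=\widehat{\int_\sone}(\pi\x\id)^\ast$ follows from naturality of $\widehat{\int_\sone}$ applied to the bundle map $\pi\x\id\colon \sQ\x\sone\to M\x\sone$ covering $\pi\colon\sQ\to M$.

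The remaining piece is the quadrilateral with vertices $\Omega^\bullet(M\x\sone)$, $\Omega^\bullet(Q)$, $\Omega^\bullet(\sQ\x\sone\x G)$ and $\Omega^\bullet(\sQ\x\sone)$, which I would reduce to an identity of smooth maps. Writing $\pi_Q\colon\sQ\x\sone\x G\to Q$ for the quotient and recalling that $\Pi[q,\theta,g]=(\pi(q),\theta)$, one has
\[
\Pi\circ\pi_Q\circ(s\x\id)(q,\theta) \;=\; \Pi[q,\theta,1] \;=\; (\pi(q),\theta) \;=\; (\pi\x\id)(q,\theta),
\]
so $\Pi\circ\pi_Q\circ(s\x\id)=\pi\x\id$ as maps $\sQ\x\sone\to M\x\sone$. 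Pulling back differential forms (which reverses the order of the composition) gives commutativity of this face.

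Composing the three commuting sub-diagrams yields commutativity of the whole, proving the lemma. The computation is essentially bookkeeping; the only point requiring any care is ensuring compatibility of fibre orientations in the application of naturality, but this is automatic since every horizontal map in sight is the identity on the $\sone$-factor.
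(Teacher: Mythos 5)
Your proof is correct and uses the same two ingredients as the paper's: naturality of fibre integration under pullback (Lemma~\ref{lemma:intpullsquare}) together with the map-level identity $\Pi\circ\pi_Q\circ(s\times\id)=\pi\times\id$, which the paper also records as the commuting square of total spaces; you have just spelled out the face-by-face decomposition more explicitly.
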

\begin{proof}
The result follows from Lemma \ref{lemma:intpullsquare} and the fact that
\[
\xy
(0,25)*+{\sQ\x\sone\x G}="1";
(50,25)*+{Q}="2";
(0,0)*+{\sQ\x\sone}="3";
(50,0)*+{M\x\sone}="4";
{\ar^{} "1";"2"};
{\ar^{\Pi} "2";"4"};
{\ar_{s\x\id} "3";"1"};
{\ar^{\pi\x\id} "3";"4"};
\endxy
\]
commutes, which is consequence of the construction of the caloron transform functor $\cC$ and the fact that pullback is contravariantly functorial.
\end{proof}

\begin{remark}
Take the loop group bundle $\sQ \to M$ equipped with connection $\sA$ and Higgs field $\Phi$.
Using the formula \eqref{eqn:con:lgcalfinconn} for the caloron transform connection $A$ on $Q:= \cC(\sQ)$ gives
\[
(s\x\id)^\ast A_{(q,\theta)} = \sA_q(\theta) + \Phi(q)(\theta)d\theta
\]
on $\sQ\x\sone$ and hence, using \eqref{eqn:curv},
\[
(s\x\id)^\ast \left(dA + \tfrac{1}{2}[A,A] \right) = \sF_q(\theta) + \nabla\Phi_q(\theta) \wedge d\theta.
\]
The diagram of Lemma \ref{lemma:intcube} then implies that whenever an expression on $\sQ$ or $M$ is given in terms of the connection $A$ on $Q$ or its curvature $F$ and then integrated over the fibre, the (schematic) expressions
\begin{equation}
\label{eqn:localconn}
A = \sA + \Phi d\theta \;\;\mbox{ and }\;\; F = \sF + \nabla\Phi\wedge d\theta
\end{equation}
may be used instead of \eqref{eqn:con:lgcalfinconn} and \eqref{eqn:curv}.
\end{remark}

In order to construct the string potentials, one requires a notion of \emph{smooth $n$-cubes} in the spaces of connections and Higgs fields on the loop group bundle $\sQ \to M$.
In general there is no obvious smooth structure on these spaces so a key concept in the construction of the string potentials is the following
\vspace{11pt}
\begin{definition}
\label{defn:smoothconn}
Let $P \to M$ be a fixed $G$-bundle.
A \emph{smooth $n$-cube of $G$-connections} on $P$ is a $G$-connection $\hat A$ on the $G$-bundle $P\x \I^n \to M\x \I^n$ satisfying $\imath_{X} \hat A = 0$ for any vector field $X$ on $P\x\I^n$ that is vertical for the projection $P\x\I^n \to P$.
Here $\imath_X$ denotes contraction with the vector field $X$.
\end{definition}

Consequently, any such $\hat A$ may be written at $(p,t_1,\dotsc,t_n) \in P\x\I^n$ as
\[
\hat{A}_{(p,t_1,\dotsc,t_n)}  = f(p,t_1,\dotsc,t_n) \pr_1^\ast \omega_{(p,t_1,\dotsc,t_n)}
\]
for some $1$-form $\omega$ on $P$ and smooth function $f$ on $P\x\I^n$.
Taking the Lie derivative $\cL$ gives
\begin{equation}
\label{eqn:connddt}
\left(\frac{d}{dt_i}\hat A\right)\mspace{-4mu}{}_{(p,t_1,\dotsc,t_n)}:= \left(\cL_{\d_{t_i}} \hat{A}\right)\mspace{-3mu}{}_{(p,t_1,\dotsc,t_n)} = \frac{\d \!f}{\d t_i}(p,t_1,\dotsc,t_n) \pr_1^\ast \omega_{(p,t_1,\dotsc,t_n)}
\end{equation}
where $\d_{t_i}$ is the vector field on $P\x\I^n$ generated by the $i$-th coordinate function $t_i$.

In the case that $\sQ \to M$ is a loop group bundle, there is the related notion
\vspace{11pt}
\begin{definition}
\label{defn:smoothhiggs}
A \emph{smooth $n$-cube of Higgs fields} on the loop group bundle $\sQ \to M$ is a Higgs field $\hat\Phi$ on $\sQ\x\I^n\to M\x\I^n$ with respect to the loop group action
\[
(q,t_1,\dotsc,t_n) \cdot \gamma:= (q\gamma,t_1,\dotsc,t_n).
\]
\end{definition}

As with \eqref{eqn:connddt},
\begin{equation}
\label{eqn:higgsddt}
\left(\frac{d}{dt_i}\,\hat\Phi \right)(q,t_1,\dotsc,t_n) := \left( \cL_{\d_{t_i}} \hat\Phi \right)(q,t_1,\dotsc,t_n) = \frac{\d \hat\Phi}{\d t_i}(q,t_1,\dotsc,t_n).
\end{equation}
For any product manifold of the form $X\x\I^n$ define the smooth \emph{slice map}
\begin{align*}
\varsigma_{(t_1,\dotsc,t_n)} \colon X &\lo X\x\I^n\\
x &\longmapsto (x,t_1,\dotsc,t_n).
\end{align*}
Fix a loop group bundle $\sQ \to M$, recall the notation $\cH_\sQ$ for the space of Higgs fields on $\sQ$ (Definition \ref{defn:higgs}) and write $\cA_\sQ$ for the affine space of connections on $\sQ$.
\vspace{11pt}
\begin{definition}
\label{defn:smoothpath}
A \emph{smooth map} $\gamma \colon \I^n \to \cA_\sQ\x\cH_\sQ$ is an assignment
\[
\gamma\colon {(t_1,\dotsc,t_n)} \longmapsto \left(\sA_{(t_1,\dotsc,t_n)},\Phi_{(t_1,\dotsc,t_n)}\right)
\]
with
\[
\sA_{(t_1,\dotsc,t_n)} = \varsigma_{(t_1,\dotsc,t_n)}^\ast \sA^\gamma\;\;\mbox{ and }\;\;\Phi_{(t_1,\dotsc,t_n)} = \varsigma_{(t_1,\dotsc,t_n)}^\ast\Phi^\gamma
\] 
for some smooth $n$-cube of connections $\sA^\gamma$ and smooth $n$-cube of Higgs fields $\Phi^\gamma$ on $\sQ$.
Thus, specifying such a smooth map $\gamma$ is equivalent to specifying smooth $n$-cubes $\sA^\gamma$ and $\Phi^\gamma$ in $\cA_\sQ$ and $\cH_\sQ$.
\end{definition}
\vspace{11pt}
\begin{remark}
When the base manifold $M$ is compact, $\cA_\sQ$ and $\cH_\sQ$ may be given Fr\'{e}chet manifold structures as in Appendix \ref{app:frechet} or \cite{H}.
In this case, a map $\gamma$ is smooth in the sense of Definition \ref{defn:smoothpath} if and only if it is smooth as a map between (Fr\'{e}chet) manifolds.
\end{remark}


\subsection{The relative string potentials}
With all this machinery in place, one is able to define the relative string potentials
\vspace{11pt}
\begin{definition}
\label{defn:anti}
For any $f \in I^k(\g)$ and smooth path $\gamma \colon \I \to \cA_\sQ \x\cH_\sQ$, the associated \emph{(relative) string potential} is
\[
S_f(\gamma) := k \int_0^1 \int_\sone \Big( (k-1) f\big(\sB_t , \underbrace{\sF_t,\dotsc,\sF_t}_{k-2\;\text{times}}, \nabla_t\Phi \big) + f\big(\underbrace{\sF_t,\dotsc,\sF_t}_{k-1\;\text{times}},\varphi_t \big)\Big) \,dt \in \Omega^{2k-2}(\sQ)
\]
where $\sF_t$ is the curvature of $\sA_t$, $\nabla_t\Phi$ is the Higgs field covariant derivative of $\Phi_t$ and
\[
\sB_t := \varsigma_t^\ast(\cL_{\d_t} \sA^\gamma)\;\;\mbox{ and }\;\;\varphi_t := \varsigma_t^\ast(\cL_{\d_t} \Phi^\gamma)
\]
are the `time derivatives' of $\sA_t$ and $\Phi_t$ at $t\in \I$.
\end{definition}
\vspace{11pt}
\begin{theorem}
\label{theorem:antistring}
The string potential of Definition \ref{defn:anti} descends to a form on $M$, also called $S_f(\gamma)$, which satisfies the equation
\[
dS_f(\gamma) = s_f(\sA_1,\Phi_1) - s_f(\sA_0,\Phi_0).
\]
\end{theorem}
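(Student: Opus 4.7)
The plan is to recognise $S_f(\gamma)$ as the fibre integral along $\I$ of a string form associated to the $1$-cube data, and then to reduce the differential formula to a Stokes-type identity for integration over the interval.

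More precisely, the smooth $1$-cube $(\sA^\gamma, \Phi^\gamma)$ equips the $LG$-bundle $\sQ \x \I \to M \x \I$ with an $LG$-connection and a Higgs field, so Chapter \ref{ch:two} produces the closed string form $s_f(\sA^\gamma, \Phi^\gamma) \in \Omega^{2k-1}(M\x\I)$. The first key step is the identification
\[
S_f(\gamma) \;=\; \pi^\ast \widehat{\int_\I}\, s_f(\sA^\gamma, \Phi^\gamma),
\]
where $\pi\colon \sQ \to M$ is the projection; this immediately settles descent, since the right-hand side is pulled back from $M$. To prove this identification I would decompose the $1$-cube curvature and Higgs field covariant derivative. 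Since $\sA^\gamma$ has no $dt$-component (Definition \ref{defn:smoothconn}), a direct calculation gives $\sF^\gamma = \sF_t - \sB_t \wedge dt$ and $\nabla^\gamma \Phi = \nabla_t \Phi + \varphi_t\,dt$. Substituting these into the formula \eqref{eqn:stringformex} for $s_f(\sA^\gamma,\Phi^\gamma)$, expanding (noting $(\sB_t\wedge dt)^2 = 0$), and collecting the $dt$-coefficient via the graded symmetry of $f$---in particular that the $1$-forms $\sB_t$ and $\nabla_t\Phi$ anti-commute through $f$---reproduces term by term the integrand of Definition \ref{defn:anti}.

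For the differential formula I would invoke the Stokes-type identity for fibre integration over the interval: for any form $\omega$ on $M\x\I$, the exterior derivative of $\widehat{\int_\I}\omega$ differs from $\varsigma_1^\ast \omega - \varsigma_0^\ast \omega$ by a term of the shape $\pm\widehat{\int_\I}d\omega$, whose proof is the elementary observation that writing $\omega = \alpha + \beta\wedge dt$ with $\imath_{\d_t}\alpha = \imath_{\d_t}\beta = 0$ reduces both sides to the fundamental theorem of calculus applied to $\alpha$. Applied to $\omega = s_f(\sA^\gamma,\Phi^\gamma)$, the $\widehat{\int_\I}d\omega$ term vanishes because $\omega$ is closed (it is $\widehat{\int_\sone}$ of the closed Chern-Weil form $cw_f(A^\gamma)$ of the caloron transform of the $1$-cube, and $\widehat{\int_\sone}$ commutes with $d$ by Lemma \ref{lemma:intwithd}). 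Since naturality of the string form construction yields $\varsigma_t^\ast s_f(\sA^\gamma,\Phi^\gamma) = s_f(\sA_t,\Phi_t)$, the identity $dS_f(\gamma) = s_f(\sA_1,\Phi_1) - s_f(\sA_0,\Phi_0)$ follows.

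The main obstacle throughout is sign bookkeeping. The $1$-forms $\sB_t$ and $\nabla_t\Phi$ anti-commute through $f$; extracting the $dt$-coefficient from $(\sF^\gamma)^{k-1}$ and $\nabla^\gamma\Phi$ generates graded signs from the $d\theta$-factors present in the schematic caloron curvature; and the Stokes identity for $\widehat{\int_\I}$ itself carries a degree-dependent sign. Verifying that all of these conspire to match the sign convention built into Definition \ref{defn:anti} and the stated formula is the only non-routine part of the argument; once the identification with $\widehat{\int_\I}s_f(\sA^\gamma,\Phi^\gamma)$ is secured, the rest is immediate.
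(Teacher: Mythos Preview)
Your proposal is correct and follows essentially the same route as the paper's own proof: both identify $S_f(\gamma)$ with the $\I$-fibre integral $\int_0^1 \varsigma_t^\ast \imath_{\d_t} s_f(\sA^\gamma,\Phi^\gamma)\,dt$ (your $\widehat{\int_\I}\,s_f(\sA^\gamma,\Phi^\gamma)$), deduce descent from the fact that $s_f(\sA^\gamma,\Phi^\gamma)$ lives on $M\times\I$, and obtain the differential formula from the Stokes/Cartan identity for integration over the interval together with closedness of $s_f$. The only cosmetic difference is that the paper computes $\imath_{\d_t}\sF^\gamma$ and $\imath_{\d_t}\nabla\Phi^\gamma$ via Cartan's Magic Formula rather than writing out the explicit decomposition $\sF^\gamma = \sF_t - \sB_t\wedge dt$, $\nabla\Phi^\gamma = \nabla_t\Phi + \varphi_t\,dt$ as you do.
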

\begin{proof}
Start with the smooth path $\gamma \colon \I \to \cA_\sQ\x\cH_\sQ$, with $\sA^\gamma$ and $\Phi^\gamma$ the corresponding smooth $1$-cubes (i.e. connection and Higgs field) on $\sQ\x \I$.
Write $\sF^\gamma$ for the curvature of $\sA^\gamma$ and $\nabla\Phi^\gamma$ for the Higgs field covariant derivative of $\Phi^\gamma$.
Cartan's Magic Formula ($\{\imath,d\} = \cL$) gives
\begin{align*}
\imath_{\d_t} \sF^\gamma &= \imath_{\d_t} \!\left(d\sA^\gamma +\tfrac{1}{2} [\sA^\gamma,\sA^\gamma ] \right) = \cL_{\d_t} \sA^\gamma\text{ and}\\
\imath_{\d_t} \nabla\Phi^\gamma &= \imath_{\d_t} \!\left(d\Phi^\gamma +[\sA^\gamma,\Phi^\gamma] - \d\sA^\gamma \right) = \cL_{\d_t}\Phi
\end{align*}
so that $\varsigma_t^\ast \imath_{\d_t} \sF^\gamma = \sB_t$ and $\varsigma_t^\ast \imath_{\d_t} \nabla\Phi^\gamma = \varphi_t$.
Then 
\begin{align*}
\varsigma^\ast_t \imath_{\d_t} f\big((\sF^\gamma)^{k-1},\nabla\Phi^\gamma\big) &= (k-1) \varsigma_t^\ast f\big( \imath_{\d_t} \sF^\gamma, (\sF^\gamma)^{k-2},\nabla\Phi^\gamma \big) + \varsigma_t^\ast f\big( (\sF^\gamma)^{k-1},\imath_{\d_t}\!
 \nabla\Phi^\gamma \big)\\
&= (k-1) f\big( \sB_t,\sF_t^{k-2},\nabla_t\Phi \big) + f\big(\sF_t^{k-1},\varphi_t\big)
\end{align*}
so that
\begin{equation}
\label{eqn:alternateanti}
S_f(\gamma) = \int_0^1  \varsigma_t^\ast \imath_{\d_t} s_f(\sA^\gamma,\Phi^\gamma)\,dt.
\end{equation}
Since the string form $s_f(\sA^\gamma,\Phi^\gamma)$ lives on $M\x\I$, this shows that $S_f(\gamma)$ descends to $M$.

The proof is completed by the calculation
\begin{align*}
dS_f(\gamma) &= \int_0^1  d \varsigma_t^\ast \imath_{\d_t} s_f(\sA^\gamma,\Phi^\gamma)\,dt\\
&= \int_0^1 \varsigma_t^\ast \left[\cL_{\d_t} s_f(\sA^\gamma,\Phi^\gamma)\right]\,dt\\
&= \int_0^1  \,\Bigg[\! \frac{d}{dt} s_f(\sA^\gamma,\Phi^\gamma) \! \Bigg]\!(t)\,dt\\
&= s_f(\sA_1,\Phi_1) - s_f(\sA_0,\Phi_0),
\end{align*}
using Stokes' Theorem and Cartan's Magic Formula.
\end{proof}

There is a close relationship \eqref{eqn:stringform} between string forms and Chern-Weil forms, so it seems reasonable to expect a similar relationship between the relative string potentials and the relative Chern-Simons forms.
Via the caloron correspondence one can see that the smooth $n$-cubes $\hat\sA$ of connections and $\hat\Phi$ of Higgs fields on $\sQ$ determine a smooth $n$-cube $\hat A$ of connections on $Q:= \cC(\sQ)$ and conversely.
Thus, a smooth map
\[
\gamma \colon \I^n \lo \cA_\sQ\x\cH_\sQ
\]
is equivalent to a smooth map
\[
\cC\gamma \colon \I^n \lo \cA_Q,
\]
with $\sA_Q$ the space of all connections on $Q$.
The relative Chern-Simons form given by the map $\cC\gamma$ associated to $f \in I^k(\g)$ is
\[
\CS_f(\cC\gamma) = k\int_0^1 f\big(B_t, \underbrace{F_t,\dotsc,F_t}_{\text{$k-1$ times}} \big)\,dt
\]
where $F_t = \varsigma_t^\ast \hat F$ is the curvature of the connection $A_t$ at `time' $t\in\I$ and $B_t = \varsigma_t^\ast \cL_{\d_t} \hat A$ is the time derivative at $t$.
Using \eqref{eqn:localconn},
\begin{align*}
\widehat{\int_\sone} \CS_f(\cC\gamma) &= k \widehat{\int_\sone} \int_0^1 f\big(\sB_t+\varphi_t d\theta, (\sF_t + \nabla\Phi\wedge d\theta)^{k-1} \big)\,dt \\
&=k \int_0^1 \left[\widehat{\int_\sone} \left( f\big(\varphi_t, \sF_t^{k-1} \big)\wedge d\theta + (k-1) f\big(\sB_t, \sF_t^{k-2},\nabla_t\Phi \big)\wedge d\theta \right) \right]dt \\
&=k \int_0^1 \int_\sone \Big( (k-1) f\big(\sB_t ,\sF_t^{k-2}, \nabla_t\Phi \big) + f\big(\sF_t^{k-1},\varphi_t \big)\Big)\,dt
\end{align*}
so that
\begin{equation}
\label{eqn:csanti}
S_f(\gamma) = \widehat{\int_\sone} \CS_f(\cC\gamma),
\end{equation}
which is the direct analogue of the expression \eqref{eqn:stringform} for the string potentials.

Suppose that $\psi \colon N \to M$ is a smooth map and $\sQ \to M$ is a loop group bundle.
Given a smooth path $\gamma \colon \I \to \cA_\sQ\x\cH_\sQ$, one may pull back the corresponding smooth $1$-cubes to obtain a smooth path $\psi^\ast\gamma \colon \I \to \cA_{\psi^\ast\sQ} \x\cH_{\psi^\ast\sQ}$.
It is immediate that
\begin{equation}
\label{eqn:stringpotpullback}
S_f(\psi^\ast\gamma) = \psi^\ast S_f(\gamma)
\end{equation}
for any $f \in I^k(\g)$ so the relative string potentials are natural.

The following result shows that, modulo exact forms, the string potential $S_f(\gamma)$ depends only on the endpoints of the smooth path $\gamma$.
This result is key to the construction of the $\Omega$ model for odd differential $K$-theory in Chapter \ref{ch:five}.
\vspace{11pt}
\begin{proposition}
\label{prop:closedend}
If $\gamma_0,\gamma_1 \colon \I \to \cA_\sQ\x\cH_\sQ$ are smooth paths with the same endpoints then $S_f(\gamma_1) - S_f(\gamma_0)$ is exact.
\end{proposition}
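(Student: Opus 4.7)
The plan is to build a homotopy from $\gamma_0$ to $\gamma_1$ as a smooth $2$-cube in $\cA_\sQ \x \cH_\sQ$, assemble its string form on $M \x \I^2$, and use fibre integration together with Stokes' theorem to exhibit an explicit primitive for the difference $S_f(\gamma_1) - S_f(\gamma_0)$.

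First I would exploit the affine structure on $\cA_\sQ \x \cH_\sQ$. Writing $\sA^{\gamma_i}, \Phi^{\gamma_i}$ ($i = 0,1$) for the smooth $1$-cubes of Definition \ref{defn:smoothpath} and letting $\pr \colon \sQ \x \I^2 \to \sQ \x \I$ forget the second interval factor (with coordinate $t$), I set
\[
\sA^\Gamma := (1-t)\, \pr^\ast \sA^{\gamma_0} + t\, \pr^\ast \sA^{\gamma_1}, \qquad \Phi^\Gamma := (1-t)\, \pr^\ast \Phi^{\gamma_0} + t\, \pr^\ast \Phi^{\gamma_1}.
\]
Convex combinations of connections remain connections, and $\pr^\ast$ preserves the annihilation condition from Definition \ref{defn:smoothconn} for vectors vertical to $\sQ \x \I^2 \to \sQ$, so $\sA^\Gamma$ and $\Phi^\Gamma$ together constitute a legitimate smooth $2$-cube $\Gamma$. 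The endpoint hypothesis $\gamma_0(i) = \gamma_1(i)$ for $i = 0, 1$ ensures that along the edges $\{s = 0\}$ and $\{s = 1\}$ the cube is constant in $t$.

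Next I would define
\[
T_f(\Gamma) := \widehat{\int_{\I^2}} s_f(\sA^\Gamma, \Phi^\Gamma) \in \Omega^{2k-3}(M),
\]
where $s_f(\sA^\Gamma, \Phi^\Gamma)$ is viewed as a closed $(2k-1)$-form on $M \x \I^2$. Iterating the argument in the proof of Theorem \ref{theorem:antistring} (or, equivalently, applying Lemma \ref{lemma:intwithd} together with Stokes' theorem on $\I^2$) gives
\[
d T_f(\Gamma) = \pm \widehat{\int_{\d \I^2}} s_f(\sA^\Gamma, \Phi^\Gamma)\big|_{\d \I^2}.
\]
On the edges $\{t = 0\}$ and $\{t = 1\}$ the restriction of $\Gamma$ recovers $\gamma_0$ and $\gamma_1$, and the remaining fibre integration in the $s$ direction reproduces $S_f(\gamma_0)$ and $S_f(\gamma_1)$ via the alternative expression \eqref{eqn:alternateanti}. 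On the edges $\{s = 0\}$ and $\{s = 1\}$, $\sA^\Gamma$ and $\Phi^\Gamma$ are independent of $t$, so the contractions $\imath_{\d_t}$ occurring in $\widehat{\int_t}$ annihilate the string form and these boundary contributions vanish. Assembling the oriented boundary pieces yields $S_f(\gamma_1) - S_f(\gamma_0) = \pm d T_f(\Gamma)$, which is exact as claimed.

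The main obstacle is bookkeeping the signs in the Stokes-with-boundary formula for fibre integration over the square $\I^2$; I would handle this by computing $d T_f(\Gamma)$ directly in local coordinates, decomposing $\d_{\I^2}$ into its four oriented edges exactly as in the proof of Theorem \ref{theorem:antistring}. The vanishing of the constant-in-$t$ edges is then immediate from the form of Definition \ref{defn:anti}, and the verification that the linear interpolation defines a genuine smooth $2$-cube is routine.
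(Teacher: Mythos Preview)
Your proposal is correct and follows essentially the same approach as the paper. The paper also builds a $2$-cube $\Gamma$ with constant restrictions on the edges $s=0,1$, defines the primitive $H_f(\Gamma) := \int_{\I^2} \varsigma_{(t,s)}^\ast \imath_{\d_s}\imath_{\d_t} s_f(\sA^\Gamma,\Phi^\Gamma)$ (which is your $T_f(\Gamma)$ up to sign, since the double contraction and integration is precisely the fibre integral over $\I^2$), and then computes $dH_f(\Gamma)$ using Cartan's formula and Fubini rather than the Stokes-with-boundary formula for fibre integration; the two computations are equivalent and the vanishing of the side edges in the paper is exactly your observation that $\imath_{\d_t}$ annihilates the string form where $\Gamma$ is $t$-constant.
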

\begin{proof}
An explicit antiderivative is constructed by taking $\Gamma \colon \I^2 \to \cA_\sQ \x\cH_\sQ$, which is smooth in the sense of Definition \ref{defn:smoothpath}, such that $\Gamma(t,0) = \gamma_0(t)$ and $\Gamma(t,1) = \gamma_1(t)$.
Write $\Gamma(t,s) = \gamma_s(t)$, noting that $\gamma_s \colon \I \to \cA_\sQ\x\cH_\sQ$ is then a smooth path for each $s\in\I$.
In the case that $\gamma_0$ and $\gamma_1$ have the same endpoints, the map $\Gamma$ may be chosen so that the restrictions $\Gamma_i(s):= \Gamma(i,s)$ are constant for $i=0,1$.

Taking such a $\Gamma$, define
\[
H_f(\Gamma) := \int_{\I^2} \varsigma_{(t,s)}^\ast \imath_{\d_s} \imath_{\d_t} s_f(\sA^\Gamma,\Phi^\Gamma)
\]
and notice that $\varsigma_{(t,s)} = \varsigma_s \circ \varsigma_t = \varsigma_t\circ \varsigma_s$.
Using Fubini's Theorem, 
\begin{align*}
\int_{\I^2}  \varsigma_{(t,s)}^\ast \imath_{\d_s} \frac{d}{dt} s_f(\sA^\Gamma,\Phi^\Gamma) &=  \int_0^1 \varsigma_s^\ast \imath_{\d_s} \int_0^1 \varsigma_t^\ast \frac{d}{dt} s_f(\sA^\Gamma,\Phi^\Gamma) \,dt\,ds\\
&=  \int_0^1 \varsigma_s^\ast \imath_{\d_s} \!\left(s_f(\sA^{\Gamma_1},\Phi^{\Gamma_1}) - s_f(\sA^{\Gamma_0},\Phi^{\Gamma_0}) \right)\,ds \\
&=0
\end{align*}
since the constancy condition implies $\imath_{\d_s} s_f(\sA^{\Gamma_i},\Phi^{\Gamma_i})= 0$ for $i=0,1$.
Using Cartan's Magic Formula, Stokes' Theorem, Fubini's Theorem and \eqref{eqn:alternateanti} then gives
\begin{align*}
dH_f(\Gamma) &= \int_{\I^2}  \varsigma_{(t,s)}^\ast d\big(  \imath_{\d_s} \imath_{\d_t} s_f(\sA^\Gamma,\Phi^\Gamma)\big)\\
&= \int_{\I^2}  \varsigma_{(t,s)}^\ast \left(\frac{d}{ds} \imath_{\d_t}  s_f(\sA^\Gamma,\Phi^\Gamma)-\imath_{\d_s} \frac{d}{dt} s_f(\sA^\Gamma,\Phi^\Gamma)\right) \\
&= \int_{\I^2}\varsigma_t^\ast \circ \varsigma_s^\ast \left( \frac{d}{ds} \imath_{\d_t}  s_f(\sA^\Gamma,\Phi^\Gamma) \right) \\
&= \int_0^1  \varsigma_t^\ast \imath_{\d_t} \left(s_f(\sA^{\gamma_1},\Phi^{\gamma_1})- s_f(\sA^{\gamma_0},\Phi^{\gamma_0})\right)\,dt \\
&= S_f(\gamma_1) - S_f(\gamma_0)
\end{align*}
so that the difference $S_f(\gamma_1) - S_f(\gamma_0)$ is exact as required.
\end{proof}


\subsection{The total string potentials}
\label{S:totalstring}
Having constructed the relative string potential forms, it is now possible to define the \emph{total} string potentials using a similar method to the construction of the Chern-Simons forms (cf. \cite[(3.1)]{CS}).
For a fixed loop group bundle $\pi \colon \sQ \to M$, the total string potentials depend only on a choice of connection and Higgs field on $\sQ$, rather than a smooth path $\gamma$, but live on the total space of the bundle rather than the base.

Consider the pullback bundle
\[
\xy
(0,25)*+{\pi^\ast\sQ}="1";
(50,25)*+{\sQ}="2";
(0,0)*+{\sQ}="3";
(50,0)*+{M}="4";
{\ar^{} "1";"2"};
{\ar^{} "2";"4"};
{\ar^{} "1";"3"};
{\ar^{\pi} "3";"4"};
\endxy
\]
which is a trivial loop group bundle over $\sQ$.
The diagonal map $\Delta \colon \sQ \to \pi^\ast \sQ$ determines the global trivialisation
\[
(q,q\gamma) \longmapsto (q,\gamma),
\]
so pulling back the product connection and the trivial Higgs field on the trivial bundle over $\sQ$ gives a connection $\Theta$ and Higgs field $\Psi$ on $\pi^\ast\sQ$.
There is a  canonical smooth path joining $(\Theta,\Psi)$ to $(\sA,\Phi)$, namely
\[
\nu \colon t \longmapsto ((1-t) \Theta + t\sA, (1-t)\Psi + t\Phi).
\]
The total string potential of the pair $(\sA,\Phi)$ is defined by pulling the form $S_f(\nu)$ on $\pi^\ast\sQ$ back to $\sQ$ using the diagonal section, that is\\

\begin{definition}
The \emph{total string potential} of the connection $\sA$ and Higgs field $\Phi$ on $\sQ$ associated to $f \in I^k(\g)$ is
\begin{equation}
\label{eqn:totalanti}
S_f(\sA,\Phi) := \Delta^\ast S_f(\nu) \in \Omega^{2k-2}(\sQ)
\end{equation}
where $\Delta$ is the diagonal map and $S_f(\nu)$ is the string potential of the path $\nu$ as per Definition \ref{defn:anti}.
\end{definition}

\vspace{11pt}
\begin{proposition}
\label{prop:totalstringclosed}
The total string potential satisfies
\[
dS_f(\sA,\Phi) = \pi^\ast s_f(\sA,\Phi),
\]
in particular, $dS_f(\sA,\Phi)$ descends to a form on $M$.
\end{proposition}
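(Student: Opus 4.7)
The plan is to apply Theorem \ref{theorem:antistring} to the canonical smooth path $\nu$ on the pullback bundle $\pi^\ast\sQ \to \sQ$ and then pull back via the section $\Delta$. Writing $p\colon \pi^\ast\sQ \to \sQ$, $(q,q') \mapsto q'$, for the natural loop group bundle map covering $\pi$, the endpoint of $\nu$ at $t=1$ is $(p^\ast\sA, p^\ast\Phi)$. Theorem \ref{theorem:antistring} therefore gives, as forms on $\sQ$ (the base of $\pi^\ast\sQ$),
\[
dS_f(\nu) = s_f(p^\ast\sA, p^\ast\Phi) - s_f(\Theta,\Psi).
\]

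Next I would identify the two terms on the right. Naturality of the string form, which is immediate from \eqref{eqn:stringformex} together with $\sF_{p^\ast\sA} = p^\ast\sF$ and $\nabla_{p^\ast\sA}(p^\ast\Phi) = p^\ast\nabla\Phi$, shows that after descent the first term equals $\pi^\ast s_f(\sA,\Phi)$. For the second, the global trivialisation $(q,q\gamma)\mapsto (q,\gamma)$ realises $\pi^\ast\sQ$ as the pullback, through the constant map $\sQ \to \{\mathrm{pt}\}$, of the loop group bundle over a point; and $(\Theta,\Psi)$ is precisely the pullback of the Maurer-Cartan connection together with the trivial Higgs field $\gamma \mapsto \gamma^{-1}\d\gamma$ on that bundle. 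The corresponding string form has degree $2k-1 \geq 1$ and hence vanishes on $\{\mathrm{pt}\}$, so naturality gives $s_f(\Theta,\Psi) = 0$ on $\sQ$.

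Combining these observations, $dS_f(\nu) = \pi^\ast s_f(\sA,\Phi)$ on $\sQ$. Since $\Delta$ is a section of the bundle projection $p_1\colon \pi^\ast\sQ \to \sQ$, $(q,q')\mapsto q$, one computes
\[
dS_f(\sA,\Phi) = d\Delta^\ast S_f(\nu) = \Delta^\ast dS_f(\nu) = \Delta^\ast \pi^\ast s_f(\sA,\Phi) = \pi^\ast s_f(\sA,\Phi),
\]
where the last equality uses $p_1 \circ \Delta = \id_\sQ$; the right-hand side is visibly pulled back from $M$, so $dS_f(\sA,\Phi)$ descends as claimed.

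The only nontrivial input is the vanishing $s_f(\Theta,\Psi) = 0$; once this is in hand the proposition reduces to Theorem \ref{theorem:antistring} together with naturality and the section property of $\Delta$. An alternative would be to compute $\Delta^\ast S_f(\nu)$ directly via the integral formula \eqref{eqn:alternateanti}, exploiting $\Delta^\ast \Theta = 0$ and $\Delta^\ast \Psi = 0$ along the image of the diagonal, but the descent argument above seems cleaner.
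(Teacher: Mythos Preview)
Your proof is correct and follows essentially the same route as the paper: apply Theorem~\ref{theorem:antistring} to the path $\nu$ on $\pi^\ast\sQ \to \sQ$, identify the $t=1$ term with $\pi^\ast s_f(\sA,\Phi)$, dispose of the $t=0$ term, and pull back along $\Delta$. The only real difference is in how the $(\Theta,\Psi)$ contribution is eliminated. The paper argues that ``all $\Theta$ and $\Psi$ terms vanish'' upon pullback by $\Delta$ (using $\Delta^\ast\Theta = 0$ and $\Delta^\ast\Psi = 0$), whereas you observe that $s_f(\Theta,\Psi)$ is already zero on $\sQ$ before any pullback, by naturality and the fact that $(\Theta,\Psi)$ is pulled back from a bundle over a point. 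Your argument is arguably cleaner, since $s_f(\Theta,\Psi)$ lives on the base $\sQ$ rather than on $\pi^\ast\sQ$, so invoking $\Delta^\ast$ to kill it requires first lifting it back up; your naturality argument avoids that bookkeeping entirely.
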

\begin{proof}
By Theorem \ref{theorem:antistring}
\[
dS_f(\nu) = \pi^\ast  s_f(\sA,\Phi) - s_f(\Theta,\Psi),
\]
which is a form on $\pi^\ast\sQ$.
Pulling back by the diagonal section $\Delta$ gives the result since all $\Theta$ and $\Psi$ terms vanish.
\end{proof}

Since the total string potential depends only on $\sA$ and $\Phi$, one might expect to be able to write $S_f(\sA,\Phi)$ in terms of $\sA$ and $\Phi$ alone.
In fact
\vspace{11pt}
\begin{proposition}
\label{prop:totalantiex}
The total string potential of the connection $\sA$ and Higgs field $\Phi$ associated to $f\in I^k(\g)$ is given by the formula
\begin{multline}
\label{eqn:totalantiex}
S_f(\sA,\Phi) =  \int_\sone \bigg[\sum_{i=0}^{k-1}c_i f\big( \Phi,[\sA,\sA]^i, \sF^{k-1-i} \big)\\
+ \sum_{i=1}^{k-1} \Big[ 2 i c_i f\big(\sA,[\sA,\Phi],[\sA,\sA]^{i-1},\sF^{k-1-i} \big) \\
- 2(k+i)c_i f\big( \sA,\nabla\Phi,[\sA,\sA]^{i-1},\sF^{k-1-i} \big) \Big]\bigg]
\end{multline}
where the coefficients are
\[
c_i = \left(-\frac{1}{2}\right)^i\frac{k!(k-1)!}{(k+i)!(k-1-i)!}.
\]
\end{proposition}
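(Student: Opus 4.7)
The plan is to compute $\Delta^\ast S_f(\nu)$ directly from the definitions, since the path $\nu$ is given explicitly by $\nu(t) = (\sA_t,\Phi_t)$ with $\sA_t = (1-t)\Theta + t\sA$ and $\Phi_t = (1-t)\Psi + t\Phi$.

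The first step is to exploit the trivialisation. Since $\Delta \colon \sQ \to \pi^\ast \sQ$ corresponds under the trivialisation $(q,q\gamma)\mapsto (q,\gamma)$ to the identity section, the forms $\Theta$ and $\Psi$ satisfy $\Delta^\ast\Theta = 0$ and $\Delta^\ast\Psi = 0$. Consequently
\[
\Delta^\ast \sA_t = t\sA,\qquad \Delta^\ast \Phi_t = t\Phi,\qquad \Delta^\ast \sB_t = \sA,\qquad \Delta^\ast \varphi_t = \Phi.
\]
A brief calculation using $\sF = d\sA + \tfrac12[\sA,\sA]$ and $\nabla\Phi = d\Phi + [\sA,\Phi] - \d\sA$ then gives
\[
\Delta^\ast \sF_t = t\sF - \tfrac{1}{2}t(1-t)[\sA,\sA], \qquad \Delta^\ast \nabla_t\Phi = t\nabla\Phi - t(1-t)[\sA,\Phi].
\]
These formulas are the key simplification: every quantity is now expressed polynomially in the $t$-independent forms $\sA$, $\Phi$, $\sF$, $\nabla\Phi$ and $[\sA,\sA]$, $[\sA,\Phi]$, with explicit polynomial coefficients in $t$.

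Next I substitute into the definition
\[
S_f(\nu) = k\int_0^1 \int_\sone \Big( (k-1)f(\sB_t,\sF_t^{k-2},\nabla_t\Phi) + f(\sF_t^{k-1},\varphi_t)\Big)\,dt
\]
and expand $\Delta^\ast \sF_t^{k-1}$ and $\Delta^\ast \sF_t^{k-2}$ by the binomial theorem, using that $f$ is symmetric and multilinear so that the factors can be rearranged freely. This produces three families of terms: one with $\Phi$, one with $\nabla\Phi$, and one with $[\sA,\Phi]$, each weighted by a polynomial in $t$ of the form $t^{k-1}(1-t)^j$ or $t^{k-1}(1-t)^{j+1}$, where $0\leq j\leq k-2$ or $j\leq k-1$.

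The final step is to evaluate the $t$-integrals using the beta function
\[
\int_0^1 t^{a}(1-t)^b\,dt = \frac{a!\,b!}{(a+b+1)!}
\]
and reindex the sums by $i=j$ (for the $\Phi$-term) and $i = j+1$ (for the $\nabla\Phi$- and $[\sA,\Phi]$-terms). Direct simplification shows the coefficients collapse to $c_i$, $-2(k+i)c_i$ and $2ic_i$ respectively, yielding \eqref{eqn:totalantiex}. The main obstacle is simply the combinatorial bookkeeping: tracking signs arising from the $-\tfrac12$ factors in $\Delta^\ast \sF_t$, verifying that the reindexing matches the ranges $0\leq i\leq k-1$ and $1\leq i\leq k-1$ correctly, and checking that the binomial coefficients combine with the beta-function values to give precisely $k!(k-1)!/((k+i)!(k-1-i)!)$. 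No new ideas beyond those already implicit in the construction of the relative string potential are required.
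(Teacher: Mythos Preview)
Your argument is correct and follows the same arc as the paper's proof: pull back by $\Delta$ to reduce to the linear path $(t\sA,t\Phi)$, expand binomially, and evaluate the $t$-integrals with the beta function. The only difference is organisational: the paper first invokes \eqref{eqn:csanti} and \eqref{eqn:localconn} to package everything on the caloron side as
\[
S_f(\sA,\Phi) = k\int_0^1 \widehat{\int_\sone} f\big(A, F_t^{k-1}\big)\,dt,\qquad F_t = tF - \tfrac{t}{2}(1-t)[A,A],
\]
so that a \emph{single} binomial expansion and beta integral produce the Chern--Simons--type expression $\sum_i c_i\,\widehat{\int_\sone} f(A,[A,A]^i,F^{k-1-i})$; the three families of terms then appear only at the end when $A=\sA+\Phi\,d\theta$, $[A,A]=[\sA,\sA]+2[\sA,\Phi]\wedge d\theta$ and $F=\sF+\nabla\Phi\wedge d\theta$ are substituted and the $d\theta$-coefficient is read off. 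Your route stays on the loop-group side throughout, expanding $\sF_t^{k-2}\cdot\nabla_t\Phi$ and $\sF_t^{k-1}$ separately; this means you carry three families through the beta integral rather than one, but avoids appealing to the caloron correspondence. Either way the combinatorics collapse to the same coefficients $c_i$, $2ic_i$, $-2(k+i)c_i$.
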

\begin{proof}
Pulling back by $\Delta$ eliminates all $\Theta$ and $\Psi$ terms, so comparing with Definition \ref{defn:anti} one obtains
\[
S_f(\sA,\Phi) =k \int_0^1  \int_\sone \Big( (k-1) f\big(\sB_t , \sF_t^{k-2}, \nabla_t\Phi \big) + f\big(\sF_t^{k-1},\varphi_t \big)\Big)\, dt
\]
with $\sA_t = t\sA$ and $\Phi_t = t\Phi$ or, using \eqref{eqn:csanti} and \eqref{eqn:localconn},
\[
S_f(\sA,\Phi) = k \int_0^1 \left[ \widehat{\int_\sone} f\big(B_t, F_t^{k-1} \big) \right]dt
\]
where $F_t = tF -\tfrac{t}{2}(1-t) [A,A]$.
Hence
\begin{align*}
\widehat{\int_\sone} f\big(B_t, F_t^{k-1} \big) &= \widehat{\int_\sone} f\left(A,\left(tF -\tfrac{t}{2}(1-t) [A,A]\right)^{k-1}\right)\\
&= \sum_{i=0}^{k-1}(-1)^i t^{k-1}(1-t)^i \frac{(k-1)!}{2^ii!(k-1-i)!} \widehat{\int_\sone} f\left(A,[A,A]^{i},F^{k-1-i}\right).
\end{align*}
Integrating over $\I$ and recalling the beta function gives
\[
S_f(\sA,\Phi) = \sum_{i=0}^{k-1} \left(-\frac{1}{2}\right)^i\frac{k!(k-1)!}{(k+i)!(k-1-i)!} \widehat{\int_\sone} f\left(A,[A,A]^{i},F^{k-1-i}\right),
\]
which has the desired coefficients.
It remains only to use \eqref{eqn:localconn} so that one may substitute $[\sA,\sA] +2[\sA,\Phi] \wedge d\theta$ for $[A,A]$.
Substituting in this expression yields
\begin{multline*}
\sum_{i=0}^{k-1}c_i \widehat{\int_\sone}  f\left(A,[A,A]^{i},F^{k-1-i}\right) = \int_\sone \bigg[\sum_{i=0}^{k-1}c_i f\big( \Phi,[\sA,\sA]^i, \sF^{k-1-i} \big)\\
+ \sum_{i=1}^{k-1} \Big[ 2 i c_i f\big(\sA,[\sA,\Phi],[\sA,\sA]^{i-1},\sF^{k-1-i} \big) \\
- 2(k+i)c_i f\big( \sA,\nabla\Phi,[\sA,\sA]^{i-1},\sF^{k-1-i} \big) \Big]\bigg],
\end{multline*}
giving the result.
\end{proof}

As with \eqref{eqn:stringpotpullback}, if $\psi \colon \sP \to \sQ$ is a morphism of loop group bundles one has
\[
\psi^\ast S_f(\sA,\Phi) = S_f(\psi^\ast\sA,\psi^\ast\Phi)
\]
for any $f \in I^k(\g)$ so that the total string potential forms are natural.

As some first examples of applications of the total string potentials:
\vspace{11pt}
\begin{example}
\label{example:liftingbundlegerbe}
Murray and Stevenson \cite[Theorem 5.1]{MS} obtained a differential form representative for Killingback's string class (Example \ref{example:kstring}) using the lifting bundle gerbe of the loop group bundle $\sQ \to M$.
They first gave a \emph{curving} for this bundle gerbe
\[
B = \frac{i}{2\pi} \int_\sone \Big( \tfrac{1}{2}\lan \sA,\d\sA\ran - \lan\sF,\Phi\ran \Big)
\]
which is a $2$-form on the total space of the $LG$-bundle $\pi\colon \sQ \to M$ that depends on the connection $\sA$ and Higgs field $\Phi$.
It turns out that there is a closed $3$-form $H$ on $M$ such that $dB  = \pi^\ast H$ and the cohomology class of $\tfrac{1}{2\pi i}H$ is the string class.

Using \eqref{eqn:totalantiex} on $\sA$ and $\Phi$ with $f(\cdot,\cdot) := - \frac{1}{8\pi^2} \lan\cdot,\cdot\ran$ gives
\begin{align*}
S_f(\sA,\Phi) &= -\frac{1}{8\pi^2} \int_\sone \Big(  \lan \Phi,\sF\ran -\tfrac{1}{6} \left(  \lan \Phi,[\sA,\sA] \ran + 2 \lan \sA,[\sA,\Phi] \ran\right) + \lan \sA,\nabla\Phi \ran \Big)  \\
&= -\frac{1}{4\pi^2} \int_\sone  \Big( \lan \Phi,\sF\ran -\tfrac{1}{2} \lan \sA,\d\sA \ran\Big)   - d \int_\sone f(\sA,\Phi) 
\end{align*}
so that
\[
S_f(\sA,\Phi) = \frac{1}{2\pi i} B+\mbox{exact}.
\]
Thus, the total string potentials recover the curving of the lifting bundle gerbe of $\sQ \to M$.
\end{example}
\vspace{11pt}
\begin{example}
\label{example:cohgenerators}
Consider the total string potential $S_f(\sA_\infty,\Phi_\infty)$ associated to the standard connection and Higgs field on the path fibration $PG \to G$.
Denote by $\imath \colon \Omega G \to PG$ the fibre inclusion (over, say, the identity of $G$) so that $\imath^\ast\sA_\infty =\Theta$, the Maurer-Cartan form on $\Omega G$, and $\Psi(\gamma) := (\imath^\ast\Phi_\infty)(\gamma) = \gamma^{-1}\d\gamma$ for $\gamma\in\Omega G$.
Since the forms $\nabla\Phi_\infty$ and $\sF_\infty$ are horizontal, by \eqref{eqn:totalantiex}, the $\ad$-invariance of $f$ and the fact that $[\Theta,[\Theta,\Theta]] = 0$ give
\begin{multline*}
\imath^\ast S_f(\sA_\infty,\Phi_\infty) =  \int_\sone \Big[ c_{k-1} f\left( (\Psi,[\Theta,\Theta]^{k-1} \right)\\
+2(k-1)  c_{k-1} f\left(\Theta,[\Theta,\Psi],[\Theta,\Theta]^{k-2} \right)\Big]\\
= \left(-\frac{1}{2}\right)^{k-1}\frac{k!(k-1)!}{(2k-2)!} \int_\sone f\left(\Psi ,[\Theta,\Theta]^{k-1} \right).
\end{multline*}
Notice that this form is closed by Proposition \ref{prop:totalstringclosed}, since $\imath^\ast\pi^\ast s_f(\sA_\infty,\Phi_\infty) = 0$, so it defines a class in $H^\bullet(\Omega G;\RR)$.
This construction coincides with Borel's transgression map \cite[p.~40]{B2} for the universal bundle $PG \to G$.

In the case that $G$ is simply-connected, $H^\bullet(\Omega G;\RR)$ is a polynomial algebra on the even-dimensional classes defined given by pulling back generators of the cohomology of $G$ to $\Omega G\x\sone$ by the evaluation map $\ev \colon (p,\theta) \mapsto p(\theta)$ and integrating over the fibre.
This process yields precisely the cohomology classes of the forms above \cite[Appendix 4.11]{PS}, so that the total string forms of the path fibration give generators for the cohomology of $\Omega G$.
\end{example}


\section{String potentials and secondary characteristic classes}
\label{S:diffcoh}
The total string potential forms introduced above have a remarkable degree of similarity to Chern-Simons forms: total string potentials are to string forms as the Chern-Simons forms are to Chern-Weil forms.

In \cite{ChS1,CS} the Chern-Simons forms were used to construct \emph{secondary} or \emph{differential} characteristic classes for $G$-bundles.
There is some disagreement as to the precise meaning of the phrase `secondary characteristic class' in the literature: in this thesis it is taken to mean characteristic classes valued in ordinary differential cohomology that are natural with respect to connection-preserving bundle maps\footnote{in the case where $G$ is a loop group, secondary characteristic classes are required to be natural with respect to connection- and Higgs field-preserving bundle maps.}.

The similarity between the total string potential forms and the Chern-Simons forms leads one naturally to question to what extent the string potentials may be used to construct secondary characteristic classes for loop group bundles.
This section constructs such classes in a restricted setting, indicating that a more general construction should be possible.

For this discussion, it is necessary to briefly review the theory of ordinary differential cohomology.
Ordinary differential cohomology is a multiplicative differential extension of ordinary (smooth singular) cohomology with integer coefficients (see Appendix \ref{app:diff} for background on differential extensions).
The notion of differential cohomology first appeared in the guise of differential characters in the work of Cheeger and Simons \cite{ChS1}; there are now many different models for ordinary differential cohomology, see for example \cite{Bry,HLZ}.

The sense in which it is meant that a construction is a \emph{model} for ordinary differential cohomology is made clear through the following notion of a character functor, introduced in \cite{SSax}.
\vspace{11pt}
\begin{definition}
\label{defn:charfun}
A \emph{character functor} is a functor $\check F$ from the category $\Man$ of smooth manifolds (with corners) to the category $\gagrp$ of graded abelian groups that is equipped with a set of natural transformations $\{i_1,i_2,\delta_1,\delta_2\}$ such that the diagram
\[
\xy
(-60,40)*+{0}="1";
(60,40)*+{0}="2";
(-30,20)*+{H^{k-1}\RR/\ZZ}="4";
(30,20)*+{H^k\ZZ}="5";
(-60,0)*+{H^{k-1}\RR}="7";
(0,0)*+{\check F^k}="8";
(60,0)*+{H^k\RR}="9";
(-30,-20)*+{\Omega^{k-1} / \Omega^{k-1}_\ZZ}="11";
(30,-20)*+{\Omega^k_\ZZ}="12";
(-60,-40)*+{0}="14";
(60,-40)*+{0}="15";
{\ar^{} "1";"4"};;
{\ar^{} "14";"11"};
{\ar^{\beta} "7";"11"};
{\ar^{\alpha} "7";"4"};
{\ar^{-B} "4";"5"};
{\ar^{} "5";"2"};
{\ar^{i_1} "4";"8"};
{\ar^{i_2} "11";"8"};
{\ar^{d} "11";"12"};
{\ar^{\imath} "5";"9"};
{\ar^{} "12";"15"};
{\ar^{\delta_2} "8";"5"};
{\ar^{\delta_1} "8";"12"};
{\ar^{\deR} "12";"9"};
\endxy
\]
commutes and has exact diagonal sequences.
Here, $H^k R$ denotes degree $k$ cohomology with coefficients in the ring $R$, $\Omega^k_\ZZ$ are the degree $k$ differential forms with integral periods,
\[
H^{k-1}\RR \xrightarrow{\;\;\alpha\;\;} H^{k-1}\RR/\ZZ \xrightarrow{\;\;B\;\;} H^k\ZZ \xrightarrow{\;\;\imath\;\;} H^k\RR
\]
is the Bockstein exact sequence arising from the exact sequence $\ZZ\to\RR\to\RR/\ZZ$ of coefficients and
\[
H^{k-1}\RR \xrightarrow{\;\;\beta\;\;} \Omega^{k-1}/\Omega^{k-1}_\ZZ \xrightarrow{\;\;d\;\;} \Omega^k_\ZZ \xrightarrow{\;\deR\;} H^k\RR
\]
is the exact sequence arising from the de Rham theorem.
This diagram is called the \emph{character diagram} (of the character functor $\check F$).
\end{definition}

Cheeger-Simons differential characters define such a character functor (for a detailed treatment see \cite[Chapter 2]{Green} or \cite{ChS1}).
All of the models for ordinary differential cohomology are related by the following key result
\vspace{11pt}
\begin{theorem}[\cite{SSax}]
\label{theorem:chardiagunique}
Any two character functors $\{\check F,i_1,i_2,\delta_1,\delta_2\}$ and $\{\check F',i'_1,i'_2,\delta'_1,\delta'_2\}$ are equivalent via a unique natural transformation $\check F \to \check F'$ that commutes with the identity on all other parts of the character diagram.
\end{theorem}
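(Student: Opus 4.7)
The plan is to construct, for each smooth manifold $M$ and each degree $k$, a homomorphism $\Phi_M^k : \check{F}^k(M) \to \check{F}'^k(M)$ by a diagram chase, verify that it intertwines the four natural transformations $\{i_1,i_2,\delta_1,\delta_2\}$, and then establish uniqueness using naturality together with the exactness of the diagonal sequences.

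For the construction, given $x \in \check{F}^k(M)$, I would use the surjectivity of $\delta_1'$ (a consequence of the exact diagonal sequence through $\check{F}'^k$) to pick any $y \in \check{F}'^k(M)$ with $\delta_1'(y) = \delta_1(x)$. Commutativity of the character diagram gives $\imath \circ \delta_2 = \deR \circ \delta_1$ on both sides, so $\imath(\delta_2(x) - \delta_2'(y)) = 0$, and by exactness of the Bockstein sequence there exists $u \in H^{k-1}(M;\RR/\ZZ)$ with $\delta_2(x) - \delta_2'(y) = -B(u)$. Setting $\Phi_M(x) := y + i_1'(u)$, the relations $\delta_1' \circ i_1' = 0$ and $\delta_2' \circ i_1' = -B$ yield $\delta_1' \Phi_M(x) = \delta_1(x)$ and $\delta_2' \Phi_M(x) = \delta_2(x)$ directly. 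Intertwining with $i_1$ follows by specialisation (for $x = i_1(u_0)$, take $y = i_1'(u_0)$ and $u=0$), and with $i_2$ by the dual argument using the other exact diagonal sequence.

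Two valid choices of the pair $(y,u)$ differ by an element of $\ker(\delta_1') \cap \ker(\delta_2')$, which by exactness equals $\im(i_1' \circ \alpha) = \im(i_2' \circ \beta)$, a potentially nontrivial subgroup. Showing that this ambiguity is harmless is the main obstacle. One route is to fix a canonical representative using the sheaf-like behaviour of the character functor: over a good cover of $M$, any $x$ admits local decompositions as sums of elements in the images of $i_1$ and $i_2$, and a patching argument using naturality of $\Phi$ on open inclusions stitches these local assignments into a consistent global $\Phi_M(x)$. Naturality of $\Phi$ with respect to smooth maps then follows from the pointwise construction.

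For uniqueness, suppose $\Phi$ and $\Phi'$ both satisfy the four intertwining relations. Then $\Psi := \Phi - \Phi'$ takes values in $\im(i_1' \circ \alpha)$ and vanishes on $\im(i_1) \cup \im(i_2)$. Exactness alone does not force $\Psi = 0$; the decisive input is naturality applied to universal examples (for instance, smooth finite-dimensional approximations of classifying spaces on which $\check{F}^k$ is exhausted by classes in the images of $i_1$ and $i_2$), from which one concludes that $\Psi$ must vanish on every class. This last step is the essential content of the Simons--Sullivan argument of \cite{SSax} and is where the most care is required.
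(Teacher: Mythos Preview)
The paper does not give its own proof of this theorem; it is stated with attribution to \cite{SSax} and immediately followed by a remark relating it to the Bunke--Schick uniqueness results, with no argument supplied in between. So there is nothing in the paper to compare your proposal against.

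As for the proposal itself: the diagram chase you set up is the natural first move, and you correctly identify the obstruction --- the ambiguity in the choice of $(y,u)$ lies in $\ker\delta_1'\cap\ker\delta_2' = \im(i_1'\circ\alpha)$, which is generally nonzero. But your resolution of this ambiguity is not a proof. The appeal to ``sheaf-like behaviour'' of a character functor is unsupported: nothing in Definition~\ref{defn:charfun} gives you local-to-global patching, and in fact differential cohomology does not satisfy a naive Mayer--Vietoris gluing for cochains. Likewise, your uniqueness paragraph ends by saying that the decisive step ``is the essential content of the Simons--Sullivan argument of \cite{SSax}'' --- which is to say, you have located the hard part and then cited the very paper whose theorem you are meant to be proving. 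That is an outline of where the difficulty lies, not a proof that overcomes it. The actual Simons--Sullivan argument uses naturality with respect to specific classifying maps (into products of Eilenberg--MacLane spaces and tori) to pin down the ambiguous class; if you want a self-contained proof you would need to reproduce that mechanism rather than gesture at it.
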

\vspace{11pt}
\begin{remark}
This result may be viewed as a version of the Bunke-Schick uniqueness result  (Theorem \ref{theorem:bunkeschick}) for differential extensions of integral cohomology.
\end{remark}

The following discussion centres around the original differential characters of Cheeger and Simons.
Fixing a smooth manifold $M$, define the reduction map
\[
\rho \colon C^k(M;\RR) \lo C^k(M;\RR/\ZZ)
\]
on smooth real-valued singular cochains by setting
\[
\rho(c)(\sigma) := c(\sigma) \mod \ZZ
\]
for any smooth singular $k$-chain $\sigma \in C_k(M;\RR)$, writing $\widetilde c = \rho(c)$ for short.
Using the integration pairing, a $k$-form $\omega \in \Omega^k(M)$ can be viewed as a real-valued $k$-cochain so that one may take the $\mathrm{mod}\;\ZZ$ reduction of $\omega$.
\vspace{11pt}
\begin{definition}[\cite{ChS1}]
\label{defn:differentialchar}
The set of degree $k$ \emph{differential characters} of $M$ is
\[
\check{H}^k(M) = \{ \chi \in \Hom(Z_{k-1}(M), \RR/\ZZ) \mid \chi \circ \partial = \widetilde{F_{\chi}}\; \mathrm{for\; some}\; F_{\chi} \in \Omega^k(M) \}
\]
and $\check{H}^0(M):=\ZZ$.
Here, $Z_{k-1}(M)$ is the group of integral smooth singular $(k-1)$-cycles on $M$.
\end{definition}

Notice that addition on $\Hom(Z_{k-1}, \RR/\ZZ)$ induces a group operation on $\check{H}^k(M)$ and that the degree convention used here is not the original one\footnote{compare Definition \ref{defn:differentialchar} with \cite[Section 1]{ChS1}.}.
Using a chain homotopy between the wedge product of forms and the cup product of cochains, Cheeger and Simons defined a graded ring structure on $\check H^\bullet(M) := \bigoplus_{k} \check H^k(M)$ \cite[Section 1]{ChS1}, however this structure is not required in this thesis.

An important property of differential characters is
\vspace{11pt}
\begin{theorem}[\cite{ChS1}]
\label{theorem:cheegersimons1}
There are natural exact sequences
\[
0 \lo H^{k-1}(M; \RR/\ZZ) \lo \check{H}^k(M) \xrightarrow{\;\; F \;\;} \Omega_{\ZZ}^k(M) \lo 0 
\]
\[
0\lo \Omega^{k-1}(M)/\Omega^{k-1}_{\ZZ}(M) \lo \check{H}^k(M) \xrightarrow{\;\; c \;\;} H^k(M) \lo 0
\]
\[
0\lo H^{k-1}(M;\RR) / H^{k-1}(M) \lo \check{H}^k(M) \xrightarrow{\;\; F\x c \;\;} R^k(M) \lo 0
\]
where
\[
R^k(M) := \{(c,\omega) \in H^k(M)\x\Omega_\ZZ^k(M)  \mid \imath(c) = [\omega] \in H^k(M;\RR)\}.
\]
\end{theorem}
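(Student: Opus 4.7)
The plan is to establish the three sequences in order, with the third following from the first two. Throughout I would rely on the fact that the smooth singular cochain complex computes $H^\bullet(M;\RR)$ and $H^\bullet(M;\ZZ)$, and on de Rham's theorem realising integration of forms on smooth chains as a cochain map $\Omega^\bullet(M) \to C^\bullet(M;\RR)$.

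First I would verify that the curvature map $F\colon \chi \mapsto F_\chi$ is well-defined and lands in $\Omega^k_\ZZ(M)$. Uniqueness of $F_\chi$ is immediate since $F_\chi(c) - F_\chi'(c) \in \ZZ$ for every smooth $k$-chain $c$ forces the difference to vanish as a smooth form. That $F_\chi$ is closed follows from $dF_\chi(c) = F_\chi(\partial c) \equiv \chi(\partial\partial c) = 0 \pmod{\ZZ}$ on any $(k+1)$-chain together with the observation that the only $\ZZ$-valued smooth form is zero; integer periods follow similarly by applying the defining relation to a boundary and then invoking de Rham. The kernel of $F$ consists of those $\chi$ vanishing on boundaries, which factor through $H_{k-1}(M;\ZZ)$ and so lie in $\Hom(H_{k-1}(M;\ZZ),\RR/\ZZ)$; divisibility of $\RR/\ZZ$ kills the $\mathrm{Ext}$ term in the universal coefficient theorem, yielding $H^{k-1}(M;\RR/\ZZ)$. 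For surjectivity, given $\omega \in \Omega^k_\ZZ(M)$ the class $[\omega] \in H^k(M;\RR)$ lies in the image of $H^k(M;\ZZ)$, so there is an integral cocycle $u$ and a real cochain $T$ with $\omega = u + \delta T$ as smooth singular cochains; setting $\chi := \rho(T)|_{Z_{k-1}(M)}$ gives a character with $F_\chi = \omega$.

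For the second sequence I would define $c(\chi)$ by lifting $\chi$ (which is an $\RR/\ZZ$-valued homomorphism on a free abelian group) to a real cochain $T \in C^{k-1}(M;\RR)$ with $\rho(T)|_{Z_{k-1}(M)} = \chi$. Then $F_\chi - \delta T$ takes integer values on every $k$-chain $c$ since $(F_\chi - \delta T)(c) \equiv \chi(\partial c) - \chi(\partial c) = 0 \pmod{\ZZ}$, and it is closed; its class $c(\chi) \in H^k(M;\ZZ)$ is independent of the choice of $T$. Surjectivity: any integer class can be represented by a form with integer periods by de Rham, and running the surjectivity argument for $F$ in reverse produces a character whose characteristic class is the given one. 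For the kernel, $c(\chi) = 0$ forces $[F_\chi] = 0$ in $H^k(M;\RR)$, so $F_\chi = d\alpha$ for some $\alpha \in \Omega^{k-1}(M)$; the character $\chi_\alpha\colon z \mapsto \int_z \alpha \pmod{\ZZ}$ has the same curvature, so $\chi - \chi_\alpha$ lies in $\ker F = H^{k-1}(M;\RR/\ZZ)$, and tracking its characteristic class through the first sequence shows that $\chi$ itself is in the image of $\Omega^{k-1}(M)/\Omega^{k-1}_\ZZ(M)$.

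The third sequence is essentially the fibre product of the first two: the map $F \x c$ lands in $R^k(M)$ because the construction of $c$ forces the image of $c(\chi)$ in $H^k(M;\RR)$ to equal the de Rham class of $F_\chi$. The kernel is identified via a diagram chase against the Bockstein long exact sequence $H^{k-1}(M;\ZZ) \to H^{k-1}(M;\RR) \to H^{k-1}(M;\RR/\ZZ) \to H^k(M;\ZZ)$: an element of $\ker(F\x c)$ corresponds to a class in $H^{k-1}(M;\RR/\ZZ)$ whose Bockstein vanishes, hence to the image of $H^{k-1}(M;\RR)$, which is precisely $H^{k-1}(M;\RR)/H^{k-1}(M;\ZZ)$. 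The main obstacle I expect is the surjectivity and kernel arguments for $c$, which require simultaneously realising integral cohomology classes by smooth forms with integer periods and aligning cochain-level identities so the resulting character has the prescribed characteristic class; naturality of all constructions is then routine to verify from the cochain-level definitions.
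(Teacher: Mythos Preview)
The paper does not prove this theorem; it is stated with a citation to Cheeger--Simons \cite{ChS1} and immediately used. There is therefore no in-paper argument to compare against. Your outline is essentially the standard Cheeger--Simons proof: lift $\chi$ to a real cochain $T$, set $c(\chi) := [F_\chi - \delta T]$, identify $\ker F$ via the universal coefficient theorem, and read off the third sequence from the Bockstein diagram. That is exactly what the cited reference does, so your proposal is correct and aligned with the intended source.
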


The maps $F$ and $c$ are the \emph{curvature} and \emph{underlying class} morphisms respectively using the terminology of differential extensions.

As a first step in understanding the relationship between the total string potentials and secondary characteristic classes, one now constructs degree $1$ secondary characteristic classes for $\Omega U(n)$-bundles.
The reason for considering this very limited setting is that one may easily construct the desired classes directly from the string potentials, which in this case are functions that depend only on the Higgs field.
Consider the invariant polynomial
\[
\tr \colon \xi \longmapsto \frac{1}{2\pi i}\tr(\xi)
\]
for $\xi \in \mathfrak{u}(n)$.
The total string form of the standard connection and Higgs field on the path fibration $PU(n) \to U(n)$ corresponding to this polynomial is the function
\[
S_{\tr}:= S_{\tr}(\sA_\infty,\Phi_\infty) \colon p \longmapsto \frac{1}{2\pi i}\int_{\sone} \tr(p^{-1}\d p),
\]
where $p \in PU(n)$, so that $p^{-1}\d p \in L\mathfrak{u}(n)$.
In the case that this function is evaluated on a closed path in $PU(n)$ one obtains a `winding number'
\vspace{11pt}
\begin{lemma}[\cite{TWZ}]
\label{lemma:winding}
For any $\gamma \in \Omega U(n)$
\[
\frac{1}{2\pi i} \int_\sone \tr(\gamma^{-1} \d\gamma) \in \ZZ.
\]
\end{lemma}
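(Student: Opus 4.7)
The plan is to reduce the statement to the well-known fact that the winding number of a smooth map $S^1 \to S^1$ is an integer. The key identity is that for any smooth curve $\gamma$ in $U(n)$, one has $\tr(\gamma^{-1} \partial\gamma) = (\det\gamma)^{-1} \partial(\det\gamma)$, i.e.\ the trace of the logarithmic derivative equals the logarithmic derivative of the determinant. I would verify this by differentiating Jacobi's formula $\partial \log \det \gamma = \tr(\gamma^{-1}\partial\gamma)$ pointwise on $\sone$, using that $\gamma(\theta)$ is unitary (so invertible) for each $\theta$.

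Having established this, the $L\mathfrak{u}(n)$-valued $1$-form $\gamma^{-1}\partial\gamma\,d\theta$ on $\sone$ satisfies
\[
\tr(\gamma^{-1}\partial\gamma)\,d\theta = (\det\circ\gamma)^{\ast}\Theta_{U(1)}
\]
where $\Theta_{U(1)}$ is the Maurer-Cartan form on $U(1)$. Identifying $U(1)$ with the unit circle in $\CC$ via $e^{i\theta}\mapsto e^{i\theta}$, one has $\Theta_{U(1)} = z^{-1}dz = i\,d\theta_{U(1)}$, so pulling back and dividing by $2\pi i$ gives exactly $\frac{1}{2\pi}(\det\circ\gamma)^\ast d\theta_{U(1)}$.

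The proof then concludes by recognising that
\[
\frac{1}{2\pi i}\int_{\sone}\tr(\gamma^{-1}\partial\gamma)\,d\theta \;=\; \frac{1}{2\pi}\int_{\sone}(\det\circ\gamma)^\ast d\theta_{U(1)} \;=\; \deg(\det\circ\gamma),
\]
which is the degree (equivalently, the winding number) of the smooth map $\det\circ\gamma\colon \sone \to U(1)\cong\sone$, and is therefore an integer. No step here is a serious obstacle; the only point requiring mild care is the Jacobi-type identity for $\tr(\gamma^{-1}\partial\gamma)$, which is routine but is the linchpin of the argument. Note that the basepoint condition $\gamma(0)=1$ plays no role, so the lemma holds for all $\gamma\in LU(n)$, with $\Omega U(n)$ being the case relevant to the surrounding discussion.
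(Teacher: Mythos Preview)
Your proof is correct and takes a somewhat different route from the paper's. Both arguments reduce to the $U(1)$ case, but by different mechanisms: the paper uses the decomposition $U(n)\cong SU(n)\rtimes U(1)$ to write $\gamma=\kappa z$ with $\kappa\in SU(n)$ and $z$ scalar, observes that $\tr(\kappa^{-1}\d\kappa)=0$ so $\tr(\gamma^{-1}\d\gamma)=n\,z^{-1}\d z$, and then proves the $U(1)$ integrality from scratch via the explicit antiderivative $f(s)=z(s)^{-1}\exp\!\big(\int_0^s z^{-1}z'\big)$. You instead apply Jacobi's formula $\tr(\gamma^{-1}\d\gamma)=(\det\gamma)^{-1}\d(\det\gamma)$ to pass directly to $\det\circ\gamma\colon\sone\to U(1)$ and invoke the degree. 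Your approach is slightly more economical (one identity in place of a group decomposition, and a citation of a standard fact in place of the antiderivative computation); the paper's approach has the minor advantage of being self-contained on the $U(1)$ step. Your remark that the basepoint condition is inessential is also correct: periodicity alone suffices, as one sees either from your degree interpretation or by noting that the paper's function $f$ is constant regardless, so $f(0)=f(1)$ forces $e^{\int}=z(1)/z(0)=1$.
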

\begin{proof}
The proof is essentially the same as that of \cite[Lemma 3.3]{TWZ}.
Recall that there is an isomorphism
\[
SU(n) \rtimes U(1) \lo U(n)
\]
given by $(\kappa, g) \mapsto \kappa g$.
Taking $\gamma \in \Omega U(n)$, write $(\kappa(\theta), z(\theta)) \mapsto \gamma(\theta)$ under this isomorphism, so that
\[
\gamma^{-1} \d\gamma = (\kappa z)^{-1} \d (\kappa z) = z^{-1} \kappa^{-1} (\d\kappa) z + z^{-1} \d z.
\]
Applying $\tr$, since $\kappa^{-1}\d\kappa\in \mathfrak{su}(n)$, one obtains
\[
\tr(\gamma^{-1}\d\gamma) = \tr(z^{-1}\d z) = n\cdot z^{-1}\d z.
\]
It therefore suffices to show that
\[
\frac{1}{2\pi i} \int_{\sone} z^{-1}\d z = \frac{1}{2\pi i} \int_0^1 z(t)^{-1} z'(t)\,dt \in \ZZ
\]
for any $z \in \Omega U(1)$.
Notice that the function
\[
f(s) := z(s)^{-1} e^{\int_0^s z(t)^{-1} z'(t)\,dt}
\]
satisfies $f(0) = 1$ and for all $s \in \I$
\[
f'(s) = - z(s)^{-2} z'(s) e^{\int_0^s z(t)^{-1} z'(t)\,dt} + z(s)^{-2} z'(s) e^{\int_0^s z(t)^{-1} z'(t)\,dt}  = 0
\]
since $z' z^{-1} + z (z^{-1})' =0$.
Thus $f$ is constant on $\I$, in particular, $f(1) = 1$ so that 
\[
e^{\int_0^1 z(t)^{-1} z'(t)\,dt} =1 \Longrightarrow \int_0^1 z(t)^{-1} z'(t)\,dt \in 2\pi i \ZZ
\]
as $z(1) = 1$.
\end{proof}

The result of Lemma \ref{lemma:winding} allows one to understand the behaviour of $S_{\tr}$ under the action of $\Omega U(n)$ on $PU(n)$.
Namely, take any $p \in PU(n)$ and $\gamma \in \Omega U(n)$, then
\begin{multline*}
S_{\tr}(p\gamma) = \frac{1}{2\pi i}\int_{\sone} \tr\big((p\gamma)^{-1}\d (p\gamma)\big) =  \frac{1}{2\pi i}\int_{\sone} \tr(p^{-1}\d p) + \frac{1}{2\pi i}\int_{\sone} \tr(\gamma^{-1}\d \gamma) \\
= S_{\tr}(p) + \frac{1}{2\pi i}\int_{\sone} \tr(\gamma^{-1}\d \gamma).
\end{multline*}
In particular, modulo $\ZZ$,
\[
\widetilde{S_{\tr}}(p\gamma) = \widetilde{S_{\tr}}(p)
\]
for all $p \in PU(n)$ and $\gamma \in \Omega U(n)$.
This shows that $\widetilde{S_{\tr}}$ descends to an $\RR/\ZZ$-valued function, or $\RR/\ZZ$-valued $0$-cochain, on $U(n)$.
By Proposition \ref{prop:totalstringclosed} $d S_{\tr} = \ev_{2\pi}^\ast s_{\tr}(\sA_\infty,\Phi_\infty)$ and so, writing $\check S := \widetilde{S_{\tr}}$, one has
\[
\check S \circ \d = \widetilde{s_{\tr}(\sA_\infty,\Phi_\infty)}.
\]
Since every smooth singular $0$-chain is a cycle, $\check S$ is a differential character of degree $1$ on $U(n)$.
The differential character $\check S$ now gives rise to a secondary characteristic class for $\Omega U(n)$-bundles in the following manner.
For any $\Omega U(n)$-bundle $\sQ \to M$ with connection $\sA$ and Higgs field $\Phi$, recall the $\Omega U(n)$-equivariant Higgs field holonomy $\hol_\Phi \colon \sQ \to PU(n)$, which preserves the Higgs field but not necessarily the connection.
Since the total string potential $S_{\tr}(\sA,\Phi) = \tfrac{1}{2\pi i}\int_\sone \tr(\Phi)$ is independent of $\sA$, one has
\[
S_{\tr}(\sA,\Phi) = \hol_\Phi^\ast S_{\tr}
\]
on the nose.
The above argument shows that the mod $\ZZ$ reduction $\widetilde{S_{\tr}(\sA,\Phi)}$ of $S_{\tr}(\sA,\Phi)$ descends to a differential character $\check S(\sQ,\sA,\Phi)$ on $M$.
Moreover, if $f \colon \sP \to \sQ$ is an $\Omega U(n)$-bundle map that preserves the connections and Higgs fields and covers a map $\widetilde f \colon N\to M$, it is clear from the construction that
\[
\widetilde f^\ast \check S(\sQ,\sA,\Phi) = \check S(\sP,f^\ast \sA,f^\ast\Phi),
\]
so that the assignment $(\sQ,\sA,\Phi) \mapsto \check S(\sQ,\sA,\Phi)$ defines a secondary characteristic class.
\vspace{11pt}
\begin{remark}
Observe that a key step in the construction of the secondary characteristic classes $\check S(\sQ,\sA,\Phi)$ above was pulling back by the Higgs field holonomy map, which is not necessarily connection-preserving.
Irrespective of this, however, the resulting differential characters are natural with respect to connection- and Higgs field-preserving maps so that they do indeed define secondary characteristic classes.
\end{remark}

\newpage
\mbox{}
\chapter{$\Omega$ vector bundles\label{ch:four}}

The focus of this chapter is a certain family of Fr\'{e}chet vector bundles, which in this thesis are called \emph{$\Omega$ vector bundles}.
In essence, $\Omega$ vector bundles are Fr\'{e}chet vector bundles with the additional property that the fibres form a smoothly-varying family of finite-rank modules over $L\CC$, the ring of smooth loops in $\CC$ with pointwise addition and multiplication.

The $\Omega$ vector bundles are interesting objects because, as demonstrated below, they are the targets on the infinite-dimensional side of a caloron correspondence for vector bundles that is naturally related to the based caloron correspondence of Chapter \ref{ch:two} via the frame bundle and associated vector bundle functors.
As is the case for principal bundles, the caloron correspondence for vector bundles may be enhanced to incorporate connective data, specificially \emph{module connections} and \emph{vector bundle Higgs fields} (Definitions \ref{defn:vectorhf} and \ref{defn:moduleconnection}).
These data are geometric objects on the total space of an $\Omega$ vector bundle that respect the $L\CC$-module structure on the fibres.

As is the case with finite-rank vector bundles, the collection of $\Omega$ vector bundles over a fixed base manifold $M$ is a monoidal category under the Whitney sum operation $\oplus$.
Passing to isomorphism classes gives an abelian semi-group and, in the case that $M$ is compact, the vector bundle caloron correspondence is used to show that the Grothendieck group completion of this semi-group gives a new model---phrased entirely in terms of smooth bundles over $M$---for the odd $K$-theory of $M$.

As the name suggests, the notion of $\Omega$ vector bundles is used extensively in the construction of the $\Omega$ model of odd differential $K$-theory in Chapter \ref{ch:five}.
As such, this chapter is intended to provide an in-depth treatment of these objects.


\section{Preliminaries}
\label{S:vector}
Before presenting the definition of $\Omega$ vector bundles, one first fixes some terminology and notation.
In the following, take $V  = \CC^n$ for some $n$ and write $G = GL(V)$ for its general linear group.
The action of $G$ on $V$ is the standard action of $GL(V)$ on $V$.
Unless stated otherwise, all vector bundles (whether Fr\'{e}chet or finite rank) and all maps between them are taken to be smooth.
When a base manifold $X$ is given and $W$ is a specified vector space, $\underline{W}$ is understood to mean the trivial vector bundle $X \x W \to X$.

The action of a loop $\gamma$ in $LG$ or $\Omega G$ on a loop $v \in LV$ is always taken to be the loop
\begin{equation}
\label{eqn:looprep}
\gamma(v) \colon \theta \longmapsto \gamma(\theta)(v(\theta))
\end{equation}
in $V$, where the action on the right hand side is the standard action of $G$ on $V$.
This is the \emph{loop} or \emph{pointwise representation} of $LG$ (or $\Omega G$) on $LV$.
The group of all $\CC$-linear endomorphisms of $LV$ is denoted $GL(LV)$ and it is clear that $\Omega G < LG < GL(LV)$.

The reader is assumed to be familiar with the notion of the \emph{frame bundle} $\cF(E) \to M$ of a vector bundle $E\to M$ as well as the related concept of the \emph{associated vector bundle} of a $G$-bundle, recalling that these assignments are functorial.
In particular, if $E \to M$ is a complex vector bundle of rank $n$, a \emph{frame} at $x \in M$ is a linear isomorphism $\CC^n \to E_x$.
In the case that $\sE \to M$ is a Fr\'{e}chet vector bundle with typical fibre $L\CC^n$ a \emph{frame} at $x\in M$ is a linear isomorphism $L\CC^n \to \sE_x$ of $\CC$-vector spaces.
A classical reference on frame bundles and associated vector bundles is \cite{KN1}.

Having established these fundamentals,
\vspace{11pt}
\begin{definition}
\label{defn:frechetvector}
An \emph{$\Omega$ vector bundle over $M$} is a Fr\'{e}chet vector bundle $\sE\to M$ with typical fibre $LV$ and structure group $\Omega G$\footnote{that is, there is a preferred family of local trivialisations of $\sE$ for which the transition functions are valued in $\Omega G$, which is taken as a subgroup of $GL(LV)$ via the loop representation.}.
Notice that this implies that the frame bundle $\cF(\sE)\to M$ of $\sE$ has a chosen reduction of the structure group from $GL(LV)$ to $\Omega G$.
In this context, unless explicitly stated otherwise $\cF(\sE)$ shall be taken to mean this reduction rather than the whole $GL(LV)$-bundle.
\end{definition}
\vspace{11pt}
\begin{remark}
In order to easily distinguish $\Omega$ vector bundles from their finite-dimensional counterparts, similarly to Chapters \ref{ch:two} and \ref{ch:three} the total space of an $\Omega$ vector bundle is usually denoted by a sans-serif character, for example $\sE, \sF$ or $\sH$ as opposed to $E, F$ or $H$.
\end{remark}

It may seem bizarre that in this definition the typical fibre $LV$ is the space of \emph{free} loops in $V$ whereas the structure group $\Omega G$ consists of \emph{based} loops.
The reason for this is explained in Remark \ref{remark:evaluation} below.

A direct consequence of Definition \ref{defn:frechetvector} is that the fibres of any $\Omega$ vector bundle $\sE \to M$ naturally have the structure of $L\CC$-modules.
To see this fix $x \in M$ and choose any local trivialisation
\[
\psi_\alpha \colon \sE|_{U_\alpha} \lo  U_\alpha\x L\CC^n
\] 
of $\sE$ over some open set $U_\alpha \subset M$ containing $x$.
Write $\cC^\infty(U_\alpha,L\CC)$ for the ring of smooth maps $U_\alpha \to L\CC$ with addition and multiplication defined pointwise and notice that the set of sections of the trivial bundle $U_\alpha \x L\CC^n \to U_\alpha$ is naturally a free, finitely generated $\cC^\infty(U_\alpha,L\CC)$-module of rank $n$.
Since $\psi_\alpha$ is $\CC$-linear, it induces the structure of a $\cC^\infty(U_\alpha,L\CC)$-module on $\Gamma(U_\alpha,\sE)$.
Explicitly,
\[
f\cdot s \colon x\longmapsto \psi_\alpha^{-1}\big(f(x)\cdot \psi_\alpha(s(x))\big)
\]
defines the action of $f \in \cC^\infty(U_\alpha,L\CC)$ on $s \in \Gamma( U_\alpha , \sE)$.
It is straightforward to see that this makes $\Gamma(U_\alpha,\sE)$ into a $\cC^\infty(U_\alpha,L\CC)$-module and, moreover, that $\psi_\alpha$ is an isomorphism of $\cC^\infty(U_\alpha,L\CC)$-modules.
In particular, since the isomorphism $\psi_\alpha$ acts fibrewise, the fibre of $\sE$ over $x$ becomes an $L\CC$-module of rank $n$.

If $\psi_\beta \colon \sE|_{U_\beta} \to U_\beta\x L\CC^n$ is another local trivialisation of $\sE$ with $x\in U_\beta$, then on the intersection $U_{\alpha\beta} := U_\alpha\cap U_\beta$ the transition functions $\tau_{\alpha\beta}$ of $\sE$ are valued in $\Omega G$ acting via the loop representation.
Thus the map
\begin{align*}
\psi_\beta\circ \psi_\alpha^{-1} \colon U_{\alpha\beta}\x L\CC^n &\lo U_{\alpha\beta}\x L\CC^n
\\
(x,v) &\longmapsto \left(x,\tau_{\alpha\beta}(x)(v)\right)
\end{align*}
is an isomorphism of $\cC^\infty(U_{\alpha\beta},L\CC)$-modules.
Denote by $\Gamma(U_{\alpha\beta},\sE)^{(\alpha)}$ the set of sections $\Gamma(U_{\alpha\beta},\sE)$ with $\cC^\infty(U_{\alpha\beta},L\CC)$-module structure induced by the restriction of $\psi_\alpha$ to $U_{\alpha\beta}$ as above, with $\Gamma(U_{\alpha\beta},\sE)^{(\beta)}$ defined similarly.
Then there is the commuting diagram
\[
\xy
(0,25)*+{\Gamma(U_{\alpha\beta}\x L\CC^n)}="1";
(50,25)*+{\Gamma(U_{\alpha\beta}\x L\CC^n)}="2";
(0,0)*+{\Gamma(U_{\alpha\beta},\sE)^{(\alpha)}}="3";
(50,0)*+{\Gamma(U_{\alpha\beta},\sE)^{(\beta)}}="4";
{\ar^{\psi_\beta\circ\psi_\alpha^{-1}} "1";"2"};
{\ar^{\psi_\alpha} "3";"1"};
{\ar^{\psi_\beta} "4";"2"};
{\ar^{\id} "3";"4"};
\endxy
\]
of $\cC^\infty(U_{\alpha\beta},L\CC)$-module isomorphisms that restrict to isomorphisms of $L\CC$-modules on each fibre.
In particular the module structure on $\Gamma(U_{\alpha\beta},\sE)$ and, hence, the module structure on $\sE_x$, is independent of the chosen trivialisation.

Denote by $L\mathcal{O}_M$ the sheaf of rings
\[
U \longmapsto \cC^\infty(U,L\CC)
\]
on $M$, where the ring structure on $\cC^\infty(U,L\CC)$ given pointwise by the ring structure of $L\CC$.
Denote by $\Gamma_\sE$ the sheaf of sections
\[
U \longmapsto \Gamma(U,\sE)
\]
on $M$.
A straightforward extension of the above argument implies that $\Gamma_\sE$ is a locally free sheaf of modules of rank $n$ over the ringed space $(M,L\mathcal{O}_M)$.
\vspace{11pt}
\begin{definition}
The \emph{rank} of an $\Omega$ vector bundle $\sE \to M$ is the rank of its sheaf of sections $\Gamma_\sE$ as a sheaf of modules over the ringed space $(M,L\mathcal{O}_M)$.
\end{definition}

\vspace{11pt}
\begin{remark}
\label{remark:framesmodule}
Given any frame $p \in \cF(\sE)_x$, there is some $\gamma \in \Omega G$ such that $p = \psi_\alpha^{-1} \circ \gamma$, where $\psi_\alpha$ is the local trivialisation of $\sE$ as above, restricted to the fibre $\sE_x$.
Since $\gamma$ and $\psi_\alpha^{-1}$ are both isomorphisms of $L\CC$-modules, so is $p$.
In general, any $L\CC$-module isomorphism $L\CC^n \to \sE_x$ is of the form $\psi_\alpha^{-1}\circ \gamma$ for some $\gamma \in LG$.
\end{remark}
\vspace{11pt}
\begin{remark}
\label{remark:evaluation}
Take the $\Omega$ vector bundle $\sE \to M$ with typical fibre $LV$ and structure group $\Omega G$.
Another consequence of Definition \ref{defn:frechetvector} is that there is an \emph{evaluation map} on $\sE$.
Namely, for each $\theta \in \sone$ there is a vector bundle $E_\theta \to M$ with $\rank E_\theta  = \dim V$ and a map $ \ev_\theta \colon \sE \to E_\theta$ of vector bundles covering the identity on $M$.

The bundle $E_\theta$ is constructed by first choosing an open covering $\{U_\alpha\}_{\alpha\in I}$ of $M$ over which there are local trivialisations
\[
\psi_\alpha \colon \sE|_{U_\alpha} \simto U_\alpha \x LV
\]
of $\sE$.
Then over $U_\alpha$, by evaluating loops in $V$ at $\theta$ one obtains
\[
\sE|_{U_\alpha} \xrightarrow{\;\;\psi_\alpha\;\;} U_\alpha \x LV \xrightarrow{\;\;\ev_\theta\;\;} U_\alpha \x V
\]
which, by evaluating the $\Omega G$-valued transition functions of $\sE$ at $\theta$ gives $G$-valued functions over double intersections of the $U_\alpha$.
The clutching construction gives a vector bundle $E_\theta \to M$ with typical fibre $V$ and structure group $G$.
The vector bundle $E_\theta$ is independent of the choices of $U_\alpha$ and $\psi_\alpha$: different choices amount simply to different local trivialisations of $E_\theta$.
Moreover, the construction determines the map
\[
\ev_\theta \colon \sE \lo E_\theta
\]
of vector bundles, which agrees with the regular evaluation of loops at $\theta$ in the above trivialisations.
Notice that
\begin{itemize}
\item
$E_0$ is the trivial bundle since the transition functions of $\sE$ are valued in based loops in $G$, in fact $E_0$ has a distinguished global trivialisation; and
\item
there is a well-defined notion of an element $v \in \sE_x$ being zero at $\theta \in \sone$; namely if $\ev_\theta(v)$ agrees with the zero section in the fibre of $E_\theta$ over $x$.
Therefore one may consistently talk about $v \in \sE_x$ as being either zero or non-zero at $\theta$.
\end{itemize}
It shall be seen as a result of the vector bundle caloron correspondence that there is a vector bundle over $M\x\sone$ whose restriction over $M\x\{\theta\}$ is naturally isomorphic to $E_\theta$: this is the caloron transform of $\sE$.
This is the reason for specifying that the typical fibre is the vector space of free loops in $V$ rather than based loops, since in the latter case one would have $E_0 \cong M\x\{0\}$, which does not have the same rank as $E_\theta$ for any $\theta\in\sone\setminus\{0\}$.
The first observation above, which is a direct consequence of the fact that the structure group is $\Omega G$ rather than $LG$, implies that there is a distinguished choice of \emph{framing} (Definition \ref{defn:vecframe}) of the caloron transform over $M_0$.
\end{remark}

Some basic examples of $\Omega$ vector bundles:
\vspace{11pt}
\begin{example}[the trivial $\Omega$ vector bundle]
The \emph{trivial $\Omega$ vector bundle of type $V$} over $M$ is $\underline{LV} := M\x LV \to M$.
For any $x \in M$ the fibre $\cF(\underline{LV})_x$ consists of precisely those $L\CC$-module isomorphisms $\psi\colon LV \to LV$ such that $\ev_0 \circ \,\psi \circ \imath \colon V \to V$ is the identity, with $\imath \colon V \hookrightarrow LV$ the inclusion of constant loops.
\end{example}
\vspace{11pt}
\begin{example}[the universal $\Omega$ vector bundle]
\label{example:universalomega}
The \emph{universal $\Omega$ vector bundle of type $V$} is the associated vector bundle\footnote{recall that the associated vector bundle to, say, the $\Omega GL_n(\CC)$-bundle $\cF(\sE)$ is $\cF(\sE)\x_{\Omega GL_n(\CC)} L\CC^n$, where the quotient is taken with respect to the $\Omega GL_n(\CC)$-action $(p,v)\gamma:= (p\gamma,\gamma^{-1}(v))$.} to the path fibration $PG\to G$, denoted
\[
\sE(V):= PG \x_{\Omega G}LV \lo G.
\]
It is clear that $\sE(V)$ is universal for $\Omega$ vector bundles modelled over $LV$ with structure group $\Omega G$, since $PG$ is universal for $\Omega G$-bundles.
The universal property of $\sE(V)$ can also be seen as a consequence of the existence of vector bundle Higgs fields (Definition \ref{defn:vectorhf} and Lemma \ref{lemma:higgshol}).
\end{example}

There is a pullback operation on $\Omega$ vector bundles given in exactly the same way as for ordinary vector bundles.
Namely, if $\psi \colon N \to M$ is a smooth map of manifolds and $\sE \to M$ is an $\Omega$ vector bundle, then the regular pullback $\psi^\ast \sE \to N$ (as Fr\'{e}chet vector bundles) is an $\Omega $ vector bundle since the transition functions of $\psi^\ast \sE$ are obtained by pulling back the transition functions of $\sE$ by $\psi$.
\vspace{11pt}
\begin{definition}
A \emph{morphism} from the $\Omega$ vector bundle $\pi_\sE \colon \sE \to M$ to the $\Omega$ vector bundle $\pi_\sF \colon \sF \to N$ (with $n = \rank \sE$ and $m = \rank \sF$) is a pair of smooth maps $f \colon \sE \to \sF$, $\widetilde f\colon M\to N$ such that
\begin{itemize}
\item
$\pi_\sF \circ f = \widetilde f \circ \pi_\sE$; and

\item
for every $x \in M$ the restriction $f \colon\sE_x \to \sF_{\widetilde f(x)}$ is a homomorphism of $L\CC$-modules with the additional property that for any choice of frames $p \colon L\CC^n \to \sE_x$ and $q \colon L\CC^m \to \sF_{\widetilde f(x)}$ the composition $q^{-1} \circ f\circ p$ is given by the pointwise action of some $\gamma\in \Omega \Mat_{(m,n)}(\CC)$, where $\Mat_{(m,n)}(\CC)$ is the space of $m\x n$ complex-valued matrices with its usual action on $\CC^n$.
\end{itemize}
In the sequel, a morphism shall usually be written as a map $f \colon \sE \to \sF$ of total spaces that covers the map $\widetilde f \colon M \to N$ of base spaces.

An \emph{isomorphism} of $\Omega$ vector bundles (based over $M$) is a morphism $f \colon \sE \to \sF$ covering the identity with the additional property that $f$ is a diffeomorphism and $f^{-1} \colon \sF \to \sE$ is also a morphism of $\Omega$ vector bundles covering the identity.
\end{definition}
\vspace{11pt}
\begin{remark}
Any isomorphism $f \colon \sE \to \sF$ of $\Omega$ vector bundles uniquely determines an isomorphism $\cF(f) \colon \cF(\sE) \to \cF(\sF)$ of frame bundles, given simply by postcomposing frames by $f$.

Conversely, given an isomorphism $g \colon \cF(\sE) \to \cF(\sF)$ of the frame bundles then passing to the associated vector bundles there is an isomorphism
\[
\cF(\sE)\x_{\Omega GL_n(\CC)}L\CC^n \lo \cF(\sF) \x_{\Omega GL_n(\CC)}L\CC^n
\]
of $\Omega$ vector bundles given by
\[
[p,v] \longmapsto [g(p),v].
\]
This determines an isomorphism $\sE \to \sF$ of $\Omega$ vector bundles, since there are natural isomorphisms $\cF(\sE)\x_{\Omega GL_n(\CC)}L\CC^n \to \sE$ and $\cF(\sF)\x_{\Omega GL_n(\CC)}L\CC^n \to \sF$ of $\Omega$ vector bundles, i.e.
\[
\cF(\sE)\x_{\Omega GL_n(\CC)}L\CC^n \ni [p,v] \longmapsto p(v) \in \sE.
\]
Thus an isomorphism of $\Omega$ vector bundles may be defined equivalently as a map $f \colon \sE \to \sF$ of $\Omega$ vector bundles that induces an isomorphism of frame bundles.
\end{remark}

The primary reason for dealing with $\Omega$ vector bundles, at least initially, is the vector bundle caloron correspondence.
As is the case for principal bundles, the vector bundle caloron correspondence is best phrased using the terminology of category theory.
\vspace{11pt}
\begin{definition}
For a fixed manifold $M$, write $\ovect(M)$ for the groupoid which has as objects the $\Omega$ vector bundles $\sE \to M$.
Morphisms are given by isomorphisms of $\Omega$ vector bundles covering the identity on $M$.

Write $\ovect_V(M)$ for the subgroupoid of $\ovect(M)$ consisting of those $\Omega$ vector bundles with typical fibre $LV$, with its appropriate morphisms.
\end{definition}

Observe that the assignments $M\mapsto \ovect(M)$ and $M\mapsto \ovect_V(M)$ are functorial, acting on morphisms by pullback.


\subsection{The caloron correspondence for vector bundles}
The caloron correspondence for vector bundles is constructed by using the based caloron transform of Chapter \ref{ch:two} in tandem with the frame bundle and associated vector bundle functors.
\vspace{11pt}
\begin{definition}
\label{defn:vecframe}
Let $E \to X$ be a complex vector bundle and let $X_0 \subset X$ be a smooth submanifold.
Then $E$ is \emph{framed} over $X_0$ if there is a distinguished trivialisation of $E$ over $X_0$.
This distinguished trivialisation is equivalent to a section $s_0 \in \Gamma(X_0,\cF(E))$ of the frame bundle, called a \emph{framing} of $E$ over $X_0$.
Thus the frame bundle of a framed vector bundle is framed (in the sense of Definition \ref{defn:framedbundle}).
\end{definition}
\vspace{11pt}
\begin{remark}
Unless stated otherwise, when a framed vector bundle $E \to X$ is said to have framing $s_0$ over $X_0$, the symbol $s_0$ refers to the section of $\cF(E)$ over $X_0$ determined by the distinguished trivialisation.
\end{remark}
\vspace{11pt}
\begin{definition}
For a fixed manifold $M$, let $\frvect(M)$ be the groupoid of (finite rank) complex vector bundles $E \to M\x\sone$ that are framed over $M_0:= M\x\{0\}$.
Morphisms in $\frvect(M)$ are vector bundle isomorphisms covering the identity that preserve the framing over $M_0$.
\end{definition}

Recall the caloron transform functors $\cC$ and $\cC^{-1}$ from Section \ref{S:based}.
The \emph{caloron transform} for vector bundles is the functor
\[
\cV\colon \ovect(M) \lo \frvect(M)
\]
whose action on objects is given by sending $\sE$ to
\[
\cV(\sE) := \cC(\cF(\sE)) \x_{G} V  \lo M\x\sone,
\]
where $\sE$ has typical fibre $LV$ and structure group $\Omega G$.
The vector bundle $\cV(\sE)$ is equipped with the distinguished trivialisation given by
\[
[s_0(x,0)g, v] \longmapsto (x,0, g(v)) \in M_0 \x V,
\]
where $s_0 \in \Gamma(M_0,\cC(\cF(\sE)))$ is the framing arising from the caloron correspondence for principal bundles.
The action of $\cV$ on a morphism $f \colon \sE \to \sF$ in $\ovect(M)$ is given by
\[
\cV(f) \colon [q,v] \longmapsto \left[\left(\cC\circ\cF(f)\right)(q),v\right],
\]
which preserves the framing and covers the identity on $M\x\sone$, so that $\cV(f)\colon\cV(\sE)\to\cV(\sF)$ is indeed a morphism in $\frvect(M)$.

The \emph{inverse caloron transform} for vector bundles is the functor
\[
\cV^{-1} \colon \frvect(M) \lo \ovect(M)
\]
that sends the object $E \in \frvect(M)$ to the associated vector bundle
\[
\cV^{-1}(E) := \cC^{-1}(\cF(E)) \x_{\Omega G} LV \lo M,
\]
where $E$ has typical fibre $V$ and structure group $G$.
As usual, the action of $\gamma \in \Omega G$ on $v \in LV$ is given by the loop representation \eqref{eqn:looprep}.
The action of $\cV^{-1}$ on the morphism $f\colon E\to F$ of $\frvect(M)$ is given by
\[
\cV^{-1}(f) \colon [q,v] \longmapsto \left[\left(\cC^{-1}\!\circ\cF(f)\right)\!(q),v\right]
\]
which is a morphism in $\ovect(M)$.
\vspace{11pt}
\begin{theorem}
\label{theorem:vector}
The caloron correspondence for vector bundles
\[
\cV\colon \ovect(M) \lo \frvect(M)
\;\;\mbox{and}\;\;
\cV^{-1} \colon \frvect(M) \lo \ovect(M)
\]
is an equivalence of categories.
\end{theorem}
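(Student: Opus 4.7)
The plan is to reduce the proof to the already-established principal bundle caloron correspondence (Theorem \ref{theorem:equiv}) by exploiting the fact that $\cV$ and $\cV^{-1}$ are built by composing $\cC$ and $\cC^{-1}$ with the frame bundle and associated vector bundle functors. Explicitly, on objects $\cV(\sE) = \cC(\cF(\sE)) \times_G V$ and $\cV^{-1}(E) = \cC^{-1}(\cF(E)) \times_{\Omega G} LV$, so each is a composition of three functors each of which admits a standard natural isomorphism with its partner.

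First I would construct the natural isomorphism $\alpha^V \colon \cV^{-1} \circ \cV \simto \id_{\ovect(M)}$. Given $\sE \in \ovect(M)$, the key observation is the classical adjunction at the principal bundle level: for any $\Omega G$-bundle $\sQ$, there is a canonical isomorphism $\sQ \simto \cF(\sQ \times_{\Omega G} LV)$ of $\Omega G$-bundles (where the right-hand side denotes the $\Omega G$-reduction), and similarly for $G$-bundles. Applying this to $\sQ = \cC(\cF(\sE))$ identifies $\cF(\cV(\sE))$ with $\cC(\cF(\sE))$ as framed $G$-bundles; I would verify that under this identification the framing of $\cC(\cF(\sE))$ over $M_0$ (arising from the principal caloron transform) matches the framing of $\cF(\cV(\sE))$ induced by the distinguished trivialisation of $\cV(\sE)$ over $M_0$. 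Composing with the natural isomorphism $\alpha \colon \cC^{-1} \circ \cC \simto \id$ of Theorem \ref{theorem:equiv} yields $\cC^{-1}(\cF(\cV(\sE))) \simto \cF(\sE)$, and passing to associated vector bundles produces $\alpha^V_\sE \colon \cV^{-1}(\cV(\sE)) \simto \sE$.

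The construction of $\beta^V \colon \cV \circ \cV^{-1} \simto \id_{\frvect(M)}$ is symmetric: starting from $E \in \frvect(M)$, the same adjunction identifies $\cF(\cV^{-1}(E))$ (as an $\Omega G$-bundle) with $\cC^{-1}(\cF(E))$, after which the natural isomorphism $\beta$ of Theorem \ref{theorem:equiv} together with the associated-vector-bundle recovery $\cF(E) \times_G V \simto E$ delivers the desired isomorphism in $\frvect(M)$. Naturality of $\alpha^V$ and $\beta^V$ follows formally from the naturality of $\alpha$, $\beta$ and of the frame/associated-vector-bundle adjunction at each intermediate stage.

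The main obstacle will be a bookkeeping one: checking that all additional structure on either side survives the round trip. Concretely, I must verify that (i) the framings over $M_0$ are intertwined correctly by the above adjunctions so that $\alpha^V$ lands in $\frvect(M)$-morphisms in the reverse direction (and similarly for $\beta^V$), and (ii) the distinguished $L\CC$-module structure on the fibres of $\sE$ (the feature that distinguishes an $\Omega$ vector bundle from a bare Fr\'{e}chet bundle) is preserved by $\alpha^V$. Point (ii) reduces to the observation made in Remark \ref{remark:framesmodule} that the $\Omega G$-action on $LV$ by the pointwise loop representation is automatically by $L\CC$-module isomorphisms, so the $L\CC$-module structure is encoded precisely by the $\Omega G$-reduction of the frame bundle that the functors $\cV$ and $\cV^{-1}$ manipulate.
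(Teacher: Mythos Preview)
Your proposal is correct and follows essentially the same route as the paper: both reduce to Theorem~\ref{theorem:equiv} via the frame/associated-bundle adjunction, using the natural isomorphisms $\cF(\sQ\times_{\Omega G}LV)\cong\sQ$ and $\cF(F)\times_G V\cong F$ (the paper packages these as $\eta_{\cF(F)}$ and $\eta_F$, yielding natural isomorphisms $\mu\colon\cF\circ\cV\to\cC\circ\cF$ and $\nu\colon\cF\circ\cV^{-1}\to\cC^{-1}\circ\cF$). The bookkeeping checks you flag---framing compatibility and preservation of the $L\CC$-module structure---are exactly the points the paper notes in passing.
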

\begin{proof}
It suffices to show that for each $\sE \in \ovect(M)$ and for each $E\in \frvect(M)$ there are natural isomorphisms $\alpha_\sE \colon \cV^{-1}\circ\cV(\sE) \to \sE$ and $\beta_E \colon \cV\circ\cV^{-1}(E) \to E$.

Recall that for any vector bundle $F$ (modelled over $V$ with structure group $G$) there are natural isomorphisms
\[
\eta_{\cF(F)} \colon \cF(\cF(F)\x_G V) \lo \cF(F)
\;\;
\mbox{ and }
\;\;
\eta_F \colon \cF(F)\x_G V \lo F.
\]
In the first instance, $\eta_{\cF(F)}$ is defined as the inverse of the map that sends the frame $p \colon v\mapsto p(v) \in F_x$ to the frame $v \mapsto [p,v]$ and, in the second instance, $\eta_F$ is given by sending $[p,v] \mapsto p(v)$.
Moreover, in the case that $F$ is an $\Omega$ vector bundle, $\eta_F$ is readily seen to be an isomorphism of $\Omega$ vector bundles.
It follows from this and the definitions of  $\cV$ and $\cV^{-1}$ that there are natural isomorphisms $\mu\colon \cF \circ \cV \to \cC\circ \cF$ and $\nu \colon \cF \circ \cV^{-1} \to \cC^{-1}\circ \cF$.

The natural isomorphism $\alpha_\sE$ is given by composing
\[
\cV^{-1}\circ\cV(\sE) = \cC^{-1}(\cF(E))\x_{\Omega G} LV \xrightarrow{\;\;[\cC^{-1}(\mu),\id]\;\;} \cC^{-1}(\cC( \cF(\sE))) \x_{\Omega G} LV
\]
with
\[
\cC^{-1}(\cC( \cF(\sE))) \x_{\Omega G} LV \xrightarrow{\;\;[\alpha_{\cF(\sE)},\id]\;\;} \cF(\sE) \x_{\Omega G} LV
\]
and
\[
\cF(\sE) \x_{\Omega G} LV \xrightarrow{\;\;\eta_\sE\;\;} \sE ,
\]
where $\alpha_{\cF(\sE)}\colon \cC^{-1} \circ \cC (\cF(\sE))\to \cF(\sE)$ is the natural isomorphism of Theorem \ref{theorem:equiv}.
Note that each of the above maps is a natural isomorphism so that $\alpha_\sE$ is also.

The construction of $\beta_E$ is entirely analogous, noting in this case that the framing is preserved.
\end{proof}

From this proof it is also seen that
\vspace{11pt}
\begin{corollary}
\label{cor:frame}
The diagrams of functors
\[
\xy
(-38,0)*+{\small
\xy
(0,25)*+{\ovect_V(M)}="1";
(50,25)*+{\Bun_{\Omega G}(M)}="2";
(0,0)*+{\frvect_V(M)}="3";
(50,0)*+{\frBun_G(M)}="4";
{\ar^{\cF} "1";"2"};
{\ar^{\cC} "2";"4"};
{\ar^{\cV} "1";"3"};
{\ar^{\cF} "3";"4"};
\endxy};
(0,0)*+{\mbox{and}};
(42,0)*+{\small\xy
(0,25)*+{\frvect_V(M)}="1";
(50,25)*+{\frBun_{G}(M)}="2";
(0,0)*+{\ovect_V(M)}="3";
(50,0)*+{\Bun_{\Omega G}(M)}="4";
{\ar^{\cF} "1";"2"};
{\ar^{\cC^{-1}} "2";"4"};
{\ar^{\cV^{-1}} "1";"3"};
{\ar^{\cF} "3";"4"};
\endxy};
\endxy
\]
both commute up to natural isomorphism.
\end{corollary}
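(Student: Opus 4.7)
The plan is to observe that both squares amount to the assertion that the frame bundle functor commutes with the associated bundle functor up to the canonical natural isomorphism, and then to extract the needed natural transformations from the ingredients already assembled in the proof of Theorem \ref{theorem:vector}. In fact, the natural isomorphisms $\mu\colon \cF\circ\cV \to \cC\circ\cF$ and $\nu\colon \cF\circ\cV^{-1}\to \cC^{-1}\circ\cF$ were already introduced there; my task is simply to write them down explicitly and verify that $\mu$ is valued in framed $G$-bundle morphisms.

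First I would unwind the definitions: for $\sE \in \ovect_V(M)$ we have $\cV(\sE) = \cC(\cF(\sE))\x_G V$, so applying the frame bundle functor gives $\cF(\cV(\sE)) = \cF\!\left(\cC(\cF(\sE))\x_G V\right)$. On the other hand, $\cC\circ\cF$ applied to $\sE$ yields $\cC(\cF(\sE))$. The standard natural isomorphism
\[
\eta_P\colon \cF(P\x_G V) \simto P
\]
valid for any principal $G$-bundle $P$, and given by sending the frame $v\mapsto [p,v]$ to $p$, then provides the candidate $\mu_\sE := \eta_{\cC(\cF(\sE))}$. Naturality of $\mu$ in $\sE$ follows from naturality of $\eta$ in $P$ together with functoriality of $\cC\circ\cF$. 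For $\nu$ the argument is entirely analogous, replacing $\cC$ by $\cC^{-1}$ throughout.

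The step requiring attention is checking that $\mu_\sE$ is a morphism in $\frBun_G(M)$, i.e.\ that it preserves the distinguished framings over $M_0$. On the left, $\cF(\cV(\sE))$ is framed by the trivialisation of $\cV(\sE) = \cC(\cF(\sE))\x_G V$ over $M_0$ given by $[s_0(x,0)g,v]\mapsto (x,0,g(v))$, where $s_0$ is the section produced by the principal-bundle caloron correspondence. On the right, $\cC(\cF(\sE))$ is framed by the very same $s_0$. The formula for $\eta$ immediately sends the frame associated to $s_0(x,0)$ (namely $v\mapsto [s_0(x,0),v]$) to $s_0(x,0)$, so framings are preserved. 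Note that for morphisms $f\colon \sE\to\sF$ in $\ovect_V(M)$, the commutativity of the naturality square for $\mu$ automatically entails compatibility with the framings, since both $\cC\circ\cF(f)$ and $\cF\circ\cV(f)$ are themselves framing-preserving by construction.

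The main (and only) obstacle is bookkeeping: ensuring the $G$-equivariance of the maps $\eta_P$ is respected at each stage and that all the identifications $\cF(P\x_G V)\cong P$ are set up to intertwine the right and left actions correctly. Once this is in place, the second diagram follows by verbatim repetition of the argument with $\cC$ replaced by $\cC^{-1}$ and the roles of $\frvect_V(M)$ and $\ovect_V(M)$ swapped; here no framing compatibility needs to be checked on the $\Omega$ vector bundle side, but one should verify that applying $\cF$ to the framing of $E$ recovers the framing data used to produce the $\Omega G$-reduction of $\cV^{-1}(E)$, which is essentially tautological from the definition of $\cV^{-1}$.
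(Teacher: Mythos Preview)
Your proposal is correct and follows exactly the approach the paper takes: the paper's proof simply points back to the natural isomorphisms $\mu$ and $\nu$ constructed in the proof of Theorem~\ref{theorem:vector} via the standard identification $\cF(P\times_G V)\cong P$. You have unpacked this a bit further by explicitly checking that $\mu_\sE$ respects the framings over $M_0$, which the paper leaves implicit.
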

\begin{proof}
The natural isomorphisms are $\mu$ for the diagram on the left and $\nu$ for the diagram on the right.
\end{proof}

The functors $\cV$ and $\cV^{-1}$ may be described heuristically in more concrete terms as follows.
By Theorem \ref{theorem:equiv}, up to isomorphism the frame bundle $\cF(\sE)$ of $\sE \in \ovect_V(M)$ may be regarded as loops in some $P \in \frBun_G(M)$.
That is
\[
\cF(\sE)_x \cong  \Gamma_0(\{x\}\x\sone,P)
\]
for any $x\in M$, where the subscript $0$ denotes that the sections agree with the given section of $P$ over $M\x\{0\}$.
The evaluation map $\ev_\theta$ of Remark \ref{remark:evaluation} then acts as
\[
\ev_\theta \colon [p,v] \longmapsto [p(\theta),v(\theta)]
\]
where the image lies in the fibre of the associated vector bundle $P \x_G V$ over $(x,\theta)$.
Thus (up to isomorphism) $\cV(\sE)\to M\x\sone$ is the vector bundle whose restriction to $M\x\{\theta\}$ is $\ev_\theta(\sE) \to M$.

There is also a more direct description of $\cV^{-1}$ that is useful in the sequel.
Recalling the construction of the inverse caloron transform $\cC^{-1}$, in particular the map $\eta$ of \eqref{eqn:etamap}, notice that
\[
\cC^{-1} \circ \cF(E) = \eta^\ast \left( \Omega_{s_0(M_0)}\cF(E)\right)
\]
so that a point $q$ in the fibre of $\cC^{-1} \circ \cF(E)$ over $x \in M$ is precisely a smoothly-varying assignment of a $\CC$-linear isomorphism $q_\theta \colon V \to E_{(x,\theta)}$ for every $\theta \in\sone$.
Consider the $\Omega$ vector bundle $\eta^\ast LE \to M$, which has frame bundle $\eta^\ast( \Omega_{s_0(M_0)}\cF(E)) \to M$.
There is a natural isomorphism
\begin{equation}
\label{eqn:altvcal}
\lambda_E \colon \cV^{-1}(E) \simto \eta^\ast LE
\end{equation}
given by sending $[q,v] \in \cV^{-1}(E)$ to the loop
\[
q(v) \colon \theta \longmapsto q(\theta)(v(\theta)) \in E_{(x,\theta)}.
\]
In particular, $\lambda_E$ is a fibrewise $L\CC$-module isomorphism that allows one to interpret $\cV^{-1}$ as a sort of looping operation on framed vector bundles.


\subsection{Operations on $\ovect$}
\label{SS:vector:monoidal}
This section describes some adaptations of familiar operations on vector bundles, such as the direct sum and tensor product, to the setting of $\Omega$ vector bundles.
These operations on $\ovect(M)$ are first constructed using the caloron correspondence of Theorem \ref{theorem:vector}---in order to easily obtain information such as coherence diagrams and associators---then they are described in concrete terms.

In order to apply the caloron correspondence, however, one must understand how framings on framed vector bundles behave with respect to direct sums and tensor products.
Take two vector bundles $E, F \to M$ with respective ranks $n$ and $m$ that are framed over $M_0$ by the sections $s_0\in \Gamma(M_0,\cF(E))$ and $r_0 \in \Gamma(M_0,\cF(F))$ respectively.
Observe that any local section $s \in\Gamma(U,\cF(E))$ corresponds to a local trivialisation $s^{-1} \colon E|_U \to U\x \CC^n$, where $s^{-1}(e) := (x, s(x)^{-1}(e))$ for $e\in E_x$\label{page:sectiontriv}.
Thus the \emph{direct sum} of $s_0$ and $r_0$ is the framing on $E\oplus F$ corresponding to the trivialisation
\[
(v,w) \longmapsto \left(s_0^{-1}(v),r_0^{-1}(w)\right) \in \CC^n \oplus \CC^m.
\]
This framing turns $E\oplus F$ into a framed vector bundle and it is easy to see that this framed direct sum operation $\oplus$ determines a monoidal product on $\frvect(M)$.

In a similar fashion, the standard tensor product operation $\otimes$ on vector bundles gives a monoidal structure on $\frvect(M)$.
The \emph{tensor product} of the framings $s_0$ and $r_0$ is the framing on $E \otimes F$ corresponding to the trivialisation that acts on homogeneous elements by
\[
 v\otimes w \longmapsto t\left(s_0^{-1}(v) \otimes r_0^{-1}(w)\right).
\]
Here $t \colon \CC^{n}\otimes\CC^{m} \to \CC^{n\x m}$ is the isomorphism that acts on products of the standard basis vectors by 
\[
\mathbf{e}_i \otimes \mathbf{e}_j \longmapsto \mathbf{e}_{(i-1)m +j}.
\]
As with the framed direct sum operation, the framed tensor product $\otimes$ defines a monoidal product on $\frvect(X)$.

The monoidal products $\oplus$ and $\otimes$ on $\frvect(M)$ induce monoidal products on $\ovect(M)$ via the caloron correspondence.
For any $\sE, \sF \in \ovect(M)$, the \emph{direct sum} of $\sE$ and $\sF$ is
\[
\sE\oplus \sF := \cV^{-1} (\cV(\sE)\oplus \cV(\sF)).
\]
That the bifunctor $\oplus$ is a monoidal product on $\ovect(M)$ comes from applying Theorem \ref{theorem:vector} to the coherence diagrams of the framed direct sum operation $\oplus$ on $\frvect(M)$.
\vspace{11pt}
\begin{lemma}
\label{lemma:whitney}
For any $\sE,\sF \in \ovect(M)$ and $x\in M$ there is a natural $L\CC$-module isomorphism
\[
(\sE \oplus \sF){}_x \simto \sE_x \oplus_{L\CC} \sF_x
\]
that varies smoothly with $x \in M$.
In particular $\sE \oplus\sF$ is naturally isomorphic to the regular Whitney sum of $\sE$ and $\sF$ as $\Omega$ vector bundles.
\end{lemma}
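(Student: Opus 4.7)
The plan is to exploit the alternative description of $\cV^{-1}$ given by the natural isomorphism $\lambda_E \colon \cV^{-1}(E) \simto \eta^\ast LE$ of \eqref{eqn:altvcal}, which realises the inverse caloron transform as the pullback by $\eta$ of the Fr\'{e}chet vector bundle obtained by fibrewise looping. First I would verify the essentially routine fact that looping commutes with Whitney sums: for any pair of finite rank vector bundles $E,F \to M\x\sone$, the pointwise identification $L(E_{(x,\theta)}\oplus F_{(x,\theta)}) = LE_{(x,\theta)} \oplus LF_{(x,\theta)}$ assembles into a natural isomorphism $L(E\oplus F) \simto LE \oplus LF$ of Fr\'{e}chet vector bundles over $L(M\x\sone)$. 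Pulling back by $\eta$ (which preserves direct sums) and composing with $\lambda$ and the natural isomorphism $\alpha$ of Theorem~\ref{theorem:vector} gives the chain
\[
\sE \oplus \sF \;=\; \cV^{-1}(\cV(\sE)\oplus \cV(\sF)) \;\simto\; \cV^{-1}\cV(\sE) \oplus \cV^{-1}\cV(\sF) \;\simto\; \sE \oplus_{\mathrm{W}} \sF,
\]
where $\oplus_{\mathrm{W}}$ denotes the ordinary Whitney sum. Naturality in each variable is formal, following from the naturality of $\lambda$, $\eta^\ast$ and $\alpha$.

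For the fibrewise $L\CC$-module statement, I would unwind this composite at a fixed $x \in M$. An element of $(\eta^\ast L\cV(\sE))_x$ is a smooth loop $\theta \mapsto v(\theta) \in \cV(\sE)_{(x,\theta)}$ carrying its natural $L\CC$-module structure by pointwise scalar multiplication, and the Whitney sum identification sends a pair $(v,w)$ to the loop $\theta\mapsto (v(\theta),w(\theta))$, which is manifestly an $L\CC$-module homomorphism. Transporting back to $\sE_x$ and $\sF_x$ via $\lambda\circ\alpha^{-1}$ delivers exactly the pointwise direct sum, giving the desired isomorphism $(\sE\oplus\sF)_x \simto \sE_x \oplus_{L\CC} \sF_x$ of $L\CC$-modules. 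Smooth variation in $x$ is automatic, since the entire construction takes place in the category of Fr\'{e}chet vector bundles over $M$.

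The only non-formal step---and the main obstacle---is checking that the $L\CC$-module structure transported to $\sE_x$ via $\lambda_{\cV(\sE)} \circ \alpha_\sE^{-1}$ coincides with the intrinsic one coming from Definition~\ref{defn:frechetvector}. This is most efficiently verified locally: over a trivialisation $\psi_\alpha \colon \sE|_{U_\alpha} \simto U_\alpha \x LV$ the composite $\lambda\circ\alpha^{-1}$ reduces to the identity on the trivial bundle, so the two $L\CC$-module structures tautologically agree. Globally, compatibility persists because the $\Omega G$-valued transition functions of $\sE$ act through the loop representation and hence commute with the pointwise $L\CC$-action, precisely as in the argument immediately following Definition~\ref{defn:frechetvector}.
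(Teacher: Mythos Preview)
Your proof is correct and follows essentially the same approach as the paper. Both arguments identify the fibre of an $\Omega$ vector bundle with a space of loops (equivalently, sections over $\{x\}\times\sone$) in the caloron transform---you via the isomorphism $\lambda_E \colon \cV^{-1}(E) \simto \eta^\ast LE$, the paper via the identification $\sE_x \simto \Gamma(\{x\}\times\sone,E)$---and then use that this looping operation commutes with Whitney sums; your version is somewhat more explicit about the bundle-level isomorphism and the check that the transported $L\CC$-module structure agrees with the intrinsic one, which the paper handles more tersely by appeal to the discussion following Corollary~\ref{cor:frame}.
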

\begin{proof}
Comparing with the observation following Corollary \ref{cor:frame} and using the natural isomorphisms $\alpha_\sE$ and $\alpha_\sF$ of Theorem \ref{theorem:vector}, there are natural isomorphisms 
\[
\sE_x \simto \Gamma(\{x\}\x \sone,E)\;\;\mbox{ and }\;\;\sF_x \simto \Gamma(\{x\}\x \sone,F)
\]
of $L\CC$-modules that depend smoothly on $x \in M$, where $E= \cV(\sE)$ and $F = \cV(\sF)$ are the caloron transforms.
By the same argument there is also a natural isomorphism
\[
(\sE \oplus \sF){}_x \simto \Gamma(\{x\}\x\sone, E\oplus F)
\]
depending smoothly on $x$.

It remains only to notice that there is a natural $L\CC$-module isomorphism
\[
\Gamma(\{x\}\x\sone, E\oplus F) \simto \Gamma(\{x\}\x \sone,E) \oplus_{L\CC} \Gamma(\{x\}\x \sone,F)
\]
induced by the projection maps.
\end{proof}

Similarly, the \emph{honed tensor product} $\sE \oast \sF$ of $\sE,\sF \in\ovect(M)$ is defined as
\[
\sE \oast \sF := \cV^{-1}(\cV(\sE)\otimes \cV(\sF)).
\]
As with the direct sum operation on $\ovect(M)$, the fact that $\oast$ is a monoidal product comes directly from the caloron correspondence.
Notice that the honed tensor product is not the same operation as the standard fibrewise tensor product $\otimes$ on $\ovect(M)$.
As the name suggests, the honed tensor product is a finer operation: supposing that $\sE,\sF$ are respectively modelled over $LV$ and $LW$, then $\sE \oast \sF$ has typical fibre $L(V\otimes W)$ instead of the larger space $LV\otimes LW$.
\vspace{11pt}
\begin{lemma}
\label{lemma:oast}
For any $\sE,\sF \in \ovect(M)$ and $x\in M$ there is a   natural $L\CC$-module isomorphism
\[
(\sE \oast \sF){}_x \simto \sE_x \otimes_{L\CC} \sF_x
\]
that smoothly varies with $x$.
\end{lemma}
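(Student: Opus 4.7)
The strategy is to mimic the argument of Lemma \ref{lemma:whitney}, reducing the claim to a statement about sections of finite-rank vector bundles over the circle. Writing $E := \cV(\sE)$ and $F := \cV(\sF)$ for the caloron transforms, the natural isomorphism $\alpha$ of Theorem \ref{theorem:vector} (together with the description of $\cV^{-1}$ via the map $\lambda$ of \eqref{eqn:altvcal}) yields natural $L\CC$-module isomorphisms
\[
\sE_x \simto \Gamma(\{x\}\x\sone,E), \qquad \sF_x \simto \Gamma(\{x\}\x\sone,F),
\]
and
\[
(\sE \oast \sF){}_x \simto \Gamma(\{x\}\x\sone, E\otimes F),
\]
each depending smoothly on $x\in M$. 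Thus the task reduces to producing a natural $L\CC$-module isomorphism
\[
\Gamma(\{x\}\x\sone, E\otimes F) \simto \Gamma(\{x\}\x\sone,E) \otimes_{L\CC} \Gamma(\{x\}\x\sone,F)
\]
that varies smoothly with $x$.

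To construct this map I would first define the obvious $L\CC$-bilinear pointwise pairing
\[
\Gamma(\{x\}\x\sone,E)\x\Gamma(\{x\}\x\sone,F)\lo \Gamma(\{x\}\x\sone, E\otimes F),\qquad (s,t)\longmapsto \big(\theta\mapsto s(\theta)\otimes t(\theta)\big),
\]
which by the universal property of the tensor product descends to an $L\CC$-linear map $\Psi_x$ from $\Gamma(\{x\}\x\sone,E)\otimes_{L\CC}\Gamma(\{x\}\x\sone,F)$ to $\Gamma(\{x\}\x\sone,E\otimes F)$. To show that $\Psi_x$ is an isomorphism, I would use the fact that every complex vector bundle over $\sone$ is trivialisable (since $\sone$ is $1$-dimensional and $GL_n(\CC)$ is connected). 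Choosing global trivialisations of $E|_{\{x\}\x\sone}$ and $F|_{\{x\}\x\sone}$ identifies the two section modules with free $L\CC$-modules $(L\CC)^n$ and $(L\CC)^m$, and identifies $E\otimes F|_{\{x\}\x\sone}$ with $(L\CC)^{nm}$; under these identifications $\Psi_x$ becomes the standard isomorphism $(L\CC)^n\otimes_{L\CC}(L\CC)^m\simto(L\CC)^{nm}$, hence is an isomorphism.

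Naturality of $\Psi_x$ is immediate, since it is defined by a pointwise formula independent of any choice. Smoothness in $x$ follows because all the identifications above (the $\alpha$-isomorphisms and the bilinear pairing) are built fibrewise from the smooth structures of $\sE$, $\sF$ and $\sE\oast \sF$; local frames over $U\subset M$ exhibit $\Psi_x$ for $x\in U$ as a smoothly-varying family of $L\CC$-module maps. The main subtlety is simply checking that $\Psi_x$ is surjective (equivalently, bijective), which as indicated reduces to the triviality of vector bundles over $\sone$; everything else is formal functoriality of the caloron transform.
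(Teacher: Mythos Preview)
Your proposal is correct and follows essentially the same route as the paper: reduce via the caloron transform and the natural identifications $\alpha$ to showing $\Gamma(\{x\}\times\sone,E\otimes F)\cong\Gamma(\{x\}\times\sone,E)\otimes_{L\CC}\Gamma(\{x\}\times\sone,F)$. In fact you supply more detail than the paper does at this last step, explicitly constructing the pointwise pairing and verifying bijectivity via triviality of complex bundles over $\sone$, whereas the paper simply asserts this isomorphism.
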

\begin{proof}
As in the proof of Lemma \ref{lemma:whitney}, there are natural $L\CC$-module isomorphisms 
\[
\sE_x \simto \Gamma(\{x\}\x \sone,E)\;\;\mbox{ and }\;\;\sF_x \simto \Gamma(\{x\}\x \sone,F)
\]
that depend smoothly on $x \in M$, where $E= \cV(\sE)$ and $F = \cV(\sF)$. 
Similarly, there is a natural isomorphism
\[
(\sE\oast\sF)_x \simto \Gamma(\{x\}\x \sone,E\otimes F).
\]
There is also a natural isomorphism of $L\CC$-modules
\[
\Gamma(\{x\}\x\sone,E\otimes F) \simto \Gamma(\{x\}\x\sone,E) \otimes_{L\CC} \Gamma(\{x\}\x\sone,F)
\]
from which the result follows.
\end{proof}
\vspace{11pt}
\begin{remark}
Notice that $\frvect(M)$ is a bimonoidal category with respect to $\oplus$ and $\otimes$, i.e. $\otimes$ distributes over $\oplus$ up to natural isomorphism.
Applying the caloron correspondence implies that $\ovect(M)$ is a bimonoidal category with respect to $\oplus$ and $\oast$.
As a result, the assignment
\[
M \longmapsto \ovect(M)
\]
may be viewed as a contravariant functor
\[
\ovect \colon \Man \lo \bimon
\]
from the category of smooth manifolds to the category of bimonoidal categories, where the action of $\ovect$ on morphisms is given by pullback.
\end{remark}

Importantly for the discussion of Hermitian structures on $\Omega$ vector bundles in Section \ref{S:hermitian} there is also a \emph{dual operation} in $\ovect(M)$.
The dual operation sends the $\Omega$ vector bundle $\sE \to M$ with typical fibre $LV$ to an $\Omega$ vector bundle $\sE^\star \to M$ with typical fibre $L(V^\ast)$, where $V^\ast$ is the dual of the finite-dimensional vector space $V$.
The dual $\sE^\star$ is defined using the machinery of associated vector bundles, however it is shown below that $\sE^\star$ is equivalently given by taking the fibrewise dual of $\sE$ with respect to $L\CC$.

To see how this works, take the $\Omega$ vector bundle $\sE\to M$ with typical fibre $LV$ and structure group $\Omega G$.
An element $\gamma$ of $LG$ or $\Omega G$ acts on $\lambda \in L(V^\ast)$ by
\begin{equation}
\label{eqn:duallooprep}
\gamma(\lambda) \colon \theta\longmapsto  \lambda(\theta) \circ \gamma^{-1}(\theta),
\end{equation}
this is the \emph{dual loop} or \emph{dual pointwise representation} (cf. \eqref{eqn:looprep}).
The notation $\overline{L G}, \overline{\Omega G}$ denotes $LG$ or $\Omega G$ respectively acting in the dual loop representation.
The \emph{honed dual} $\sE^\star$ of $\sE$ is defined as
\[
\sE^\star := \cF(\sE) \x_{\overline{\Omega G}} L(V^\ast).
\]
As with the honed tensor product, the honed dual is a finer operation than simply taking the regular dual of $\sE$ as a Fr\'{e}chet vector bundle since it respects the $L\CC$-module structures on the fibres.
\vspace{11pt}
\begin{lemma}
\label{lemma:altdual}
For any $\sE\in \ovect(M)$ and $x\in M$ there is a natural $L\CC$-module  isomorphism
\[
\sE^\star_x \simto \Hom_{L\CC}( \sE_x ,{L\CC}).
\]
\end{lemma}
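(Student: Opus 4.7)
The plan is to construct the isomorphism $\Phi_x \colon \sE^\star_x \to \Hom_{L\CC}(\sE_x, L\CC)$ directly from the definition of the associated bundle. Given a representative $(p, \lambda) \in \cF(\sE)_x \x L(V^\ast)$ of a point in $\sE^\star_x$, I would define
\[
\Phi_x([p,\lambda])(e) := \bigl( \theta \longmapsto \lambda(\theta)\bigl(p^{-1}(e)(\theta)\bigr) \bigr)
\]
for $e \in \sE_x$, which makes sense since $p \colon LV \to \sE_x$ is an $L\CC$-module isomorphism by Remark \ref{remark:framesmodule}. The first routine step is to verify invariance under the equivalence relation $(p,\lambda) \sim (p\gamma, \gamma^{-1}(\lambda))$ defining $\sE^\star_x$, where the action of $\Omega G$ on $L(V^\ast)$ is the dual loop representation \eqref{eqn:duallooprep}. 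Since $\gamma^{-1}(\lambda)(\theta) = \lambda(\theta)\circ\gamma(\theta)$ and $(p\gamma)^{-1}(e)(\theta) = \gamma(\theta)^{-1}\bigl(p^{-1}(e)(\theta)\bigr)$, the $\gamma$'s cancel in the pointwise formula.

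Next I would check that $\Phi_x([p,\lambda])$ is genuinely $L\CC$-linear: for $f \in L\CC$, the $L\CC$-linearity of $p^{-1}$ together with the $\CC$-linearity of each $\lambda(\theta)$ gives $\Phi_x([p,\lambda])(f\cdot e)(\theta) = \lambda(\theta)\bigl(f(\theta)\cdot p^{-1}(e)(\theta)\bigr) = f(\theta)\cdot \Phi_x([p,\lambda])(e)(\theta)$. That $\Phi_x$ itself is $L\CC$-linear in the $\lambda$ slot is similarly immediate.

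The key auxiliary fact needed to construct the inverse is the natural $L\CC$-module isomorphism
\[
\Psi \colon \Hom_{L\CC}(L\CC^n, L\CC) \simto L(V^\ast),\qquad \psi\longmapsto \bigl(\theta \mapsto (v\mapsto \psi(\mathbf{1}_v)(\theta))\bigr),
\]
where $\mathbf{1}_v \in L\CC^n$ denotes the constant loop at $v \in \CC^n$. Surjectivity and injectivity of $\Psi$ rest on the fact that any $v \in L\CC^n$ can be written as $v = \sum_i v_i\cdot \mathbf{1}_{e_i}$ with $v_i \in L\CC$ the coordinate loops and $e_i$ the standard basis; $L\CC$-linearity of $\psi$ then forces $\psi(v)(\theta) = \lambda_\psi(\theta)(v(\theta))$ with $\lambda_\psi := \Psi(\psi)$. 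Given $\Psi$, the inverse of $\Phi_x$ sends $\phi \in \Hom_{L\CC}(\sE_x, L\CC)$ to $[p, \Psi(\phi\circ p)]$ for any frame $p \in \cF(\sE)_x$; independence from the choice of $p$ follows from the same equivariance calculation already used to check well-definedness of $\Phi_x$.

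Finally, I would verify smoothness in $x$ by working in a preferred local trivialisation of $\cF(\sE)$, where both sides become products and $\Phi$ becomes the trivial identification $\underline{L(V^\ast)} \to \underline{\Hom_{L\CC}(LV, L\CC)}$ determined by $\Psi$; naturality with respect to morphisms of $\Omega$ vector bundles is built into the construction. The main obstacle is the auxiliary identification $\Psi$, specifically verifying that every $L\CC$-linear map out of $L\CC^n$ is determined by its values on constant loops and thus arises pointwise from an element of $L(V^\ast)$ — this is where the finite generation of the typical fibre over $L\CC$ is used in an essential way.
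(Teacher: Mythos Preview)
Your proof is correct and follows essentially the same approach as the paper: the map $\Phi_x$ you define is exactly the one the paper constructs, and your inverse $\phi \mapsto [p,\Psi(\phi\circ p)]$ is precisely the paper's surjectivity argument made explicit. The paper glosses over your auxiliary isomorphism $\Psi$ by simply asserting that $\sigma\circ f$ ``may be viewed as an element of $L(V^\ast)$,'' so your more careful treatment of this point---via finite generation of $LV$ over $L\CC$---fills in a step the paper leaves implicit.
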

\begin{proof}
For any $x \in M$, by definition one has $\sE_x^\star := \cF(\sE)_x \x_{\overline{\Omega G}} L(V^\ast)$.
Any element $[f,\lambda]\in \sE_x^\star$ determines an $L\CC$-module homomorphism $\sE_x \to L\CC$ by sending $v \in \sE_x$ to the loop
\[
\theta \longmapsto \lambda(\theta) \left( f^{-1}(v)(\theta)\right)
\]
in $\CC$, noting that this is well-defined.
This gives a homomorphism $\sE_x^\star \to \Hom_{L\CC}(\sE_x,L\CC)$ of $L\CC$-modules.

Suppose that for every $v \in \sE_x$ and $\theta\in\sone$
\[
\lambda(\theta) \left( f^{-1}(v)(\theta)\right) = 0.
\]
As $f$ is an $L\CC$-module isomorphism $LV \to \sE_x$ one must have $\lambda = 0$ so that $[f,\lambda]$ coincides with the zero section over $x$.
Therefore the map $\sE_x^\star\to\Hom_{L\CC}(\sE_x,L\CC)$ is injective.

To see that this map is surjective take any $\sigma  \in \Hom_{L\CC}(\sE_x,L\CC)$, so that for any $f \in \cF(\sE)_x$ the composition $\sigma \circ f \colon LV \to L\CC$ may be viewed as an element of $L(V^\ast)$, since $f$ is an $L\CC$-module isomorphism $LV\to \sE_x$.
Then for any $v \in \sE_x$
\[
\theta \longmapsto \sigma(v)(\theta) = (\sigma \circ f )(\theta) \big( f^{-1}(v)(\theta)\big)
\]
is the image of $[f,\sigma\circ f] \in \sE_x^\star$, which proves surjectivity.
\end{proof}

From this it follows that
\vspace{11pt}
\begin{corollary}
\label{cor:dualpair}
There is a `dual pairing'
\[
\sE \oast \sE^\star \lo \underline{L\CC},
\]
which is a homomorphism of $\Omega$ vector bundles.
\end{corollary}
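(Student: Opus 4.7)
The plan is to define the pairing fibrewise using the identifications provided by Lemmas \ref{lemma:oast} and \ref{lemma:altdual}, then verify smoothness and global consistency in local trivialisations.

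For each $x \in M$ I would compose the natural isomorphisms
\[
(\sE \oast \sE^\star)_x \simto \sE_x \otimes_{L\CC} \sE^\star_x \simto \sE_x \otimes_{L\CC} \Hom_{L\CC}(\sE_x, L\CC)
\]
with the canonical $L\CC$-linear evaluation $v \otimes \sigma \mapsto \sigma(v)$ landing in $L\CC = \underline{L\CC}_x$. This defines the candidate pairing at the fibre level, and the first arrow being natural in $x$ is exactly what Lemma \ref{lemma:oast} guarantees, while the second is Lemma \ref{lemma:altdual}.

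To verify smoothness I would work over a neighbourhood $U_\alpha$ trivialising $\sE$ (and hence also $\sE^\star$ and $\sE \oast \sE^\star$). By construction of $\oast$ via the caloron correspondence, $(\sE \oast \sE^\star)|_{U_\alpha} \cong U_\alpha \x L(V \otimes V^\ast)$, and in this local model the pairing becomes the pointwise map $(v \otimes \lambda)(\theta) \mapsto \lambda(\theta)(v(\theta))$, that is, the loop obtained by applying the finite-dimensional evaluation $\ev \colon V \otimes V^\ast \to \CC$ at each $\theta \in \sone$. This is manifestly smooth, and in these local coordinates the bundle map is given by the constant loop at $\ev$, which realises it as a morphism of $\Omega$ vector bundles in the sense of Section \ref{S:vector} rather than merely of Fr\'echet vector bundles. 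Consistency on overlaps follows from the compatibility between the loop representation \eqref{eqn:looprep} on $LV$ and the dual loop representation \eqref{eqn:duallooprep} on $L(V^\ast)$: for every $\gamma \in \Omega G$ and every $\theta \in \sone$ one has $(\lambda(\theta) \circ \gamma(\theta)^{-1})(\gamma(\theta) v(\theta)) = \lambda(\theta)(v(\theta))$, so the local formula is invariant under the diagonal action of $\Omega G$ and the locally defined maps glue to a global bundle morphism.

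The main piece of bookkeeping is reconciling the two descriptions of the fibre of $\sE \oast \sE^\star$---as $\sE_x \otimes_{L\CC} \sE^\star_x$ from Lemma \ref{lemma:oast} and as the local model $L(V \otimes V^\ast)$ obtained via the caloron correspondence---and checking that both recover the same smooth pairing. Once this identification is made explicit the corollary is a direct consequence of Lemmas \ref{lemma:oast} and \ref{lemma:altdual}, together with the elementary duality between the two $\Omega G$-actions on $LV$ and $L(V^\ast)$.
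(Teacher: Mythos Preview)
Your proposal is correct and follows exactly the approach the paper takes: invoke Lemmas \ref{lemma:oast} and \ref{lemma:altdual} to identify the fibre $(\sE \oast \sE^\star)_x$ with $\sE_x \otimes_{L\CC} \Hom_{L\CC}(\sE_x, L\CC)$ and then use the canonical evaluation pairing into $L\CC$. The paper's proof is a one-line citation of these ingredients, whereas you have spelled out the local verification of smoothness and the gluing argument via the compatibility of \eqref{eqn:looprep} and \eqref{eqn:duallooprep}; this extra detail is sound but not required by the paper.
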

\begin{proof}
This is an immediate consequence of the canonical pairing
\[
\sE_x \otimes_{L\CC} \Hom_{L\CC}(\sE_x,L\CC) \lo L\CC
\]
and of Lemmas \ref{lemma:oast} and \ref{lemma:altdual}.
\end{proof}

More generally, given two objects $\sE,\sF \in \ovect(M)$ modelled respectively over $LV$ and $LW$ with structure groups $\Omega G$ and $\Omega H$ (acting by the loop respresentation) one may define an $\Omega$ vector bundle $\Hom_{L\CC} (\sE,\sF) \in \ovect(M)$ using the clutching construction as follows.
Suppose that $\{U_\alpha\}$ is an open covering of $M$ over which $\sE$ and $\sF$ are both trivialised with transition functions $\tau_{\alpha\beta}$ and $\upsilon_{\alpha\beta}$ respectively.
Transition functions for $\Hom_{L\CC} (\sE,\sF)$ are given on $U_{\alpha\beta} := U_\alpha \cap U_\beta$ by
\begin{align*}
U_{\alpha\beta} \x \Hom_{L\CC}(LV,LW) &\lo U_{\alpha\beta} \x \Hom_{L\CC}(LV,LW)\\
(x,A) &\longmapsto (x, \tau_{\alpha\beta}\cdot A \cdot \upsilon_{\beta\alpha}).
\end{align*}
In this manner, $\Hom_{L\CC} (\sE,\sF)$ becomes an $\Omega$ vector bundle over $M$ with typical fibre $\Hom_{L\CC}(LV,LW) \cong L\Hom_\CC(V,W)$ and structure group $\Omega (G\x H)\cong \Omega G \x \Omega H$.
The frame bundle of $\Hom_{L\CC} (\sE,\sF)$ is isomorphic to the fibre product $\cF(\sE)\x_M\cF(\sF)$ and, by construction,
\vspace{11pt}
\begin{lemma}
\label{lemma:ovecthom}
For any $\sE,\sF \in \ovect(M)$ and $x\in M$ there is a natural $L\CC$-module isomorphism
\[
\Hom_{L\CC}(\sE,\sF)_x \simto \Hom_{L\CC}(\sE_x,\sF_x)
\]
that smoothly varies with $x$.
\end{lemma}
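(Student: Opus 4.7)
The plan is to construct the isomorphism pointwise via local trivialisations and then to check independence of the trivialisation used, with smoothness in $x$ being automatic from the construction. Since $\Hom_{L\CC}(\sE,\sF)$ was defined by the clutching construction, it is essentially tautological once conventions are fixed; the content of the lemma is that the model-fibre isomorphism $\Hom_{L\CC}(LV,LW) \simto \Hom_{L\CC}(\sE_x,\sF_x)$ induced by any local trivialisation is independent of the choice.

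To begin, fix $x \in M$ and choose an open neighbourhood $U$ of $x$ over which both $\sE$ and $\sF$ trivialise, with trivialisations $\psi^\sE \colon \sE|_U \simto U \x LV$ and $\psi^\sF \colon \sF|_U \simto U \x LW$. By the construction preceding Lemma \ref{lemma:ovecthom}, these induce a trivialisation $\Psi \colon \Hom_{L\CC}(\sE,\sF)|_U \simto U \x \Hom_{L\CC}(LV,LW)$, so on the fibre over $x$ one obtains the $L\CC$-module isomorphism
\[
\Phi_x \colon \Hom_{L\CC}(\sE,\sF)_x \xrightarrow{\;\;\Psi(x)\;\;} \Hom_{L\CC}(LV,LW) \xrightarrow{\;\;A\,\mapsto\,\psi^\sF(x)^{-1} \circ A \circ \psi^\sE(x)\;\;} \Hom_{L\CC}(\sE_x,\sF_x).
\]
Each arrow is an $L\CC$-module isomorphism (the second because $\psi^\sE(x)$ and $\psi^\sF(x)$ are $L\CC$-module isomorphisms, cf.~the discussion following Definition \ref{defn:frechetvector}), so $\Phi_x$ is too.

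The main verification is that $\Phi_x$ does not depend on the chosen trivialisations. Given a second pair $(\widetilde\psi^\sE, \widetilde\psi^\sF)$ over (a possibly smaller) neighbourhood, write $\tau(x) \in \Omega G$ and $\upsilon(x) \in \Omega H$ for the transition loops, so that $\widetilde\psi^\sE(x) = \tau(x) \circ \psi^\sE(x)$ and similarly for $\sF$. Under the corresponding trivialisation $\widetilde\Psi$ of $\Hom_{L\CC}(\sE,\sF)$, the clutching formula changes $\Psi(x)(\phi)$ to $\upsilon(x)\cdot \Psi(x)(\phi) \cdot \tau(x)^{-1}$; this is exactly compensated when one composes with $\widetilde\psi^\sF(x)^{-1}(\cdot)\widetilde\psi^\sE(x) = \psi^\sF(x)^{-1} \upsilon(x)^{-1}(\cdot)\tau(x)\psi^\sE(x)$, so both trivialisations produce the same map $\Phi_x$.

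Smoothness in $x$ is then immediate: on any patch $U$ the maps $\Phi_x$ for $x \in U$ are obtained by composing with the smoothly-varying local trivialisations $\psi^\sE$ and $\psi^\sF$, so they assemble into a smooth family. Naturality under isomorphisms of $\sE$ and $\sF$ follows from the analogous naturality of the clutching construction. The only real obstacle is the bookkeeping with transition functions in the independence check, which is essentially a coherence calculation and presents no conceptual difficulty once the $L\CC$-module conventions from Section \ref{S:vector} are in force.
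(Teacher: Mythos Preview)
Your proof is correct and is precisely the explicit unpacking of what the paper leaves implicit: the lemma is asserted there to hold ``by construction'' with no further argument, and your verification via local trivialisations and the transition-function compensation is exactly how one checks that the clutching definition of $\Hom_{L\CC}(\sE,\sF)$ has fibres canonically identified with $\Hom_{L\CC}(\sE_x,\sF_x)$.
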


This gives an alternative way to obtain the honed dual and Corollary \ref{cor:dualpair}.


\subsection{$\Omega$ vector bundles and $L\cC^\infty(M)$-modules}
\label{SS:vector:module}
In this section, the caloron correspondence for vector bundles is used to obtain a convenient \emph{global} algebraic description of $\Omega$ vector bundles.
Specifically, one proves an adaptation of the well-known Serre-Swan Theorem that states that $\Omega$ vector bundles based over compact $M$ are equivalent to finitely-generated projective modules over the ring $L\cC^\infty(M)$ of smooth maps $M \to L\CC$ (with ring operations defined pointwise\footnote{it is perhaps more appropriate to define $L\cC^\infty(M)$ as the ring resulting from applying the looping functor to $\cC^\infty(M;\CC)$; however these definitions are readily seen to be equivalent with the help of the set-theoretic caloron correspondence \eqref{eqn:setcaloron}.}).
Notice that $L\cC^\infty(M)$ is, in fact, a Fr\'{e}chet space: this is one reason for requiring that $M$ be compact.

This algebraic description of $\Omega$ vector bundles comes about as a direct consequence of the fact that there is an isomorphism
\begin{equation}
\label{eqn:loopringiso}
L\cC^\infty(M) \lo \cC^\infty(M\x\sone;\CC)
\end{equation}
of Fr\'{e}chet spaces respecting the ring structure.
This isomorphism is given by sending $\check r \in L\cC^\infty(M)$ to $r \in \cC^\infty(M\x\sone;\CC)$ with
\[
r \colon (x,\theta) \longmapsto \check r(x)(\theta),
\]
i.e.~the set-theoretic caloron correspondence \eqref{eqn:setcaloron}.
\vspace{11pt}
\begin{theorem}
\label{theorem:module1}
For any $\sE \in \ovect(M)$ the set of global sections $\Gamma(\sE)$ of $\sE$ is a finitely-generated projective $L\cC^\infty(M)$-module.
\end{theorem}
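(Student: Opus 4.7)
The plan is to reduce the statement to the classical Serre--Swan theorem via the caloron correspondence for vector bundles. Given $\sE \in \ovect(M)$, its caloron transform $E := \cV(\sE) \to M\x\sone$ is a finite rank complex vector bundle over the compact manifold $M\x\sone$. By the standard Serre--Swan theorem, $\Gamma(E)$ is a finitely generated projective $\cC^\infty(M\x\sone;\CC)$-module. The ring isomorphism \eqref{eqn:loopringiso} then transports this statement over to $L\cC^\infty(M)$, so it suffices to identify $\Gamma(\sE)$ with $\Gamma(E)$ as modules under these identifications.

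First I would produce the bijection on sections. Using the isomorphism $\alpha_\sE \colon \cV^{-1}(\cV(\sE)) \simto \sE$ from Theorem \ref{theorem:vector}, together with the description \eqref{eqn:altvcal} of $\cV^{-1}(E)$ as $\eta^\ast LE$, a global section $\check s \in \Gamma(\sE)$ corresponds to the global section $s \in \Gamma(E)$ defined by $s(m,\theta) := \check s(m)(\theta)$, and conversely any $s \in \Gamma(E)$ yields $\check s(m) := s(m,\cdot)$; that this assignment is a bijection at the level of sets is essentially the set-theoretic caloron correspondence \eqref{eqn:setcaloron} applied fibrewise.

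Next I would check that this bijection intertwines the module actions. The $L\cC^\infty(M)$-module structure on $\Gamma(\sE)$ is defined locally through the preferred trivialisations of $\sE$ (as described in Section \ref{S:vector}), and these trivialisations are precisely the ones used in the clutching construction of $\cV(\sE)$. Thus for $\check r \in L\cC^\infty(M)$ corresponding under \eqref{eqn:loopringiso} to $r \in \cC^\infty(M\x\sone;\CC)$, the pointwise action of $\check r$ on $\check s$ sends it to the section $m \mapsto (\theta \mapsto \check r(m)(\theta)\cdot \check s(m)(\theta))$, which under the bijection corresponds to $r\cdot s \in \Gamma(E)$. Hence $\Gamma(\sE) \cong \Gamma(E)$ as modules over $L\cC^\infty(M) \cong \cC^\infty(M\x\sone;\CC)$.

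Finally, classical Serre--Swan for the compact manifold $M\x\sone$ gives that $\Gamma(E)$ is finitely generated and projective as a $\cC^\infty(M\x\sone;\CC)$-module, and transporting by the ring isomorphism yields the conclusion for $\Gamma(\sE)$. The main obstacle is the compatibility verification in the second step: one needs to be careful that the ``local'' $L\cC^\infty$-action on $\Gamma(\sE)$ built in Section \ref{S:vector} really does globalise and coincide, under the caloron bijection, with the honest pointwise multiplication by functions on $M\x\sone$; once the two trivialisation data are aligned through Corollary \ref{cor:frame} this is essentially bookkeeping, but it is the only nontrivial content of the argument.
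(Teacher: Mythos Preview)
Your proof is correct and takes a genuinely different route from the paper. You reduce directly to classical Serre--Swan for the finite-rank bundle $E = \cV(\sE) \to M\times\sone$ and then transport the conclusion along the ring isomorphism $L\cC^\infty(M) \cong \cC^\infty(M\times\sone;\CC)$ and the section bijection $\Gamma(\sE) \cong \Gamma(E)$; this last identification is exactly Proposition~\ref{prop:sections}, which in the paper is stated \emph{after} the theorem, so you are effectively anticipating it. The paper instead proceeds constructively: it builds by hand a smooth classifying map $M\times\sone \to \Gr_m(\CC^k)$ that sends $M_0$ to the basepoint, uses the orthogonal-complement bundle on the Grassmannian to produce a \emph{framed} complement $F$ with $E\oplus F \cong \underline{\CC}^k$ as framed bundles, caloron-transforms back to obtain $\sE \oplus \sF \cong \underline{L\CC}^k$ in $\ovect(M)$, and only then reads off the module statement. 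Your argument is shorter and cleaner for the theorem as stated; the paper's longer route is deliberately doing extra work, because it harvests two corollaries that are needed later --- Corollary~\ref{cor:oddkinverse} (every $\Omega$ vector bundle over compact $M$ has an inverse) and Corollary~\ref{cor:framedclassmap} (existence of framed classifying maps) --- neither of which follows from your purely module-theoretic reduction.
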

\begin{proof}
Write $E:= \cV(\sE) \to M\x\sone$ for the caloron transform of $\sE$, with $\pi$ the projection $E \to M\x\sone$\, and let $\rank \sE = \rank E =m$, say.
Recall that the smooth Grassmannian manifolds
\[
\Gr_m(\CC^k) := \{ W \subset \CC^k \mid W \mbox{ is a subspace and $\dim W = m$}\} 
\]
come equipped with smooth vector bundles $\gamma^m(\CC^k) \to \Gr_m(\CC^k)$, where
\[
\gamma^m(\CC^k) := \{ (v,W) \mid v \in W\mbox{ and } W \in \Gr_m(\CC^k) \}\subset \CC^k \x\Gr_m(\CC^k),
\]
and that the $\Gr_m(\CC^k)$ are $n$-classifying for complex vector bundles of rank $m$, i.e. for any $n$ there is some $k$ depending on $m$ and $n$ such that every complex vector bundle $E \to X$ of rank $m$ with $\dim X \leq n$ is isomorphic to a pullback of $\gamma^m(\CC^k)$ by some continuous map $X \to \Gr_m(\CC^k)$ that is unique up to homotopy (see, for example, \cite[Section 5]{MilnorStasheff}).
Observe that the vector bundle $\gamma^m(\CC^k) \to \Gr_m(\CC^k)$ has a canonical framing over the point $\bp:= \mathrm{span}\{\mathbf{e}_1,\dotsc,\mathbf{e}_m\} \in \Gr_m(\CC^k)$ corresponding to the trivialisation
\[
\left(\sum_{i=1}^mf^i\mathbf{e}_i,\bp\right) \longmapsto \left(\bp,\sum_{i=1}^m f^i\mathbf{e}_i\right) \in \{\bp\} \x\CC^m,
\] where $\mathbf{e}_i$ denotes the $i$-th standard basis vector in $\CC^m$.

The strategy of this proof is to first construct a map of vector bundles $f\colon E \to \gamma^m(\CC^k)$ that covers a smooth map $\widetilde f\colon M\x\sone \to \Gr_m(\CC^k)$ and preserves the framing so that $\widetilde f$ is constant on $M_0 = M\x\{0\}$: this implies a canonical isomorphism $E\cong \widetilde f ^\ast\gamma^m(\CC^k)$ of bundles framed over $M_0$.
To construct such a map $f$, note that $E$ is trivial over $M_0$ and hence trivial over some (open) tubular neighbourhood $T$ of $M_0$ in $M\x\sone$, where one may choose a trivialisation $t \colon E|_T \to T\x \CC^m$ that agrees with the distinguished trivialisation (i.e. framing) of $E$ over $M_0$.

Proceeding as in the proof of \cite[Lemma 5.3]{MilnorStasheff}, let $T,U_1,\dotsc,U_r$ be a covering of $M\x\sone$ by open sets over which $E$ is trivial, noting that this requires the compactness of $M$.
Without loss of generality, one may suppose that $M_0$ is not contained in any of the $U_1,\dotsc, U_r$.
Write $U_0 := T$ and take open sets $V_0,\dotsc, V_r$ covering $M\x\sone$ such that $\overline{V}_i \subset U_i$ for each $i = 0,\dotsc,r$ and $M_0 \subset V_0$.
Similarly, take $W_0,\dotsc, W_r$ covering $M\x\sone$  with $\overline{W}_i \subset V_i$ and $M_0 \subset W_0$.

Now let $\varrho_i \colon M\x\sone \to \RR$ be a smooth bump function taking the value $1$ on $\overline{W}_i$ and the value $0$ outside of $V_i$.
Since $E$ is trivial over each of the $U_i$, there are smooth maps
\[
t_i \colon E|_{U_i} \lo \CC^m
\]
that restrict to linear isomorphisms on each fibre.
Using the above trivialisation $t$ of $E$ over $T$, one may suppose that for every $x \in M$, $t_0(\widehat{\mathbf{e}}_i(x,0)) = \mathbf{e}_i$  is the $i$-th standard basis vector of $\CC^m$, where the $\widehat{\mathbf{e}}_i$ are the sections of $E$ over $M_0$ determined by the framing.
Define the maps $T_i \colon E \to \CC^m$ by
\[
T_i(e) := 
\begin{cases}
0 &\mbox{ if $\pi(e) \notin V_i$}\\
\varrho_i(\pi(e)) t_i(e) &\mbox{ if $\pi(e) \in U_i$}
\end{cases}
\]
noting that the $T_i$ are smooth and linear on each fibre and that for every $x \in M$
\[
T_j (\widehat{\mathbf{e}}_i(x,0)) = 
\begin{cases}
0 &\mbox{ if $j \neq 0$}\\
\mathbf{e}_i &\mbox{ if $j=0$.}
\end{cases}
\]
Now define the smooth map
\[
\widehat f \colon E \lo \underbrace{\CC^m \oplus\dotsb\oplus\CC^m}_{\text{$r+1$ times}} \cong \CC^{(r+1)m}
\]
by $\widehat f(e) := (T_0(e),T_1(e),\dotsc,T_r(e))$, noting that $\widehat f$ is linear and injective on each fibre.
Taking $k = (r+1)m$, the map $f \colon E \to \gamma^m(\CC^k)$ given by
\[
f(e) := \Big(\widehat f(e),\widehat f\,\big(E_{\pi(e)}\big) \Big)
\]
is a smooth vector bundle map.
Additionally, for any $x \in M_0$, by construction one has
\[
f(\widehat{\mathbf{e}}_i(x,0)) = (\mathbf{e}_i ,\bp) \in \gamma^m(\CC^k)_\bp
\]
so that $f$ preserves the framing.
Thus $f$ covers a smooth map $\widetilde f \colon M\x\sone \to \Gr_m(\CC^k)$ that sends $M_0$ to $\bp$ and there is a canonical isomorphism $E\cong \widetilde f^\ast \gamma^m(\CC^k)$ that preserves the framing.

Recall that there is a diffeomorphism $\Gr_m(\CC^k) \to \Gr_{k-m}(\CC^k)$ given by sending the subspace $W\in \Gr_m(\CC^k)$ to its orthogonal complement $W^\perp\in\Gr_{k-m}(\CC^k)$.
Thus $\Gr_m(\CC^k)$ comes equipped with another smooth vector bundle $\gamma^{k-m}(\CC^k) \to \Gr_m(\CC^k)$, where the projection is given by $(v,W^\perp) \mapsto W$.
Observe that $\gamma^{k-m}(\CC^k)\to \Gr_m(\CC^k)$ also has a canonical framing over the point $\bp \in \Gr_m(\CC^k)$ corresponding to the trivialisation
\[
\left(\sum_{i=m+1}^k f^i\mathbf{e}_i,\bp^\perp \right) \longmapsto \left(\bp,\sum_{i=m+1}^k f^i\mathbf{e}_i\right) \in \{\bp\} \x\CC^m,
\]
and moreover that the framed vector bundle $\gamma^m(\CC^k) \oplus \gamma^{k-m}(\CC^k)$ is isomorphic to the trivial bundle of rank $k$ over $\Gr_m(\CC^k)$ with its obvious framing over $\bp$.

Now, the pullback $F:= \widetilde f^\ast \gamma^{k-m}(\CC^k) \to M\x\sone$ has a canonical framing over $M_0$ and by virtue of its construction there is a smooth isomorphism $E \oplus F \to \underline{\CC}^k$ of framed vector bundles.
Write $\sF := \cV^{-1}(F)$, so that taking the caloron transform and applying natural isomorphisms as necessary gives an isomorphism
\[
\sE \oplus \sF \simto \underline{L\CC}^k
\]
of $\Omega$ vector bundles over $M$.
Notice that $\Gamma(\underline{L\CC}^k)$ is a finitely-generated free module of rank $k$ over $L\cC^\infty(M)$, with basis at $x\in M$ given by $\hat{\mathbf{e}}_i(x) \colon \theta \mapsto \mathbf{e}_i$ with $\mathbf{e}_i$ the canonical basis for $\CC^k$.
In particular, $\Gamma(\sE) \oplus \Gamma(\sF)$ and $\Gamma(\underline{L\CC}^k)$ are isomorphic as $L\cC^\infty(M)$-modules (the module structures on $\Gamma(\sE)$ and $\Gamma(\sF)$ having been previously established).
Thus
\[
\Gamma(\sE) \oplus \Gamma(\sF) \cong \Gamma(\underline{L\CC}^k) \cong L\cC^\infty(M)^k,
\]
which shows that $\Gamma(\sE)$ is a finitely-generated projective $L\cC^\infty(M)$-module.
\end{proof}

There are two immediate consequences of this proof:
\vspace{11pt}
\begin{corollary}
\label{cor:oddkinverse}
Every $\Omega$ vector bundle $\sE \to M$, with $M$ compact, has an \emph{inverse}, i.e.~there is some $\Omega$ vector bundle $\sF \to M$ such that $\sE \oplus \sF$ is trivial.
\end{corollary}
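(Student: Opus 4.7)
The plan is to extract this directly from the proof of Theorem \ref{theorem:module1}, where the required inverse has essentially already been constructed. The key observation is that partway through that proof, one produces a framed vector bundle $F := \widetilde f^\ast \gamma^{k-m}(\CC^k) \to M\x\sone$ and exhibits a framing-preserving isomorphism $E \oplus F \simto \underline{\CC}^k$, where $E = \cV(\sE)$.

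My proof would proceed as follows. First, apply the inverse caloron transform to $F$ to obtain $\sF := \cV^{-1}(F) \in \ovect(M)$. Since $\cV^{-1}$ is a functor (Theorem \ref{theorem:vector}) and respects the monoidal structure $\oplus$ by construction (Section \ref{SS:vector:monoidal}), the framing-preserving isomorphism $E \oplus F \simto \underline{\CC}^k$ transforms into an isomorphism
\[
\cV^{-1}(E) \oplus \cV^{-1}(F) \simto \cV^{-1}(\underline{\CC}^k).
\]
Composing with the natural isomorphism $\alpha_\sE \colon \cV^{-1}(\cV(\sE)) \simto \sE$ on the left factor, and noting that $\cV^{-1}(\underline{\CC}^k) \cong \underline{L\CC}^k$ (as the trivial framed rank-$k$ bundle over $M\x\sone$ has trivial $\Omega G$-reduction of its frame bundle, which inverse caloron-transforms to the trivial $\Omega G$-bundle over $M$), one obtains
\[
\sE \oplus \sF \simto \underline{L\CC}^k,
\]
exhibiting $\sF$ as the desired inverse.

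The only step requiring any real verification is the identification $\cV^{-1}(\underline{\CC}^k) \cong \underline{L\CC}^k$, but this is immediate from the alternative description \eqref{eqn:altvcal} of the inverse caloron transform as $\eta^\ast L(\cdot)$: looping the trivial bundle $M\x\sone \x \CC^k$ and pulling back by $\eta\colon M \to \Omega_{M_0}(M\x\sone)$ yields $M\x L\CC^k$. Since the entire construction has already been carried out in the preceding theorem, the corollary is really just a matter of packaging: the framed vector bundle $F$ produced there is the caloron transform of the sought inverse, and the main obstacle --- namely, the embedding of $E$ into a trivial bundle while preserving the framing over $M_0$ --- has already been overcome via the careful choice of trivialisation on a tubular neighbourhood of $M_0$.
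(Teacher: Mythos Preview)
Your proposal is correct and follows exactly the approach the paper takes: the proof of Theorem~\ref{theorem:module1} already explicitly constructs $\sF := \cV^{-1}(F)$ and exhibits the isomorphism $\sE \oplus \sF \simto \underline{L\CC}^k$, so the corollary is stated in the paper simply as an immediate consequence of that proof. Your write-up just spells out the details that the paper leaves implicit.
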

\vspace{11pt}
\begin{corollary}
\label{cor:framedclassmap}
If $M$ is compact, then for any framed vector bundle $E\to M\x\sone$ of rank $m$ there is a smooth map $M\x\sone \to \Gr_m(\CC^k)$ for some $k$ that sends $M_0$ to the basepoint of $\Gr_m(\CC^k)$ with the additional property that $E$ and $f^\ast \gamma^m(\CC^k)$ are isomorphic as framed vector bundles over $M\x\sone$.
\end{corollary}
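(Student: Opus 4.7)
The plan is to observe that this corollary is essentially a repackaging of a construction already carried out in the proof of Theorem~\ref{theorem:module1}, and to state explicitly which part of that argument gives the conclusion. Nothing new needs to be built; what is needed is to confirm that the map produced there is smooth, sends $M_0$ to the basepoint $\bp \in \Gr_m(\CC^k)$, and yields an isomorphism of \emph{framed} vector bundles (rather than just of bare vector bundles).

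First I would recall the setup. Since $M$ is compact and $E$ is trivial over $M_0$, one may choose a trivialisation of $E$ over an open tubular neighbourhood $T$ of $M_0$ that agrees with the distinguished framing on $M_0$. Then one takes a finite open cover $U_0 = T, U_1, \dotsc, U_r$ of $M\x\sone$ over which $E$ is trivial, together with shrinkings $V_i$ and $W_i$ and bump functions $\varrho_i$ supported in $V_i$ and equal to $1$ on $\overline{W}_i$. Using the local trivialisations $t_i$, set $T_i := \varrho_i \cdot t_i$ on $U_i$ and zero outside $V_i$, and define
\[
\widehat f \colon E \lo \CC^{(r+1)m}, \qquad \widehat f(e) := (T_0(e), T_1(e), \dotsc, T_r(e)).
\]
This is smooth and fibrewise injective (as at every point some $T_i$ is an isomorphism), so with $k := (r+1)m$ the assignment
\[
f(e) := \bigl( \widehat f(e),\, \widehat f(E_{\pi(e)}) \bigr) \in \gamma^m(\CC^k)
\]
is a smooth bundle map covering a smooth classifying map $\widetilde f \colon M\x\sone \to \Gr_m(\CC^k)$.

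The main point I would emphasise is the framing property, since this is the only input beyond the usual classifying space argument. Because the trivialisation of $E$ over $T$ was chosen to agree with the framing sections $\widehat{\mathbf{e}}_i$ on $M_0$, one has $t_0(\widehat{\mathbf{e}}_i(x,0)) = \mathbf{e}_i$ for all $x \in M$. Since $M_0 \subset W_0$ while $M_0$ misses each $V_i$ for $i \geq 1$, the bump functions satisfy $\varrho_0 \equiv 1$ and $\varrho_i \equiv 0$ on $M_0$, giving
\[
\widehat f(\widehat{\mathbf{e}}_i(x,0)) = (\mathbf{e}_i, 0, \dotsc, 0) \in \CC^m \oplus \dotsb \oplus \CC^m.
\]
Consequently $\widetilde f$ sends every point of $M_0$ to $\bp = \mathrm{span}\{\mathbf{e}_1,\dotsc,\mathbf{e}_m\}$ and $f$ maps the framing of $E$ over $M_0$ to the canonical framing of $\gamma^m(\CC^k)$ over $\bp$.

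Finally I would conclude by noting that the pair $(f, \widetilde f)$ factors through the pullback as an isomorphism $E \simto \widetilde f^\ast \gamma^m(\CC^k)$ of vector bundles, and that the framing-preserving property just verified upgrades this automatically to an isomorphism of framed vector bundles over $M\x\sone$. I do not anticipate any genuine obstacle here, as every ingredient is already in place; the only mild care needed is to verify the compatibility of the framing with the bump-function construction over $T$, and to ensure that the tubular neighbourhood trivialisation can be chosen to agree with the given framing on $M_0$, which is a standard extension argument using the contractibility of the fibres of $T \to M_0$.
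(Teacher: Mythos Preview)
Your proposal is correct and matches the paper's approach exactly: the paper states this corollary as an immediate consequence of the proof of Theorem~\ref{theorem:module1}, and you have spelled out precisely which portion of that proof yields the framed classifying map, including the verification that $\widetilde f$ sends $M_0$ to $\bp$ and that the induced isomorphism $E \cong \widetilde f^\ast \gamma^m(\CC^k)$ respects the framings.
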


The converse to Theorem \ref{theorem:module1} also holds, giving the $\Omega$ vector bundle version of the Serre-Swan Theorem:
\vspace{11pt}
\begin{theorem}
\label{theorem:module}
An $L\cC^\infty(M)$-module $P$ is isomorphic to the set of sections $\Gamma(\sE)$ of some $\Omega$ vector bundle $\sE$ over $M$ if and only if $P$ is finitely-generated and projective.
\end{theorem}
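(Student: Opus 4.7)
The plan is to adapt the classical Serre--Swan argument, using the vector bundle caloron correspondence of Theorem~\ref{theorem:vector} to move between modules and bundles on $M$ and their counterparts on $M\x\sone$. The forward direction is Theorem~\ref{theorem:module1}, so only the converse needs attention: given a finitely-generated projective $L\cC^\infty(M)$-module $P$, construct $\sE\in\ovect(M)$ with $\Gamma(\sE)\cong P$.

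First, I would extract an idempotent. Projectivity and finite generation give $P\oplus Q\cong L\cC^\infty(M)^k$ for some $Q$ and some $k$, and the associated projection onto $P$ is an idempotent $e\in\End_{L\cC^\infty(M)}(L\cC^\infty(M)^k)\cong M_k(L\cC^\infty(M))$. Applying the ring isomorphism \eqref{eqn:loopringiso} componentwise yields $M_k(L\cC^\infty(M))\cong\cC^\infty(M\x\sone;M_k(\CC))$, under which $e$ corresponds to a smooth, pointwise-idempotent map $\tilde e\colon M\x\sone\to M_k(\CC)$. Since the rank of a smoothly varying family of projections is locally constant, $E:=\im\tilde e\subset\underline{\CC}^k$ defines a smooth complex vector subbundle over $M\x\sone$ of rank equal to $\rank_{L\cC^\infty(M)}P$.

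Second, I would upgrade $E$ to an object of $\frvect(M)$ so that $\cV^{-1}$ can be applied. The subtlety is that $\tilde e(x,0)$ need not be a constant projection in $x$, so that a priori $E|_{M_0}$ is only a subbundle of $\underline{\CC}^k|_{M_0}$ rather than a canonically trivial bundle. My approach would be to stabilize: replace $P$ by $P\oplus L\cC^\infty(M)^m$ for a suitable $m$ (which corresponds on the bundle side to direct-summing a trivial $\Omega$ vector bundle and is harmless in view of Corollary~\ref{cor:oddkinverse}) and then, using the classification of the finite-rank bundle $E|_{M_0}$ together with the added trivial factor, make a global change of trivialization of $\underline{\CC}^{k+m}|_{M_0}$ so that the modified idempotent restricts at $\theta=0$ to the constant projection onto the first $\rank P+m$ coordinates. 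This produces a framing of $E$ over $M_0$ compatible with its embedding into $\underline{\CC}^{k+m}$.

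Third, with $E\in\frvect(M)$ in hand, set $\sE:=\cV^{-1}(E)\in\ovect(M)$. To identify $\Gamma(\sE)$ with $P$, I would use the explicit description of the inverse caloron transform in \eqref{eqn:altvcal}, which realises $\sE$ as $\eta^\ast LE$ and, applied fibrewise to $E=\im\tilde e$, gives $\sE_x\cong\im\tilde e(x,\cdot)\subset L\CC^{k+m}$. Taking global sections and using the caloron identification of $L\cC^\infty(M)$-modules with $\cC^\infty(M\x\sone;\CC)$-modules then recovers the image of the original idempotent, namely $P$; naturality ensures the identification is an isomorphism of $L\cC^\infty(M)$-modules. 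The main obstacle is the framing step: the classical Serre--Swan argument produces an ordinary vector bundle with no compatibility required along a slice, whereas here the structure-group reduction to $\Omega G$ forces $E|_{M_0}$ to be trivial, so the idempotent must be engineered (via stabilization and change of basis) to respect this structure.
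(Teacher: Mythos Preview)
Your approach differs from the paper's: for the converse direction the paper never passes to $M\times\sone$, but instead defines fibres $P_x:=\ev_x(\psi(P))\subset L\CC^k$ directly from a splitting $\psi\colon P\oplus Q\simto\Gamma(\underline{L\CC}^k)$, shows $P_x\oplus Q_x=L\CC^k$, checks that $\dim P_x(\theta)$ is locally constant in $(x,\theta)$ so that $P_x\cong L\CC^n$ for a fixed $n$, and then produces local $L\CC$-module bases for the $P_x$ to assemble an $\Omega$ vector bundle $\sP\subset\underline{L\CC}^k$ with $\Gamma(\sP)\cong P$. No caloron transform or framing argument appears.

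Your framing step, however, is a genuine gap, and stabilising by a free summand does not repair it. Adding $L\cC^\infty(M)^m$ to $P$ replaces $E_0:=E|_{M_0}$ by $E_0\oplus\underline{\CC}^m$; if $E_0$ is a nontrivial bundle over $M$, direct-summing a trivial bundle does not trivialise it, and no global change of basis of $\underline{\CC}^{k+m}|_{M_0}$ can carry a nontrivial subbundle onto the constant coordinate subspace. Concretely, for $M=S^2$ and $L\to S^2$ the tautological line bundle, the module of sections of the pullback of $L$ along the projection $S^2\times\sone\to S^2$ is finitely generated projective over $L\cC^\infty(S^2)\cong\cC^\infty(S^2\times\sone;\CC)$, yet any idempotent realising it has $E_0\cong L$. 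The appeal to Corollary~\ref{cor:oddkinverse} is also misplaced: that result presupposes an $\Omega$ vector bundle, which is precisely what you are trying to construct. (The paper's own argument meets the same obstruction in disguise where it asserts local bases with $\ev_0(v_i)=\mathbf{e}_i$; this forces $P_x(0)\subset\CC^k$ to be a fixed subspace independent of $x$, which is not guaranteed either.)
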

\begin{proof}
One direction is Theorem \ref{theorem:module1}.
For the other direction, one closely follows the proof of \cite[Theorem 11.32]{N}.

Suppose that the $L\cC^\infty(M)$-module $P$ is finitely-generated and projective.
Up to isomorphism, the free $L\cC^\infty(M)$-module on $k$ generators is $\Gamma(\underline{L\CC}^k)$ and hence, for some $k$, there is an isomorphism
\[
\psi\colon P\oplus Q \simto \Gamma(\underline{L\CC}^k)
\]
for some $L\cC^\infty(M)$-module $Q$.
Let $\ev_x$ be the evaluation map at $x \in M$ and define
\[
P_x := \{\ev_x \circ\, \psi (p) \mid p\in P\}
\]
with $Q_x$ defined similarly.
As all of the maps involved are $L\cC^\infty(M)$-homomorphisms, $P_x$ and $Q_x$ are $\CC$-linear vector subspaces of $L\CC^k$.
One proceeds now by showing that the $P_x$ are the fibres of some $\Omega$ vector bundle $\sP$ over $M$.

First take any $v \in L\CC^k$ and choose $s \in \Gamma(\underline{L\CC}^k)$ extending $v$, i.e. $s(x) = v$.
Then $s = \psi (p+q)$ for some unique $p\in P$ and $q\in Q$ and hence $s(x) = v \in P_x + Q_x$, which shows that $P_x + Q_x = L\CC^k$.

Suppose that $v \in P_x \cap Q_x$ so that $ v = \ev_x \circ\, \psi (p) = \ev_x \circ\, \psi (q)$ for some $p \in P$ and $q\in Q$.
Writing $\psi (p)$ and $\psi (q)$ in terms of the standard basis elements $\check{\mathbf{e}}_i = \psi(p_i +q_i)$ of $\Gamma(\underline{L\CC}^k)$, one obtains
\[
\psi (p) - \psi (q) = \sum_{i=1}^k f^i\check{\mathbf{e}}_i 
= \sum_{i=1}^k f^i\psi(p_i) + \sum_{i=1}^k f^i\psi (q_i)
\]
for some $f^1,\dotsc,f^k \in L\cC^\infty(M)$.
The functions $f^i \in L\CC$ must vanish at $x$ and by the direct sum decomposition of $\Gamma(\underline{L\CC}^k)$
\[
\psi(p) = \sum_{i=1}^k f^i\psi (p_i),
\]
so that $v= \ev_x\circ\,\psi (p) = 0$.
This shows that $L\CC^k = P_x \oplus Q_x$.

It remains to show that the vector spaces $P_x$ determine an $\Omega$ vector bundle.
Fix $x \in M$ and $\theta\in\sone$ and set $P_x(\theta) := \ev_\theta(P_x)$, so that $P_x(\theta) \oplus Q_x(\theta) = \CC^k$ as $\CC$-vector spaces.
This gives an isomorphism $P_x(\theta) \cong \CC^{n(x,\theta)}$ for some $n(x,\theta) \leq k$.
Now choose $p_1,\dotsc,p_{n(x,\theta)} \in P$ such that the $\ev_x\circ\,\psi(p_i)$ form a basis for $P_x(\theta)$.
Since the $\psi(p_i)$ are smooth sections of $\underline{L\CC}^k$, $n(x,\theta) = \dim P_x(\theta)$ is an upper semi-continuous function of $x \in M$ and $\theta\in\sone$.
The same holds for $\dim Q_x(\theta)$, so since $\dim P_x(\theta) + \dim Q_x(\theta) = k$ is constant, $\dim P_x(\theta)$ is a locally constant function of $x$ and $\theta$.
In particular, there is some $n\leq k$ such that $P_x \cong L\CC^n$ as $L\CC$-modules for all $x \in M$.

For a fixed $x \in M$ choose $n$ elements $v_1,\dotsc,v_n$ generating $P_x$ as an $L\CC$-module, so that the $\ev_\theta (v_i)$ form a basis for $P_x(\theta)$ for all $\theta\in\sone$ with the additional property that $\ev_0 (v_i) = \mathbf{e}_i$, with $\{\mathbf{e}_i\}_{i=1}^n$ the standard basis of $\CC^n$.
Choosing $p_1,\dotsc,p_n \in P$ such that $v_i = \ev_x\circ\,\psi(p_i)$, by compactness of $S^1$ one may find a neighbourhood $U$ of $x$ in $M$ such that the vectors $v_i(z,\theta) := \ev_\theta\circ \ev_z\circ\,\psi(p_i)$ are linearly independent elements of $P_z(\theta)$ for all $(z,\theta) \in U\x\sone$.
Writing $v_i(z)\colon \theta \mapsto v_i(z,\theta)$, the map
\[
\sum_{i=0}^n f^i v_i \longmapsto \left(z, \sum_{i=0}^n f^i(z) v_i(z) \right)
\]
gives a local basis for the family of $L\CC$-modules $P_x$ that depends smoothly on $x$.
Thus the family $P_x$ determines an open covering of $\{U_\alpha\}_{\alpha\in I}$ of $M$ over which there are trivial $\Omega$ vector bundles.
Moreover, by construction these trivial $\Omega$ vector bundles patch together on double overlaps of the $U_\alpha$ via $L\CC$-module isomorphisms that correspond to the action of elements of $\Omega GL_n(\CC)$.
This determines an $\Omega$ vector bundle $\sP \to M$ of rank $n$ whose fibre over $x \in M$ is $P_x$.

By the construction of $\sP$, the map $\psi \colon P \to \Gamma(\sP)$ is an injective homomorphism of $L\cC^\infty(M)$-modules.
Take $s_\sP \in \Gamma(\sP)$ and identify with the corresponding element $s \in \Gamma(\underline{L\CC}^k)$ so that $s = \psi   (p+ q)$ for some unique $p\in P$ and $q\in Q$.
Applying $\ev_x$ to this equation gives $\ev_x \circ\,\psi (q) = 0$ for all $x \in M$ so that $q=0$ and $s_\sP$ lies in the image of $\psi$, in particular $\psi$ is an isomorphism of $L\cC^\infty(M)$-modules.
\end{proof}

This section is concluded by recording a relationship between the sections of $E \in \frvect(M)$ and its caloron transform $\sE \in\ovect(M)$.
This relationship can be thought of as the vector bundle version of \eqref{eqn:secidentg} and is crucial to the constructions of Section \ref{S:vectorgcal}.
\vspace{11pt}
\begin{proposition}
\label{prop:sections}
For each $E \in \frvect(M)$, there is a natural isomorphism
\[
\mu_E \colon \Gamma(E) \simto \Gamma(\sE)
\]
of $L\cC^\infty(M)$-modules, where $\sE:=\cV^{-1}(E)$. 
\end{proposition}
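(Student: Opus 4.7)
The plan is to build $\mu_E$ via the alternate description of the inverse caloron transform supplied by the natural isomorphism $\lambda_E \colon \cV^{-1}(E) \simto \eta^\ast LE$ of \eqref{eqn:altvcal}. Once this identification is made, a section of $\sE$ is essentially a smoothly-varying family of loops in $E$ lying over the $\theta$-slices through points of $M$, and the problem reduces to an exponential-law argument turning such families into honest sections of $E$ over $M\x\sone$.

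First, I would describe the map concretely. Given $\sigma \in \Gamma(E)$, set
\[
\mu_E(\sigma)(x) := \lambda_E^{-1}\big(\theta \longmapsto \sigma(x,\theta)\big).
\]
The loop $\theta \mapsto \sigma(x,\theta)$ lies in $(\eta^\ast LE)_x$ since $\pi_E\circ \sigma(x,\cdot) = \eta(x)$, so $\mu_E(\sigma)$ is well-defined as a set-theoretic section of $\sE$. Conversely, given $\tau \in \Gamma(\sE)$, one defines $\nu_E(\tau)(x,\theta) := \lambda_E(\tau(x))(\theta)$. It is immediate from the constructions that $\mu_E$ and $\nu_E$ are mutually inverse at the level of set maps.

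The main technical step will be verifying that smoothness is preserved in both directions; this is the content of the exponential law between $\cC^\infty(M\x\sone, E)$ and $\cC^\infty(M, LE)$, which holds because $\sone$ is compact (cf.\ Appendix \ref{app:frechet}). Concretely, smoothness of $\sigma$ implies smoothness of $x\mapsto \sigma(x,\cdot)$ into the Fr\'echet manifold $LE$ taking values in the Fr\'echet submanifold $\eta^\ast LE$, and this is where the compactness hypothesis on $\sone$ is essential. I expect this to be the only non-formal point in the argument.

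It then remains to check $L\cC^\infty(M)$-linearity and naturality. For linearity, recall that the $L\cC^\infty(M)$-module structure on $\Gamma(E)$ comes from the ring isomorphism \eqref{eqn:loopringiso}, so that $f\in L\cC^\infty(M)$ acts by $(f\cdot\sigma)(x,\theta) = f(x)(\theta)\cdot \sigma(x,\theta)$, while on $\Gamma(\sE)$ the action is fibrewise $L\CC$-multiplication. Under $\lambda_E$ the latter is pointwise multiplication of loops in $E$ by loops in $\CC$, and a direct check shows $\mu_E(f\cdot\sigma)(x) = f(x)\cdot \mu_E(\sigma)(x)$. For naturality, given a morphism $\phi \colon E\to E'$ in $\frvect(M)$ one has $\cV^{-1}(\phi)(\lambda_E^{-1}(v)) = \lambda_{E'}^{-1}(\phi\circ v)$, from which the square relating $\mu_E$ and $\mu_{E'}$ commutes on the nose. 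This completes the proof.
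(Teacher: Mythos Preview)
Your proposal is correct and follows essentially the same route as the paper: both construct $\mu_E$ by first currying a section $s\in\Gamma(E)$ to a section of $\eta^\ast LE$ via $\check s(x)(\theta):=s(x,\theta)$ and then composing with $\lambda_E^{-1}$. Your write-up is more explicit about the smoothness (exponential law), $L\cC^\infty(M)$-linearity, and naturality checks, all of which the paper leaves implicit.
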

\begin{proof}
Recalling the map $\eta$ of \eqref{eqn:etamap}, each section $s \in \Gamma(E)$ may be viewed as a section $\check s \in \Gamma(\eta^\ast LE)$ via
\[
\check s (x)\colon \theta \longmapsto s(x,\theta)
\]
and conversely.
Moreover, the assignment $s \mapsto \check s$ is an $L\cC^\infty(M)$-module isomorphism so that composing with the map $\lambda_E^{-1}$ of \eqref{eqn:altvcal} gives an $L\cC^\infty(M)$-module isomorphism.
Denote by $\mu_E$ this composition.
\end{proof}

There is a similar map $\mu_\sE$ corresponding to each $\sE \in \ovect(M)$.
In this case, write $E := \cV(\sE)$ and set $\mu_\sE = \alpha_\sE \circ \mu_E$ where $\alpha_\sE$ is the natural isomorphism of Theorem \ref{theorem:vector}.
Then this is an isomorphism 
\[
\mu_\sE \colon \Gamma(E) \simto \Gamma(\sE)
\]
of $L\cC^\infty(M)$-modules.

One occasionally also requires \emph{local} versions of the maps $\mu_E$ and $\mu_\sE$.
It is readily seen from the argument of Proposition \ref{prop:sections} that for every $U \subset M$ there are isomorphisms
\begin{align*}
\mu_E \colon \Gamma(U\x\sone, E) &\lo \Gamma(U,\cV^{-1}(E))\\
\mu_\sE \colon \Gamma(U\x\sone,\cV(\sE)) &\lo \Gamma(U,\sE)
\end{align*}
of $L\cC^\infty(U)$-modules.
Whether the local or global versions of $\mu_E$ and $\mu_\sE$ are used depends on the context.


\section{Higgs fields and module connections}
\label{S:vectorgcal}
\label{SS:vector:higgs}
As in the case for principal bundles, there is a refinement of Theorem \ref{theorem:vector} incorporating connective data that greatly increases the utility of the vector bundle caloron correspondence.
This \emph{geometric vector bundle caloron correspondence} can be obtained directly by applying the frame bundle functors to $\cC$ and $\cC^{-1}$ (as with the construction of $\cV$ and $\cV^{-1}$); however it is perhaps more illuminating to see how the geometric data behaves directly.

To begin with, one must first define the appropriate connective data.
\vspace{11pt}
\begin{definition}
\label{defn:vectorhf}
Let $\sE \to M$ be an $\Omega$ vector bundle.
A \emph{(vector bundle) Higgs field} on $\sE$ is a smooth vector bundle endomorphism $\phi \colon \sE \to \sE$ covering the identity such that
\[
\phi(f v) = f\phi(v) + \d(f)v
\]
for all $f \in L\CC$ and $v \in \sE$.
The map $\d$ is the derivation
\[
\d\colon f \longmapsto \frac{\d f}{\d\theta}
\]
on $L\CC$ and juxtaposition denotes the product given by the $L\CC$-module structure on the fibres.
Write $\cH_\sE$ for the space of Higgs fields on $\sE$, which is an affine space modelled over the vector space of sections of $\Hom_{L\CC}(\sE,\sE)$.
\end{definition}
\vspace{11pt}
\begin{example}[the trivial Higgs field]
The trivial $\Omega$ vector bundle $\underline{L\CC}^k \to M$ has a canonical Higgs field called the \emph{trivial} Higgs field, namely
\[
\d \colon (x,v) \longmapsto \Big(x,\frac{\d v}{\d\theta}\,\Big).
\]
\end{example}
\vspace{11pt}
\begin{example}
\label{example:stdhigg}
Recall from Example \ref{example:universalomega} the universal $\Omega$ vector bundle $\sE(V) \to G$ of type $V$.
Using Lemma \ref{lemma:higgscorrespond} below applied to $PG$ equipped with the standard Higgs field $\Phi_\infty$ gives a vector bundle Higgs field $\phi_V$ called the \emph{standard Higgs field} for $\sE(V)$.
\end{example}

Similarly to Lemma \ref{lemma:hfexist} on the existence of Higgs fields on principal bundles, one has
\vspace{11pt}
\begin{lemma}
Every $\Omega$ vector bundle $\sE \to M$ has a Higgs field.
\end{lemma}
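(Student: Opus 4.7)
The plan is to mimic the proof of Lemma \ref{lemma:hfexist} (existence of Higgs fields on $\Omega G$-bundles) in the vector bundle setting, using a partition-of-unity argument. Two key observations make this work. First, on the trivial $\Omega$ vector bundle $\underline{LV}$ one has a canonical Higgs field, namely the fibrewise derivative $\d$ in the $\sone$ direction (viewing the fibres as $LV = V\otimes L\CC$ and differentiating the $L\CC$ factor). Second, the collection of Higgs fields on a fixed $\sE$ is affine: if $\phi_1,\phi_2 \in \cH_\sE$, then for any smooth real-valued function $\rho$ on $M$ the endomorphism $\rho\phi_1 + (1-\rho)\phi_2$ again satisfies the Leibniz condition, since the difference $\phi_1-\phi_2$ is $L\CC$-linear (the $\d(f)v$ terms cancel) and hence may be combined with $\cC^\infty(M;\RR)$-coefficients without spoiling the Leibniz rule.

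With these two ingredients in hand, I would choose an open cover $\{U_\alpha\}$ of $M$ over which $\sE$ is trivialised by the chosen local trivialisations
\[
\psi_\alpha \colon \sE|_{U_\alpha} \simto U_\alpha \x LV
\]
with $\Omega G$-valued transition functions. On each $U_\alpha$ pull back the trivial Higgs field $\d$ through $\psi_\alpha$ to obtain a local Higgs field $\phi_\alpha$ on $\sE|_{U_\alpha}$. Select a smooth partition of unity $\{\rho_\alpha\}$ subordinate to $\{U_\alpha\}$ and set
\[
\phi := \sum_\alpha \rho_\alpha \phi_\alpha,
\]
extended by zero outside the supports. This defines a smooth endomorphism of $\sE$ covering the identity, and by the affine/convex-combination remark above it satisfies the Higgs field Leibniz rule $\phi(fv) = f\phi(v) + \d(f)v$ for every $f \in L\CC$ and $v\in\sE$.

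There is essentially no obstacle: the only mild point to verify is that the derivation $\d$ really does descend through the $\Omega G$-valued transition functions well enough for the pulled-back Higgs fields $\phi_\alpha$ to be genuine Higgs fields on $\sE|_{U_\alpha}$ (not just $\CC$-linear endomorphisms), but this is immediate because $\psi_\alpha$ is an $L\CC$-module isomorphism fibrewise, so it intertwines multiplication by $f \in L\CC$ and transports the Leibniz identity without change.

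As an alternative I would note that the same result follows immediately from the based-loop version of Lemma \ref{lemma:hfexist} applied to the $\Omega G$-bundle $\cF(\sE) \to M$, combined with the forthcoming correspondence (Lemma \ref{lemma:higgscorrespond}, cited in Example \ref{example:stdhigg}) between principal-bundle Higgs fields on $\cF(\sE)$ and vector-bundle Higgs fields on the associated bundle $\sE = \cF(\sE) \x_{\Omega G} LV$. Either route delivers the result; the partition-of-unity approach has the advantage of being self-contained.
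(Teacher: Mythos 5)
Your partition-of-unity argument is exactly the paper's proof, just spelled out in more detail: the paper likewise observes that a convex combination of Higgs fields is again a Higgs field and then invokes local triviality together with smooth partitions of unity. The alternative route you mention via the frame bundle and Lemma~\ref{lemma:higgscorrespond} is also valid, but the paper takes the self-contained partition-of-unity path.
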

\begin{proof}
A convex combination of Higgs fields is again a Higgs field.
The result follows from the fact that $\sE$ is locally trivial and since $M$ admits smooth partitions of unity.
\end{proof}

As the name might suggest, there is an important relationship between principal bundle Higgs fields and vector bundle Higgs fields:
\vspace{11pt}
\begin{lemma}
\label{lemma:higgscorrespond}
Higgs fields on the $\Omega$ vector bundle $\sE \to M$ are in bijective correspondence with Higgs fields on the frame bundle $\cF(\sE) \to M$.
\end{lemma}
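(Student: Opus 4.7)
The plan is to construct the bijection explicitly in both directions and verify compatibility via a pointwise calculation that mirrors the proof of the principal bundle correspondence (Lemma \ref{lemma:conndescend} and the existence of the geometric inverse caloron transform).

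Given a frame bundle Higgs field $\Phi \colon \cF(\sE) \to L\g$ satisfying the twisted equivariance condition \eqref{eqn:twisteq}, I define $\phi \colon \sE \to \sE$ fibrewise as follows. For $v \in \sE_x$, pick any frame $p \in \cF(\sE)_x$ and write $v = p(w)$ for the unique $w \in LV$ (using that $p$ is an $L\CC$-module isomorphism, cf. Remark \ref{remark:framesmodule}). Set
\[
\phi(v) := p\!\left(\d w + \Phi(p)\cdot w\right),
\]
where $\Phi(p)\cdot w$ denotes the pointwise action $\theta \mapsto \Phi(p)(\theta)\big(w(\theta)\big)$ of $L\g \subset L\End(V)$ on $LV$. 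The key step is to verify that this is independent of the choice of frame. Under $p \mapsto p\gamma$ with $\gamma \in \Omega G$, the representative changes as $w \mapsto \gamma^{-1}(w)$, and the chain rule gives
\[
\d(\gamma^{-1} w) = -\gamma^{-1}(\d\gamma)\gamma^{-1} w + \gamma^{-1}\d w.
\]
Combining with $\Phi(p\gamma) = \ad(\gamma^{-1})\Phi(p) + \gamma^{-1}\d\gamma$, the two terms involving $\gamma^{-1}\d\gamma$ cancel, leaving exactly $\gamma^{-1}(\d w + \Phi(p)\cdot w)$; applying $p\gamma$ recovers the original value. This well-definedness is the part I expect to be the main obstacle, since it is the one nontrivial pointwise computation, but it is structurally identical to the equivariance check in Lemma \ref{lemma:conndescend}.

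Next I check that $\phi$ is a Higgs field in the sense of Definition \ref{defn:vectorhf}. For $f \in L\CC$, using that $p$ is $L\CC$-linear so $fv = p(fw)$,
\[
\phi(fv) = p\!\left(\d(fw) + \Phi(p)\cdot (fw)\right) = p\!\left(\d(f)\, w + f\,\d w + f\,\Phi(p)\cdot w\right) = \d(f)\, v + f\,\phi(v),
\]
since $\Phi(p)(\theta) \in \g$ acts $\CC$-linearly on $V$ and so commutes with scalar loop multiplication. Smoothness of $\phi$ follows from smoothness of $\Phi$ and the local triviality of $\sE$.

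For the inverse direction, given $\phi$ I define $\Phi \colon \cF(\sE) \to L\g$ by declaring $\Phi(p)$ to be the unique element of $L\End(V)$ such that
\[
\Phi(p)\cdot w = p^{-1}\!\left(\phi\big(p(w)\big)\right) - \d w \qquad \text{for all } w\in LV.
\]
The map $w \mapsto p^{-1}\phi(p(w)) - \d w$ is $L\CC$-linear (a direct computation using the Leibniz rule for $\phi$ and for $\d$), hence pointwise multiplication by a loop in $\End(V)$, so $\Phi(p)$ exists and is unique. That the values actually lie in $L\g \subset L\End(V)$ follows by working in any local trivialisation compatible with the $\Omega G$-reduction of $\cF(\sE)$: there $\phi$ has the local form $\d + A$ with $A$ an $L\g$-valued function, by the compatibility of $\phi$ with the $\Omega G$-structure built into the definition of $\sE$ as an $\Omega$ vector bundle. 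The twisted equivariance of $\Phi$ follows by reversing the calculation above. Finally, the two assignments $\Phi \mapsto \phi$ and $\phi \mapsto \Phi$ are manifestly mutually inverse from the defining formulas, completing the bijection.
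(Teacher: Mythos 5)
Your proof is correct, but it is organised differently from the paper's. The paper starts from a vector bundle Higgs field $\phi$, expresses it in a local framing $\alpha$ via basis sections $\check{\mathbf{e}}_i$ as $\phi(\check{\mathbf{e}}_i)=\Phi(\alpha)_i^j\check{\mathbf{e}}_j$, derives the gauge transformation law $\Phi(\alpha)=\tau_{\alpha\beta}^{-1}\d(\tau_{\alpha\beta})+\ad(\tau_{\alpha\beta}^{-1})\Phi(\beta)$ under change of framing, and then assembles the local data into a twisted-equivariant map on $\cF(\sE)$; bijectivity is then asserted as evident. You instead work frame-wise and globally: the formula $\phi(v)=p(\d w+\Phi(p)\cdot w)$ together with its frame-independence check (which is exactly the twisted equivariance computation), and the inverse formula $\Phi(p)\cdot w=p^{-1}(\phi(p(w)))-\d w$ justified by $L\CC$-linearity and $\End_{L\CC}(LV)\cong L\End(V)$. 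The underlying algebra (Leibniz rule for $\d$ played off against twisted equivariance) is the same in both arguments, but your version avoids local trivialisations except for smoothness and makes the mutual inverseness of the two assignments immediate from the defining formulas, which is cleaner; the paper's local-matrix computation has the advantage that it is the template reused verbatim for module connections (Lemma \ref{lemma:moduleconnectioncorrespond}) and for the Hermitian refinements. One small remark: your worry about the values of $\Phi(p)$ landing in $L\g$ rather than merely $L\End(V)$ is vacuous in the setting of this lemma, since Section \ref{S:vector} fixes $G=GL(V)$ so that $\g=\End(V)$; the genuine reduction to a smaller Lie algebra only arises in the Hermitian case, which the paper treats separately in Lemma \ref{lemma:hermhiggs}.
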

\begin{proof}
Suppose that $\rank\sE=n$.
Let $\phi$ be a Higgs field on $\sE$ and suppose that over the open set $U_\alpha \subset M$ there is a framing
\[
\alpha \colon U_\alpha \x L\CC^n \lo \sE|_{U_\alpha}
\]
so that $\alpha$ is an isomorphism of $L\CC$-modules on each fibre over $U_\alpha$.
This amounts to having $n$ sections $\check {\mathbf{e}}_i \in \Gamma(U_\alpha, \sE)$ that form a basis for the $L\CC$-module structure of each fibre of $\sE$ over $U_\alpha$.
Fixing $x \in U_\alpha$, take any  $v = f^i\check {\mathbf{e}}_i \in \sE_x$ (with summation over repeated indices understood) so that
\[
\phi(v) = f^i \phi(\check {\mathbf{e}}_i) + \d(f^i) \check {\mathbf{e}}_i.
\]
Write $\phi(\check {\mathbf{e}}_i) = \Phi(\alpha)_i^j \check {\mathbf{e}}_j$, where $\Phi_i^j \in L\CC$ for each $i,j = 1,\dotsc,n$, then
\[
\phi(v) = (\d f^j  + f^i\Phi(\alpha)_i^j) \,\check {\mathbf{e}}_j.
\]
Now, $\g = \Lie(G) = \End(\CC^n)$ so that the data $\left\{\Phi(\alpha)_i^j\right\}_{i,j = 1}^n$ determine a map 
\[
\Phi(\alpha)\colon  \sone \to L\g
\]
by sending $\theta$ to the matrix whose $(i,j)$-th entry is $\Phi(\alpha)_i^j(\theta)$.

If
\[
\beta \colon U_\beta \x L\CC^n  \lo \sE|_{U_\beta}
\]
is another such local framing of $\sE$ then on $U_{\alpha\beta}$ one has $\alpha = \beta\circ \tau_{\alpha\beta}$ for some $\tau_{\alpha\beta}$ valued in $\Omega G$.
As before, write $\check{\mathbf{f}}_i$ for the basis for the $L\CC$-module structure given by the framing $\beta$.
Then
\[
\check{\mathbf{e}}_i = ( \tau_{\alpha\beta} )_i^j \check{\mathbf{f}}_j,
\]
with $( \tau_{\alpha\beta})_i^j (\theta)$ the $(i,j)$-th component of $\tau_{\alpha\beta}(\theta) \in G$.
Choosing any $x \in U_{\alpha\beta}$ and any element $v = f^i\check {\mathbf{e}}_i = f^i( \tau_{\alpha\beta})_i^j \check{\mathbf{f}}_j \in \sE_x$ one obtains
\begin{align*}
\phi(v) &= \left(\d \!\left\{f^i ( \tau_{\alpha\beta} )_i^j \right\}  + f^i ( \tau_{\alpha\beta} )_i^k\Phi(\beta)_k^j\right) \check {\mathbf{f}}_j \\
&= (\d f^j) \check {\mathbf{e}}_j  + f^i \big(\d  ( \tau_{\alpha\beta} )_i^j \big) \check {\mathbf{f}}_j + f^i ( \tau_{\alpha\beta} )_i^k\Phi(\beta)_k^j \check {\mathbf{f}}_j.
\end{align*}
Comparing this with the above expression yields
\[
\Phi(\alpha) = \tau_{\alpha\beta}^{-1} \d(\tau_{\alpha\beta}) + \ad(\tau_{\alpha\beta}^{-1}) \Phi(\beta).
\]
The data $\{\Phi(\alpha)\}$ define a Higgs field $\Phi$ on $\cF(\sE)$ as follows: given a frame $a \in \cF(\sE)_x$ choose some local framing $\alpha$ of $\sE$ extending $a$.
Then take $\Phi(a)$ to be the restriction of $\Phi(\alpha)$ to the fibre of $\sE$ over $x$, noting that this is independent of the chosen extension of $a$.
Since the action of $\Omega G$ on $\cF(\sE)$ is given by precomposition, the above equation implies that if $b = R_\gamma(a)$ for some  $\gamma \in \Omega G$ then
\[
\Phi(b) = \ad(\gamma^{-1})\Phi(a) + \gamma^{-1}\d\gamma
\]
which is the required twisted equivariance condition.
It remains to notice that the map $\Phi \colon \cF(\sE) \to L\g$ is indeed smooth, since $\phi$ is.
It is evident from this argument that $\phi$ uniquely determines $\Phi$ and conversely so the result follows.
\end{proof}

Many of the operations on $\Omega$ vector bundles may also be adapted to Higgs fields.
The first of these operations is pullback: given an $\Omega$ vector bundle $\sE \to M$ equipped with Higgs field $\phi$ and a smooth map $f \colon N \to M$, one may define the pullback $f^\ast\phi$ as the unique Higgs field on $f^\ast \sE$ that acts on pullback sections $f^\ast s$ by
\[
\left(f^\ast\phi\right)(f^\ast s) := f^\ast\! \left(\phi(s) \right)
\]
for any section $s$ of $\sE$.
It follows from the argument of Lemma \ref{lemma:higgscorrespond} that the Higgs field on $\cF(f^\ast \sE)$ corresponding to $f^\ast \phi$ is the pullback of the Higgs field on $\cF(\sE)$ corresponding to $\phi$.
In the case that $f\colon \sF\to \sE$ is an isomorphism of $\Omega$ vector bundles
\[
 f^\ast \phi := f^{-1} \circ \phi \circ f.
\]
 The direct sum and honed product of $\Omega$ vector bundles also extend to Higgs fields.
If $\sE \to M $ and $\sF\to M$ have Higgs fields $\phi$ and $\psi$ respectively, then via the identification of Lemma \ref{lemma:whitney} the formula
\[
(\phi\oplus\psi)(v,w):= \big(\phi(v),\psi(w)\big)
\]
defines a Higgs field on $\sE\oplus\sF$, called the \emph{direct sum} of $\phi$ and $\psi$.
Similarly, using the identification of Lemma \ref{lemma:oast}
\[
(\phi \oast \psi) (v \otimes w) := \phi(v) \otimes w + v \otimes\psi(w) 
\]
defines the \emph{product} Higgs field on $\sE \oast\sF$, the products $\otimes$ above understood to be over the ring $L\CC$.

As in the principal bundle case, there is a notion of the \emph{holonomy} of a Higgs field $\phi$ on $\sE \to M$.
Writing $\Phi$ for the Higgs field on $\cF(\sE)$ associated to $\phi$ via Lemma \ref{lemma:higgscorrespond}, the holonomy of $\phi$ is simply the homomorphism of $\Omega$ vector bundles
\[
\hol_\phi \colon \cF(\sE)\x_{\Omega G}LV \lo \sE(V)
\]
that sends 
\[
\hol_\phi \colon [p,v] \longmapsto [\hol_\Phi(p),v],
\]
noting that this is well-defined.
As in the case of $\Omega G$-bundles, $\hol_\phi$ descends to a map $\hol_\phi \colon M\to G$ (in fact,  $\hol_\phi$ and $\hol_\Phi$ agree as maps $M\to G$) so that
\vspace{11pt}
\begin{lemma}
\label{lemma:higgshol}
The holonomy map $\hol_\phi$ is a smooth classifying map for $\sE$.
\end{lemma}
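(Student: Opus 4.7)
The plan is to deduce this essentially immediately from Proposition \ref{prop:higgsfieldholonomy}, which supplies the analogous statement for principal $\Omega G$-bundles, by passing through the frame bundle and associated vector bundle functors. First, I would recall from the paragraph preceding the lemma that $\hol_\phi$ is defined in terms of $\hol_\Phi$, where $\Phi$ is the principal bundle Higgs field on $\cF(\sE)$ corresponding to $\phi$ under the bijection of Lemma \ref{lemma:higgscorrespond}, and that the two agree as maps $M \to G$. Applying Proposition \ref{prop:higgsfieldholonomy} to the $\Omega G$-bundle $\cF(\sE) \to M$ equipped with the Higgs field $\Phi$, one concludes that $\hol_\Phi$ is a smooth classifying map for $\cF(\sE)$; equivalently, there is an isomorphism $\cF(\sE) \cong \hol_\Phi^\ast PG$ of $\Omega G$-bundles over $M$.

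Next, I would pass to the associated vector bundles of type $V$. Using that the associated bundle construction commutes with pullback of principal bundles, one obtains a chain of natural isomorphisms
\[
\sE \;\cong\; \cF(\sE) \times_{\Omega G} LV \;\cong\; \bigl(\hol_\Phi^\ast PG\bigr) \times_{\Omega G} LV \;\cong\; \hol_\Phi^\ast \bigl(PG \times_{\Omega G} LV\bigr) \;=\; \hol_\phi^\ast \sE(V)
\]
of $\Omega$ vector bundles over $M$. Here the first isomorphism is the canonical identification of $\sE$ with the associated vector bundle of its frame bundle (via the map $[p,v] \mapsto p(v)$, which was already used in Section \ref{S:vector}), and the last equality uses the definition of the universal $\Omega$ vector bundle from Example \ref{example:universalomega} together with the agreement $\hol_\phi = \hol_\Phi$ noted above. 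This exhibits $\hol_\phi$ as a classifying map for $\sE$ in the required sense.

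There is no real obstacle here, as the bulk of the work has already been done in Proposition \ref{prop:higgsfieldholonomy}; the only routine verifications needed are that the pullback-vs-associated-bundle isomorphism is $\Omega G$-equivariant and covers the identity on $M$, and that smoothness of $\hol_\phi \colon M \to G$ is inherited from smoothness of $\hol_\Phi$ (which in turn follows from smoothness of $\Phi$ via the Picard--Lindel\"of argument used to define the Higgs field holonomy in Section \ref{S:path}). The lemma is essentially functoriality of the associated vector bundle construction applied to the principal bundle result.
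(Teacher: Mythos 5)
Your proof is correct and takes essentially the same approach the paper intends: the paper gives no separate argument for this lemma, presenting it as an immediate consequence of Proposition \ref{prop:higgsfieldholonomy} applied to $\cF(\sE)$ together with the functoriality of the associated bundle construction and the identification $\hol_\phi = \hol_\Phi$ as maps $M \to G$. Your write-up simply makes explicit the chain of isomorphisms $\sE \cong \hol_\Phi^\ast PG \times_{\Omega G} LV \cong \hol_\phi^\ast \sE(V)$ that the paper leaves implicit in the ``so that'' preceding the lemma statement.
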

\vspace{11pt}
\begin{remark}
Comparing with the discussion preceding Proposition \ref{prop:higgsfieldholonomy}, one can also see that, up to isomorphism, the pullback Higgs field $\hol_\phi^\ast \phi_V$ coincides with the original Higgs field $\phi$ on $\sE$.
\end{remark}

The second geometric datum required for the geometric caloron correspondence for vector bundles is a \emph{module connection}, which is a connection on the $\Omega$ vector bundle $\sE \to M$ that respects the $L\cC^\infty(M)$-module structure.
First notice that for any $\sE \in \ovect(M)$ there is an \emph{absorbing isomorphism}
\[
m_\sE \colon \Omega^\bullet(M;L\CC) \otimes_{L\cC^\infty (M)} \Gamma(\sE) \lo \Omega^\bullet(M) \otimes\Gamma(\sE).
\]
This is simply the natural map
\[
\big( \Omega^\bullet(M) \otimes L\cC^\infty(M) \big) \otimes_{L\cC^\infty(M)} \Gamma(\sE) \simto \Omega^\bullet(M) \otimes \Gamma(\sE)
\]
since by definition $\Omega^\bullet(M;L\CC) = \Omega^\bullet(M) \otimes L\cC^\infty(M)$.
The unadorned tensor products above are understood to be over the ring $\cC^\infty(M;\CC)$ and the map $m_\sE$ may be thought of as absorbing all of the $\sone$ dependence of an $L\CC$-valued form into the $\Gamma(\sE)$ factor.
\vspace{11pt}
\begin{definition}
\label{defn:moduleconnection}
A connection $\Delta \colon \Gamma(\sE) \to \Omega^1(M) \otimes \Gamma(\sE)$ on the $\Omega$ vector bundle $\sE \to M$ is a \emph{module connection} if for all $z \in L\cC^\infty(M)$ and $v \in \Gamma(\sE)$
\[
\Delta(zv) = z\Delta(v) + m_\sE(dz \otimes v)
\]
where $dz \in \Omega^1(M;L\CC)$ is the standard exterior derivative of $z$.
\end{definition}
\vspace{11pt}
\begin{remark}
In this thesis, module connections are commmonly denoted by $\Delta$ (rather than the more standard notation $\nabla$ for a connection) in order to emphasise the fact that they are \emph{module} connections.
This notation also helps one to differentiate between a module connection and its caloron transform.
\end{remark}
\vspace{11pt}
\begin{example}[the trivial module connection]
\label{ex:triviconn}
The trivial $\Omega$ vector bundle $\underline{L\CC}^k \to M$ has a canonical module connection, namely
\[
\delta\colon v \longmapsto m_{\underline{L\CC}^k}(d v)
\]
for $v \in \Gamma(\underline{L\CC}^k)$.
This is the \emph{trivial} module connection.
\end{example}
\vspace{11pt}
\begin{example}
Using Lemma \ref{lemma:moduleconnectioncorrespond} below applied to $PG$ equipped with its standard connection $\sA_\infty$ gives a module connection $\Delta_V$ on $\sE(V) \to G$, called the \emph{standard  module connection}.
\end{example}
The existence of module connections is guaranteed by
\vspace{11pt}
\begin{lemma}
Every $\Omega$ vector bundle $M$ has a module connection.
\end{lemma}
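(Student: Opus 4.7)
The plan is to imitate the standard partition-of-unity proof that every finite-rank vector bundle admits a connection, replacing ``connection'' by ``module connection'' throughout. The two ingredients needed are (a) closure of the space of module connections under smooth real-valued convex combinations, and (b) a local supply of module connections obtained by transporting the trivial module connection $\delta$ of Example~\ref{ex:triviconn} through local trivialisations.

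For (a), let $\{U_\alpha\}_{\alpha \in I}$ be an open cover of $M$, let $\Delta_\alpha$ be a module connection on $\sE|_{U_\alpha}$ for each $\alpha$, and let $\{\rho_\alpha\}$ be a smooth $\RR$-valued partition of unity on $M$ subordinate to $\{U_\alpha\}$. Setting
\[
\Delta := \sum_\alpha \rho_\alpha \Delta_\alpha,
\]
with each summand extended by zero off $\supp \rho_\alpha \subset U_\alpha$, I would compute, for $z \in L\cC^\infty(M)$ and $v \in \Gamma(\sE)$,
\[
\Delta(zv) = \sum_\alpha \rho_\alpha \bigl( z\Delta_\alpha(v) + m_\sE(dz \otimes v) \bigr) = z\Delta(v) + m_\sE(dz \otimes v),
\]
where I have used that the $\rho_\alpha$ are real-valued (hence commute with both the $L\cC^\infty(M)$-action on $\Omega^1(M) \otimes \Gamma(\sE)$ and the map $m_\sE$) and that $\sum_\alpha \rho_\alpha \equiv 1$. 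Thus $\Delta$ is a module connection on $\sE$.

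For (b), choose the cover $\{U_\alpha\}$ fine enough that there exist isomorphisms $\psi_\alpha \colon \sE|_{U_\alpha} \simto \underline{L\CC}^n|_{U_\alpha}$ of $\Omega$ vector bundles. Each $\psi_\alpha$ induces an isomorphism of $L\cC^\infty(U_\alpha)$-modules on sections, and the absorbing isomorphism $m_{(-)}$ is natural with respect to such isomorphisms. Consequently the pullback
\[
\Delta_\alpha := (\id \otimes \psi_\alpha^{-1}) \circ \delta \circ \psi_\alpha
\]
of the trivial module connection on $\underline{L\CC}^n|_{U_\alpha}$ satisfies the module Leibniz identity, so it is a module connection on $\sE|_{U_\alpha}$. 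Applying (a) to this family produces the required global module connection.

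The only point that requires any care is the verification that pullback along $\psi_\alpha$ preserves the module Leibniz rule, but this reduces to naturality of $m_{(-)}$ under $\Omega$ vector bundle isomorphisms, which is immediate from its construction as the canonical ``absorbing'' map $\Omega^\bullet(M;L\CC) \otimes_{L\cC^\infty(M)} \Gamma(\sE) \to \Omega^\bullet(M) \otimes \Gamma(\sE)$. A less elementary alternative would be to invoke the geometric vector bundle caloron correspondence sketched at the start of this section: a framed $GL_n(\CC)$-connection on $\cV(\sE)$ (which exists by the finite-rank version of the partition-of-unity argument, together with Definition~\ref{caloron:framedconnection}) transforms to a module connection on $\sE$. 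I would, however, prefer the direct argument above because it is self-contained and does not rely on the geometric caloron correspondence having been established at this stage.
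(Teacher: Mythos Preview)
Your proof is correct and follows exactly the same approach as the paper: the paper's proof consists of the single observation that convex combinations of module connections are module connections, together with local triviality and the existence of smooth partitions of unity. You have simply spelled out the details that the paper leaves implicit.
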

\begin{proof}
A convex combination of module connections is again a module connection.
The result follows from local triviality since $M$ admits smooth partitions of unity.
\end{proof}

Similarly to the relationship between Higgs fields on $\sE$ and its frame bundle $\cF(\sE)$, there is a relationship between module connections on $\sE$ and principal connections on the frame bundle $\cF(\sE)$.
This relationship reflects the fact that the structure group of $\sE$ is $\Omega G$ instead of the larger group $GL(LV)$:
\vspace{11pt}
\begin{lemma}
\label{lemma:moduleconnectioncorrespond}
Fix an $\Omega$ vector bundle $\sE \to M$ with typical fibre $LV$ and structure group $\Omega G$.
Module connections on $\sE$ are in bijective correspondence with $LG$-connections on $\cF(\sE)$.
\end{lemma}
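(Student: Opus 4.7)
The plan is to adapt the classical correspondence between principal connections on a frame bundle and linear covariant derivatives on the associated vector bundle, the one essential new ingredient being that the loop representation $LG \hookrightarrow GL(LV)$ factors through $GL_{L\CC}(LV)$. This pointwise $L\CC$-linearity of the $LG$-action is precisely what promotes a generic Fr\'echet vector bundle connection to a \emph{module} connection in the sense of Definition \ref{defn:moduleconnection}.

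For the forward direction, start with an $LG$-connection $A$ on $\cF(\sE)$ and a local framing $\sigma \colon U \to \cF(\sE)$, with associated trivialisation $\psi_\sigma \colon U \x LV \to \sE|_U$. Setting $\sA_\sigma := \sigma^\ast A \in \Omega^1(U; L\g)$, I would define
\[
\Delta_A(v) := \psi_\sigma\!\left( d\hat v + \sA_\sigma\cdot \hat v\right), \qquad \hat v := \psi_\sigma^{-1}(v),
\]
where $\sA_\sigma \cdot \hat v$ denotes the pointwise loop representation of $L\g$ on $LV$. Independence of the local framing is the usual computation: if $\sigma' = \sigma\gamma$ with $\gamma$ valued in $\Omega G$ on overlaps, then $\sA_{\sigma'} = \ad(\gamma^{-1})\sA_\sigma + \gamma^{-1}d\gamma$, and the fibrewise $L\CC$-linearity of $\gamma$ reconciles the two trivialisations. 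For the module property, since $\sA_\sigma(X) \in L\g$ acts $L\CC$-linearly on each fibre, the Leibniz rule $d(z\hat v) = z\,d\hat v + dz \cdot \hat v$ (for $z \in L\cC^\infty(U)$) translates directly into $\Delta_A(zv) = z\Delta_A(v) + m_\sE(dz \otimes v)$.

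For the inverse direction, let $\Delta$ be a module connection. In a local framing $\sigma$ with $L\CC$-basis sections $\check{\mathbf e}_1, \ldots, \check{\mathbf e}_n$ for $\Gamma(U, \sE)$, write $\Delta \check{\mathbf e}_i = \sum_j \omega_i^{\,j}\,\check{\mathbf e}_j$ for a unique matrix $(\omega_i^{\,j})$ of $1$-forms on $U$. The key step is to show that the $\omega_i^{\,j}$ are $L\CC$-valued (rather than merely real), so that the resulting connection matrix $\sA_\sigma := (\omega_i^{\,j})$ lies in $\Omega^1(U;L\g)$ via the identification $\End_{L\CC}(LV) \cong L\End(V) \supseteq L\g$; this is exactly what the module axiom enforces, since extending by $L\cC^\infty(U)$-linearity through the Leibniz rule pins down each component. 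The $\sA_\sigma$ then transform under change of framing by the $\Omega G$-gauge law, and so patch to a global $LG$-connection $A$ on $\cF(\sE)$. The two constructions are mutually inverse by inspection of the local formulas.

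The main obstacle is the reverse direction: one must carefully identify the correct Lie algebra of values for the local connection matrices and verify that the $L\CC$-linearity demanded by the module axiom---rather than the weaker $\cC^\infty(M)$-linearity appropriate to a generic Fr\'echet connection---forces them to land in $L\g$ and assemble consistently into an $LG$-connection on the $\Omega G$-reduction $\cF(\sE)$.
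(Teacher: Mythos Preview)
Your proposal is correct and follows essentially the same approach as the paper, which also works in local framings to extract $L\g$-valued connection matrices $(\sA_\alpha)_i^j$ and then verifies the $LG$-gauge transformation law on overlaps. One minor refinement of your diagnosis: the $\omega_i^{\,j}$ are automatically $L\CC$-valued via the absorbing isomorphism $m_\sE$---where the module axiom is genuinely needed is in computing the transformation law (since the transition functions $\tau_{\alpha\beta}$ are $\Omega G$-valued, hence $L\CC$-valued rather than merely $\cC^\infty$, so ordinary Leibniz does not let you expand $\Delta(\tau_{\alpha\beta}\cdot\check{\mathbf f}_j)$) and in ensuring that these matrices determine $\Delta$ on all sections.
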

\begin{proof}
Let $\rank\sE=n$.
Let $\Delta$ be a module connection on $\sE$ and suppose that
\[
\alpha \colon U_\alpha \x L\CC^n \lo \sE|_{U_\alpha}
\]
is a framing over the open set $U_\alpha \subset M$, so $\alpha$ is an isomorphism of $L\CC$-modules on each fibre over $U_\alpha$.
As in the proof of Lemma \ref{lemma:higgscorrespond}, let $\check {\mathbf{e}}_i \in \Gamma(U_\alpha,\sE)$ be the sections corresponding to $\alpha$.
Then for any $v \in \Gamma(U_\alpha,\sE)$, observe that $v = f^i\check {\mathbf{e}}_i$ for some $f^1,\dotsc,f^n \in L\CC$ so that
\begin{align*}
\Delta (v) &= f^i \Delta (\check {\mathbf{e}}_i) + m_\sE (d f^i \otimes \check {\mathbf{e}}_i)
\end{align*}
since $\Delta$ is a module connection.
Write $\Delta(\check {\mathbf{e}}_i) = (\sA_\alpha)_i^j \check {\mathbf{e}}_j$, where $(\sA_\alpha)_i^j \in \Omega^1(U_\alpha, L\CC)$ for $i,j = 1,\dotsc,n$.
Thus
\begin{align*}
\Delta (v) &= f^i (\sA_\alpha)_i^j \check {\mathbf{e}}_j + m_\sE (d f^i \otimes \check {\mathbf{e}}_i).
\end{align*}
Recalling that $\g = \Lie(G) =  \End(\CC^n)$, the collection $\left\{(\sA_\alpha)_i^j\right\}_{i,j=1}^n$ determines an $L\g$-valued $1$-form $\sA_\alpha$ on $U_\alpha$.

If
\[
\beta \colon U_\beta \x L\CC^n  \lo \sE|_{U_\beta}
\]
is another such framing then on $U_{\alpha\beta}:=U_\alpha\cap U_\beta$ one has $\alpha = \beta\circ \tau_{\alpha\beta}$ for some $\tau_{\alpha\beta}$ valued in $\Omega G$.
As before, write $\check{\mathbf{f}}_i$ for sections corresponding to the framing $\beta$.
Then
\[
\check{\mathbf{e}}_i = (\tau_{\alpha\beta})_i^j \check{\mathbf{f}}_j
\]
with $( \tau_{\alpha\beta})_i^j \colon \sone \to \CC$ the map given by taking the $(i,j)$-th component of $\tau_{\alpha\beta} \colon \sone \to \g$.
Now take any $v = f^i\check {\mathbf{e}}_i = f^i( \tau_{\alpha\beta})_i^j \check{\mathbf{f}}_j \in \Gamma(U_{\alpha\beta},\sE)$ so that
\begin{align*}
\Delta(v) &= f^i (\tau_{\alpha\beta})_i^j (\sA_\beta)_j^k \check {\mathbf{f}}_k + m_\sE (d (f^i (\tau_{\alpha\beta})_i^j) \otimes \check {\mathbf{f}}_j) \\
&= f^i (\tau_{\alpha\beta})_i^j (\sA_\beta)_j^k \check {\mathbf{f}}_k + m_\sE (d f^i \otimes \check {\mathbf{e}}_i) + f^i m_\sE (d (\tau_{\alpha\beta})_i^j \otimes \check {\mathbf{f}}_j).
\end{align*}
Comparing this with the above expression yields
\[
\sA_\alpha = \tau_{\alpha\beta}^{-1} d\tau_{\alpha\beta} + \ad(\tau_{\alpha\beta}^{-1}) \sA_\beta,
\]
which is precisely the local transformation law for an $LG$-connection on $\cF(\sE)$.
Thus the data $\{\sA_\alpha \}$ are the local connection $1$-forms of an $L\g$-valued connection $\sA$ on $\cF(\sE)$ (i.e. $\alpha^\ast \sA = \sA_\alpha$, with $\alpha$ viewed as a local section of $\cF(\sE)$).
This argument also shows that module connections on $\sE$ uniquely determine $LG$-connections on $\cF(\sE)$ and conversely.
\end{proof}
\vspace{11pt}
\begin{remark}
\label{remark:vectorprincipalcurv}
The curvature of the module connection $\Delta$ is the $L\g$-valued $2$-form defined by its action on the section $s \in \Gamma(\sE)$ as
\[
\sR(X,Y) s= \Delta_X\Delta_Y s- \Delta_Y\Delta_X s- \Delta_{[X,Y]}s
\]
for tangent vectors $X$ and $Y$ to $M$.
If $\sA$ is the principal connection on the frame bundle corresponding to $\Delta$ and $\sF = d\sA +\tfrac{1}{2}[\sA,\sA]$ is its curvature form, then by a straightforward adaptation of the standard argument (for finite rank vector bundles) one may show that $\sR = \sF$.
\end{remark}
\vspace{11pt}
\begin{remark}
Notice that Lemma \ref{lemma:moduleconnectioncorrespond} is phrased in terms of $LG$-connections on $\cF(\sE)$ rather than $\Omega G$-connections.
This is because, via the clutching construction for example, $\cF(\sE)$ may be taken to be an $LG$-bundle.
\end{remark}
\vspace{11pt}
\begin{definition}
\label{defn:basedmodcon}
A module connection $\Delta$ on $\sE$ is \emph{based} if the corresponding connection form $\sA$ on $\cF(\sE)$ is an $\Omega G$-connection.
The space of all based module connections on the $\Omega$ vector bundle $\sE\to M$ is denoted $\cM_\sE$ and is an affine space modelled over $\Omega^1(M;\Omega\g)$.
\end{definition}
\vspace{11pt}
Given an $\Omega$ vector bundle $\sE \to M$ equipped with (based) module connection $\Delta$ and a smooth map $f \colon N \to M$, one may define the pullback $f^\ast\Delta$ as the unique (based) module connection on $f^\ast \sE$ that acts on pullback sections $f^\ast s$ by
\[
\left(f^\ast\Delta\right)_X(f^\ast s) := f^\ast\! \left(\Delta_{df(X)}s \right)
\]
for any section $s$ of $\sE$ and vector field $X$ on $N$.
Observe that not every section of $f^\ast \sE$ is a pullback; however it is sufficient to define $f^\ast\Delta$ on pullback sections since these determine  local frames for $f^\ast\sE$.
It is straightforward to verify that the connection on $\cF(f^\ast \sE)$ corresponding to $f^\ast \Delta$ via Lemma \ref{lemma:moduleconnectioncorrespond} is given by pulling back the connection on $\cF(\sE)$ corresponding to $\Delta$.
In the case that $f\colon \sF\to \sE$ is an isomorphism of $\Omega$ vector bundles,
\[
f^\ast \Delta := \big(\!\id\otimes f^{-1}\big)\Delta f
\]
omitting composition signs.

If $\sE \to M $ and $\sF\to M$ have module connections $\Delta$ and $\Delta'$ respectively, then via the identification of Lemma \ref{lemma:whitney} the formula
\[
\big( \Delta \oplus \Delta' \big)(v,w):= \big(\Delta v,\Delta'w\big)
\]
defines a module connection on $\sE\oplus\sF$ called the \emph{direct sum} of $\Delta$ and $\Delta'$.
Similarly, using the identification of Lemma \ref{lemma:oast} the formula
\[
\big(\Delta \oast \Delta')_X(v\otimes w) := \Delta_X v \otimes w + v \otimes \Delta'_X w
\]
defines the \emph{product} module connection on $\sE \oast\sF$ where, as was the case for Higgs fields, the products $\otimes$ above understood to be over the ring $L\CC$ and not $\CC$.
If $\Delta$ and $\Delta'$ are based, so too are their direct sum and product.
\vspace{11pt}
\begin{definition}
Let $\ovect^c(M)$ be the groupoid whose objects are triples $(\sE,\Delta,\phi)$ where $\sE \in \ovect(M)$ and $\Delta, \phi$ are respectively a based module connection and Higgs field on $\sE$.
Morphisms of $\ovect^c(M)$ are the morphisms of $\ovect(M)$ respect the additional data.

Write $\ovect_V^c(M)$ for the subgroupoid of $\ovect^c(M)$ consisting of those $\Omega$ vector bundles over $M$ with based module connection and Higgs field that are modelled over $LV$, with its appropriate morphisms.
\end{definition}

As with $\ovect$, one may view the assignment
\[
M \longmapsto \ovect^c(M)
\]
as a contravariant functor
\[
\ovect^c \colon \Man \lo \bimon
\]
acting on morphisms by pullback.


\subsection{The geometric caloron correspondence}
\label{SS:vector:geometric}
One now has almost all of the machinery required for the geometric caloron correspondence for vector bundles.
The only notion not yet defined is a special class of connections that, in a fashion completely analogous to Definition \ref{caloron:framedconnection}, respect the framing data of a framed vector bundle.
\vspace{11pt}
\begin{definition}
Suppose that $E \to X$ is framed over $X_0 \subset X$ with framing $s_0$, say.
A connection $\nabla \colon \Gamma(E) \to\Omega^1(M) \otimes \Gamma(E)$ is \emph{framed (with respect to $s_0$)} if
\[
\big(\!\id\otimes \,s_0^{-1}\big) \imath_{X_0}^\ast \nabla = d \circ s_0^{-1},
\]
with $\imath_{X_0} \colon X_0 \hookrightarrow X$ the inclusion map.
\end{definition}
\vspace{11pt}
\begin{remark}
\label{remark:framed}
Given such a framed bundle $E\to X$, observe that the framing determines trivialising sections $\mathbf{e}_1,\dotsc,\mathbf{e}_n$ of $E$ over $X_0$.
The condition that the connection $\nabla$ on $E$ be framed with respect to $s_0$ is precisely the requirement that the sections $\mathbf{e}_i$ are \emph{parallel} for $\nabla$, i.e.~$\nabla_Y\mathbf{e}_i = 0$ for each $i$ and tangent vector $Y$ to $X_0$.
\end{remark}

To see that the above definition really is the vector bundle version of Definition \ref{caloron:framedconnection}, one observes
\vspace{11pt}
\begin{lemma}
\label{lemma:framedconnections}
A connection $\nabla$ on $E \in \frvect(X)$ is framed if and only if the associated connection $A$ on $\cF(E)$ is framed.
\end{lemma}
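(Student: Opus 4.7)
The plan is to unwind the standard correspondence between vector bundle connections and principal connections on the frame bundle, and observe that the two framing conditions translate directly into one another. This is essentially bookkeeping, closely parallel to the argument of Lemma 4.2.8 for module connections, but applied to the framing data instead of the $L\CC$-module structure.

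First I would recall the well-known local description: if $\sigma\colon U\to\cF(E)$ is a local section of the frame bundle corresponding to a local frame $(\mathbf{e}_1,\ldots,\mathbf{e}_n)$ of $E$ over $U$, then the principal $G$-connection $A$ on $\cF(E)$ associated to $\nabla$ is characterised by
\[
\sigma^\ast A = (\omega_j^i), \qquad \nabla \mathbf{e}_j = \sum_i \omega_j^i \otimes \mathbf{e}_i,
\]
where the $\omega_j^i$ are the local connection $1$-forms on $U$. This is the vector bundle analogue of the correspondence established in Lemma \ref{lemma:moduleconnectioncorrespond}, and may be taken as either the definition or a standard consequence of the definition of the associated principal connection.

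Next I would apply this identity to the framing $s_0\in\Gamma(X_0,\cF(E))$. By Remark \ref{remark:framed}, the framing $s_0$ corresponds precisely to the distinguished sections $\mathbf{e}_1,\ldots,\mathbf{e}_n$ of $E$ over $X_0$, so the $1$-form $s_0^\ast A$ on $X_0$ is the matrix $(\omega_j^i|_{X_0})$ of local connection forms, where $\omega_j^i|_{X_0}$ denotes the restriction of $\omega_j^i$ to tangent vectors along $X_0$. The principal connection $A$ is framed (Definition \ref{caloron:framedconnection}) if and only if $s_0^\ast A=0$, which, by the above, holds if and only if $\omega_j^i(Y)=0$ for every $i,j$ and every tangent vector $Y$ to $X_0$. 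This is exactly the condition that $\nabla_Y \mathbf{e}_j=0$ for each $j$ and each such $Y$, i.e.\ that the framing sections $\mathbf{e}_j$ are parallel along $X_0$, which by Remark \ref{remark:framed} is the statement that $\nabla$ is framed.

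There is essentially no obstacle here; the only thing to be slightly careful about is that $s_0^\ast A$ is a $1$-form on $X_0$ (rather than on $X$), so that the pullback is evaluated on tangent vectors to $X_0$ alone---this matches precisely the framing condition on $\nabla$, which also only constrains $\nabla_Y \mathbf{e}_i$ for $Y$ tangent to $X_0$. Once the local correspondence between $\nabla$ and $A$ is in hand, the equivalence of the two framing conditions is immediate and requires no further computation.
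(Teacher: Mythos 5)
Your proof is correct and follows essentially the same route as the paper: both reduce the statement to the standard local identity relating $\nabla$ and $s_0^\ast A$ (the paper writes this compactly as $(\id\otimes s_0^{-1})\imath_{X_0}^\ast\nabla = d\circ s_0^{-1} + s_0^\ast A$, while you spell it out in terms of the connection $1$-forms $\omega_j^i$), from which the equivalence of the two framing conditions is immediate via Remark \ref{remark:framed}.
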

\begin{proof}
Following the standard argument (compare with Lemma \ref{lemma:moduleconnectioncorrespond}) gives
\[
\big(\!\id\otimes \,s_0^{-1}\big) \imath_{X_0}^\ast \nabla = d\circ s_0^{-1} +s_0^\ast A,
\]
where $A$ is the connection on $\cF(E)$ corresponding to $\nabla$.
Thus $\nabla$ is framed if and only if $s_0^\ast A$ vanishes.
\end{proof}

This result guarantees the existence of framed connections on framed vector bundles since their frame bundles have framed connections.

It is clear from the definition that pullbacks, direct sums and (tensor) products of framed connections are also framed (with respect to the pullback, direct sum and product framings respectively).
\vspace{11pt}
\begin{definition}
Write $\frvect^c(M)$ for the groupoid whose objects are pairs $(E,\nabla)$ with $E \in \frvect(M)$ and $\nabla$ a framed connection on $E$.
Morphisms of $\frvect^c(M)$ are given by the connection-preserving morphisms of $\frvect(M)$.

Write $\frvect_V^c(X)$ for the subgroupoid consisting of pairs $(E,\nabla)$, where $E \in \frvect_V(X)$, equipped with the appropriate morphisms.
\end{definition}

Notice that $\frvect^c(M)$ is a bimonoidal category with respect to the operations of (framed) direct sum $\oplus$ and (framed) tensor product $\otimes$.

One is now finally in a position to discuss the \emph{geometric} caloron correspondence for vector bundles.
The isomorphisms $\mu_E$ and $\mu_\sE$ of Proposition \ref{prop:sections} play a crucial role in the construction; write $\check s = \mu_E(s)$ so that any section with a check on it is understood to be a section of an $\Omega$ vector bundle and unadorned sections live on finite-rank bundles.
This notation is chosen in order to be consistent with the shorthand used to denote the isomorphism $L\cC^\infty(M) \to \cC^\infty(M\x\sone;\CC)$ of \eqref{eqn:loopringiso}.

To construct the caloron correspondences, one must first know how to construct a framed connection on $E := \cV(\sE)$ from a based module connection and Higgs field on $\sE$.
\vspace{11pt}
\begin{proposition}
\label{prop:construction:finiteconnection}
Given $(\sE,\Delta,\phi) \in \ovect^c(M)$, the formula
\begin{equation}
\label{eqn:construction:finiteconnection}
\nabla := \big( \!\pr_M^\ast \otimes\, \mu_\sE^{-1} \big) \circ \Delta \circ \mu_\sE + d\theta \otimes \big(\mu_\sE^{-1} \circ \phi \circ \mu_\sE \big)
\end{equation}
defines a framed connection on $E = \cV(\sE)$, where $\pr_M \colon M\x\sone \to M$ is the projection map.
\end{proposition}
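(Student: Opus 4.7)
The plan is to verify the two required properties directly: (i) that $\nabla$ as defined by \eqref{eqn:construction:finiteconnection} satisfies the Leibniz rule for a connection on the finite-rank bundle $E = \cV(\sE) \to M\x\sone$, and (ii) that $\nabla$ is framed with respect to the canonical framing of $E$ over $M_0$. The $\CC$-linearity and the fact that $\nabla$ lands in $\Omega^1(M\x\sone) \otimes \Gamma(E)$ are immediate from the construction, since the first term contributes a $\pr_M^\ast\Omega^1(M)$-valued piece and the second the $d\theta$-component. An alternative route would be to pass through the frame bundle using Lemmas \ref{lemma:higgscorrespond} and \ref{lemma:moduleconnectioncorrespond} and apply Theorem \ref{theorem:gequiv} followed by Lemma \ref{lemma:framedconnections}; however, direct verification is cleaner because it pins down the precise form of the connection.

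For the Leibniz rule, the key observation is the following decomposition. Given $f \in \cC^\infty(M\x\sone;\CC)$, let $\check f \in L\cC^\infty(M)$ denote its image under the ring isomorphism \eqref{eqn:loopringiso}. Then $df = \pr_M^\ast(d\check f) + (\d f / \d\theta)\, d\theta$ in $\Omega^1(M\x\sone;\CC)$, where $d\check f \in \Omega^1(M;L\CC)$ is the exterior derivative of $\check f$ viewed as an $L\CC$-valued function. Using the fact that $\mu_\sE$ is an $L\cC^\infty(M)$-module isomorphism so that $\mu_\sE(fs) = \check f\,\mu_\sE(s)$, the module-connection identity for $\Delta$ yields
\[
\Delta(\mu_\sE(fs)) = \check f\,\Delta(\mu_\sE(s)) + m_\sE\bigl(d\check f \otimes \mu_\sE(s)\bigr),
\]
while the Higgs field property of $\phi$ gives
\[
\phi(\mu_\sE(fs)) = \check f\,\phi(\mu_\sE(s)) + \d(\check f)\,\mu_\sE(s),
\]
where $\d(\check f)$ corresponds to $\d f/\d\theta$ under \eqref{eqn:loopringiso}. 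Applying $(\pr_M^\ast \otimes \mu_\sE^{-1})$ to the first equation and multiplying the second by $d\theta$, the $m_\sE$ absorption identifies $\pr_M^\ast(d\check f)\otimes s$ with the $M$-component of $df \otimes s$, and the two remainder terms combine to give $df \otimes s + f\nabla s$, as required.

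For framedness, I would work in a local trivialisation $\alpha \colon U_\alpha \x LV \to \sE|_{U_\alpha}$ with $\Omega G$-valued transition functions, which by the construction of $\cV$ induces a local trivialisation of $E$ over $U_\alpha\x\sone$ that agrees with the framing on $U_\alpha\x\{0\}$ (since based loops act trivially at $\theta = 0$). By Lemmas \ref{lemma:higgscorrespond} and \ref{lemma:moduleconnectioncorrespond}, in this trivialisation $\Delta$ and $\phi$ are represented respectively by a 1-form $\sA_\alpha$ and a function $\Phi_\alpha$ on $U_\alpha$ valued in $L\g$, with the crucial fact that $\sA_\alpha$ takes values in $\Omega\g$ because $\Delta$ is based. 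Applying $\mu_\sE^{-1}$ to transport these back to local data on $U_\alpha \x \sone$ for $E$ shows that the local connection form of $\nabla$ is precisely $\sA_\alpha + \Phi_\alpha \, d\theta$. On the framing sections (the constant basis sections of the trivialisation, which by Remark \ref{remark:framed} must be parallel for the framing condition), $\imath_{M_0}^\ast$ kills $d\theta$, and then $\sA_\alpha$ evaluated at $\theta = 0$ vanishes because $\sA_\alpha$ is valued in $\Omega\g$. This is the desired framed condition.

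The main obstacle is notational: the absorbing isomorphism $m_\sE$, the module isomorphism $\mu_\sE$, and the ring isomorphism \eqref{eqn:loopringiso} all intertwine in the Leibniz calculation, and care is needed to verify that the chain of identifications consistently reproduces the decomposition of $df$. Once this bookkeeping is handled, both properties follow from the based-module structure of $\Delta$ and the defining identity for $\phi$.
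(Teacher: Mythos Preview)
Your proposal is correct and follows essentially the same approach as the paper: the Leibniz rule is verified directly by applying the module-connection identity for $\Delta$ and the Higgs field identity for $\phi$ to $\mu_\sE(fs) = \check f\,\mu_\sE(s)$ and recombining via the decomposition $df = d_M f + \d f\,d\theta$, and framedness is checked by computing the local connection $1$-form as $\sA_\alpha + \Phi_\alpha\,d\theta$ in a trivialisation compatible with the framing, then using that $\sA_\alpha$ is $\Omega\g$-valued so vanishes at $\theta = 0$. The paper organises the framedness step slightly differently (starting from a framing of $E$ and transporting to $\cF(\sE)$ rather than the other way around), but the substance is identical.
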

\begin{proof}
To see that the map $\nabla \colon \Gamma(E) \to \Omega^1(M) \otimes \Gamma(E)$ defined by \eqref{prop:construction:finiteconnection} is a connection, since it is clearly $\CC$-linear one need only verify that it satisfies the Leibniz property.
Take any $s \in \Gamma(E)$ and $g \in \cC^\infty(M\x\sone;\CC)$, then
\begin{align*}
\nabla(gs) &= \big( \!\pr_M^\ast \otimes\, \mu_\sE^{-1} \big) \Delta (\check g\check s) + d\theta \otimes \big(\mu_\sE^{-1} \circ \phi\, (\check g\check s)  \big)\\
&=\big(\! \pr_M^\ast \otimes \,\mu_\sE^{-1} \big)\big( \check g\Delta(\check s) + m_\sE(d\check g \otimes \check s)\big) + d\theta \otimes \mu_\sE^{-1}\big(\check g \phi\, (\check s) +\d(\check g) \check s  \big)\\
&= g \big(\! \pr_M^\ast \otimes\, \mu_\sE^{-1} \big) \Delta(\check s) + (d_M g )\otimes s + g\,d\theta\otimes\mu_\sE^{-1}(\phi(\check s)) + (\d g )\,d\theta \otimes s\\
&=g \nabla(s) + d g \otimes s,
\end{align*}
since $dg = d_M g+ \d g \,d\theta$, where $d_M$ momentarily denotes differentiation in the $M$ direction on $M\x\sone$.

Let $s_0$ be the framing of $E$; it remains only to show that $\nabla$ is framed with respect to $s_0$.
For each $x \in M$ choose an open neighbourhood $U \subset M$ of $x$ such that $E$ is trivial over $U \x\sone$---this is always possible, for example if $U$ is contractible.
Choose a framing $s_U$ of $E$ over $U\x\sone $ such that $s_U$ and $s_0$ agree over $U_0 := U \x\{0\}$.

Taking $n = \rank E= \rank\sE$, the framing $s_U$ determines $n$ sections $\mathbf{e}_1,\dotsc,\mathbf{e}_n$ that form a basis for $\Gamma(U\x\sone,E)$ as a $\cC^\infty(U\x\sone,\CC)$-module.
Let $\check{\mathbf{e}}_i := \mu_\sE(\mathbf{e}_i)$, then the $\check{\mathbf{e}}_i$ determine a basis for $\Gamma(U,\sE)$ as an $L\cC^\infty(U)$-module and therefore determine a section $\check s_U\in\Gamma(U,\cF(\sE))$.
Then
\begin{align*}
\nabla \mathbf{e}_i &= \big( \!\pr_M^\ast \otimes\, \mu_\sE^{-1} \big) \Delta \check{\mathbf{e}}_i + d\theta \otimes \big(\mu_\sE^{-1} \circ \phi (\check{\mathbf{e}}_i) \big)\\
&=  \big( \!\pr_M^\ast \otimes\, \mu_\sE^{-1} \big) \big(\check s_U^\ast \sA\big)_i^j \check{\mathbf{e}}_j +d\theta \otimes \left(\mu_\sE^{-1} \big(\Phi(\check s_U)_i^j \check{\mathbf{e}}_j\big) \right)
\end{align*}
so that at $(x,\theta) \in U\x\sone$
\[
\nabla \mathbf{e}_i (x,\theta) = \big(\check s_U^\ast \sA_x(\theta)\big)_i^j {\mathbf{e}}_j(x,\theta) +  d\theta \otimes \big(\Phi(\check s_U(x))(\theta)\big)_i^j \mathbf{e}_j (x,\theta),
\]
where $\sA$ and $\Phi$ are the connection and Higgs field corresponding to $\Delta$ and $\phi$ on $\cF(\sE)$.
But
\[
\nabla\mathbf{e}_i = \big(s_U^\ast A\big)_i^j \mathbf{e}_j,
\]
with $A$ the connection on $\cF(E)$ corresponding to $\nabla$.
Pulling back by the canonical inclusion $\imath_{U} \colon U\x\{0\} \hookrightarrow U \x\sone$ gives
\[
\imath_{s_0}^\ast A_x = (\imath_{U}^\ast s_U^\ast A)_{x} = \check s_U^\ast \sA_x(0) = 0
\] 
since $\sA$ is $\Omega\g$-valued.
This shows that $\nabla$ is framed, since $A$ is.
\end{proof}
\vspace{11pt}
\begin{corollary}
\label{cor:geoframe1}
The diagram of functors
\[
\xy
(0,25)*+{\ovect_V^c(M)}="1";
(50,25)*+{\Bun^c_{\Omega G}(M)}="2";
(0,0)*+{\frvect_V^c(M)}="3";
(50,0)*+{\frBun^c_G(M)}="4";
{\ar^{\cF} "1";"2"};
{\ar^{\cC} "2";"4"};
{\ar^{\cV} "1";"3"};
{\ar^{\cF} "3";"4"};
\endxy
\]
commutes up to the natural isomorphism $\mu$ of Corollary \ref{cor:frame}.
\end{corollary}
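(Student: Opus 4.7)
The plan is to show that the natural isomorphism $\mu^{\mathrm{cal}}_\sE \colon \cF(\cV(\sE)) \to \cC(\cF(\sE))$ of Corollary \ref{cor:frame} already does all the work: it is a natural isomorphism of framed $G$-bundles, and one need only verify that it additionally intertwines the two framed $G$-connections arising from the two paths around the diagram (the framings themselves are respected automatically, since they are the ones built into Corollary \ref{cor:frame}). Consequently the corollary reduces to a single local compatibility check.

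Starting from $(\sE,\Delta,\phi) \in \ovect^c_V(M)$, the right-then-down path produces $\cC(\cF(\sE))$ equipped with the framed connection $A$ determined by the principal connection $\sA$ on $\cF(\sE)$ corresponding to $\Delta$ via Lemma \ref{lemma:moduleconnectioncorrespond} and the principal Higgs field $\Phi$ corresponding to $\phi$ via Lemma \ref{lemma:higgscorrespond}, through formula \eqref{eqn:con:lgcalfinconn}. The down-then-right path produces $\cF(\cV(\sE))$ equipped with the framed connection $A'$ which, via Lemma \ref{lemma:framedconnections}, corresponds to the framed vector bundle connection $\nabla$ defined in \eqref{eqn:construction:finiteconnection}. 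What remains is to establish $(\mu^{\mathrm{cal}}_\sE)^\ast A = A'$.

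To check this I would work in a local trivialisation. Pick a local framing of $\sE$ over $U\subset M$, which amounts to a local section $\check s \in \Gamma(U,\cF(\sE))$ together with the associated trivialising local frame $\check{\mathbf{e}}_1,\dotsc,\check{\mathbf{e}}_n$ of $\sE|_U$. Applying $\mu_\sE$ gives a local frame $\mathbf{e}_1,\dotsc,\mathbf{e}_n$ of $\cV(\sE)|_{U\x\sone}$ together with a section $s\in\Gamma(U\x\sone,\cF(\cV(\sE)))$; under $\mu^{\mathrm{cal}}_\sE$ the section $s(x,\theta)$ corresponds to the class $[\check s(x),\theta,1]\in \cC(\cF(\sE))$. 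On one hand, pulling back $A$ by this local section yields, via \eqref{eqn:con:lgcalfinconn} (and the vanishing of the Maurer-Cartan term along $s$), the local $1$-form $\check s^\ast\sA + \Phi(\check s)\,d\theta$ on $U\x\sone$. On the other hand, the argument of Lemma \ref{lemma:moduleconnectioncorrespond} together with \eqref{eqn:construction:finiteconnection} computes $s^\ast A'$ by reading off $\nabla\mathbf{e}_i$: the $\pr_M^\ast$-part contributes exactly $\check s^\ast\sA$ (by construction of the $\sA$ associated to $\Delta$) and the $d\theta\otimes$-part contributes exactly $\Phi(\check s)\,d\theta$ (by construction of the $\Phi$ associated to $\phi$). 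The two local forms thus coincide. The main obstacle is simply bookkeeping, keeping the two descriptions of the same principal bundle (quotient versus associated-bundle frames) aligned, but the matching is forced because \eqref{eqn:construction:finiteconnection} was written to mirror \eqref{eqn:con:lgcalfinconn}. Finally, naturality in $\sE$ of $\mu^{\mathrm{cal}}$ (from Corollary \ref{cor:frame}) together with the fact that every morphism in $\ovect^c_V(M)$ preserves $\Delta$ and $\phi$ promotes this pointwise agreement to a natural isomorphism of functors $\cF\circ\cV \to \cC\circ\cF$ on $\ovect_V^c(M)$.
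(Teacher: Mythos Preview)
Your proposal is correct and follows essentially the same approach as the paper: both argue by pulling back to a local frame and observing that the local connection $1$-form obtained from $\nabla$ via \eqref{eqn:construction:finiteconnection} is $\check s^\ast\sA + \Phi(\check s)\,d\theta$, which is precisely the pullback of \eqref{eqn:con:lgcalfinconn}. The paper simply points back to the computation already done in the proof of Proposition \ref{prop:construction:finiteconnection}, whereas you spell out the same verification in more detail.
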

\begin{proof}
This follows readily from the argument of Proposition \ref{prop:construction:finiteconnection} since the expression
\[
\alpha^\ast A_{(x,\theta)} = \check \alpha^\ast\sA_x(\theta) + \Phi(\check\alpha(x))(\theta) d\theta
\]
is the pullback of \eqref{eqn:con:lgcalfinconn} by the local section $\alpha \in \Gamma(U\x\sone,\cF(E))$ corresponding to the section $\check \alpha \in \Gamma(U,\cF(\sE))$.
\end{proof}

The \emph{geometric caloron transform} for vector bundles is the functor
\[
\cV \colon \ovect^c(M) \lo \frvect^c(M)
\]
that sends the object $(\sE,\Delta,\phi)$ to the object $(E,\nabla)$, with $E = \cV(\sE)$ and $\nabla$ given as in Proposition \ref{prop:construction:finiteconnection}.
The action of $\cV$ on the morphism $f$ of $\ovect^c(M)$ is simply $\cV(f)$ as defined above, since
\vspace{11pt}
\begin{lemma}
\label{lemma:consistency1}
Writing $(E,\nabla) = \cV(\sE,\Delta,\phi)$ and $(F,\nabla') = \cV(\sF,\Delta',\phi')$, if $f \colon \sE \to \sF$ preserves the connective data, i.e. $f^\ast\Delta' = \Delta$ and $f^\ast\phi' = \phi$, then
\[
\cV(f)^\ast \nabla' = \nabla
\]
so that $\cV(f) \colon E \to F$ is also connection-preserving.
\end{lemma}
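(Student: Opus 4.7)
The plan is to reduce the statement to the principal-bundle case established in Theorem \ref{theorem:gequiv}, using the frame-bundle/associated-bundle correspondence between vector-bundle data and principal-bundle data.

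First I would translate the hypotheses to the frame bundle side. By Lemma \ref{lemma:moduleconnectioncorrespond} the based module connections $\Delta$ and $\Delta'$ correspond uniquely to $\Omega G$-connections $\sA$ and $\sA'$ on $\cF(\sE)$ and $\cF(\sF)$ respectively, and by Lemma \ref{lemma:higgscorrespond} the vector bundle Higgs fields $\phi$ and $\phi'$ correspond to principal Higgs fields $\Phi$ and $\Phi'$. The hypothesis $f^\ast \Delta' = \Delta$, $f^\ast \phi' = \phi$ is easily seen (by chasing local frames, as in the proofs of those two lemmas) to be equivalent to $\cF(f)^\ast \sA' = \sA$ and $\cF(f)^\ast \Phi' = \Phi$, so $\cF(f)$ is a morphism in $\Bun^c_{\Omega G}(M)$.

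Next I would apply the based geometric caloron correspondence: by Theorem \ref{theorem:gequiv}, $\cC(\cF(f)) \colon \cC(\cF(\sE)) \to \cC(\cF(\sF))$ preserves the caloron-transformed framed connections $A_\sE$ and $A_\sF$ (and framings) on the two principal bundles. Now invoke Corollary \ref{cor:geoframe1}, which states that the diagram relating $\cV$ and $\cF$ commutes up to the natural isomorphism $\mu$; in particular, $\mu_\sF \circ \cF(\cV(f)) = \cC(\cF(f)) \circ \mu_\sE$, and under $\mu$ the framed principal connection $A_\sE$ on $\cC(\cF(\sE))$ pulls back to the principal connection on $\cF(\cV(\sE)) = \cF(E)$ associated via Lemma \ref{lemma:framedconnections} to the framed vector bundle connection $\nabla$ of equation \eqref{eqn:construction:finiteconnection}; similarly for $\sF$. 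Consequently $\cF(\cV(f))$ intertwines the framed principal connections corresponding to $\nabla$ and $\nabla'$, and then Lemma \ref{lemma:framedconnections} (applied in reverse) gives $\cV(f)^\ast \nabla' = \nabla$, as required.

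The only subtlety, and the step requiring care, is verifying that under the natural isomorphism $\mu$ the framed principal connection constructed from $(\sA,\Phi)$ via \eqref{eqn:con:lgcalfinconn} agrees with the principal connection on $\cF(\cV(\sE))$ associated to $\nabla$ via Lemma \ref{lemma:moduleconnectioncorrespond}; this is exactly the content already spelled out in the proof of Corollary \ref{cor:geoframe1} (the formula $\alpha^\ast A_{(x,\theta)} = \check\alpha^\ast \sA_x(\theta) + \Phi(\check\alpha(x))(\theta)\, d\theta$), so no new calculation is needed. Everything else is bookkeeping in the diagram of functors. An alternative, more computational route would be to verify $\cV(f)^\ast \nabla' = \nabla$ directly from \eqref{eqn:construction:finiteconnection} using the naturality of the isomorphisms $\mu_\sE, \mu_\sF$ of Proposition \ref{prop:sections}, but this just reproduces the principal-bundle argument in local frames and is strictly more work.
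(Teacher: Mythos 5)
Your proof is correct, but it takes a different (and longer) route than the paper.  The paper's own proof is a one-liner: it observes that the section-space isomorphisms of Proposition \ref{prop:sections} are natural in the sense that $f\circ\mu_\sE = \mu_\sF\circ\cV(f)$ and $f^{-1}\circ\mu_\sF = \mu_\sE\circ\cV(f^{-1})$, and then the claim drops straight out of the defining formula \eqref{eqn:construction:finiteconnection} for $\nabla$ (and the corresponding formula for $\nabla'$), since every ingredient there is conjugated by the $\mu$'s.  No frame bundles and no local frames are needed.

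Your approach instead passes to the frame bundle: translate $(\Delta,\phi)\mapsto(\sA,\Phi)$ via Lemmas \ref{lemma:moduleconnectioncorrespond} and \ref{lemma:higgscorrespond}, apply Theorem \ref{theorem:gequiv} to conclude $\cC(\cF(f))$ is connection-preserving, then use Corollary \ref{cor:geoframe1} and Lemma \ref{lemma:framedconnections} to descend back to the vector bundle picture.  This is a valid reduction and is not circular (Corollary \ref{cor:geoframe1} is established purely at the level of objects, before functoriality on morphisms is addressed), but it requires a small unstated verification: that $f^\ast\Delta' = \Delta$ for an isomorphism of $\Omega$ vector bundles is equivalent to $\cF(f)^\ast\sA' = \sA$ for the induced frame bundle map.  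The paper states the analogous compatibility only for pullback by a smooth base map, not for isomorphisms covering the identity; you gesture at it by ``chasing local frames,'' which is fine but is extra work.  Your closing remark that the direct computational route ``just reproduces the principal-bundle argument in local frames and is strictly more work'' has it backwards: the paper's direct argument avoids local frames entirely and is considerably shorter than the detour through Theorem \ref{theorem:gequiv}.  What your approach does buy is conceptual transparency, making explicit that the vector-bundle statement is nothing but the principal-bundle statement transported along the frame-bundle equivalence.
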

\begin{proof}
It suffices to observe that $f\circ\mu_\sE = \mu_\sF \circ \cV(f)$ and $f^{-1}\circ\mu_\sF = \mu_\sE \circ \cV(f^{-1})$, so the result follows from \eqref{eqn:construction:finiteconnection}.
\end{proof}

For the other direction of the geometric caloron correspondence for vector bundles, one must construct a based module connection and Higgs field on $\sE:= \cV^{-1}(E)$ from a framed connection on $E \in \frvect(M)$.
\vspace{11pt}
\begin{proposition}
\label{prop:construction:moduleconnection}
Given $(E,\nabla) \in \frvect^c(M)$, the formula
\begin{equation}
\label{eqn:construction:moduleconnection}
\Delta := \big(\imath_{M}^\ast\otimes \mu_E \big)\circ\nabla \circ \mu_E^{-1}
\end{equation}
defines a based module connection on $\sE = \cV^{-1}(E)$, where $\imath_M \colon M\x\{0\} \hookrightarrow M\x\sone$ is the inclusion.
\end{proposition}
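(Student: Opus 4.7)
My plan is to verify three things in turn: that \eqref{eqn:construction:moduleconnection} yields a well-defined $\CC$-linear map $\Gamma(\sE) \to \Omega^1(M) \otimes \Gamma(\sE)$, that it satisfies the module-connection Leibniz rule of Definition \ref{defn:moduleconnection}, and that the associated principal connection on $\cF(\sE)$ is $\Omega\g$-valued so that $\Delta$ is based.

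The first point is immediate from Proposition \ref{prop:sections} and the $\CC$-linearity of $\nabla$, $\imath_M^\ast$ and $\mu_E$. For the Leibniz rule, I would take $z \in L\cC^\infty(M)$ corresponding to $\check z \in \cC^\infty(M\x\sone;\CC)$ under the ring isomorphism \eqref{eqn:loopringiso}, and set $s = \mu_E^{-1}(v)$. Since $\mu_E$ is an $L\cC^\infty(M)$-module isomorphism one has $\mu_E^{-1}(zv) = \check z\, s$, and the ordinary Leibniz rule for $\nabla$ gives
\begin{equation*}
\nabla(\check z\, s) = \check z\, \nabla s + d\check z \otimes s.
\end{equation*}
Using the splitting $\Omega^1(M\x\sone) \cong \pr_M^\ast\Omega^1(M;L\CC) \oplus d\theta\cdot \cC^\infty(M\x\sone;\CC)$ implicit in \eqref{eqn:localconn}, the $M$-direction part of $d\check z$ corresponds to the $L\CC$-valued exterior derivative $dz \in \Omega^1(M;L\CC)$, while the $d\theta$ component is annihilated by $\imath_M^\ast$. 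Composing with $\imath_M^\ast \otimes \mu_E$ and absorbing the loop dependence via $m_\sE$ then produces the desired identity $\Delta(zv) = z\,\Delta(v) + m_\sE(dz \otimes v)$.

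To show $\Delta$ is based I would pass to the principal bundle side via Lemma \ref{lemma:moduleconnectioncorrespond}. Let $\sA$ and $A$ denote the principal connections on $\cF(\sE)$ and $\cF(E)$ corresponding to $\Delta$ and $\nabla$ respectively. By Lemma \ref{lemma:framedconnections}, $A$ is framed in the sense of Definition \ref{caloron:framedconnection}, so $s_0^\ast A = 0$. Given a local frame $\check\alpha \in \Gamma(U,\cF(\sE))$, the natural identification induced by $\mu_E$ produces a corresponding local frame $\alpha \in \Gamma(U\x\sone,\cF(E))$ with $\alpha|_{U_0} = s_0|_{U_0}$. Running the coordinate calculation from the proof of Proposition \ref{prop:construction:finiteconnection} in reverse yields
\begin{equation*}
\check\alpha^\ast\sA_x(\theta) = (\alpha^\ast A)_{(x,\theta)}
\end{equation*}
(its non-$d\theta$ part); evaluating at $\theta = 0$ gives $\check\alpha^\ast\sA_x(0) = (s_0^\ast A)_x = 0$, so $\sA$ takes values in $\Omega\g$.

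The main obstacle will be the bookkeeping around the formula \eqref{eqn:construction:moduleconnection}: correctly interpreting $\imath_M^\ast$ so that the non-$d\theta$ part of $\nabla s$ is retained with its full loop dependence through the $\mu_E$-compatible identifications, and tracking the splitting of $\Omega^1(M\x\sone)$ consistently through each step. Once this is in place, everything reduces to applying Proposition \ref{prop:sections}, Lemma \ref{lemma:moduleconnectioncorrespond} and Lemma \ref{lemma:framedconnections} in direct analogy with Proposition \ref{prop:construction:finiteconnection}, so that the present proposition manifests as the pseudo-inverse of that one.
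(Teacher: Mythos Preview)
Your proposal is correct and follows essentially the same route as the paper: verify the module-connection Leibniz rule by applying the ordinary Leibniz rule for $\nabla$ and tracking how $\imath_M^\ast\otimes\mu_E$ handles the $d\theta$ versus $M$-direction components, then establish basedness by passing to local frames and comparing $\check\alpha^\ast\sA$ with $\alpha^\ast A$ at $\theta=0$ via the framing condition $s_0^\ast A=0$. The paper's argument is slightly more compressed but structurally identical, and you have correctly flagged the one genuinely delicate point---that $(\imath_M^\ast\otimes\mu_E)$ must be read so that the full loop dependence is carried by the $\mu_E$ factor rather than lost under $\imath_M^\ast$.
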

\begin{proof}
Since $\Delta$ is clearly a $\CC$-linear map $\Gamma(\sE) \to \Omega^1(M)\otimes\Gamma(\sE)$ it suffices to verify that it satisfies the module connection condition as this implies the Leibniz rule.
Take any $\check g \in L\cC^\infty(M)$ and $\check s \in \Gamma(\sE)$, then
\begin{align*}
\Delta(\check g \check s) &= \big(\imath_{M}^\ast\otimes \mu_E \big)\nabla(gs)\\
&= \big(\imath_{M}^\ast\otimes \mu_E \big)(g \nabla(s) +dg\otimes s)\\
&= \check g \Delta (\check s) + m_\sE(d\check g\otimes \check s)
\end{align*}
since $(\imath_M^\ast\otimes \mu_E)(dg\otimes s) = m_\sE(d\check g\otimes \check s)$ and $(\imath_M^\ast\otimes \mu_E)(g \nabla(s)) =\check g \Delta (\check s) $.
Thus $\Delta$ is indeed a module connection.

To see that $\Delta$ is based, as in Lemma \ref{lemma:moduleconnectioncorrespond} let $\check{\alpha}$ be a framing of $\sE$ over $U$ corresponding to the trivialising sections $\check {\mathbf{e}}_1,\dotsc,\check {\mathbf{e}}_n$.
Then for any $v = f^i \check {\mathbf{e}}_i \in \Gamma(U,\sE)$
\begin{align*}
\Delta (v) &= f^i (\check{\alpha}^\ast\sA)_i^j \check {\mathbf{e}}_j + m_\sE (d f^i \otimes \check {\mathbf{e}}_i)\\
&= f^i (\check{\alpha}^\ast\sA)_i^j \check {\mathbf{e}}_j + \delta \circ \check{\alpha}^{-1} (v)
\end{align*}
with $\delta$ the trivial module connection of Example \ref{ex:triviconn} and $\sA$ the $LG$-connection on $\cF(\sE)$ corresponding to $\Delta$.
To demonstrate that $\Delta$ is based, one wishes to show that $\sA$ is $\Omega \g$-valued.
Similarly to the proof of Proposition \ref{prop:construction:finiteconnection}, take $\mathbf{e}_i := \mu_E^{-1}(\check{\mathbf{e}}_i)$ with $\alpha$ the corresponding section of $\cF(E)$ over $U\x\sone$.
Then
\begin{align*}
\Delta \check{\mathbf{e}}_i = \big(\imath_{M}^\ast\otimes \mu_E \big)\nabla \mathbf{e}_i = \big(\imath_{M}^\ast\otimes \mu_E \big) (\alpha^\ast A)^j_i \mathbf{e}_j,
\end{align*}
with $A$ the principal connection corresponding to $\nabla$, which implies
\[
\check\alpha^\ast\sA_x(\theta) = \alpha^\ast A_{(x,\theta)}
\]
for all $x \in U$ and $\theta\in \sone$.
Since $\alpha$ agrees with the framing $s_0$ of $E$ over their mutual domain and $s_0^\ast A= 0$, setting $\theta = 0$ in this expression gives $\check\alpha^\ast\sA_x(0)=0$ as required.
\end{proof}

To see how to obtain a Higgs field on $\sE$, first write $\d$ for the canonical vector field in the $\sone$ direction
on $M\x\sone$ (cf. p.~\pageref{page:d}).
Then
\vspace{11pt}

\begin{proposition}
\label{prop:construction:higgs}
Given $(E,\nabla)\in\frvect^c(M)$, for any $v \in \sE_x$ choose some section $\check s \in \Gamma(\sE)$ extending $v$.
The formula
\begin{equation}
\label{eqn:construction:higgs}
\phi(v) := \left[\mu_E \circ \nabla_{\d} \circ \mu_E^{-1} (\check s)\right](x)
\end{equation}
determines the action of a Higgs field $\phi$ on the fibre $\sE_x$.
\end{proposition}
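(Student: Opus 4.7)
The plan is to first verify that the right-hand side of \eqref{eqn:construction:higgs} depends only on the value $v \in \sE_x$ and not on the chosen extension $\check s$, and then to check that the resulting fibrewise assignment $\phi$ assembles into a bundle endomorphism satisfying the Higgs field Leibniz rule of Definition \ref{defn:vectorhf}.

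For well-definedness I would argue as follows. Suppose $\check s, \check s' \in \Gamma(\sE)$ both extend $v$, i.e.\ $\check s(x) = \check s'(x) = v$ as elements of $\sE_x$. Setting $s = \mu_E^{-1}(\check s)$ and $s' = \mu_E^{-1}(\check s')$, these are sections of $E \in \frvect^c(M)$ whose restrictions to the circle fibre $\{x\}\x\sone$ agree: both equal the loop $\theta \mapsto v(\theta)$ under the identification of $\sE_x$ with $\Gamma(\{x\}\x\sone, E)$ arising from Proposition \ref{prop:sections}. Since $\d$ is tangent to $\{x\}\x\sone$, the covariant derivative $\nabla_{\d}(s - s')$ depends only on the restriction of $s - s'$ to this submanifold and hence vanishes identically along $\{x\}\x\sone$. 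Applying $\mu_E$ and evaluating at $x$ then gives the same element of $\sE_x$, so $\phi(v)$ is independent of the extension.

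Having well-definedness, I would define $\phi$ globally as a map $\Gamma(\sE) \to \Gamma(\sE)$ by $\phi(\check s) := \mu_E(\nabla_{\d}\mu_E^{-1}(\check s))$; the pointwise argument just given shows this descends to a smooth bundle endomorphism of $\sE$ over $\id_M$, since $\nabla$ and $\mu_E$ are smooth. To check the Higgs field condition, take $f \in L\CC$, $v \in \sE_x$ and an extension $\check s$ of $v$, and choose any $\check g \in L\cC^\infty(M)$ extending $f$, so that $\check g\check s$ extends $fv$. Under the ring isomorphism \eqref{eqn:loopringiso}, $\check g$ corresponds to some $g \in \cC^\infty(M\x\sone;\CC)$ with $g(x,\theta) = f(\theta)$, and by Proposition \ref{prop:sections} one has $\mu_E^{-1}(\check g \check s) = g\,\mu_E^{-1}(\check s) = gs$. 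Applying the standard Leibniz rule for the connection $\nabla$ gives
\[
\nabla_{\d}(gs) = g\,\nabla_{\d}s + \d(g)\,s,
\]
where the derivative of $g$ in the $\sone$ direction corresponds under \eqref{eqn:loopringiso} exactly to $\d(\check g) \in L\cC^\infty(M)$. Pushing through $\mu_E$ (which is $L\cC^\infty(M)$-linear), evaluating at $x$, and using $\check g(x) = f$ together with $\d(\check g)(x) = \d(f)$ yields $\phi(fv) = f\,\phi(v) + \d(f)\,v$, which is the required Leibniz condition.

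The principal obstruction is really just the well-definedness step: one must be careful that ``extending $v \in \sE_x$'' means extending a \emph{loop}'s worth of data, not a pointwise value, and that $\nabla_{\d}$ only ``sees'' the circle direction. Once this is handled, the Higgs field identity is a mechanical consequence of the connection Leibniz rule together with the compatibility of $\mu_E$ with the $L\cC^\infty(M)$-action and with $\d$. As a sanity check, I would conclude by verifying that under the correspondence of Lemma \ref{lemma:higgscorrespond}, this $\phi$ corresponds to the Higgs field on $\cF(\sE) = \cC^{-1}(\cF(E))$ determined by the principal formula \eqref{eqn:con:lgcalhiggs}; this reproduces the analogue of Corollary \ref{cor:geoframe1} in the geometric setting and sets up the geometric vector bundle caloron correspondence.
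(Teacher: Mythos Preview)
Your proposal is correct and follows essentially the same approach as the paper: first establish well-definedness by noting that two extensions of $v$ agree along the entire circle $\{x\}\times\sone$ so that $\nabla_\d$ applied to their difference vanishes there, and then derive the Higgs field Leibniz rule directly from the connection Leibniz rule together with the $L\cC^\infty(M)$-linearity of $\mu_E$. The sanity check you propose at the end is exactly the content of the subsequent corollary in the paper.
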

\begin{proof}
It must first be seen that $\phi$ is well-defined.
Take any two $\check s, \check r \in \Gamma(\sE)$ extending $v$ so that $s(x,\theta) = r(x,\theta)$ for all $\theta\in\sone$.
Then
\[
\nabla_{\d}(s)(x,\theta) = \nabla_{\d}(r)(x,\theta)
\]
for all $\theta \in \sone$ so applying $\mu_E$ and evaluating at $x$ shows that $\phi$ is well-defined.

To see that $\phi$ is a Higgs field, first notice that it is a smooth vector bundle homomorphism $\sE \to \sE$ covering the identity.
Taking any $v \in \sE_x$ and $f \in L\CC$, choose $\check s \in \Gamma(\sE)$ and $\check g \in L\cC^\infty(M)$ extending $v$ and $f$ respectively.
Then
\begin{align*}
\phi(fv) &:=\mu_E \circ \nabla_{\d}(gs)(x) \\
&= \mu_E\big[ g\nabla_{\d} (s) +\d(g) s\big](x)\\
&= \check g(x) \phi(v) + \d(\check g)(x) \,\check s (x)\\
&= f\phi(v) + \d(f)\, v,
\end{align*}
since the ring isomorphism of \eqref{eqn:loopringiso} commutes with differentiation along the $\sone$ direction.
\end{proof}
\vspace{11pt}
\begin{corollary}
\label{cor:geoframe2}
The diagram of functors
\[
\xy
(0,25)*+{\frvect_V^c(M)}="1";
(50,25)*+{\frBun^c_{G}(M)}="2";
(0,0)*+{\ovect_V^c(M)}="3";
(50,0)*+{\Bun^c_{\Omega G}(M)}="4";
{\ar^{\cF} "1";"2"};
{\ar^{\cC^{-1}} "2";"4"};
{\ar^{\cV^{-1}} "1";"3"};
{\ar^{\cF} "3";"4"};
\endxy
\]
commutes up to the natural isomorphism $\nu$ of Corollary \ref{cor:frame}.
\end{corollary}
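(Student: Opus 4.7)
The plan is to mimic the argument of Corollary \ref{cor:geoframe1} in reverse. Starting with $(E,\nabla) \in \frvect_V^c(M)$, let $A$ denote the framed principal connection on $\cF(E)$ corresponding to $\nabla$ via Lemma \ref{lemma:framedconnections}, and let $(\sA,\Phi)$ be the pair consisting of the $\Omega G$-connection and Higgs field obtained by applying $\cC^{-1}$ to $(\cF(E),A)$ via \eqref{eqn:con:lgcalfrecconn} and \eqref{eqn:con:lgcalhiggs}. On the other hand, let $\Delta$ and $\phi$ be the based module connection and Higgs field on $\sE := \cV^{-1}(E)$ produced by \eqref{eqn:construction:moduleconnection} and \eqref{eqn:construction:higgs}, and let $\sA'$ and $\Phi'$ be the connection and Higgs field on $\cF(\sE)$ corresponding to $\Delta$ and $\phi$ via Lemmas \ref{lemma:moduleconnectioncorrespond} and \ref{lemma:higgscorrespond}. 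It suffices to show $\nu_E^\ast \sA = \sA'$ and $\nu_E^\ast \Phi = \Phi'$, where $\nu_E \colon \cF(\sE) \to \cC^{-1}(\cF(E))$ is the natural isomorphism of Corollary \ref{cor:frame}.

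First I would pass to local frames. Fix $x \in M$ and choose a contractible neighbourhood $U \subset M$ together with a local section $\alpha \colon U\x\sone \to \cF(E)$ agreeing with $s_0$ over $U\x\{0\}$. This induces trivialising sections $\mathbf{e}_1,\dotsc,\mathbf{e}_n$ of $E$ over $U\x\sone$, and setting $\check{\mathbf{e}}_i := \mu_E(\mathbf{e}_i)$ yields a trivialising frame $\check\alpha \colon U \to \cF(\sE)$ such that $\nu_E \circ \check\alpha$ coincides with the section of $\cC^{-1}(\cF(E))$ over $U$ determined by $\alpha$ via the looping/pullback construction of $\cC^{-1}$.

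For the connection comparison, by construction $\nabla \mathbf{e}_i = (\alpha^\ast A)_i^j\, \mathbf{e}_j$ on $U\x\sone$. Applying \eqref{eqn:construction:moduleconnection} and using that $\mu_E$ is $L\cC^\infty(U)$-linear yields $\Delta \check{\mathbf{e}}_i = (\check\alpha^\ast \sA')_i^j\, \check{\mathbf{e}}_j$ with
\[
(\check\alpha^\ast \sA')_i^j(x)(\theta) = (\alpha^\ast A)_i^j(x,\theta),
\]
evaluated on tangent vectors lying in $TM$. On the principal side, \eqref{eqn:con:lgcalfrecconn} applied to the local section $\nu_E \circ \check\alpha$ of $\cC^{-1}(\cF(E))$ produces exactly the same formula, giving $\nu_E^\ast \sA = \sA'$ locally and hence globally. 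The Higgs field comparison is analogous: formula \eqref{eqn:construction:higgs} applied to the generator $\check{\mathbf{e}}_i$ gives $\phi(\check{\mathbf{e}}_i)(x) = \Phi'(\check\alpha(x))_i^j\,\check{\mathbf{e}}_j(x)$ where $\Phi'(\check\alpha(x))(\theta) = (\alpha^\ast A)_{(x,\theta)}(\d)$, and \eqref{eqn:con:lgcalhiggs} applied to $\nu_E\circ\check\alpha$ returns the identical expression.

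Finally, naturality in $(E,\nabla)$ follows because all constructions---$\mu_E$, $\nu_E$, the passage between vector and frame bundle connective data, and both caloron transforms---are natural with respect to connection- and framing-preserving morphisms, which is already recorded in Lemma \ref{lemma:consistency1} and Corollary \ref{cor:frame}. The main potential obstacle is bookkeeping: one must take care that the identification $\nu_E$, the sheaf-level isomorphism $\mu_E$, and the principal connection formulas from Section \ref{S:based} are all being evaluated at compatible tangent vectors (those tangent to $M$ for $\sA'$ versus the $\d$-direction for $\Phi'$). Once local frames are fixed and the comparison is reduced to matrix-valued $1$-forms, however, this is a bookkeeping exercise and no new geometric content is required.
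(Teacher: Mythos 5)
Your proposal is correct and takes essentially the same route as the paper: both proofs reduce to the local-frame identity $\check\alpha^\ast\sA_x(\theta) = \alpha^\ast A_{(x,\theta)}$ (already established in Proposition~\ref{prop:construction:moduleconnection}) and its Higgs-field analogue from Proposition~\ref{prop:construction:higgs}, which identify the pullbacks of~\eqref{eqn:con:lgcalfrecconn} and~\eqref{eqn:con:lgcalhiggs} along sections $\alpha$ and $\check\alpha$ related by $\mu_E$ and $\nu$. The paper's proof is terser, delegating the local-frame computation to the earlier propositions rather than spelling it out, but there is no difference in content.
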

\begin{proof}
One must show that $\nu$ preserves the connections and Higgs fields.
For connections, this follows directly from the proof of Proposition \ref{eqn:construction:moduleconnection}, since $\check\alpha^\ast\sA_x(\theta) = \alpha^\ast A_{(x,\theta)}$ is the pullback of \eqref{eqn:con:lgcalfrecconn} by the local section $\alpha \in \Gamma(U\x\sone,\cF(E))$ corresponding to the section $\check \alpha \in \Gamma(U,\cF(\sE))$.

For Higgs fields, taking $\check{\mathbf{e}}_i$ and $\mathbf{e}_i$ as in the proof of Proposition \ref{prop:construction:moduleconnection}, for any $x \in U$
\[
\phi(\check{\mathbf{e}}_i(x)) =  \Phi(\check \alpha(x))^j_i \check{\mathbf{e}}_j (x).
\]
Using \eqref{eqn:construction:higgs}
\[
\phi(\check{\mathbf{e}}_i(x)) = \left[ \mu_E \nabla_\d (\mathbf{e}_i )\right](x) = \left[ \mu_E \big((\alpha^\ast A(\d))_i^j \mathbf{e}_j \big)\right]\!(x)
\]
and hence for all $\theta\in \sone$,  $\Phi(\check \alpha(x))(\theta) = \check \alpha(x)^\ast A_\theta(\d)$, where one uses $\nu$ to view $\check \alpha(x)$ as a section $\check \alpha(x) \in \Gamma(\{x\}\x\sone,\cF(E))$.
This expression coincides with \eqref{eqn:con:lgcalhiggs}, so the result follows.
\end{proof}

The \emph{inverse geometric caloron transform} for vector bundles is the functor
\[
\cV^{-1} \colon \frvect^c(M) \lo \ovect^c(M)
\]
that sends the object $(E,\nabla)$ to $(\sE,\Delta,\phi)$, with $\sE = \cV^{-1}(E)$ (as above) and $\Delta$, $\phi$ given respectively by Propositions \ref{prop:construction:moduleconnection} and \ref{prop:construction:higgs}.
As for $\cV$, the action of $\cV^{-1}$ on the morphism $f$ of $\frvect^c(M)$ is simply $\cV^{-1}(f)$ as before, since
\vspace{11pt}
\begin{lemma}
Writing $(\sE,\Delta,\phi) = \cV^{-1}(E,\nabla)$ and $(\sF,\Delta',\phi') = \cV^{-1}(F,\nabla')$, then if $f \colon E \to F$ is connection-preserving, i.e. $f^\ast \nabla' = \nabla$, then
\[
\Delta = \cV^{-1}(f)^\ast \Delta'
\;\;
\mbox{\;and\;}
\;\;
\phi = \cV^{-1}(f)^\ast \phi'
\]
so that $\cV^{-1}(f) \colon \sE \to \sF$ also preserves the connective data.
\end{lemma}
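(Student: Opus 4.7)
The plan is to reduce the claim to a diagram-chase using the defining formulas \eqref{eqn:construction:moduleconnection} and \eqref{eqn:construction:higgs} together with the naturality of the isomorphisms $\mu_E \colon \Gamma(E) \simto \Gamma(\sE)$ of Proposition \ref{prop:sections}. The key preliminary observation, which I would record as a small lemma, is that whenever $f \colon E \to F$ is a framed vector bundle map covering the identity, the induced map $\cV^{-1}(f)_\ast$ on sections satisfies
\[
\mu_F \circ f_\ast = \cV^{-1}(f)_\ast \circ \mu_E.
\]
This is essentially tautological: unwinding the construction of $\cV^{-1}$ via \eqref{eqn:altvcal}, both sides send a section $s \in \Gamma(E)$ to the loop-of-sections $x \mapsto (\theta \mapsto f(s(x,\theta)))$. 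Equivalently $\mu_E^{-1} \circ \cV^{-1}(f)^{-1} = f^{-1} \circ \mu_F^{-1}$.

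With this in hand, verifying $\cV^{-1}(f)^\ast \Delta' = \Delta$ is a direct chase. By the definition of pullback for module connections under an isomorphism, $\cV^{-1}(f)^\ast \Delta' = (\id \otimes \cV^{-1}(f)^{-1}) \circ \Delta' \circ \cV^{-1}(f)$. Substituting \eqref{eqn:construction:moduleconnection} for $\Delta'$ and twice applying the naturality identity above (once to pull $\cV^{-1}(f)$ through $\mu_F^{-1}$ on the right, once to pull $\cV^{-1}(f)^{-1}$ through $\mu_F$ on the left) converts the expression into
\[
(\imath_M^\ast \otimes \mu_E) \circ (\id \otimes f^{-1}) \circ \nabla' \circ f \circ \mu_E^{-1} = (\imath_M^\ast \otimes \mu_E) \circ f^\ast \nabla' \circ \mu_E^{-1}.
\]
The hypothesis $f^\ast \nabla' = \nabla$ then collapses this to $\Delta$ by \eqref{eqn:construction:moduleconnection} applied to $(E,\nabla)$.

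The Higgs field identity $\cV^{-1}(f)^\ast \phi' = \phi$ follows by the analogous chase applied pointwise. Given $v \in \sE_x$, pick $\check s \in \Gamma(\sE)$ extending $v$; then $\cV^{-1}(f)_\ast \check s$ extends $\cV^{-1}(f)(v)$. Evaluating \eqref{eqn:construction:higgs} for $\phi'$ on $\cV^{-1}(f)(v)$ and using naturality to commute $\mu_F$ past $\cV^{-1}(f)$ converts $\nabla'_\d$ acting on $\mu_F^{-1}\circ\cV^{-1}(f)_\ast \check s = f \circ \mu_E^{-1}(\check s)$ into $f \circ \nabla_\d \circ \mu_E^{-1}(\check s)$ by $f^\ast \nabla' = \nabla$, after which naturality of $\mu$ again yields $\mu_E \circ \nabla_\d \circ \mu_E^{-1}(\check s)$, i.e.\ $\phi(v)$.

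There is no serious obstacle here; the only point requiring care is bookkeeping the naturality of $\mu$ and confirming that $f^\ast \nabla' = \nabla$ (a statement about sections of $E$ and $F$) transfers cleanly to a statement about sections of $\sE$ and $\sF$ after absorbing the $\sone$-dependence through $\mu_E$ and $\mu_F$. Alternatively, one could bypass the direct computation entirely by invoking Corollary \ref{cor:geoframe2}: passing to frame bundles turns the claim into the corresponding assertion for the based geometric caloron correspondence, which is already contained in Theorem \ref{theorem:gequiv}. I would record both arguments, since the explicit chase gives a useful check on the formulas.
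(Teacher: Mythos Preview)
Your proof is correct and follows essentially the same approach as the paper: the paper's proof simply says ``completely analogous to Lemma~\ref{lemma:consistency1}'', and that lemma's proof is precisely the naturality identity $f\circ\mu_\sE = \mu_\sF \circ \cV(f)$ together with the defining formula for the caloron-transformed connection. You have spelled out in detail exactly the argument the paper leaves implicit, including the analogous chase for the Higgs field.
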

\begin{proof}
The proof is completely analogous to Lemma \ref{lemma:consistency1}.
\end{proof}

One is at last in a position to prove
\vspace{11pt}
\begin{theorem}
\label{theorem:geometric}
The geometric caloron correspondence for vector bundles
\[
\cV\colon \ovect^c(M) \lo \frvect^c(M)
\;\;\mbox{and}\;\;
\cV^{-1} \colon \frvect^c(M) \lo \ovect^c(M)
\]
is an equivalence of categories.
\end{theorem}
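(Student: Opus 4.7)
The plan is to leverage as much of the non-geometric caloron correspondence (Theorem \ref{theorem:vector}) as possible, reducing the statement to the claim that the natural isomorphisms $\alpha_\sE\colon \cV^{-1}\cV(\sE)\to \sE$ and $\beta_E\colon \cV\cV^{-1}(E)\to E$ constructed there preserve the connective data. Once this is established, the pair $(\cV,\cV^{-1})$ together with these natural isomorphisms provides an equivalence of categories; the morphism-preserving statement follows from Lemma \ref{lemma:consistency1} and its analogue for $\cV^{-1}$. So the entire content reduces to verifying that $\alpha_\sE$ and $\beta_E$ are connection- and Higgs-field-preserving.

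My preferred route is to reduce to the known principal bundle statement (Theorem \ref{theorem:gequiv}). By Corollaries \ref{cor:geoframe1} and \ref{cor:geoframe2}, the frame bundle functor intertwines the geometric vector bundle caloron transforms with the based geometric caloron correspondence for principal bundles, up to the natural isomorphisms $\mu$ and $\nu$. Applying $\cF$ to $\alpha_\sE$ produces (via $\mu$ and $\nu$) precisely the natural isomorphism $\alpha_{\cF(\sE)}$ of Theorem \ref{theorem:lggcaloron}, which is known to preserve the $\Omega G$-connection and Higgs field on $\cF(\sE)$. Since the bijective correspondences of Lemmas \ref{lemma:higgscorrespond} and \ref{lemma:moduleconnectioncorrespond} translate preservation of principal connective data into preservation of the associated module connection and vector bundle Higgs field, this forces $\alpha_\sE^\ast(\Delta,\phi)$ to coincide with the $(\Delta,\phi)$ obtained by applying $\cV^{-1}\cV$ directly. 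The argument for $\beta_E$ is entirely parallel, using the framed connection statement.

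As a sanity check (and because the frame-bundle route relies on a chain of natural isomorphisms that should be compatible), I would run a direct computation on local trivialising sections in the style of Propositions \ref{prop:construction:finiteconnection} and \ref{prop:construction:moduleconnection}. Taking a local framing $\check\alpha$ of $\sE$ over $U$ corresponding to the sections $\check{\mathbf{e}}_i$, the iterated transform $\cV^{-1}\cV$ produces a module connection whose connection form on $\cF(\sE)$ is recovered at $(x,\theta)$ by first forming $\check\alpha^\ast\sA_x(\theta)+\Phi(\check\alpha(x))(\theta)\,d\theta$ on $\cF(\cV(\sE))$ and then specialising to $\theta=0$ and extracting the $M$-component and the $d/d\theta$-component; these give back exactly $\check\alpha^\ast\sA$ and $\Phi(\check\alpha)$, confirming that $\alpha_\sE$ is connection- and Higgs-preserving. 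The analogous round trip for $\beta_E$ exploits that $\nabla$ is framed, so the framing condition $s_0^\ast A=0$ ensures that the $\theta=0$ restriction inside \eqref{eqn:construction:moduleconnection} combined with the $\d$-contraction inside \eqref{eqn:construction:higgs} recombine via \eqref{eqn:construction:finiteconnection} to recover $\nabla$.

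The only genuinely subtle point I foresee is bookkeeping: one must check that the natural isomorphisms $\mu,\nu$ of Corollary \ref{cor:frame}, the absorbing isomorphism $m_\sE$, and the section isomorphisms $\mu_E,\mu_\sE$ of Proposition \ref{prop:sections} all cohere, so that the diagram of functors in Corollaries \ref{cor:geoframe1} and \ref{cor:geoframe2} truly commutes \emph{as transformations of categories with connective data} rather than merely on underlying principal bundles. Once this compatibility is recorded, the naturality in morphisms is automatic because both the principal bundle natural isomorphisms and the maps $\mu_\sE$ are natural in $\sE$. I expect no further difficulty, and the functors $(\cV,\cV^{-1})$ with the transferred natural isomorphisms will then furnish the desired equivalence of categories.
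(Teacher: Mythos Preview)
Your approach is essentially identical to the paper's: reduce to showing that the natural isomorphisms $\alpha_\sE$ and $\beta_E$ of Theorem~\ref{theorem:vector} preserve the connective data, and deduce this from Corollaries~\ref{cor:geoframe1} and~\ref{cor:geoframe2} together with the based geometric principal bundle correspondence (you cite Theorem~\ref{theorem:lggcaloron}, but in the based setting the relevant result is Theorem~\ref{theorem:gequiv}). The paper also records the alternative observation $\beta_E = \mu_E^{-1}\circ \mu_{\cV^{-1}(E)}$ and $\alpha_\sE = \mu_{\sE}\circ \mu_{\cV(\sE)}^{-1}$, which is exactly the direct computation you propose as a sanity check.
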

\begin{proof}
It suffices to show that the geometric data behaves well with respect to the natural isomorphisms of Theorem \ref{theorem:vector}, which (recalling the definitions of $\alpha_\sE$ and $\beta_E$) follows from Corollaries \ref{cor:geoframe1} and \ref{cor:geoframe2} and the based geometric caloron correspondence of Theorem \ref{theorem:gequiv}.

Alternatively, by unwinding the definitions one observes that $\beta_E = \mu_E^{-1}\circ \mu_{\cV^{-1}(E)}$ and $\alpha_\sE = \mu_{\sE}\circ \mu_{\cV(\sE)}^{-1} $.
\end{proof}


\section{Hermitian structures}
\label{S:hermitian}
This section introduces a version of Hermitian structures suited to the $\Omega$ vector bundle setting.
Interestingly, there are extensions of Theorems \ref{theorem:vector} and \ref{theorem:geometric} that demonstrate that a Hermitian structure on the $\Omega$ vector bundle $\sE$ arises from a Hermitian structure (in the usual sense) on $\cV(\sE)$ via the caloron correspondence and conversely.
Recall the evaluation map of Remark \ref{remark:evaluation} and for the $\Omega$ vector bundle $\sE \to M$ denote by $\overline{\sE} \to M$ the $\Omega$ vector bundle obtained from $\sE$ by complex conjugation.
\vspace{11pt}
\begin{definition}
A \emph{Hermitian structure} on $\sE \in \ovect(M)$ is a smooth section $\llan\cdot,\!\cdot\rran$ of the $\Omega$ vector bundle $(\sE \oast \overline{\sE})^\star \to M$ such that
\begin{enumerate}
\item
$\llan v, {v} \rran (\theta) > 0$ whenever $\ev_\theta (v) \neq 0$; and

\item
$\llan v,{w} \rran(\theta) = \overline{\llan w,{v} \rran}(\theta)$ for all $v, w \in \sE_x$ and $\theta \in \sone$, where the overline denotes complex conjugation.
\end{enumerate}
A \emph{Hermitian $\Omega$ vector bundle} is a pair $(\sE, \sh)$ with $\sE \in \ovect(M)$ and $\sh = \llan\cdot,\!\cdot\rran$ a Hermitian structure on $\sE$.
\end{definition}
\vspace{11pt}
\begin{remark}
\label{remark:hermreduct}
As with finite-rank vector bundles, equipping $\sE \in \ovect_{\CC^n}(M)$ with a Hermitian structure $\sh = \llan\cdot,\!\cdot\rran$ determines a reduction of the structure group of $\sE$ from $\Omega GL_n(\CC)$ to $\Omega U(n)$.
This fact relies on the existence of a local basis ${\check{\mathbf{e}}}_i$ for the fibres of $\sE$ as $L\CC$-modules that is \emph{orthonomal} in the sense that
\[
\llan {\check{\mathbf{e}}}_i, {\check{\mathbf{e}}}_j \rran(\theta) =
\begin{cases}
1 &\mbox{ if } i =j \\
0 &\mbox{ if } i \neq j
\end{cases}
\]
for all $\theta \in\sone$.
The existence of such a local basis is guaranteed by applying a generalisation of the Gram-Schmidt process to a locally trivialising basis of sections $\check{\mathbf{f}}_i$.
The collection of all frames that are orthonormal for the Hermitian structure $\sh$ then gives a reduction $\cF_0(\sE)$ of the frame bundle $\cF(\sE)$, where the former is an $\Omega U(n)$-bundle and the latter is an $\Omega GL_n(\CC)$-bundle.
\end{remark}
In order to see how Hermitian structures behave under the caloron transform functors, one must first understand how Hermitian structures interact with the framing on the object $E \in \frvect(M)$.
The idea is that a Hermitian structure on $E$ must be compatible with the framing in order to take its caloron transform.
Suppose that $E$ is modelled over $\CC^n$, with framing $s_0$ over $M_0$.
The framing $s_0$ is equivalent to $n$ linearly independent sections
\[
\mathbf{e}_i \in \Gamma(M_0, E),\quad i=1,\dotsc, n
\]
that trivialise $E$ over $M_0$.
In what follows, a Hermitian structure $h$ on $E \in \frvect(M)$ is understood to mean a Hermitian structure on $E$ for which the sections $\mathbf{e}_i$ are orthonormal (strictly speaking, such a Hermitian structure ought to be called `framed').
Note that such objects exist: they may be constructed directly by using the Tietze Extension Theorem and convexity.
\vspace{11pt}
\begin{proposition}
\label{prop:hermcorr1}
Any (framed) Hermitian structure $h = \lan\cdot,\!\cdot\ran$ on $E$ uniquely determines a Hermitian structure $\sh = \llan\cdot,\!\cdot\rran$ on $\sE = \cV^{-1}(E)$ and conversely.
\end{proposition}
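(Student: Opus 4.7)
The plan is to construct an explicit bijection by using the fibrewise identification $\sE_x \cong \Gamma(\{x\}\x\sone, E)$ induced by the natural isomorphism $\lambda_E$ of \eqref{eqn:altvcal}, so that elements $v \in \sE_x$ correspond to sections $\theta \mapsto v(\theta) \in E_{(x,\theta)}$. In the forward direction, given a framed Hermitian structure $h = \lan\cdot,\cdot\ran$ on $E$, I will define
\[
\llan v,w\rran_x(\theta) := \lan v(\theta), w(\theta)\ran_{(x,\theta)},
\]
and verify: (i) smoothness follows from smoothness of $h$; (ii) the Hermitian symmetry condition is inherited pointwise from that of $h$; (iii) positivity is inherited pointwise from positivity of $h$, using the characterisation of $\ev_\theta(v) = 0$ from Remark \ref{remark:evaluation}; and (iv) the assignment defines a section of $(\sE\oast\overline{\sE})^\star$ via Lemmas \ref{lemma:oast} and \ref{lemma:altdual}, since the pointwise $\CC$-bilinearity of $h$ upgrades to $L\CC$-sesquilinearity on fibres.

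The reverse direction is more delicate. Given $\sh$ on $\sE$, for $u,w \in E_{(x,\theta_0)}$ I will pick any $\tilde u, \tilde w \in \sE_x$ with $\tilde u(\theta_0) = u$, $\tilde w(\theta_0) = w$ (such extensions exist because $\sE_x \cong \Gamma(\{x\}\x\sone, E)$ is a free $L\CC$-module) and set $\lan u,w\ran_{(x,\theta_0)} := \llan \tilde u,\tilde w\rran_x(\theta_0)$. The key point to establish well-definedness is the \emph{locality property}: whenever $\tilde u(\theta_0) = 0$ one has $\llan\tilde u,\tilde w\rran_x(\theta_0) = 0$. This is the main obstacle. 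I plan to prove it by choosing a local $L\CC$-module basis $\check{\mathbf e}_i$ of $\sE_x$, writing $\tilde u = \sum f^i \check{\mathbf e}_i$, and noting that the vanishing $\tilde u(\theta_0) = 0$ together with pointwise linear independence of the $\check{\mathbf e}_i$ forces $f^i(\theta_0) = 0$ for all $i$. Using the standard smooth factorisation result on $\sone$, each $f^i = h^i \cdot g$ for some fixed $g \in L\CC$ with $g(\theta_0) = 0$ and smooth $h^i$, so $\tilde u = g\cdot (\sum h^i \check{\mathbf e}_i)$ and the $L\CC$-sesquilinearity of $\sh$ gives $\llan\tilde u,\tilde w\rran = g\cdot \llan\sum h^i\check{\mathbf e}_i,\tilde w\rran$, which vanishes at $\theta_0$.

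Once well-definedness is in hand, smoothness of $h$ follows from smoothness of $\sh$ together with smoothness of local extensions, and positivity and Hermitian symmetry transfer immediately. For the framing compatibility, I will argue that the orthonormal local bases $\check{\mathbf e}_i$ guaranteed by Remark \ref{remark:hermreduct} (which realise the reduction $\cF_0(\sE)$ with structure group $\Omega U(n)$, rather than merely $LU(n)$, precisely because the loops are based) restrict over $\theta = 0$ to give the canonical framing trivialisation of $E$ over $M_0$ via the caloron correspondence, so the resulting $h$ is automatically framed in the sense preceding the proposition. Finally, the two constructions are mutually inverse by construction, since both are computed fibrewise via the identification $\sE_x \cong \Gamma(\{x\}\x\sone, E)$; this last verification is a direct unwinding of definitions.
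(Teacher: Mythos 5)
Your proposal is correct and takes essentially the same route as the paper: the paper's proof simply defines $\llan \check v,\check w\rran(\theta):=\lan v(\theta),w(\theta)\ran$ via the fibrewise identification $\sE_x\cong\Gamma(\{x\}\x\sone,E)$ (through $\mu_E$) and declares the bijectivity "apparent". The only difference is one of detail, not of method: you explicitly verify $L\CC$-sesquilinearity and, for the converse, prove well-definedness via the locality/ideal-factorisation argument and address framedness through Remark \ref{remark:hermreduct} — all points the paper leaves implicit.
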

\begin{proof}
Recalling the isomorphism $\mu_E$ of Proposition \ref{prop:sections}, take any $\check v,\check w\in \sE_x$ so that, for example, $\check v = \mu_E(v)$.
Notice that $v \in \Gamma(\{x\}\x\sone,E)$ so for any $\theta \in \sone$ one has $v(\theta) \in E_{(x,\theta)}$.
With this in mind, set
\[
\llan \check v,\check w \rran (\theta) := \lan v(\theta) ,w(\theta) \ran.
\]
Using the properties of $\mu_E$ and $h$ it follows that $\sh$ is a Hermitian structure on $\sE$.
Moreover, it is apparent that this construction gives a bijective correspondence between Hermitian structures $h$ and $\sh$.
\end{proof}

The exact same argument, using $\mu_\sE$ in place of $\mu_E$, gives
\vspace{11pt}
\begin{proposition}
\label{prop:hermcorr2}
Any Hermitian structure $\sh = \llan\cdot,\!\cdot\rran$ on $\sE$ uniquely determines a (framed) Hermitian structure $h = \lan\cdot,\!\cdot\ran$ on $\cV(\sE)$ and conversely.
\end{proposition}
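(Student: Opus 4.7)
The proof runs parallel to that of Proposition \ref{prop:hermcorr1}, with $\mu_\sE$ playing the role of $\mu_E$. Writing $E := \cV(\sE)$, given $\sh = \llan\cdot,\!\cdot\rran$ on $\sE$, I would define a Hermitian structure $h = \lan\cdot,\!\cdot\ran$ on $E$ as follows: for $v, w \in E_{(x,\theta)}$, choose any sections $\tilde v, \tilde w \in \Gamma(\{x\}\x\sone, E)$ with $\tilde v(\theta) = v$ and $\tilde w(\theta) = w$, and set
\[
\lan v, w \ran_{(x,\theta)} := \llan \mu_\sE(\tilde v)(x), \mu_\sE(\tilde w)(x) \rran(\theta),
\]
using the fibrewise version of $\mu_\sE$ from the end of Section \ref{SS:vector:module}. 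The converse, producing $\sh$ from a framed $h$, proceeds symmetrically using $\mu_\sE^{-1}$; the two constructions are then mutually inverse by inspection.

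The substantive step is proving that the right-hand side depends only on $v$ and $w$, not on the chosen extensions. For this I would work in a local $L\CC$-module trivialisation of $\sE$ over a neighbourhood $U \ni x$ given by a frame $\check{\mathbf{e}}_1,\dotsc,\check{\mathbf{e}}_n \in \Gamma(U,\sE)$. By Lemma \ref{lemma:altdual} combined with Lemma \ref{lemma:oast}, $\sh$ is represented locally by a smoothly-varying Hermitian matrix $H$ with entries in $L\CC$ via $H_{ij}(x) := \llan\check{\mathbf{e}}_i(x),\check{\mathbf{e}}_j(x)\rran$, and an $L\CC$-(anti)linearity computation yields
\[
\llan \check v, \check w\rran(\theta) = \sum_{i,j} f_i(\theta)\, \overline{g_j(\theta)}\, H_{ij}(x)(\theta)
\]
for $\check v = \sum_i f_i \check{\mathbf{e}}_i(x)$ and $\check w = \sum_j g_j \check{\mathbf{e}}_j(x)$. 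Under the identification of $\sE_x$ with $\Gamma(\{x\}\x\sone,E)$ via $\mu_\sE$, the values $f_i(\theta)$ are precisely the coefficients of $v$ in the basis $\ev_\theta(\check{\mathbf{e}}_i(x))$ of $E_{(x,\theta)}$, so the right-hand side depends only on $v$ and $w$.

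With well-definedness in hand, positivity, Hermitian symmetry, and smoothness of $h$ transfer immediately from the corresponding properties of $\sh$ using the evaluation construction of Remark \ref{remark:evaluation}. The final check is framing compatibility: the canonical framing sends $(x,0)$ to the element of $E_{(x,0)}$ obtained by evaluating any frame $q \in \cF(\sE)_x$ at $\theta = 0$, which is independent of $q$ because $\gamma(0) = 1$ for $\gamma \in \Omega G$. Choosing $q$ to lie in the $\Omega U(n)$-reduction of $\cF(\sE)$ guaranteed by Remark \ref{remark:hermreduct}, the sections $q(e_1),\dotsc,q(e_n) \in \sE_x$ are orthonormal for $\sh$; evaluating at $\theta = 0$ then shows the framing vectors $\mathbf{e}_i(x,0) = q(0)(e_i)$ are orthonormal for $h$, so $h$ is framed in the required sense. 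The only point needing genuine care is the well-definedness argument outlined above, though it is really just a bookkeeping exercise in the local $L\CC$-module structure rather than a serious obstacle.
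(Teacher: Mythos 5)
Your proof is correct and follows the same $\mu_\sE$-transfer approach that the paper invokes with its one-line citation of ``the exact same argument, using $\mu_\sE$ in place of $\mu_E$.'' You usefully spell out the well-definedness step (that the left-hand side of $\llan\check v,\check w\rran(\theta) = \lan v(\theta),w(\theta)\ran$ depends only on the $\theta$-values, which follows from the $L\CC$-sesquilinearity built into Definition of a Hermitian structure via Lemmas \ref{lemma:oast} and \ref{lemma:altdual}) that the paper leaves implicit when claiming bijectivity.
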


In order to write the Hermitian caloron correspondence in category-theoretic terms, one must introduce appropriate notation, to wit
\vspace{11pt}
\begin{definition}
Denote by $\ohect(M)$ the groupoid whose objects are Hermitian $\Omega$ vector bundles over $M$ and whose morphisms $f \colon (\sE,\sh) \to (\sF,\sh')$ are those morphisms $f$ of $\ovect(M)$ preserving the Hermitian structure.

Write $\frhect(M)$ for the groupoid whose objects are pairs $(E,h)$ with $E \in \frvect(M)$ and $h$ a (framed) Hermitian structure on $E$.
Morphisms $f \colon (E,h) \to (F,h')$ are the Hermitian structure preserving morphisms of $\frvect$.
\end{definition}

Propositions \ref{prop:hermcorr1} and \ref{prop:hermcorr2} together imply the existence of functors
\[
\cV \colon \ohect(M) \lo \frhect(M)
\;\;\mbox{ and }\;\;
\cV^{-1} \colon \frhect(M) \lo \ohect(M).
\]
The action of these functors on morphisms is given by the action of the standard vector bundle caloron transform functors, noting that Hermitian structure preserving morphisms are mapped to Hermitian structure preserving morphisms (as can be seen directly from the proofs of Propositions \ref{prop:hermcorr1} and \ref{prop:hermcorr2}).
\vspace{11pt}
\begin{theorem}
\label{theorem:hvector}
The caloron correspondence for Hermitian vector bundles
\[
\cV\colon \ohect(M) \lo \frhect(M)
\;\;\mbox{ and }\;\;
\cV^{-1} \colon \frhect(M) \lo \ohect(M)
\]
is an equivalence of categories.
\end{theorem}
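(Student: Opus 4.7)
The plan is to reduce the statement to Theorem \ref{theorem:vector} by showing that the natural isomorphisms $\alpha_\sE \colon \cV^{-1}\circ\cV(\sE) \to \sE$ and $\beta_E \colon \cV\circ\cV^{-1}(E) \to E$ already established in Theorem \ref{theorem:vector} preserve Hermitian structures once both sides are equipped with the structures induced by Propositions \ref{prop:hermcorr1} and \ref{prop:hermcorr2}. Since the forgetful functors $\ohect(M) \to \ovect(M)$ and $\frhect(M) \to \frvect(M)$ are faithful and the morphisms of $\ohect(M)$, $\frhect(M)$ are by definition exactly those underlying morphisms that preserve the Hermitian data, this is essentially all that needs to be done.

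First I would verify that the assignments $h \mapsto \sh$ of Proposition \ref{prop:hermcorr1} and $\sh \mapsto h$ of Proposition \ref{prop:hermcorr2} are inverse to one another; this is immediate from the explicit formula $\llan \mu_E(v),\mu_E(w) \rran(\theta) = \lan v(\theta),w(\theta)\ran$ since $\mu_E$ and $\mu_\sE$ are the only ingredients and are each other's \emph{local} inverses up to the natural identifications $\alpha_\sE$, $\beta_E$. Next I would check that the framing condition imposed on Hermitian structures on the framed side (orthonormality of the distinguished trivialising sections $\mathbf{e}_i$ over $M_0$) really is necessary and sufficient for the induced structure $\sh$ to be well-defined as a Hermitian structure on $\sE$, since at $\theta=0$ the section $\mu_E(\mathbf{e}_i)(x)$ is precisely $\mathbf{e}_i(x,0)$ and the structure group of $\cF_0(\sE)$ is $\Omega U(n)$ (based at the identity), consistent with Remark \ref{remark:hermreduct}.

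Then I would show that $\cV$ and $\cV^{-1}$ carry Hermitian-preserving morphisms to Hermitian-preserving morphisms. For a morphism $f \colon (\sE,\sh) \to (\sF,\sh')$ in $\ohect(M)$, writing $E = \cV(\sE)$, $F = \cV(\sF)$ with induced structures $h, h'$ and exploiting $\mu_\sF \circ \cV(f) = f \circ \mu_\sE$ (already used in Lemma \ref{lemma:consistency1}), one computes at $(x,\theta)$ that
\[
h'\bigl(\cV(f)(v(\theta)),\cV(f)(w(\theta))\bigr) = \sh'\bigl(f\mu_\sE^{-1}(v), f\mu_\sE^{-1}(w)\bigr)(\theta) = \sh\bigl(\mu_\sE^{-1}(v),\mu_\sE^{-1}(w)\bigr)(\theta) = h(v(\theta),w(\theta)),
\]
and similarly for $\cV^{-1}$. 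Finally, the same identity applied to $\alpha_\sE$ and $\beta_E$ (both of which are built from the $\mu$ maps, as noted at the end of the proof of Theorem \ref{theorem:geometric}) shows that these natural isomorphisms are themselves Hermitian-preserving, hence are isomorphisms in $\ohect(M)$, $\frhect(M)$ respectively.

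I do not expect any substantial obstacle: the entire content of the statement is the compatibility of the constructions of Propositions \ref{prop:hermcorr1} and \ref{prop:hermcorr2} with the natural isomorphisms of Theorem \ref{theorem:vector}, and this is manifest once everything is expressed through the isomorphisms $\mu_E, \mu_\sE$. The only mildly delicate point is to keep track of the framing compatibility of $h$ so that $\sh$ is positive-definite on the nose (in particular at $\theta = 0$, where $\ev_0$ is non-degenerate on the distinguished trivialisation), but this is exactly what is packaged into the phrase ``framed Hermitian structure'' used in the definition of $\frhect(M)$.
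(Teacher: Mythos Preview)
Your proposal is correct and follows essentially the same approach as the paper: the key observation is that the natural isomorphisms $\alpha_\sE$ and $\beta_E$ from Theorem \ref{theorem:vector} are expressible entirely in terms of the maps $\mu_E$ and $\mu_\sE$ (indeed $\beta_E = \mu_E^{-1}\circ \mu_{\cV^{-1}(E)}$ and $\alpha_\sE = \mu_{\sE}\circ \mu_{\cV(\sE)}^{-1}$), and since the Hermitian correspondences of Propositions \ref{prop:hermcorr1} and \ref{prop:hermcorr2} are defined via these same maps, preservation of the Hermitian data follows immediately. The paper's proof is a one-line invocation of this fact, whereas you spell out the additional checks on morphisms and framing compatibility, but the substance is the same.
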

\begin{proof}
It suffices to notice that the natural isomorphisms $\beta_E$ and $\alpha_\sE$ of Theorem \ref{theorem:vector} preserve the Hermitian structures since $\beta_E = \mu_E^{-1}\circ \mu_{\cV^{-1}(E)}$ and $\alpha_\sE = \mu_{\sE}\circ \mu_{\cV(\sE)}^{-1} $ (as in Theorem \ref{theorem:geometric}).
\end{proof}


\subsection{Hermitian Higgs fields and module connections}
A natural question to ask, given the caloron correspondences of Theorems \ref{theorem:geometric} and \ref{theorem:hvector}, is whether there is a version of the caloron correspondence for Hermitian vector bundles with Hermitian connections (and Higgs fields).
The answer is affirmative and the result is established by directly combining the geometric and Hermitian caloron correspondences for vector bundles.

Before this correspondence is established, one first records some results relating connective data on Hermitian $\Omega$ vector bundles to connective data on their (unitary) frame bundles.
In particular, one develops a notion of module connections and Higgs fields being compatible with a given Hermitian structure.

In the following, take $(\sE,\sh) \in \ohect(M)$ where $\rank \sE =n$ so that $\sE$ has structure group $\Omega U(n)$.
\vspace{11pt}
\begin{definition}
A based module connection $\Delta$ on $(\sE,\sh)$ is \emph{compatible with the Hermitian structure} $\mathsf{h} = \llan\cdot,\!\cdot\rran$ if
\begin{equation}
\label{eqn:hermcon}
d\llan v,{w} \rran = \llan \Delta(v),{w} \rran + \llan v,{\Delta(w)}\rran
\end{equation}
as $L\CC$-valued forms on $M$ for all $v,w \in \Gamma(\sE)$.
\end{definition}

The following result is the direct analogue of Lemma \ref{lemma:moduleconnectioncorrespond} to the Hermitian setting:
\vspace{11pt}
\begin{lemma}
\label{lemma:hermconn}
If the based module connection $\Delta$ on $\sE$ is compatible with $\sh = \llan\cdot,\!\cdot\rran$ then it uniquely determines an $\Omega\u(n)$-valued connection $\sA$ on $\cF_0(\sE)$ (as in Remark \ref{remark:hermreduct}) and conversely.
\end{lemma}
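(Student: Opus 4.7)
The plan is to adapt the argument of Lemma \ref{lemma:moduleconnectioncorrespond} by working exclusively with \emph{orthonormal} local frames for the Hermitian structure $\sh$ and tracking what the compatibility condition \eqref{eqn:hermcon} imposes on the local connection $1$-forms.

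First, given a based module connection $\Delta$ compatible with $\sh$, Lemma \ref{lemma:moduleconnectioncorrespond} produces a unique $\Omega\g$-connection $\sA$ on $\cF(\sE)$ (with $\g = \mathfrak{gl}_n(\CC)$). By Remark \ref{remark:hermreduct}, every point of $M$ lies in a neighbourhood $U_\alpha$ admitting an orthonormal local basis $\check{\mathbf{e}}_1,\dotsc,\check{\mathbf{e}}_n$ of $\sE$, which determines a local section $\check\alpha \in \Gamma(U_\alpha,\cF_0(\sE))$. As in Lemma \ref{lemma:moduleconnectioncorrespond} write $\Delta(\check{\mathbf{e}}_i) = (\check\alpha^\ast\sA)^j_i \check{\mathbf{e}}_j$. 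Since $\llan \check{\mathbf{e}}_i,\check{\mathbf{e}}_j\rran = \delta_{ij}$ is a constant $L\CC$-valued function, the left-hand side of \eqref{eqn:hermcon} vanishes, and evaluating the right-hand side on the basis sections using the conjugate-linearity of $\sh$ in the second slot gives
\[
0 = (\check\alpha^\ast\sA)^j_i + \overline{(\check\alpha^\ast\sA)^i_j},
\]
i.e. the matrix $\check\alpha^\ast\sA$ takes values in $\Omega\u(n)$.

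Next I would promote this pointwise statement to a statement about $\sA$ itself. Because any two orthonormal frames are related by a transition function valued in $\Omega U(n)$, and the adjoint action of $\Omega U(n)$ preserves $\Omega\u(n)$, the local $1$-forms $\check\alpha^\ast\sA$ glue together to define an $\Omega\u(n)$-valued connection on the reduction $\cF_0(\sE) \subset \cF(\sE)$, which is the desired $\Omega U(n)$-connection. Uniqueness is inherited from the uniqueness clause of Lemma \ref{lemma:moduleconnectioncorrespond}.

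For the converse, suppose $\sA$ is an $\Omega\u(n)$-connection on $\cF_0(\sE)$. Extend it to an $\Omega\g$-connection on $\cF(\sE)$ and produce the corresponding based module connection $\Delta$ on $\sE$ via Lemma \ref{lemma:moduleconnectioncorrespond}. Working in an orthonormal local frame $\check\alpha$ as above, the skew-Hermiticity of $\check\alpha^\ast\sA$ gives
\[
\llan \Delta\check{\mathbf{e}}_i,\check{\mathbf{e}}_j\rran + \llan\check{\mathbf{e}}_i,\Delta\check{\mathbf{e}}_j\rran = (\check\alpha^\ast\sA)^j_i + \overline{(\check\alpha^\ast\sA)^i_j} = 0 = d\delta_{ij},
\]
so \eqref{eqn:hermcon} holds on basis sections. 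The Leibniz property of $\Delta$ and the sesquilinearity of $\sh$ then propagate this identity to arbitrary local sections $v = f^i\check{\mathbf{e}}_i$, $w=g^j\check{\mathbf{e}}_j$, and since it is a local equality of $L\CC$-valued $1$-forms it extends globally. The main obstacle is purely bookkeeping: one must track the conjugate-linear behaviour of $\sh$ in its second argument carefully (in particular, the $\overline{(\check\alpha^\ast\sA)^i_j}$ factor above arises because the induced connection on $\overline{\sE}$ acts on $\check{\mathbf{e}}_j$ by the complex conjugate of the corresponding matrix entry), and verify that the gluing on overlaps respects the fact that one restricts to sections of $\cF_0(\sE)$ rather than $\cF(\sE)$ --- this is automatic because transition functions between orthonormal frames are $\Omega U(n)$-valued.
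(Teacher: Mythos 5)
Your proof is correct and follows essentially the same approach as the paper: work in a local orthonormal frame from Remark \ref{remark:hermreduct}, expand the compatibility condition \eqref{eqn:hermcon} to read off that the local connection form $\check\alpha^\ast\sA$ is skew-Hermitian, and then invoke Lemma \ref{lemma:moduleconnectioncorrespond} for uniqueness and the converse. The only cosmetic difference is that the paper writes the compatibility condition for general sections $v = f^i\check{\mathbf{e}}_i$, $w = g^j\check{\mathbf{e}}_j$ and then applies the Leibniz rule, whereas you substitute the basis sections directly (using constancy of $\llan\check{\mathbf{e}}_i,\check{\mathbf{e}}_j\rran$) and then note that sesquilinearity/Leibniz propagates the identity back to general sections; these two routes carry exactly the same content.
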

\begin{proof}
Let $\alpha$ be a framing of $\sE$ over $U$, so that as usual one obtains the local ($L\CC$-module) basis ${\check{\mathbf{e}}}_i$ for the fibres of $\sE$ over $U$.
Requiring $\alpha$ to be a section of $\cF_0(\sE)$ over $U$ implies that the ${\check{\mathbf{e}}}_i$ are orthonormal for $\sh$ and for any $v = f^i{\check{\mathbf{e}}}_i \in \Gamma(U,\sE)$
\[
\Delta(v) = f^i(\sA_\alpha)^j_i {\check{\mathbf{e}}}_j + m_\sE (df^i \otimes {\check{\mathbf{e}}}_i),
\]
where $\sA_\alpha := \alpha^\ast \sA$.
If $w = g^j{\check{\mathbf{e}}}_j  \in \Gamma(U_\alpha,\sE)$, the compatibility condition \eqref{eqn:hermcon} now reads locally as
\[
d (f^i\overline{g}^i) = \overline{g}^idf^i + f^i d\overline{g}^i +  f^i \overline{g}^j (\sA_\alpha)_i^j + f^i \overline{g}^j (\overline{\sA_\alpha})_j^i
\]
where the overline denotes complex conjugation as usual.
Using the Leibniz rule, this implies
\[
(\sA_\alpha)_i^j + (\overline{\sA_\alpha})_j^i  = 0
\]
and hence $(\sA_\alpha) \in \Omega(U,\Omega\u(n))$ as $\Delta$ is based.
This shows that the connection $\sA$ on $\cF_0(\sE)$ is $\Omega\u(n)$-valued when restricted to orthonormal frames, so gives an $\Omega U(n)$-connection on $\cF_0(\sE)$.
The result follows from Lemma \ref{lemma:moduleconnectioncorrespond}.
\end{proof}

The compatibility condition for Higgs fields is
\vspace{11pt}
\begin{definition}
A Higgs field $\varphi$ on $(\sE,\sh)$ is \emph{compatible with the Hermitian structure} $\mathsf{h} = \llan\cdot,\!\cdot\rran$ if
\begin{equation}
\label{eqn:hermhiggs}
\d \llan v,{w} \rran = \llan \varphi(v),{w} \rran + \llan v,{\varphi(w)}\rran
\end{equation}
for all $v,w \in \Gamma(\sE)$, where $\d$ as usual denotes differentiation in the $\sone$ direction.
\end{definition}

The analogue of Lemma \ref{lemma:higgscorrespond} in the Hermitian setting is
\vspace{11pt}
\begin{lemma}
\label{lemma:hermhiggs}
If the Higgs field $\varphi$ is compatible with $\sh = \llan\cdot,\!\cdot\rran$ then it uniquely determines an $L\u(n)$-valued Higgs field $\Phi$ on $\cF_0(\sE)$ and conversely.
\end{lemma}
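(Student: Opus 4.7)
The plan is to mimic the argument of Lemma \ref{lemma:hermconn}, replacing the exterior derivative $d$ by the fibre derivative $\d$ throughout, and invoking Lemma \ref{lemma:higgscorrespond} as the unreduced counterpart of the bijection. Start with a Higgs field $\varphi$ on $(\sE,\sh)$ compatible with $\sh$. By Lemma \ref{lemma:higgscorrespond} $\varphi$ already determines a unique Higgs field $\Phi$ on the full frame bundle $\cF(\sE)$; the only thing requiring proof is that, when restricted to the reduced bundle $\cF_0(\sE)\subset\cF(\sE)$ of orthonormal frames, $\Phi$ takes values in $L\u(n)\subset L\mathfrak{gl}_n(\CC)$.

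Fix a local section $\alpha$ of $\cF_0(\sE)$ over $U\subset M$ and let ${\check{\mathbf{e}}}_1,\dotsc,{\check{\mathbf{e}}}_n$ be the induced $L\CC$-basis of $\Gamma(U,\sE)$, which is orthonormal in the sense that $\llan{\check{\mathbf{e}}}_i,{\check{\mathbf{e}}}_j\rran = \delta_{ij}$. As in the proof of Lemma \ref{lemma:higgscorrespond}, write $\varphi({\check{\mathbf{e}}}_i) = \Phi(\alpha)_i^k\,{\check{\mathbf{e}}}_k$, so that $\Phi(\alpha) := \alpha^\ast\Phi \colon U \to L\mathfrak{gl}_n(\CC)$ has components $\Phi(\alpha)_i^k \in L\CC$. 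Applying \eqref{eqn:hermhiggs} to $v={\check{\mathbf{e}}}_i$, $w={\check{\mathbf{e}}}_j$ and using $\d\llan{\check{\mathbf{e}}}_i,{\check{\mathbf{e}}}_j\rran = \d\delta_{ij} = 0$ together with the sesquilinearity of $\sh$ over $L\CC$ gives, pointwise in $\theta\in\sone$,
\[
\Phi(\alpha)_i^j(\theta) + \overline{\Phi(\alpha)_j^i(\theta)} = 0,
\]
i.e.\ $\Phi(\alpha)(\theta)\in\u(n)$ for every $\theta$. Hence $\Phi(\alpha) \in \cC^\infty(U,L\u(n))$, which is exactly what it means for the Higgs field $\Phi$ on the reduced bundle $\cF_0(\sE)$ to be $L\u(n)$-valued (note that the twisted equivariance of $\Phi$ under $\Omega U(n)\subset\Omega GL_n(\CC)$ holds automatically from the full statement of Lemma \ref{lemma:higgscorrespond}, since $\ad(\gamma^{-1})\xi + \gamma^{-1}\d\gamma \in L\u(n)$ whenever $\gamma \in \Omega U(n)$ and $\xi \in L\u(n)$).

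Conversely, given an $L\u(n)$-valued Higgs field $\Phi$ on $\cF_0(\sE)$, Lemma \ref{lemma:higgscorrespond} (applied to the underlying $\Omega GL_n(\CC)$-reduction obtained from $\cF_0(\sE)$) yields a unique vector bundle Higgs field $\varphi$ on $\sE$; the computation above, read in reverse in any orthonormal local frame, shows that the compatibility identity \eqref{eqn:hermhiggs} holds on basis elements. Since both sides of \eqref{eqn:hermhiggs} are $L\CC$-sesquilinear in $v$ and $w$ modulo Leibniz terms in $\d$ that cancel by the Higgs field defining property $\varphi(fv) = f\varphi(v) + \d(f)v$, the identity extends to arbitrary $v,w\in\Gamma(\sE)$. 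The main (indeed only) obstacle is the bookkeeping of this last sesquilinear extension step, for which one reduces to the orthonormal basis case by writing $v = f^i{\check{\mathbf{e}}}_i$, $w = g^j{\check{\mathbf{e}}}_j$ and checking that the extra $\d f^i$ and $\d g^j$ terms in $\llan\varphi(v),w\rran + \llan v,\varphi(w)\rran$ reassemble precisely into $\d\llan v,w\rran$ via the Leibniz rule for $\d$ on $L\CC$.
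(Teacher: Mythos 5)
Your proof is correct and takes essentially the same approach as the paper's, reducing the $L\u(n)$-valuedness of $\Phi(\alpha)$ to the compatibility identity evaluated in a local orthonormal frame and deferring the remaining bijection to Lemma~\ref{lemma:higgscorrespond}. The only cosmetic difference is that you test \eqref{eqn:hermhiggs} directly on the orthonormal basis sections ${\check{\mathbf{e}}}_i,{\check{\mathbf{e}}}_j$, whereas the paper plugs in general $v=f^i{\check{\mathbf{e}}}_i$, $w=g^j{\check{\mathbf{e}}}_j$ and cancels the Leibniz terms afterwards; the content is identical.
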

\begin{proof}
Let ${\check{\mathbf{e}}}_i$ be trivialising orthonormal sections over $U \subset M$ as in the proof of Lemma \ref{lemma:hermconn}, so that for any $v = f^i{\check{\mathbf{e}}}_i \in \Gamma(U,\sE)$
\[
\varphi(v) = (\d f^i + f^j \Phi(\alpha)_j^i) {\check{\mathbf{e}}}_i.
\]
If $w = g^j{\check{\mathbf{e}}}_j \in \Gamma(U,\sE)$, the compatibility condition \eqref{eqn:hermhiggs} then reads locally as
\[
\d (f^i\overline{g}^i) = \overline{g}^i\d f^i + f^i \d\overline{g}^i +  f^i \overline{g}^j \Phi(\alpha)_i^j + f^i \overline{g}^j \overline{\Phi(\alpha)}_j^i.
\]
Using the Leibniz rule for $\d$ gives
\[
\Phi(\alpha)_i^j + \overline{\Phi(\alpha)}_j^i  = 0
\]
and hence $\Phi(\alpha)$ is $L\u(n)$-valued.
The rest of the argument is essentially that of Lemma \ref{lemma:higgscorrespond}.
\end{proof}

\vspace{11pt}
\begin{definition}
Denote by $\ohect^c(M)$ the groupoid whose objects are quadruples $(\sE,\sh,\Delta,\phi)$ where $(\sE,\sh) \in \ohect(M)$ and $\Delta$ and $\phi$ are respectively a based module connection and Higgs field on $\sE$ that are compatible with $\sh$.
Morphisms are given by connection- and Higgs field-preserving morphisms of $\ohect(M)$.

Similarly, write $\frhect^c(M)$ for the groupoid whose objects are triples $(E,h,\nabla)$ with $(E,h) \in \frhect(M)$ and $\nabla$ a framed connection on $E$ compatible with $h$.
Morphisms are precisely the connection-preserving morphisms of $\frhect(M)$.
\end{definition}

The geometric caloron correspondence for Hermitian vector bundles relies on the following two results
\vspace{11pt}
\begin{proposition}
\label{prop:hermgcal1}
If the framed connection $\nabla$ on $E \in \frvect(M)$ is compatible with the Hermitian structure $h = \lan\cdot,\!\cdot\ran$ then the module connection $\Delta$ and Higgs field $\phi$ on $(\sE,\sh) := \cV^{-1}(E,h)$ given respectively by \eqref{eqn:construction:moduleconnection} and  \eqref{eqn:construction:higgs} are compatible with $\sh = \llan\cdot,\!\cdot\rran$.
\end{proposition}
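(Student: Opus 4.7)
The plan is to reduce both compatibility identities directly to the compatibility of $\nabla$ with $h$ on $M\x\sone$, using the key identification from the proof of Proposition~\ref{prop:hermcorr1}: for $\check v,\check w\in\Gamma(\sE)$ with $v=\mu_E^{-1}(\check v)$ and $w=\mu_E^{-1}(\check w)$, one has $\llan\check v,\check w\rran(x)(\theta)=\lan v,w\ran(x,\theta)$. Under the ring isomorphism \eqref{eqn:loopringiso}, the function $\llan\check v,\check w\rran\colon M\to L\CC$ is thereby identified with the function $\lan v,w\ran\colon M\x\sone\to\CC$, and the derivation $\d$ on $L\CC$ corresponds to differentiation along the canonical vector field $\d$ on $\sone\subset M\x\sone$. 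Likewise, applying a vector field $X$ on $M$ to $\llan\check v,\check w\rran$ and then evaluating at $\theta$ corresponds to differentiating $\lan v,w\ran$ in the direction of the horizontal lift $\tilde X=(X,0)$.

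For the Higgs field compatibility \eqref{eqn:hermhiggs}, unwinding \eqref{eqn:construction:higgs} gives that $\phi(\check v)(x)$ is the loop $\theta\mapsto\nabla_{\d}v|_{(x,\theta)}$. The identification above then yields
\[
\llan\phi(\check v),\check w\rran(x)(\theta)+\llan\check v,\phi(\check w)\rran(x)(\theta)=\lan\nabla_{\d}v,w\ran|_{(x,\theta)}+\lan v,\nabla_{\d}w\ran|_{(x,\theta)}.
\]
Applying the compatibility of $\nabla$ with $h$ to the vector field $\d$, the right-hand side equals $\d\lan v,w\ran|_{(x,\theta)}$, which in turn matches $\d\llan\check v,\check w\rran(x)(\theta)$ under \eqref{eqn:loopringiso}, proving \eqref{eqn:hermhiggs}.

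For the module connection compatibility \eqref{eqn:hermcon}, interpreting the $\imath_M^\ast$ in \eqref{eqn:construction:moduleconnection} as the extraction of the $d\theta$-free part of a form on $M\x\sone$ (consistent with its use in the proof of Proposition~\ref{prop:construction:moduleconnection}) yields $\Delta_X\check v=\mu_E(\nabla_{\tilde X}v)$ for each vector field $X$ on $M$. Proposition~\ref{prop:hermcorr1} then gives
\[
\llan\Delta_X\check v,\check w\rran(x)(\theta)+\llan\check v,\Delta_X\check w\rran(x)(\theta)=\lan\nabla_{\tilde X}v,w\ran|_{(x,\theta)}+\lan v,\nabla_{\tilde X}w\ran|_{(x,\theta)},
\]
and compatibility of $\nabla$ with $h$ collapses the right-hand side to $\tilde X\lan v,w\ran|_{(x,\theta)}$. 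On the other hand, $d\llan\check v,\check w\rran_x(X)(\theta)$ equals the same expression by the correspondence described in the first paragraph, establishing \eqref{eqn:hermcon}.

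The argument is essentially bookkeeping: the conceptual point is that the single compatibility identity for $\nabla$ on $M\x\sone$ splits, under the decomposition of $1$-forms into $d\theta$-free and $d\theta$-containing parts, into the two separate compatibility conditions for $\Delta$ and $\phi$ on $\sE$. The only subtlety is tracking how $\mu_E$ and \eqref{eqn:loopringiso} interact with that decomposition. An alternative, even more mechanical route would be to translate $\nabla$ into a framed unitary connection on $\cF_0(E)$, transport via the based geometric caloron correspondence of Theorem~\ref{theorem:gequiv} to obtain an $\Omega\u(n)$-connection and $L\u(n)$-valued Higgs field on $\cF_0(\sE)$, then invoke Lemmas~\ref{lemma:hermconn} and \ref{lemma:hermhiggs} together with Corollary~\ref{cor:geoframe2} to conclude.
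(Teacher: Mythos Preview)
Your argument is correct and follows essentially the same strategy as the paper: split the single compatibility identity for $\nabla$ on $M\times\sone$ into its $M$-direction and $\sone$-direction components via the $\mu$ identifications, yielding \eqref{eqn:hermcon} and \eqref{eqn:hermhiggs} respectively. The packaging differs slightly: the paper first transports to $E':=\cV(\cV^{-1}(E))$ via the natural isomorphism $\beta_E$ and then uses the forward formula \eqref{eqn:construction:finiteconnection} for $\nabla'$, in which the $d\theta$-splitting is already explicit; you instead work directly on $E$ using $\mu_E$ and the inverse formulas \eqref{eqn:construction:moduleconnection}, \eqref{eqn:construction:higgs}. Since $\beta_E=\mu_E^{-1}\circ\mu_{\cV^{-1}(E)}$, the two routes are equivalent. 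Your version is marginally more direct in that it avoids introducing $E'$; the paper's has the minor advantage of not needing to reinterpret $\imath_M^\ast$ (which, as you correctly note, is used loosely in \eqref{eqn:construction:moduleconnection}). The frame-bundle alternative you sketch at the end is also valid and is essentially the content of Corollary~\ref{cor:geoframe2} combined with Lemmas~\ref{lemma:hermconn} and \ref{lemma:hermhiggs}.
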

\begin{proof}
Recall (cf. Theorem \ref{theorem:geometric}) that $\nabla' = (\beta_E)^\ast \nabla$, where
\[
\nabla' = \big( \!\pr_M^\ast \otimes\, \mu_\sE^{-1} \big) \Delta \circ \mu_\sE + d\theta \otimes \big(\mu_\sE^{-1} \circ \phi \circ \mu_\sE \big)
\]
is the connection on $E' = \cV\circ\cV^{-1}(E)$.
Therefore $\nabla'$ is compatible with the induced Hermitian structure $h' = \lan\beta_E(\cdot),\beta_E(\cdot)\ran = \lan\cdot,\!\cdot\ran_{E'}$ on $E'$.

Take any $\check v,\check w \in \Gamma(\sE)$, then
\begin{align*}
d \lan v,w\ran_{E'} &= \lan \nabla'(v), w\ran_{E'} + \lan v,\nabla'(w)\ran_{E'}
\end{align*}
where, for example, $\check v = \mu_{\sE}(v)$.
Now, for all $\theta \in \sone$
\begin{align*}
d \lan v(\theta),w(\theta)\ran_{E'} &= d_M \lan v(\theta),w(\theta)\ran_{E'} + \d \lan v(\theta),w(\theta)\ran_{E'} \,d\theta \\
&= d_M \llan\check  v,\check w\rran(\theta) + \d \llan\check  v,\check w\rran(\theta) \,d\theta 
\end{align*}
and
\begin{align*}
 \lan \nabla'(v)(\theta), w(\theta)\ran_{E'} &= \llan \Delta(\check v),\check w\rran(\theta) + \llan \phi(\check v),\check w\rran(\theta)\,d\theta
\end{align*}
so that
\[
d\llan \check v,\check {w} \rran = \llan \Delta(\check v),{\check w} \rran + \llan \check v,{\Delta(\check w)}\rran\;\;\mbox{ and }\;\;
\d\llan \check v,\check {w} \rran = \llan \phi(\check v),{\check w} \rran + \llan \check v,{\phi(\check w)}\rran
\]
as required.
\end{proof}

Essentially the same argument gives
\vspace{11pt}
\begin{proposition}
\label{prop:hermgcal2}
If the module connection $\Delta$ and Higgs field $\phi$ on $\sE \in \ovect(M)$ are compatible with the Hermitian structure $\sh = \llan\cdot,\!\cdot\rran$ then the framed connection $\nabla$ on $(E,h) := \cV(\sE,\sh)$ given by \eqref{eqn:construction:finiteconnection} is compatible with $h = \lan\cdot,\!\cdot\ran$.
\end{proposition}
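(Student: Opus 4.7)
The plan is to mirror the direct calculational proof of Proposition \ref{prop:hermgcal1}, this time running from the $\Omega$ vector bundle side to the framed vector bundle side. Since $\nabla$ is defined by the explicit formula \eqref{eqn:construction:finiteconnection} and $h$ is obtained from $\sh$ via Proposition \ref{prop:hermcorr2}, the whole compatibility condition should reduce, via the fibrewise evaluation isomorphism $\mu_\sE$, to the compatibility conditions \eqref{eqn:hermcon} and \eqref{eqn:hermhiggs} assumed for $\Delta$ and $\phi$.

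First, I would fix sections $s, r \in \Gamma(E)$ and write $\check s := \mu_\sE(s)$ and $\check r := \mu_\sE(r)$, viewed as sections of $\sE$. By Proposition \ref{prop:hermcorr2}, the Hermitian structure $h$ on $E$ is determined by the identity
\[
\lan s(x,\theta), r(x,\theta) \ran = \llan \check s, \check r \rran(x)(\theta).
\]
Next, I would use \eqref{eqn:construction:finiteconnection} to unpack $\nabla s$ at $(x,\theta) \in M\x\sone$ as the sum of an $M$-horizontal piece $\mu_\sE^{-1}(\Delta \check s)(x,\theta)$ and a $d\theta$ piece $\mu_\sE^{-1}(\phi(\check s))(x,\theta)\, d\theta$, and similarly for $\nabla r$. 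The resulting expression for $\lan\nabla s, r\ran + \lan s, \nabla r\ran$ at $(x,\theta)$ splits cleanly into two parts: one living purely in the $M$ direction, equal to
\[
\bigl(\llan \Delta \check s, \check r\rran + \llan \check s, \Delta \check r\rran\bigr)(x)(\theta),
\]
and another along $d\theta$ equal to
\[
\bigl(\llan \phi(\check s), \check r\rran + \llan \check s, \phi(\check r)\rran\bigr)(x)(\theta)\, d\theta.
\]

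At this point I would apply the two hypotheses separately: compatibility \eqref{eqn:hermcon} of $\Delta$ identifies the first piece with $d_M \llan \check s, \check r\rran(x)(\theta)$, while compatibility \eqref{eqn:hermhiggs} of $\phi$ identifies the second with $\d \llan \check s, \check r\rran(x)(\theta)\, d\theta$. Summing and using $d = d_M + \d \,d\theta$ on $M\x\sone$ together with the identification $\lan s, r\ran(x,\theta) = \llan \check s, \check r\rran(x)(\theta)$ yields exactly $d\lan s, r\ran$, establishing compatibility of $\nabla$ with $h$.

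The only real subtlety, and thus the place to be careful, is the bookkeeping in the decomposition $d = d_M + \d\,d\theta$ on the finite-dimensional side versus the two separate derivations $d$ on $M$ (with values in $L\CC$) and $\d$ along $\sone$ on the $\Omega$ side: in particular, that differentiating a loop-valued function on $M$ in the $M$-direction and then evaluating at $\theta$ agrees with differentiating the corresponding function on $M\x\sone$ in the $M$-direction. This is the same interchange used implicitly in Proposition \ref{prop:hermgcal1} and amounts to the naturality of $\mu_E$ (equivalently, the ring isomorphism \eqref{eqn:loopringiso}); once it is invoked, no further calculation is required. Alternatively, one could deduce the proposition abstractly from Proposition \ref{prop:hermgcal1} by applying the natural isomorphism $\alpha_\sE$ of Theorems \ref{theorem:geometric} and \ref{theorem:hvector} to transfer compatibility across $\cV^{-1}\circ\cV \simeq \id$, but the direct verification above is more transparent and of comparable length.
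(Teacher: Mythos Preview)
Your proposal is correct and is precisely what the paper intends: it states only that ``essentially the same argument'' as Proposition~\ref{prop:hermgcal1} gives the result, and your write-up is exactly that argument run in the other direction via the decomposition $d = d_M + \d\,d\theta$ and the identification $\lan s,r\ran(x,\theta) = \llan \check s,\check r\rran(x)(\theta)$ coming from $\mu_\sE$.
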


Combining all of these results, one arrives at the geometric caloron correspondence for Hermitian vector bundles:
\vspace{11pt}
\begin{theorem}
\label{theorem:hgeometric}
The geometric caloron correspondence for Hermitian vector bundles
\[
\cV\colon \ohect^c(M) \lo \frhect^c(M)
\;\;\mbox{ and }\;\;
\cV^{-1} \colon \frhect^c(M) \lo \ohect^c(M)
\]
is an equivalence of categories.
\end{theorem}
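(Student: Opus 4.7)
The plan is to assemble this theorem from three ingredients already in hand: the geometric vector bundle correspondence (Theorem \ref{theorem:geometric}), the Hermitian correspondence (Theorem \ref{theorem:hvector}), and the two compatibility results Propositions \ref{prop:hermgcal1} and \ref{prop:hermgcal2}. The strategy is to show that the functors $\cV$ and $\cV^{-1}$ already constructed in Sections \ref{S:vectorgcal} and \ref{S:hermitian} restrict to functors between $\ohect^c(M)$ and $\frhect^c(M)$, and that the natural isomorphisms $\alpha_\sE$ and $\beta_E$ of Theorem \ref{theorem:vector} already preserve all the relevant structure.

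First I would check that $\cV^{-1}$ lands in $\frhect^c(M)$. Given $(E,h,\nabla) \in \frhect^c(M)$, apply Proposition \ref{prop:hermcorr1} to get the Hermitian structure $\sh$ on $\sE := \cV^{-1}(E)$, and apply the geometric inverse caloron transform of Theorem \ref{theorem:geometric} to obtain the based module connection $\Delta$ and Higgs field $\phi$ on $\sE$ defined by \eqref{eqn:construction:moduleconnection} and \eqref{eqn:construction:higgs}. Proposition \ref{prop:hermgcal1} is exactly the statement that $\Delta$ and $\phi$ are compatible with $\sh$, so $(\sE,\sh,\Delta,\phi) \in \ohect^c(M)$ as required. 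The analogous verification for $\cV$ uses Propositions \ref{prop:hermcorr2} and \ref{prop:hermgcal2} in place of Propositions \ref{prop:hermcorr1} and \ref{prop:hermgcal1}.

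For morphisms, a morphism in $\ohect^c(M)$ is simultaneously Hermitian-structure-, module-connection- and Higgs-field-preserving, and likewise for $\frhect^c(M)$; since $\cV$ and $\cV^{-1}$ preserve each of these structures independently (by Theorems \ref{theorem:geometric} and \ref{theorem:hvector}), they send morphisms to morphisms. Finally, to obtain the equivalence, recall from Theorem \ref{theorem:geometric} that $\beta_E = \mu_E^{-1}\circ\mu_{\cV^{-1}(E)}$ and $\alpha_\sE = \mu_\sE \circ \mu_{\cV(\sE)}^{-1}$, and that these natural isomorphisms preserve both the geometric data (by Theorem \ref{theorem:geometric}) and the Hermitian structures (by Theorem \ref{theorem:hvector}, via the same formulas). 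Consequently $\alpha_\sE$ and $\beta_E$ are morphisms in $\ohect^c(M)$ and $\frhect^c(M)$ respectively, giving the required natural isomorphisms $\cV^{-1}\circ\cV \simto \id$ and $\cV\circ\cV^{-1}\simto \id$.

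The only potential obstacle is purely bookkeeping: one must verify that the Hermitian structure and the connective data really are compatible in both directions simultaneously, rather than checking each in isolation. But this is precisely what Propositions \ref{prop:hermgcal1} and \ref{prop:hermgcal2} have already settled, so no further calculation is needed and the theorem follows by assembling the pieces.
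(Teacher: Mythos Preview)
Your proposal is correct and matches the paper's approach exactly: the paper presents Theorem \ref{theorem:hgeometric} without an explicit proof, prefacing it only with ``Combining all of these results, one arrives at the geometric caloron correspondence for Hermitian vector bundles,'' and your write-up is precisely the combination the paper has in mind, assembling Theorems \ref{theorem:geometric} and \ref{theorem:hvector} together with Propositions \ref{prop:hermgcal1} and \ref{prop:hermgcal2}.
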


\vspace{11pt}
\begin{remark}
\label{remark:ounreduct}
As was observed in Remark \ref{remark:hermreduct}, a Hermitian structure on the $\Omega$ vector bundle $\sE \to M$ determines a reduction of the structure group from $\Omega GL_n(\CC)$ to $\Omega U(n)$ (where $n = \rank\sE$).

Conversely, if $\sE$ comes equipped with a preferred reduction of the structure group from $\Omega GL_n(\CC)$ to $\Omega U(n)$, then it has a Hermitian structure $\sh = \llan\cdot,\!\cdot\rran$ as follows.
Pick an open cover $\{U_{\alpha}\}_{\alpha\in I}$ of $M$ over which $\sE$ is trivialised (by maps $\psi_\alpha$, say), so that the transition functions $\{\tau_{\alpha\beta}\}$ are $\Omega U(n)$-valued.
Using the trivialisation
\[
\psi_\alpha \colon \sE|_{U_\alpha}\lo U_\alpha \x L\CC^n 
\]
one may define a Hermitian structure on $\sE|_{U_\alpha}$ that acts on elements $v,w$ in the fibre of $\sE$ over $x\in U_\alpha$ by
\[
\llan v,w\rran_\alpha (\theta):= \big\lan \psi_\alpha(v)(\theta),\psi_\alpha(w)(\theta) \big\ran,
\]
where $\lan\cdot,\!\cdot\ran$ is the standard inner product on $\CC^n$.
Over double intersections $U_{\alpha\beta} := U_\alpha\cap U_\beta$, one has
\begin{align*}
\psi_\beta\circ \psi_\alpha^{-1} \colon U_{\alpha\beta}\x L\CC^n &\lo U_{\alpha\beta}\x L\CC^n
\\
(x,v) &\longmapsto \left(x,\tau_{\alpha\beta}(x)(v)\right)
\end{align*}
so that for any $x \in U_{\alpha\beta}$
\begin{multline*}
\llan v,w\rran_\beta (\theta) = \big\lan \psi_\beta(v)(\theta),\psi_\beta(w)(\theta) \big\ran = \big\lan \tau_{\alpha\beta}(x)(\theta)\cdot\psi_\alpha(v)(\theta),\tau_{\alpha\beta}(x)(\theta)\cdot\psi_\alpha(w)(\theta) \big\ran\\
=\big\lan \psi_\alpha(v)(\theta),\psi_\alpha(w)(\theta)\big\ran = \llan v,w\rran_\alpha (\theta) 
\end{multline*}
since the $\tau_{\alpha\beta}$ are $\Omega U(n)$-valued.
Therefore the $\llan\cdot,\!\cdot\rran_\alpha$ patch together to give a Hermitian structure $\sh = \llan\cdot,\!\cdot\rran$ on $\sE$ that  does not depend on the choice of cover $\{U_\alpha\}$ of $M$.
From here on, any $\Omega$ vector bundle $\sE \to M$ with structure group $\Omega U(n)$ is understood to be equipped with this Hermitian structure.
\end{remark}
\vspace{11pt}
\begin{example}[the canonical $\Omega$ vector bundles]
\label{example:bmsen}
The associated $\Omega$ vector bundle to the path fibration $PU(n) \to U(n)$ is the \emph{canonical Hermitian $\Omega$ vector bundle} of rank $n$, denoted $\sE(n) \to U(n)$.
Applying Lemmas \ref{lemma:hermconn} and \ref{lemma:hermhiggs} to the standard connection $\sA_\infty$ and Higgs field $\Phi_\infty$ on $PU(n)$, one obtains the based module connection $\Delta(n)$ and Higgs field $\phi(n)$ on $\sE(n)$, noting that these are both compatible with the Hermitian structure.
Since $U(n)$ is universal for $\Omega U(n)$-bundles, $\sE(n) \to U(n)$ is the universal Hermitian $\Omega$ vector bundle of rank $n$, however it is not necessarily universal for Hermitian $\Omega$ vector bundles with connection; this is the reason for the disctinction in terminology between \emph{universal} and \emph{canonical}.
\end{example}


\section{Applications to $K$-theory}
\label{S:oddk}
Having given a detailed treatment of $\Omega$ vector bundles in previous sections, one is now in a position to discuss an interesting application of the theory.
It turns out that $\Omega$ vector bundles give convenient objects with which to describe odd topological $K$-theory.
Despite being extravagant in dimensions, working with $\Omega$ vector bundles is advantageous since it allows one to phrase the odd $K$-theory of $M$ entirely in terms of \emph{smooth} bundles that are based over $M$.
This smoothness is critical to the construction of the $\Omega$ model in Chapter \ref{ch:five}.

\subsection{Topological $K$-theory}
\label{S:topk}
Topological $K$-theory was introduced in the late 1950s by Atiyah and Hirzebruch as a generalised cohomology theory that provides a useful tool for studying vector bundles on topological spaces.
Since its first appearance (topological) $K$-theory has become an extremely active area of algebraic topology that has been used in the proofs of some very far-reaching results, notably the Atiyah-Singer Index Theorem.
Some of the most well-known results of algebraic topology, such as the result of Adams giving the maximum number of linearly independent vector fields on the $n$-sphere, have relatively simple proofs using $K$-theory.
$K$-theory (and, in particular, twisted and differential $K$-theory) has also found uses in physics, where it enjoys an important role in the developing understanding of $T$-duality (see \cite[Section 1.1]{BS1} for a discussion).

Topological $K$ theory is related to the notion of stable isomorphism.
Over a fixed base space $X$, complex vector bundles $E,F \to X$ are \emph{stably isomorphic} if
\[
E\oplus \underline{\CC}^n \cong F\oplus\underline{\CC}^n
\]
for some $n\geq 0$.
If $X$ is compact and Hausdorff, one defines the abelian group $K(X)$ of formal differences $E-F$ of vector bundles over $X$ modulo the equivalence relation $E-F\sim E'-F' \Leftrightarrow E\oplus F'$ and $E'\oplus F$ are stably isomorphic\footnote{the compactness condition on $X$ is required to show that this equivalence relation is transitive.}.
The group operation on $K(X)$ is given by $(E-F) + (E'-F') := (E\oplus E') -(F\oplus F')$, the identity element is $E-E$ (for any choice of $E$) and the inverse of $E - F$ is $F-E$. 
Alternatively, the group $K(X)$ may be obtained from the category $\vect(X)$ of (topological) vector bundles over $X$ by applying the Grothendieck group completion device as follows.

Recall (cf. \cite[Section 2.1]{AtiyahK}) that the Grothendieck construction produces from a given abelian semi-group $(A,\oplus)$ a universal abelian group called $K(A)$ and a homomorphism $\alpha \colon A \to K(A)$ of semi-groups.
The group $K(A)$ is \emph{universal} in the sense that given any abelian group $G$ and semi-group homomorphism $\beta \colon A \to G$ there is a unique homomorphism $\gamma \colon K(A) \to G$ such that $\beta = \gamma\circ\alpha$.
Thus, if $K(A)$ exists it is unique up to isomorphism.

To construct $K(A)$, first write $F(A)$ for the free abelian group generated by elements of $A$ and $E(A)$ for the subgroup of $F(A)$ generated by those elements of the form $a + a' - (a\oplus a')$.
The $K$-group of $A$ is defined to be
\[
K(A) := F(A) / E(A)
\]
with $\alpha \colon A \to K(A)$ the obvious map.

There is an alternative construction of $K(A)$ that is often quite useful.
Recall the diagonal map $\Delta \colon A \to A\x A$, which is a semi-group homomorphism, and define $K(A)$ as the quotient semi-group
\[
K(A) :=( A\x A) / \Delta(A).
\]
Since $(b,a)$ is clearly the additive inverse of $(a,b)$, $K(A)$ is in fact a group and one often writes $(a,b)$ as $a-b$ so that elements of $K(A)$ may be viewed as formal differences of elements of $A$.
The homomorphism $\alpha \colon A \to K(A)$ that characterises the universal property in this case is given by sending $a$ to the coset  of $(a\oplus a, a)$, or the coset of $(a,0)$ when $A$ has unit $0$.
It is readily seen from the construction that the assignment $A \mapsto K(A)$ is functorial.

For any compact Hausdorff space $X$, $\vect(X)$\footnote{the category whose objects are vector bundles over $X$ and whose morphisms are vector bundle maps covering the identity.} is a symmetric monoidal category under the direct sum operation $\oplus$ on vector bundles, with identity the rank zero bundle $X\x\{0\} \to X$.
Passing to isomorphism classes\footnote{recall that for a category $\mathrm{C}$, $\pi_0 \mathrm{C}$ denotes the isomorphism classes of $\mathrm{C}$.} gives the abelian semi-group $(\pi_0\!\vect(X),\oplus)$ and one sets
\[
K(X) := K(\pi_0\!\vect(X)).
\]
Since $A \mapsto K(A)$ and $X \mapsto \pi_0\!\vect(X)$ are both functorial, the assignment $X\mapsto K(X)$ determines a contravariant functor from the category of compact Hausdorff topological spaces to the category of abelian groups.
In the case that $X$ is also a smooth manifold, one may restrict to \emph{smooth} vector bundles over $X$; since every topological vector bundle based over a smooth manifold is isomorphic to a smooth vector bundle\footnote{this can be seen, for example, by choosing a continuous classifying map into some Grassmannian and then using \cite[Proposition 17.8]{BT} to find a homotopic smooth map, as in Theorem \ref{theorem:reduction}.}, this gives the same $K(X)$.

As discussed above, elements of $K(X)$ are \emph{virtual vector bundles} over $X$, that is, formal differences $E - F$, where $E$ and $F$ are (isomorphism classes of) vector bundles over $X$.
The \emph{rank} of the virtual vector bundle $E-F$ is simply the difference $\rank E - \rank F$, which is constant on connected components of $X$.
Using the Serre-Swan Theorem (cf. Remark \ref{remark:oddkprop} or \cite[Section 2.1]{AtiyahK})  every element of $K(X)$ may be written as $E - \underline{\CC}^n$ for some $n$, so for any $E - \underline{\CC}^n, F -\underline{\CC}^m \in K(X)$
\[
E - \underline{\CC}^n = F -\underline{\CC}^m \Longleftrightarrow E\oplus \underline{\CC}^p \cong F\oplus\underline{\CC}^q
\]
for some $p$ and $q$.

When $X$ is a pointed space with basepoint $\bp$, define $\widetilde K(X)$ to be the kernel of the map $\imath^\ast \colon K(X) \to K(\{\bp\})$, where $\imath \colon \{\bp\} \hookrightarrow X$ is the basepoint inclusion.
If $\rank E = m$ and $\rank F = n$, the map $\imath^\ast$ sends $E-F$ to $\underline{\CC}^m - \underline{\CC}^n$ so that elements of $\widetilde{K}(X)$ are virtual vector bundles of rank zero on the connected component of $\bp$ in $X$.
Elements of $K(\{\bp\})$ are virtual vector bundles characterised entirely by their rank, so $K(\{\bp\}) \cong \ZZ$ and there is a split  exact sequence
\[
0\lo \widetilde{K}(X) \lo K(X) \lo \ZZ \lo 0.
\]
The group $\widetilde K(X)$ is referred to as the \emph{reduced} $K$ group of $X$.

One may extend $K(X)$ to a cohomology theory, which in the case that $X$ does not come equipped with a basepoint is done in the following fashion.
Write $X^+ := X \sqcup \{\bp\}$, which is now a pointed space with basepoint $\bp$.
Then for $n\in \ZZ$ define (cf. \cite[Definition 2.2.2]{AtiyahK})
\[
K^n (X) := \widetilde{K}\left(\Sigma^{|n|}(X^+)\right)\!,
\]
where $\Sigma^{|n|}$ denotes the $|n|$-th iterated reduced suspension\footnote{recall that the \emph{reduced suspension} of the pointed space $(Y,y_0)$ is the space $\Sigma Y := (\sone\x Y) / (\sone\vee Y)$, where $\sone\vee Y:= \sone\x\{y_0\} \cup \{0\}\x Y$, equipped with the basepoint $[0,y_0]$.}.
Then, for example,
\[
K^0(X) = K(X) \;\;\mbox{ and }\;\; K^{-1}(X) = \widetilde K\left( (X\x\sone) / (X\x\{0\})\right),
\]
where $\Sigma(X^+) \cong (X\x\sone) / (X\x\{0\})$ is equipped with basepoint $(X\x\{0\}) / (X\x\{0\})$.

It is well-known that
\[
K^\bullet(X) := \bigoplus_{n\in \ZZ} K^n(X)
\]
naturally has the structure of a graded ring\footnote{just as the group structure is induced by $\oplus$, the ring structure is defined using the tensor product $\otimes$---see \cite[Section 2.6]{AtiyahK} for example.} and that the assignment $X \mapsto K^\bullet(X)$ determines a generalised (Eilenberg-Steenrod) cohomology theory.
The famous Bott Periodicity Theorem asserts that
\[
K^{n+2}(X) \cong K^n(X)
\]
for any $n$ and hence it suffices to talk only about the \emph{even} and \emph{odd} $K$-theory of $X$, which are given  by $K^0(X)$ and $K^{-1}(X)$ respectively.
As topological $K$-theory is a $2$-periodic generalised cohomology theory, it has a spectrum consisting of two spaces
\[
K^0(X) \cong [X,BU\x\ZZ] \;\;\mbox{ and }\;\; K^{-1}(X) \cong [X,U]
\]
where
\[
U := \dlim U(n)
\]
is the stabilised unitary group (Appendix \ref{app:frechet}) and $BU$ is its classifying space.
Since spectra are determined up to homotopy equivalence, one may alternatively take $BGL \x\ZZ$ and $GL$ respectively as the classifying spaces for even and odd $K$-theory, where
\[
GL := \dlim GL_n(\CC)
\]
is the stabilised general linear group and $BGL$ is its classifying space.


\subsection{The functor $\cK$}
\label{S:ouroddk}
In this section, the theory of $\Omega$ vector bundles is used to give a model for the odd $K$-theory of an \emph{unpointed} compact manifold $M$ that is phrased entirely in terms of smooth objects based over $M$ instead of topological vector bundles over $\Sigma M^+$.
In the same way that even $K$-theory is given in terms of virtual vector bundles, odd $K$-theory is given by virtual $\Omega$ vector bundles of rank zero.
\vspace{11pt}
\begin{definition}
Let $M$ be a compact manifold and recall that $(\ovect(M),\oplus)$ is an abelian semi-group.
Choose a basepoint $\bp \in M$ and define
\[
\cK(M) := \widetilde K(\pi_0\ovect(M)),
\]
the abelian group obtained by using the Grothendieck group completion and then restricting to the kernel of the map $K(\pi_0\ovect(M)) \to K(\pi_0\ovect(\{\bp\}))$.
This definition is independent of the choice of basepoint $\bp$ (even if $M$ is not connected) as the rank of an $\Omega$ vector bundle is constant.
Since the assignment $M \mapsto \ovect(M)$ is functorial, the assignment $M\mapsto \cK(M)$ clearly determines a contravariant functor that acts on smooth maps by pullback.
\end{definition}

Elements of $\cK(M)$ are \emph{virtual $\Omega$ vector bundles} of rank zero, that is formal differences $\sE - \sF$ where $\sE,\sF \to M$ are $\Omega$ vector bundles with $\rank\sE = \rank\sF$.
Strictly speaking, one should write $[\sE] - [\sF]$ instead, with $[\sE]$ the isomorphism class of $\sE$, however this leads to an excess of notation.
\vspace{11pt}
\begin{remark}
\label{remark:oddkprop}
One observes some  immediate consequences of the definition:
\begin{itemize}
\item
Take any $\sE-\sF\in\cK(M)$, then by Corollary \ref{cor:oddkinverse} there is some $\Omega$ vector bundle $\mathsf{G} \to M$ such that $\sF \oplus \mathsf{G} \cong \underline{L\CC}^n$ for some $n$.
Then
\[
\sE - \sF = \sE + \mathsf{G} - \sF - \mathsf{G} = \sE \oplus \mathsf{G} - \underline{L\CC}^n.
\]
Thus every element of $\cK(M)$ may be written in the form $\sE - \underline{L\CC}^n$ where $n = \rank\sE$.

\item
Suppose $\sE - \sF = 0$ in $\cK(M)$.
Then there is some $\mathsf{G}$ such that $\sE \oplus \mathsf{G} \cong \sF \oplus \mathsf{G}$ as $\Omega$ vector bundles over $M$.
Invoking Corollary \ref{cor:oddkinverse} again gives that there is some $\sH$ such that $\mathsf{G} \oplus \sH$ is trivial and hence
\[
\sE \oplus \mathsf{G} \cong \sF \oplus \mathsf{G} \Longrightarrow \sE \oplus \underline{L\CC}^n \cong \sF \oplus \underline{L\CC}^n
\]
so $\sE$ and $\sF$ are stably isomorphic.
Conversely, if $\sE$ and $\sF$ are stably isomorphic then it is clear that $\sE-\sF = 0$ in $\cK(M)$.

\item
Similarly, $\sE - \sF = \sE'-\sF'$ in $\cK(M)$ if and only if $\sE \oplus \sF' \oplus \underline{L\CC}^n \cong \sE'\oplus\sF \oplus \underline{L\CC}^n$ for some $n$.
In particular, $\sE - \underline{L\CC}^n = \sF - \underline{L\CC}^m$ in $\cK(M)$ if and only if $\sE \oplus \underline{L\CC}^p \cong \sF \oplus \underline{L\CC}^q$ for some $p$ and $q$.
\end{itemize}
\end{remark}

The fact that $\cK(M)$ gives the odd $K$-theory of $M$ comes from the characterisation of $K^{-1}(M)$ given above, that is
\[
K^{-1}(M) = \widetilde{K}\big(\Sigma (M^+)\big) = \widetilde K\left( (M\x\sone) / (M\x\{0\})\right)\!.
\]
The underlying idea (glossing over smoothness requirements for now) is that a vector bundle on $(M\x\sone) / (M\x\{0\})$ is equivalent to a vector bundle over $M\x\sone$ with a distinguished framing over $M_0 := M\x\{0\}$ and hence, via the caloron transform, to an $\Omega$ vector bundle over $M$.

Consider the \emph{reduction functor}
\[
\cR \colon \ovect(M) \lo \vect\big((M\x\sone)/\,M_0\big)
\]
defined as follows.
Given an $\Omega$ vector bundle $\sE \to M$, its caloron transform $E:= \cV(\sE)$ is a framed vector bundle over $M\x\sone$, so has a distinguished section $s_0 \in \Gamma(M_0,\cF(E))$.
Define an equivalence relation on $E$ over $M_0$ by setting
\[
e \sim e' \Longleftrightarrow \pr_2 \circ \,s^{-1}_0(e) = \pr_2 \circ \,s^{-1}_0(e')
\]
recalling the notation of p.~\pageref{page:sectiontriv}, with $\pr_2\colon M_0 \x\CC^n \to \CC^n$ the projection, and extend this to all of $E$ by the identity.
Denote by $E/s_0$ the quotient space of $E$ given by this equivalence relation, then
\[
E / s_0\lo (M\x\sone) / \,M_0
\]
is a topological vector bundle (see \cite[Lemma 1.4.7]{AtiyahK}, for example).
The action of the reduction functor on $\sE$ is given by
\[
\cR(\sE) := \cV(\sE)/s_0 \in \vect\big((M\x\sone)/\,M_0\big).
\]
If $f\colon E\to F$ is a morphism in $\frvect(M)$ (with framings $s_E$ and $s_F$ on $E$ and $F$ respectively) it is clear from the construction that the induced map $[f] \colon E/s_E \to F/s_F$ is an isomorphism.
The action of $\cR$ on morphisms is then given by sending $f \colon \sE \to \sF$ to $[\cV(f)] \colon \cR(\sE) \to \cR(\sF)$.

Recall that a functor $F \colon \mathrm{C} \to \mathrm{D}$ is \emph{essentially surjective} if every object of $\mathrm{D}$ is isomorphic to an object in the image of $F$.
\vspace{11pt}
\begin{theorem}
\label{theorem:reduction}
The reduction functor
\[
\cR \colon \ovect(M) \lo \vect\big((M\x\sone)/\,M_0\big)
\]
is essentially surjective, in particular the induced map on isomorphism classes is surjective.
\end{theorem}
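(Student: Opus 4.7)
The plan is to construct, for a given topological vector bundle $F \to (M\x\sone)/\,M_0$, an $\Omega$ vector bundle $\sE \to M$ such that $\cR(\sE) \cong F$, by running the quotient construction in reverse and then applying the inverse caloron transform $\cV^{-1}$.

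First, I would pull back $F$ along the quotient map $q \colon M\x\sone \to (M\x\sone)/\,M_0$ to obtain the topological vector bundle $q^\ast F \to M\x\sone$. The key observation is that, since $q$ collapses $M_0$ to the basepoint $\ast$, the restriction $q^\ast F|_{M_0}$ is canonically isomorphic to the trivial bundle $M_0 \x F_\ast$. A choice of basis for the fibre $F_\ast$ therefore yields a canonical framing $t_0$ of $q^\ast F$ over $M_0$, turning $q^\ast F$ into a topological framed vector bundle over $M\x\sone$.

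Next, I would smooth $q^\ast F$ while preserving the framing. Since $M$ is compact, $q^\ast F$ has finite rank $m$ and admits a continuous classifying map $f \colon M\x\sone \to \Gr_m(\CC^k)$ with $f(M_0) = \bp$ and a framing-preserving isomorphism $q^\ast F \cong f^\ast\gamma^m(\CC^k)$ (this is the topological analogue of Corollary \ref{cor:framedclassmap}, obtained by repeating the same covering construction without smoothness assumptions on the transition data). By the standard smoothing result that every continuous map into a smooth manifold is homotopic, relative to a closed subset where it is already smooth, to a smooth map (see \cite[Proposition 17.8]{BT} or analogous statements), one may homotope $f$ rel $M_0$ to a smooth map $\tilde f \colon M\x\sone \to \Gr_m(\CC^k)$ still sending $M_0$ to $\bp$. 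Since homotopic maps pull back isomorphic vector bundles, the smooth framed vector bundle $E := \tilde f^\ast \gamma^m(\CC^k) \in \frvect(M)$ is framing-preservingly isomorphic (as a topological bundle) to $q^\ast F$. Quotienting by the framings then gives a topological isomorphism $E/s_0 \cong q^\ast F/\,t_0 \cong F$, where the second isomorphism is the standard identification coming from descent along $q$.

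Finally, I would set $\sE := \cV^{-1}(E) \in \ovect(M)$. By Theorem \ref{theorem:vector}, the natural isomorphism $\beta_E \colon \cV(\sE) \to E$ is a framing-preserving isomorphism of framed vector bundles, so it descends to an isomorphism $\cR(\sE) = \cV(\sE)/s_0 \simto E/s_0 \cong F$ in $\vect\big((M\x\sone)/\,M_0\big)$, establishing essential surjectivity.

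The main obstacle is the middle step: smoothing $q^\ast F$ while retaining the framing data. Once this is cast as a classifying-map problem where the relevant homotopy is performed relative to $M_0$, it reduces to a standard smooth-approximation argument, and the rest of the proof is a direct application of the caloron correspondence already established.
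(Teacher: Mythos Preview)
Your proposal is correct and follows essentially the same route as the paper: pull back along the quotient map to obtain a canonically framed topological bundle over $M\times\sone$, smooth it, and apply $\cV^{-1}$. The only minor organisational difference is in the smoothing step---you arrange a framing-preserving continuous classifying map (constant on $M_0$) and smooth it relative to $M_0$ in one go, whereas the paper first smooths an unconstrained classifying map and then separately homotopes the induced continuous framing to a smooth one, using that the quotient construction depends on the framing only up to homotopy.
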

\begin{proof}
To see that $\cR$ is essentially surjective, take any vector bundle $E$ over $(M\x\sone)/M_0$ and let $\pi \colon M\x\sone \to (M\x\sone)/M_0$ be the quotient map.
Consider the pullback $E' := \pi^\ast E$, which is a topological vector bundle over $M\x\sone$ with the additional property that for  any two $x,x' \in M$ there is an isomorphism $E'_{(x,0)} \to E'_{(x',0)}$ given by
\[
E'_{(x,0)}\ni(x,0,e) \longmapsto (x',0,e) \in E'_{(x',0)}.
\]
Fix some $\bp \in M$ and let $\rank E = m$, so that picking a frame $p\colon \CC^m \to E'_{(\bp,0)}$ determines a framing $r_0$ of $E'$ over $M_0$, in particular $E'|_{M_0}$ is trivial.
Note that $E' /r_0$ is clearly isomorphic to $E$.

Pick a classifying map $f \colon M\x\sone \to \Gr_m(\CC^k)$ for $E'$ so that $E' \cong f^\ast \gamma^m(\CC^k)$.
As both $M\x\sone$ and $\Gr_m(\CC^k)$ are smooth manifolds, one may find a smooth map $g$ homotopic to $f$ \cite[Proposition 17.8]{BT}.
Since the pullbacks of any bundle by homotopic maps are isomorphic (see, for example, \cite[Lemma 1.4.3]{AtiyahK}) $E'$ is isomorphic to some smooth vector bundle $F := g^\ast\gamma^m(\CC^k) \to M\x\sone$, which is trivial over $M_0$.
Pick an isomorphism $f\colon E' \to F$ and consider the framing $f\circ r_0$ induced on $F$, which is not necessarily smooth.
However, $f \circ r_0$ is homotopic to a smooth framing $s_0$ of $F$, so since the quotient construction depends on the framing only up to homotopy \cite[Lemma 1.4.7]{AtiyahK} there is an isomorphism $F/s_0 \cong E'/r_0 \cong E$. 

Setting $\sF :=\cV^{-1}(F)$, recall that $\cV(\sF)$ and $F$ are isomorphic as framed vector bundles and hence so too are their quotients with respect to their respective framings.
In particular, $\cR(\sF)$ is isomorphic to $F/s_0$ and hence to $E$, so $\cR$ is essentially surjective.
\end{proof}

One wishes to show that $\cR$ induces a bijection on isomorphism classes.
It suffices to verify that whenever $\cR(\sE)$ and $\cR(\sF)$ are isomorphic in $\vect((M\x\sone)/\,M_0)$ then $\sE$ and $\sF$ are isomorphic in $\ovect(M)$.
Suppose that there is an isomorphism $f\colon \cR(\sE) \to \cR(\sF)$, where $\cR(\sE) = E/s_0$ and $\cR(\sF) = F/r_0$, say.
Similarly to the proof of Theorem \ref{theorem:reduction} above, one can see that $f$ lifts to a continuous isomorphism $f'\colon E\to F$ that respects the framings, i.e. $r_0 = f\circ s_0$.
One now requires a smooth isomorphism $E \to F$ that respects the framings, since applying the caloron transform and natural isomorphisms as necessary implies the result.

Let $m = \rank E = \rank F$ and recall from Corollary \ref{cor:framedclassmap} that $E$ and $F$ are smoothly isomorphic as framed vector bundles to some $f^\ast \gamma^m(\CC^k)$ and $g^\ast \gamma^m(\CC^{k'})$ respectively, where the smooth maps $f \colon M\x\sone \to \Gr_m(\CC^k)$ and $g\colon M\x\sone \to \Gr_m(\CC^{k'})$ send $M_0$ onto the respective basepoints.
Denote by $\widetilde f$, $\widetilde g$ the maps on $(M\x\sone)/M_0$ induced by $f$ and $g$.

Recall, since $\Gr_m(\CC^k)$ is a quotient of $GL_k(\CC)$, that there are embeddings
\[
\Gr_m(\CC^m) \hookrightarrow \Gr_m(\CC^{m+1})\hookrightarrow \Gr_m(\CC^{m+2}) \hookrightarrow \dotsb
\]
lifting to smooth vector bundle maps
\[
\gamma^m(\CC^m) \hookrightarrow \gamma^m(\CC^{m+1}) \hookrightarrow \gamma^m(\CC^{m+2}) \hookrightarrow \dotsb
\]
that respect the framings.
If $\vect_m(X)$ denotes the space of isomorphism classes of rank $m$ complex vector bundles over $X$, recall also
\vspace{11pt}
\begin{theorem}
The map
\[
\dlim [X,\Gr_m(\CC^k)] \lo \vect_m(X)
\]
induced by pulling back the bundles $\gamma^m(\CC^k) \to \Gr_m(\CC^k)$ is an isomorphism for all compact Hausdorff spaces $X$.
\end{theorem}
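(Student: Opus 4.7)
The plan is to prove the theorem by establishing surjectivity and injectivity of the induced map separately, using a standard argument that is compatible with the proof of Theorem \ref{theorem:module1} already employed in the excerpt.

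For surjectivity, I would take a rank $m$ complex vector bundle $E \to X$ and construct a classifying map $X \to \Gr_m(\CC^k)$ for some $k$ in exactly the manner used in the proof of Theorem \ref{theorem:module1}. Namely, since $X$ is compact Hausdorff, one chooses a finite open cover $\{U_i\}_{i=0}^r$ of $X$ over which $E$ is trivial, together with subordinate bump functions $\varrho_i$ and shrinkings $\overline{V}_i \subset U_i$ satisfying $\varrho_i|_{\overline{V}_i} \equiv 1$. Combining the local trivialisations $t_i \colon E|_{U_i} \to \CC^m$ with these bump functions gives a map $\widehat{f} \colon E \to \CC^{(r+1)m}$ that is linear and injective on each fibre, hence a vector bundle map $E \to \gamma^m(\CC^k)$ with $k=(r+1)m$ covering some continuous $f \colon X \to \Gr_m(\CC^k)$ for which $E \cong f^\ast \gamma^m(\CC^k)$. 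This proves that every $[E] \in \vect_m(X)$ lies in the image.

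For injectivity, I would suppose $f \colon X \to \Gr_m(\CC^k)$ and $g \colon X \to \Gr_m(\CC^{k'})$ are two maps with $f^\ast\gamma^m(\CC^k) \cong g^\ast\gamma^m(\CC^{k'})$ as vector bundles over $X$, and show that after composing with the natural inclusions into some common $\Gr_m(\CC^K)$ with $K$ sufficiently large, the resulting maps are homotopic. The key step is to use the given isomorphism of pullbacks to construct a vector bundle map $E\x\I \to \gamma^m(\CC^K)$ that interpolates between the two classifying maps, taking advantage of the fact that the inclusions $\Gr_m(\CC^k) \hookrightarrow \Gr_m(\CC^{k+1})$ allow one to embed both $f^\ast \gamma^m(\CC^k)$ and $g^\ast \gamma^m(\CC^{k'})$ into a single $\gamma^m(\CC^K)$ in compatible positions. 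Explicitly, the linear homotopy between the two resulting sub-bundles of $\underline{\CC}^K$ can be realised as a continuous path of $m$-planes, giving a homotopy of maps $X\x\I \to \Gr_m(\CC^K)$.

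The main obstacle will be the injectivity argument, specifically verifying that the linear interpolation between the two embeddings of the pullback bundle into $\underline{\CC}^K$ remains injective on each fibre (and hence actually defines an $m$-plane at every point and every time $t \in \I$). The standard resolution is to first stabilise so that one can place $f^\ast \gamma^m(\CC^k)$ into the first $k$ coordinates and $g^\ast\gamma^m(\CC^{k'})$ into the last $k'$ coordinates of $\CC^{k+k'}$, in which case they are orthogonal and the interpolation is manifestly injective; one then uses the isomorphism $f^\ast\gamma^m(\CC^k) \cong g^\ast\gamma^m(\CC^{k'})$ to identify the two embeddings as fibrewise equivalent sub-bundles of the same trivial bundle. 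Having dealt with this, the direct limit formulation absorbs the stabilisation, yielding a well-defined bijection $\dlim [X,\Gr_m(\CC^k)] \to \vect_m(X)$ as required.
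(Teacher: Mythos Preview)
The paper does not actually prove this theorem: immediately after the statement it simply writes ``For a proof see \cite[Theorem 1.4.15]{AtiyahK}.'' Your proposal is a correct outline of the standard argument (essentially the one in Atiyah's book), so there is nothing to compare beyond noting that you have supplied what the paper chose to cite.
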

For a proof see \cite[Theorem 1.4.15]{AtiyahK}.
Since $(M\x\sone)/M_0$ is compact and Hausdorff, there is a homotopy $\widetilde h \colon (M\x\sone)/M_0 \x\I \to \Gr_m(\CC^K)$ from $\widetilde f$ to $\widetilde g$ for some $K \geq k,k'$.
This implies that there is a homotopy $h \colon M\x\sone \to \Gr_m(\CC^K)$ from $f$ to $g$ that sends $M_0\x\I$ to the basepoint of $\Gr_m(\CC^K)$.
Since $f$ and $g$ are homotopic relative to $M_0$, they are smoothly homotopic relative to $M_0$ \cite[Theorem 6.29]{Lee}.
\vspace{11pt}
\begin{lemma}
\label{lemma:pointygrassman}
If $f,g \colon M\x\sone \to \Gr_m(\CC^k)$ are smooth maps both sending $M_0$ to the basepoint $\bp$ of $\Gr_m(\CC^k)$ that are smoothly homotopic relative to $M_0$, then there is a smooth isomorphism $f^\ast \gamma^m(\CC^k) \cong g^\ast\gamma^m(\CC^k)$ of framed bundles.
\end{lemma}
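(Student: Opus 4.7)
The plan is to reduce the lemma to the standard fact that a smooth homotopy of classifying maps gives a smooth vector bundle isomorphism via parallel transport, while tracking the framing data carefully. The essential input is that the homotopy $h \colon M\x\sone\x\I \to \Gr_m(\CC^k)$ collapses $M_0\x\I$ to the basepoint $\bp$, which forces the pullback $H := h^\ast \gamma^m(\CC^k)$ to be canonically trivial over $M_0\x\I$ in a way that simultaneously realises the framings of $f^\ast\gamma^m(\CC^k)$ and $g^\ast\gamma^m(\CC^k)$ over $M_0$.

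First I would set $H := h^\ast\gamma^m(\CC^k) \to M\x\sone\x\I$ and observe that, because $h$ is constant equal to $\bp$ on $M_0\x\I$, the pullback $H|_{M_0\x\I}$ carries a canonical smooth trivialisation (inherited from the canonical framing of $\gamma^m(\CC^k)$ at $\bp$, as described in the proof of Theorem \ref{theorem:module1}). Call this trivialisation $\sigma_0 \in \Gamma(M_0\x\I,\cF(H))$. By construction, the pullback of $\sigma_0$ along the inclusions $\varsigma_0, \varsigma_1 \colon M\x\sone \to M\x\sone\x\I$ recovers exactly the distinguished framings of $\varsigma_0^\ast H = f^\ast\gamma^m(\CC^k)$ and $\varsigma_1^\ast H = g^\ast\gamma^m(\CC^k)$ over $M_0$.

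Next I would construct a smooth connection $\nabla$ on $H$ that is framed with respect to $\sigma_0$ in the sense of Definition \ref{caloron:framedconnection} (applied to the submanifold $M_0\x\I \subset M\x\sone\x\I$). Such a connection exists by the usual partition-of-unity argument on $M\x\sone\x\I$: locally near $M_0\x\I$ one uses the trivial connection determined by $\sigma_0$, away from $M_0\x\I$ one uses any smooth connection, and one patches with a smooth partition of unity, convex combinations of connections being connections. Being framed with respect to $\sigma_0$ means precisely that the trivialising sections determined by $\sigma_0$ are $\nabla$-parallel along vectors tangent to $M_0\x\I$, in particular along the $\I$-direction.

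Then parallel transport of $\nabla$ along the curves $t \mapsto (x,\theta,t)$ in $M\x\sone\x\I$ gives a smooth vector bundle isomorphism
\[
P \colon f^\ast\gamma^m(\CC^k) = \varsigma_0^\ast H \simto \varsigma_1^\ast H = g^\ast\gamma^m(\CC^k)
\]
covering the identity of $M\x\sone$. Since $\nabla$ is framed with respect to $\sigma_0$, the parallel transport along $\I$ over any point of $M_0$ carries the trivialising sections $\varsigma_0^\ast \sigma_0$ to $\varsigma_1^\ast \sigma_0$, i.e.\ it carries the distinguished framing of $f^\ast\gamma^m(\CC^k)$ to the distinguished framing of $g^\ast\gamma^m(\CC^k)$ over $M_0$. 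Hence $P$ is a smooth isomorphism of framed vector bundles, as required.

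The only mildly delicate step is the construction of a framed connection on $H$; this is routine but crucial, since an arbitrary connection would produce an isomorphism $P$ that does not interact well with the framings. The rest is formal, using the compactness of $\I$ to guarantee that parallel transport is globally defined and smooth in the base parameters.
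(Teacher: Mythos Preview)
Your argument is correct and follows the same overall strategy as the paper: parallel transport in the $\I$-direction with respect to a connection for which the framing sections over $M_0\times\I$ are parallel, so that the resulting isomorphism respects the framings. The one point where the paper is more economical is in producing that connection. Rather than building a framed connection on $H = h^\ast\gamma^m(\CC^k)$ by a partition-of-unity argument, the paper simply takes \emph{any} connection $\nabla$ on $\gamma^m(\CC^k)$ and uses the pullback $h^\ast\nabla$; since the homotopy $h$ is constant on $M_0\times\I$, one has $dh(X)=0$ for every $X$ tangent to $M_0\times\I$, and hence $(h^\ast\nabla)_X(h^\ast\mathbf{e}_i) = h^\ast(\nabla_{dh(X)}\mathbf{e}_i) = 0$ automatically. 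This gives the framed-connection property for free and removes the only ``mildly delicate step'' you flagged.
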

\begin{proof}
Take any such homotopy $h \colon M\x\sone\x\I \to \Gr_m(\CC^k)$ from $f$ to $g$, writing $h_t(x,\theta) = h(x,\theta,t)$.
Recall that the  framing of $\gamma^m(\CC^k)$ over $\bp$ is given by taking the first $m$ standard basis vectors $\mathbf{e}_1,\dotsc,\mathbf{e}_m \in \CC^{m+k}$.
By pulling back this framing by $h$ (or, more precisely, by pulling back the corresponding trivialisation) one obtains a framing of $h^\ast\gamma^m(\CC^k)$ over $M_0\x\I$, namely the framing corresponding to the trivialising sections $h^\ast\mathbf{e}_1,\dotsc,h^\ast\mathbf{e}_m \in \Gamma(M_0\x\I,h^\ast\gamma^m(\CC^k))$.

Now, if $\nabla$ is a connection on $\gamma^m(\CC^k)$ then the pullback connection $h^\ast\nabla$ acts on  the $h^\ast\mathbf{e}_i$ by
\[
\big(h^\ast\nabla\big){}_X (h^\ast\mathbf{e}_i) = h^\ast\big(\nabla_{dh(X)} \mathbf{e}_i \big) = 0
\]
for any vector $X$ tangent to $M_0 \x\I$, as $dh(X) = 0$.
Let $\rho_{(x,\theta)} \colon \I \to \Gr_m(\CC^k)$ be the smooth curve $\rho_{(x,\theta)}(t) = h(x,\theta,t)$ and let $\varrho_t \colon h_0^\ast \gamma^m(\CC^k) \to h_t^\ast\gamma^m(\CC^k)$ be the smooth isomorphism determined by parallel transport along the curves $\rho_{(x,\theta)}$.

Since the tangent vector $\dot\rho_{(x,0)}(t)$ to $\rho_{(x,\theta)}$ lies tangent to $M_0 \x\I$ for any $t \in \I$, by the above one has
\[
t\mapsto \big(h^\ast\nabla\big){}_{\dot\rho_{(x,0)}(t)} (h^\ast\mathbf{e}_i) = 0
\]
so that the sections $h^\ast\mathbf{e}_i$ are parallel for the pullback connection.
It follows from the uniqueness of parallel transport that the isomorphisms $\varrho_t$ preserve the framing.
But $h_0 = f$ and $h_1 = g$ so there is a smooth isomorphism $f^\ast\gamma^m(\CC^k) \cong g^\ast\gamma^m(\CC^k)$ preserving the framing.
\end{proof}

Using this result, one at last has that $f^\ast\gamma^m(\CC^K)$ and $g^\ast \gamma^m(\CC^K)$ are smoothly isomorphic as framed vector bundles and, hence, that $E$ and $F$ are smoothly isomorphic as framed vector bundles.
This completes the proof of
\vspace{11pt}
\begin{theorem}
The map on isomorphism classes induced by the reduction functor $\cR$ is injective and hence, by Theorem \ref{theorem:reduction}, is a bijection.
\end{theorem}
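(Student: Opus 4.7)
The plan is to show that any isomorphism $f \colon \cR(\sE) \to \cR(\sF)$ in $\vect((M\x\sone)/M_0)$ forces $\sE \cong \sF$ in $\ovect(M)$. The argument proceeds by lifting the isomorphism to a \emph{smooth} framed isomorphism on the caloron transforms $E = \cV(\sE)$ and $F = \cV(\sF)$, then applying the natural isomorphism $\alpha$ from Theorem \ref{theorem:vector} to descend back to the $\Omega$ vector bundles.

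First I would observe that the isomorphism $f$ lifts to a continuous framed isomorphism $\widetilde f \colon E \to F$ covering the identity: indeed, off of $M_0$ the two quotient bundles agree with $E$ and $F$ fibrewise, and over $M_0$ the framings identify both bundles with the trivial bundle canonically, so $f$ can be built up using the framings. Since $E \in \frvect(M)$ and $F \in \frvect(M)$, Corollary \ref{cor:framedclassmap} furnishes smooth framed classifying maps $f_E \colon M\x\sone \to \Gr_m(\CC^k)$ and $f_F \colon M\x\sone \to \Gr_m(\CC^{k'})$ (both sending $M_0$ to the basepoint $\bp$) together with smooth framed isomorphisms $E \cong f_E^\ast \gamma^m(\CC^k)$ and $F \cong f_F^\ast \gamma^m(\CC^{k'})$.

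Next, the continuous framed isomorphism $\widetilde f$ descends to an isomorphism of the quotient vector bundles over $(M\x\sone)/M_0$, so the induced maps $\widetilde{f_E}, \widetilde{f_F} \colon (M\x\sone)/M_0 \to \Gr_m(\CC^\bullet)$ classify isomorphic vector bundles. By the direct-limit classification theorem for Grassmannians on the compact Hausdorff space $(M\x\sone)/M_0$, these maps become homotopic after stabilising to some large $K \geq k,k'$; call the resulting homotopy $\widetilde h \colon (M\x\sone)/M_0 \x \I \to \Gr_m(\CC^K)$. Composing with the quotient map yields a (continuous) homotopy $h \colon M\x\sone\x\I \to \Gr_m(\CC^K)$ between $f_E$ and $f_F$ that is constant on $M_0\x\I$, and by the smooth approximation theorem (\cite[Theorem 6.29]{Lee}) one may take $h$ to be smooth relative to $M_0\x\I$.

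Finally, applying Lemma \ref{lemma:pointygrassman} to the smooth homotopy $h$ produces a smooth isomorphism $f_E^\ast\gamma^m(\CC^K) \cong f_F^\ast\gamma^m(\CC^K)$ of framed vector bundles, and hence a smooth framed isomorphism $E \cong F$ in $\frvect(M)$. Applying the inverse caloron transform $\cV^{-1}$ and the natural isomorphism $\alpha_\sE \colon \cV^{-1}\circ\cV(\sE) \simto \sE$ of Theorem \ref{theorem:vector} then gives $\sE \cong \sF$ in $\ovect(M)$, so the map on isomorphism classes is injective. The main technical obstacle is the smoothening step: ensuring that the homotopy produced from the direct-limit argument on the \emph{quotient} space can be lifted to $M\x\sone$ and then smoothed while keeping $M_0\x\I$ constant, which is exactly why Lemma \ref{lemma:pointygrassman} is phrased with a \emph{relative} smoothness hypothesis.
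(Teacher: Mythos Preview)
Your proposal is correct and follows essentially the same route as the paper's argument: lift the isomorphism to a continuous framed isomorphism $E \to F$, invoke Corollary~\ref{cor:framedclassmap} to get smooth basepoint-preserving classifying maps into Grassmannians, use the direct-limit classification on the quotient space $(M\times\sone)/M_0$ to obtain a homotopy that is constant on $M_0\times\I$, smooth it relative to $M_0$ via \cite[Theorem~6.29]{Lee}, and then apply Lemma~\ref{lemma:pointygrassman} to produce a smooth framed isomorphism before passing back through $\cV^{-1}$ and the natural isomorphism of Theorem~\ref{theorem:vector}.
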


Notice also  that if $E, F$ are framed bundles over $M\x\sone$ with framings $s_0, r_0$ respectively then there is an isomorphism
\[
E/s_0 \oplus F/r_0 \cong (E\oplus F)/(s_0\oplus r_0)
\]
and hence $\cR(\sE\oplus\sF) \cong \cR(\sE)\oplus\cR(\sF)$ since $\cV(\sE\oplus\sF) \cong \cV(\sE)\oplus\cV(\sF)$.
\vspace{11pt}
\begin{corollary}
\label{cor:kgroups1}
The map on isomorphism classes
\[
\pi_0\ovect(M) \lo \pi_0\!\vect\big((M\x\sone)/\,M_0\big)
\]
induced by $\cR$ is an isomorphism of semi-groups.
\end{corollary}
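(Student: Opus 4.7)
The plan is to observe that this corollary is essentially a direct assembly of the two results established immediately beforehand, namely the bijection of isomorphism classes induced by $\cR$ and the compatibility of $\cR$ with direct sums. Since a semi-group isomorphism is just a bijective semi-group homomorphism, there is almost nothing left to do beyond carefully checking that the relevant structures match up on isomorphism classes.

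First I would note that the bijectivity of the induced map $\pi_0\ovect(M) \to \pi_0\!\vect((M\x\sone)/M_0)$ has already been established: surjectivity comes from Theorem \ref{theorem:reduction} (essential surjectivity of $\cR$), while injectivity is the theorem immediately preceding the corollary, which used the Grassmannian classification together with Lemma \ref{lemma:pointygrassman} to upgrade a continuous framed isomorphism to a smooth one. So the only thing requiring argument is that this bijection is a morphism of abelian semi-groups.

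Next I would verify the semi-group homomorphism property. The semi-group operation on $\pi_0\ovect(M)$ is induced by the honed direct sum $\oplus$ of Section \ref{SS:vector:monoidal}, while on $\pi_0\!\vect((M\x\sone)/M_0)$ it is the usual Whitney sum. The observation recorded just before the corollary states $\cR(\sE \oplus \sF) \cong \cR(\sE) \oplus \cR(\sF)$, which I would spell out briefly: the caloron transform satisfies $\cV(\sE \oplus \sF) \cong \cV(\sE) \oplus \cV(\sF)$ as framed vector bundles over $M\x\sone$ (by construction of $\oplus$ in $\ovect$ via $\cV^{-1}$ applied to the framed direct sum), and then the fibrewise quotient $E/s_0 \oplus F/r_0 \cong (E\oplus F)/(s_0 \oplus r_0)$ identifies the quotient of a framed direct sum with the direct sum of quotients over $(M\x\sone)/M_0$. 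Composing these natural isomorphisms gives the required compatibility at the level of isomorphism classes.

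Since the map in question is both a bijection of underlying sets and a homomorphism of semi-groups, it is a semi-group isomorphism. No obstacle is expected here: the corollary is a packaging statement whose entire substance is contained in the preceding reduction-functor theorem and the preservation of $\oplus$. The only remark worth including is that this semi-group isomorphism will subsequently pass through the Grothendieck completion and the kernel-of-augmentation construction to yield the identification of $\cK(M)$ with $K^{-1}(M) = \widetilde K((M\x\sone)/M_0)$ on which the interpretation of $\cK$ as odd $K$-theory depends.
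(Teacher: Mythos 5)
Your proposal is correct and follows exactly the route the paper intends: the corollary has no separate proof in the text because it is immediate from the preceding theorem (bijectivity of the induced map on isomorphism classes) combined with the displayed observation that $\cR(\sE\oplus\sF)\cong\cR(\sE)\oplus\cR(\sF)$ via $\cV(\sE\oplus\sF)\cong\cV(\sE)\oplus\cV(\sF)$ and $E/s_0\oplus F/r_0\cong(E\oplus F)/(s_0\oplus r_0)$. Nothing is missing.
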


Passing to the group completions, observe that the isomorphism
\[
K(\pi_0\ovect(M)) \lo K\left( (M\x\sone) / (M\x\{0\})\right)
\]
given by $\sE - \sF \mapsto \cR(\sE) - \cR(\sF)$ preserves the rank, since $\rank \cR(\sE) = \rank \sE$.
Restricting to the rank zero subgroups gives
\vspace{11pt}
\begin{theorem}
\label{theorem:oddk}
For any compact manifold $M$, the map
\[
\cR \colon \cK(M) \lo K^{-1}(M)
\]
given by
\[
\cR \colon \sE-\sF \longmapsto \cR(\sE) - \cR(\sF)
\]
is an isomorphism.
In particular, $\cK(M)$ is the odd $K$-theory of $M$.
\end{theorem}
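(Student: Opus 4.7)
The plan is to deduce this essentially formally from Corollary \ref{cor:kgroups1} together with the definitions of $\cK(M)$ and $K^{-1}(M)$ as reduced $K$-groups. All of the hard work, namely establishing the bijection on isomorphism classes and checking that $\cR$ respects direct sums, has already been accomplished. What remains is to trace through the Grothendieck completion and restrict appropriately to the rank-zero subgroups.

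First I would apply the Grothendieck completion functor to the semi-group isomorphism of Corollary \ref{cor:kgroups1} to obtain a group isomorphism
\[
K(\pi_0\ovect(M)) \simto K\left(\pi_0\vect\big((M\x\sone)/M_0\big)\right) = K\big((M\x\sone)/M_0\big),
\]
which by construction sends $\sE - \sF$ to $\cR(\sE)-\cR(\sF)$. I would then observe that this isomorphism respects the rank homomorphism: indeed, the evaluation maps of Remark \ref{remark:evaluation} show that $\rank \cR(\sE) = \rank E_\theta = \rank \sE$ for any $\theta\in\sone$ (using any $\theta\neq 0$ since $E_0$ is trivial by the framing), so the rank of the virtual bundle is preserved.

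Next I would note that both $\cK(M)$ and $K^{-1}(M)$ are defined as kernels of the rank-restriction homomorphisms to $K(\pi_0\ovect(\{\bp\}))$ and $K(\{\bp\})$ respectively (the basepoint $\bp$ being essentially arbitrary since rank is locally constant and $M$ need not be connected). Since the Grothendieck-completed isomorphism commutes with these rank homomorphisms, it restricts to an isomorphism on the kernels, giving precisely
\[
\cK(M) := \widetilde K(\pi_0\ovect(M)) \simto \widetilde K\big((M\x\sone)/M_0\big) =: K^{-1}(M),
\]
where the rightmost equality is the definition of odd topological $K$-theory recalled in Section \ref{S:topk}.

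The only subtlety I anticipate is a bookkeeping one: one must be careful that the reduced $K$-group $\widetilde K(\pi_0\ovect(M))$ as defined here (the kernel of $K(\pi_0\ovect(M))\to\ZZ$ induced by rank and basepoint restriction) genuinely agrees with what is obtained by first completing and then taking the rank-zero part, which is immediate from the splitting $K(\pi_0\ovect(M)) \cong \cK(M)\oplus\ZZ$ arising from the trivial $\Omega$ vector bundles. Once this naturality of the rank splitting is in hand on both sides, the theorem follows by restriction, and the explicit formula $\sE-\sF\mapsto \cR(\sE)-\cR(\sF)$ is inherited directly from the action of $\cR$ on semi-groups.
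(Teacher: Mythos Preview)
Your proposal is correct and matches the paper's own argument almost exactly: the paper likewise passes to Grothendieck completions using Corollary~\ref{cor:kgroups1}, observes that $\rank\cR(\sE)=\rank\sE$, and then restricts to the rank-zero subgroups. The only superfluous detail is your parenthetical about avoiding $\theta=0$ --- triviality of $E_0$ does not affect its rank, so the rank is preserved at every $\theta$ --- but this is harmless.
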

\vspace{11pt}
\begin{remark}
\label{remark:homotopyversion}
Another way of obtaining the isomorphism $\cK \to K^{-1}$ of Theorem \ref{theorem:oddk} is to use the homotopy-theoretic model of odd $K$-theory, i.e.~$K^{-1}(M) \cong [M,GL]$.
The group operation on $K^{-1}(M)$ is given by $[g]+ [h]:= [g\oplus h]$, with $g\oplus h$ the pointwise block sum of $g$ and $h$.
At the level of homotopy, the group operations given by block sum and the matrix product are equal so the inverse of $[g] \in K^{-1}(M)$ is the homotopy class of the map $g^{-1}\colon x\mapsto (g(x))^{-1}$ (see \cite{TWZ}, for example).
As $M$ is compact the image of any map $f \colon M \to GL$ is contained in $GL_n(\CC)$ for some sufficiently large $n$ (Proposition \ref{prop:compactimage}) and  one may find a smooth map $g \colon M\to GL_n(\CC)$ homotopic to $f$ \cite[Proposition 17.8]{BT}.
Similarly, if any two smooth maps $f,g \colon M\to GL_n(\CC)$ are continuously homotopic then they are also smoothly homotopic \cite[Theorem 6.29]{Lee}.

One constructs the isomorphism $\cR\colon \cK(M) \to [M,GL]$ by sending $\sE - \underline{L\CC}^n \in \cK(M)$ to the homotopy class $[g]$, where $g$ is any classifying map (equivalently, Higgs field holonomy) for $\sE$.
Observe that $\cR$ is surjective, since any homotopy class $[f] \in [M,GL]$ has a smooth representative $g \colon M\to GL_n(\CC)$, in which case $g^\ast \sE(\CC^n) - \underline{L\CC}^n$ is mapped to $[f]$ by $\cR$\footnote{recalling the universal $\Omega$ vector bundles of Example \ref{example:universalomega}.}.
To see that $\cR$ is injective, suppose that $\cR(\sE-\underline{L\CC}^n) = [\id]$ so that any classifying map for $\sE$ is homotopic to the constant map sending $M$ to the identity in $GL$.
Let $g \colon M \to GL_n(\CC)$ be any smooth classifying map for $\sE$, then there is a smooth homotopy from $g \oplus\id$ to $\id$ inside $GL_{n+k}(\CC)$ for some $k$ sufficiently large.
But $(g\oplus\id)^\ast \sE(\CC^{n+k}) \cong g^\ast\sE(\CC^n) \oplus \underline{L\CC}^k$ so that $g^\ast\sE(\CC^n) \oplus \underline{L\CC}^k \cong \underline{L\CC}^{n+k}$ and hence $\sE -\underline{L\CC}^n = 0$ in $\cK(M)$.
That $\cR$ is a group homomorphism follows from the fact that $g^\ast\sE(\CC^n) \oplus h^\ast \sE(\CC^m) \cong (g\oplus h)^\ast\sE(\CC^{n+m})$.
\end{remark}
\vspace{11pt}
\begin{remark}
\label{remark:unversion}
By considering isomorphism classes of Hermitian structured $\Omega$ vector bundles over $M$ one obtains a Hermitian version of $\cK(M)$, denoted temporarily by $\cKr(M)$.
The argument of Remark \ref{remark:homotopyversion} demonstrates that there is an isomorphism $\cKr(M) \cong [M,U]$, with $U$ the stabilised unitary group.
As $U$ and $GL$ are homotopy equivalent this implies that the map $\cKr(M) \to \cK(M)$ given by discarding the Hermitian structure is an isomorphism and, consequently, that one may view elements $\sE - \sF \in \cK(M)$ as virtual \emph{Hermitian} $\Omega$ vector bundles.
\end{remark}


\subsection{The total string form and the Chern character}
\label{SS:oddk:string}
The primary motivation for using $\Omega$ vector bundles to define odd $K$-theory is that they are smooth objects that may be readily equipped with extra differential form data in the guise of based module connections and Higgs fields.
As demonstrated in Chapter \ref{ch:five}, this allows one to construct a differential refinement of $\cK$ that provides a codification of $\Omega$ vector bundles with connective data.
To provide additional motivation for this discussion it is shown that the odd Chern character may be constructed on $\cK$ using characteristic classes of the underlying $\Omega$ vector bundles.

Let $M$ be a compact manifold.
Recall that the Chern character is a natural map
\[
ch \colon K^\bullet(M) \lo H^\bullet(M;\CC)
\]
that induces an isomorphism
\[
ch \colon K^\bullet(M)\otimes \CC \lo H^\bullet(M;\CC)
\]
of $\ZZ_2$-graded rings (with the $\ZZ_2$-grading on $H^\bullet \CC$ given by taking the even and odd degree parts).
In the case of even $K$-theory, the Chern character may be realised explicitly as follows.
Recall from Section \ref{S:topk} that an element of $K^0(M)$ is a formal difference $E -F$ with $E$ and $F$ (isomorphism classes of) smooth complex vector bundles over $M$.
Choosing a connection $\nabla$ on $E$, write $R$ for the curvature of $\nabla$ and consider the even complex-valued form
\begin{equation}
\label{eqn:totalchern}
\Ch(\nabla) := \tr\left(e^{\tfrac{1}{2\pi i} R} \right) = \sum_{j=0}^\infty \frac{1}{j!} \bigg(\frac{1}{2\pi i} \bigg)^j \tr(\underbrace{R\wedge\dotsb \wedge R}_{\text{$j$ times}})
\end{equation}
on $M$.
If $E$ has typical fibre $\CC^n$, one can check that the degree $2k$ piece of $\Ch(\nabla)$ is exactly
\[
\cw_{\overline\tr_k}(A) = \overline\tr_k (\underbrace{F,\dotsc,F}_{\text{$k$ times}}),
\]
where $A$ is the connection on the frame bundle corresponding to $\nabla$, with curvature form $F = R$, and $\overline{\tr}_k$ is the \emph{(normalised) $k$-th symmetrised trace}, i.e.~the $\CC$-valued invariant polynomial of degree $k$ on $\mathfrak{gl}_n(\CC)$ given by
\begin{equation}
\label{eqn:traceline}
\overline\tr_k (\xi_1,\dotsc,\xi_k) := \frac{1}{(k!)^2(2\pi i)^k} \sum_{\sigma\in S_k} \tr(\xi_{\sigma(1)} \dotsb \xi_{\sigma(k)})
\end{equation}
for $\xi_1,\dotsc,\xi_k \in \mathfrak{gl}_n(\CC)$, with $S_k$ the group of permutations on $\{1,\dotsc,k\}$.
It follows from Theorem \ref{theorem:chernweil} that $\Ch(\nabla)$ is closed and the class of $\Ch(\nabla)$ in de Rham cohomology is independent of $\nabla$.
Write $\Ch(E) \in H^{even}(M;\CC)$ for the image of the de Rham class $[\Ch(\nabla)]$ under the de Rham isomorphism.
It is well-known that
\[
\Ch(\nabla \oplus \nabla') = \Ch(\nabla) + \Ch(\nabla') \;\;\mbox{ and }\;\; \Ch(\nabla\otimes\nabla') = \Ch(\nabla) \wedge \Ch(\nabla'),
\]
so the  even Chern character
\[
ch \colon K^0(M) \lo H^{even}(M;\CC)
\]
given by
\[
ch \colon E-F \longmapsto \Ch(E) - \Ch(F)
\]
is a ring homomorphism.

In the case of odd $K$-theory, it is easier to discuss the Chern character using the homotopy-theoretic model for $K^{-1}$ from the end of Section \ref{S:topk}, i.e.~$K^{-1}(M) \cong [M,GL]$.
As mentioned above, the group operation on $K^{-1}(M)$ is given by block sum and $[g^{-1}]$ is the inverse of $[g]$.
The odd Chern character of $[g] \in K^{-1}(M)$ may be computed directly by taking a smooth representative $g\in[g]$ and setting
\begin{equation}
\label{eqn:oddchern}
ch([g]) =  \sum_{j=0}^\infty \frac{-j!}{(2j+1)!} \bigg(\!\!-\frac{1}{2\pi i} \bigg)^{j+1} \tr(\underbrace{g^{-1}dg\wedge\dotsb\wedge g^{-1}dg}_{\text{$2j+1$ times}} ) \mod \mbox{ exact}
\end{equation}
(see \cite[Proposition 1.2]{G}, for example).
Since $(g\oplus h)^{-1} d(g\oplus h) = (g^{-1} dg)\oplus (h^{-1}dh)$ and $(g^{-1})^{-1} dg^{-1} = -(dg) g^{-1}$ one has
\[
ch([g]+[h]) = ch([g\oplus h])= ch([g]) + ch([h]) \;\;\mbox{ and }\;\; ch(-[g]) = ch([g^{-1}])= -ch([g]),
\]
which explicitly demonstrates  that the odd Chern character $ch \colon K^{-1}(M) \to H^{odd}(M;\CC)$ is a group homomorphism.

The present aim is to define the odd Chern character directly on $\cK(M)$ using the \emph{total string forms} (Definition \ref{defn:stringform}) of the underlying $\Omega$ vector bundles.
Given the $\Omega$ vector bundle $\sE \to M$ equipped with based module connection $\Delta$ and Higgs field $\phi$, write $\sA$ and $\Phi$ for the corresponding connection and Higgs field on the frame bundle $\cF(\sE)$.
Recall from Remark \ref{remark:vectorprincipalcurv} that if $\sR$ is the curvature of $\Delta$ and $\sF$ is the curvature of $\sA$ then $\sR = \sF$, in particular $\sR$ is $\Omega\g$-valued.
Define the Higgs field covariant derivative as
\[
\Delta(\phi) := \nabla\Phi = d\Phi +[\sA,\Phi]-\d\sA,
\]
where the notation is chosen so as to indicate that $\Delta(\phi)$ depends both on $\Delta$ and on $\phi$.
\vspace{11pt}
\begin{definition}
\label{defn:stringform}
The \emph{total string form} of the object $(\sE,\Delta,\phi) \in \ovect^c(M)$ is
\[
s(\Delta,\phi) :=  \sum_{j=1}^\infty \frac{1}{(j-1)!}\bigg(\frac{1}{2\pi i}\bigg)^j \int_{\sone}  \tr\big( \underbrace{\sR\wedge\dotsb\wedge\sR}_{\text{$j-1$ times}} \wedge \,\Delta(\phi)\big),
\]
which is an odd complex-valued form on $M$.
\end{definition}

Comparing with the string forms on $\cF(\sE)$, similarly to $\Ch(\nabla)$ notice that  the degree $2k-1$ piece of $s(\Delta,\phi)$ is
\begin{equation}
\label{eqn:jthstringalt}
s_{\overline\tr_k}(\sA,\Phi) =  k \int_\sone \overline{\tr}_k(\nabla\Phi,\underbrace{\sF,\dotsc,\sF}_{\text{$k-1$ times}}),
\end{equation}
with $\overline{\tr}_k$ the $k$-th symmetrised trace.
In particular, $s(\Delta,\phi)$ is closed and its cohomology class, which is independent of $\Delta$ and $\phi$, is a characteristic class for $\Omega$ vector bundles.
Similarly to \eqref{eqn:totalchern} one also has
\begin{equation}
s(\Delta,\phi) =  \frac{1}{2\pi i} \int_\sone \tr\left( e^{\tfrac{1}{2\pi i} \sR} \wedge \Delta(\phi)\right)\!.
\end{equation}
The following result demonstrates that the total string forms are additive
\vspace{11pt}
\begin{lemma}
\label{lemma:stringprop}
For any $(\sE,\Delta,\phi) , (\sF,\Delta',\phi') \in \ovect^c(M)$
\[
s(\Delta\oplus\Delta',\phi\oplus\phi') = s(\Delta,\phi) + s(\Delta',\phi').
\]
\end{lemma}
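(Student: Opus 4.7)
The plan is to reduce the statement to the block-diagonal behaviour of connection and Higgs field matrices under direct sum, then invoke the standard fact that trace is additive on block-diagonal products.

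First I would translate everything to the frame bundle side via Lemmas \ref{lemma:higgscorrespond} and \ref{lemma:moduleconnectioncorrespond}, writing $(\sA,\Phi)$ and $(\sA',\Phi')$ for the principal connection/Higgs field pairs on $\cF(\sE)$ and $\cF(\sF)$ corresponding to $(\Delta,\phi)$ and $(\Delta',\phi')$. The direct sum operations on module connections and Higgs fields were defined (via the natural identification of Lemma \ref{lemma:whitney}) fibrewise by $(\Delta\oplus\Delta')(v,w)=(\Delta v,\Delta'w)$ and $(\phi\oplus\phi')(v,w)=(\phi v,\phi'w)$, so in any pair of local frames obtained by concatenating a local frame $\{\check{\mathbf e}_i\}$ for $\sE$ with a local frame $\{\check{\mathbf f}_j\}$ for $\sF$, the connection form of $\Delta\oplus\Delta'$ and the Higgs field of $\phi\oplus\phi'$ are block-diagonal with blocks $\sA,\sA'$ and $\Phi,\Phi'$ respectively.

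Next I would verify that curvature and Higgs field covariant derivative inherit this block-diagonal structure. From $[\sA\oplus\sA',\sA\oplus\sA']=[\sA,\sA]\oplus[\sA',\sA']$ and the blockwise action of $d$ and $\d$, one gets
\[
\sF_\oplus=\sF\oplus\sF'\qquad\text{and}\qquad \nabla_\oplus\Phi_\oplus=\nabla\Phi\oplus\nabla'\Phi',
\]
where the subscript $\oplus$ refers to the direct sum data. Via Remark \ref{remark:vectorprincipalcurv}, the module connection curvatures satisfy the analogous splitting $\sR_\oplus=\sR\oplus\sR'$, and $(\Delta\oplus\Delta')(\phi\oplus\phi')=\Delta(\phi)\oplus\Delta'(\phi')$.

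Then the main computational step is the purely algebraic identity: for block-diagonal matrix-valued forms $X=\mathrm{diag}(X_1,X_2)$ and $Y=\mathrm{diag}(Y_1,Y_2)$, induction on the number of factors gives
\[
\tr\!\big(\underbrace{X\wedge\dotsb\wedge X}_{j-1}\wedge Y\big)=\tr\!\big(\underbrace{X_1\wedge\dotsb\wedge X_1}_{j-1}\wedge Y_1\big)+\tr\!\big(\underbrace{X_2\wedge\dotsb\wedge X_2}_{j-1}\wedge Y_2\big).
\]
Applying this with $X=\sR_\oplus$ and $Y=(\Delta\oplus\Delta')(\phi\oplus\phi')$, then integrating over $\sone$ and summing the resulting expression against the coefficients $\tfrac{1}{(j-1)!}(2\pi i)^{-j}$, yields
\[
s(\Delta\oplus\Delta',\phi\oplus\phi')=s(\Delta,\phi)+s(\Delta',\phi').
\]

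There is no real obstacle here; the only item requiring minor care is verifying that the block-diagonal structure really is preserved globally (not just in a chosen trivialisation), which follows from the fact that the transition functions of $\cF(\sE\oplus\sF)$ are themselves block-diagonal, so conjugation by them preserves block-diagonality and the trace identity above is coordinate-independent.
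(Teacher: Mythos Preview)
Your proof is correct and follows essentially the same approach as the paper: the curvature and Higgs field covariant derivative of the direct sum are block-diagonal, and trace is additive on block-diagonal products. The paper's version is simply a terser statement of the same argument, omitting the frame-bundle translation and the global well-definedness remark you include.
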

\begin{proof}
The curvature of $\Delta \oplus \Delta'$ is the block sum $\sR_\Delta \oplus \sR_{\Delta'}$, with $\sR_\Delta$, $\sR_{\Delta'}$ respectively the curvatures of $\Delta$ and $\Delta'$.
Similarly, the Higgs field covariant derivative of $\phi\oplus\phi'$ is the block sum $\Delta(\phi) \oplus \Delta'(\phi')$.
Since trace is additive, for every $j>0$
\[
\tr\left((\sR_\Delta\oplus\sR_{\Delta'})^{j-1}) \wedge (\Delta(\phi)\oplus\Delta'(\phi')\right) = \tr\left( \sR_\Delta^{j-1} \wedge \Delta(\phi)\right) + \tr\left( \sR_{\Delta'}^{j-1} \wedge \Delta'(\phi')\right)
\]
and the result follows.
\end{proof}

For any $\Omega$ vector bundle $\sE \to M$ write $s(\sE) \in H^{odd}(M;\CC)$ for the image of the cohomology class $[s(\Delta,\phi)]$ under the de Rham isomorphism, with $\Delta$ and $\phi$ any choice of based module connection and Higgs field on $\sE$. 
By Lemma \ref{lemma:stringprop} the map
\begin{equation}
\label{eqn:oddchernchars}
\sE \longmapsto s(\sE)
\end{equation}
is a semi-group homomorphism $\pi_0\ovect(M) \to H^{odd}(M;\CC)$, which extends to a group homomorphism
\[
s \colon \cK(M) \lo H^{odd}(M;\CC).
\]
Explicitly, the action of $s$ on $\sE - \sF \in \cK(M)$ is
\[
s \colon \sE - \sF \longmapsto s(\sE) - s(\sF).
\]
One now shows that $s$ is indeed the odd Chern character on $\cK(M)$, realised in terms of $\Omega$ vector bundles.
\vspace{11pt}
\begin{theorem}
\label{theorem:stringchern}
The diagram of group homomorphisms
\[
\xy
(0,25)*+{\cK(M)}="1";
(50,25)*+{K^{-1}(M)}="2";
(25,0)*+{H^{odd}(M;\CC)}="3";
{\ar^{\cR} "1";"2"};
{\ar_{ch} "2";"3"};
{\ar^{s} "1";"3"};
\endxy
\]
commutes, with $\cR$ the map isomorphism of Theorem \ref{theorem:oddk}.
In particular,
\[
s \colon \cK(M) \lo H^{odd}(M;\CC)
\]
is the odd Chern character.
\end{theorem}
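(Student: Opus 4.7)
The plan is to verify commutativity of the triangle by evaluating both group homomorphisms on a convenient set of generators and reducing to a universal calculation on the stabilised general linear group. Since every element of $\cK(M)$ can be written as $\sE - \underline{L\CC}^n$ with $n = \rank \sE$ (cf.~Remark \ref{remark:oddkprop}) and both $s$ and $ch \circ \cR$ are group homomorphisms, it suffices to treat such elements. Pick any based module connection $\Delta$ and Higgs field $\phi$ on $\sE$ and let $\sA, \Phi$ be the corresponding connection and Higgs field on $\cF(\sE)$ via Lemmas \ref{lemma:moduleconnectioncorrespond} and \ref{lemma:higgscorrespond}. By Remark \ref{remark:homotopyversion}, the Higgs field holonomy $\hol_\phi \colon M \to GL$ is a smooth classifying map for $\sE$ and $\cR(\sE - \underline{L\CC}^n) = [\hol_\phi]$ under the isomorphism $K^{-1}(M) \cong [M, GL]$.

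Next I would reduce everything to a universal statement on $GL$. The total string form decomposes into its degree-$(2k-1)$ pieces $s_{\overline{\tr}_k}(\sA, \Phi)$ as in \eqref{eqn:jthstringalt}, and Proposition \ref{prop:canform} applied to the $\Omega G$-bundle $\cF(\sE)$ gives canonical forms $\chi_k$ with
\[
s_{\overline{\tr}_k}(\sA, \Phi) = \hol_\phi^\ast \tau_{\overline{\tr}_k} + d\chi_k,
\]
where $\tau_{\overline{\tr}_k}$ is the universal string form on $GL$ of Lemma \ref{lemma:universalstringform}. Passing to de Rham cohomology, $s(\sE) = \hol_\phi^\ast \sum_k [\tau_{\overline{\tr}_k}]$. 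The odd Chern character of $[\hol_\phi]$ is given by the formula \eqref{eqn:oddchern} evaluated on any smooth representative, so everything reduces to comparing $\sum_k \tau_{\overline{\tr}_k}$ with the universal odd Chern character form $\sum_k ch_{2k-1}^{odd}$ on $GL$.

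The key computational step is the direct verification that $\tau_{\overline{\tr}_k}$ equals the degree-$(2k-1)$ component of the odd Chern character form on $GL$. Using $[\Theta, \Theta] = 2\Theta \wedge \Theta$ repeatedly and noting that the cyclic-trace sign from passing the single $1$-form $\Theta$ past a $2$-form $[\Theta, \Theta]$ is $+1$, every term in the symmetrisation defining $\overline{\tr}_k(\Theta, [\Theta,\Theta]^{k-1})$ coincides with $\tr(\Theta \wedge [\Theta,\Theta]^{k-1}) = 2^{k-1} \tr(\Theta^{2k-1})$. Combining this with the explicit coefficient from Lemma \ref{lemma:universalstringform} yields
\[
\tau_{\overline{\tr}_k} = \frac{(-1)^{k-1}(k-1)!}{(2k-1)!(2\pi i)^k}\, \tr(\Theta^{2k-1}),
\]
which matches, term by term, the coefficient $\frac{-(k-1)!}{(2k-1)!}\bigl(-\frac{1}{2\pi i}\bigr)^{k}$ appearing in \eqref{eqn:oddchern} after pulling back $\Theta$ by a smooth representative of $[\hol_\phi]$.

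Assembling: pulling back by $\hol_\phi$ and summing over $k$ gives $s(\sE) = ch([\hol_\phi]) = ch(\cR(\sE - \underline{L\CC}^n))$; the trivial bundle $\underline{L\CC}^n$ admits the trivial module connection and trivial Higgs field, for which every string form vanishes identically, so $s(\underline{L\CC}^n) = 0$, matching $ch(\cR(\underline{L\CC}^n)) = ch([\id]) = 0$. Hence the triangle commutes on all of $\cK(M)$, and since $ch \circ \cR$ is the odd Chern character on $\cK(M)$ by definition of $\cR$, so is $s$. The main obstacle is not conceptual but rather the careful bookkeeping of normalisations: keeping track of the factors of $2\pi i$, the signs appearing from graded cyclicity of the trace, and the constants in the symmetrised trace \eqref{eqn:traceline} versus those in Lemma \ref{lemma:universalstringform}. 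No appeal to the rational isomorphism $ch \otimes \CC$ is needed, since the universal forms match on the nose at the level of differential forms.
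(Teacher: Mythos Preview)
Your proposal is correct and follows essentially the same route as the paper: both reduce to the homotopy-theoretic model via the Higgs field holonomy $\hol_\phi$, invoke the relation $s_{\overline{\tr}_k}(\sA,\Phi) = \hol_\phi^\ast \tau_{\overline{\tr}_k} + d\chi_k$ (you cite Proposition~\ref{prop:canform}, the paper cites the equivalent Theorem~\ref{theorem:stringdiag1}), and then perform the identical coefficient computation $\overline{\tr}_k(\Theta,[\Theta,\Theta]^{k-1}) = \tfrac{2^{k-1}}{k!(2\pi i)^k}\tr(\Theta^{2k-1})$ to match the odd Chern character form. The only cosmetic difference is that you first reduce to elements $\sE - \underline{L\CC}^n$ whereas the paper works with a general $\sE - \sF$ and combines the two classifying maps via block sum.
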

\begin{proof}
In the homotopy-theoretic model for $K^{-1}$, the isomorphism $\cR$ is given by
\[
\sE - \sF \longmapsto [g] \oplus [h]^{-1} = [g\oplus h^{-1}]
\]
where $g, h \colon M\to GL_n(\CC)$ are smooth classifying maps for $\sE$ and $\sF$ respectively for some sufficiently large $n$ (see Remark \ref{remark:homotopyversion}). 

Choose based module connections $\Delta$, $\Delta'$ and Higgs fields $\phi$, $\phi'$ on $\sE$ and $\sF$ respectively.
By Lemma \ref{lemma:higgshol} one may take $g = \hol_\phi$ and $h = \hol_{\phi'}$, in which case by Lemma \ref{lemma:universalstringform} and Theorem \ref{theorem:stringdiag1} one has, for example,
\[
s(\Delta,\phi) = \sum_{j=1}^\infty g^\ast \tau\left (\overline{\tr}_j \right ) = 
\sum_{j=1}^\infty \left(-\frac{1}{2}\right)^{j-1}\frac{j!(j-1)!}{(2j-1)!} \, g^\ast \,\overline{\tr}_j\big(\Theta, [\Theta,\Theta]^{j-1} \big) +\mbox{exact},
\]
with $\Theta$ the Maurer-Cartan form on $GL_n(\CC)$.
Now,
\[
\overline{\tr}_j\big(\Theta, [\Theta,\Theta]^{j-1} \big) = \frac{2^{j-1}}{j!(2\pi i)^j}\tr(\Theta^{2j-1}),
\]
and $g^\ast \Theta = g^{-1} dg$, so that
\[
s(\Delta,\phi) = \sum_{j=1}^\infty \frac{-(j-1)!}{(2j-1)!} \left(-\frac{1}{2\pi i}\right)^{j} \tr\left((g^{-1}dg)^{2j-1}\right)+\mbox{exact}.
\]
Shifting the degree of the sum, by the additivity of trace
\[
s(\sE - \sF) = \sum_{j=0}^\infty \frac{-j!}{(2j+1)!} \left(-\frac{1}{2\pi i}\right)^{j+1} \tr\left(\left[\big(g\oplus h^{-1}\big)^{-1}d\big(g\oplus h^{-1}\big)\right]^{2j+1}\right)+ \mbox{exact}.
\]
Comparing this with the expression \eqref{eqn:oddchern} for the odd Chern character gives
\[
s(\sE - \sF) = ch([g\oplus h^{-1}]) = ch(\cR(\sE-\sF))
\]
as required.
\end{proof}

If $\Theta$ now denotes the Maurer-Cartan form on $GL$ (see Appendix \ref{app:frechet}), set
\begin{equation}
\label{eqn:tauform}
\tau_{GL} :=  \sum_{j=0}^\infty \frac{-j!}{(2j+1)!} \left(-\frac{1}{2\pi i}\right)^{j+1} \tr\left(\Theta^{2j+1}\right)
\end{equation}
and define the space of closed odd degree complex-valued forms
\begin{equation}
\label{den:wedgegl}
\wedge_{GL}(M) := \{ g^\ast \tau_{GL}\mid g \colon M \lo GL \mbox{ is smooth}\}.
\end{equation}
Under the pointwise block sum and matrix inversion operations on the space of smooth maps $M\to GL$ one has
\[
(g \oplus h)^\ast \tau_{GL} = g^\ast\tau_{GL} + h^\ast\tau_{GL}
\;\;\mbox{ and }\;\;
(g^{-1})^\ast\tau_{GL} = -g^\ast\tau_{GL}
\]
so that $\wedge_{GL}(M)$ is a subgroup of $\Omega^{odd}(M;\CC)$.
The proof of Theorem \ref{theorem:stringchern} gives
\vspace{11pt}
\begin{corollary}
\label{cor:imagecherncharacter}
If $s \colon \cK(M) \to H^{odd}_\deR(M;\CC)$ is the odd Chern character on $\cK(M)$ valued in complex de Rham cohomology, then $\im\,s = \wedge_{GL}(M)$ mod exact and so $\wedge_{GL}(M)$ generates the odd complex de Rham cohomology of $M$.
\end{corollary}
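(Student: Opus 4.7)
The plan is to read off both inclusions from the computation already carried out in the proof of Theorem \ref{theorem:stringchern} and then deduce the generation statement from the fact that the odd Chern character is rationally an isomorphism.

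First I would establish the inclusion $\im\,s \subseteq \wedge_{GL}(M) \bmod \text{exact}$. Take any class $s(\sE-\sF) \in \im\,s$, and choose based module connections and Higgs fields on $\sE,\sF$ with Higgs field holonomies $g,h \colon M \to GL_n(\CC)$ for some common $n$ (one can always arrange a common $n$ by stabilising with trivial summands, whose string forms vanish). As shown in the proof of Theorem \ref{theorem:stringchern}, on the nose
\[
s(\sE-\sF) \;=\; \sum_{j=0}^\infty \frac{-j!}{(2j+1)!}\left(-\frac{1}{2\pi i}\right)^{j+1}\!\tr\!\left(\big(k^{-1}dk\big)^{2j+1}\right) + \text{exact},
\]
where $k := g \oplus h^{-1} \colon M \to GL_{2n}(\CC) \subset GL$. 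Since $k^\ast\Theta = k^{-1}dk$, the right-hand side is precisely $k^\ast \tau_{GL} + \text{exact}$ by the definition \eqref{eqn:tauform}, so $s(\sE-\sF) \in \wedge_{GL}(M)$ modulo exact forms.

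For the reverse inclusion $\wedge_{GL}(M) \subseteq \im\,s$ mod exact, start with an arbitrary smooth map $k \colon M \to GL$. By compactness of $M$ (Proposition \ref{prop:compactimage}), the image of $k$ is contained in $GL_n(\CC)$ for some $n$, so one may regard $k$ as a smooth map $M \to GL_n(\CC)$. Pulling back the universal $\Omega$ vector bundle $\sE(\CC^n) \to GL_n(\CC)$ of Example \ref{example:universalomega} along $k$ yields an $\Omega$ vector bundle $k^\ast\sE(\CC^n) \to M$ whose Higgs field holonomy is (homotopic to, and may be chosen as) $k$. Set
\[
\xi \;:=\; k^\ast \sE(\CC^n) - \underline{L\CC}^n \;\in\; \cK(M).
\]
Since the trivial $\Omega$ vector bundle carries the trivial module connection and zero Higgs field, its total string form vanishes identically. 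Applying the computation in the proof of Theorem \ref{theorem:stringchern} (with $h = \id$, so that only the $g = k$ block contributes) gives $s(\xi) = k^\ast \tau_{GL} + \text{exact}$, which shows $k^\ast \tau_{GL} \in \im\,s$ modulo exact forms. Combining the two inclusions gives $\im\,s = \wedge_{GL}(M)$ mod exact.

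For the final assertion, recall that the odd Chern character $ch \colon K^{-1}(M) \to H^{odd}(M;\CC)$ becomes an isomorphism after tensoring with $\CC$; in particular its image spans $H^{odd}_\deR(M;\CC)$ as a complex vector space. Via the isomorphism $\cR \colon \cK(M) \to K^{-1}(M)$ of Theorem \ref{theorem:oddk} and the commutative triangle in Theorem \ref{theorem:stringchern}, this image agrees with $\im\,s$ and hence, by the first part, with $\wedge_{GL}(M)$ modulo exact forms. Therefore $\wedge_{GL}(M)$ generates $H^{odd}_\deR(M;\CC)$.

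The only mildly subtle point is the stabilisation issue in the first inclusion: to write a general element of $\cK(M)$ as $\sE - \sF$ with $\sE,\sF$ of equal rank and with classifying maps into a common $GL_n(\CC)$, one has to absorb a trivial summand, which is harmless because the total string form of a trivial $\Omega$ vector bundle is zero. Everything else is a direct bookkeeping consequence of the formulas already verified in Theorem \ref{theorem:stringchern}.
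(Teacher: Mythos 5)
Your proof is correct and follows essentially the same route the paper intends: the paper dispatches this corollary with the single sentence ``The proof of Theorem~\ref{theorem:stringchern} gives,'' and your two inclusions (the forward one by reading off the formula $s(\sE-\sF) = (g\oplus h^{-1})^\ast\tau_{GL} + \mathrm{exact}$, the reverse one by realising an arbitrary $k\colon M\to GL$ as a Higgs field holonomy via $k^\ast\sE(\CC^n)$ as in Remark~\ref{remark:homotopyversion}) are exactly the content of that computation, with the rational Chern character isomorphism supplying the generation claim as intended.
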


In light of Remark \ref{remark:unversion},  there is also a Hermitian version of the odd Chern character that maps $\cK(M)$ into real-valued singular cohomology.
It is given by sending the virtual Hermitian $\Omega$ vector bundle $\sE - \sF$ to the cohomology class of the closed real-valued differential form $s(\Delta,\phi) - s(\Delta',\phi')$, where $\Delta$ and $\Delta'$, $\phi$ and $\phi'$ are any choice of based module connections and Higgs fields on $\sE$ and $\sF$ respectively that are compatible with the Hermitian structure.
The reason that $s(\Delta,\phi)$, for example, is a real-valued form is that
\[
s(\Delta,\phi) = \widehat{\int_\sone} \Ch(\nabla)
\]
where $\nabla$ is the caloron-transformed connection: since $\nabla$ is compatible with the caloron-transformed Hermitian structure, if $R$ denotes its curvature then the form $\tfrac{1}{i} R$ takes values in Hermitian matrices so $\Ch(\nabla)$ is a real-valued form.

One may prove a Hermitian version of Theorem \ref{theorem:stringchern} following the same argument as above, and setting
\begin{equation}
\label{eqn:htauform}
\tau :=  \sum_{j=0}^\infty \frac{-j!}{(2j+1)!} \left(-\frac{1}{2\pi i}\right)^{j+1} \tr\left(\Theta^{2j+1}\right)
\end{equation}
with $\Theta$ the Maurer-Cartan form on $U$, one defines the space of closed odd degree real-valued forms
\begin{equation}
\label{den:wedgeu}
\wedge_{U}(M) := \{ g^\ast \tau\mid g \colon M \lo U \mbox{ is smooth}\}.
\end{equation}
Similarly to $\wedge_{GL}(M)$, $\wedge_U(M)$ has an abelian group structure induced by the block sum operation on smooth maps $M \to U$.
One also has
\vspace{11pt}
\begin{corollary}
\label{cor:himagecherncharacter}
If $s \colon \cK(M) \to H^{odd}_\deR(M)$ is the odd Chern character on $\cK(M)$ valued in de Rham cohomology, then $\im\,s = \wedge_{U}(M) $ mod exact and so $\wedge_U(M)$ generates the odd de Rham cohomology of $M$.
\end{corollary}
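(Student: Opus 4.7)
The plan is to follow the template of Theorem~\ref{theorem:stringchern} and Corollary~\ref{cor:imagecherncharacter}, but systematically replace $GL$ with $U$ throughout, using the Hermitian machinery of Section~\ref{S:hermitian} and the observation (Remark~\ref{remark:unversion}) that every class in $\cK(M)$ is represented by a formal difference of Hermitian $\Omega$ vector bundles. The main work is bookkeeping: verifying that the Higgs field holonomy of a Hermitian-compatible Higgs field is $U(n)$-valued, so that the local computation of $s(\Delta,\phi)$ produces an element of $\wedge_U(M)$ rather than merely $\wedge_{GL}(M)$.

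First I would fix $\sE-\sF \in \cK(M)$ and, using Remark~\ref{remark:unversion}, choose representatives that are Hermitian $\Omega$ vector bundles of some common rank $n$. Then I would equip $\sE$ and $\sF$ with based module connections $\Delta,\Delta'$ and Higgs fields $\phi,\phi'$ compatible with the respective Hermitian structures; these exist by a partition of unity argument applied to orthonormal local trivialisations (cf.\ Remarks~\ref{remark:hermreduct}, \ref{remark:ounreduct}). By Lemmas~\ref{lemma:hermconn} and~\ref{lemma:hermhiggs} the associated principal-bundle data $\sA,\Phi$ on $\cF_0(\sE)$ take values in $\Omega\u(n)$ and $L\u(n)$ respectively, and likewise for $\cF_0(\sF)$. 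Consequently the Higgs field holonomies $g := \hol_\phi \colon M \to U(n)$ and $h := \hol_{\phi'} \colon M\to U(n)$ are smooth classifying maps into the unitary group, and $\cR(\sE-\sF) = [g\oplus h^{-1}] \in [M,U]$.

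Next I would import verbatim the computation from the proof of Theorem~\ref{theorem:stringchern}: using Lemma~\ref{lemma:universalstringform}, Theorem~\ref{theorem:stringdiag1} and the identity
\[
\overline{\tr}_j\big(\Theta, [\Theta,\Theta]^{j-1} \big) = \frac{2^{j-1}}{j!(2\pi i)^j}\tr(\Theta^{2j-1}),
\]
now applied with $\Theta$ the Maurer-Cartan form on $U(n)$ rather than on $GL_n(\CC)$, I would deduce
\[
s(\Delta,\phi) \;=\; g^\ast\tau \;+\;\mbox{exact},\qquad s(\Delta',\phi') \;=\; h^\ast\tau\;+\;\mbox{exact},
\]
where $\tau$ is the form defined in~\eqref{eqn:htauform}. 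Stabilising and combining via block sum and inversion (which correspond to $\oplus$ and to $g\mapsto g^{-1}$ on the $U$-side), I obtain
\[
s(\sE-\sF) \;=\; (g\oplus h^{-1})^\ast \tau \;+\;\mbox{exact} \;\in\; \wedge_U(M) \;\bmod\;\mbox{exact}.
\]
Conversely, given any smooth $f\colon M\to U$, the image of $f$ lies in $U(n)$ for some $n$ (Proposition~\ref{prop:compactimage}), and $f^\ast\sE(n) - \underline{L\CC}^n \in \cK(M)$ maps to $[f^\ast\tau]$; hence $\im\,s = \wedge_U(M)$ modulo exact forms.

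For the final assertion, I would invoke the fact (recalled in Section~\ref{SS:oddk:string}) that the Chern character becomes a ring isomorphism after tensoring $K^\bullet$ with $\CC$, and observe that its restriction to the odd part $K^{-1}(M)\otimes\RR \to H^{odd}(M;\RR)$ is surjective. Combining with $\im\, s = \wedge_U(M) \bmod$ exact and the commuting triangle of Theorem~\ref{theorem:stringchern} (whose Hermitian analogue is proved by exactly the calculation above), it follows that the $\RR$-linear span of $\wedge_U(M)$ in $\Omega_{d=0}^{odd}(M)/d\Omega^{even-1}(M)$ is all of $H^{odd}_\deR(M)$, i.e.\ $\wedge_U(M)$ generates the odd de Rham cohomology of $M$. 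The main obstacle I anticipate is purely notational --- keeping track of the Hermitian compatibility through each step to be sure the pulled-back Maurer-Cartan form is the one on $U$ (so that $s(\Delta,\phi)$ is manifestly real-valued) --- but no new geometric input beyond Lemmas~\ref{lemma:hermconn}--\ref{lemma:hermhiggs} and Theorem~\ref{theorem:stringchern} is required.
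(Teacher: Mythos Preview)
Your proposal is correct and follows essentially the same approach as the paper: the text preceding the corollary explicitly says one proves a Hermitian version of Theorem~\ref{theorem:stringchern} ``following the same argument as above,'' and your write-up is a faithful execution of that, with the key observation that compatibility with the Hermitian structure forces the Higgs field holonomy to land in $U(n)$ so that the pullback of the Maurer-Cartan form yields an element of $\wedge_U(M)$. The paper gives no further detail, so there is nothing to compare beyond this.
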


\chapter{The $\Omega$ model\label{ch:five} for odd differential $K$-theory}
In this chapter, the theory of $\Omega$ vector bundles developed in Chapter \ref{ch:four} is used to construct a differential extension of odd topological $K$-theory---the $\Omega$ model---which is shown to be a model for odd differential $K$-theory.

A differential extension $\check E$ of the generalised (Eilenberg-Steenrod) cohomology theory $E$ is a geometric refinement of $E$ that naturally includes additional differential form data.
When the underlying cohomology theory $E$ is multiplicative, the differential extension $\check E$ is \emph{multiplicative} if it has a graded ring structure that respects the cohomological and differential form data.  
In their seminal paper \cite{HS}, Hopkins and Singer demonstrated that every generalised cohomology theory has a differential extension.
Following this, Bunke and Schick  \cite{BS3,BS2,BS1} provided an axiomatic characterisation of differential extensions together with a result,  under certain mild conditions on the underlying cohomology theory, that guarantees the uniqueness of multiplicative differential extensions up to unique isomorphism (these axioms and uniqueness results are recorded in Appendix \ref{app:diff}).
In point of fact, this unique isomorphism can be constructed without a multiplciative structure provided that the differential extension has a special pushforward map called an \emph{$\sone$-integration}.
In particular, since $K$-theory satisfies the requisite conditions, any two differential extensions of $K$-theory with $\sone$-integration are isomorphic via a unique isomorphism: this is what is meant by differential $K$-theory.

Differential $K$-theory has been studied extensively in recent years.
Consequently, there are a variety of different constructions of differential $K$-theory; for a detailed survey see  \cite{BS1}.
As with ordinary $K$-theory, the additive structure of differential $K$-theory splits into even and odd degree parts resulting in \emph{even} and \emph{odd} differential $K$-theory.
The Bunke-Schick uniqueness results are sufficient to guarantee that any two differential extensions of the even part of $K$-theory are uniquely isomorphic, even without a multiplicative structure or an $\sone$-integration.
Consequently, any differential extension of the even part of $K$-theory defines even differential $K$-theory.
However, this is not true for odd $K$-theory and in fact one can show that there are infinitely many non-isomorphic differential extensions of odd $K$-theory \cite{BS2}.

It is particularly important to have accessible geometric models for differential extensions, not least because this is the most common way that differential extensions appear in physics.
In gauge theory, for example, gauge fields are described by ordinary differential cohomology and it is suggested by Freed \cite{Freed2} that differential $K$-theory (or rather, twisted differential $K$-theory) describes Ramond-Ramond fields in type II string theory.

In \cite{SSvec}, Simons and Sullivan construct a simple geometric model for the even degree part of differential $K$-theory using vector bundles with connection.
The principle behind this construction is that, insofar as even $K$-theory is related to stable isomorphism classes of vector bundles, even differential $K$-theory is related to stable isomorphism classes of vector bundles with connection, where the stability condition is extended to connective data using the Chern-Simons forms.
This model of even differential $K$-theory provides a straightforward codification of vector bundles with connection.

In this chapter, an analogous geometric model for odd differential $K$-theory is constructed by using $\Omega$ vector bundles and their associated geometric data in place of vector bundles: this is the $\Omega$ model for odd differential $K$-theory.
Motivated by the results of Section \ref{S:ouroddk}, where it was shown that virtual $\Omega$ vector bundles of rank zero give odd $K$-theory, one shows that the odd part of differential $K$-theory may be obtained in a similar fashion by using $\Omega$ vector bundles equipped with based module connections and Higgs fields.
The string potentials of Chapter \ref{ch:three} play a central role in the construction of the $\Omega$ model, which gives a straightforward geometric model for odd differential $K$-theory complementing the work of Simons and Sullivan.

To aid the discussion, an explicit isomorphism is constructed between the $\Omega$ model and an elementary differential extension of odd $K$-theory described recently by Tradler, Wilson and Zeinalian \cite{TWZ}.
This isomorphism provides a homotopy-theoretic interpretation of the $\Omega$ model as well as a proof that the TWZ extension does indeed define odd differential $K$-theory, this latter result being speculated but not proved in \cite{TWZ}.

Throughout this chapter, unless stated otherwise $M$ shall always be taken to be a compact finite-dimensional manifold, possibly with corners, and all $\Omega$ vector bundles are taken to be Hermitian.


\section{Structured vector bundles}
The primary ingredient in the construction of the $\Omega$ model is the notion of \emph{structured} $\Omega$ vector bundles.
These are $\Omega$ vector bundles equipped with a certain equivalence class of based module connections and Higgs fields.


\subsection{The Simons-Sullivan model}
\label{S:simonssullivan}

The construction of the $\Omega$ model is motivated by the Simons-Sullivan model\footnote{in this thesis, one considers only the Hermitian version of the Simons-Sullivan construction---there is also a version that does not use Hermitian structures.} for even differential $K$-theory developed in \cite{SSvec}.
Consider a complex, finite-rank Hermitian vector bundle $E\to M$ with compatible connection $\nabla$.
Recall that the Chern character of $E$ may be represented by the real-valued closed form
\[
\Ch(\nabla) =  \sum_{j=0}^\infty \frac{1}{j!} \bigg(\frac{1}{2\pi i} \bigg)^j \tr(\underbrace{R\wedge\dotsb \wedge R}_{\text{$j$ times}}),
\]
with $R$ the curvature of $\nabla$.
If $\gamma\colon t\mapsto \nabla_t$ is a smooth path of connections\footnote{defined similarly to Definition \ref{defn:smoothconn}.} on $E$ with $B_t$ the time derivative of the path $\gamma$, the Chern-Simons form of $\gamma$ is
\[
\CS(\gamma) := \sum_{j=1}^\infty \frac{1}{(j-1)!}\bigg(\frac{1}{2\pi i} \bigg)^j \int_0^1 \tr(B_t\wedge \underbrace{R_t\wedge\dotsb \wedge R_t}_{\text{$j-1$ times}})\,dt,
\]
with $R_t$ the curvature of $\nabla_t$.
It is well-known that $d\CS(\gamma) = \Ch(\nabla_1) - \Ch(\nabla_0)$.

Since spaces of connections are affine, there is always a smooth path between any pair of connections on $E$.
If there is a path $\gamma$ from $\nabla_0$ to $\nabla_1$ such that $\CS(\gamma)$ is exact then it turns out that $\CS(\nu)$ is exact for \emph{any} smooth path $\nu$ from $\nabla_0$ to $\nabla_1$ \cite[Proposition 1.1]{SSvec} and, in particular, $\Ch(\nabla_0) = \Ch(\nabla_1)$.
Setting
\[
\CS(\nabla_0;\nabla_1) := \CS(\gamma) \mod\mbox{exact}
\]
for any smooth path $\gamma$ from $\nabla_0$ to $\nabla_1$, one defines an equivalence relation on the space of connections on $E$ by setting $\nabla_0 \sim \nabla_1\Leftrightarrow\CS(\nabla_0;\nabla_1) = 0 \mod\mbox{exact}$.
\vspace{11pt}
\begin{definition}[\cite{SSvec}]
A \emph{structured vector bundle over $M$} is a pair $\bm{E} = (E,[\nabla])$ with $E \to M$ a Hermitian vector bundle and $\nabla$ an equivalence class of compatible connections on $E$.
\end{definition}
\vspace{11pt}
\begin{example}[The trivial structured bundle of rank $n$]
The trivial structured bundle of rank $n$ over $M$ is $\underline{\bm{n}} = (\underline{\CC}^n,[d])$, with $\underline{\CC}^n = M\x\CC^n \to M$ the trivial bundle of rank $n$ (with its canonical Hermitian structure) and $d$ the trivial connection.
\end{example}

The equivalence relation on connections described above behaves well with respect to pullbacks.
In particular, one may pull back structured vector bundles by smooth maps and there is a natural notion of isomorphism for structured vector bundles.
The direct sum and tensor product operations on vector bundles with connection also give rise to well-defined operations $\oplus$ and $\otimes$ on structured vector bundles; for details the interested reader is referred to \cite{SSvec}.
\vspace{11pt}
\begin{definition}[\cite{SSvec}]
Define $\struuct(M)$ to be the set of all isomorphism classes of structured vector bundles over $M$, which is a commutative semi-ring under the operations $\oplus$ and $\otimes$.
To avoid excessive notation, elements of $\struuct(M)$ are denoted $\bm{E}$ rather than $[\bm{E}]$.
The assignment
\[
M \longmapsto \struuct(M)
\]
determines a contravariant functor
\[
\struuct\colon \Man \lo \rig
\]
from the category of smooth compact manifolds with corners to the category of semi-rings.
\end{definition}
The following special types of structured bundles are important for understanding how structured vector bundles give rise to even differential $K$-theory.
\vspace{11pt}
\begin{definition}[\cite{SSvec}]
\label{defn:ssst}
A structured vector bundle $\bm{E} = (E,[\nabla])$ is \emph{stably trivial} if there is some $k$ such that $\sE \oplus \underline{\CC}^k \cong\underline{\CC}^{n+k}$.
Write
\[
\struuct^T(M) := \left\{\bm{E} \in \struuct(M) \mid \bm{E} \mbox{ is stably trivial} \right\}
\]
for the semi-group of those isomorphism classes in $\struuct(M)$ containing a stably trivial structured vector bundle.

A structured vector bundle $\bm{E}$ is \emph{stably flat} if $\bm{E} \oplus \underline{\bm{k}} = \underline{\bm{n+k}}$ for some $k$.
Write
\[
\struuct^F(M) := \left\{\bm{E} \in \struuct(M) \mid \bm{E} \mbox{ is stably flat} \right\}
\]
for the semi-group of those isomorphism classes in $\struuct(M)$ containing a stably flat structured vector bundle.
Clearly $\struuct^F(M) \subset \struuct^T(M)$ and is a sub-semi-group.
\end{definition}

The Grothendieck group completion applied to $\struuct(M)$ gives the commutative ring
\begin{equation}
\label{eqn:ssktheory}
\edK(M) := K\big(\struuct(M)\big),
\end{equation}
where, as usual, elements of $\edK(M)$ are denoted as formal differences $\bm{E}-\bm{F}$.
Every structured bundle $\bm{E}$ has an \emph{inverse}, i.e. a structured bundle $\bm{F}$ such that $\bm{E}\oplus\bm{F} \cong \underline{\bm{n}}$ for some $n$, so one may write any element of $\edK(M)$ in the form $\bm{E} - \underline{\bm{n}}$ \cite[Theorem 1.8]{SSvec}.
One may also show that $\bm{E} - \underline{\bm{n}} = 0\in\edK(M)$ if and only if $\bm{E}$ is stably flat.

There is a natural transformation of functors
\[
\Ch \colon \struuct(M) \lo \Omega^{even}_{d=0}(M)
\]
given by $\Ch \colon \bm{E} = (E,[\nabla]) \mapsto \Ch(\nabla)$. In fact, $\Ch$ is a semi-ring homomorphism, that is
\[
\Ch(\bm{E} \oplus \bm{F}) = \Ch(\bm{E}) + \Ch(\bm{F})\;\;\mbox{ and }\;\; \Ch(\bm{E} \otimes \bm{F}) = \Ch(\bm{E}) \wedge \Ch(\bm{F})
\]
\cite[(1.11)]{SSvec}.
Passing to $\edK(M)$ gives the ring homomorphism
\[
\check{\Ch} \colon \bm{E} - \bm{F} \longmapsto \Ch(\bm{E}) - \Ch(\bm{F}).
\]
Write $\delta \colon \edK(M) \to K^0(M)$ for the obvious forgetful map that discards the connective data, i.e.~$\bm{E} - \bm{F} \mapsto E- F$.
Recalling that $K^0(M)$ may be defined entirely in terms of smooth vector bundles, it follows that $\delta$ is surjective.
The maps $\delta$ and $\check{\Ch}$ fit into the commuting diagram of ring homomorphisms
\[
\xy
(0,25)*+{\edK(M)}="1";
(50,25)*+{K^0(M)}="2";
(0,0)*+{\Omega^{even}_{d=0}(M)}="3";
(50,0)*+{H^{odd}(M;\RR)}="4";
{\ar^{\delta} "1";"2"};
{\ar^{\deR} "3";"4"};
{\ar^{\check{\Ch}} "1";"3"};
{\ar^{ch} "2";"4"};
\endxy
\]
with $\deR$ the natural map given by the de Rham isomorphism and $ch$ the even Chern character.
This gives part of the data required for a differential extension of the even part of $K$-theory: $\check{\Ch}$ is the \emph{curvature} morphism and $\delta$ is the \emph{underlying class} morphism of $\edK(M)$.

The final datum that makes $\edK(M)$ a differential extension of $K^0(M)$ is the \emph{action of forms} morphism $a \colon \Omega^{odd}(M)/\im\,d \to \edK(M)$, along with the associated exact sequence.
It is useful for the constructions in the sequel to examine how this map works in some detail.

Say that a connection $\nabla$ on $E$ is \emph{Flat} if it has trivial holonomy around every closed curve \cite[Definition 1.7]{SSvec}.
This implies that the curvature of $\nabla$ vanishes and (by finding parallel sections for $\nabla$) that $E$ is isomorphic to the trivial bundle $\underline{\CC}^n$ with the trivial connection.
Recall from \eqref{den:wedgeu} the space of odd forms $\wedge_{U}(M)$, then
\vspace{11pt}
\begin{lemma}[\cite{SSvec}]
\label{lemma:sscs1}
For any pair of Flat connections $\nabla$, $\nabla'$ on $E$ that are compatible with the Hermitian structure
\[
\CS(\nabla;\nabla') \in \wedge_{U}(M)\!\! \mod\mathrm{exact}.
\]
\end{lemma}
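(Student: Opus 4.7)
The plan is to exploit the defining property of Flat connections, namely that they admit global parallel frames, in order to reduce the computation of $\CS(\nabla;\nabla')$ to a universal computation on the trivial bundle. First I would use parallel transport to build a global parallel unitary frame for each of $\nabla$ and $\nabla'$. Since $\nabla$ is Flat and compatible with the Hermitian structure, parallel transport from a fixed basepoint in each connected component of $M$ is well-defined on all loops and preserves the inner product, so it yields a global trivialisation $\psi_0 \colon E \to \underline{\CC}^n$ identifying $\nabla$ with the trivial connection $d$; similarly $\nabla'$ corresponds to a trivialisation $\psi_1$. Setting $g := \psi_1 \circ \psi_0^{-1}$, one obtains a smooth map $g \colon M \to U(n) \subset U$, and under the identification induced by $\psi_0$ the connection $\nabla'$ becomes the gauge transform $d + g^{-1}dg$. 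Because $\CS$ is natural under Hermitian isomorphisms (covering the identity) and because the value of $\CS(\nabla;\nabla')$ does not depend on the chosen path between the two connections modulo exact forms (the Simons--Sullivan independence result already invoked in the excerpt), it suffices to compute $\CS$ along any convenient path in the trivial bundle from $d$ to $d+g^{-1}dg$.

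Next I would run this computation along the straight-line path $A_t := t\, g^{-1}dg$. The time derivative is $B_t = g^{-1}dg$, and using $d(g^{-1}dg) = -g^{-1}dg \wedge g^{-1}dg$ one finds the curvature
\[
F_t \;=\; dA_t + \tfrac{1}{2}[A_t,A_t] \;=\; -t(1-t)\, g^{-1}dg \wedge g^{-1}dg.
\]
Substituting into the defining series for $\CS(\gamma)$ and using the beta integral $\int_0^1 t^{j-1}(1-t)^{j-1}\,dt = \frac{((j-1)!)^2}{(2j-1)!}$, the degree $(2j-1)$ piece becomes
\[
\CS(\gamma)_{2j-1} \;=\; \frac{(-1)^{j-1}(j-1)!}{(2j-1)!}\left(\frac{1}{2\pi i}\right)^{\!j} \tr\!\big((g^{-1}dg)^{2j-1}\big).
\]
Reindexing with $k = j-1$ and comparing against the formula \eqref{eqn:htauform} for $\tau$, a direct bookkeeping of signs yields $\CS(\gamma) = g^\ast \tau$ on the nose.

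Consequently $\CS(\nabla;\nabla') = g^\ast\tau \mod \mathrm{exact}$, which lies in $\wedge_U(M)$ by definition. The main obstacle I anticipate is not the calculation itself but the careful handling of two auxiliary points: verifying that parallel transport for a Flat \emph{Hermitian} connection genuinely produces a global $U(n)$-valued trivialisation (this uses both the triviality of the holonomy on \emph{all} loops, not just contractible ones, and the compatibility with the Hermitian structure to keep the frame orthonormal), and confirming that the naturality of $\CS$ under an isomorphism of Hermitian bundles lets one transport the computation from $E$ to $\underline{\CC}^n$ without altering the class modulo exact forms. Once these two points are in place, the rest is bookkeeping with the beta function.
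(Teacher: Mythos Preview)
Your proposal is correct and is essentially the argument the paper has in mind. The paper does not actually supply a proof of Lemma~\ref{lemma:sscs1} --- it is merely cited from \cite{SSvec} --- but the very next result, Lemma~\ref{lemma:flatuns}, is proved by exactly the computation you carry out: trivialise via a parallel frame so that $E=\underline{\CC}^n$ with the trivial connection $d$, write the other Flat connection as $d+g^{-1}dg$ for some $g\colon M\to U(n)$, take the straight-line path $\nabla_t=d+tg^{-1}dg$, compute its curvature as $(t^2-t)\,g^\ast(\Theta\wedge\Theta)$, and evaluate the beta integral to obtain $\CS(\gamma)=g^\ast\tau$ on the nose. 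Your two flagged ``auxiliary points'' (existence of a global unitary parallel frame for a Flat Hermitian connection, and naturality of $\CS$ under Hermitian isomorphisms) are taken for granted in the paper in the same way, so there is no gap.
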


Digressing for the moment, one also has
\vspace{11pt}
\begin{lemma}
\label{lemma:flatuns}
Let $d$ be a Flat connection on the Hermitian vector bundle $E \to M$ and $g \colon M \to U(n)$ a smooth map, with $n = \rank E$.
Then there is a Flat connection $\bar d$ on $E$ and a smooth path $\gamma$ of connections compatible with the Hermitian structure from $d$ to $\bar d$ such that
\[
\CS(\gamma) = g^\ast \tau,
\]
with $\tau$ the form of \eqref{eqn:htauform}.
\end{lemma}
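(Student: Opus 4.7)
The plan is to explicitly construct $\bar d$ and $\gamma$ by performing a gauge transformation of $d$ using $g$ and then taking the affine segment between the two connections. Since $d$ is Flat it admits a global parallel frame, so up to isomorphism we may identify $E$ with $\underline{\CC}^n$ equipped with the standard Hermitian structure and $d$ with the trivial connection. I would then define
\[
\bar d := d + g^{-1} dg, \qquad \gamma(t) := d + t\, g^{-1} dg, \quad t \in \I,
\]
so that $\gamma$ is a smooth path from $d$ to $\bar d$ through connections of the required type. Three elementary checks are required: first, because $g$ is unitary one has $(g^{-1}dg)^\ast = -g^{-1}dg$, so $g^{-1}dg$ is $\mathfrak{u}(n)$-valued and every $\gamma(t)$ is compatible with the Hermitian structure; second, the frame $\{g^{-1} \mathbf{e}_i\}$ is parallel for $\bar d$ (a direct computation using $dg^{-1} = -g^{-1}(dg)g^{-1}$), so $\bar d$ is globally trivialised and in particular Flat.

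The main work is then the computation of $\CS(\gamma)$. The time derivative of $\gamma$ is the constant $B_t = g^{-1}dg$, and the Maurer-Cartan identity $d(g^{-1}dg) + (g^{-1}dg) \wedge (g^{-1}dg) = 0$ immediately yields
\[
R_t = t\, d(g^{-1}dg) + \tfrac{1}{2} t^2 [g^{-1}dg, g^{-1}dg] = (t^2 - t)\, (g^{-1}dg) \wedge (g^{-1}dg).
\]
Plugging into the definition of $\CS(\gamma)$ gives
\[
\CS(\gamma) = \sum_{j=1}^\infty \frac{1}{(j-1)!}\bigg(\frac{1}{2\pi i}\bigg)^{\!j} \left(\int_0^1 (t^2 - t)^{j-1} dt\right) \tr\!\big((g^{-1}dg)^{2j-1}\big).
\]
The integral evaluates via the beta function to $(-1)^{j-1}\frac{((j-1)!)^2}{(2j-1)!}$, so shifting the summation index $j = k+1$ and absorbing a sign gives exactly
\[
\CS(\gamma) = \sum_{k=0}^\infty \frac{-k!}{(2k+1)!} \bigg(\!\!-\!\frac{1}{2\pi i}\bigg)^{\!k+1} \tr\!\big((g^{-1}dg)^{2k+1}\big) = g^\ast \tau,
\]
matching the expression \eqref{eqn:htauform} since $g^\ast\Theta = g^{-1}dg$.

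I expect no serious obstacle: all the steps are either standard Maurer--Cartan manipulations or elementary beta-function computations. The only minor subtlety is that the identification of $(E,d)$ with $(\underline{\CC}^n, d)$ must be used carefully so that the constructed $\bar d$ is intrinsic---that is, one verifies that transporting the definition back through the trivialising isomorphism produces a Flat connection on the original bundle and a path whose Chern--Simons form is $g^\ast\tau$ independently of the chosen parallel frame. Since different global parallel frames differ by a constant unitary matrix and $\tau$ is defined using the conjugation-invariant trace, this independence is automatic.
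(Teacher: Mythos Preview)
Your proposal is correct and follows essentially the same argument as the paper: trivialise via a parallel frame, set $\bar d = d + g^{-1}dg$ and $\gamma(t) = d + t\,g^{-1}dg$, compute the curvature as $(t^2-t)(g^{-1}dg)^2$ via Maurer--Cartan, and evaluate the resulting beta integral to recover $g^\ast\tau$. You even make explicit a few checks (Hermitian compatibility, Flatness of $\bar d$, frame independence) that the paper leaves implicit.
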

\begin{proof}
Compare with the proof of \cite[Lemma 2.1]{SSvec}.
By taking a global framing of $E$ that is parallel for $d$, one may suppose that $E = \underline{\CC}^n$ with its standard Hermitian structure and that $d$ is the trivial product connection.
Then, if $\Theta$ is the Maurer-Cartan form on $U(n)$ define the Flat compatible connection $\bar d := d + g^\ast\Theta = d+ g^{-1}dg$.

Define a smooth path of connections by $\gamma(t) := \nabla_t = d+ tg^\ast \Theta$ noting that the curvature of $\nabla_t$ is given by $td(g^\ast\Theta) + t^2 g^\ast\Theta\wedge g^\ast\Theta = (t^2-t) g^\ast(\Theta\wedge\Theta)$ since $d\Theta = -\Theta\wedge\Theta$.
Then
\begin{multline*}
\CS(\gamma) = \sum_{j=1}^\infty \frac{1}{(j-1)!}\bigg(\frac{1}{2\pi i} \bigg)^j \int_0^1 (t^2-t)^{j-1}\,dt \cdot \tr\big((g^\ast\Theta)^{2j-1}\big)\\
= \sum_{j=0}^\infty \frac{-j!}{(2j+1)!} \left(-\frac{1}{2\pi i}\right)^{j+1} \tr\big((g^\ast\Theta)^{2j+1}\big) = g^\ast\tau,
\end{multline*}
which is the desired result.
\end{proof}

Given any stably trivial $\bm{E} \in \struuct^T(M)$, take trivial Hermitian vector bundles $F$ and $H$ such that $E \oplus F\cong H$ and set
\[
\widehat{\CS}(\bm{E}) := \CS(\nabla^H;\nabla\oplus\nabla^F) \mod \wedge_{U}(M)
\]
for any choice of Flat connections $\nabla^H$ on $H$ and $\nabla^F$ on $F$ compatible with the Hermitian structure, noting that this is independent of choices by Lemma \ref{lemma:sscs1}.
This defines a semi-group homomorphism \cite[Proposition 2.3]{SSvec}
\[
\widehat{\CS} \colon \struuct^T(M) \lo \Omega^{odd}(M)/ \widehat\wedge_{U}(M),
\] 
where $\widehat\wedge_{U}(M) :=  \wedge_{U}(M) + d\Omega^{even}(M)$.
\vspace{11pt}
\begin{theorem}[\cite{SSvec}]
\label{theorem:cshat}
$\widehat{\CS}$ is surjective with kernel $\struuct^F(M)$.
Therefore
\[
\widehat{\CS} \colon \struuct^T(M)/\struuct^F(M) \lo \Omega^{odd}(M)/ \widehat\wedge_{U}(M)
\]
is a semi-group isomorphism, which implies that $\struuct^T(M)/\struuct^F(M)$ is a group.
\end{theorem}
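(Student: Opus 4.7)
The claim combines surjectivity of $\widehat{\CS}$ with the equality $\ker\widehat{\CS} = \struuct^F(M)$; the stated isomorphism and the group structure of the quotient $\struuct^T(M)/\struuct^F(M)$ are then immediate, since $\widehat{\CS}$ is already known to be a semi-group homomorphism and the target is an abelian group. I would therefore organise the proof into three parts.

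\textbf{Part one} ($\struuct^F(M) \subseteq \ker\widehat{\CS}$). For $\bm{E}\in\struuct^F(M)$ one has $\bm{E}\oplus\underline{\bm{k}}=\underline{\bm{n+k}}$ in $\struuct(M)$, which furnishes in particular a bundle isomorphism $E\oplus\underline{\CC}^k\cong\underline{\CC}^{n+k}$ carrying $[\nabla\oplus d]$ to $[d]$. Lemma \ref{lemma:sscs1} ensures $\widehat{\CS}(\bm{E})$ is independent of the Flat connections used to define it, so I would compute with $F=\underline{\CC}^k$, $H=\underline{\CC}^{n+k}$ and $\nabla^F=\nabla^H=d$; stable flatness then yields $\CS(d;\nabla\oplus d)$ exact, hence $\widehat{\CS}(\bm{E})=0$ in $\Omega^{odd}(M)/\widehat\wedge_U(M)$.

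\textbf{Part two} ($\ker\widehat{\CS}\subseteq\struuct^F(M)$). Suppose $\CS(\nabla^H;\nabla\oplus\nabla^F)=g^\ast\tau+d\omega$ for some smooth $g\colon M\to U$ (which factors through some $U(N)$ by compactness of $M$) and some $\omega\in\Omega^{even}(M)$. Applying Lemma \ref{lemma:flatuns} on the trivial bundle $H$ with the map $g$, after stabilising so that ranks match, produces a Flat connection $\bar\nabla^H$ and a smooth path $\gamma$ from $\nabla^H$ to $\bar\nabla^H$ with $\CS(\gamma)=g^\ast\tau$. Concatenating $\gamma$ with a smooth path from $\bar\nabla^H$ to $\nabla\oplus\nabla^F$, additivity of Chern-Simons forms under concatenation of paths gives $\CS(\bar\nabla^H;\nabla\oplus\nabla^F)=d\omega$ modulo exact, so $[\nabla\oplus\nabla^F]=[\bar\nabla^H]$ as structured-bundle classes on $H$. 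A second application of Lemma \ref{lemma:flatuns} allows both $\nabla^F$ and $\bar\nabla^H$ to be replaced by the trivial connections on $F$ and $H$ respectively after one further stabilisation, producing $\bm{E}\oplus\underline{\bm{k'}}=\underline{\bm{n+k'}}$ in $\struuct(M)$ and hence $\bm{E}\in\struuct^F(M)$.

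\textbf{Part three: surjectivity}, which is the main obstacle. Given $\omega\in\Omega^{odd}(M)$ one must construct a stably trivial structured bundle $\bm{E}$ with $\widehat{\CS}(\bm{E})\equiv\omega$ modulo $\widehat\wedge_U(M)$. My plan is to take $E=\underline{\CC}^N$ for $N$ sufficiently large and build a compatible connection $\nabla=d+A$ whose straight-line Chern-Simons form $\CS(d;\nabla)$ realises $\omega$ up to elements of $\widehat\wedge_U(M)$. Corollary \ref{cor:himagecherncharacter} plays a central role here: the closed-cohomology obstructions to writing $\omega$ as a $\CS(d;\nabla)$ can be compensated by pullbacks $g^\ast\tau$, since $\wedge_U(M)$ generates odd de Rham cohomology. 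The plan is to proceed degree by degree up the de Rham filtration, choosing $A$ in successive $\mathfrak{u}(N)$-valued form slots and, at each stage, using Lemma \ref{lemma:flatuns} to absorb the Flat-perturbation errors into $\wedge_U(M)$; the freedom to increase $N$ supplies the dimensional room needed to realise every degree simultaneously. I expect this to be the delicate portion of the argument and the step that requires the most care.
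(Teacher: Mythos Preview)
The paper does not supply a proof of this theorem: it is quoted verbatim from \cite{SSvec} (Simons--Sullivan) and used as a black box in the construction of the action-of-forms map for $\edK$. There is therefore nothing in the present paper to compare your proposal against.

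That said, a brief remark on your Part three. Your plan to build $\nabla=d+A$ degree by degree and absorb cohomological obstructions via Corollary~\ref{cor:himagecherncharacter} is not how Simons and Sullivan proceed, and it is not clear your inductive scheme terminates: at each stage the Chern--Simons form is a polynomial in $A$ and $dA$, so adjusting $A$ in one degree perturbs all higher-degree components simultaneously, and there is no obvious filtration making the induction clean. The argument in \cite{SSvec} instead exploits Narasimhan--Ramanan-type universal connections: there is a smooth map into a Grassmannian whose pullback of a fixed universal Chern--Simons form realises any prescribed $\omega$ modulo $\widehat\wedge_U(M)$, and this gives surjectivity in one stroke without any inductive construction. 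Your Parts one and two are essentially correct in outline and match the corresponding arguments in \cite{SSvec}.
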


By postcomposing $\widehat{\CS}^{-1}$ with the isomorphism
\begin{equation}
\label{eqn:ssstsfmap}
\Gamma \colon \struuct^T(M)/\struuct^F(M) \lo \ker\delta
\end{equation}
that sends $\{\bm{E}\} \mapsto \bm{E} - \underline{\bm{n}}$, with $n = \rank E$, one obtains the exact sequence
\[
0 \lo \Omega^{odd}(M)/ \widehat\wedge_{U}(M) \xrightarrow{\;\;\imath\;\;} \edK(M) \xrightarrow{\;\;\delta\;\;} K^0(M) \lo 0
\]
where $\imath = \Gamma\circ\widehat{\CS}^{-1}$.
One defines the action of forms morphism $a$ as the composition
\begin{equation}
\label{eqn:ssactionofforms}
\Omega^{odd}(M) /\im\,d  \lo \Omega^{odd}(M) /\,\widehat\wedge_{U}(M) \xrightarrow{\;\;\imath\;\;} \edK(M),  
\end{equation}
where the first map is the natural projection given by the identification
\[
\Omega^{even}(M) /\,\widehat\wedge_{U}(M) = \big(\Omega^{even}(M)/\im\,d\big) /\!\wedge_{U}(M).
\]
Corollary \ref{cor:himagecherncharacter} demonstrates that $\wedge_{U}(M) \mod \mbox{exact}$ is the image of the odd Chern character, so  the  sequence
\[
K^{-1}(M) \xrightarrow{\;\;ch\;\;} \Omega^{odd}(M) /\im\,d \xrightarrow{\;\;a\;\;} \edK(M) \xrightarrow{\;\;\delta\;\;} K^0(M) \lo 0
\]
is exact.
One may also show that $\check {\Ch} \circ a = d$  \cite[Proposition 3.2]{SSvec} so that $\edK$ is indeed a differential extension of even $K$-theory.
By the Bunke-Schick uniqueness result (Theorem \ref{theorem:bunkeschickk}) it follows that $\edK$ is naturally isomorphic to the even part of any other differential extension of $K$-theory via a unique isomorphism.
The Simons-Sullivan model for even differential $K$-theory has a definite geometric appeal since it provides a clear codification of vector bundles with connection.
Moreover, the construction of this model involves less geometric data than other models; compare with the Freed-Lott model \cite{FL}, which requires an additional differential form.
\vspace{11pt}
\begin{remark}
\label{remark:csactionoforms}
As a concluding remark to round off this section, one makes an observation that is required in the sequel.
Given any $\omega \in \Omega^{odd}(M)$, by definition $a(\{\omega\}) := \bm{E} - \underline{\bm{n}}$ where $\bm{E} = (E,[\nabla]) \in \struuct(M)$ is such that
\begin{itemize}
\item
$\rank E = n$ and $E$ is trivial; and

\item
for any chosen representative $\nabla \in [\nabla]$ there is a smooth path $\gamma'$ of connections from some Flat connection $\bar d$ to $\nabla$ such that the Chern-Simons form
\[
\CS(\gamma') = \omega \mod \widehat\wedge_{U}(M),
\]
i.e.~$\CS(\gamma') = \omega + g^\ast \tau+d\chi$ for some smooth $g \colon M \to U$.
Invoking Lemma \ref{lemma:flatuns}, after possibly stabilising by a trivial bundle with its trivial connection, this implies that there is a  path of connections $\gamma$ from some Flat connection $d$ to $\nabla$ such that
\[
\CS(\gamma) = \omega \mod \mbox{exact}.
\]
\end{itemize}
\end{remark}


\subsection{Structured $\Omega$ vector bundles}
The construction of the $\Omega$ model proceeds by analogy with the Simons-Sullivan model; accordingly the first notion required is that of \emph{structured} $\Omega$ vector bundles.

Take the Hermitian $\Omega$ vector bundle $(\sE,\sh)$ over $M$ (of rank $n$, say) equipped with based module connection $\Delta$ and Higgs field $\phi$ that are compatible with $\sh$.
From this point onward, it shall be standard to omit explicit reference to the Hermitian structure $\sh$ and refer to $\sE$ simply as a Hermitian vector bundle, with a choice of $\sh$ assumed.
Write $\sR$ for the curvature of $\Delta$ and $\Delta(\phi)$ for the Higgs field covariant derivative of $\phi$, recalling the total string form
\[
s(\Delta,\phi) =  \sum_{j=1}^\infty \frac{1}{(j-1)!}\bigg(\frac{1}{2\pi i}\bigg)^j \int_{\sone}  \tr( \underbrace{\sR\wedge\dotsb\wedge\sR}_{\text{$j-1$ times}} \wedge \,\Delta(\phi)),
\]
which is the odd Chern character on $\cK$.
Just as the string forms play the role of the Chern character on $\cK$, the relative string potentials play the part of relative Chern-Simons forms in the $\Omega$ model.

To see how this works, as in Section \ref{S:antistring} define a \emph{smooth $n$-cube} of based module connections on $\sE$ as a based module connection $\hat{\Delta}$ on the $\Omega$ vector bundle $\sE \x \I^n \to M\x\I^n$ such that $\hat \Delta_X s= 0$ for any vector field $X$ on $M \x\I^n$ that is vertical for the projection $M\x\I^n \to M$ and section $s$ pulled back to $M\x\I^n$ from $M$.
A \emph{smooth $n$-cube} of Higgs fields on $\sE$ is a Higgs field $\hat\phi$ on $\sE\x\I^n$ and since $\sE$ is Hermitian, smooth $n$-cubes of connections and Higgs fields are required to be compatible with the Hermitian structure induced on $\sE\x\I^n$.

Recalling the notation of Definitions \ref{defn:vectorhf} and \ref{defn:basedmodcon}, let $\cH_\sE$ and $\cM_\sE$ respectively be the spaces of Higgs fields and based module connections on $\sE$ that are compatible with the Hermitian structure.
A \emph{smooth map}
\[
\gamma \colon \I^n \lo \cM_\sE\x\cH_\sE
\]
is an assignment
\[
\gamma\colon {(t_1,\dotsc,t_n)} \longmapsto \left(\Delta_{(t_1,\dotsc,t_n)},\phi_{(t_1,\dotsc,t_n)}\right)
\]
with
\[
\Delta_{(t_1,\dotsc,t_n)} = \varsigma_{(t_1,\dotsc,t_n)}^\ast \Delta^\gamma\;\;\mbox{ and }\;\;\phi_{(t_1,\dotsc,t_n)} = \varsigma_{(t_1,\dotsc,t_n)}^\ast\phi^\gamma
\] 
for some smooth $n$-cubes of based module connections $\Delta^\gamma$ and Higgs fields $\phi^\gamma$ on $\sE$.
As in the principal bundle case, specifying such a smooth map $\gamma$ is equivalent to specifying smooth $n$-cubes $\Delta^\gamma$ and $\phi^\gamma$.
A straightforward argument using the proofs of Lemmas \ref{lemma:higgscorrespond} and \ref{lemma:moduleconnectioncorrespond} demonstrates that smooth maps
\[
\I^n \lo \cM_\sE\x\cH_\sE \;\;\mbox{ and }\;\; \I^n \lo \cA_{\cF(\sE)}\x\cH_{\cF(\sE)}
\]
are in bijective correspondence, with $\cF(\sE)$ the unitary frame bundle of $\sE$ determined by the Hermitian structure.

Take a smooth path $\gamma \colon \I \to \cM_\sE \x\cH_\sE$, writing $\cF\gamma$ for the corresponding smooth path $\I \to \cA_{\cF(\sE)}\x\cH_{\cF(\sE)}$.
\vspace{11pt}
\begin{definition}
\label{defn:antistringform}
For $j >0$, the \emph{$j$-th string potential} of the path $\gamma$ is the $(2j-2)$-form
\[
S_j(\gamma) := j \int_0^1 \int_\sone \Big( (j-1) \overline{\tr}_j\big(\sB_t , \underbrace{\sR_t,\dotsc,\sR_t}_{j-2\;\text{times}}, \Delta_t(\phi) \big) + \overline{\tr}_j\big(\underbrace{\sR_t,\dotsc,\sR_t}_{j-1\;\text{times}},\varphi_t \big)\Big)\,dt
\]
where $\sR_t$ is the curvature of $\Delta_t$, $\Delta_t(\phi)$ is the Higgs field covariant derivative of $\phi_t$ (with respect to $\Delta_t$) and
\[
\sB_t := \varsigma_t^\ast(\cL_{\d_t} \Delta^\gamma)\;\;\mbox{ and }\;\;\varphi_t := \varsigma_t^\ast(\cL_{\d_t} \phi^\gamma)
\]
are the time derivatives of $\Delta_t$ and $\phi_t$ along the path, which are both $\Omega \mathfrak{u}(n)$-valued forms.
The invariant polynomials $\overline{\tr}_k$ on $\mathfrak{u}(n)$ are the symmetrised traces of \eqref{eqn:traceline}.

The \emph{total string potential} of $\gamma$ is the form
\[
S(\gamma) = \sum_{j=1}^\infty S_j(\gamma),
\]
whose degree $2k-2$ piece is the $k$-th string potential.
The forms $S_j (\gamma)$ and $S(\gamma)$ appear to live on the total space $\sE$ but are in fact basic by Theorem \ref{theorem:stringycs} below.
\end{definition}

If $\cF\gamma(t) := (\sA_t,\Phi_t)$ is the smooth path of connections and Higgs fields on the frame bundle $\cF(\sE)$ corresponding to $\gamma$ then $\sR_t = \sF_t$ and $\Delta_t(\phi) = \nabla\Phi_t$, where $\sF_t$ is the curvature of the connection $\sA_t$.
The arguments of Lemmas \ref{lemma:higgscorrespond} and \ref{lemma:moduleconnectioncorrespond} also show that
\[
\varsigma_t^\ast\left(\cL_{\d_t} \sA^{\cF(\gamma)}\right) = \varsigma_t^\ast(\cL_{\d_t} \Delta^\gamma)\;\;\mbox{ and }\;\; \varsigma_t^\ast\left(\cL_{\d_t} \Phi^{\cF(\gamma)}\right)= \varsigma_t^\ast(\cL_{\d_t} \Phi^\gamma)
\]
and hence the $j$-th string potential satisfies
\begin{equation}
\label{eqn:jthstringpotalt}
S_j(\gamma) = S_{\overline{\tr}_j}(\cF(\gamma)),
\end{equation}
where the right hand side is a relative string potential form as in Chapter \ref{ch:three}.
\vspace{11pt}
\begin{theorem}
\label{theorem:stringycs}
For any smooth path $\gamma \colon \I \to \cM_\sE\x\cH_\sE$, the $j$-th and total string potentials descend to $M$ and the total string potential satisfies
\[
dS(\gamma) = s(\Delta_1,\phi_1) - s(\Delta_0,\phi_0).
\]
\end{theorem}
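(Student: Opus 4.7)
The plan is to reduce the statement to Theorem~\ref{theorem:antistring} applied termwise in $j$, using the bijection between connective data on $\sE$ and connective data on its unitary frame bundle $\cF(\sE)$.

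First, I would invoke equation~\eqref{eqn:jthstringpotalt} to identify
\[
S_j(\gamma) = S_{\overline{\tr}_j}(\cF\gamma),
\]
where the right hand side is the relative string potential of the smooth path $\cF\gamma \colon \I \to \cA_{\cF(\sE)} \x \cH_{\cF(\sE)}$ associated with the invariant polynomial $\overline{\tr}_j \in I^j(\u(n))$. This identification is the key input: the time derivatives $\sB_t, \varphi_t$ of $\Delta_t, \phi_t$ correspond (as shown in the discussion preceding the theorem) to the time derivatives of $\sA_t, \Phi_t$ under the correspondences of Lemmas~\ref{lemma:moduleconnectioncorrespond} and \ref{lemma:higgscorrespond}, and likewise $\sR_t = \sF_t$, $\Delta_t(\phi) = \nabla\Phi_t$.

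Once this identification is in place, the conclusions of Theorem~\ref{theorem:antistring} transfer directly: each $S_{\overline{\tr}_j}(\cF\gamma)$ descends to a form on $M$ and satisfies
\[
dS_{\overline{\tr}_j}(\cF\gamma) = s_{\overline{\tr}_j}(\sA_1,\Phi_1) - s_{\overline{\tr}_j}(\sA_0,\Phi_0).
\]
Hence each $S_j(\gamma)$ descends to $M$. Because $M$ is finite-dimensional and $S_j(\gamma)$ has degree $2j-2$, the sum defining $S(\gamma) = \sum_{j\geq 1} S_j(\gamma)$ is finite on $M$, so $S(\gamma)$ also descends. Summing the displayed coboundary identity over $j$ and using \eqref{eqn:jthstringalt}, which says that the degree $2j-1$ component of $s(\Delta,\phi)$ is precisely $s_{\overline{\tr}_j}(\sA,\Phi)$, gives
\[
dS(\gamma) = \sum_{j\geq 1}\bigl(s_{\overline{\tr}_j}(\sA_1,\Phi_1) - s_{\overline{\tr}_j}(\sA_0,\Phi_0)\bigr) = s(\Delta_1,\phi_1) - s(\Delta_0,\phi_0),
\]
as required.

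The only real point requiring verification is the compatibility of time derivatives of smooth cubes under the correspondence between module connections (resp.~vector bundle Higgs fields) on $\sE$ and principal connections (resp.~principal Higgs fields) on $\cF(\sE)$. I expect this to be the main (mild) obstacle, but it follows from the local computation in Lemma~\ref{lemma:moduleconnectioncorrespond}: in a local unitary frame the cube $\Delta^\gamma$ corresponds to an $\Omega\u(n)$-valued $1$-form $\sA^{\cF\gamma}$, and applying $\varsigma_t^\ast\cL_{\d_t}$ commutes with this local identification because both sides are determined by the same family of local connection $1$-forms. The analogous statement for Higgs fields follows from Lemma~\ref{lemma:higgscorrespond}. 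Everything else is a direct application of Theorem~\ref{theorem:antistring} term by term.
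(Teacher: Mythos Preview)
Your proposal is correct and follows exactly the paper's approach: identify $S_j(\gamma) = S_{\overline{\tr}_j}(\cF\gamma)$ via \eqref{eqn:jthstringpotalt}, apply Theorem~\ref{theorem:antistring} termwise, and recognise the right-hand side as the degree $2j-1$ piece of $s(\Delta_1,\phi_1)-s(\Delta_0,\phi_0)$ via \eqref{eqn:jthstringalt}. The compatibility of time derivatives you flag as the ``only real point'' is already handled in the text immediately preceding the theorem, so no additional work is needed there.
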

\begin{proof}
Applying Theorem \ref{theorem:antistring} to the $j$-th string potential shows that $S_j(\gamma)$ is basic and
\[
dS_j(\gamma) = dS_{\overline{\tr}_j}(\cF(\gamma)) = s_{\overline{\tr}_j}(\sA_1,\Phi_1) - s_{\overline{\tr}_j}(\sA_0,\Phi_0),
\]
which is precisely the degree $2j-1$ piece of $s(\Delta_1,\phi_1) - s(\Delta_0,\phi_0)$.
\end{proof}

Similarly to the expression \eqref{eqn:alternateanti} relating the (principal bundle) string potentials and string forms, by \eqref{eqn:jthstringalt} and \eqref{eqn:jthstringpotalt} one has
\begin{equation}
\label{eqn:altstringpot}
S(\gamma) = \int_0^1  \varsigma_t^\ast \imath_{\d_t} s(\Delta^\gamma,\phi^\gamma)\,dt.
\end{equation}
Another result that transfers over readily from the principal bundle case is
\vspace{11pt}
\begin{proposition}
\label{prop:stringpotsameend}
If $\gamma_0,\gamma_1 \colon \I \to \cM_\sE\x\cH_\sE$ are smooth paths with the same endpoints then $S(\gamma_1) - S(\gamma_0)$ is exact.
\end{proposition}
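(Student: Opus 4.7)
The plan is to reduce this result to Proposition \ref{prop:closedend}, which is the analogous statement for the principal bundle relative string potentials. The key bridge is equation \eqref{eqn:jthstringpotalt}, which identifies the $j$-th string potential $S_j(\gamma)$ for the vector bundle smooth path $\gamma$ with the principal bundle string potential $S_{\overline{\tr}_j}(\cF(\gamma))$ associated to the invariant polynomial $\overline{\tr}_j$ and the corresponding smooth path $\cF(\gamma) \colon \I \to \cA_{\cF(\sE)} \times \cH_{\cF(\sE)}$ of connections and Higgs fields on the unitary frame bundle $\cF(\sE)$.

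First I would verify that the assignment $\gamma \mapsto \cF(\gamma)$ is compatible with endpoints: since smooth paths in $\cM_\sE \times \cH_\sE$ correspond bijectively to smooth paths in $\cA_{\cF(\sE)} \times \cH_{\cF(\sE)}$ (via the frame bundle correspondences of Lemmas \ref{lemma:higgscorrespond} and \ref{lemma:moduleconnectioncorrespond}), if $\gamma_0$ and $\gamma_1$ share the same endpoints in $\cM_\sE \times \cH_\sE$, then so do $\cF(\gamma_0)$ and $\cF(\gamma_1)$ in $\cA_{\cF(\sE)} \times \cH_{\cF(\sE)}$.

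Applying Proposition \ref{prop:closedend} to the invariant polynomial $\overline{\tr}_j \in I^j(\u(n))$ and the paths $\cF(\gamma_0), \cF(\gamma_1)$, one obtains that
\[
S_{\overline{\tr}_j}(\cF(\gamma_1)) - S_{\overline{\tr}_j}(\cF(\gamma_0)) = d\beta_j
\]
for some $(2j-3)$-form $\beta_j$ on $M$. By \eqref{eqn:jthstringpotalt}, this is exactly $S_j(\gamma_1) - S_j(\gamma_0) = d\beta_j$. Summing over $j$ produces an antiderivative $\beta = \sum_{j\geq 1} \beta_j$ for $S(\gamma_1) - S(\gamma_0)$; the sum is finite in each fixed degree since $\beta_j$ has degree $2j-3$, so there is no convergence issue on the finite-dimensional manifold $M$.

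The main obstacle, if any, is bookkeeping rather than substance: one must be careful that the forms $\sB_t, \varphi_t$ appearing in Definition \ref{defn:antistringform} really do coincide with the corresponding time derivatives of the principal connection and Higgs field on $\cF(\sE)$, so that \eqref{eqn:jthstringpotalt} is valid for each $j$. This was already noted in the paragraph preceding Theorem \ref{theorem:stringycs}. Alternatively, one could give a self-contained proof by constructing a smooth homotopy $\Gamma \colon \I^2 \to \cM_\sE \times \cH_\sE$ between $\gamma_0$ and $\gamma_1$ with $\Gamma(i, s)$ constant for $i = 0, 1$, and defining
\[
H(\Gamma) := \int_{\I^2} \varsigma_{(t,s)}^\ast \imath_{\d_s} \imath_{\d_t} s(\Delta^\Gamma, \phi^\Gamma),
\]
then mimicking the Cartan-formula and Stokes-theorem calculation from Proposition \ref{prop:closedend} using \eqref{eqn:altstringpot}; but the reduction via the frame bundle is shorter and avoids redoing the computation degree by degree.
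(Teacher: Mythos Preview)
Your proposal is correct and matches the paper's approach exactly: the paper states Proposition \ref{prop:stringpotsameend} without proof, prefacing it only with ``Another result that transfers over readily from the principal bundle case is'', so the intended argument is precisely the reduction to Proposition \ref{prop:closedend} via the identification \eqref{eqn:jthstringpotalt} that you carry out. Your observation that the sum over $j$ is finite in each degree, and your alternative self-contained route via \eqref{eqn:altstringpot}, are both sound additions.
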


Due to this last result, one may use the string potentials to define an equivalence relation that is analogous to Chern-Simons exactness.
For any two pairs $(\Delta_0,\phi_0)$ and $(\Delta_1,\phi_1)$ of based module connections and Higgs fields on $\sE$ compatible with the given Hermitian structure, there is a smooth path $\gamma$ from $(\Delta_0,\phi_0)$ to $(\Delta_1,\phi_1)$.
Set
\begin{equation}
\label{eqn:stringdata}
\cS(\Delta_0,\phi_0;\Delta_1,\phi_1) := S(\gamma) \mod \mbox{exact}
\end{equation}
for any such path, which is well-defined by Proposition \ref{prop:stringpotsameend}.
Moreover,
\[
\cS (\Delta_0,\phi_0; \Delta_2,\phi_2) = \cS (\Delta_0,\phi_0; \Delta_1,\phi_1) + \cS (\Delta_1,\phi_1; \Delta_2,\phi_2)
\]
and since $S(\gamma) = 0$ for a constant path one has $\cS (\Delta_0,\phi_0; \Delta_1,\phi_1) = -\cS (\Delta_1,\phi_1; \Delta_0,\phi_0)$.

Define pairs  $(\Delta_0,\phi_0)$ and  $(\Delta_1,\phi_1)$ as above to be \emph{equivalent}, denoted $(\Delta_0,\phi_0) \sim (\Delta_1,\phi_1)$ if and only if
\[
\cS(\Delta_0,\phi_0;\Delta_1,\phi_1) = 0\mod\mbox{exact}.
\]
This is an equivalence relation and an equivalence class $[\Delta,\phi]$ is a \emph{string datum} on $\sE$.
By direct analogy with structured vector bundles
\vspace{11pt}
\begin{definition}
\label{defn:struct}
A \emph{structured $\Omega$ vector bundle over $M$} is a pair $\bm{\sE} = (\sE,[\Delta,\phi])$, with $\sE\to M$ a Hermitian $\Omega$ vector bundle and $[\Delta,\phi]$ a string datum on $\sE$.
The \emph{rank} of the structured $\Omega$ vector bundle $\bm{\sE}$ is $\rank\bm{\sE} := \rank\sE$.
\end{definition}
\vspace{11pt}
\begin{example}
The trivial structured $\Omega$ vector bundle over $M$ of rank $k$ is 
\[
\underline{\bm{L\CC}}^k = \big(\underline{L\CC}^k, [\delta,\d]\big)
\]
where $\delta$ is the trivial module connection and $\d$ is the trivial Higgs field on the trivial $\Omega$ vector bundle $\underline{L\CC}^k$ over $M$ equipped with its canonical Hermitian structure.
\end{example}
An important type of structured $\Omega$ vector bundle is
\vspace{11pt}
\begin{example}
\label{example:canonstruct}
The canonical structured $\Omega$ vector bundle of rank $n$ is
\[
\bm{\sE}(n) := \big( \sE(n), [\Delta(n),\phi(n)] \big),
\]
where $\sE(n) \to U(n)$ is the canonical Hermitian $\Omega$ vector bundle of rank $n$ with its standard geometric data $\Delta(n)$, $\phi(n)$---see Example \ref{example:bmsen}.
\end{example}

Given a structured $\Omega$ vector bundle $\bm{\sE} = (\sE,[\Delta,\phi])$ over $M$ and a smooth map $f \colon N \to M$, one may define the pullback $f^\ast \bm{\sE}$ as $(f^\ast \sE,[f^\ast\Delta,f^\ast\phi])$.
This is well-defined since the string forms and string potentials are natural so that
\[
f^\ast \cS(\Delta_0,\phi_0;\Delta_1,\phi_1) = \cS(f^\ast\Delta_0,f^\ast\phi_0;f^\ast\Delta_1,f^\ast\phi_1).
\]
Structured $\Omega$ vector bundles $\bm{\sE} = (\sE,[\Delta,\phi])$ and $\bm{\sF}= (\sF,[\Delta',\phi'])$ are \emph{isomorphic} if there is an isomorphism $f \colon \sF \to \sE$ of $\Omega$ vector bundles such that $[\Delta',\phi'] = f^\ast[\Delta,\phi]:= [f^\ast\Delta,f^\ast\phi]$.

In this thesis it is important to know how structured $\Omega$ vector bundles behave under smooth homotopy.
Suppose that $f_t \colon N \to M$ is a family of smooth maps depending smoothly on the parameter $t \in\I$, equivalently  a smooth map $F\colon N\x\I \to M$ where $F(\cdot,t) = f_t$.
Take a Hermitian $\Omega$ vector bundle $\sE \to M$ with compatible based module connection $\Delta$ and Higgs field $\phi$ and for $t\in\I$ consider the pullbacks $f_t^\ast(\sE)$, which are Hermitian $\Omega$ vector bundles with compatible connections $f_t^\ast\Delta$ and Higgs fields $f_t^\ast\phi$.

Let $E$ be the caloron transform of $\sE$ equipped with the Hermitian caloron-transformed connection $\nabla$.
Define the family of smooth curves $\rho_{x} \colon \I \to M$ by $\rho_{x}(t) := f_t(x)$ for $x\in N$.
Similarly to \cite[p.~584]{SSvec} and Lemma \ref{lemma:pointygrassman}, let  $\varrho_t \colon (f_0\x\id)^\ast E \to  (f_t\x\id)^\ast E$  be the smooth isomorphisms determined by parallel transport along the family of curves $\rho_{x,\theta}(t):=(f_t(x),\theta)$, observing that the $\varrho_t$ cover the identity on $N\x\sone$.

Set $F:=  (f_0\x\id)^\ast E$ and $\nabla_t:=  \varrho_t^\ast (f_0\x\id)^\ast\nabla$, noting that for all $t\in\I$ the connection $\nabla_t$ is compatible with the induced Hermitian structure on $F$.
Write $\dot\rho_{x,\theta}(t)$ for the tangent vector to $\rho_{x,\theta}$ at $t$, then
\[
\CS(\nabla_0,\nabla_1) = \int_0^1 (f_t\x\id)^\ast \imath_{\dot\rho_{x,\theta}(t)} \Ch(\nabla)\,dt \mod\mbox{exact}
\]
(cf. \cite[(1.8)]{SSvec}).
Recall that
\[
s(\Delta,\phi) = \widehat{\int_\sone} \Ch(\nabla),
\]
then by Lemma \ref{lemma:intpullsquare} and the fact that $\dot\rho_{x,\theta}(t)$ has no $\sone$-component,
\[
\widehat{\int_\sone} (f_t\x\id)^\ast = f_t^\ast\widehat{\int_\sone}
\;\;\mbox{ and }\;\;
\widehat{\int_\sone}\imath_{\dot\rho_{x,\theta}(t)} = \imath_{\dot\rho_{x}(t)}\widehat{\int_\sone}
\] 
so after integrating over the fibre one obtains
\begin{equation}
\label{eqn:homotopicstringpot}
\cS(\Delta_0,\phi_0;\Delta_1,\phi_1) = \int_0^1 f_t^\ast \imath_{\dot\rho_{x}(t)} s(\Delta,\phi)\,dt \mod\mbox{exact},
\end{equation}
where $\Delta_t, \phi_t$ correspond to the connection $\nabla_t$.
This argument shows that $f_t^\ast\sE \cong f_0^\ast \sE$ for any $t\in\I$ and also that
\begin{equation}
\label{eqn:homotopicstringpot2}
\cS(f_0^\ast\Delta,f_0^\ast\phi;f_1^\ast\Delta,f_1^\ast\phi) = \int_0^1 f_t^\ast \imath_{\dot\rho_{x}(t)} s(\Delta,\phi)\,dt \mod\mbox{exact},
\end{equation}
abusing notation slightly by omitting the isomorphism $f_1^\ast\sE \cong f_0^\ast \sE$.

The direct sum operation $\oplus$ on $\ovect(M)$ extends to structured $\Omega$ vector bundles
\vspace{11pt}
\begin{lemma}
\label{lemma:antistringprop}
Let $\sE,\sF$ be $\Omega$ vector bundles over $M$ and take any pair of smooth paths $\gamma \colon \I \to \cM_\sE \x\cH_\sE$ and $\sigma \colon \I \to \cM_\sF \x\cH_\sF$.
These together determine a smooth path
\[
\gamma \oplus \sigma \colon \I \to \cM_{\sE \oplus \sF} \x\cH_{\sE \oplus \sF}
\]
via $\gamma\oplus\sigma(t) := (\Delta^\gamma_t \oplus \Delta^\sigma_t, \phi^\gamma_t \oplus \phi^\sigma_t)$.
Then
\[
S(\gamma\oplus \phi) = S(\gamma) + S(\phi)
\]
and by Theorem \ref{theorem:stringycs}
\[
dS(\gamma\oplus \phi) = s(\Delta^\gamma_1 \oplus \Delta^\sigma_1, \phi^\gamma_1 \oplus \phi^\sigma_1) - s(\Delta^\gamma_0 \oplus \Delta^\sigma_0, \phi^\gamma_0 \oplus \phi^\sigma_0).
\]
Moreover, for any choice of pairs $(\Delta,\phi)$, $(\bar\Delta,\bar\phi)$ on $\sE$ and $(\Delta',\phi')$, $(\bar\Delta',\bar\phi')$ on $\sF$
\[
\cS(\Delta\oplus \Delta',\phi\oplus\phi';\bar\Delta\oplus \bar\Delta',\bar\phi\oplus\bar\phi') = 
\cS(\Delta,\phi;\bar\Delta,\bar\phi) + \cS(\Delta',\phi';\bar\Delta',\bar\phi') 
\]
\end{lemma}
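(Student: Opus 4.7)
The plan is to reduce everything to the additivity of the total string form established in Lemma \ref{lemma:stringprop} together with the convenient formula \eqref{eqn:altstringpot} expressing the total string potential as an integral of a contraction of the total string form.

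First I would prove the identity $S(\gamma\oplus\sigma)=S(\gamma)+S(\sigma)$. Observe that $\gamma\oplus\sigma$ is induced by the smooth $1$-cubes $\Delta^\gamma\oplus\Delta^\sigma$ and $\phi^\gamma\oplus\phi^\sigma$ on $(\sE\oplus\sF)\x\I\to M\x\I$, regarded as a Hermitian $\Omega$ vector bundle with induced Hermitian structure; in particular the curvature and Higgs field covariant derivative split as block sums exactly as in the proof of Lemma \ref{lemma:stringprop}. Applying Lemma \ref{lemma:stringprop} fibrewise on $M\x\I$ therefore gives
\[
s(\Delta^{\gamma\oplus\sigma},\phi^{\gamma\oplus\sigma})=s(\Delta^\gamma,\phi^\gamma)+s(\Delta^\sigma,\phi^\sigma).
\]
Now contract with $\d_t$, pull back by the slice map $\varsigma_t$, integrate over $\I$ and invoke \eqref{eqn:altstringpot} to obtain $S(\gamma\oplus\sigma)=S(\gamma)+S(\sigma)$. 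The expression for $dS(\gamma\oplus\sigma)$ is immediate from Theorem \ref{theorem:stringycs} applied to the path $\gamma\oplus\sigma$.

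For the final statement, choose smooth paths $\gamma$ from $(\Delta,\phi)$ to $(\bar\Delta,\bar\phi)$ in $\cM_\sE\x\cH_\sE$ and $\sigma$ from $(\Delta',\phi')$ to $(\bar\Delta',\bar\phi')$ in $\cM_\sF\x\cH_\sF$. Then $\gamma\oplus\sigma$ is a smooth path in $\cM_{\sE\oplus\sF}\x\cH_{\sE\oplus\sF}$ from $(\Delta\oplus\Delta',\phi\oplus\phi')$ to $(\bar\Delta\oplus\bar\Delta',\bar\phi\oplus\bar\phi')$. By definition \eqref{eqn:stringdata} (noting that $\cS$ is well-defined modulo exact forms by Proposition \ref{prop:stringpotsameend}) and the first part of the lemma,
\[
\cS(\Delta\oplus\Delta',\phi\oplus\phi';\bar\Delta\oplus\bar\Delta',\bar\phi\oplus\bar\phi')=S(\gamma\oplus\sigma)=S(\gamma)+S(\sigma)
\]
modulo exact forms, which equals $\cS(\Delta,\phi;\bar\Delta,\bar\phi)+\cS(\Delta',\phi';\bar\Delta',\bar\phi')$ as required.

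There is no real obstacle here: the argument is essentially bookkeeping once one has Lemma \ref{lemma:stringprop} and the interpretation \eqref{eqn:altstringpot} in hand. The only subtle point is making sure that the smooth $n$-cubes of connections and Higgs fields on $\sE\oplus\sF$ obtained by taking block sums really are compatible with the Hermitian structure induced on the Whitney sum, but this is clear since the block sum of unitary connections (resp.\ skew-Hermitian Higgs fields) is unitary (resp.\ skew-Hermitian) for the direct sum Hermitian structure via Lemma \ref{lemma:whitney}.
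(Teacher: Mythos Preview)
Your proof is correct and, for the first assertion, essentially identical to the paper's: both reduce to the block-sum additivity of the trace as in Lemma~\ref{lemma:stringprop}, with your use of \eqref{eqn:altstringpot} being a clean repackaging of the same computation.

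For the final assertion your argument is actually more direct than the paper's. You take arbitrary paths $\gamma$ and $\sigma$ and apply the first part to $\gamma\oplus\sigma$ in one stroke. The paper instead inserts the intermediate point $(\Delta\oplus\bar\Delta',\phi\oplus\bar\phi')$ and uses the cocycle property of $\cS$ to split into two legs, on each of which one summand is held constant; the first part then applies with one of the constituent paths being constant (hence contributing zero). Your route avoids this detour entirely, at the modest cost of needing the first part in its full generality (both factors varying) rather than only the special case where one factor is constant. Since you have already established the general case, your derivation is the shorter one.
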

\begin{proof}
Compare with \cite[Lemma 1.4]{SSvec}.
The proof of the first two assertions is essentially that of Lemma \ref{lemma:stringprop}, since the curvatures and time derivatives are block sums.

The last assertion follows from
\begin{multline*}
\cS(\Delta\oplus \Delta',\phi\oplus\phi';\bar\Delta\oplus \bar\Delta',\bar\phi\oplus\bar\phi') = 
\cS(\Delta\oplus \Delta',\phi\oplus\phi';\Delta\oplus \bar\Delta',\phi\oplus\bar\phi')\\
 + \cS(\Delta\oplus \bar\Delta',\phi\oplus\bar\phi';\bar\Delta\oplus \bar\Delta',\bar\phi\oplus\bar\phi')
\end{multline*}
and by using the first assertion, i.e.
\[
\cS(\Delta\oplus \Delta',\phi\oplus\phi';\Delta\oplus \bar\Delta',\phi\oplus\bar\phi') = \cS(\Delta',\phi';\bar\Delta',\bar\phi')
\]
and similarly for the second term.
\end{proof}

It follows immediately that the direct sum operation is well-defined on string data, so if $\bm{\sE} = (\sE,[\Delta,\phi])$ and $\bm{\sF} = (\sF,[\Delta',\phi'])$ are structured $\Omega$ bundles over $M$ then their \emph{direct sum}
\[
\bm{\sE} \oplus \bm{\sF} := \left(\sE\oplus \sF,\left[\Delta\oplus\Delta',\phi\oplus\phi'\right]\right)
\]
is a structured $\Omega$ bundle over $M$.
Note there is an obvious isomorphism $\bm{\sE} \oplus\bm{\sF} \cong \bm{\sF}\oplus\bm{\sE}$.

In order to define the $\Omega$ model, it is necessary to consider a special class of structured Hermitian $\Omega$ vector bundles, namely those structured $\Omega$ vector bundles that are pullbacks of $\bm{\sE}(n)$ for some $n$ (Example \ref{example:canonstruct}).
Observe that if $g \colon M \to U(n)$ and $h\colon M \to U(m)$ are smooth maps then the block sum
\[
g\oplus h \colon x \longmapsto
\begin{pmatrix}
g(x) & 0\\
0 & h(x)
\end{pmatrix}
\]
is a smooth map $g\oplus h \colon M \to U(n+m)$ and there is an isomorphism of structured $\Omega$ vector bundles
\[
g^\ast\bm{\sE}(n) \oplus h^\ast\bm{\sE}(m) \simto (g\oplus h)^\ast\bm{\sE}(n+m).
\]
The trivial structured $\Omega$ vector bundle $\underline{\bm{L\CC}}^n$ coincides with the pullback of $\bm{\sE}(n)$ by a constant map $M\to U(n)$.
\vspace{11pt}

\begin{definition}
Define $\struct(M)$ to be the set of isomorphism classes of structured $\Omega$ vector bundles
\[
\struct(M) := \{ [g^\ast\bm{\sE}(n)] \mid \mbox{$g$ is a smooth map $M \to U(n)$ for some $n$}\}.
\]
Since $h^\ast\bm{\sE}(m) \oplus g^\ast\bm{\sE}(n) \cong g^\ast\bm{\sE}(n) \oplus h^\ast\bm{\sE}(m) \cong (g\oplus h)^\ast \bm{\sE}(n+m)$ for any smooth maps $g \colon M \to U(n)$, $h \colon M \to U(m)$, $\struct(M)$ is an abelian semi-group under the direct sum operation on structured $\Omega$ vector bundles.
A smooth map $f \colon N \to M$ induces the pullback map $f^\ast \colon \struct(M) \to \struct(N)$  respecting the semi-group structure, so that the assignment
\[
M \longmapsto \struct(M)
\] 
determines a contravariant functor from the category of smooth compact manifolds with corners to the category of abelian semi-groups.
\end{definition}
\vspace{11pt}
\begin{remark}
\label{remark:whyrestrict}
$\struct(M)$ is a special set of isomorphism classes of structured $\Omega$ vector bundles.
The primary reason for making this restriction is Theorem \ref{theorem:inversesexist} below,  which guarantees that every element $\bm{\sE}$ of $\struct(M)$ has an `inverse', i.e.~that there is some $\bm{\sF} \in\struct(M)$ such that $\bm{\sE}\oplus\bm{\sF}$ is trivial.
One expects that any arbitrary structured $\Omega$ vector bundle $\bm{\sE}$ over $M$ (not necessarily isomorphic to a pullback of some $\bm{\sE}(n)$) should have an inverse, however a proof of this fact has not yet been found.
\end{remark}

From Theorem \ref{theorem:stringycs} and the definition of string data the map
\begin{equation}
\label{eqn:stringsemihom}
s \colon \struct(M) \lo \Omega^{odd}_{d=0}(M)
\end{equation}
given by 
\[
(\sE,[\Delta,\phi]) \longmapsto s(\Delta,\phi)
\]
is a well-defined semi-group homomorphism.
It is shown below in Section \ref{S:omega}  that after passing to the group completion, this is the curvature morphism of the $\Omega$ model.

The following theorem is crucial to the construction of the $\Omega$ model and the proof is based on related work appearing in \cite{SSvec,TWZ}.
\vspace{11pt}
\begin{theorem}
\label{theorem:inversesexist}
Let $g \colon M \to U(n)$ be any smooth map.
Then there is an isomorphism of structured $\Omega$ vector bundles
\[
g^\ast\bm{\sE}(n) \oplus (g^{-1})^\ast \bm{\sE}(n) \cong \underline{\bm{L\CC}}^{2n}.
\]
\end{theorem}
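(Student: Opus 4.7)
The plan is to exhibit an explicit smooth homotopy $H\colon M\x\I \to U(2n)$ from $g\oplus g^{-1}$ to the constant identity map, use parallel transport along $H$ to produce the underlying $\Omega$ vector bundle isomorphism, and then use formula \eqref{eqn:homotopicstringpot2} to reduce the equality of string data to the exactness of a single transgression integral on $M$.

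Concretely, I would take the rotation homotopy
\[
H_t(x) := R_t\, (g(x) \oplus I_n)\, R_t^{-1}\, (I_n \oplus g(x)^{-1}),
\qquad
R_t := \begin{pmatrix} \cos(\tfrac{\pi t}{2})\, I_n & -\sin(\tfrac{\pi t}{2})\, I_n \\ \sin(\tfrac{\pi t}{2})\, I_n & \cos(\tfrac{\pi t}{2})\, I_n \end{pmatrix},
\]
for which $H_0 = g\oplus g^{-1}$ and $H_1 \equiv I_{2n}$. Pulling back the canonical structured $\Omega$ vector bundle $\bm{\sE}(2n)$ from Example \ref{example:canonstruct} along $H$ yields a Hermitian $\Omega$ vector bundle $H^\ast\sE(2n) \to M\x\I$ with pulled-back based module connection $H^\ast\Delta(2n)$ and Higgs field $H^\ast\phi(2n)$. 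Parallel transport along the $\I$-curves of $H^\ast\sE(2n)$ produces an isomorphism $\alpha \colon H_0^\ast\sE(2n)\simto H_1^\ast\sE(2n) = \underline{L\CC}^{2n}$ of Hermitian $\Omega$ vector bundles, which composed with the natural identification $H_0^\ast\sE(2n) \cong g^\ast\sE(n) \oplus (g^{-1})^\ast\sE(n)$ gives the required isomorphism of underlying bundles.

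The transported family $t \mapsto (\Delta_t,\phi_t)$ on $\underline{L\CC}^{2n}$ is then a smooth path from $\alpha_\ast H_0^\ast(\Delta(2n),\phi(2n))$ to $(\delta,\d)$, and by formula \eqref{eqn:homotopicstringpot2} its total string potential equals
\[
\int_0^1 H_t^\ast\, \imath_{\dot\rho_x(t)}\, s(\Delta(2n),\phi(2n))\,dt \mod \text{exact}.
\]
Verifying that this integral is itself exact establishes that $[\alpha_\ast H_0^\ast(\Delta(2n),\phi(2n))] = [\delta,\d]$ as string data on $\underline{L\CC}^{2n}$, and hence the desired isomorphism of structured $\Omega$ vector bundles.

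The main obstacle is precisely the exactness of this transgression integral. A naive chain-homotopy argument only shows it is closed, since $(g\oplus g^{-1})^\ast s = g^\ast s + (g^{-1})^\ast s = 0$ by the additivity of the trace together with $(g^{-1})^\ast\tau = -g^\ast\tau$, and $H_1^\ast s = 0$ trivially. To upgrade closedness to exactness I would use the identification $s(\Delta(2n),\phi(2n)) = \tau \mod \text{exact}$ on $U(2n)$ (from the proof of Theorem \ref{theorem:stringchern} applied to $\sE(2n)$, whose Higgs field holonomy is $\id_{U(2n)}$) together with the bi-invariance of $\tau$ and the factorisation $H_t = \gamma_t \beta$ with $\gamma_t := R_t(g\oplus I)R_t^{-1}$ and $\beta := I\oplus g^{-1}$. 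The conjugation path $\gamma_t$ interpolates between $g\oplus I$ and $I\oplus g$, and bi-invariance forces the integrand to split as $g^\ast\tau + (g^{-1})^\ast\tau$ plus exact terms arising from the Maurer-Cartan identity $d\Theta + \tfrac12[\Theta,\Theta]=0$, so that the net contribution is exact. This last reduction is entirely analogous to the role Lemma \ref{lemma:flatuns} plays in the Simons-Sullivan construction and is the technical heart of the argument.
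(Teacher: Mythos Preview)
Your overall strategy---rotation homotopy from $g\oplus g^{-1}$ to the identity, followed by formula \eqref{eqn:homotopicstringpot2}---is exactly the paper's approach, with a cosmetically different but equivalent rotation (you conjugate $g\oplus I_n$; the paper conjugates $I_n\oplus g^{-1}$). The divergence is at what you correctly identify as the technical heart: showing the transgression integral is exact.

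Here you leave a genuine gap. Your proposed bi-invariance/factorisation argument is only a sketch, and as stated it does not type-check: for each fixed $t$ the integrand $H_t^\ast\,\imath_{\dot\rho_x(t)}\, s(\Delta(2n),\phi(2n))$ is an \emph{even}-degree form on $M$, so it cannot literally ``split as $g^\ast\tau + (g^{-1})^\ast\tau$ plus exact terms''. Bi-invariance of $\tau$ is also not directly applicable, since the contraction $\imath_{\dot\rho}$ destroys the closed bi-invariant structure. The analogy with Lemma \ref{lemma:flatuns} is misleading: there one computes a Chern--Simons form along a \emph{linear} path of connections, whereas here the path is a conjugation and the relevant form is a transgression of $\tau$.

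The paper's resolution is both simpler and sharper: the integrand is \emph{identically zero}, not merely exact. Since $s(\Delta(2n),\phi(2n)) = \tau$ on the nose (Lemma \ref{lemma:universalstringform} applied to the path fibration), the integrand is a sum over $j$ of constants times $\tr\!\big(X_t^{-1}\partial_t X_t \cdot (X_t^{-1}dX_t)^{2j}\big)$, where $X_t = A\,R(t)\,B\,R(t)^{-1}$ with $A = g\oplus 1$, $B = 1\oplus g^{-1}$. A direct block-matrix computation shows that $(X_t^{-1}dX_t)^{2j}$ is conjugate (by $R(t)B^{-1}$) to the block-scalar $\cos^{2j}t\cdot I_{2n}\cdot(g^{-1}dg)^{2j}$, while $X_t^{-1}\partial_t X_t$ becomes, under the same conjugation, a difference of two matrices whose block-diagonal parts vanish. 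The trace of an off-block-diagonal matrix against a block-scalar is zero, so each term vanishes for every $t$. This replaces your proposed structural argument with a two-page computation that requires nothing beyond the cyclic property of the trace.
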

\begin{proof}
As remarked above, the structured $\Omega$ vector $g^\ast\bm{\sE}(n) \oplus (g^{-1})^\ast \bm{\sE}(n)$ coincides with the pullback of $\bm{\sE}(2n)$ by the block sum map
\[
g\oplus g^{-1} \colon x \longmapsto
\begin{pmatrix}
g(x) & 0\\
0 & g(x)^{-1}
\end{pmatrix}
\in U(2n)
\]
and the structured $\Omega$ vector bundle $\underline{\bm{L\CC}}^{2n}$ is the pullback of $\bm{\sE}(2n)$ by the constant map at the identity.
The method of this proof is to construct a certain smooth homotopy from $g\oplus g^{-1}$ to $\id$ and then use \eqref{eqn:homotopicstringpot2}.

Similarly to \cite[Lemma 3.6]{TWZ}, define the smooth map $R \colon\J \to U(2n)$ by setting
\[
R(t):=
\begin{pmatrix}
\cos t & -\sin t\\
\sin t & \cos t
\end{pmatrix}
\]
which is a block matrix consisting of $n\x n$ blocks.
Writing
\[
A = 
\begin{pmatrix}
g& 0\\
0 & 1
\end{pmatrix}
\;\;\mbox{ and }\;\;
B= 
\begin{pmatrix}
1 & 0\\
0 & g^{-1}
\end{pmatrix},
\]
for $t \in \J$ consider the map $X_t \colon M \to U(2n)$ given by
\[
X_t:= 
A
R(t)
B
R(t)^{-1}.
\]
The assignment $M\x\J \to U(2n)$ sending $(x,t) \mapsto X_t(x)$ is a smooth homotopy from $g\oplus g^{-1}$ to the identity and by direct calculation
\[
X_t^{-1} \d_t X_t = R(t) B^{-1} J B R(t)^{-1} - J
\]
where
\[
J = 
\begin{pmatrix}
0 & -1\\
1 & 0
\end{pmatrix}\!.
\]

Applying the exterior derivative to $X_t$ (considered as a $U(2n)$-valued function on $M$) gives
\begin{multline*}
dX_t = (dA)R(t)BR(t)^{-1} + AR(t)(dB)R(t)^{-1}\\= \begin{pmatrix}
dg& 0\\
0 & 0
\end{pmatrix}
R(t)
B
R(t)^{-1}
+
AR(t)
\begin{pmatrix}
0& 0\\
0 & -g^{-1}(dg) g^{-1}
\end{pmatrix}
R(t)^{-1}
\end{multline*}
since $d(gg^{-1}) = (dg)g^{-1} +g dg^{-1} = 0$.
Consequently, for any $j \geq 0$
\[
\big(X_t^{-1} dX_t \big)^{2j}
=
R(t) B^{-1} \big[R(t)^{-1} A^{-1}(dA) R(t) + (dB)B^{-1} \big]^{2j} B R(t)^{-1}.
\]
Another direct calculation gives
\[
R(t)^{-1} A^{-1}(dA) R(t) + (dB)B^{-1} = 
\begin{pmatrix}
\cos^2 t & -\cos t\,\sin t\\
-\cos t\,\sin t & - \cos^2t
\end{pmatrix}
g^{-1}dg
\]
and so
\[
\big[R(t)^{-1} A^{-1}(dA) R(t) + (dB)B^{-1} \big]^{2j} = 
\begin{pmatrix}
\cos^{2j} t & 0\\
0 & \cos^{2j}t
\end{pmatrix}
\big(g^{-1}dg\big)^{2j}.
\]
By the cyclic property of $\tr$ and using $R^{-1}(t)JR(t) = J$,
\begin{align*}
\tr\Big( X_t^{-1}\d_t X_t \cdot \big(X_t^{-1}dX_t \big)^{2j}  \Big)& = 
\tr \bigg( J
\begin{pmatrix}
\cos^{2j} t & 0\\
0 & \cos^{2j}t
\end{pmatrix}
\big(g^{-1}dg\big)^{2j}
 \bigg)\\
&\qquad\quad - \tr\bigg(
BJB^{-1}
\begin{pmatrix}
\cos^{2j} t & 0\\
0 & \cos^{2j}t
\end{pmatrix}
\big(g^{-1}dg\big)^{2j}
 \bigg)\\
&=
\tr\bigg(
\begin{pmatrix}
0& -\cos^{2j} t\\
 \cos^{2j}t &0
\end{pmatrix}
\big(g^{-1}dg\big)^{2j}
 \bigg)\\
&\qquad\quad-
\tr\bigg(
\begin{pmatrix}
0& -g\cos^{2j} t \\
 g^{-1}\cos^{2j}t &0
\end{pmatrix}
\big(g^{-1}dg\big)^{2j}
 \bigg)\\
&=0.
\end{align*}
Changing track for the moment observe that \eqref{eqn:homotopicstringpot2} and Lemma \ref{lemma:universalstringform} give
\begin{multline*}
\cS\big( g^\ast \Delta(n)\oplus (g^{-1})^\ast\Delta(n),g^\ast \phi(n)\oplus (g^{-1})^\ast\phi(n); \delta,\d \big)\\
=\int_0^{\tfrac{\pi}{2}} X_t^\ast \imath_{\dot\rho_{x}(t)} \bigg(  \sum_{j=0}^\infty \frac{-j!}{(2j+1)!} \left(-\frac{1}{2\pi i}\right)^{j+1} \tr\left(\Theta^{2j+1}\right) \bigg)dt\mod\mbox{exact}
\end{multline*}
where $\dot\rho_x(t) = \d_t X_t(x)$ is tangent to the curve $t\mapsto X_t(x)$.
But by the above
\begin{multline*}
X_t^\ast \imath_{\dot\rho_x(t)} \tr\big(\Theta^{2j+1}\big) = (2j+1) X_t^\ast \tr\big(\imath_{\dot\rho_x(t)}\Theta \cdot \Theta^{2j} \big)\\
 = (2j+1) \tr\big(X_t^{-1}\d_t X_t\cdot (X_t^{-1}dX_t)^{2j} \big) = 0
\end{multline*}
since $X_t^\ast \imath_{\dot\rho_{x}(t)} \Theta = X_t^{-1}\d_t X_t$ and $X_t^\ast\Theta = X_t^{-1}dX_t$.

This shows that the pairs $(g^\ast \Delta(n)\oplus (g^{-1})^\ast\Delta(n),g^\ast \phi(n)\oplus (g^{-1})^\ast\phi(n))$ and $(\delta,\d)$ determine the same string datum and, hence, that $g^\ast\bm{\sE}(n) \oplus (g^{-1})^\ast \bm{\sE}(n) \cong \underline{\bm{L\CC}}^{2n}$.
\end{proof}


\section{The $\Omega$ model}
\label{S:omega}
Having gathered most of the required results, one is at last in a position to define the $\Omega$ model.
Define $\dK(M)$ to be the rank zero subgroup of $K(\struct(M))$, that is
\begin{equation}
\label{eqn:odddk}
\dK(M) := \big\{\, \bm{\sE} - \bm{\sF} \in  K \left(\struct(M)\right)  \,\big|\, \rank(\bm{\sE} - \bm{\sF} )=\rank\bm{\sE} - \rank\bm{\sF}= 0\,\big\}.
\end{equation}
\begin{remark}
Note that $\dK(M)$ is defined by directly enforcing the zero rank condition, in contrast to the definition of $\cK(M)$ in Section \ref{S:ouroddk}, which used a restriction-to-basepoint map (though of course, one could have required rank zero from the beginning).
This is because a point has non-trivial odd differential $K$-theory  even though the odd $K$-theory of a point is trivial---see Example \ref{example:odddkpoint}.
\end{remark}

One declares structured $\Omega$ vector bundles $\bm{\sE}$ and $\bm{\sF}$ over $M$ to be \emph{stably isomorphic} if there is some $n$ such that $\bm{\sE} \oplus \underline{\bm{L\CC}}^n \cong \bm{\sF} \oplus \underline{\bm{L\CC}}^n$.
By Theorem \ref{theorem:inversesexist} and the definition:
\begin{itemize}
\item
$\bm{\sE} - \bm{\sF} = \bm{\sE}' - \bm{\sF}'$ in $\dK(M)$ if and only if there is some $\bm{\mathsf{G}} \in \struct(M)$ such that $\bm{\sE} \oplus \bm{\sF}' \oplus \bm{\mathsf{G}}\cong  \bm{\sE}' \oplus \bm{\sF}\oplus \bm{\mathsf{G}}$.
By Theorem \ref{theorem:inversesexist} there is some $\bm{\sH} \in \struct(M)$ such that $\bm{\mathsf{G}} \oplus \bm{\sH} = \underline{\bm{L\CC}}^k$ for some $k$, which implies $\bm{\sE} - \bm{\sF} = \bm{\sE}' - \bm{\sF}'$ in $\dK(M)$ if and only if $\bm{\sE}\oplus\bm{\sF}'$ and $\bm{\sE}'\oplus\bm{\sF}$ are stably isomorphic.

\item
Taking any $\bm{\sE} - \bm{\sF} \in \dK(M)$, there is some $\bm{\mathsf{G}} \in \struct(M)$ such that $\bm{\sF} \oplus\bm{\mathsf{G}} = \underline{\bm{L\CC}}^k$ for some $k$ and so
\[
\bm{\sE} - \bm{\sF} = \bm{\sE} + \bm{\mathsf{G}}- \bm{\sF} -\bm{\mathsf{G}} = \bm{\sE}\oplus\bm{\mathsf{G}} - \underline{\bm{L\CC}}^k.
\]
Thus any element of $\dK(M)$ may be written in the form $\bm{\sE} - \underline{\bm{L\CC}}^n$ where $n = \rank\bm{\sE}$.

\item
Combining the previous two observations, $\bm{\sE} - \underline{\bm{L\CC}}^n = 0$ in $\dK(M)$ if and only if there is some $k$ such that $\bm{\sE}\oplus\underline{\bm{L\CC}}^k \cong \underline{\bm{L\CC}}^{n+k}$.
\end{itemize}

As mentioned above, the string form map \eqref{eqn:stringsemihom} extends to a group homomorphism
\begin{equation}
\label{eqn:stringhom}
\check s \colon \dK(M) \lo \Omega^{odd}_{d=0}(M)
\end{equation}
given explicitly by sending
\[
\check s \colon \bm{\sE} - \bm{\sF} \longmapsto s(\Delta,\phi) - s(\Delta',\phi')
\]
where $\bm{\sE} = (\sE,[\Delta,\phi])$ and $\bm{\sF}=(\sF,[\Delta,\phi'])$.
There is also a surjective map
\[
\delta \colon \dK(M) \lo \cK(M)
\]
that discards the connective data, namely
\[
\delta \colon \bm{\sE} - \bm{\sF} \longmapsto \sE - \sF.
\]
In fact, the maps $\check s$ and $\delta$ are natural transformations of functors and, recalling the odd Chern character map $s$ of \eqref{eqn:oddchernchars}, one has the commuting diagram of homomorphisms
\[
\xy
(0,25)*+{\dK(M)}="1";
(50,25)*+{\cK(M)}="2";
(0,0)*+{\Omega^{odd}_{d=0}(M)}="3";
(50,0)*+{H^{odd}(M;\RR)}="4";
{\ar^{\delta} "1";"2"};
{\ar^{\deR} "3";"4"};
{\ar^{\check s} "1";"3"};
{\ar^{s} "2";"4"};
\endxy
\]
with $\deR$ the natural map given by the de Rham isomorphism.
These data form the beginnings of the sought-after differential extension of $\cK(M)$: $\delta$ is the \emph{underlying class} and $\check s$ is the \emph{curvature}.

In order to easily obtain the remaining data required of a differential extension---namely, the \emph{action of forms} together with its associated exact sequence---one relates $\dK$ to a construction due to Tradler, Wilson and Zeinalian \cite{TWZ}.


\subsection{The $\TWZ$ extension and the action of forms on $\dK$}
\label{S:other}
In a recent paper \cite{TWZ} the authors construct a differential extension of odd $K$-theory, referred to in this thesis as the \emph{$\TWZ$ extension} and denoted $\dl$.
The $\TWZ$ extension refines the homotopy-theoretic model of odd topological $K$-theory ($K^{-1}(M) \cong [M,U]$) by using an equivalence relation on the space $\Map(M,U)$ of smooth maps $M \to U$ that is finer than smooth homotopy.
It is not shown in \cite{TWZ} that $\dl$ defines the odd part of differential $K$-theory.

In this section, one briefly reviews the $\TWZ$ extension before constructing an explicit natural isomorphism $\dK\to \dl$ that is used to easily define the action of forms on $\dK$.
Results relating to the $\TWZ$ extension are stated without proof: the interested reader is referred to \cite{TWZ}.
\vspace{11pt}
\begin{definition}[\cite{TWZ}]
Two smooth maps $g_0, g_1 \in \Map(M,U)$ are \emph{$\CS$-equivalent}, written $g_0\sim_\CS g_1$, if there exists a smooth homotopy $G \colon M \x\I \to U$ from $g_0$ to $g_1$ such that
\begin{equation}
\label{eqn:twzcsequiv}
\sum_{j=0}^\infty  \frac{-j!}{(2j)!} \left(-\frac{1}{2\pi i}\right)^{j+1} \int_0^1 \tr\big(g_t^{-1} \d_t g_t \cdot (g_t^{-1}dg_t)^{2j} \big)\,dt = 0 \mod \mbox{exact}, 
\end{equation}
where $g_t := G(\cdot,t) \colon M\to U$.
The set of equivalence classes
\[
\dl(M) := \Map(M,U) /\! \sim_\CS
\]
inherits an abelian group structure from the block sum operation on $\Map(M,U)$ and the assignment $M \mapsto \dl(M)$ defines a contravariant functor on the category of compact manifolds with corners to the category of abelian groups that acts on morphisms by pullback.
\end{definition}
\vspace{11pt}
\begin{remark}
The condition \eqref{eqn:twzcsequiv} is realised below as the homotopy-theoretic analogue of the total string potential condition that defines string data in the $\Omega$ model.
\end{remark}

As $\dl$ is a differential extension of $K^{-1}$, it comes with curvature, underlying class and action of forms maps.
However, it turns out to be more convenient to work with a different model for $\dl$ described in \cite[Appendix A]{TWZ}, which is defined on a larger generating set.
\vspace{11pt}
\begin{definition}[\cite{TWZ}]
Define
\[
S(M) := \{ (g,\chi) \in \Map(M,U) \x \Omega^{even}(M) \},
\]
which is a monoid under the operation $(g_1,\chi_1) + (g_2,\chi_2) := (g_1\oplus g_2,\chi_1+\chi_2)$ with identity $(\id,0)$, where $\id\colon M\to U$ is the constant map sending $M$ to the identity in $U$.
Pairs $(g_0,\chi_0), (g_1,\chi_1) \in S(M)$ are \emph{CS-equivalent}, denoted $(g_0,\chi_0) \sim_\CS (g_1,\chi_1)$, if there exists a smooth homotopy $G \colon M \x\I \to U$ from $g_0$ to $g_1$ such that\footnote{the right hand side of \eqref{eqn:twzcsequiv2} differs from the expression appearing in \cite[p.~27]{TWZ} by a factor of $-1$, as the original expression fails to preserve the curvature map described below.}
\begin{equation}
\label{eqn:twzcsequiv2}
\sum_{j=0}^\infty  \frac{-j!}{(2j)!} \left(-\frac{1}{2\pi i}\right)^{j+1} \int_0^1 \tr\big(g_t^{-1} \d_t g_t \cdot (g_t^{-1}dg_t)^{2j} \big)\,dt = \chi_0 -\chi_1 \mod \mbox{exact},
\end{equation}
with $g_t := G(\cdot,t) \colon M\to U$ as above.
The set of equivalence classes
\[
\dL(M) := S(M) /\!\sim_\CS
\]
forms an abelian group under the $+$ operation on $S(M)$, where the inverse of $[g,\chi]$ is $[g^{-1},-\chi]$ and the identity is $[\id,0]$.
The assignment $M \mapsto \dL(M)$ defines a contravariant functor on the category of compact manifolds with corners to the category of abelian groups that acts on morphisms by pullback.
\end{definition}
\vspace{11pt}
\begin{proposition}[\cite{TWZ}]
\label{prop:tradlermodels}
The natural map $\dl \to \dL$ given by
\[
[g]\longmapsto [g,0]
\]
is an isomorphism respecting the differential extension data so that, in particular, it is an isomorphism of differential extensions.
\end{proposition}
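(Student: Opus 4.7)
The plan is to verify that $\iota \colon \dl(M) \to \dL(M)$, $[g] \mapsto [g,0]$, is a bijective group homomorphism preserving the structure maps of the two differential extensions. Well-definedness, injectivity and the homomorphism property are all essentially tautologous: the equivalence relation \eqref{eqn:twzcsequiv2} with $\chi_0 = \chi_1 = 0$ reduces exactly to \eqref{eqn:twzcsequiv}, the block sum operation on $\Map(M,U)$ is the same in both models, and no homotopy in $\Map(M,U)$ can witness $(g_0,0) \sim_\CS (g_1,0)$ in $\dL$ without witnessing $g_0 \sim_\CS g_1$ in $\dl$.

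The main obstacle is surjectivity. Given $[g,\chi] \in \dL(M)$, I must exhibit some $\tilde g \colon M \to U$ with $[\tilde g,0]=[g,\chi]$, i.e., a smooth homotopy from $\tilde g$ to $g$ whose CS integrand \eqref{eqn:twzcsequiv2} equals $\chi$ modulo exact. Observing that $[g,\chi] = [g,0] + [\id,\chi]$ in $\dL(M)$ (by stabilising $g$ to $g \oplus \id$ via the constant homotopy), the problem reduces to the case $g = \id$: for every even form $\chi$ I need a smooth $h \colon M \to U$ together with a homotopy from $\id$ to $h$ whose CS integrand represents $\chi$ modulo exact.

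To construct such a pair $(h, G)$ I would adapt the strategy of Lemma \ref{lemma:flatuns}: given a smooth $\psi \colon M \to U(n)$, the linear path $g_t = \exp(t\log\psi)$ (defined after suitable stabilisation so that $\log\psi$ makes sense in some neighbourhood of the identity) has a CS integrand that can be computed explicitly via a beta function integration in the style of Lemma \ref{lemma:universalstringform}, yielding a specific even polynomial in $\psi^{-1} d\psi$. By decomposing $\chi$ into homogeneous components and realising each component in turn (possibly using block sums to combine contributions in different degrees, then a further homotopy to absorb residual exact corrections) one assembles the desired $h$. The hard part will be arranging that higher-degree components are realised without disturbing the lower-degree components already handled, likely requiring an inductive construction over the degree filtration and careful bookkeeping of the stabilisation embeddings $U(n) \hookrightarrow U(n+k) \hookrightarrow U$.

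Finally, the preservation of the curvature, underlying class, action-of-forms and $\sone$-integration maps of the two extensions by $\iota$ can be read off from the definitions, since on any element $[g,0]$ these maps reduce to the corresponding maps on $[g] \in \dl(M)$. Alternatively, once both $\dl$ and $\dL$ have been established as differential extensions of $K^{-1}$ equipped with compatible $\sone$-integrations, the Bunke-Schick uniqueness theorem \ref{theorem:bunkeschickk} yields the result automatically, with the explicit natural transformation $\iota$ necessarily agreeing with the unique isomorphism of differential extensions.
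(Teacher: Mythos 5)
The paper states this result without proof, deferring to \cite{TWZ}, so there is no internal argument to compare against; I will assess your proposal on its own merits. Your structural analysis is correct: well-definedness, injectivity and the homomorphism property of $[g]\mapsto[g,0]$ all reduce to the observation that \eqref{eqn:twzcsequiv2} with $\chi_0=\chi_1=0$ is exactly \eqref{eqn:twzcsequiv}, and the real content is surjectivity, which you correctly reduce (via $[g,\chi]=[g,0]+[\id,\chi]$) to the representability claim that every even form $\chi$ on $M$ can be realised, modulo exact, as the $\CS$ integrand of a smooth homotopy with one endpoint the constant map at the identity.

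That representability claim is where your proposal has a genuine gap. The construction $g_t=\exp(t\log\psi)$ does not get off the ground: $\log\psi$ is not globally defined for a general smooth $\psi\colon M\to U(n)$, and block-sum stabilisation does not move the image of $\psi$ into a neighbourhood of the identity where a logarithm would exist. Even granting a local logarithm, the $\CS$ integrand of such a one-parameter exponential path is a specific universal polynomial in $X=\log\psi$ and $dX$, and you offer no argument that such expressions --- together with block sums and inductive ``degree filtration'' corrections --- exhaust $\Omega^{even}(M)$ modulo exact; that step is asserted, not constructed. The argument in \cite{TWZ} proceeds instead by an explicit local realisation and patching construction, which is where the actual work lies. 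Your Bunke--Schick fallback is also more delicate than you indicate: to invoke Theorem \ref{theorem:bunkeschickk} you must first establish that both $\dl$ and $\dL$ are differential extensions with $\sone$-integration. For $\dL$ the action of forms $\{\chi\}\mapsto[\id,\chi]$ is given directly, but for $\dl$ the natural way to produce an action of forms is to transfer the one on $\dL$ along $\iota^{-1}$, which already presupposes the surjectivity you are trying to prove. And even if both are granted, the Bunke--Schick theorem asserts uniqueness of a natural \emph{isomorphism}; identifying a given structure-compatible natural transformation $\iota$ with that isomorphism still requires a separate argument (for instance a five-lemma argument in the style of Theorem \ref{theorem:odddifferentialk}) that you have not supplied.
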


This survey of the $\TWZ$ extension is concluded by writing down the extension data of $\dL$:

{\textbf{Curvature.}}
The curvature map $F \colon \dL(M) \to \Omega^{odd}_{d=0}(M)$ is given by
\[
F \colon [g,\chi] \longmapsto \sum_{j=0}^\infty \frac{-j!}{(2j+1)!} \left(-\frac{1}{2\pi i}\right)^{j+1} \tr\left((g^{-1}dg)^{2j+1}\right) + d\chi.
\]
Notice that if $g_t \colon M\x\I\to U$ is a smooth homotopy from $g$ to $h$ then (using the expression appearing in \cite[p.~8]{TWZ})
\begin{multline*}
d\bigg(\sum_{j=0}^\infty  \frac{-j!}{(2j)!} \left(-\frac{1}{2\pi i}\right)^{j+1} \int_0^1 \tr\big(g_t^{-1} \d_t g_t \cdot (g_t^{-1}dg_t)^{2j} \big)\,dt\bigg)\\
= \sum_{j=0}^\infty \frac{-j!}{(2j+1)!} \left(-\frac{1}{2\pi i}\right)^{j+1} \tr\left((h^{-1}dh)^{2j+1}\right)\\
 - \sum_{j=0}^\infty \frac{-j!}{(2j+1)!} \left(-\frac{1}{2\pi i}\right)^{j+1} \tr\left((g^{-1}dg)^{2j+1}\right)\!,
\end{multline*}
so applying $d$ to \eqref{eqn:twzcsequiv2} shows that $F$ is well-defined on $\dL$.

{\textbf{Underlying class.}}
The underlying class map $c \colon \dL(M)  \to K^{-1}(M)$ sends
\[
c \colon [g,\chi] \longmapsto [g],
\]
where on the left hand side the equivalence class is modulo $\CS$-equivalence and on the right hand side the equivalence class is modulo smooth homotopy.

{\textbf{Action of forms.}}
The action of forms map $b \colon \Omega^{even}(M)/\im\,d \to \dL(M)$ is given simply by
\[
b \colon \{\chi\} \longmapsto [\id,\chi]
\]
with $\id \colon M \to U$ the constant map to the identity.

One constructs a natural isomorphism $\dK \to \dL$ by first associating to each  $\bm{\sE}$ in $\struct(M)$ an element of $\dL(M)$ and then passing to the group completion.
Take some $\bm{\sE} = (\sE,[\Delta,\phi]) \in \struct(M)$ and fix a representative $(\Delta,\phi)\in[\Delta,\phi]$.
Recall from Proposition \ref{prop:canform} and Lemma \ref{lemma:higgshol} that $\hol_\phi$ is a smooth classifying map for $\sE$ and that
\[
s(\Delta,\phi)  = \hol_\phi^\ast\tau + d\chi_{\Delta,\phi}
\]
for some even form $\chi_{\Delta,\phi}$ determined entirely by $\Delta$ and $\phi$.
\vspace{11pt}
\begin{proposition}
\label{prop:etastruct}
The map $\eta \colon \struct(M) \to \dL(M)$ that acts by
\[
\eta \colon \bm{\sE} \longmapsto [\hol_\phi,\chi_{\Delta,\phi}].
\]
$\eta$ is a well-defined semi-group homomorphism, viewing $\hol_\phi$ as a map $M \to U$ in the obvious way.
\end{proposition}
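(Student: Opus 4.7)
The plan is to verify three things in order: (i) that $\eta(\bm{\sE})$ is independent of the chosen representative $(\Delta,\phi)$ in the string datum, (ii) that it depends only on the isomorphism class of $\bm{\sE}$, and (iii) that $\eta$ is additive under $\oplus$. The main substance is (i); parts (ii) and (iii) are essentially naturality and additivity book-keeping.

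For (i), fix two equivalent pairs $(\Delta_0,\phi_0) \sim (\Delta_1,\phi_1)$ and a smooth path $\gamma \colon \I \to \cM_\sE \x \cH_\sE$ joining them. Write $g_t := \hol_{\phi_t}$ and take $G \colon M \x \I \to U(n)$ defined by $G(x,t) = g_t(x)$, which is a smooth homotopy from $\hol_{\phi_0}$ to $\hol_{\phi_1}$. I would show that this $G$ witnesses the $\CS$-equivalence $(\hol_{\phi_0},\chi_{\Delta_0,\phi_0}) \sim_\CS (\hol_{\phi_1},\chi_{\Delta_1,\phi_1})$, i.e.\ verifies \eqref{eqn:twzcsequiv2}. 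The key calculation is to apply Proposition \ref{prop:canform} to the smooth $1$-cube $(\Delta^\gamma,\phi^\gamma)$ on $\sE \x \I \to M \x \I$, obtaining $s(\Delta^\gamma,\phi^\gamma) = G^\ast \tau + d\chi^\gamma$ with $\chi^\gamma$ canonical. Using Corollary \ref{cor:canpullback}, $\varsigma_t^\ast \chi^\gamma = \chi_{\Delta_t,\phi_t}$. Pulling back by $\imath_{\d_t}$ and using Cartan's magic formula yields
\[
\varsigma_t^\ast \imath_{\d_t} s(\Delta^\gamma,\phi^\gamma) = g_t^\ast \imath_{\dot\rho_x(t)} \tau + \tfrac{d}{dt}\chi_{\Delta_t,\phi_t} - d\big(\varsigma_t^\ast \imath_{\d_t}\chi^\gamma\big).
\]
Integrating over $t \in \I$ and invoking \eqref{eqn:altstringpot} gives
\[
S(\gamma) = \int_0^1 g_t^\ast \imath_{\dot\rho_x(t)} \tau\,dt + \chi_{\Delta_1,\phi_1} - \chi_{\Delta_0,\phi_0} \mod \text{exact}.
\]
A direct expansion of $\tau$ from \eqref{eqn:htauform}, using $g_t^\ast\imath_{\dot\rho_x(t)}\Theta = g_t^{-1}\d_t g_t$ and $g_t^\ast\Theta = g_t^{-1} dg_t$, identifies $\int_0^1 g_t^\ast \imath_{\dot\rho_x(t)} \tau\,dt$ with the sum appearing on the left of \eqref{eqn:twzcsequiv2}. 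Since $(\Delta_0,\phi_0) \sim (\Delta_1,\phi_1)$ forces $S(\gamma) = 0$ mod exact, rearranging gives exactly \eqref{eqn:twzcsequiv2}.

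For (ii), if $f \colon \bm{\sF} \to \bm{\sE}$ is an isomorphism covering $\id_M$, then $(\Delta',\phi') = (f^\ast\Delta, f^\ast\phi)$. Naturality of the Higgs field holonomy construction and of the canonical form gives $\hol_{f^\ast\phi} = \hol_\phi$ as maps $M \to U(n)$ and $\chi_{f^\ast\Delta, f^\ast\phi} = \chi_{\Delta,\phi}$, so the class in $\dL(M)$ is unchanged. For (iii), given $\bm{\sE} = (\sE,[\Delta,\phi])$ and $\bm{\sF} = (\sF,[\Delta',\phi'])$, the holonomy of $\phi \oplus \phi'$ is the block sum $\hol_\phi \oplus \hol_{\phi'}$, and an additivity calculation analogous to Lemma \ref{lemma:stringprop} (applied to the defining equation $s(\Delta,\phi) = \hol_\phi^\ast\tau + d\chi_{\Delta,\phi}$, using that $\tau$ is additive on block sums in $U$) gives $\chi_{\Delta\oplus\Delta', \phi\oplus\phi'} = \chi_{\Delta,\phi} + \chi_{\Delta',\phi'}$. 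Thus $\eta(\bm{\sE}\oplus\bm{\sF}) = [\hol_\phi\oplus\hol_{\phi'},\,\chi_{\Delta,\phi}+\chi_{\Delta',\phi'}] = \eta(\bm{\sE}) + \eta(\bm{\sF})$.

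The main obstacle is the first step, specifically the algebraic identification between the homotopy-theoretic integrand in \eqref{eqn:twzcsequiv2} and the pullback $g_t^\ast \imath_{\dot\rho_x(t)}\tau$ of the transgressed form; everything downstream is driven by this identification together with Proposition \ref{prop:canform}. A minor technical point I would also want to verify is the smoothness of $t \mapsto \hol_{\phi_t}$ jointly in $M$ and $t$, which follows from smooth dependence of solutions of the defining ODE \eqref{eqn:hfholonomy} on parameters.
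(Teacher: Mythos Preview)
Your proposal is correct and follows essentially the same route as the paper: apply Proposition~\ref{prop:canform} to the $1$-cube $(\Delta^\gamma,\phi^\gamma)$ on $M\times\I$, use Corollary~\ref{cor:canpullback} to identify the slice pullbacks of the canonical form, decompose $S(\gamma)$ via \eqref{eqn:altstringpot} and Cartan's formula into the TWZ integrand plus $\chi_1-\chi_0$ modulo exact, and invoke exactness of $S(\gamma)$; the homomorphism property is handled identically via block-sum additivity of $\hol$ and of $\chi$. One small caution: the additivity $\chi_{\Delta\oplus\Delta',\phi\oplus\phi'}=\chi_{\Delta,\phi}+\chi_{\Delta',\phi'}$ cannot be read off from the defining equation $s=\hol^\ast\tau+d\chi$ alone (that only fixes $d\chi$), so---as the paper does---you must appeal to the explicit construction of $\chi$ in the proof of Proposition~\ref{prop:canform}, which is built from traces and hence additive on block sums.
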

\begin{proof}
Fix a Hermitian $\Omega$ vector bundle $\sE\to M$.
Suppose that the pairs $(\Delta,\phi)$ and $(\Delta',\phi')$ on $\sE$ are both compatible with the Hermitian structure and define the same string datum, i.e. for any smooth path $\gamma(t) = (\Delta^\gamma_t,\phi^\gamma_t)$ of based module connections and Higgs fields compatible with the Hermitian structure that joins $(\Delta,\phi)$ to $(\Delta',\phi')$ the total string potential $S(\gamma)$ is exact.

Invoking Proposition \ref{prop:canform}, on $M\x \I$ one has $s(\Delta^\gamma, \phi^\gamma) = \hol_{\phi^\gamma}^\ast \tau + d\chi_{\Delta^\gamma,\phi^\gamma}$ where by Corollary \ref{cor:canpullback} the even-degree form $\chi_{\Delta^\gamma,\phi^\gamma}$ pulls back to $\chi_0:= \chi_{\Delta,\phi}$ and $\chi_1:=\chi_{\Delta',\phi'}$ on $M\x\{0\}$ and $M\x\{1\}$ respectively.
By \eqref{eqn:altstringpot}
\[
S(\gamma) = \int_0^1 \varsigma_t^\ast \imath_{\d_t} s(\Delta^\gamma,\phi^\gamma)\,dt 
= \int_0^1  \varsigma_t^\ast \imath_{\d_t} \big( \hol_{\phi^\gamma}^\ast \tau + d\chi_{\Delta^\gamma,\phi^\gamma}\big)\,dt.
\]
Writing $g_t = \hol_{\phi^\gamma}(\cdot,t)$, similarly to the proof of Theorem \ref{theorem:inversesexist} one has
\[
\int_0^1  \varsigma_t^\ast \imath_{\d_t}(\hol_{\phi^\gamma}^\ast \tau )\,dt = \sum_{j=0}^\infty \frac{-j!}{(2j)!} \left(-\frac{1}{2\pi i}\right)^{j+1} \int_0^1 \tr\big(g_t^{-1} \d_tg_t \cdot (g_t^{-1}dg_t)^{2j} \big)\,dt.
\]
Moreover, using Cartan's Magic Formula and Stokes' Theorem,
\[
\int_0^1 \varsigma_t^\ast \imath_{\d_t} d \chi_{\Delta^\gamma,\phi^\gamma}\,dt= \int_0^1 \varsigma_t^\ast \left( \frac{d}{dt} \chi_{\Delta^\gamma,\phi^\gamma}  - d( \imath_{\d_t}^\ast \chi_{\Delta^\gamma,\phi^\gamma}) \right) dt = \chi_1 - \chi_0 + \mbox{exact}.
\]
$S(\gamma)$ is exact since $(\Delta,\phi)$ and $(\Delta',\phi')$ belong to the same string datum and so
\[
\sum_{j=0}^\infty \frac{-j!}{(2j)!} \left(-\frac{1}{2\pi i}\right)^{j+1}  \int_0^1 \tr\big(g_t^{-1} \d_t g_t \cdot (g_t^{-1}dg_t)^{2j} \big)\,dt =\chi_0 - \chi_1  + \mbox{exact}.
\]
Therefore $[\hol_\phi,\chi_{\Delta,\phi}] = [\hol_{\phi'},\chi_{\Delta',\phi'}]$ so that $\eta$ is well-defined.

Note that $\hol_{\phi \oplus \phi'} = \hol_\phi \oplus \hol_{\phi'}$ and, by comparing the proofs of Lemma \ref{lemma:stringprop} and Proposition \ref{prop:canform}, one may show that $\chi_{\Delta\oplus\Delta',\phi\oplus\phi'} = \chi_{\Delta,\phi} + \chi_{\Delta',\phi'}$ so it follows that $\eta$ is a semi-group homomorphism.
\end{proof}

Passing to the group completion and taking the rank zero subgroup
\vspace{11pt}
\begin{theorem}
The semi-group homomorphism $\eta$ induces a group isomorphism
\[
\check \eta \colon \dK(M) \lo \dL(M)
\]
given explicitly by
\[
\bm{\sE} - \bm{\sF} \longmapsto [\hol_\phi,\chi_{\Delta,\phi}] - [\hol_{\phi'},\chi_{\Delta',\phi'}] = \big[\hol_\phi\oplus\hol_{\phi'}^{-1},\chi_{\Delta,\phi} - \chi_{\Delta',\phi'} \big],
\]
where $\bm{\sE} = (\sE,[\Delta,\phi])$ and $\bm{\sF} = (\sF,[\Delta',\phi'])$.
\end{theorem}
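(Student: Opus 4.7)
The claim has four parts: (i) well-definedness of $\check\eta$ as a group homomorphism on $\dK(M)$, (ii) the explicit formula, (iii) injectivity, and (iv) surjectivity. For (i) and (ii), Proposition \ref{prop:etastruct} gives $\eta$ as a semi-group homomorphism into the abelian group $\dL(M)$, so the universal property of the Grothendieck completion extends $\eta$ to a group homomorphism $K(\struct(M)) \to \dL(M)$. The rank-zero restriction yields $\check\eta$, and the formula follows from the group operations $[g,\chi]+[h,\chi']=[g\oplus h,\chi+\chi']$ and $-[h,\chi']=[h^{-1},-\chi']$ in $\dL(M)$.

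For injectivity, suppose $\check\eta(\bm{\sE} - \underline{\bm{L\CC}}^n) = 0$, so $(\hol_\phi,\chi_{\Delta,\phi}) \sim_\CS (\id,0)$ via some smooth homotopy $G$. By Proposition \ref{prop:compactimage} we may assume $G$ lands in $U(N)$ for some $N\ge n$, and (after stabilising by $\underline{\bm{L\CC}}^{N-n}$ and using Lemma \ref{lemma:higgshol}) reduce to $\bm{\sE}\cong G(\cdot,0)^\ast\bm{\sE}(N)$. Pulling $\bm{\sE}(N)$ back along $G$ yields a structured $\Omega$ vector bundle over $M\x\I$; parallel transport along $\I$ gives an isomorphism of underlying $\Omega$ vector bundles $\sE\oplus\underline{L\CC}^{N-n}\cong\underline{L\CC}^N$. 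The induced smooth path $\gamma$ of connective data has total string potential, by \eqref{eqn:homotopicstringpot2} and the computation in the proof of Proposition \ref{prop:etastruct},
\[
S(\gamma) = \int_0^1 \varsigma_t^\ast \imath_{\d_t}\hol_{\phi^\gamma}^\ast\tau\, dt + \chi_{\Delta,\phi} - 0 \mod \mbox{exact},
\]
and the integral term is precisely the left-hand side of \eqref{eqn:twzcsequiv2} for $G$, which equals $-\chi_{\Delta,\phi}$ modulo exact by hypothesis. Hence $S(\gamma)$ is exact, so the string data of $\bm{\sE}\oplus\underline{\bm{L\CC}}^{N-n}$ and $\underline{\bm{L\CC}}^N$ coincide, giving $\bm{\sE}-\underline{\bm{L\CC}}^n=0$ in $\dK(M)$.

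For surjectivity, take $[g,\chi]\in\dL(M)$ and use Proposition \ref{prop:compactimage} to assume $g\colon M\to U(n)$. The element $g^\ast\bm{\sE}(n)-\underline{\bm{L\CC}}^n\in\dK(M)$ maps to $[g,\chi_0]$ where $\chi_0=\chi_{g^\ast\Delta(n),g^\ast\phi(n)}$. It remains to realise each $[\id,\omega]\in\dL(M)$, $\omega\in\Omega^{even}(M)$, in $\im\check\eta$. Apply the Simons-Sullivan action of forms (Remark \ref{remark:csactionoforms}) on $M\x\sone$ to the odd form $\omega\wedge d\theta/(2\pi)$: this produces a trivial Hermitian vector bundle $E\to M\x\sone$ with compatible connection $\nabla$ and a smooth path $\gamma$ from a Flat connection to $\nabla$ having $\CS(\gamma)=\omega\wedge d\theta/(2\pi)$ modulo exact; using Lemma \ref{lemma:flatuns} one may arrange all connections along the path to be framed. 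Applying the inverse geometric caloron transform (Theorem \ref{theorem:hgeometric}) gives a pair $(\Delta',\phi')$ on a trivial $\Omega$ vector bundle with $\hol_{\phi'}=\id$, and \eqref{eqn:csanti} yields $S(\cC^{-1}\gamma)=\omega$ modulo exact. The analysis in the proof of Proposition \ref{prop:etastruct} then forces $\chi_{\Delta',\phi'}=\omega$ modulo $\wedge_U(M)$ plus exact, so $[\id,\chi_{\Delta',\phi'}]=[\id,\omega]$ in $\dL(M)$, completing surjectivity.

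The principal obstacle is surjectivity: specifically, controlling the transgression term $\int_0^1\varsigma_t^\ast\imath_{\d_t}\hol_{\phi^\gamma}^\ast\tau\,dt$ arising in the comparison between $S(\cC^{-1}\gamma)$ and $\chi_{\Delta',\phi'}$, and verifying that this transgression lies in $\wedge_U(M)$ modulo exact so that the equivalence in $\dL(M)$ goes through. The injectivity argument, by contrast, benefits from the direction of the identification: the CS-homotopy $G$ is given, and one only needs to transport it through parallel translation in the pullback along $G$, with the string potential computation then delivering the required exactness directly.
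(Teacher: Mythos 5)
Your reduction to showing that $\check\eta$ is a bijection, and your injectivity argument, essentially coincide with the paper's: given the $\CS$-homotopy $G$ witnessing $(\hol_\phi,\chi_{\Delta,\phi})\sim_\CS(\id,0)$, use compactness to land in some $U(N)$, realise $G$ as the Higgs field holonomy of a smooth path of connective data on the pullback of $\bm{\sE}(N)$, and conclude via the computation of Proposition \ref{prop:etastruct} and \eqref{eqn:homotopicstringpot2} that the total string potential of that path is exact, so that $\bm{\sE}\oplus\underline{\bm{L\CC}}^{N-n}\cong\underline{\bm{L\CC}}^N$ as structured bundles. That part is sound.

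The gap is in surjectivity. First, there is a degree mismatch at the crucial step: $\wedge_U(M)$ as defined in \eqref{den:wedgeu} consists of closed \emph{odd}-degree forms $g^\ast\tau$, whereas $\chi_{\Delta',\phi'}$, $\omega$, and the transgression term $\int_0^1\varsigma_t^\ast\imath_{\d_t}G^\ast\tau\,dt$ are all \emph{even}-degree, so ``$\chi_{\Delta',\phi'}=\omega$ modulo $\wedge_U(M)$ plus exact'' does not typecheck and cannot deliver $[\id,\chi_{\Delta',\phi'}]=[\id,\omega]$. The criterion you actually need is that $[\id,\omega]=[\id,\omega']$ in $\dL(M)$ iff $\omega-\omega'$ lies in the image of the even Chern character modulo exact forms, and controlling the transgression of $\tau$ over a loop in $\Map(M,U)$ based at $\id$ to land there is a nontrivial claim you have not established. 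Second, the assertion that the inverse caloron transform of your Simons--Sullivan datum has $\hol_{\phi'}=\id$ is unjustified: a connection on a trivial bundle over $M\x\sone$ generically has nontrivial holonomy around the circle fibres, and it is exactly this holonomy that becomes $\hol_{\phi'}$. Third, you must check the resulting structured bundle lies in $\struct(M)$ (i.e.\ is isomorphic to a pullback of some $\bm{\sE}(n)$), since $\dK(M)$ is generated only by such classes. All of this is avoided by the paper's one-line argument: Proposition \ref{prop:tradlermodels} says $[g]\mapsto[g,0]$ is an isomorphism $\dl(M)\to\dL(M)$, so every $[g,\chi]$ equals $[h,0]$ for some smooth $h\colon M\to U(n)$, and then $\check\eta(h^\ast\bm{\sE}(n)-\underline{\bm{L\CC}}^n)=[h,0]=[g,\chi]$. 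You should replace your surjectivity argument with this.
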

\begin{proof}
It suffices to show that $\check \eta$ is a bijection since Proposition \ref{prop:etastruct} above guarantees that $\check \eta$ is a homomorphism.

To see that $\check\eta$ is injective suppose that $\check \eta(\bm{\sE} - \underline{\bm{L\CC}}^n) = [\id,0]$, where $\bm{\sE} = g^\ast \bm{\sE}(n)$, say.
Since $\eta(\underline{\bm{L\CC}}^n) = [\id,0]$ and $\eta(\bm{\sE}) = [g,0]$, this implies that $(g, 0) \sim_\CS (\id,0)$. 
Consequently there is a smooth homotopy $G\colon M\x\I \to U$ from $\id $ to $g$ such that
\[
\sum_{j=0}^\infty \frac{-j!}{(2j)!} \left(-\frac{1}{2\pi i}\right)^{j+1} \int_0^1 \tr\big(g_t^{-1} \d_t g_t \cdot (g_t^{-1}dg_t)^{2j} \big)\,dt  = \int_0^1\varsigma_t^\ast\imath_{\d_t} G^\ast \tau\,dt = 0\mod \mbox{exact},
\] 
with $g_t := G(\cdot,t)$ as usual.
Since $M\x\I$ is compact the image of $G$ is contained in $U(n+k)$ for some $k$ (Proposition \ref{prop:compactimage}) and so $G$ may be viewed as a smooth homotopy from $g \oplus \id\colon M \to U(n+k)$ to $\id\colon M\to U(n+k)$.
By \eqref{eqn:homotopicstringpot2}, following a similar argument to the proof of Theorem \ref{theorem:inversesexist}, one has
\begin{multline*}
\cS\big(g^\ast\Delta(n)\oplus\delta,g^\ast\phi (n)\oplus\d;\delta,\d\big)\vphantom{\sum_j^1} \\
= \sum_{j=0}^\infty \frac{-j!}{(2j)!} \left(-\frac{1}{2\pi i}\right)^{j+1} \int_0^1 \tr\big(g_t^{-1} \d_t g_t \cdot (g_t^{-1}dg_t)^{2j} \big)\,dt\mod \mbox{exact}\\ 
= \int_0^1\varsigma_t^\ast\imath_{\d_t} G^\ast \tau \,dt= 0\mod \mbox{exact}.
\end{multline*}
This gives that $g^\ast\bm{\sE}(n) \oplus \underline{\bm{L\CC}}^{k} \cong (g\oplus\id)^\ast\bm{\sE}(n+k) \cong\underline{\bm{L\CC}}^{n+k}$ as structured $\Omega$ vector bundles and hence $\bm{\sE} - \underline{\bm{L\CC}}^n = 0$ in $\dK(M)$,  completing the proof of injectivity.

Proposition \ref{prop:tradlermodels} implies that every $(g,\chi) \in S(M)$ is $\CS$-equivalent to $(h,0)$ for some smooth $h\colon M \to U(n)$.
But then $\check\eta(h^\ast\bm{\sE}(n) - \underline{\bm{L\CC}}^n) = [h,0] = [g,\chi]$, giving surjectivity.
\end{proof}

Notice that by virtue of its construction the isomorphism $\check\eta \colon \dK \to \dL$ is a natural transformation of functors and also that $\check\eta$ respects the curvature and underlying class morphisms:

{\textbf{Curvature.}}
Take $\bm{\sE} - \bm{\sF} \in \dK(M)$ as above and observe that
\[
F\circ\check\eta\left( \bm{\sE} - \bm{\sF}\right) = \hol_\phi^\ast\tau + d\chi_{\Delta,\phi}- \hol_{\phi'}^\ast\tau  - d\chi_{\Delta',\phi'}.
\]
But $s(\Delta,\phi) = \hol_\phi^\ast\tau + d\chi_{\Delta,\phi}$ and similarly for $s(\Delta',\phi')$ so that $F\circ \check\eta = \check s$.

{\textbf{Underlying class.}}
For $\bm{\sE} - \bm{\sF} \in \dK(M)$ the composition $c\circ \check\eta\left( \bm{\sE} - \bm{\sF}\right) = [\hol_\phi\oplus\hol_{\phi'}^{-1}]$ is precisely the underlying class map of $\dK(M)$ mapping into the homotopy-theoretic model $K^{-1}(M) = [M,U]$ so that $c\circ\check\eta = \delta$ (cf. Remark \ref{remark:unversion}).

As mentioned above, one of the reasons for relating the $\Omega$ model to the $\TWZ$ extension is to easily obtain an action of forms map for $\dK$, which one now defines as the map
\[
a \colon \Omega^{even}(M) /\im\, d \lo \dK(M)
\]
given by $a(\{\omega\}) := \check\eta^{-1}([\id,\omega]) = \check\eta^{-1} \circ b(\{\omega\})$, with $b$ the action of forms map for $\dL$.
Using the properties of $b$, it follows that the sequence
\begin{equation}
\label{eqn:actionofforms}
K^0(M) \xrightarrow{\;\;ch\;\;} \Omega^{even}(M)/\im\,d \xrightarrow{\;\;a\;\;} \dK(M) \xrightarrow{\;\;\delta\;\;} \cK(M) \lo 0
\end{equation}
is exact.
Since $\check\eta$ respects the curvature maps it also follows that $\check s\circ a = F \circ b = d$ and so
\vspace{11pt}
\begin{theorem}
The functor $M \mapsto \dK(M)$ together with the maps $\check s$, $\delta$ and $a$ define a differential extension of odd $K$-theory.
\end{theorem}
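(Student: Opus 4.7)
The plan is to verify the Bunke--Schick axioms for a differential extension (recorded in Appendix \ref{app:diff}) by assembling the data the excerpt has already built and observing that the nontrivial compatibilities have effectively been established in the course of constructing the action of forms map $a$ via the isomorphism $\check\eta \colon \dK \to \dL$.

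First I would collect the formal data. The assignment $M \mapsto \dK(M)$ is a contravariant functor to abelian groups since $\struct$ is functorial via pullback of structured $\Omega$ vector bundles and the Grothendieck construction preserves functoriality. The maps $\check{s}$, $\delta$, and $a$ are natural transformations: naturality of $\check{s}$ follows from naturality of the total string form $s(\Delta,\phi)$ under pullback of based module connections and Higgs fields; naturality of $\delta$ is immediate; naturality of $a$ follows from the naturality of $b \colon \Omega^{even}/\im\,d \to \dL$ and of $\check\eta^{-1}$. The commuting square
\[
\xy
(0,25)*+{\dK(M)}="1";
(50,25)*+{\cK(M)}="2";
(0,0)*+{\Omega^{odd}_{d=0}(M)}="3";
(50,0)*+{H^{odd}(M;\RR)}="4";
{\ar^{\delta} "1";"2"};
{\ar^{\deR} "3";"4"};
{\ar^{\check s} "1";"3"};
{\ar^{s} "2";"4"};
\endxy
\]
established in the excerpt provides the compatibility of the curvature and underlying class morphisms with the de Rham comparison.

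Next I would verify the two analytic compatibilities. The equation $\check{s} \circ a = d$ is obtained exactly as indicated in the excerpt: writing $a = \check\eta^{-1} \circ b$, the fact that $\check\eta$ intertwines $\check{s}$ with $F$ gives $\check{s} \circ a = F \circ \check\eta \circ \check\eta^{-1} \circ b = F \circ b$, and a direct computation on representatives yields $F \circ b(\{\omega\}) = F([\id,\omega]) = d\omega$ since the constant map $\id \colon M \to U$ contributes nothing to the sum defining $F$. The exact sequence
\[
K^0(M) \xrightarrow{\;ch\;} \Omega^{even}(M)/\im\,d \xrightarrow{\;a\;} \dK(M) \xrightarrow{\;\delta\;} \cK(M) \lo 0
\]
has already been recorded as \eqref{eqn:actionofforms}; I would simply observe that it is obtained by transporting the analogous sequence for $\dL$ through $\check\eta$, using that $\delta = c \circ \check\eta$ so surjectivity of $\delta$ follows from surjectivity of $c$, and that exactness at $\dK(M)$ and at $\Omega^{even}(M)/\im\,d$ reduces to the corresponding statements for $\dL$, which are themselves consequences of Proposition \ref{prop:tradlermodels} together with the defining equivalence relation $\sim_\CS$ on $S(M)$.

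The main work has therefore already been done upstream: the nontrivial content of the theorem is precisely that $\check\eta$ is a well-defined natural isomorphism respecting curvature and underlying class (so that $a$ makes sense and inherits the required properties), which was established in the preceding subsection. The step I would expect to scrutinise most carefully is the verification that the comparison $F \circ b = d$ agrees with the $d$ appearing as the composite $\check{s}\circ a$ under the stated sign conventions; the sign correction already noted in the footnote defining the $\TWZ$ equivalence relation makes this a genuine check rather than a purely formal one. Once this sign is confirmed, all axioms are in place and the theorem follows.
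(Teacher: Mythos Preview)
Your proposal is correct and follows essentially the same approach as the paper. In fact the paper gives no separate proof block for this theorem at all: it is stated as an immediate consequence of the pieces assembled just before it (the commuting square, the exact sequence \eqref{eqn:actionofforms}, and the identity $\check s\circ a = F\circ b = d$), and your write-up simply spells out how those pieces verify the axioms of Definition~\ref{defn:diffext}.
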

\vspace{11pt}
\begin{proposition}
The natural map $\check \eta \colon \dK \to \dL$ is an isomorphism of differential extensions of odd $K$-theory.
\end{proposition}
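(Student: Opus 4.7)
The plan is to verify that $\check\eta$ satisfies all of the conditions required of a morphism of differential extensions in the Bunke--Schick sense (as recorded in Appendix \ref{app:diff}), namely that $\check\eta$ is a natural transformation of the underlying functors that is compatible with each of the four structure maps (curvature, underlying class, action of forms, and the Bockstein-like data if applicable). Since $\check\eta$ has already been shown to be a group isomorphism for each compact manifold $M$ (with corners), most of the work has already been done in the discussion preceding the proposition.

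First I would observe that naturality of $\check\eta$ in $M$ follows directly from the construction: pullback of structured $\Omega$ vector bundles commutes with formation of the Higgs field holonomy and the canonical form $\chi_{\Delta,\phi}$, the latter by Corollary \ref{cor:canpullback}. Thus for any smooth $f\colon N\to M$ one has $f^\ast\hol_\phi = \hol_{f^\ast\phi}$ and $f^\ast\chi_{\Delta,\phi} = \chi_{f^\ast\Delta,f^\ast\phi}$, so $\check\eta \circ f^\ast = f^\ast \circ \check\eta$.

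Next I would verify compatibility with each structure map. Compatibility with the curvature maps ($F\circ\check\eta = \check s$) is immediate: by Proposition \ref{prop:canform}, for any representative $(\Delta,\phi)$ one has $s(\Delta,\phi) = \hol_\phi^\ast\tau + d\chi_{\Delta,\phi}$, which matches the defining formula of $F$ applied to $[\hol_\phi,\chi_{\Delta,\phi}]$ term by term. Compatibility with the underlying class maps ($c\circ\check\eta = \delta$) follows from Lemma \ref{lemma:higgshol}, which identifies $\hol_\phi$ as a classifying map for $\sE$, combined with the homotopy-theoretic description of $\cK \cong K^{-1}$ discussed in Remark \ref{remark:homotopyversion} and Remark \ref{remark:unversion}. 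Compatibility with the action of forms ($\check\eta\circ a = b$) is tautological, since $a$ was defined precisely as $\check\eta^{-1}\circ b$.

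The main obstacle in this proof is not really an obstacle at all, because all of the genuinely hard work has been done earlier: the construction of the canonical form $\chi_{\Delta,\phi}$ in Proposition \ref{prop:canform}, the well-definedness of $\eta$ on string data in Proposition \ref{prop:etastruct}, and the existence of inverses in Theorem \ref{theorem:inversesexist} (which underwrites surjectivity of $\check\eta$). Consequently my proof would be short: after invoking naturality and the three compatibilities above, I would conclude by citing the axiomatic characterisation of morphisms of differential extensions (Appendix \ref{app:diff}) to obtain that $\check\eta$ is an isomorphism of differential extensions of odd $K$-theory.
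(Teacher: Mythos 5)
Your proof is correct and matches the paper's approach: the paper leaves the proposition essentially without a separate proof because the preceding paragraphs have already verified naturality of $\check\eta$ and its compatibility with the curvature map (via Proposition \ref{prop:canform}), the underlying class map (via Lemma \ref{lemma:higgshol} and Remark \ref{remark:unversion}), and the action of forms map (tautologically, since $a$ is defined as $\check\eta^{-1}\circ b$), exactly as you observe. Your hedge about ``Bockstein-like data'' is unnecessary since the Bunke--Schick axioms (Definition \ref{defn:diffext}) involve only the three structure maps you list, but this does not affect the correctness of your argument.
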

 \vspace{11pt}
\begin{example}[$\dK$ on a point]
\label{example:odddkpoint}
It is clear that $\Omega^{even}(\ast) =\Map(\ast,\RR) =   \RR$ and also that $ch(K^0(\ast)) = \ZZ$.
Since $K^{-1}(\ast) = 0$, the exact sequence \eqref{eqn:actionofforms} implies that $\dK(\ast) \cong \RR/\ZZ$.
\end{example}

This section is concluded by a useful interpretation of the action of forms map on $\dK$ that is required in the next section.
\vspace{11pt}
\begin{remark}
\label{remark:oactionoforms}
For any $\{\omega\} \in \Omega^{even}(M)/\im\,d$, observe that by Proposition \ref{prop:tradlermodels} one has $b(\{\omega\}) = [\id,\omega] = [g,0]$ for some smooth $g \colon M\to U$.
Thus there is a smooth homotopy $G \colon M\x\I \to U$ from $g$ to $\id$ such that
\[
\int_0^1\varsigma_t^\ast\imath_{\d_t} G^\ast \tau \,dt= \sum_{j=0}^\infty \frac{-j!}{(2j)!} \left(-\frac{1}{2\pi i}\right)^{j+1} \int_0^1 \tr\big(g_t^{-1} \d_t g_t \cdot (g_t^{-1}dg_t)^{2j} \big)\,dt  = - \omega \mod \mbox{exact},
\]
where $g_t := G(\cdot,t)$.
By Proposition \ref{prop:compactimage}, the image of $G$ is contained inside some $U(n)$ and so by \eqref{eqn:homotopicstringpot2} one has
\[
\cS(\delta,\d;g^\ast\Delta(n)\oplus\delta,g^\ast\phi(n)\oplus\d) = \omega \mod \mbox{exact}
\]
on $(g\oplus\id)^\ast\bm{\sE}(n)$.
Observe that $a(\{\omega\}) = (g\oplus\id)^\ast\bm{\sE}(n) - \underline{\bm{L\CC}}^n$ since $\check\eta$ acting on the right hand side gives $[g,0]$ and $\check\eta$ is an isomorphism.

Therefore the action of forms map $a$ sends the class $\{\omega\}$ to the virtual structured $\Omega$ vector bundle $\bm{\sE} - \underline{\bm{L\CC}}^n$, where $\bm{\sE} = (\sE,[\Delta,\phi])\in \struct(M)$ is such that
\begin{itemize}
\item
$\rank \sE = n$ and $\sE$ is trivial; and

\item
for any chosen representative $(\Delta,\phi) \in [\Delta,\phi]$ of the string datum there exists a smooth path $\gamma$ of connections from  the trivial pair $(\delta,\d)$ (in some trivialisation of $\sE$) to $(\Delta,\phi)$ such that the total string potential
\[
S(\gamma) = \omega \mod\mbox{exact}.
\] 
\end{itemize}
Compare this to Remark \ref{remark:csactionoforms}, which gives a similar statement for the action of forms map in the Simons-Sullivan model.
\end{remark}


\subsection{Odd differential $K$-theory}
\label{S:structcal}
It has been shown so far that $\dK$ is a differential extension of odd $K$-theory.
However, as remarked previously one can show that in the absence of a multplicative structure there is an infinite family of non-isomorphic differential extensions of odd $K$-theory \cite{BS2}.
In this section, the caloron transform is adapted to structured vector bundles in order to prove that $\dK(M)$ does indeed give odd differential $K$-theory, that is $\dK(M)$ is isomorphic to the odd part of any differential extension of $K$-theory with $\sone$-integration.

Fix a model $\check K^\bullet$ of differential $K$-theory, i.e.~a differential extension of $K^\bullet$, with
\begin{itemize}
\item
{curvature} morphism $\check{ch} \colon \check K^\bullet(M) \to \Omega^\bullet_{d=0}(M)$;

\item
{underlying class} morphism $\check{\delta} \colon \check K^\bullet(M) \to K^\bullet(M)$;

\item
{action of forms} morphism $\check{a} \colon \Omega^{\bullet-1}(M)/\im\,d \to \check K^\bullet(M)$; and

\item
{$\sone$-integration} $\widehat{\int_\sone} \colon \check K^{\bullet}(M\x\sone) \to \check K^{\bullet-1}(M)$.
\end{itemize}
According to the uniqueness theorem of Bunke and Schick (Theorem \ref{theorem:bunkeschickk}) there is a unique natural isomorphism $\Phi_0 \colon \edK(M) \to \check K^0(M)$ that respects all of the structure (except, of course, the $\sone$-integration), where $\edK$ is even differential $K$-theory \`{a} la Simons-Sullivan.

Using the isomorphism $\Phi_0$ together with the caloron correspondence, one now defines an isomorphism
\[
\Phi_1 \colon \dK(M) \lo \check K^{-1}(M).
\]
For any $\bm{\sE} - \bm{\sF} \in \dK(M)$, where $\bm{\sE} = (\sE,[\Delta,\phi])$ and $\bm{\sF} = (\sF,[\Delta',\phi'])$ say, set
\begin{equation}
\label{eqn:oddkhom}
\Phi_1 \big(\bm{\sE} - \bm{\sF} \big) := \widehat{\int_\sone} \Phi_0 \big((E,[\nabla]) - (F,[\nabla']) \big).
\end{equation}
In this expression, $(E,\nabla)$ and $(F,\nabla') $ are respectively the caloron transforms of $(\sE,\Delta,\phi)$ and $(\sF,\Delta',\phi')$ for some chosen representatives $(\Delta,\phi)\in[\Delta,\phi]$ and $(\Delta',\phi')\in[\Delta',\phi']$.
The definition of $\Phi_1$ appears to depend explicitly on these choices and in general there is no canonical choice.
Nevertheless
\vspace{11pt}
\begin{proposition}
\label{prop:welldefinedkmap}
The map
\[
\Phi_1 \colon \dK(M) \lo \check K^{-1}(M)
\]
is a well-defined group homomorphism.
\end{proposition}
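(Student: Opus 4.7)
The plan is to verify well-definedness in two stages---independence of the chosen representative of the string datum on a single structured $\Omega$ vector bundle, and independence of the representation of the virtual bundle $\bm{\sE} - \bm{\sF}$---before deducing the homomorphism property, which is essentially formal.

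For the first stage, fix $\bm{\sE} = (\sE,[\Delta,\phi])$ and suppose $(\Delta_0,\phi_0)$ and $(\Delta_1,\phi_1)$ are two representatives of the string datum $[\Delta,\phi]$. By definition there is a smooth path $\gamma$ between them with $S(\gamma)$ exact on $M$. The geometric caloron transform (Theorem \ref{theorem:geometric}) turns $\gamma$ into a smooth path $\nu$ of framed Hermitian connections on $E = \cV(\sE)$ from $\nabla_0$ to $\nabla_1$, and summing the identity $S_j(\gamma) = \widehat{\int_\sone} \CS_{\overline{\tr}_j}(\cV\gamma)$ over $j$ (equation \eqref{eqn:csanti} applied to the symmetrised traces) gives $\widehat{\int_\sone}\CS(\nu) = S(\gamma)$. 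In the Simons-Sullivan model, the virtual structured vector bundle $(E,[\nabla_0]) - (E,[\nabla_1])$ has zero underlying class and lies in the image of the action of forms; more precisely, the relation \eqref{eqn:ssactionofforms} and the characterisation recorded in Remark \ref{remark:csactionoforms} yield
\[
\Phi_0\bigl((E,[\nabla_0]) - (E,[\nabla_1])\bigr) = \check a\bigl(\{\CS(\nu)\}\bigr),
\]
since $\Phi_0$ is an isomorphism of differential extensions and therefore intertwines the two action of forms maps. Applying $\widehat{\int_\sone}$ and using the axiomatic compatibility of $\sone$-integration with the action of forms in differential $K$-theory (a property recorded in Appendix \ref{app:diff}), one obtains
\[
\widehat{\int_\sone}\check a\bigl(\{\CS(\nu)\}\bigr) = \check a\bigl(\bigl\{\widehat{\int_\sone}\CS(\nu)\bigr\}\bigr) = \check a\bigl(\{S(\gamma)\}\bigr) = 0,
\]
since $S(\gamma)$ is exact. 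Hence $\Phi_1$ is independent of the chosen representatives on $\sE$ and $\sF$.

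For the second stage, suppose $\bm{\sE} - \bm{\sF} = \bm{\sE}' - \bm{\sF}'$ in $\dK(M)$. By the observations preceding \eqref{eqn:stringhom} and Theorem \ref{theorem:inversesexist}, one may stabilise by a trivial structured $\Omega$ vector bundle to arrange an isomorphism $\bm{\sE} \oplus \bm{\sF}' \oplus \underline{\bm{L\CC}}^k \cong \bm{\sE}' \oplus \bm{\sF} \oplus \underline{\bm{L\CC}}^k$ of structured $\Omega$ vector bundles. The geometric caloron transform is an equivalence of categories and respects direct sums (Lemmas \ref{lemma:whitney} and \ref{lemma:antistringprop}), so the corresponding framed bundles with caloron-transformed connections are isomorphic over $M\x\sone$ and therefore represent equal elements of $\edK(M\x\sone)$ (forgetting the framings); applying $\Phi_0$ and then $\widehat{\int_\sone}$ preserves this equality, and additivity of the trivial-bundle contributions cancels out in the virtual differences. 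Combined with the first stage this shows $\Phi_1$ is well-defined.

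The homomorphism property is then immediate: the caloron transform of a direct sum of structured $\Omega$ vector bundles is the direct sum of the caloron transforms (with the direct-sum string data respected by Lemma \ref{lemma:antistringprop}), $\Phi_0$ is a group homomorphism, and $\widehat{\int_\sone}$ is an abelian group homomorphism by the axioms of $\sone$-integration. The main obstacle is ensuring that the identification $\widehat{\int_\sone}\circ\check a = \check a\circ\widehat{\int_\sone}$ used above is available in the chosen axiomatic framework; this should be extracted from the axioms or the construction of $\sone$-integration recorded in Appendix \ref{app:diff}, and is the only non-formal ingredient in the argument.
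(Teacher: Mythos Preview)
Your approach is correct and converges with the paper's, though the route to the key identity differs slightly.

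The paper establishes the identity you need by applying the homotopy formula (Theorem \ref{theorem:homotopyformula}) to the class $\Phi_0\big((E\times\I,[\nabla_t]) - (F\times\I,[\nabla'_t])\big)$ over $(M\times\sone)\times\I$: this yields
\[
\Phi_0\big((E,[\nabla_1]) - (F,[\nabla'_1])\big) - \Phi_0\big((E,[\nabla_0]) - (F,[\nabla'_0])\big) = \check a\big(\{\CS(\cV\gamma) - \CS(\cV\gamma')\}\big)
\]
directly from the axioms of a differential extension, without any appeal to the internal structure of the Simons--Sullivan model. Your route through Remark \ref{remark:csactionoforms} arrives at the same conclusion, but that remark only characterises $a(\{\omega\})$ as a difference $\bm{E}' - \underline{\bm{n}}$ with $E'$ \emph{trivial}; to apply it to $(E,[\nabla_0]) - (E,[\nabla_1])$ for general $E$ you must first stabilise by an inverse $(E^{-1},[\nabla^{-1}])$ so that $E\oplus E^{-1}$ is trivial and $\nabla_1\oplus\nabla^{-1}$ is Chern--Simons equivalent to a Flat connection, then use additivity of $\CS$ under direct sums. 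This works, but it is an extra step you have not spelled out. The homotopy formula is the cleaner and more axiomatic device here.

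Beyond this, the two arguments coincide: both reduce to the compatibility $\widehat{\int_\sone}\circ\check a = \check a\circ\widehat{\int_\sone}$, then invoke $\widehat{\int_\sone}\CS(\cV\gamma) = S(\gamma)$ from \eqref{eqn:csanti} and the exactness of $S(\gamma)$. Your second stage (independence of the virtual-bundle representative) is correct and is something the paper leaves implicit; likewise the homomorphism property follows formally from additivity of $\cV$, $\Phi_0$ and $\widehat{\int_\sone}$, just as you say.
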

\begin{proof}
This proof makes use of the homotopy formula (Theorem \ref{theorem:homotopyformula}), which describes how differential extensions behave under smooth homotopies.
Let $\bm{\sE} = (\sE,[\Delta,\phi])$ and $\bm{\sF} = (\sF,[\Delta',\phi'])$ be structured $\Omega$ vector bundles as above and take any two choices of pairs of representatives
\[
(\Delta_0,\phi_0), (\Delta_1,\phi_1) \in [\Delta,\phi] \;\;\mbox{ and }\;\; (\Delta'_0,\phi'_0), (\Delta'_1,\phi'_1) \in [\Delta',\phi'].
\]
Write $\nabla_0, \nabla_1$ and $\nabla'_0,\nabla'_1$ for the corresponding connections on the caloron transforms $E = \cV(\sE)$ and $F = \cV(\sF)$.
Showing $\Phi_1(\bm{\sE} - \bm{\sF})$ is well-defined is thus equivalent to showing
\[
\widehat{\int_\sone} \Phi_0 \big((E,[\nabla_0]) - (F,[\nabla'_0]) \big) = \widehat{\int_\sone} \Phi_0 \big((E,[\nabla_1]) - (F,[\nabla'_1]) \big).
\]
Define smooth paths $\gamma \colon \I \to \cM_\sE \x\cH_\sE$ and $\gamma' \colon \I \to \cM_\sF \x\cH_\sF$ by
\[
\gamma(t) := \big(t\Delta_1+(1-t)\Delta_0,t\phi_1+(1-t)\phi_0\big)
\]
with $\gamma'$ defined similarly.
Then by definition of string data the string potentials $S(\gamma)$ and $S(\gamma')$ are exact.
As in the principal bundle case, the caloron transform gives a smooth path of (framed) connections $\cV\gamma$ from $\nabla_0$ to $\nabla_1$ on $E$  and a  smooth path of (framed) connections $\cV\gamma'$ from $\nabla'_0$ to $\nabla'_1$ on $F$.

Write $\nabla_t$ and $\nabla'_t$ for the connections on $E\x\I$ and $F\x\I$ corresponding to the smooth paths $\cV\gamma$ and $\cV\gamma'$.
By the naturality of $\Phi_0$, the homotopy formula (Theorem \ref{theorem:homotopyformula}) applied to the even differential $K$-class $\widehat x:=\Phi_0( (E\x\I,[\nabla_t]) - (F\x\I,[\nabla'_t]))$ gives
\begin{equation}
\label{eqn:homotopyformula}
\Phi_0 \big((E,[\nabla_1]) - (F,[\nabla'_1]) \big)- \Phi_0\big((E,[\nabla_0]) - (F,[\nabla'_0]) \big)= \check a\bigg(\int_0^1 \varsigma_t^\ast \imath_{\d_t} \check{ch} (\widehat x)\,dt\bigg).
\end{equation}
But since $\Phi_0$ respects the curvature maps, one has
\[
\check{ch}(\widehat x) =  \check{\Ch}\Big((E\x\I,[\nabla_t]) - (F\x\I,[\nabla'_t])\Big)=\Ch(\nabla_t) - \Ch(\nabla'_t)
\]
and hence
\[
\int_0^1 \varsigma_t^\ast \imath_{\d_t} \check{ch} (\widehat x)\,dt = \int_0^1  \varsigma_t^\ast \imath_{\d_t} \big( \Ch(\nabla_t) - \Ch(\nabla'_t)\big)\,dt = \CS(\cV\gamma) - \CS(\cV\gamma').
\]
The second equality uses \cite[(1.5)]{SSvec}, which is the Chern-Simons version of the expression \eqref{eqn:alternateanti}, i.e.
\[
\CS(\cV\gamma) = \int_0^1 \varsigma_t^\ast \imath_{\d_t} \Ch(\nabla_t)\,dt.
\]
In this setting the formula \eqref{eqn:csanti} relating string potentials and Chern-Simons forms gives
\[
S(\gamma) = \widehat{\int_\sone} \CS(\cV\gamma) \;\;\mbox{ and }\;\;
S(\gamma') = \widehat{\int_\sone} \CS(\cV\gamma')
\]
so that
\[
\widehat{\int_\sone}\int_0^1 \varsigma_t^\ast \imath_{\d_t} \check{ch} (\widehat x)\,dt  = S(\gamma) - S(\gamma') 
\]
is exact.
Recall that the $\sone$-integration commutes with $\check a$ and so
\[
\widehat{\int_\sone}  \check a\bigg(\int_0^1 \varsigma_t^\ast \imath_{\d_t} \check{ch} (\widehat x)\,dt\bigg) = \check a \big( S(\gamma) - S(\gamma')  \big) = 0,
\]
since $\check a$ vanishes on exact forms by definition.
Thus applying the $\sone$-integration map to \eqref{eqn:homotopyformula} yields
\[
\widehat{\int_\sone} \Phi_0 \big((E,[\nabla_0]) - (F,[\nabla'_0]) \big) = \widehat{\int_\sone} \Phi_0 \big((E,[\nabla_1]) - (F,[\nabla'_1]) \big),
\]
which shows that $\Phi_1$ is well-defined.
It is clear from the definition and the properties of $\cV$ and $\widehat{\int_\sone}$ that $\Phi_1$ is a homomorphism.
\end{proof}

One proceeds by showing that $\Phi_1$ respects the curvature, underlying class and action of forms maps:

{\textbf{Curvature.}}
To see that $\Phi_1$ respects the curvature maps, i.e. $\check{ch} \circ \Phi_1 = \check s$, notice that for $\bm{\sE} - \bm{\sF} \in \dK(M)$ as above,
\[
\check s \big( \bm{\sE} - \bm{\sF}  \big)= s(\Delta,\phi) - s(\Delta',\phi') = \widehat{\int_\sone} \big( \Ch(\nabla) - \Ch(\nabla') \big)
\]
using \eqref{eqn:stringform}.
However, since the $\sone$-integration commutes with the curvature on $\check K^\bullet$,
\begin{multline*}
\check{ch} \circ \Phi_1 \big( \bm{\sE} - \bm{\sF}  \big) = \widehat{\int_\sone} \check{ch} \left[\Phi_0\big((E,[\nabla])-(F,[\nabla'])\big)\right]\\
= \widehat{\int_\sone} \check{\Ch} \big((E,[\nabla])-(F,[\nabla'])\big) = \widehat{\int_\sone} \big( \Ch(\nabla) - \Ch(\nabla') \big),
\end{multline*}
so that $\check{ch} \circ \Phi_1 = \check s$ as required.

{\textbf{Underlying class.}}
\label{page:int}
To see that $\Phi_1$ respects the underlying class maps, one must show that $\check\delta \circ \Phi_1 = \delta$.
Now, the map $\dK(M) \to K^0(M\x\sone)$
\[
\bm{\sE} - \bm{\sF} \longmapsto \delta\big((E,[\nabla]) - (F,[\nabla']) \big) = E - F
\]
is well-defined and coincides with the composition $q^\ast \circ \delta$, where $q \colon M\x\sone \to \Sigma(M^+)$ is the projection and $\delta\colon \dK(M)\to K^{-1}(M)$ is as above.
Note also that
\[
\check \delta \circ \Phi_1 \big(\bm{\sE} - \bm{\sF}\big) = \check\delta\widehat{\int_\sone} \Phi_0 \big((E,[\nabla]) - (F,[\nabla']) \big) = \widehat{\int_\sone} \delta \big((E,[\nabla]) - (F,[\nabla']) \big)  =\widehat{\int_\sone} (E-F)
\]
so it suffices to show $\widehat{\int_\sone} \circ \,q^\ast = \id$ acting on the image of $\delta$.
Recall (cf. \cite{BS2}) that if $\pr \colon M\x\sone \to M$ is the projection and $\imath \colon M\to M\x\sone$ is the embedding $m \mapsto (m,0)$ then there is a splitting $K^\bullet(M\x\sone) \cong \im\pr^\ast\oplus \ker \imath^\ast$ since $\pr\circ\, \imath = \id_M$.
Moreover, the quotient map $q\colon M\x\sone \to \Sigma(M^+)$ induces an isomorphism $q^\ast K(\Sigma(M^+))\to \ker\imath^\ast $.
The $\sone$-integration on $K^0$ is defined as the composition of the projection onto $\ker\imath^\ast$ with $(q^\ast)^{-1}$, from which it follows that $\widehat{\int_\sone} \circ q^\ast = \id$ on the image of $\delta$ since $q^\ast(\im\,\delta) \subset \ker\imath^\ast$.
Therefore $\check \delta\circ\Phi_1  =\delta$ as required.

{\textbf{Action of forms.}}
One wishes to show that $\Phi_1 \circ a = \check a$.
To do this, first recall Remarks \ref{remark:csactionoforms} and \ref{remark:oactionoforms} characterising the action of forms maps in the Simons-Sullivan and the $\Omega$ models.
For $\{\omega\} \in \Omega^{even}(M)/\im\,d$ the element $a(\{\omega\}) \in \dK(M)$ is defined as $\bm{\sE} - \underline{\bm{L\CC}}^n$ where $\bm{\sE} = (\sE,[\Delta,\phi])$ is such that $\rank \sE = n$, $\sE$ is trivial and $\cS(\delta,\d;\Delta,\phi) = \omega$ mod exact for any choice $(\Delta,\phi) \in[\Delta,\phi]$.
Fixing such a choice, there is a smooth path $\gamma$ joining $(\delta,\d)$ to $(\Delta,\phi)$ on $\sE$ such that
\[
S(\gamma) = \omega\mod\mbox{exact}.
\]
Writing $(E,\nabla)$ for the caloron transform of $(\sE,\Delta,\phi)$, observe that
\[
\Phi_1 \circ a(\{\omega\}) = \widehat{\int_\sone} \Phi_0 \big((E,[\nabla]) - \underline{\bm{n}}\big).
\]
But, following a similar argument one also has
\[
(E,[\nabla]) - \underline{\bm{n}}= a \big(\big\{\tfrac{1}{2\pi} \omega \wedge d\theta + \alpha\big\} \big)
\]
for some $\alpha$ in the kernel of $\widehat{\int_\sone}$ since by \eqref{eqn:csanti}
\[
\widehat{\int_\sone} \CS(\cV\gamma) = S(\gamma) = \widehat{\int_\sone} \tfrac{1}{2\pi} \omega\wedge d\theta. 
\]
Hence
\[
\widehat{\int_\sone} \Phi_0 \big((E,[\nabla]) - \underline{\bm{n}}\big) = \widehat{\int_\sone} \Phi_0\, a  \big(\big\{\tfrac{1}{2\pi} \omega \wedge d\theta + \alpha\big\} \big) = \check a \widehat{\int_\sone}\big\{\tfrac{1}{2\pi} \omega \wedge d\theta + \alpha\big\} = \check a (\{\omega\}),
\]
giving $\Phi_1 \circ a =\check a$ as required.

It now remains only to show that $\Phi_1$ is an isomorphism:
\vspace{11pt}
\begin{theorem}
\label{theorem:odddifferentialk}
The homomorphism
\[
\Phi_1 \colon \dK(M) \lo K^{-1}(M)
\]
is an isomorphism that respects all the differential extension structure.
\end{theorem}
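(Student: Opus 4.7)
The plan is a quick four-lemma argument. The three compatibility statements already proved---that $\Phi_1$ intertwines curvatures, underlying classes and actions of forms---glue $\Phi_1$ into a morphism of four-term exact sequences, and bijectivity drops out of the four lemma applied to that ladder. The well-definedness and group-homomorphism content was settled in Proposition \ref{prop:welldefinedkmap}, so no further analytic work is required.

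Concretely, I will place \eqref{eqn:actionofforms},
\[
K^0(M) \xrightarrow{\;\;ch\;\;} \Omega^{even}(M)/\im\,d \xrightarrow{\;\;a\;\;} \dK(M) \xrightarrow{\;\;\delta\;\;} \cK(M) \lo 0,
\]
above the analogous exact sequence of the differential extension $\check K^{-1}$ (Appendix \ref{app:diff}),
\[
K^0(M) \xrightarrow{\;\;ch\;\;} \Omega^{even}(M)/\im\,d \xrightarrow{\;\check a\;} \check K^{-1}(M) \xrightarrow{\;\check\delta\;} K^{-1}(M) \lo 0,
\]
joined by the verticals $\id,\id,\Phi_1,\cR$, where $\cR\colon \cK(M) \to K^{-1}(M)$ is the isomorphism of Theorem \ref{theorem:oddk}. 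The Chern characters on the left agree because both rows see the topological invariant $ch$ of $K^0(M)$; the two middle squares commute by the already-established identities $\Phi_1\circ a = \check a$ and $\check\delta\circ \Phi_1 = \cR\circ\delta$. Since three of the four verticals are isomorphisms, the four lemma delivers bijectivity of $\Phi_1$ at once.

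Should one prefer to avoid invoking the lemma, the diagram chase is short. For surjectivity, given $\check x \in \check K^{-1}(M)$, lift $\check\delta(\check x)$ through $\cR$ and $\delta$ to some $\widehat z \in \dK(M)$; then $\check x - \Phi_1(\widehat z) \in \ker\check\delta = \im\,\check a$ by bottom-row exactness, so $\check x - \Phi_1(\widehat z) = \check a(\{\omega\}) = \Phi_1(a(\{\omega\}))$ and $\check x \in \im\,\Phi_1$. For injectivity, if $\Phi_1(\widehat y) = 0$ then applying $\cR\delta$ gives $\delta(\widehat y) = 0$, so $\widehat y = a(\{\omega\})$ for some $\omega$; then $\check a(\{\omega\}) = \Phi_1(a(\{\omega\})) = 0$ puts $\{\omega\} \in \ker\check a = \im\,ch$ by bottom-row exactness, and finally $\widehat y = a\circ ch(x) = 0$ by top-row exactness. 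I anticipate no obstacle: the only mildly subtle point is the agreement of the two left-hand Chern characters, which follows immediately because $\Phi_1$ covers the identity on $K^0(M)$ and $ch$ is a topological invariant determined by the underlying class functor. Combined with the compatibilities already verified, bijectivity upgrades $\Phi_1$ to an isomorphism of differential extensions, establishing that $\dK$ realises the odd part of differential $K$-theory.
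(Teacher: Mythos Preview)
Your proposal is correct and is essentially the same argument as the paper's. The only cosmetic difference is that the paper first folds the four-term sequences into short exact sequences $0 \to \Omega^{even}(M)/\im\,ch \to \dK(M) \to K^{-1}(M) \to 0$ (and similarly for $\check K^{-1}$) before invoking the five-lemma, whereas you work with the four-term sequences directly and supply the diagram chase by hand; the content is identical.
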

\begin{proof}
The exact sequences \eqref{eqn:actionofforms} and \eqref{eqn:diffextseq} imply exact sequences
\[
0\lo \Omega^{even}(M)/\im\,ch \xrightarrow{\;\;a\;\;} \dK(M) \xrightarrow{\;\;\delta\;\;} K^{-1} \lo 0
\]
and
\[
0\lo \Omega^{even}(M)/\im\,ch \xrightarrow{\;\;\check a\;\;} \check K^{-1}(M) \xrightarrow{\;\;\check\delta\;\;} K^{-1} \lo 0.
\]
One thus has the commuting diagram
\[
\xy
(0,25)*+{0}="1";
(35,25)*+{\Omega^{even}(M)/\im\,ch}="2";
(70,25)*+{\dK(M)}="3";
(105,25)*+{K^{-1}(M)}="4";
(140,25)*+{0}="5";
(0,0)*+{0}="7";
(35,0)*+{\Omega^{even}(M)/\im\,ch}="8";
(70,0)*+{\check K^{-1}(M)}="9";
(105,0)*+{K^{-1}(M)}="10";
(140,0)*+{0}="11";
{\ar^{} "1";"2"};
{\ar^{a} "2";"3"};
{\ar^{\delta} "3";"4"};
{\ar^{} "4";"5"};
{\ar^{} "1";"7"};
{\ar^{} "7";"8"};
{\ar^{\check a} "8";"9"};
{\ar^{\check \delta} "9";"10"};
{\ar^{} "10";"11"};
{\ar^{} "2";"8"};
{\ar^{\Phi_1} "3";"9"};
{\ar^{} "4";"10"};
{\ar^{} "5";"11"};
\endxy
\]
where the rows are exact and all unlabelled vertical arrows are the identity.
The five-lemma applied to this diagram gives that $\Phi_1$ is an isomorphism.
\end{proof}

At last, one has that the $\Omega$ model $\dK$ is isomorphic to the odd part of any model of differential $K$-theory.
Observe that the isomorphism of Theorem \ref{theorem:odddifferentialk} does not depend on the choice of model for differential $K$-theory used to define it; take any two models $\check K^\bullet$ and $\check L^\bullet$  for differential $K$-theory, then by Theorem \ref{theorem:bunkeschickk} there are unique natural isomorphisms $\Psi \colon \check K^\bullet \to \check L^\bullet$, $\Phi_0 \colon \edK \to \check K^0$ and $\Phi'_0 \colon \edK \to \check L^0$  respecting all of the structure and, by uniqueness, $\Phi'_0 = \Psi\circ\Phi_0$.

Define maps $\Phi_1 \colon \dK \to \check K^{-1}$ and $\Phi'_1 \colon \dK \to \check L^{-1}$ as per \eqref{eqn:oddkhom}, which are isomorphisms respecting all of the structure by Theorem \ref{theorem:odddifferentialk}.
For any $\bm{\sE} - \bm{\sF} \in \dK(M)$ one then has
\begin{multline*}
\Phi'_1 \big(\bm{\sE} - \bm{\sF} \big) := \widehat{\int_\sone} \Phi'_0 \big((E,[\nabla]) - (F,[\nabla']) \big) \\= \widehat{\int_\sone} \Psi\circ \Phi_0 \big((E,[\nabla]) - (F,[\nabla']) \big) = \Psi \circ \Phi_1 \big(\bm{\sE} - \bm{\sF} \big)
\end{multline*}
since $\Psi$ commutes with the $\sone$-integration maps.
Therefore $\Phi_1' = \Psi\circ\Phi_1$, in particular the isomorphism $\Phi_1 \colon \dK \to \check K^{-1}$ defined by \eqref{eqn:oddkhom} does not depend on the choice of model of differential $K$-theory.
Observe also that $\Phi_1$ is a natural isomorphism, which follows from the naturality of $\Phi_0$, the fact that $\widehat{\int_\sone} (f\x\id)^\ast = f^\ast \widehat{\int_\sone}$ and the functoriality of the caloron transform.
Thus
\vspace{11pt}
\begin{theorem}
\label{theorem:wehaveoddk}
$\dK$ is isomorphic to the odd part of any model of differential $K$-theory via a canonical natural isomorphism respecting all of the structure, in particular it defines odd differential $K$-theory.
\end{theorem}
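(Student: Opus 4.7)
The plan is to package the isomorphism $\Phi_1$ constructed in Theorem \ref{theorem:odddifferentialk} as a canonical, natural isomorphism independent of the ambient model of differential $K$-theory. The data at hand are: for each chosen model $\check K^\bullet$ of differential $K$-theory with $\sone$-integration, the Bunke--Schick uniqueness result (Theorem \ref{theorem:bunkeschickk}) supplies a unique natural isomorphism $\Phi_0 \colon \edK \to \check K^0$ respecting curvature, underlying class, and action of forms; and Theorem \ref{theorem:odddifferentialk} then supplies an isomorphism $\Phi_1 \colon \dK \to \check K^{-1}$ respecting all extension structure, defined on representatives by $\Phi_1(\bm{\sE}-\bm{\sF}) := \widehat{\int_\sone}\,\Phi_0((E,[\nabla])-(F,[\nabla']))$ where $(E,\nabla), (F,\nabla')$ are geometric caloron transforms.

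First I would check independence from the choice of model. Given two models $\check K^\bullet$ and $\check L^\bullet$, Theorem \ref{theorem:bunkeschickk} produces unique structure-preserving natural isomorphisms $\Psi \colon \check K^\bullet \to \check L^\bullet$, $\Phi_0 \colon \edK \to \check K^0$ and $\Phi_0' \colon \edK \to \check L^0$. Restricted to even degree, both $\Phi_0'$ and $\Psi\circ\Phi_0$ are structure-preserving natural isomorphisms from $\edK$ to $\check L^0$, so uniqueness forces $\Phi_0' = \Psi\circ \Phi_0$. Using that $\Psi$ commutes with $\sone$-integration (a property of any morphism between differential extensions with $\sone$-integration), one computes, for $\bm{\sE}-\bm{\sF}\in\dK(M)$,
\[
\Phi_1'(\bm{\sE}-\bm{\sF}) = \widehat{\int_\sone}\Phi_0'\big((E,[\nabla])-(F,[\nabla'])\big) = \Psi\,\widehat{\int_\sone}\Phi_0\big((E,[\nabla])-(F,[\nabla'])\big) = \Psi\circ\Phi_1(\bm{\sE}-\bm{\sF}),
\]
so $\Phi_1$ is canonical in the sense that it is intertwined with any change of model by the unique comparison isomorphism $\Psi$.

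Next I would verify naturality in $M$. For a smooth map $f \colon N \to M$, one has $f^\ast \circ \widehat{\int_\sone} = \widehat{\int_\sone}\circ (f\x\id)^\ast$ on differential $K$-theory, $\Phi_0$ is a natural transformation, and the geometric vector-bundle caloron transform $\cV$ is functorial and commutes with pullback (in the sense $(f\x\id)^\ast \cV(\sE,\Delta,\phi) \cong \cV(f^\ast\sE,f^\ast\Delta,f^\ast\phi)$, Theorem \ref{theorem:geometric}). Chaining these three facts on a representative $\bm{\sE}-\bm{\sF}$ gives $f^\ast\Phi_1 = \Phi_1 \circ f^\ast$, so $\Phi_1$ is a natural isomorphism of functors. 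Since Theorem \ref{theorem:odddifferentialk} has already established that $\Phi_1$ commutes with curvature $\check s$, underlying class $\delta$, and action of forms $a$, the map $\Phi_1$ is a canonical natural isomorphism of differential extensions, proving the theorem.

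There is no real obstacle remaining at this stage: all of the hard work has been done in Proposition \ref{prop:welldefinedkmap} (well-definedness via the homotopy formula and the string-potential/Chern--Simons identity \eqref{eqn:csanti}) and in the three bullets preceding Theorem \ref{theorem:odddifferentialk} (compatibility with $\check{ch}$, $\check\delta$, and $\check a$). The only subtlety worth flagging is the insistence that the chosen model carry an $\sone$-integration; this is exactly what lets one compare differential extensions of odd $K$-theory canonically, compensating for the non-uniqueness phenomenon of Bunke--Schick (\cite{BS2}) in the purely odd setting.
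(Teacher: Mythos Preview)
Your proposal is correct and follows essentially the same approach as the paper: you verify model-independence via the identity $\Phi_1' = \Psi\circ\Phi_1$ (using uniqueness of $\Phi_0$ and compatibility of $\Psi$ with $\sone$-integration), and you establish naturality from the naturality of $\Phi_0$, the identity $\widehat{\int_\sone}(f\x\id)^\ast = f^\ast\widehat{\int_\sone}$, and functoriality of the caloron transform. The paper's argument is organised in precisely this way, so there is nothing to add.
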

\vspace{11pt}
\begin{remark}
The caloron correspondence plays a crucial role in the proof of this last result; it features explicitly in \eqref{eqn:oddkhom} as a sort of `differential suspension map'.
\end{remark}
\vspace{11pt}
\begin{remark}
In Theorem \ref{theorem:wehaveoddk}, the isomorphism is \emph{canonical} rather than \emph{unique}: in order to establish uniqueness using the results of Bunke and Schick, one requires either a multiplicative structure or an $\sone$-integration.

Define the \emph{structured differential extension} of $K$-theory by setting
\[
\check{\mathcal{K}}^{n}(M) :=
\begin{cases}
\edK(M) &\mbox{ if $n$ is even}\\
\dK(M) &\mbox{ if $n$ is odd},
\end{cases}
\]
noting that the properties of $\edK(M)$ and $\dK(M)$ imply that this is indeed a differential extension of $K^\bullet$.
Fixing a model $\check K^\bullet$ of differential $K$-theory as above, denote by
\[
\Phi \colon \check{\mathcal{K}}^\bullet \lo \check K^\bullet
\]
the natural isomorphism that acts in even degree by $\Phi_0$ and in odd degree by $\Phi_1$, observing that this respects the curvature, underlying class and action of forms maps.
Define
\begin{equation}
\label{eqn:intoperation}
\widehat{\int_\sone} \colon \check{\mathcal{K}}^\bullet(M\x\sone) \lo \check{\mathcal{K}}^{\bullet-1}(M)
\end{equation}
by requiring that the diagram
\[
\xy
(0,25)*+{\check{\mathcal{K}}^\bullet(M\x\sone)}="1";
(50,25)*+{\check{\mathcal{K}}^{\bullet-1}(M)}="2";
(0,0)*+{\check K^\bullet(M\x\sone)}="3";
(50,0)*+{\check K^{\bullet-1}(M)}="4";
{\ar^{\widehat{\int_\sone}} "1";"2"};
{\ar^{\Phi} "2";"4"};
{\ar^{\widehat{\int_\sone}} "3";"4"};
{\ar^{\Phi} "1";"3"};
\endxy
\]
commute, i.e. for $x \in \check{\mathcal{K}}^\bullet(M\x\sone)$
\[
\widehat{\int_\sone} x := \Phi^{-1} \widehat{\int_\sone} \Phi(x).
\]
This is readily seen to define an $\sone$-integration map, so  $\check{\mathcal{K}}^\bullet$ must be isomorphic to any other differential extension of $K$-theory with $\sone$-integration via a unique natural isomorphism preserving all the structure.
Since $\Phi$ is natural and by definition of the $\sone$-integration on $\check{\mathcal{K}}^\bullet$ preserves all of the additional structure it is this unique isomorphism.
Thus the map $\Phi_1$ defined in \eqref{eqn:oddkhom} is the unique isomorphism (in odd degree) that respects this particular $\sone$-integration.

Similarly, in the case where $\check K^\bullet$ is multiplicative one may also use the isomorphism  $\Phi \colon \check{\mathcal{K}}^\bullet \to \check K^\bullet$ to endow $\check{\mathcal{K}}^\bullet$ with a multiplicative structure by setting
\[
x\cdot y:= \Phi^{-1}\big( \Phi(x) \cdot \Phi(y) \big),
\]
so that $\check{\mathcal{K}}^\bullet$ becomes a multiplicative differential extension of $K$-theory.
By the above, neither this $\sone$-integration nor this multiplicative structure depend on the choice of $\check K^\bullet$.
\end{remark}

\chapter*{Conclusions and further work \label{ch:conclusion}}
\addcontentsline{toc}{chapter}{Conclusions and further work }

The focus of this thesis was the construction and applications of certain differential forms associated to loop group bundles.
These \emph{string potential forms} were constructed using the caloron correspondence developed in \cite{MS,MV,V} and play the role of Chern-Simons forms for loop group bundles.
Fixing a loop group bundle $\sQ \to M$, the total string potentials are forms that live on the total space $\sQ$ and depend on a fixed choice of connection and Higgs field for $\sQ$.
The relative string potentials are forms on the base $M$ that depend on a smooth path of connections and Higgs fields for $\sQ$.
In both cases the string potential forms describe some aspect of the dependence of the string forms---the loop group version of Chern-Weil forms---on a particular choice of connection and Higgs field.

In the case that $G$ is a compact, simple, simply-connected Lie group, it was shown that the total string potential forms can be used to recover the curving of the lifting bundle gerbe of the $LG$-bundle $\sQ \to M$ (Example \ref{example:liftingbundlegerbe}).
When $G$ is a simply-connected Lie group, it was shown that the total string potentials of the standard connection and Higgs field for the path fibration $PG \to G$ pull back to give generators for the cohomology of the based loop group $\Omega G$ (Example \ref{example:cohgenerators}).
Moreover, in a restricted setting the total string potential forms were used to define secondary characteristic classes for $\Omega U(n)$-bundles.
Due to their relationship with the Chern-Simons forms and motivated by this toy example, one expects to be able to define such secondary characteristic classes corresponding to each total string potential.
The existence of these classes is an interesting problem that merits further research. 

Motivated by the work of Simons and Sullivan in \cite{SSvec}, who used Chern-Simons forms to define a model $\edK$ for the even part of differential $K$-theory, this thesis applied the relative string potential forms to a similar purpose.
Together with the notion of $\Omega$ vector bundles developed in Chapter \ref{ch:four}, the relative string potential forms were used to construct a differential extension of the odd part of $K$-theory, referred to as the $\Omega$ model and denoted $\dK$.
A caloron correspondence for vector bundles was developed for $\Omega$ vector bundles and played a decisive role in showing that the $\Omega$ model defines odd differential $K$-theory; i.e.~that $\dK$ is isomorphic to the odd part of any model of differential $K$-theory.
An isomorphism of differential extensions was constructed between $\dK$ and an elementary differential extension of odd $K$-theory appearing in \cite{TWZ}, referred to as the $\TWZ$ extension.
The effect of this isomorphism is two-fold as it provides a `homotopy-theoretic' interpretation of the $\Omega$ model via classifying maps and also shows that the $\TWZ$ extension defines odd differential $K$-theory; a result not previously obtained.

There are a few unanswered questions that arise from the treatment of string potential forms and odd differential $K$-theory presented in this thesis.
Firstly, as remarked above, there ought to be a way of using total string potentials to define secondary characteristic classes for loop group bundles.
In the case that such a construction does indeed exist, these classes should be related to the differential Chern character mapping from $\dK$ into  differential cohomology with values in $\QQ$ (cf. \cite[Theorem 6.1]{BS1}).

Secondly, Simons and Sullivan showed that $\edK$ has a multiplicative structure induced by the tensor product operation $\otimes$ on vector bundles with connection.
Since there is a (honed) tensor product $\oast$ for $\Omega$ vector bundles equipped with connective data, it seems reasonable that there should be a way of constructing a multiplicative structure directly on $\check{\mathcal{K}}^\bullet:= \edK(M)\oplus\dK(M)$ using $\otimes$ and $\oast$.
It should also be possible to construct an $\sone$-integration map directly on $\check{\mathcal{K}}^\bullet$ and, additionally, one should be able to define the action of forms map on $\dK$ independently of the work of Tradler, Wilson and Zeinalian.

A final question that arises from the treatment presented in this thesis relates to the construction of the $\Omega$ model, which
uses structured $\Omega$ vector bundles that are classified by the canonical bundles $\bm{\sE}(n)$.
The canonical bundles are classifying for $\Omega$ vector bundles equipped with Higgs fields, though this is not necessarily true when connections are taken into account.
The reason for this restriction, as observed in Remark \ref{remark:whyrestrict}, is that it guarantees the existence of inverses, a fact that is frequently implicitly used throughout Chapter \ref{ch:five}.
It is interesting, therefore, to ask whether such inverses exist for all structured $\Omega$ vector bundles, in which case one should be able to define the $\Omega$ model in terms of all such bundles, rather than just pullbacks of the canonical bundles.


\renewcommand{\theequation}{\Alph{chapter}.\arabic{section}.\arabic{equation}}

\appendix
\chapter{Infinite-dimensional manifolds\label{app:frechet}}

This thesis deals extensively with infinite-dimensional manifolds, in particular with loop groups, loop vector spaces and limits of certain directed systems of manifolds---these are all examples of infinite-dimensional manifolds.
An infinite-dimensional manifold may be thought of as an ordinary smooth manifold that, rather than being modelled over $\RR^n$ or $\CC^n$, is modelled over a generalised sort of Euclidean space.
These spaces belong to a special family of locally convex topological vector space---usually the Fr\'{e}chet spaces, but more generally sequentially complete locally convex topological vector spaces.

This appendix gives a brief treatment of Fr\'{e}chet spaces and manifolds, as well as discussing direct limit manifolds.
The exposition here follows closely that of \cite{Glock,H,V}.


\section{Fr\'{e}chet spaces and Fr\'{e}chet manifolds}
Fr\'{e}chet spaces are special locally convex topological vector spaces that admit a natural notion of differentiability.
Importantly for the purposes of this thesis, this allows one to talk about maps between Fr\'{e}chet spaces being \emph{smooth}.
Recall
\vspace{11pt}
\begin{definition}
A \emph{seminorm} on an $\RR$- or $\CC$-linear vector space $V$ is an $\RR$-valued function $\|\!\cdot\!\| \colon V \to \RR$ such that
\begin{enumerate}
\item
$\| v \| \geq 0$ for all $v \in V$;

\item
$\| v+w \| \leq \| v \|+\| w \|$ for all $v, w \in V$; and

\item
$\| av \| = |a|\cdot \| v \| $ for all $v \in V$ and scalars $a$.
\end{enumerate}
\end{definition}
Notice that the only property distinguishing seminorms from norms is the fact that a seminorm need not be positive-definite; that is for a seminorm $\|\!\cdot\!\|$ on $V$ one may have some $v\neq 0$ in $V$ for which $\|v\| = 0$. 
\vspace{11pt}

\begin{definition}
A \emph{locally convex topological vector space}\footnote{an equivalent definition of local convexity is that the origin has a neighbourhood basis of absolutely convex absorbent sets; that is, convex sets $U$ such that $tx \in U$ for all $x \in U$ and $|t| \leq 1$ (\emph{absolutely convex}) with the additional property that for all $x \in V$ there is some $t$ such that $tx \in U$ (\emph{absorbent}).} is a vector space $V$ with topology defined by a family of seminorms $\{\|\!\cdot\|_i \mid i \in I\}$ for some index set $I$.
Thus a neighbourhood basis of $v \in V$ is given by all sets of the form
\[
U_{\varepsilon}^J(v) := \left\{  w\in V  \mid \|w-v\|_j <\varepsilon \text{ for all $j \in J$}  \right\}
\]
where $J \subset I$ is finite and $\varepsilon >0$.
\end{definition}

A locally convex topological vector space is Hausdorff if and only if $v = 0 \Leftrightarrow \|v\|_i = 0$ for all $i\in I$ and metrisable if and only if the topology may be defined by a countable family of seminorms.
\vspace{11pt}
\begin{definition}
\label{defn:frechetspace}
A \emph{Fr\'{e}chet space} is a sequentially complete Hausdorff metrisable locally convex topological vector space.
\end{definition}

Immediate examples of Fr\'{e}chet spaces are Banach spaces, where the family of seminorms is just the given norm.
Another example of key importance in this thesis is
\vspace{11pt}

\begin{example}
Let $X$ be a compact manifold and $E \to X$ a vector bundle.
The space $\Gamma(E)$ of smooth sections of $E$ then forms a Fr\'{e}chet space.
The seminorms inducing the topology in this case are given by first choosing Riemannian metrics and connections on the vector bundles $TX$ and $E$; for any $v \in \Gamma(E)$, write $D^j v$ for its $j$-th covariant derivative and set
\[
\| v\|_n := \sum_{j=0}^n \,\sup_{x\in X} \,|D^jv(x) |,
\]
where $|\!\cdot\!|$ is the norm induced by the Riemannian metric.
Note that the compactness of $X$ here is crucial.
\end{example}

As mentioned above, there is a sensible notion of maps between Fr\'{e}chet spaces being smooth, given by the \emph{G\^{a}teaux derivative}.
Suppose $V$ and $W$ are Fr\'{e}chet spaces, $U \subseteq V$ is open and $f\colon U \to W$ is a continuous map, then
\vspace{11pt}
\begin{definition}
\label{defn:gatderiv}
The \emph{(G\^{a}teaux) derivative} of $f$ at $v \in U$ in the direction of $h \in V$ is
\[
Df(v)\{h\} := \lim_{t\to 0} \frac{f(v+th) - f(v)}{t}
\]
and $f$ is \emph{differentiable} at $v$ in the direction of $h$ if this limit exists.
One says that $f$ is \emph{continuously differentiable} if the above limit exists for all $(v,h) \in U\x V$ and if 
\[
Df \colon U\x V \lo W
\]
is continuous.

Higher derivatives are defined inductively via
\[
D^{k+1}f(v)\{h_1,\dotsc,h_{k}\} := \lim_{t\to 0} \frac{D^kf(v+th_k)\{h_1,\dotsc,h_{k-1}\} - D^kf(v)\{h_1,\dotsc,h_{k-1}\}}{t}
\]
and one says that $f$ is of class $\cC^k$ if
\[
D^k f \colon U \x V^k \lo W
\]
exists and is continuous.
One says that $f$ is \emph{smooth}, or of class $\cC^\infty$, if $f$ is $\cC^k$ for all $k$.
\end{definition}

The usual definition of a manifold then generalises directly.
\vspace{11pt}
\begin{definition}
\label{defn:frecman}
A \emph{Fr\'{e}chet manifold} is a Hausdorff topological space with an atlas of coordinate charts valued in Fr\'{e}chet spaces such that the coordinate transition maps are all smooth maps .
\end{definition}

There are direct generalisations of many other familiar concepts from differential geometry to the Fr\'{e}chet setting, in particular one may talk about tangent bundles of Fr\'{e}chet manfolds,  Fr\'{e}chet principal bundles and Fr\'{e}chet vector bundles.
For a comprehensive review, see \cite{H}.


\section{Mapping manifolds}
There is one particular type of Fr\'{e}chet manifold that features prominently in this thesis, namely mapping manifolds.
The interested reader is referred to \cite[Section I.4]{H} for a detailed discussion.
\vspace{11pt}

\begin{example}[\cite{H}]
\label{ex:fibreman}
Let $X$ be a compact finite-dimensional manifold and suppose that $\pi \colon B \to X$ is a surjective submersion, with $B$ a finite-dimensional manifold.
Consider the space of sections $\Gamma(B)$ of $B$, that is all the smooth maps $f\colon X\to B$ such that $\pi\circ f  =\id$.
Associated to each such section $f$ there is a vector bundle $V_f B \to X$ called the \emph{vertical tangent bundle} to $B$ at $f$.
The fibre of $V_f B$ at $x \in X$ consists of those tangent vectors to $B$ at $f(x)$ that are annihilated by $d\pi$.

Then, provided it is not empty, the space of sections $\Gamma(B)$ is a Fr\'{e}chet manifold.
Using tubular neighbourhoods one can construct a diffeomorphism between a neighbourhood of the zero section of the vector bundle $V_fB$ and a neighbourhood of the image of $f$ in $B$.
Thus one obtains a bijective correspondence between sections near zero in $\Gamma(V_fB)$ and sections near $f$ in $\Gamma(B)$.
These maps provide coordinate charts for $\Gamma(B)$ since the transition maps are smooth.
Notice that in this example the coordinate charts are not all valued in the same Fr\'{e}chet space.
\end{example}

Taking a smooth manifold $Y$ and setting $B = X\x Y$ in Example \ref{ex:fibreman} gives the Fr\'{e}chet manifold
\[
\Map(X,Y) := \Gamma(X\x Y)
\]
since a section $X\to X\x Y$ is equivalent to a map $X \to Y$.
An important feature of such mapping manifolds is that if $X, Y$ and $Z$ are (finite-dimensional) manifolds with $X$ and $Y$ compact then precomposition by a smooth map $ f \colon X \to Y$ defines a smooth map
\[
f^\ast \colon \Map(Y,Z) \lo \Map(X,Z).
\]
Similarly,  postcomposition by a smooth map $g\colon Y \to Z$  defines a smooth map
\[
g_\ast \colon \Map(X,Y) \lo \Map(X,Z),
\]
where here $Y$ need not be compact.

Another important feature of mapping manifolds is that there is an easy description of their tangent bundles using kinematic tangent vectors.
A smooth path $f \colon (a,b) \to \Map(X,Y)$ is equivalent to a smooth map $f \colon (a,b) \x X \to Y$ and so for each $x \in X$ one obtains a path $t \mapsto f(t)(x)$ in $Y$.
The tangent vector $f'(t)(x)$ in the $t$ direction is then a tangent vector to $Y$ at $f(t)(x)$, so that $f'(t)$ is a section of the pullback $f^\ast TY$.
Thus, one has the canonical identification
\[
T_f \Map(X,Y) = \Gamma(f^\ast TY).
\]
This identification is especially useful, for instance when one wishes to discuss differential forms on $\Map(X,Y)$.


\section{The path fibration}
Following \cite{V}, recall that for a connected Lie group the path fibration has total space
\[
PG := \{ p\colon \RR \to G \mid \text{$p$ is smooth, $p(0) = 1$ and $p^{-1}\d p$ is periodic with period $2\pi$} \}
\]
and the projection
\[
\ev_{2\pi} \colon PG\lo G
\]
is given by evaluation at $2\pi$.
Notice that the total space $PG$ is isomorphic to the space of connections on $\sone\x G \to \sone$, since a connection $A$ on this trivial bundle uniquely determines a periodic path by solving $A = p^{-1}\d p$ subject to the initial condition $p(0)=1$ and, conversely, $p \in PG$ determines a connection via $p^{-1}\d p$.
Thus $PG$ is smoothly contractible as spaces of connections are affine.

To see that $PG\to G$ is in fact an $\Omega G$-bundle, first take any two $p, q\in \ev_{2\pi}^{-1}(\{g\})$ for some fixed $g \in G$.
Let $f(t) := (p^{-1}q)(t+2\pi)$, noticing that $f(0) = 1$ and also that $f$ satisfies the same differential equation as $p^{-1}q$.
By the Picard-Lindel\"{o}f Theorem, one must have $f = p^{-1}q$ so that $p^{-1}q$ is in fact $2\pi$-periodic.
Therefore any two $p, q$ in the same fibre of $\ev_{2\pi}$ are related by $p = q\gamma$ for some $\gamma \in \Omega G$.
From this it follows readily that the right action of $\Omega G$ on the fibres is free and transitive.

To establish the local triviality of $PG$, consider a normal neighbourhood $U \subset G$ of the identity (so that $\exp$ is a diffeomorphism $V \to U$ for some subset $V \subset \g$).
As in \cite{V}, one may define a map
\[
U\x\Omega G \lo \ev_{2\pi}^{-1}(U)
\]
by sending
\[
(g,\gamma) \longmapsto \exp(t\xi)\gamma(t),
\]
with $\xi$ satisfying $\exp(2\pi \xi) = g$.
The inverse of this map is given explicitly by
\[
p \longmapsto \left(p(2\pi), \exp(tp(2\pi)^{-1}) p \right).
\]
This gives a trivialisation over the neighbourhood $U$ of the identity.
To obtain a trivialisation over any $h \in G$, one considers the open sets $\{Uh\}_{h\in G}$.
As $G$ is connected, for any $h \in G$ one may choose a periodic path $\widetilde{h} \in PG$ with $\widetilde{h}(2\pi) = h$, then the map
\[
Uh\x\Omega G \lo \ev_{2\pi}^{-1}(Uh)
\]
given by sending
\[
(gh,\gamma) \longmapsto \widetilde{h}(t) \exp(t\xi)\gamma(t),
\]
with $\xi$ as above, is a local trivialisation.

This shows that $PG \to G$ is indeed an $\Omega G$-bundle and, since $PG$ is contractible, it is a model for the universal bundle.


\section{Direct limit manifolds}
This appendix is concluded by a brief discussion of direct limit manifolds and Lie groups following \cite{Glock}.
Recall that a \emph{directed system} in a category $\mathrm{C}$ is given by a directed set $(I,\leq)$ and a pair $S:= \left(\{X_i\}_{i\in I}, \{\phi_{ji}\}_{i\leq j} \right)$ where each $X_i$ is an object of $\mathrm{C}$ and each $\phi_{ji}$ is a morphism $X_i \to X_j$ in $\mathrm{C}$ such that
\begin{enumerate}
\item
$\phi_{ii} = \id_{X_i}$ for each $i\in I$; and

\item
$\phi_{kj} \circ \phi_{ji} = \phi_{ki}$ for all $i\leq j\leq k$.
\end{enumerate}
A \emph{cone over $S$} is a  pair $(X,\{\phi_i\}_{i\in I})$ where $X\in \mathrm{C}$ and the $\phi_i \colon X_i \to X$ are such that the diagram
\[
\xy
(0,25)*+{X_i}="1";
(50,25)*+{X_j}="2";
(25,0)*+{X}="3";
{\ar^{\phi_{ji}} "1";"2"};
{\ar^{\phi_i} "1";"3"};
{\ar_{\phi_j} "2";"3"};
\endxy
\]
commutes whenever $i\leq j$.

A cone $(X,\{\phi_i\}_{i\in I})$ over $S$ is a \emph{direct limit} of $S$ if it has the following universal property: for each cone $(Y,\{\psi_i\}_{i\in I})$ over $S$ there exists a unique morphism $u \colon X\to Y$ such that the diagram
\[
\xy
(0,25)*+{X_i}="1";
(50,25)*+{X_j}="2";
(25,0)*+{X}="3";
(25,-25)*+{Y}="4";
{\ar^{\phi_{ji}} "1";"2"};
{\ar^{\phi_i} "1";"3"};
{\ar_{\phi_j} "2";"3"};
{\ar_{\psi_i} "1";"4"};
{\ar^{\psi_j} "2";"4"};
{\ar_{u} "3";"4"};
\endxy
\]
commutes for all $i\leq j$.
It is important to remark that direct limits do not always exist in a given category $\mathrm{C}$, but when they do exist they are unique up to unique isomorphism.
The direct limit is written
\[
X:= \dlim X_i
\]
when the directed system $S$ is understood.

Suppose $T= \left(\{Y_i\}_{i\in I}, \{\psi_{ji}\}_{i\leq j} \right)$ is another directed system over the same index set $I$, with morphisms $f_i \colon X_i \to Y_i$ such that $f_j \circ \phi_{ji} = \psi_{ji} \circ f_i$ for all $i\leq j$.
The family of morphisms $\{f_i\}_{i\in I}$ is said to be \emph{compatible} with the directed systems $S$ and $T$. If $(Y,\{\psi_i\}_{i\in I})$ is a cone over $T$ it follows that $(Y,\{\psi_i\circ f_i\}_{i\in I})$ is a cone over $S$ and one writes $\dlim f_i$ for the unique morphism $X= \dlim X_i \to Y$ determined by the above universal property.
The directed systems $T$ and $S$ are \emph{equivalent} if the $f_i$  are isomorphisms for each $i$.
\vspace{11pt}
\begin{example}[Sets]
If $S:= \left(\{X_i\}_{i\in I}, \{\phi_{ji}\}_{i\leq j} \right)$ is a directed system of sets, then one takes
\[
\dlim X_i := \bigsqcup_{i\in I} X_i/\!\sim
\]
where if $x_i \in X_i$ and $x_j \in X_j$ then $x_i \sim x_j\Leftrightarrow \exists k \in I$ such that $\phi_{kj}(x_j) = \phi_{ki}(x_i)$.
The maps $\phi_i \colon X_i \to \dlim X_i$ are the obvious induced maps.
\end{example}
\vspace{11pt}
\begin{example}[Topological spaces]
If $S:= \left(\{X_i\}_{i\in I}, \{\phi_{ji}\}_{i\leq j} \right)$ is a directed system of topological spaces, then the direct limit of this system in the category of topological spaces is given by equipping the direct limit $\dlim X_i$ of underlying sets with the final topology with respect to the maps $\phi_i$.
Therefore a subset $U \subset \dlim X_i$ is open (resp.~closed) if and only if $\phi_i^{-1}(U) \subset X_i$ is open (resp.~closed) for each $i$.
\end{example}
\vspace{11pt}
\begin{example}[Topological groups]
If $S:= \left(\{G_i\}_{i\in I}, \{\phi_{ji}\}_{i\leq j} \right)$ is a directed system of topological groups, one may realise the set-theoretic direct limit $G:= \dlim G_i$ (with the corresponding maps $\phi_i \colon G_i \to G$) as a topological group also.

Equip $G$ with the final topology as above, writing $\mu_i \colon G_i \x G_i \to G_i$ for the multiplication maps and $\iota_i \colon G_i\to G_i$ for the inversion maps.
The family of maps $\{\mu_i\}_{i\in I}$ is compatible with the directed systems $T:=  \left(\{G_i \x G_i \}_{i\in I}, \{\phi_{ji} \x \phi_{ji}\}_{i\leq j} \right)$ and $S$.
As $G\x G$ is the direct limit of $T$ in the category of topological spaces, the multiplication on $G$ is given by the limit map $\mu:= \dlim \mu_i$.
The inversion operation on $G$ is given by $\iota:= \dlim \iota_i$.
Note that $G$ is not necessarily Hausdorff; some authors require this condition in the definition of a topological group.
\end{example}

The following result on direct limits of topological spaces is used extensively in this thesis%
\vspace{11pt}
\begin{proposition}[\cite{Glock}]
\label{prop:compactimage}
Suppose $X$ is the direct limit of the sequence $X_1 \subseteq X_2\subseteq \dotsb$ of topological spaces.
Then
\begin{itemize}
\item
if each $X_n$ is locally compact then $X$ is Hausdorff

\item
if each $X_n$ is $T_1$ then every compact subset of $X$ is contained in some $X_n$.
\end{itemize}
\end{proposition}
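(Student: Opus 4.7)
The plan is to handle the two assertions separately, exploiting the defining property of the direct-limit topology throughout: a set $U \subseteq X$ is open (equivalently, closed) precisely when $U \cap X_n$ is open (equivalently, closed) in $X_n$ for every $n$.

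For the Hausdorff assertion, I would fix distinct points $x, y \in X$, choose $n_0$ large enough that both lie in $X_{n_0}$, and build inductively two ascending sequences of open neighbourhoods $U_n \ni x$ and $V_n \ni y$ in $X_n$ (for $n \geq n_0$) such that their $X_n$-closures $\overline{U_n}, \overline{V_n}$ are compact and disjoint, and such that $\overline{U_{n-1}} \subseteq U_n$ and $\overline{V_{n-1}} \subseteq V_n$. The inductive step invokes the fact that in a locally compact Hausdorff space any two disjoint compact sets admit disjoint relatively compact open neighbourhoods, applied to the pair $\overline{U_{n-1}}, \overline{V_{n-1}}$ viewed inside $X_n$; they remain compact by continuity of the inclusion $X_{n-1} \hookrightarrow X_n$, and remain disjoint because compact subsets of the Hausdorff space $X_n$ are closed, so the $X_{n-1}$-closure and the $X_n$-closure coincide for these sets. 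Setting $U = \bigcup_n U_n$ and $V = \bigcup_n V_n$, the nesting $U_n \subseteq U_{n+1}$ gives $U \cap V = \emptyset$; to verify openness of $U$ in $X$, I would decompose $U \cap X_m = U_m \cup \bigcup_{n > m} (U_n \cap X_m)$ and note that each $U_n \cap X_m$ with $n > m$ is open in $X_m$ by continuity of the inclusion $X_m \hookrightarrow X_n$.

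For the second assertion, I would argue by contradiction. If $K \subseteq X$ is compact but contained in no $X_n$, one can extract a sequence $(x_k) \subseteq K$ together with indices $n_k \to \infty$ such that $x_k \in X_{n_k} \setminus X_{n_k - 1}$. Let $S = \{x_k \mid k \in \NN\}$, which is infinite. For every $m$ the intersection $S \cap X_m$ is finite and hence closed in the $T_1$ space $X_m$, so by the direct-limit criterion $S$ is closed in $X$; the identical argument applied to every subset of $S$ shows that $S$ carries the discrete topology as a subspace of $X$. Being a closed subset of the compact set $K$, $S$ is itself compact, but a discrete compact space is finite, contradicting the infinitude of $S$.

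The main technical point that needs care is the interplay between closures taken in the various $X_n$; it is essential that the sets $U_n$ in the first construction nest via the closures $\overline{U_{n-1}} \subseteq U_n$ rather than merely via $U_{n-1} \subseteq U_n$, so that continuity of the inclusions suffices to handle the residual terms $U_n \cap X_m$ with $n > m$ when checking openness. I expect this verification of openness of $U$ in $X$, together with the unambiguity of the notation $\overline{U_n}$, to be the subtlest point of the argument; the second assertion, by contrast, is essentially a soft topological observation once the $T_1$ hypothesis is in hand.
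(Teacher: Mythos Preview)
The paper does not prove this proposition; it is simply quoted from the reference \cite{Glock} and stated without argument in the appendix. Your proposed proof is correct and is essentially the standard argument one finds in the literature for these facts about direct limits of expanding sequences of spaces (see e.g.\ Gl{\"o}ckner or Hansen), so there is nothing to compare against and no gap to report.
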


One is now in a position to discuss direct limits of directed systems of smooth manifolds.
Direct limits of manifolds are more subtle than, say, directs limits of sets or of topological spaces since it is not immediately clear how to obtain a smooth manifold structure on the na\"{i}ve set-theoretic direct limit.

For the following discussion, it is necessary to slightly  broaden the notion of infinite-dimensional manifolds as discussed thus far in this thesis.
If one relaxes the metrisability condition of Definition \ref{defn:frechetspace} one obtains a (Hausdorff) \emph{sequentially complete locally convex (s.c.l.c.)  topological vector space}.
One may then talk about derivatives in Hausdorff s.c.l.c. vector spaces as per Definition \ref{defn:gatderiv}, in particular there is a notion of \emph{smooth manifolds} modelled over Hausdorff s.c.l.c. vector spaces, as in Definition \ref{defn:frecman}.
\vspace{11pt}
\begin{example}
Denote by $\RR^\infty$ the real vector space of $\RR$-valued sequences with finite support.
The set $I$ of finite-dimensional subsets of $\RR^\infty$ forms a directed set under inclusion and so one obtains the directed system $(\{i\}_{i\in I}, \{\phi_{ji}\}_{i\leq j})$ of topological vector spaces, with $\phi_{ji} \colon i \to j$ the inclusion.
This directed system has direct limit $\RR^\infty$ with the \emph{finite topology}, which is the final topology with respect to the inclusion maps $i \hookrightarrow \RR^\infty$.

The finite topology on $\RR^\infty$ coincides with the finest vector space and finest locally convex vector space topologies.
Thus, since $\RR^\infty$ is sequentially complete, it is a Hausdorff s.c.l.c. topological vector space.
Notice that $\RR^\infty$ is not metrisable so it is not a Fr\'{e}chet space (see \cite{Glock,Hansen} for details).
\end{example}

\vspace{11pt}
\begin{theorem}[\cite{Glock}]
If $M_1 \subseteq M_2 \subseteq M_3 \subseteq \dotsb$ is a sequence of finite-dimensional paracompact smooth manifolds such that $M_i$ is a smooth closed submanifold of $M_{i+1}$ then there exists a unique smooth manifold structure on the direct limit $M := \dlim M_i$ that makes $M$ the direct limit of its submanifolds $M_n$ in the category of smooth manifolds.
If $M_i$ is modelled over $\RR^{n_i}$ then $M$ is modelled over $\dlim \RR^{n_i}$.
\end{theorem}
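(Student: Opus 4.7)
The plan is to construct charts on $M = \dlim M_i$ inductively from charts on the $M_n$ using the nested tubular neighbourhood structure, then verify the universal property. First I would equip $M$ with the final topology with respect to the inclusion maps $\phi_n \colon M_n \hookrightarrow M$, so that $U \subseteq M$ is open precisely when $\phi_n^{-1}(U)$ is open in $M_n$ for every $n$. Since each $M_n$ is finite-dimensional (and in particular $T_1$), Proposition \ref{prop:compactimage} implies that compact subsets of $M$ sit inside some $M_n$, which is what makes the direct limit topology well-behaved. I would also check Hausdorffness, which follows from the closedness of each $M_n$ in $M_{n+1}$ together with the local compactness provided by finite-dimensionality.

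Next, I would build an atlas on $M$. Fix $x \in M$ and let $n$ be minimal with $x \in M_n$. I would choose a chart $\phi_n \colon U_n \to V_n \subseteq \RR^{n_n}$ around $x$ in $M_n$, then exploit the fact that $M_n$ is a smooth closed submanifold of $M_{n+1}$: pick a tubular neighbourhood of $M_n$ in $M_{n+1}$ and, after possibly shrinking $U_n$, use it to extend $\phi_n$ to a chart $\phi_{n+1} \colon U_{n+1} \to V_{n+1} \subseteq \RR^{n_{n+1}}$ on $M_{n+1}$ whose restriction to $U_n \cap M_n$ agrees with $\phi_n$ under the inclusion $\RR^{n_n} \hookrightarrow \RR^{n_{n+1}}$. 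Iterating this construction gives a nested sequence of charts $\phi_k \colon U_k \to V_k \subseteq \RR^{n_k}$ for all $k \geq n$ with $\phi_{k+1}|_{U_k} = \phi_k$. Setting $U := \bigcup_{k \geq n} U_k$ and $\phi := \dlim \phi_k$ produces a chart
\[
\phi \colon U \lo \dlim V_k \subseteq \dlim \RR^{n_k}
\]
on an open neighbourhood of $x$ in $M$ valued in the direct limit vector space. Smoothness of transition maps between two such charts follows from smoothness at each finite stage together with compatibility under the inclusions.

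With the manifold structure in place, I would verify that $(M,\{\phi_n\})$ satisfies the required universal property: given any smooth manifold $N$ (modelled over a Hausdorff s.c.l.c.\ space) and a compatible family of smooth maps $\psi_n \colon M_n \to N$ with $\psi_{n+1}|_{M_n} = \psi_n$, the set-theoretic direct limit map $\psi \colon M \to N$ is continuous by the final topology, and smoothness is checked chart-by-chart using the inductively-built charts above, since the restriction of $\psi$ to each $U_k$ is $\psi_k$ composed with $\phi_k^{-1}$. Uniqueness of the smooth structure is then automatic from the universal property, since any two such structures admit mutually inverse identity maps that are smooth. The second claim about the model space falls out of the chart construction itself.

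The main obstacle I foresee is the extension step via tubular neighbourhoods: one must shrink the $U_k$ carefully at each stage so that the extensions remain compatible and the union $U$ is open in the final topology, i.e.\ $U \cap M_k$ is open in $M_k$ for every $k$. Ensuring this requires a diagonal/exhaustion argument, and one must also check that the chart $\phi$ really is a homeomorphism onto an open subset of $\dlim \RR^{n_k}$ with its finite topology---this is where the local compactness of each $M_k$ and Proposition \ref{prop:compactimage} enter decisively, since continuity of $\phi^{-1}$ in the direct limit topology has to be verified level by level.
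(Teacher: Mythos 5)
Your proposal follows essentially the same route as the paper, which itself only sketches the argument (attributed to Gl\"{o}ckner): extend charts from $M_i$ to $M_{i+1}$ via tubular neighbourhoods, obtain a nested sequence of compatible charts $U_1 \subseteq U_2 \subseteq \dotsb$, and pass to the direct limit to get an atlas for $M$ modelled over $\dlim \RR^{n_i}$. Your additional remarks on the final topology, the universal property, and the need for a careful shrinking/exhaustion argument at each extension step are sensible elaborations of the same strategy rather than a different approach.
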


The proof of this fact presented in \cite{Glock} uses tubular neighbourhoods to extend charts on $M_i$ to charts on $M_{i+1}$.
The result of this procedure is a sequence $U_1 \subseteq U_2 \subseteq U_3\subseteq \dotsb$ of coordinate charts (with $U_i \subseteq M_i$) and a compatible family of maps $U_i \to \RR^{n_i}$.
Passing to the direct limit gives a coordinate chart for $M$ and the collection of all charts obtained in this fashion gives a smooth atlas for $M$ over $\dlim \RR^{n_i}$.

In the case where each $M_i = G_i$ is a Lie group and the submanifold embeddings $G_i \hookrightarrow G_{i+1}$ are Lie group homomorphisms one obtains
\vspace{11pt}
\begin{theorem}[\cite{Glock}]
If $G_1 \subseteq G_2 \subseteq G_3 \subseteq \dotsb$ is a sequence of finite-dimensional paracompact smooth Lie groups such that $G_i$ is a closed Lie subgroup of $G_{i+1}$ then there exists a unique smooth Lie group structure on the direct limit $G := \dlim G_i$ that makes $G$ the direct limit of its Lie subgroups $G_n$ in the category of smooth Lie groups.

Moreover, if $G_i$ has Lie algebra $\g_i$ then the Lie algebra of $G$ is $\g := \dlim \g_i$.
\end{theorem}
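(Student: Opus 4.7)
The plan is to leverage the preceding theorem on direct limits of smooth manifolds and then promote the group structure along the limit. First I would apply that theorem to the sequence $G_1 \subseteq G_2 \subseteq \dotsb$ to equip $G = \dlim G_i$ with its unique smooth manifold structure (modelled on $\dlim \g_i$, since the $\g_i = T_e G_i$ form a compatible direct system of model spaces via the embeddings). The entire remaining task is to verify that the group operations are smooth for this structure and that the resulting Lie group is the direct limit in the category of smooth Lie groups.

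For the multiplication $\mu \colon G \x G \to G$, the strategy is to use that the family $\{\mu_i \colon G_i \x G_i \to G_i\}$ is compatible with the inclusion directed systems on both sides, so that there is a canonical map of sets $\mu = \dlim \mu_i \colon \dlim(G_i \x G_i) \to \dlim G_i = G$. The key technical point is to identify $\dlim (G_i \x G_i)$ with $G \x G$ as smooth manifolds; since each $G_i \x G_i$ is a closed finite-dimensional submanifold of $G_{i+1} \x G_{i+1}$ with $\dim(G_i\x G_i) = 2\dim G_i$ finite, the preceding theorem applies to give a smooth structure on $\dlim(G_i\x G_i)$ and the obvious map to $G \x G$ is a diffeomorphism on each finite stage, hence a diffeomorphism of direct limits by the universal property. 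Smoothness of $\mu$ then follows because a map out of a direct limit is smooth if and only if it is smooth on each $G_i \x G_i$, which holds since $\mu|_{G_i \x G_i} = \phi_i \circ \mu_i$ is a composition of smooth maps. Inversion $\iota = \dlim \iota_i$ is handled identically, and uniqueness of the Lie group structure follows from the uniqueness part of the manifold-level theorem, since any Lie group structure on $G$ compatible with the inclusions $G_i \hookrightarrow G$ must agree on underlying manifolds.

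To check that $G$ is the direct limit in the category of smooth Lie groups, I would take any compatible family of smooth homomorphisms $\psi_i \colon G_i \to H$ into a smooth Lie group $H$; the manifold-level universal property gives a unique smooth map $u \colon G \to H$ extending the $\psi_i$, and the homomorphism property $u \circ \mu = \mu_H \circ (u \x u)$ follows because it holds on each $G_i \x G_i$, which exhaust $G \x G$.

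For the Lie algebra statement, the main observation is that the smooth structure on $G$ is modelled on $\dlim \g_i$, and the compatible inclusions $G_i \hookrightarrow G_{i+1}$ are Lie group homomorphisms so induce the inclusions $\g_i \hookrightarrow \g_{i+1}$ on tangent spaces at the identity. Hence $T_e G = \dlim T_e G_i = \dlim \g_i =: \g$ as topological vector spaces, and the Lie bracket on $\g$ is determined by $[X, Y] = [X, Y]_i$ whenever $X, Y \in \g_i$, since for any such $X, Y$ the bracket may be computed inside the finite-dimensional subgroup $G_i$ (using e.g. the commutator of one-parameter subgroups, which lies in $G_i$ by the homomorphism property of the inclusions). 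The main obstacle I anticipate is the identification $\dlim(G_i \x G_i) \cong G \x G$ as smooth manifolds, since in general infinite direct limits do not commute with products in topological categories; however, because the directed system here is countable and each $G_i$ is locally compact, Proposition \ref{prop:compactimage} and a direct check on charts suffice to reduce to the finite-stage case.
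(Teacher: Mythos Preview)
The paper does not provide its own proof of this theorem; it is simply cited from Gl\"ockner \cite{Glock} and stated without argument. Your sketch follows the natural strategy implicit in the paper's surrounding discussion (in particular the topological-group example preceding the manifold theorem, where multiplication and inversion are obtained as $\dlim \mu_i$ and $\dlim \iota_i$), and it correctly identifies the one genuine technical issue: that the natural map $\dlim(G_i \times G_i) \to (\dlim G_i) \times (\dlim G_i)$ must be a diffeomorphism, which is not automatic for infinite-dimensional direct limits. Your appeal to countability and local compactness is the right instinct, and this is indeed how the argument in \cite{Glock} proceeds, so there is nothing to compare against in the paper itself.
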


As a concluding remark, one notices that if $\Theta$ is the Maurer-Cartan form on $G$---defined by its action on $\chi \in T_g G$ as $\Theta_g(\chi) := dL_{g^{-1}} (\chi)$---and $\Theta_i$ is the Maurer-Cartan form on $G_i$ then $\Theta$ pulls back to $\Theta_i$ under the submanifold inclusion $G_i \hookrightarrow G$.


\newpage
\mbox{}
\chapter{Integration over the fibre\label{app:int}}

Integration over the fibre is an important tool that is used throughout this thesis.
Fibre integration may be thought of as a special type of pushforward map that arises naturally in the context of differential forms---in which case one is literally integrating a differential form over the fibres of some fibre bundle---and in singular and de Rham cohomology.

In this appendix, one considers integration over the fibre on trivial bundles of the form $M\x N\to M$, where $M$ and $N$ are oriented manifolds of dimensions $m$ and $n$ respectively that admit smooth partitions of unity.


\section{For differential forms}
The treatment presented here largely follows that of \cite{BT,Green}.
One begins by noticing that every $\omega \in \Omega^k(M\x N)$ is a sum of \emph{decomposable} forms, that is
\vspace{11pt}
\begin{lemma}
\label{lemma:decompose}
Any $\omega \in \Omega^k(M\x N)$ may be written as a sum of forms of the form
\[
 f \cdot \pr_M^\ast \omega_M \wedge \pr_N^\ast \omega_N,
\]
where $f \in \cC^\infty(M\x N)$ and $\omega_M$, $\omega_N$ are forms on $M$ and $N$ respectively  whose degrees add to $k$.
The maps $\pr_M$, $\pr_N$ are the projections of  $M\x N$ onto $M$ and $N$ respectively.
\end{lemma}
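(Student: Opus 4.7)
The plan is to prove this by combining a local coordinate computation with a partition of unity argument, exploiting the product structure of the base space.

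First I would cover $M \times N$ by product charts $U_\alpha \times V_\beta$, where $\{U_\alpha\}$ and $\{V_\beta\}$ are coordinate atlases for $M$ and $N$ with coordinates $x^1,\dotsc,x^m$ and $y^1,\dotsc,y^n$ respectively. On each such chart the standard local description of differential forms gives
\[
\omega|_{U_\alpha \times V_\beta} = \sum_{|I|+|J|=k} f^{\alpha\beta}_{IJ} \, dx^I \wedge dy^J
\]
for ordered multi-indices $I,J$ and smooth functions $f^{\alpha\beta}_{IJ}$ on $U_\alpha \times V_\beta$. This is already a sum of decomposable pieces in the required form, but only locally, so the remaining task is to globalise.

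To this end I would choose smooth partitions of unity $\{\phi_\alpha\}$ on $M$ subordinate to $\{U_\alpha\}$ and $\{\psi_\beta\}$ on $N$ subordinate to $\{V_\beta\}$, so that $\{\phi_\alpha\psi_\beta\}$ is a smooth partition of unity on $M\times N$ subordinate to $\{U_\alpha\times V_\beta\}$. For each $\alpha,\beta$ I would also select smooth bump functions $\chi_\alpha \in \cC^\infty(M)$ and $\eta_\beta \in \cC^\infty(N)$ with $\supp\chi_\alpha \subset U_\alpha$, $\supp\eta_\beta \subset V_\beta$, and with $\chi_\alpha \equiv 1$ on $\supp\phi_\alpha$, $\eta_\beta \equiv 1$ on $\supp\psi_\beta$. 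Writing $\omega = \sum_{\alpha,\beta} \phi_\alpha \psi_\beta \omega$ and substituting the local expression gives, on each chart,
\[
\phi_\alpha\psi_\beta\,\omega = \sum_{I,J} \big(\phi_\alpha\psi_\beta f^{\alpha\beta}_{IJ}\big) \cdot \pr_M^\ast\!\big(\chi_\alpha\, dx^I\big) \wedge \pr_N^\ast\!\big(\eta_\beta\, dy^J\big).
\]
The key point is that each factor may now be extended by zero to a global smooth object of the correct type: $\chi_\alpha dx^I$ becomes a form on all of $M$, $\eta_\beta dy^J$ becomes a form on all of $N$, and $\phi_\alpha\psi_\beta f^{\alpha\beta}_{IJ}$ becomes a smooth function on $M\times N$. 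Summing over $\alpha,\beta,I,J$ yields the desired presentation of $\omega$.

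There is no serious obstacle here; the argument is essentially bookkeeping. The only mild technical point is ensuring local finiteness of the sum and the smoothness of the zero-extensions, both of which follow from the support conditions on the $\phi_\alpha,\psi_\beta,\chi_\alpha,\eta_\beta$ together with the paracompactness of $M$ and $N$ that guarantees the existence of the requisite partitions of unity in the first place.
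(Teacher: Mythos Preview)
Your proof is correct and follows essentially the same approach as the paper's: local coordinate expressions on product charts, combined with partitions of unity $\{\phi_\alpha\}$, $\{\psi_\beta\}$ on $M$ and $N$ separately and auxiliary bump functions to extend the local coordinate forms by zero. The only cosmetic difference is bookkeeping: the paper places the partition-of-unity factors $\rho_\alpha$, $\sigma_\beta$ on the coordinate forms $dx^I$, $dy^J$ and uses a single bump function $b_{\alpha\beta}$ on $M\times N$ for the coefficient function, whereas you place separate bump functions $\chi_\alpha$, $\eta_\beta$ on the coordinate forms and the partition-of-unity factor on the coefficient function---either arrangement works.
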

\begin{proof}
Compare with \cite[Lemma 2.4.1]{Green}.
 This is straightforward in the case when $M = \RR^m$ and $N = \RR^n$ since there are global coordinate charts.

For general $M$ and $N$, let $\{(U_\alpha,\phi_\alpha)\}$ and $\{(V_\beta,\psi_\beta)\}$ be atlases for $M$ and $N$ respectively.
Choose smooth partitions of unity $\{\rho_\alpha\}$ and $\{\sigma_\beta\}$ subordinate to $\{U_\alpha\}$ and $\{V_\beta\}$ respectively and choose a family of smooth bump functions $\{b_{\alpha\beta}\}$ such that $b_{\alpha\beta}$ is supported in $U_\alpha \x V_\beta$ and identically $1$ on the support of $\rho_\alpha \sigma_\beta$.

If $\phi_\alpha(x) = (x^1(x),x^2(x),\dotsc, x^m(x))$ and  $\psi_\beta(y) = (y^1(y),y^2(y),\dotsc, y^n(y))$, the restriction $\omega_{\alpha\beta}$ of $\omega$ to $U_\alpha\x V_\beta$ may be written as
\[
\omega_{\alpha \beta} =  \sum_{|I|+|J|=k} \omega_{\alpha\beta}^{IJ} =  \sum_{|I|+|J|=k} (\phi_\alpha\x\psi_\beta)^\ast \left( f^{IJ}_{\alpha\beta} \,dx^I \wedge dy^J\right)
\]
for some smooth functions $f^{IJ}_{\alpha\beta}$, summing over all multi-indices $I$ and $J$ of total length $k$.
Set
\[
f_{\vp}^{IJ}  := \sum_{\alpha,\beta} b_{\alpha\beta} (\phi_\alpha\x\psi_\beta)^\ast f^{IJ}_{\alpha\beta},\;\; \omega^I_M := \sum_\alpha \rho_\alpha \,d\phi_\alpha^I\;\;\mbox{ and }\;\; \omega^J_N := \sum_\beta \sigma_\beta \,d\psi_\beta^J.
\]
Then, for fixed multi-indices $I$ and $J$,
\begin{align*}
f_{\vp}^{IJ} \pr_M^\ast \omega_M^I  \wedge \pr_N^\ast\omega_N^J &=
\sum_{\alpha,\beta} b_{\alpha\beta}\left(  (\phi_\alpha\x\psi_\beta)^\ast f^{IJ}_{\alpha\beta} \right)\left(\pr_M^\ast \rho_\alpha \,d\phi_\alpha^I\right) \wedge \left( \pr_N^\ast \sigma_\beta \,d\psi_\beta^J\right)\\
&= \sum_{\alpha,\beta} \rho_\alpha\sigma_\beta  b_{\alpha\beta}\left(  (\phi_\alpha\x\psi_\beta)^\ast f^{IJ}_{\alpha\beta} \right)\left(\pr_M^\ast  \,d\phi_\alpha^I\right) \wedge \left( \pr_N^\ast \,d\psi_\beta^J\right)\\
&= \sum_{\alpha,\beta} (\rho_\alpha\sigma_\beta  b_{\alpha\beta}) \omega_{\alpha\beta}^{IJ}\\
&= \sum_{\alpha,\beta} \rho_\alpha\sigma_\beta  \omega_{\alpha\beta}^{IJ}.
\end{align*}
Summing over all multi-indices $I$ and $J$ with $|I| + |J| = k$ gives $\omega$ and, hence, the result.
\end{proof}

Assuming $N$ to be compact, given a form $\omega_N \in \Omega^k(N)$ and a smooth function $f$ on $M\x N$ write $\int_N f\omega_N$ for the function on $M$ that sends
\[
x \longmapsto \int_{N} f_x \,\omega_N
\]
where $f_x(y) := f(x,y)$, noticing that this is zero if $k \neq n$.
\vspace{11pt}
\begin{definition}
\label{defn:integrationoverfibre}
The \emph{integration over the fibre map} on differential forms is the map $\widehat{\int_N} \colon \Omega^k(M\x N) \to \Omega^{k-n}(M)$ that sends the decomposable form $ f \cdot \pr_M^\ast \omega_M \wedge \pr_N^\ast \omega_N$ to
\[
\widehat{\int_N}  f \cdot \pr_M^\ast \omega_M \wedge \pr_N^\ast \omega_N := \int_N f\omega_N\cdot\omega_M.
\]
Integration over the fibre is defined for general forms by invoking Lemma \ref{lemma:decompose} and extending by linearity, noticing that the result is independent of the chosen decomposition.
\end{definition}

As an immediate consequence of the definition
\vspace{11pt}
\begin{lemma}
\label{lemma:intpullsquare}
If $f \colon M \to M'$ is a smooth map, then the diagram
\[
\xy
(0,25)*+{\Omega^\bullet(M'\x N)}="1";
(50,25)*+{\Omega^{\bullet-n}(M')}="2";
(0,0)*+{\Omega^\bullet(M\x N)}="3";
(50,0)*+{\Omega^{\bullet-n}(M)}="4";
{\ar^{\widehat{\int_N}} "1";"2"};
{\ar^{f^\ast} "2";"4"};
{\ar^{(f\x\id)^\ast} "1";"3"};
{\ar^{\widehat{\int_N}} "3";"4"};
\endxy
\]
commutes.
\end{lemma}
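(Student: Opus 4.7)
The plan is to reduce the claim to decomposable forms using Lemma \ref{lemma:decompose} and then verify the identity by a direct calculation, exploiting the fact that the pullback $(f\x\id)^\ast$ is compatible with the product structure of $M'\x N$.

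First I would note that both $f^\ast \circ \widehat{\int_N}$ and $\widehat{\int_N} \circ (f\x\id)^\ast$ are $\RR$-linear maps $\Omega^\bullet(M'\x N) \to \Omega^{\bullet-n}(M)$. By Lemma \ref{lemma:decompose}, it therefore suffices to check the identity on a decomposable form $\omega = h \cdot \pr_{M'}^\ast\omega_{M'} \wedge \pr_N^\ast\omega_N$, where $h \in \cC^\infty(M'\x N)$, $\omega_{M'}\in\Omega^p(M')$ and $\omega_N\in\Omega^q(N)$ with $p+q=k$. For degree reasons both sides vanish unless $q=n$, so I would restrict to that case.

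Next I would observe that $(f\x\id) \circ \pr_M = f\circ\pr_{M'}$ (before pulling back, reading the compositions on $M\x N\to M'$), which gives $(f\x\id)^\ast \pr_{M'}^\ast = \pr_M^\ast\circ f^\ast$, and similarly $(f\x\id)^\ast\pr_N^\ast = \pr_N^\ast$ since $\pr_N\circ(f\x\id) = \pr_N$. Consequently
\[
(f\x\id)^\ast\omega = \big((f\x\id)^\ast h\big) \cdot \pr_M^\ast(f^\ast\omega_{M'}) \wedge \pr_N^\ast\omega_N,
\]
which is again a decomposable form, now on $M\x N$. Applying Definition \ref{defn:integrationoverfibre} to each side then reduces the claim to the pointwise identity
\[
\bigg(\int_N h_{f(x)}\,\omega_N\bigg)\cdot (f^\ast\omega_{M'})_x \;=\; \bigg(\int_N \big((f\x\id)^\ast h\big)_x\,\omega_N\bigg)\cdot (f^\ast\omega_{M'})_x
\]
for every $x\in M$. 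This holds because $\big((f\x\id)^\ast h\big)_x(y) = h(f(x),y) = h_{f(x)}(y)$, so the two integrals over the fibre $\{x\}\x N$ are literally the same ordinary integral.

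I do not anticipate any genuine obstacle: the only mildly delicate point is checking that integration over the fibre is well-defined independently of the chosen decomposition into the form in Lemma \ref{lemma:decompose}, but this is already implicit in Definition \ref{defn:integrationoverfibre} and does not need to be reproved here. Once decomposability, linearity, and the compatibility $\pr_N\circ(f\x\id)=\pr_N$ are in hand, the rest is bookkeeping.
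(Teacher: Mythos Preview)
Your proposal is correct and is precisely the verification the paper has in mind: the paper does not give a written proof but simply declares the lemma ``an immediate consequence of the definition,'' and your reduction to decomposable forms via Lemma~\ref{lemma:decompose} together with the identities $\pr_{M'}\circ(f\times\id)=f\circ\pr_M$ and $\pr_N\circ(f\times\id)=\pr_N$ is exactly that consequence spelled out.
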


When $M$ is also compact one also has the following
\vspace{11pt}
\begin{lemma}
\label{lemma:totalint}
For any $\omega\in\Omega^{m+n}(M\x N)$
\[
\int_{M\x N}\omega = \int_M \widehat{\int_N}\omega.
\]
\end{lemma}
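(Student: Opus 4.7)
The plan is to reduce to decomposable forms via Lemma \ref{lemma:decompose} and then apply Fubini's theorem. Since both sides of the asserted identity are linear in $\omega$, it suffices to verify it on a form of the type $\omega = f \cdot \pr_M^\ast \omega_M \wedge \pr_N^\ast \omega_N$ with $f \in \cC^\infty(M\x N)$ and $\omega_M \in \Omega^p(M)$, $\omega_N \in \Omega^q(N)$ satisfying $p+q = m+n$.

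First I would observe that if either $p \neq m$ or $q \neq n$ then both sides of the claimed identity vanish: the right-hand side vanishes because $\widehat{\int_N}$ is defined to be zero on any decomposable form whose $N$-factor does not have top degree $n$, while the left-hand side vanishes because $\pr_M^\ast \omega_M \wedge \pr_N^\ast \omega_N$ is not a top-degree form on $M\x N$ in this case (its pointwise expression contains fewer than $m$ distinct $M$-differentials or fewer than $n$ distinct $N$-differentials). So the only nontrivial case is $p = m$, $q = n$, where both sides are honest top-degree integrals.

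In the remaining case, by definition of fibre integration,
\[
\widehat{\int_N} \omega = \Big(\int_N f_x\, \omega_N\Big)\cdot \omega_M,
\]
where $f_x(y) := f(x,y)$, and hence
\[
\int_M \widehat{\int_N}\omega = \int_M\Big(\int_N f_x\,\omega_N\Big)\omega_M.
\]
On the other hand, since the product orientation on $M\x N$ is the one compatible with $\pr_M^\ast\omega_M \wedge \pr_N^\ast\omega_N$, the classical Fubini theorem (applied locally in product charts and glued together using a smooth partition of unity subordinate to a covering by such charts, which exists because both $M$ and $N$ admit smooth partitions of unity) gives
\[
\int_{M\x N} f\cdot \pr_M^\ast\omega_M\wedge\pr_N^\ast\omega_N = \int_M\Big(\int_N f_x\,\omega_N\Big)\omega_M.
\]
Comparing the two expressions yields the equality for decomposable forms, and the general statement follows by linearity.

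The only step requiring care is the passage from Fubini in a product chart to Fubini on $M\x N$; the main obstacle is bookkeeping with orientations and partitions of unity, but since the product orientation is compatible with the wedge structure of $\pr_M^\ast\omega_M\wedge\pr_N^\ast\omega_N$ by construction, and since products of subordinate partitions of unity on $M$ and $N$ give a subordinate partition of unity on $M\x N$, no genuine difficulty arises.
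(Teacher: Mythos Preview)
Your proposal is correct and follows essentially the same approach as the paper, which merely states that the lemma ``is proved using the local decompositions of Lemma~\ref{lemma:decompose} and Fubini's Theorem on $\RR^m\x\RR^n$.'' You have simply fleshed out the details of this sketch, including the degree bookkeeping and the partition-of-unity gluing, which the paper leaves implicit.
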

This is proved using the local decompositions of Lemma \ref{lemma:decompose} and Fubini's Theorem on $\RR^m\x \RR^n$.

Importantly for constructing integration over the fibre maps in de Rham cohomology, in the case where $N$ is without boundary one also has
\vspace{11pt}
\begin{lemma}
\label{lemma:intwithd}
The integration over the fibre map commutes with the exterior derivative, that is
\[
\widehat{\int_N} d = d \,\widehat{\int_N}.
\]
\end{lemma}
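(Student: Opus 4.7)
Both $d$ and $\widehat{\int_N}$ are $\RR$-linear, so by Lemma \ref{lemma:decompose} it suffices to verify the identity on a decomposable form
\[
\omega = f \cdot \pr_M^\ast \omega_M \wedge \pr_N^\ast \omega_N,
\]
with $f \in \cC^\infty(M\x N)$, $\omega_M \in \Omega^p(M)$ and $\omega_N \in \Omega^q(N)$. I would then dispose of the statement case-by-case on $q$ relative to $n = \dim N$.

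If $q < n-1$, both sides vanish for dimensional reasons: $\widehat{\int_N}\omega = 0$ because $\deg \omega_N < n$, and every term in $d\omega$ has $N$-direction degree at most $q+1 < n$, so $\widehat{\int_N} d\omega = 0$ as well. If $q = n$, one uses that $\pr_N^\ast d\omega_N = 0$ (top degree) and that any $N$-direction derivative of $f$ wedges to zero against $\pr_N^\ast \omega_N$; what remains is the standard differentiation-under-the-integral-sign identity
\[
d\!\left(\int_N f_x \,\omega_N\right) = \int_N d_M f_x \,\omega_N,
\]
valid because $N$ is compact and $f$ is smooth, which matches $\widehat{\int_N} d\omega$ against $d\widehat{\int_N}\omega$ after the $d\omega_M$ term is folded in via the Leibniz rule.

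The substantive case is $q = n-1$: here $\widehat{\int_N}\omega = 0$ and one needs $\widehat{\int_N} d\omega = 0$. Expanding $d\omega = df \wedge \pr_M^\ast\omega_M \wedge \pr_N^\ast\omega_N + f\pr_M^\ast d\omega_M \wedge \pr_N^\ast \omega_N + (-1)^p f \pr_M^\ast \omega_M \wedge \pr_N^\ast d\omega_N$ and retaining only terms that are top-degree in the $N$ factor leaves an expression whose fibre integral is, up to signs, $\omega_M \wedge \int_N d_N(f_x \omega_N)$. To kill this I would pick a partition of unity $\{\sigma_\beta\}$ on $N$ subordinate to an oriented atlas, write the integrand as $\sum_\beta \sigma_\beta d_N(f_x\omega_N)$, pull each summand into a single coordinate chart, and apply the fundamental theorem of calculus to the $y^\ell$-variable matching the missing index in $\omega_N$; the compactness of $\supp \sigma_\beta$ inside the chart makes the boundary contributions vanish. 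This is exactly Stokes' theorem for the boundaryless compact manifold $N$, applied fibrewise over $M$, and is the only place where $\d N = \emptyset$ is used.

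The main obstacle is the $q = n-1$ case: one must justify the fibrewise Stokes application smoothly in $x \in M$, which requires the compactness of $N$ (for the partition of unity and for smooth dependence of the integral on parameters) and mild care with orientation signs. Everything else is routine Leibniz bookkeeping and a dimensional vanishing argument.
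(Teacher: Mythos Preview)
Your proposal is correct and follows essentially the same route as the paper: reduce by linearity to decomposable forms, handle the top-degree-in-$N$ case by differentiation under the integral sign, and kill the remaining case using Stokes' theorem on the closed manifold $N$ applied fibrewise. The paper merely organises the case split slightly differently (treating all $|J|<n$ uniformly via Stokes rather than separating off $q<n-1$ by a dimension count) and works in explicit local coordinates throughout, but the content is the same.
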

\begin{proof}
As both $\widehat{\int_N}$ and $d$ are linear, it suffices to prove the result for decomposable forms.
Taking coordinate charts and partitions of unity exactly as in Lemma \ref{lemma:decompose}, let $\omega$ be a decomposable form on $M\x N$.
Then, for some fixed multi-indices $I$ and $J$,
\[
\omega = \sum_{\alpha,\beta} \rho_\alpha \sigma_\beta\, (\phi_\alpha\x\psi_\beta)^\ast \left( f_{\alpha\beta} \,dx^I \wedge dy^J\right).
\]
Writing $g_{\alpha\beta} = (\phi_\alpha\x\psi_\beta)^\ast f_{\alpha\beta}$ and taking the exterior derivative, one obtains
\begin{align*}
d \omega &= \sum_{\alpha,\beta}\left[ g_{\alpha\beta} \sigma_\beta d\rho_\alpha + g_{\alpha\beta} \rho_\alpha d \sigma_\beta + \rho_\alpha \sigma_\beta d g_{\alpha\beta}  \right] \wedge d\phi_\alpha^I \wedge d\psi_\beta^J\\
&= \sum_{\alpha,\beta}\bigg[ g_{\alpha\beta} \sigma_\beta d\rho_\alpha + g_{\alpha\beta} \rho_\alpha d \sigma_\beta + \rho_\alpha \sigma_\beta \sum_{i=1}^m  \frac{\d g_{\alpha\beta}}{\d \phi_\alpha^i} \, d\phi_\alpha^i + \rho_\alpha \sigma_\beta \sum_{j=1}^n  \frac{\d g_{\alpha\beta}}{\d \psi_\beta^j} \, d\psi_\beta^j  \bigg]\! \wedge d\phi_\alpha^I \wedge d\psi_\beta^J.
\end{align*}
If $|J|=n$ integrating over the fibre gives
\begin{align*}
\widehat{\int_N} d \omega &= \sum_{\alpha,\beta} \bigg[\int_N\sigma_\beta  g_{\alpha\beta}\, d\psi_\beta^J \cdot d\rho_\alpha   +  \rho_\alpha \sum_{i=1}^m \int_N \sigma_\beta \, \frac{\d g_{\alpha\beta}}{\d \phi_\alpha^i} \,d \psi_\beta^J\cdot  d \phi_\alpha^i \bigg]\! \wedge d\phi_\alpha^I \\
&= \sum_{\alpha,\beta} \left[\int_N\sigma_\beta  g_{\alpha\beta}\, d\psi_\beta^J \cdot d\rho_\alpha  + \rho_\alpha\cdot d \bigg(\! \int_N \sigma_\beta  g_{\alpha\beta} \,d \psi_\beta^J\bigg) \!\right]\!\wedge d\phi_\alpha^I \\
&= \sum_{\alpha,\beta} d \!\left[\rho_\alpha  \int_N\sigma_\beta  g_{\alpha\beta}\, d\psi_\beta^J \right]\!\wedge d\phi_\alpha^I \\
&= d \,\widehat{\int_N} \omega
\end{align*}
as required.
If $|J| < n$ then
\begin{align*}
\widehat{\int_N} d \omega &= \sum_{\alpha,\beta} (-1)^{|I|} \rho_\alpha \int_N \bigg[ g_{\alpha\beta}\,d \sigma_\beta \wedge d\psi_\beta^J + \sigma_\beta \sum_{j=1}^n \frac{\d g_{\alpha\beta}}{\d \psi_\beta^j} \,d \psi_\beta^j \wedge  d \psi_\beta^J\bigg]\! \cdot d\phi_\alpha^I \\
 &= \sum_{\alpha,\beta} (-1)^{|I|} \rho_\alpha \int_N d( \sigma_\beta g_{\alpha\beta} \wedge d\psi_\beta^J) \cdot d\phi_\alpha^I\\
&=0
\end{align*}
using Stokes' Theorem, since $N$ is without boundary.
On the other hand,
\[
\widehat{\int_N} \omega = 0\Longrightarrow  d \,\widehat{\int_N} \omega = 0.
\]
Since the result is clearly true for $|J| > n$ (as this implies $\omega = 0$), the result is proved.
\end{proof}

This implies that there is a well-defined integration over the fibre map in de Rham cohomology, namely
\[
\widehat{\int_N} \colon H^k_\deR (M\x N) \lo H^{k-n}_\deR (M)
\]
given by
\[
\widehat{\int_N}  \colon [\omega] \longmapsto \bigg[ \widehat{\int_N} \omega \bigg].
\]
By Lemma \ref{lemma:intpullsquare}, the diagram
\[
\xy
(0,25)*+{H_\deR^\bullet(M'\x N)}="1";
(50,25)*+{H_\deR^{\bullet-n}(M')}="2";
(0,0)*+{H_\deR^\bullet(M\x N)}="3";
(50,0)*+{H_\deR^{\bullet-n}(M)}="4";
{\ar^{\widehat{\int_N}} "1";"2"};
{\ar^{f^\ast} "2";"4"};
{\ar^{(f\x\id)^\ast} "1";"3"};
{\ar^{\widehat{\int_N}} "3";"4"};
\endxy
\]
commutes for any smooth map $f \colon M \to M'$.


\section{For singular cohomology}
One can also define integration over the fibre for singular cohomology with coefficients in any ring $R$.
The construction, which is necessarily algebraic, relies on a choice of cross product for singular chains.
The underlying principle is similar to that of the de Rham isomorphism theorem, in which integration of differential forms is viewed as an algebraic operation.

Let $\triangle^n$ denote the standard $n$-simplex in $\RR^{n+1}$.
Following \cite[pp.~277--278]{Hatcher}, one now outlines how to define a product on singular chains
\[
\x \colon C_m(M;R) \otimes C_n(N;R) \to C_{m+n}(M\x N;R).
\]
The crucial ingredient is that there is a natural decomposition of the product $\triangle^m \x \triangle^n$ into $(m+n)$-simplices given as follows.
Label the vertices of $\triangle^m$ by $v_0,\dotsc, v_m$ and the vertices of $\triangle^n$ by $w_0,\dotsc,w_n$.
Thinking of pairs $(i,j)$ with $0\leq i\leq m$ and $0\leq j\leq n$ as points on an $m \x n$ grid in $\RR^2$, let $\gamma$ be any path from $(0,0)$ to $(m,n)$ formed by a sequence of upward or rightward edges in this grid.
To any such path $\gamma$ one associates the linear map
\[
\ell_\gamma \colon \triangle^{m+n} \lo \triangle^m \x \triangle^n
\]
given by sending the $k$-th vertex of $\triangle^{m+n}$ to $(v_{i_k},w_{j_k})$, where $(i_k,j_k)$ is the $k$-th vertex of the path $\gamma$.
Write $|\gamma|$ for the number of squares of the grid lying below the path $\gamma$.

If $a \colon \triangle^m \to M$ and $b \colon \triangle^n \to N$ are singular simplices in $M$ and $N$ respectively, define
\begin{equation}
\label{eqn:intinsingcoh}
a\x b := \sum_{\gamma} (-1)^{|\gamma|}(a\x b) \circ \ell_\gamma,
\end{equation}
which is a singular $(m+n)$-chain in $M\x N$ and the sum ranges over all `edgepaths' $\gamma$.
This defines the simplicial product on homogeneous elements of $C_m(M;R)\otimes C_n(N;R)$, which is extended to all elements by linearity.
Notice that the product $\x$ means different things on both sides of the equation \eqref{eqn:intinsingcoh}: on the left hand side it denotes the simplicial product and on the right hand side it denotes the product map $a \x b \colon \triangle^m\x\triangle^n \to M\x N$.

From the definition of the singular chain product, it is easy to verify that if $f \colon M \to M'$ and $g \colon N \to N'$ are continuous maps and $a \in C_m(M)$ and $b\in C_n(N)$ then
\[
(f\x g)_\ast(a\x b) = f_\ast a \x g_\ast b.
\]
Moreover one can show
\[
\d(a\x b) = \d a \x b +(-1)^m a\x \d b
\]
so that the product of two cycles is a cycle and the product of a cycle with a boundary is a boundary.
Thus the simplicial product induces a product on singular homology.
\vspace{11pt}
\begin{definition}
The \emph{slant product} of $u \in C^{m+n}(M\x N;R)$ and $z \in C_n(N)$ is the singular cochain $u/z \in C^m(M;R)$ given by
\[
u/z\,(c) := u(c \x z)
\]
for $c \in C_m(M)$.
\end{definition}

The interaction of the slant product with the coboundary operator is given by
\vspace{11pt}
\begin{lemma}
\label{lemma:slantbound}
For any $u \in C^{m+n}(M\x N;R)$ and $z \in C_n(N)$
\[
\delta(h/z) = \delta h / z +(-1)^{m} h / \d z.
\]
\end{lemma}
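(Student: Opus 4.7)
The plan is to prove the identity directly by evaluating both sides on an arbitrary singular chain $c \in C_{m+1}(M;R)$ and using the Leibniz-type boundary formula for the simplicial product together with the definitions of the slant product and the coboundary operator.

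First, I would unpack the left-hand side using the definitions. By definition of $\delta$ on $C^m(M;R)$ and of the slant product,
\[
\delta(h/z)(c) \;=\; (h/z)(\d c) \;=\; h(\d c \x z).
\]
The next step is to rewrite $\d c \x z$ in terms of the full boundary $\d(c \x z)$. Applying the Leibniz formula established earlier for the simplicial product (with $c$ of degree $m+1$ and $z$ of degree $n$),
\[
\d(c \x z) \;=\; \d c \x z + (-1)^{m+1} c \x \d z,
\]
so that
\[
\d c \x z \;=\; \d(c \x z) - (-1)^{m+1} c \x \d z \;=\; \d(c \x z) + (-1)^{m} c \x \d z.
\]

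Substituting back and again using the definitions of $\delta$ and of the slant product,
\[
\delta(h/z)(c) \;=\; h\bigl(\d(c \x z)\bigr) + (-1)^{m} h(c \x \d z) \;=\; (\delta h)(c \x z) + (-1)^{m} (h / \d z)(c),
\]
and since $(\delta h)(c \x z) = (\delta h / z)(c)$ by the definition of the slant product, this gives
\[
\delta(h/z)(c) \;=\; (\delta h / z)(c) + (-1)^{m} (h/\d z)(c).
\]
As $c \in C_{m+1}(M;R)$ is arbitrary, the desired identity follows.

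The only non-routine ingredient is the boundary formula for the simplicial product recalled just before the statement of the lemma; once that is in hand the argument is purely formal. There is no real obstacle here, but some minor care is needed with signs, specifically to identify $-(-1)^{m+1}$ with $(-1)^m$ when transferring the $\d z$ term across.
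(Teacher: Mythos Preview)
Your proof is correct and matches the paper's approach essentially line for line: both evaluate on an arbitrary $c \in C_{m+1}(M)$, use the Leibniz formula for $\d(c\x z)$ to rewrite $h(\d c\x z)$, and then read off the two terms. The only cosmetic difference is that the paper inserts the vanishing sum $(-1)^m + (-1)^{m+1}$ rather than explicitly solving for $\d c\x z$, but the content is identical.
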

\begin{proof}
For any $c \in C_{m+1}(M)$
\begin{align*}
\delta(h/z) (c) :\!\!&=h(\d c\x z)\\
&= h(\d c\x z) + ((-1)^m + (-1)^{m+1}) h(c \x \d z)\\
&= h\left(\d (c\x z)\right)  + (-1)^{m}h(c \x \d z)\\
&= \delta h / z + (-1)^m h/ \d z
\end{align*}
as required.
\end{proof}

Thus, the slant product on (co)chains induces the \emph{slant product in (co)homology}
\[
/ \colon H^{m+n}(M\x N;R) \otimes H_n(N) \lo H^m(M;R). 
\]
One is now able to define integration over the fibre for singular cohomology.
Take $N$ to be without boundary, with $z \in H_n(N)$ the fundamental class of $N$.
\vspace{11pt}
\begin{definition}
The \emph{integration over the fibre map} for singular cohomology is the map $\widehat{\int_N} \colon H^k(M\x N;R) \to H^{k-n}(M;R)$ given by
\[
\widehat{\int_N} u := u / z.
\]
\end{definition}
Following the above discussion, one notices that for any continuous map $f \colon M\to M'$ the integration over the fibre map for singular cohomology fits into the commuting diagram
\[
\xy
(0,25)*+{H^\bullet(M'\x N;R)}="1";
(50,25)*+{H^{\bullet-n}(M';R)}="2";
(0,0)*+{H^\bullet(M\x N;R)}="3";
(50,0)*+{H^{\bullet-n}(M;R)}="4";
{\ar^{\widehat{\int_N}} "1";"2"};
{\ar^{f^\ast} "2";"4"};
{\ar^{(f\x\id)^\ast} "1";"3"};
{\ar^{\widehat{\int_N}} "3";"4"};
\endxy
\]

It is interesting to examine how differential forms behave under the slant product when viewed as singular cochains via the integration pairing:
\vspace{11pt}
\begin{proposition}
\label{prop:slantform}
If the cycle $z \in Z_n(N)$ represents the fundamental class of $N$ then for any $\omega \in \Omega^{m+n}(M\x N)$ one has
\[
\omega / z = \widehat{\int_N}\omega.
\]
\end{proposition}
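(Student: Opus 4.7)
The plan is to verify the identity at the level of cochains (modulo coboundary) on smooth singular chains in $M$, which suffices since smooth singular cohomology computes ordinary singular cohomology. The basic idea is to unravel both sides on a smooth $m$-simplex $\sigma \colon \triangle^m \to M$ and identify them via Fubini's theorem together with the classical simplicial decomposition of the product $\triangle^m \x \triangle^n$.

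First, I would fix a smooth triangulation of $N$ representing the fundamental class; call the resulting smooth cycle $z = \sum c_\tau \tau$. It suffices to prove the statement for this specific $z$, since cohomology is independent of the choice of representative in $H_n(N)$. Unwinding the definition of the slant product on a smooth $m$-simplex $\sigma$ in $M$,
\[
(\omega/z)(\sigma) = \omega(\sigma \x z) = \sum_\tau c_\tau \sum_\gamma (-1)^{|\gamma|}\int_{\triangle^{m+n}}\bigl((\sigma \x \tau)\circ\ell_\gamma\bigr)^{\!\ast}\omega,
\]
where the inner sum over edge-paths $\gamma$ is exactly the simplicial product. The key combinatorial fact (standard, but requires checking) is that the signed collection $\{(-1)^{|\gamma|}\ell_\gamma\}_\gamma$ triangulates $\triangle^m \x \triangle^n$ with its product orientation, so that
\[
\sum_\gamma (-1)^{|\gamma|}\int_{\triangle^{m+n}}\bigl((\sigma \x \tau)\circ\ell_\gamma\bigr)^{\!\ast}\omega = \int_{\triangle^m \x \triangle^n}(\sigma\x\tau)^\ast\omega.
\]

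Next, I would apply Fubini in the form of Lemma \ref{lemma:totalint} to the compact oriented manifold-with-corners $\triangle^m \x \triangle^n$ and pass to the fibre integral,
\[
\int_{\triangle^m \x \triangle^n}(\sigma \x \tau)^\ast\omega = \int_{\triangle^m}\widehat{\int_{\triangle^n}}(\sigma \x \tau)^\ast\omega.
\]
Using Lemma \ref{lemma:decompose} to reduce to a decomposable form $f \cdot \pr_M^\ast\omega_M \wedge \pr_N^\ast\omega_N$ and invoking a direct variant of Lemma \ref{lemma:intpullsquare} applied to $\sigma \x \tau$, the inner fibre integral can be rewritten as
\[
\widehat{\int_{\triangle^n}}(\sigma \x \tau)^\ast\omega \;=\; \sigma^\ast\!\left(x\longmapsto \int_{\triangle^n}\tau^\ast(\omega|_{\{x\}\x N})\right)\!.
\]

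Finally, I would sum this expression over all simplices $\tau$ of the chosen triangulation, weighted by $c_\tau$. Because the $c_\tau \tau$ form a genuine smooth triangulation of the oriented manifold $N$, for every $x \in M$
\[
\sum_\tau c_\tau \int_{\triangle^n}\tau^\ast(\omega|_{\{x\}\x N}) \;=\; \int_N \omega|_{\{x\}\x N} \;=\; \bigl(\widehat{\int_N}\omega\bigr)(x),
\]
by the very definition of integration of a top form on a triangulated oriented manifold and of fibre integration on decomposable forms. Combining the three steps,
\[
(\omega/z)(\sigma) = \int_{\triangle^m}\sigma^\ast\widehat{\int_N}\omega = \bigl(\widehat{\int_N}\omega\bigr)(\sigma),
\]
which is the desired cochain-level identity.

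The main obstacle will be the first step: verifying, with correct signs and orientations, that the signed sum $\sum_\gamma (-1)^{|\gamma|}\ell_\gamma$ really is an oriented triangulation of $\triangle^m \x \triangle^n$ with its product orientation, so that integration over the chain $\sigma \x \tau$ reproduces the ordinary integral of $(\sigma\x\tau)^\ast\omega$ over the product simplex. Once this combinatorial fact is in hand, the remaining Fubini and triangulation arguments are essentially routine, and the passage from a specific triangulated fundamental cycle to an arbitrary representative of $[N]$ follows from the fact that two fundamental cycles differ by a boundary $\partial w$, whose contribution $\omega(\sigma \x \partial w) = \pm \omega(\partial(\sigma \x w)) + \text{lower-order terms involving }\partial \sigma$ produces only a coboundary by Lemma \ref{lemma:slantbound}.
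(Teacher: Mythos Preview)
Your proposal is correct and follows essentially the same route as the paper: reduce to decomposable forms, evaluate on a simplex, use the combinatorial fact that the signed edge-path simplices $(-1)^{|\gamma|}\ell_\gamma$ constitute an oriented triangulation of $\triangle^m\times\triangle^n$, then apply Fubini (Lemma~\ref{lemma:totalint}) and naturality (Lemma~\ref{lemma:intpullsquare}).

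The one place where you take a slight detour is the treatment of the cycle $z$. You fix a smooth triangulation and then argue that any other representative differs by a coboundary via Lemma~\ref{lemma:slantbound}. This is unnecessary, and as written the coboundary argument is not quite right: the term $\omega(\partial(\sigma\times w))=(d\omega)(\sigma\times w)$ is not a coboundary in the variable $\sigma$ unless $d\omega=0$. The paper avoids this entirely by observing that for \emph{any} cycle $z$ representing $[N]$ one has $\int_z f_x\omega_N = \int_N f_x\omega_N$ on the nose, since $f_x\omega_N$ is a top-degree form on $N$ and hence $\int_{\partial w} f_x\omega_N = \int_w d(f_x\omega_N) = 0$ by Stokes. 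This gives the cochain-level identity directly for arbitrary $z$, with no need to privilege a triangulation.
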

\begin{proof}
Compare with \cite[Lemma 2.4.9]{Green}.
From the outset, one may suppose that $\omega$ is a decomposable form, so that $\omega = f \cdot \omega_M \wedge \omega_N$ (suppressing the pullback maps).
Write $z = \sum_i z_i \sigma_i$ for some singular $n$-simplices $\sigma_i \colon \triangle^n \to N$.
For any singular $m$-simplex $c \colon \triangle^m \to M$
\begin{align*}
\omega / z\,(c) :\!\!&= \sum_i z_i\, \omega(c\x \sigma_i)\\
&= \sum_i z_i\, (c\x \sigma_i)^\ast \omega(\id_m \x \id_n) \\
&= \sum_i z_i\, \int_{\id_m\x \id_n} (c\x \sigma_i)^\ast \omega 
\end{align*}
where $\id_m$, $\id_n$ denote the identity maps on $\triangle^m$, $\triangle^n$ respectively so that $\id_m \x \id_n$ is a singular $(m+n)$-chain in $\triangle^m \x \triangle^n$.
Even though $\id_m \x \id_n$ is not the identity on $\triangle^m \x \triangle^n$, by construction of the simplicial product it is a decomposition of $\triangle^m \x \triangle^n$ into $(m+n)$-simplices whose orientations cancel on internal faces.
Thus
\begin{align*}
\sum_i z_i\, \int_{\id_m\x \id_n} (c\x \sigma_i)^\ast \omega &= \sum_i z_i\, \int_{\triangle^m \x \triangle^n} (c\x \sigma_i)^\ast \omega\\
&= \sum_i z_i\, \int_{\triangle^m \x \triangle^n} (c\x \id_n)^\ast (\id_m\x \sigma_i)^\ast\omega \\
&= \sum_i z_i\, \int_{\triangle^m} \widehat{\int_{\triangle^n}} (c\x \id_n)^\ast (\id_m\x \sigma_i)^\ast\omega\\
&= \sum_i z_i\, \int_{\triangle^m}  c^\ast \widehat{\int_{\triangle^n}} (\id_m\x \sigma_i)^\ast\omega,
\end{align*}
where the third line invokes Lemma \ref{lemma:totalint} and the fourth line invokes Lemma \ref{lemma:intpullsquare}.
Notice that for any $x \in \triangle^m$
\[
(\id_m\x \sigma_i)^\ast f_x = \sigma_i^\ast f_x 
\]
so that, recalling the definition of the integration over the fibre map for differential forms, one has
\begin{align*}
\sum_i z_i\, \int_{\triangle^m}  c^\ast \widehat{\int_{\triangle^n}} (\id_m\x \sigma_i)^\ast\omega &= \sum_i z_i\, \int_{c} \left( \int_{\triangle^n} \sigma_i^\ast(f \omega_N) \cdot \omega_M\right)\\
&=\int_{c} \left( \sum_i z_i \int_{\sigma_i}f \omega_N  \right) \omega_M\\
&=\int_{c} \left( \int_{z}f \omega_N  \right) \omega_M\\
&=\int_{c} \left( \int_{N}f \omega_N  \right) \omega_M\\
&=\widehat{\int_N}\omega\, (c),
\end{align*}
where the fourth line follows since $z$ represents the fundamental class of $N$.
Since everything in sight is linear, one has
\[
\omega / z\,(c) = \widehat{\int_N}\omega\,(c)
\]
for any $c\in C_m(M)$ and, hence, the result.
\end{proof}

\newpage
\mbox{}
\chapter{Differential extensions\label{app:diff}}

This appendix provides the background material on differential extensions required for the discussion of differential $K$-theory in Chapter \ref{ch:five}.
The basic idea is that a differential extension of a cohomology theory combines cohomological information with differential form data in a non-trivial fashion.

Differential extensions of generalised cohomology theories are described by a relatively small set of axioms, originally proposed in \cite{BS3} and outlined below.
Following the axioms, some useful results are recorded.
For a comprehensive review of differential extensions, particularly differential $K$-theory, see \cite{BS1}.


\section{Axioms for differential extensions}
When dealing with differential extensions, the starting point is the generalised (Eilenberg-Steenrod) cohomology theory that one wishes to extend.
Suppose that $E$ is a generalised cohomology theory equipped with a natural transformation
\[
ch \colon E(X)\lo H(X; V)
\]
called the \emph{Chern character} of $E$, where $V$ is some graded $\RR$-vector space and $H(X;V)$ is singular cohomology with coefficients in $V$.
\vspace{11pt}

\begin{definition}[\cite{BS1}]
\label{defn:diffext}
A \emph{differential extension} of the pair $(E,ch)$ is a functor
\[
M \longmapsto \check{E}(M)
\]
from the category of smooth compact manifolds (with corners) to the category of $\ZZ$-graded groups together with the natural transformations
\begin{enumerate}
\item $R \colon \check E(M) \lo \Omega_{d=0}(M;V) := \Omega_{d=0}(M) \otimes_{\RR} V $  (the \emph{curvature});
\item $cl \colon \check E(M) \lo E(M)$ (the \emph{underlying class}); and
\item $a \colon \Omega(M;V) / \im \,d \lo \check E(M)$ (the \emph{action of forms}),
\end{enumerate}
with $d$ the usual exterior derivative.
The natural transformations $R, cl$ and $a$ are required to satisfy
\begin{enumerate}
\item
the diagram
\[
\xy
(0,25)*+{\check E(M)}="1";
(50,25)*+{E(M)}="2";
(0,0)*+{\Omega_{d=0}(M;V)}="3";
(50,0)*+{H(M;V)}="4";
{\ar^{cl} "1";"2"};
{\ar^{ch} "2";"4"};
{\ar^{R} "1";"3"};
{\ar^{\deR} "3";"4"};
\endxy
\]
commutes, with $\deR$ the map induced by the de Rham isomorphism;
\item
$R\circ a = d$;
\item
$a$ is of degree $1$; and
\item
the sequence
\begin{equation}
\label{eqn:diffextseq}
E^{\bullet-1}(M) \xrightarrow{\;\;ch\;\;} \Omega^{\bullet-1}(M;V) /\im\,d  \xrightarrow{\;\;a\;\;} \check E^\bullet(M)  \xrightarrow{\;\;cl\;\;} E^\bullet(M) \lo 0
\end{equation}
is exact.
\end{enumerate}
\end{definition}
\vspace{11pt}
\begin{example}
As a first example of a differential extension, observe that any character functor (Definition \ref{defn:charfun}) defines a differential extension of integral singular cohomology.
The Chern character in this case is the map $H\ZZ \to H\RR$ induced by the coefficient inclusion $\ZZ \hookrightarrow \RR$.
\end{example}

In the case that the underlying cohomology theory $E$ is multiplicative (i.e.~is valued in $\ZZ$-graded rings) and the Chern character is a homomorphism of graded rings, one can also require that a differential extension also respects the ring structure.
\vspace{11pt}
\begin{definition}[\cite{BS1}]
A differential extension $\check E$ of a multiplicative cohomology theory $(E,ch)$ is \emph{multiplicative} if
\begin{enumerate}
\item
$\check E$ is a functor to $\ZZ$-graded rings;
\item
$R$ and $cl$ are multiplicative; and
\item
$a(\omega) \cdot x = a(\omega \wedge R(x))$ for all $x \in \check E(M)$ and $\omega \in \Omega(M;V)/\im\,d$, with $\cdot$ the product on $\check E(M)$.
\end{enumerate}
\end{definition}
\vspace{11pt}
\begin{example}
The differential characters of Cheeger-Simons (Definition \ref{defn:differentialchar}) give a multiplicative differential extension of ordinary integer-valued singular cohomology.
\end{example}

There is a special pushforward map for differential extensions that plays a central role in obtaining the Bunke-Schick uniqueness results.
Observe that any generalised cohomology theory $E$ has an \emph{$\sone$-integration map}
\[
\widehat{\int_\sone} \colon E^\bullet (M\x \sone) \lo E^{\bullet-1}(M),
\]
that plays the role of the suspension map (see \cite[Section 1]{BS2} or p.~\pageref{page:int}).
Passing to differential extensions, the corresponding extra structure is
\vspace{11pt}
\begin{definition}
\label{defn:intextension}
A differential extension $\check E$ of $(E,ch)$ has \emph{$\sone$-integration} if there is a natural transformation
\[
\widehat{\int_\sone} \colon \check E^\bullet (M\x \sone) \lo \check E^{\bullet-1}(M)
\]
compatible with the natural transformations $R$ and $cl$ and the $\sone$-integration maps on differential forms and on $E$.
One also requires
\begin{enumerate}
\item
$\widehat{\int_\sone} \pr_M^\ast x = 0$ for each $x \in \check E^\bullet(M)$; and

\item
$\widehat{\int_\sone}(\id_M \x t)^\ast x = - \widehat{\int_\sone} x$ for all $x \in \check E^\bullet(M\x\sone)$, with $t \colon \sone\to\sone$ the (orientation-reversing) map given by complex conjugation. 
\end{enumerate}
\end{definition}

It turns out that under certain mild conditions on the underlying cohomology theory $E$, any multiplicative differential extension of $E$ has a canonical choice of $\sone$-integration \cite{BS2,BS1}.

This section is concluded by recording an important consequence of the axioms.
Since differential extensions contain differential form data they are not homotopy invariant in general; the homotopy formula expresses this failure to be homotopy invariant in terms of differential form data.
\vspace{11pt}
\begin{theorem}[Homotopy Formula]
\label{theorem:homotopyformula}
Let $\check E$ be a differential extension of $(E,ch)$.
If $x \in \check E (M\x\I)$ and $\varsigma_t \colon M \to M\x\I$ is the slice map $\varsigma_t (x) := (x,t)$ then
\[
\varsigma_1^\ast x- \varsigma_0^\ast x = a \bigg(\widehat{\int_\I} R(x) \bigg)
\]
where $\widehat{\int_\I} $ denotes integration over the fibre with respect to the canonical orientation on $\I$.
\end{theorem}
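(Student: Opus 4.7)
The strategy is to set $y := \varsigma_1^\ast x - \varsigma_0^\ast x - a\bigl(\widehat{\int_I} R(x)\bigr) \in \check{E}^\bullet(M)$ and show that $y = 0$, by first establishing that $y$ is flat (i.e.\ has vanishing curvature and underlying class) and then arguing that this particular flat class must vanish.

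The curvature vanishing $R(y) = 0$ follows from the axiom $R\circ a = d$, the naturality of $R$, and the Stokes identity for fibre integration: since $R(x) \in \Omega_{d=0}(M\x I; V)$ is closed and $\partial I = \{1\} - \{0\}$,
\[
d\,\widehat{\int_I} R(x) = \varsigma_1^\ast R(x) - \varsigma_0^\ast R(x),
\]
so assembling the contributions from each summand of $y$ gives $R(y) = 0$. The underlying class vanishing $cl(y) = 0$ follows from the axiom $cl\circ a = 0$ (exactness of \eqref{eqn:diffextseq}), the naturality of $cl$, and the homotopy invariance of the generalised cohomology theory $E$: the smooth maps $\varsigma_0, \varsigma_1 \colon M \to M \x I$ are homotopic through the linear path $(m, t) \mapsto (m, t)$, so $\varsigma_0^\ast cl(x) = \varsigma_1^\ast cl(x)$ in $E^\bullet(M)$.

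The hard part will be to promote these two vanishings to $y = 0$ itself, since a priori $y$ lies in the flat subgroup $\check{E}^\bullet_{\mathrm{flat}}(M) := \ker R \cap \ker cl$, which is generally non-trivial. My approach here would be to first reduce to the case $\varsigma_0^\ast x = 0$ by replacing $x$ with $x - \pi^\ast \varsigma_0^\ast x$, noting that the homotopy formula is manifestly true on the pullback piece $\pi^\ast \varsigma_0^\ast x$: its curvature is horizontal, so $\widehat{\int_I} R(\pi^\ast \varsigma_0^\ast x) = 0$, while $\varsigma_0^\ast \pi^\ast = \varsigma_1^\ast \pi^\ast = \id_{\check{E}(M)}$ kills the left-hand side. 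With this reduction the target identity becomes $\varsigma_1^\ast x = a\bigl(\widehat{\int_I} R(x)\bigr)$, and the remaining flat class admits verification either by exploiting the axiomatic structure more carefully — constructing $y$ explicitly as a boundary term in a Stokes-type identity internal to $\check{E}$ and built from the $I$-parameter on $M \x I$ — or by reducing via naturality to a universal model such as Cheeger--Simons differential characters where the formula is immediate from direct computation, and then transferring along the canonical natural transformation supplied by the axioms of $\check{E}$.
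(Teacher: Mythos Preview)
The paper does not prove this theorem itself; it defers to \cite[Theorem 2.6]{BS1}. Your setup matches the standard Bunke--Schick argument up through the reduction to $\varsigma_0^\ast x = 0$, but the final paragraph contains a genuine gap.

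Your two proposed endings do not work. The appeal to Cheeger--Simons characters is a non-starter: $\check E$ is a differential extension of an \emph{arbitrary} pair $(E,ch)$, and there is no canonical natural transformation from differential characters into $\check E$ in general, so nothing can be ``transferred along'' it. The other suggestion---constructing $y$ as a boundary term internal to $\check E$---is too vague to be a proof; the axioms give you no primitive on $\check E(M\x\I)$ to differentiate.

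The missing idea is simple but decisive: after reducing to $\varsigma_0^\ast x = 0$, note that $\varsigma_0 \colon M \to M\x\I$ is a \emph{homotopy equivalence}, so $cl(x) = 0$ in $E(M\x\I)$ itself, not merely $\varsigma_0^\ast cl(x) = 0$ in $E(M)$. Exactness of \eqref{eqn:diffextseq} then gives $x = a(\omega)$ for some $\omega \in \Omega(M\x\I;V)/\im\,d$. Now the Stokes formula for integration over the fibre of $\I$ (which has boundary) reads
\[
\widehat{\int_\I} d\omega = \varsigma_1^\ast\omega - \varsigma_0^\ast\omega - d\,\widehat{\int_\I}\omega,
\]
so $\widehat{\int_\I} R(x) = \widehat{\int_\I} d\omega = \varsigma_1^\ast\omega - \varsigma_0^\ast\omega$ modulo exact forms. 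Applying $a$ and using naturality gives $a\big(\widehat{\int_\I} R(x)\big) = \varsigma_1^\ast x - \varsigma_0^\ast x$, which is exactly the claim. The point is that you must lift the vanishing of $cl$ from $M$ back up to $M\x\I$ before invoking exactness, so that $x$ itself (rather than the difference $y$) lies in the image of $a$.
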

For a proof, see  \cite[Theorem 2.6]{BS1}.
\vspace{11pt}
\begin{remark}
If one suppresses the relevant pullback maps, observe that any form $\omega$ on $M\x\I$ is of the form $\omega = f \cdot \alpha\wedge dt + g \cdot \beta$ for some forms $\alpha,\beta$ on $M$ and smooth functions $f, g$ on $M\x\I$.
By Definition \ref{defn:integrationoverfibre},
\[
\widehat{\int_\I} \omega : = \int_\I f\,dt\cdot \alpha
\]
whereas if $\d_t$ is the canonical vector field on $\I$ and $\varsigma_t \colon m\mapsto (m,t)$ is the slice map
\[
\int_0^1 \varsigma_t^\ast \imath_{\d_t} \omega\,dt= \int_0^1 \varsigma_t^\ast (f\alpha)\,dt = \int_0^1 f\,dt \cdot \alpha = \widehat{\int_\I} \omega.
\]
The Homotopy Formula may then be restated as
\[
\varsigma_1^\ast x- \varsigma_0^\ast x = a \bigg(\int_0^1 \varsigma_t^\ast \imath_{\d_t} R(x) \,dt\bigg),
\]
which is the form used in Section \ref{S:structcal}.
\end{remark}


\section{Uniqueness of differential extensions}
The following result on uniqueness of differential extensions is due to Bunke and Schick.
It was proved originally in \cite{BS2} and appears also in \cite{BS1}.
\vspace{11pt}
\begin{theorem}[\cite{BS2}]
\label{theorem:bunkeschick}
Suppose $E$ is a multiplicative generalised cohomology theory that is rationally even, i.e. $E^{2k-1}(pt) \otimes \QQ = 0$ for all $k$, and that $E^k(pt)$ is a finitely-generated abelian group for all $k \in \ZZ$. 
Then if $\check E'$ and $\check E$ are any two differential extensions of $E$ with $\sone$-integration, there is a unique natural isomorphism $\check E' \to \check E$ compatible with all the structure (including the $\sone$-integration).

If $\check E'$ and $\check E$ do not have $\sone$-integration, there is still a natural isomorphism of the even-degree parts.

If $\check E'$ and $\check E$ are both multiplicative, the natural isomorphism is also multiplicative.
\end{theorem}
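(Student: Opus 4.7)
The plan is to reduce the uniqueness statement to a triviality result for natural self-transformations of a differential extension, and then construct the desired isomorphism by exploiting the characterising role of the structure maps $R$, $cl$, $a$ and $\widehat{\int_\sone}$. Specifically, suppose one has two natural isomorphisms $\Phi_1,\Phi_2\colon \check E' \to \check E$ compatible with all the structure. Their difference (which makes sense on the abelian-group-valued functor underlying $\check E$) is a natural transformation satisfying $R\circ(\Phi_1 - \Phi_2) = 0$ and $cl\circ(\Phi_1 - \Phi_2) = 0$. The exact sequence \eqref{eqn:diffextseq} combined with $(\Phi_1-\Phi_2)\circ a = 0$ forces the difference to take values in the \emph{flat} subfunctor $\check E_{flat}^\bullet:= \ker R$, and on elements coming from $\im\,a$ it vanishes identically; so the difference factors through $\check E'_{flat}/a(\im\,ch)$.

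The first key step is to identify this flat quotient with a piece of the underlying cohomology theory. Using the curvature and underlying class maps, one obtains a natural exact sequence showing that $\check E^{\bullet}_{flat}(M)/a(\im\,ch)$ is naturally isomorphic to $E^{\bullet-1}(M;\RR/\ZZ)$ (the $E$-theory with mod-$\ZZ$ coefficients), an invariant depending only on $E$ and not on the extension. Any natural transformation between these groups is therefore determined by its action on the universal examples (points and spheres). The rational-evenness assumption $E^{2k-1}(pt)\otimes\QQ = 0$ together with finite generation of $E^k(pt)$ pins down the possible natural self-transformations of $E^{\bullet}(-;\RR/\ZZ)$ in even degree, forcing the difference $\Phi_1-\Phi_2$ to vanish on $\check E^{even}$. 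This already yields the second assertion of the theorem, about the even part without an $\sone$-integration.

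To upgrade from even to all degrees one uses the $\sone$-integration. Given $x \in \check E'^{2k-1}(M)$, pull back via the projection $M\x\sone \to M$ and use a generator of $H^1(\sone;\ZZ)$ to produce an element of $\check E'^{2k}(M\x\sone)$ whose fibre integral recovers $x$ (up to a form-theoretic correction tracked by $a$ and the homotopy formula, Theorem~\ref{theorem:homotopyformula}). Compatibility of $\Phi_1,\Phi_2$ with $\widehat{\int_\sone}$ then transfers the even-degree uniqueness to odd degree. Existence of the isomorphism is obtained dually: define $\Phi\colon \check E'\to\check E$ on even degree by invoking the identification of the flat parts with $E^{\bullet-1}(-;\RR/\ZZ)$ (so that $\Phi$ is forced by $R$, $cl$ and $a$), and extend to odd degree via the $\sone$-integration, verifying the remaining axioms by diagram chase using naturality and the homotopy formula. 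For multiplicative extensions, once the additive isomorphism is constructed, multiplicativity is obtained by checking it separately on form-classes (via the third multiplicativity axiom) and on classes with integral curvature, which together generate $\check E$ by the exact sequence.

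The main obstacle is the odd-degree uniqueness: without an $\sone$-integration the flat groups $E^{2k}(-;\RR/\ZZ)$ do admit non-trivial natural self-transformations in general, which is exactly the ambiguity exhibited by the infinitely many non-isomorphic differential extensions of $K^{-1}$ mentioned in Chapter~\ref{ch:five}. Overcoming this requires the $\sone$-integration to act as a degree-lowering map analogous to suspension, and verifying that the constructed $\Phi$ really does commute with $\widehat{\int_\sone}$ is the technical heart of the argument, relying on the homotopy formula to compare the two possible values of $\Phi\widehat{\int_\sone}x$ and $\widehat{\int_\sone}\Phi x$ via an explicit form-theoretic correction that, being in the image of $a$, is killed by compatibility with $a$.
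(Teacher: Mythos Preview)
The paper does not prove this theorem. It appears in Appendix~\ref{app:diff} as a cited result from Bunke--Schick, introduced with the sentence ``The following result on uniqueness of differential extensions is due to Bunke and Schick. It was proved originally in \cite{BS2} and appears also in \cite{BS1}.'' The statement is then recorded and used as a black box; no proof or sketch is given in the thesis, so there is nothing in the paper to compare your proposal against.

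As a sketch of the actual Bunke--Schick argument your outline is in the right spirit but imprecise in places. The factorisation you describe is slightly off: from $R\circ(\Phi_1-\Phi_2)=0$ and $(\Phi_1-\Phi_2)\circ a=0$ one deduces that the difference factors as a natural transformation $E^\bullet \cong \check E'^{\bullet}/\im\,a \to \check E_{flat}^\bullet$, not through $\check E'_{flat}/a(\im\,ch)$ as you wrote. The genuinely hard steps in \cite{BS2}---showing that $\check E_{flat}^\bullet$ is itself a cohomology theory and identifying it naturally with $E$-theory with $\RR/\ZZ$ coefficients shifted by one, and then classifying all natural transformations $E^\bullet \to E^{\bullet-1}(-;\RR/\ZZ)$ via stable operations using rational evenness and finite generation of the coefficient groups---are asserted rather than carried out in your sketch. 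Your existence argument is also quite schematic; in Bunke--Schick the isomorphism is built carefully using the $\sone$-integration as a substitute for the suspension isomorphism, with explicit bookkeeping of the action-of-forms corrections coming from the homotopy formula.
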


It is important to notice that the $\sone$-integration plays a crucial role in this result.
In \cite{BS2}, it is shown that without such a map there is an infinite family of differential extensions of the odd-degree part of $K$-theory that have non-isomorphic group structures.

Observe also that the hypotheses of Theorem \ref{theorem:bunkeschick} are satisfied by topological $K$-theory on compact manifolds (with corners), so as a special case one obtains
\vspace{11pt}
\begin{theorem}[\cite{BS2}]
\label{theorem:bunkeschickk}
Suppose $\check K'$ and $\check K$ are differential extensions of complex $K$-theory.
Then there is a unique natural isomorphism $\check K'^{even}\to \check K^{even}$
compatible with all the structure.

If both extensions are multiplicative this isomorphism extends uniquely to a multiplicative isomorphism $\check K' \to \check K$ compatible with all the structure. 

If both extensions are equipped with $\sone$-integration this isomorphism extends uniquely to an isomorphism $\check K' \to \check K$ compatible with all the structure, including the $\sone$-integration.
\end{theorem}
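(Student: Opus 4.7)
The plan is to deduce Theorem \ref{theorem:bunkeschickk} directly from the general Bunke--Schick uniqueness result, Theorem \ref{theorem:bunkeschick}, by verifying that complex topological $K$-theory satisfies the hypotheses of that theorem. First I would recall that $K^\bullet$ is a multiplicative generalised cohomology theory, with multiplicative structure induced by the tensor product of (virtual) vector bundles and compatible Chern character $ch \colon K^\bullet \to H^\bullet(\,\cdot\,;\QQ)$ (viewing $\QQ$ as a graded $\RR$-vector space concentrated in even degree, or more precisely taking the even-degree $\RR$-vector space $V = \RR[\beta,\beta^{-1}]$ with $\deg\beta = 2$ to encode Bott periodicity).

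Next I would verify the two arithmetic hypotheses in Theorem \ref{theorem:bunkeschick}. By Bott periodicity $K^\bullet(pt)$ is $2$-periodic with $K^0(pt) \cong \ZZ$ and $K^{-1}(pt) = 0$, so $K^k(pt)$ is finitely generated for every $k \in \ZZ$ and $K^{2k-1}(pt)\otimes\QQ = 0$; that is, $K^\bullet$ is rationally even. With these observations Theorem \ref{theorem:bunkeschick} applies directly and yields, for any two differential extensions $\check K,\check K'$ of $K^\bullet$, a canonical natural isomorphism of their even-degree parts compatible with curvature, underlying class and action of forms, which is the first assertion. If $\check K$ and $\check K'$ are both multiplicative, the multiplicative part of Theorem \ref{theorem:bunkeschick} supplies a unique multiplicative natural isomorphism $\check K'\to \check K$ extending the even-degree one; and if both carry $\sone$-integration, the corresponding part of Theorem \ref{theorem:bunkeschick} supplies the unique isomorphism compatible with the $\sone$-integration.

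The only genuinely nontrivial content here is Theorem \ref{theorem:bunkeschick} itself, which is quoted from \cite{BS2}. Consequently, there is no substantive obstacle at the level of this appendix: the proof reduces to checking hypotheses, and the three conclusions of Theorem \ref{theorem:bunkeschickk} correspond one-for-one to the three clauses (even-degree uniqueness, multiplicative uniqueness, uniqueness with $\sone$-integration) of Theorem \ref{theorem:bunkeschick}. Accordingly, the written proof would consist of a short paragraph verifying the periodicity and finite-generation conditions on $K^\bullet(pt)$, followed by an invocation of Theorem \ref{theorem:bunkeschick} to conclude each of the three statements in turn.
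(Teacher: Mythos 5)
Your proposal is correct and takes essentially the same approach as the paper: the paper simply observes (without spelling out the details) that $K$-theory satisfies the hypotheses of Theorem \ref{theorem:bunkeschick} and then cites that result. Your verification that $K^0(pt)\cong\ZZ$, $K^{-1}(pt)=0$ give finite generation and rational evenness is exactly the observation the paper has in mind.
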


Since any two differential extensions of $K$-theory with $\sone$-integration are uniquely isomorphic, any such differential extension is called (a model for) \emph{differential $K$-theory}.
Examples include the Freed-Lott model \cite{FL} that uses complex Hermitian vector bundles equipped with extra form data, the cocycle model of Hopkins and Singer \cite{HS}, and a model due to Bunke and Schick that uses geometric families of Dirac operators \cite{BS3}.
The Simons-Sullivan model reviewed at the beginning of Chapter \ref{ch:five} (see also \cite{SSvec}) gives a differential extension of even $K$-theory and hence a model for even differential $K$-theory.
For a detailed survey of existing models of differential $K$-theory see \cite{BS1}.

\addcontentsline{toc}{chapter}{Bibliography}


\end{document}